\documentclass[11pt,oneside]{book}  
\usepackage{amsmath,amsfonts,amssymb,amsthm}
\usepackage{algorithm,algpseudocode}
\usepackage{nicefrac}

\usepackage{mathtools}
\usepackage{mathrsfs}
\usepackage{enumitem}
\usepackage{hyperref}
\usepackage{cleveref}
\usepackage{graphicx}
\usepackage{tikz}
\usepackage{xcolor}

\usepackage{lmodern}
\usepackage[T1]{fontenc}

\theoremstyle{plain}
\newtheorem{theorem}{Theorem}[chapter]
\newtheorem{proposition}[theorem]{Proposition}
\newtheorem{lemma}[theorem]{Lemma}

\newtheorem{claim}[theorem]{Claim}
\newtheorem{corollary}[theorem]{Corollary}

\theoremstyle{definition}
\newtheorem{definition}[theorem]{Definition}
\newtheorem{example}[theorem]{Example}
\newtheorem{exercise}[theorem]{Exercise}

\theoremstyle{remark}
\newtheorem{remark}[theorem]{Remark}


\newcommand{\ie}{\emph{i.e.}}
\newcommand{\eg}{\emph{e.g.}}

\newtheorem{problem}{Problem}

\newcommand{\eqdef}{:=}

\newcommand{\df}[1]{\emph{#1}} 

\renewcommand{\Pr}{\mathbb{P}}
\newcommand{\E}{\mathbb{E}}

\newcommand{\R}{\mathbb{R}}
\newcommand{\N}{\mathbb{N}}

\newcommand{\eps}{\varepsilon}

\newcommand{\uni}{\mathcal{U}} 
\newcommand{\elem}{u} 
\newcommand{\set}{S} 

\newcommand{\mat}{M} 
\newcommand{\act}{A} 

\newcommand{\col}{x} 

\newcommand{\ones}{\vec{1}} 

\renewcommand{\SS}{\mathcal{S}}
\newcommand{\DD}{\mathcal{D}} 
\newcommand{\TT}{\mathcal{T}}
\newcommand{\RR}{\mathcal{R}} 
\newcommand{\rbox}{R} 
\renewcommand{\AA}{\mathcal{A}} 
\newcommand{\II}{\mathcal{I}} 
\newcommand{\CC}{\mathcal{C}} 

\newcommand{\grv}{g} 

\newcommand{\parcol}{h}

\newcommand{\indic}{1} 

\DeclareMathOperator{\ssdeg}{deg}

\DeclareMathOperator{\disc}{disc}
\DeclareMathOperator{\herdisc}{herdisc}
\DeclareMathOperator{\vdisc}{vdisc}
\DeclareMathOperator{\detlb}{detLB}
\DeclareMathOperator{\vollb}{volLB}
\DeclareMathOperator{\vb}{vb}
\DeclareMathOperator{\pdisc}{pdisc}
\DeclareMathOperator{\st}{st}
\DeclareMathOperator{\sser}{ss}
\DeclareMathOperator{\Tr}{tr}
\DeclareMathOperator{\tr}{tr}
\DeclareMathOperator{\rank}{rank}
\DeclareMathOperator{\sign}{sign}
\DeclareMathOperator{\vol}{vol}
\newcommand{\poly}{\ensuremath{\mathsf{poly}}}
\newcommand{\ip}[1]{\langle #1\rangle }
\newcommand{\cut}[1]{}

\newcommand{\diag}{\mathsf{diag}}

\usepackage{color}

\title{Discrepancy Theory: An Algorithmic and Geometric Perspective}

\author{
Nikhil Bansal\footnote{Supported in part by the NWO VICI award 639.023.812 and the NSF awards CCF-2327011 and CCF-2504995.} \and
Aleksandar Nikolov\footnote{Supported in part by an NSERC Discovery Grant (RGPIN-2021-03206), and the Canada Research Chairs program (CRC-2020-00004).}
}

\begin{document}


\frontmatter  
\date{}
\maketitle

\chapter*{Abstract}
\emph{Combinatorial discrepancy theory} is a subject with roots in combinatorics, geometry, and number theory, and with numerous applications to mathematics and computer science. 
At its core, discrepancy theory is  about dividing a collection of objects into two parts that are as balanced as possible.
For example, given a collection of subsets of a finite universe, we may wish to color the elements with two colors so that each set is approximately evenly split.
Other problems in discrepancy are more geometric in flavor, and ask for example, to assign signs to a collection of vectors, so that the sum of the signed vectors is as small as possible.
Classical results, such as the Beck–Fiala theorem and Spencer’s “six deviations” result, show that it is often possible to attain remarkably small discrepancy --- often far smaller than what naive random colorings achieve.

In recent years, discrepancy theory has undergone a transformation, driven by new algorithmic techniques and a rich interplay between probability, optimization, and convex geometry.
These developments have led not only to new constructive proofs of foundational theorems, but also to several new results and research directions.
This monograph aims to provide an accessible and unified introduction to these modern developments, with a focus on the core algorithmic and convex geometric ideas that have driven them. For several results, we provide
new simpler analyses, while highlighting the intuition behind the proofs.

We begin by surveying classical techniques for bounding discrepancy, including iterated rounding, partial colorings, and Banaszczyk’s method, emphasizing the underlying tools from high-dimensional geometry. As most of these methods are non-constructive, they do not yield efficient algorithms for computing the colorings that achieve the discrepancy upper  bounds.
We then describe several algorithmic techniques
that match and even improve upon the classical methods.
We also discuss related topics such as the computational complexity of approximating discrepancy, and discrepancy minimization in the online model.

\tableofcontents

\mainmatter

\allowdisplaybreaks
\chapter{Introduction}
\label{ch1-intro} 

This is a book about \emph{combinatorial discrepancy theory} --- a
subject with roots in combinatorics, geometry, and number theory, and
numerous applications to other areas of mathematics and computer
science. In many of the applications, discrepancy provides a way to
``simplify'' a ``complex'' object. In this introductory chapter, we define the basic concepts in discrepancy theory, and illustrate the ways it can be applied with several examples.

\section{Discrepancy of Sets and Sparsification}

Consider the following balancing problem. We have a collection of subsets $\SS$ of some finite universe $\uni$, which we will call a \df{set system}. We want to color the universe $\uni$ with two colors, say red and blue, so that the red elements $R$, and the blue elements $B$ ``look similar'' to all the sets in $\SS$. I.e., each set in $\SS$ should have, as much as possible, a similar number of elements from $R$ and from $B$. Then the \df{discrepancy} of the coloring is 
\[
\max_{S \in \SS} \Big|\,|S\cap R| - |S \cap B|\,\Big|,
\]
and the discrepancy of the set system $\SS$ is the minimum discrepancy over all colorings, i.e., all partitions of $\uni$ into disjoint sets $R$ and $B$. It is convenient to encode a coloring as a function $\col:\uni \to \{-1,+1\}$,
where $\col(u)=+1$ corresponds to placing $u$ in $R$, and $\col(u)=-1$ corresponds to placing it in $B$. We further define $\col(S) \eqdef \sum_{\elem \in S}\col(\elem)$, which is just $|S\cap R| - |S \cap B|$.
We can now write the discrepancy of a coloring $\col$ and the discrepancy of $\SS$, respectively, as
\begin{align*}
    \disc(\SS,\col) &\eqdef \max_{S \in \SS}|\col(S)|,\\
    \disc(\SS) &\eqdef \min_{\col:\uni\to\{-1,+1\}} \disc(\SS,\col).
\end{align*}
Using Hoeffding's inequality, and the union bound, it is easy to see that, with probability at least $\frac12$, a uniformly random coloring $\col:\uni \to \{-1,+1\}$ achieves $\disc(\SS,\col) \lesssim \sqrt{|\uni|\log(2|\SS|)}$.\footnote{Here and in the
  rest of the book we use the notation $A\lesssim B$ to mean that
  $A \le CB$ for an absolute constant $C > 0$, and $A \gtrsim B$ to
  mean $B \lesssim A$.}  A major theme of this book, and of discrepancy theory in general, is the study of methods to construct colorings that achieve smaller discrepancy than random colorings. 

\paragraph{Application: Epsilon Approximations.} 
Most applications of the discrepancy of set systems rely on the observation that, because each set $S$ in $\SS$ is approximately equally split between the two color classes, restricting $S$ to one of the color classes reduces its size by approximately half. Thus, restricting all sets to the smaller color class gives another set system on a smaller universe, in which the relative sizes of the sets are preserved. 

Let us illustrate this idea with a geometric application. Let us take $P$ to be a finite point set in the plane. Let $\RR_2$ be the family of all axis-aligned rectangles, \ie, all sets of the
form $R = [a,b]\times [c,d]$. A subset $Q$ of $P$ is an $\varepsilon$-approximation of $P$ with respect to $\RR_2$, if for every axis-aligned rectangle $R$ we have that 
\[
\left|\frac{|Q \cap R|}{|Q|} - \frac{|P \cap R|}{|P|}\right| \le \varepsilon.
\]
I.e., $Q$ must be such that the fraction of points in it that land in any axis-aligned rectangle is equal, up to an additive error $\varepsilon$, to the fraction of points of $P$ that land in the same rectangle. Typically, we are interested in $\varepsilon$-approximations of small size. Such an $\varepsilon$-approximation is a concise approximation of the set system $\RR_2|_P \eqdef \{R\cap P: R \in \RR_2\}$ of subsets of $P$ induced by axis-aligned rectangles. Naturally, there is nothing special here about axis-aligned rectangles, and $\varepsilon$-approximations can be defined analogously for other families of geometric sets, e.g., halfspaces, or Euclidean balls, and in dimensions higher than 2. They are a useful concept in computational geometry. For example, a small $\varepsilon$-approximation immediately gives a small-space data structure for approximate range counting: the data structure is the $\varepsilon$-approximation $Q$, and given a query rectangle $R$, the number of points of $P$ in $R$ can be approximated by $|Q \cap R|\cdot |P|/|Q|$, which gives an additive approximation of $\varepsilon |P|$.



Discrepancy gives us a way to construct an $\varepsilon$-approximation $Q$ of half the size of $P$. Let us denote $n\eqdef |P|$. Suppose that, for the set system $\SS \eqdef \RR_2|_P$ induced by axis-aligned rectangles on $P$, the coloring $\col$ achieves $D \eqdef \disc(\SS)$, and let $Q$ be the points in $P$ colored $+1$. The discrepancy constraint implies that for every rectangle $R \in \RR_2$,
\[
\Big||Q\cap R| - |(P\setminus Q) \cap R|\Big| \le D.
\]
On the other hand, $|P\cap R| = |Q\cap R| + |(P\setminus Q) \cap R|$, so
\[
\Big||Q\cap R| - \frac12 |P \cap R|\Big|
= 
\frac12 \Big||Q\cap R| - |(P\setminus Q) \cap R|\Big| \le \frac{D}{2}.
\]
If the size of $Q$ were exactly $n/2$, then dividing both sides by $|Q|$ would give us that $Q$ is a $D/n$-approximation to $P$ with respect to axis-aligned rectangles. But now we can observe that some axis-aligned rectangle contains all the points in $P$, so the discrepancy bound also implies that $\left||Q| -  n/2\right| \le D/2$. Then, we can make sure that $|Q| = n/2$ by adding or removing at most $D/2$ points from it, and this modified $Q$ is a $2D/n$-approximation. 

Let $\disc(\RR_2,n)$ be the maximum of $\disc(\RR_2|_P)$ over all $n$-point sets $P$. The argument above gives us the following lemma.

\begin{lemma}\label{lm:ch1-disc-epsapprox}
    For any $n$-point set $P\subseteq \R^2$, there is an $\varepsilon$-approximation $Q$ of $P$ with respect to axis-aligned rectangles such that $|Q| = n/2$ and $\varepsilon\le 2\disc(\RR_2,n)/n$.
\end{lemma}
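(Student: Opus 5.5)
We follow the argument sketched just before the statement, filling in the step where $|Q|$ is adjusted to exactly $n/2$ (we implicitly take $n$ even, as the statement requires). Let $D \eqdef \disc(\RR_2,n)$. By definition there is a coloring $\col: P \to \{-1,+1\}$ with $|\col(R\cap P)| \le D$ for every $R\in\RR_2$. Let $Q_0$ be the set of points colored $+1$. Since $|P\cap R| = |Q_0\cap R| + |(P\setminus Q_0)\cap R|$, every rectangle satisfies
\[
\Big||Q_0\cap R| - \tfrac12|P\cap R|\Big| \le \frac{D}{2}.
\]
A large enough rectangle contains all of $P$, so in particular $\big||Q_0| - n/2\big| \le D/2$.

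Next we fix the size. Let $k \eqdef \big||Q_0| - n/2\big| \le D/2$. If $|Q_0|>n/2$, remove any $k$ points from $Q_0$; otherwise add any $k$ points of $P\setminus Q_0$. Call the result $Q$, so $|Q|=n/2$. For each rectangle $R$, the count $|Q\cap R|$ differs from $|Q_0\cap R|$ by at most $k\le D/2$. By the triangle inequality,
\[
\Big||Q\cap R| - \tfrac12|P\cap R|\Big| \le D.
\]

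Finally, divide by $|Q| = n/2$:
\[
\left|\frac{|Q\cap R|}{|Q|} - \frac{|P\cap R|}{|P|}\right| = \frac{2}{n}\Big||Q\cap R| - \tfrac12|P\cap R|\Big| \le \frac{2D}{n}.
\]
This is the claimed bound $\varepsilon \le 2\disc(\RR_2,n)/n$.

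There is no real obstacle. The only points needing care are, first, that the correction by $k$ points affects every rectangle by at most $k$, which holds because adding or removing a single point changes each $|Q\cap R|$ by at most one. Second, the existence of a rectangle containing all of $P$, which holds because $P$ is finite.
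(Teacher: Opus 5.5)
Your proof is correct and is essentially the paper's argument. You take a coloring of discrepancy at most $D=\disc(\RR_2,n)$, use the $+1$ class $Q_0$, note via an all-containing rectangle that $\bigl||Q_0|-n/2\bigr|\le D/2$, and rebalance by at most $D/2$ points to obtain a $2D/n$-approximation. You also supply the small details the paper leaves implicit: $n$ must be even, and each added or removed point shifts every count $|Q\cap R|$ by at most one.
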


We will see later in Chapter~\ref{ch:gamma2} that $\disc(\RR_2, n) \lesssim \log^{1.5}(2n)$.
Thus the $\varepsilon$ in Lemma~\ref{lm:ch1-disc-epsapprox} is very close to $0$ when $n$ is large. Typically, however, we can settle for a constant $\varepsilon$, but would like $|Q|$ to only depend on $\varepsilon$. We can achieve this by applying Lemma~\ref{lm:ch1-disc-epsapprox} repeatedly, together with the following simple composition lemma, whose proof is left as an exercise.

\begin{lemma}\label{lm:ch1-epsapprox-composition}
    If $Q_1$ is an $\varepsilon_1$-approximation of $P$, and also $Q_2$ is an $\varepsilon_2$-approximation of $Q_1$, then $Q_2$ is an $(\varepsilon_1 + \varepsilon_2)$-approximation of $P$.
\end{lemma}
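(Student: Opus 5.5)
The plan is to prove the claim with the triangle inequality, applied one rectangle at a time. Fix an arbitrary axis-aligned rectangle $R \in \RR_2$, and introduce three shorthand quantities:
\[
a \eqdef \frac{|P\cap R|}{|P|}, \qquad b \eqdef \frac{|Q_1\cap R|}{|Q_1|}, \qquad c \eqdef \frac{|Q_2\cap R|}{|Q_2|}.
\]
The hypothesis that $Q_1$ is an $\varepsilon_1$-approximation of $P$ says exactly that $|b-a|\le\varepsilon_1$ for this $R$. The hypothesis that $Q_2$ is an $\varepsilon_2$-approximation of $Q_1$, applied to the same $R$, says exactly that $|c-b|\le\varepsilon_2$.

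Writing $c-a=(c-b)+(b-a)$ and applying the triangle inequality gives $|c-a|\le\varepsilon_1+\varepsilon_2$. Since $R$ was arbitrary, this bound holds for every axis-aligned rectangle, which is precisely the statement that $Q_2$ is an $(\varepsilon_1+\varepsilon_2)$-approximation of $P$.

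The only point that needs care is to make sure the definition is being applied consistently. The definition of an $\varepsilon$-approximation of a set $X$ quantifies over all rectangles $R$ and compares against the proportions in $X$. When we say $Q_2$ approximates $Q_1$, the reference set is therefore $Q_1$ itself, and $Q_2 \subseteq Q_1$. Both hypotheses quantify over the same family $\RR_2$, so they can be instantiated with the same $R$.

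Implicitly we also need $|Q_1|, |Q_2| > 0$ so that the fractions make sense; this is already assumed in the definition. Nothing else is required. The argument is not specific to rectangles and works verbatim for any family of ranges.
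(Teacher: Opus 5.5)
Your proof is correct and is exactly the intended argument: the paper leaves this lemma as an exercise, and the per-rectangle triangle inequality $|c-a|\le|c-b|+|b-a|\le\varepsilon_2+\varepsilon_1$ is all that is needed. One detail is worth stating explicitly: the definition of an approximation of $P$ requires a subset of $P$, and this holds because $Q_2\subseteq Q_1\subseteq P$.
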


We can now apply Lemma~\ref{lm:ch1-disc-epsapprox} repeatedly, each time halving the size of $Q$, and Lemma~\ref{lm:ch1-epsapprox-composition} tells us that the approximations add up. If we stop at a set $Q$ of $s$ points, we have that $Q$ is an $\varepsilon$-approximation for 
\[
\varepsilon = \frac{2 \disc(\RR_2, n)}{n} + \frac{4 \disc(\RR_2, n/2)}{n} + \ldots + \frac{\disc(\RR_2, 2s)}{s}\lesssim \frac{\disc(\RR_2, n)}{s},
\]
where the last bound follows since $\disc(\RR_2, n)$ grows polylogarithmically in $n$. Plugging in $\disc(\RR_2, n) \lesssim \log^{1.5}(2n)$ and solving for $s$, we get the following theorem.

\begin{theorem}\label{thm:ch1-rectangles}
     For any $\varepsilon \in (0,1)$, and any $n$-point set $P\subseteq \R^2$, there is an $\varepsilon$-approximation $Q$ of $P$ with respect to axis-aligned rectangles such that $|Q| \lesssim (\log^{1.5}(1/\varepsilon))/\varepsilon$.
\end{theorem}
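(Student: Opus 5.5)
The plan is to run the halving argument sketched just before the statement, and make the geometric sum and the stopping rule precise. Take the bound $\disc(\RR_2,m)\le C\log^{1.5}(2m)$ from Chapter~\ref{ch:gamma2} as given. First, a reduction: if $n \le C'\log^{1.5}(1/\varepsilon)/\varepsilon$ we simply take $Q=P$, which is a $0$-approximation. Otherwise we repeatedly apply Lemma~\ref{lm:ch1-disc-epsapprox}. To keep sizes even at every step, we may first discard at most one point. More robustly, we can start by padding or trimming $P$ so that $n$ is a power of two times a size in $[s,2s)$. Removing $r$ points from $P$ changes every ratio $|P\cap R|/|P|$ by at most $2r/n$, which is negligible here. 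I will pretend $n=2^k s$ and handle parity with this $O(1/n)$-per-step slack, or with the floor/ceiling version of the halving lemma, whose proof is identical.

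Next, compose the steps. Set $P_0=P$ and let $P_{i+1}$ be the half-size approximation of $P_i$ given by Lemma~\ref{lm:ch1-disc-epsapprox}, so that $|P_i|=n/2^i$. Then $P_{i+1}$ is a $\varepsilon_i$-approximation of $P_i$ with
\[
\varepsilon_i\le \frac{2\disc(\RR_2,n/2^i)}{n/2^i}\le \frac{2C\,2^i\log^{1.5}(2n/2^i)}{n}.
\]
By Lemma~\ref{lm:ch1-epsapprox-composition} and induction, $P_k$ is an $\left(\sum_{i<k}\varepsilon_i\right)$-approximation of $P$. Write $m_i=n/2^i$, so the terms are $2C\log^{1.5}(2m_i)/m_i$. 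Read from $i=k-1$ downward, $m_i$ doubles at each step. The function $\log^{1.5}(2m)/m$ decreases geometrically once $m$ exceeds a constant: the ratio of consecutive terms is at most $\frac12(1+1/\log 2m)^{1.5}\le 3/4$ for $m$ larger than an absolute constant. Hence the sum is dominated by its last term, giving $\sum_i\varepsilon_i\lesssim \log^{1.5}(2s)/s$ with $s=|P_k|$. Assuming $s$ exceeds an absolute constant is harmless, since the claim is trivial for $\varepsilon$ bounded away from $0$ by adjusting constants.

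Finally, choose the stopping size. We want $C''\log^{1.5}(2s)/s\le\varepsilon$, and $s\approx A\log^{1.5}(1/\varepsilon)/\varepsilon$ works for a large absolute constant $A$. Indeed, then $\log(2s)\lesssim\log(1/\varepsilon)$, so $\log^{1.5}(2s)/s\lesssim 1/A$ times $\varepsilon$. We stop at the first $k$ with $n/2^k\le 2s$, so that $|Q|\in(s,2s]$ and the bound on $|Q|$ follows.

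The main obstacle is not the geometric sum but the bookkeeping: the halving lemma as stated requires $|Q|=n/2$ exactly, and sizes must stay integral throughout. I would handle this by allowing each step to discard one point, at a cost of $O(1/m_i)$ in approximation. These extra costs also sum geometrically to $O(1/s)$, which is absorbed into the bound.
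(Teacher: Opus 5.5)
Your proposal is correct and follows the paper's argument: halve repeatedly with Lemma~\ref{lm:ch1-disc-epsapprox}, add up the errors with Lemma~\ref{lm:ch1-epsapprox-composition}, note that the sum is dominated by its last term, and solve for the stopping size $s$. Your version is more careful than the paper's sketch, which ignores parity and writes the total as $\lesssim \disc(\RR_2,n)/s$; you bound it by the last term, $\lesssim \log^{1.5}(2s)/s$, and that is what makes the final size independent of $n$.
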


Note that, as long as we can efficiently find a coloring $\col$ of any $n$ points $P$ achieving $\disc(\RR_2|_P,\col) \lesssim \log^{1.5}(2n)$, we can also perform the construction in Theorem~\ref{thm:ch1-rectangles} efficiently. Unfortunately, no such efficient algorithm is known for this bound, but the slightly weaker bound $\disc(\RR_2|_P,\col) \lesssim \log^{2}(2n)$ can be achieved in polynomial time using the methods of Chapter~\ref{ch:algo-komlos-bana}.

The size bound in Theorem~\ref{thm:ch1-rectangles} would be immediately improved by improving the bound on the discrepancy $\disc(\RR_2,n)$. This motivates the following problem, to which we return several times in this book.

\begin{problem}\label{prob:tusnady}
  Determine the maximum discrepancy of a set system induced by the family $\RR_d$ of
  \(d\)-dimensional axis-aligned boxes (cross
products of $d$ intervals) on an \(n\)-point set, \ie,
  give tight bounds on 
  \(
    \disc(\RR_d, n) \eqdef \max\{\disc(\RR_d|_P): P \subseteq \R^d, |P| = n\}.
  \)
\end{problem}
Later in the book we will see upper and lower bounds on $\disc(\RR_d, n)$ that are tight within a $O(\sqrt{\log n})$ factor.

Bounds on $\varepsilon$-approximations with respect to other families of geometric sets similar to Theorem~\ref{thm:ch1-rectangles} can be proved using the same argument and bounds on the discrepancy of the appropriate set system. 

\section{Discrepancy of Matrices and Rounding}

The discrepancy of a set system can be written in a linear algebraic way, which also suggests a natural way to define discrepancy of a matrix. Let $\SS$ be a set system on a universe $\uni$, and let us take
$\mat$ to be the incidence matrix of $\SS$,
\ie, the matrix with rows indexed by $\SS$, columns indexed by $\uni$, and entries equal to 
\[
  \mat(S,\elem) \eqdef
  \begin{cases}
    1 & \text{if } \elem \in S\\
    0 &  \text{if } \elem \not \in S
  \end{cases}
\]
for any $S \in \SS$ and $\elem \in \uni$.\footnote{We identify $n$-dimensional vectors $x$ with functions $x:[n] \to \R$, where $x(i)$ is the $i$-th coordinate of $x$, and $m\times n$ matrices $M$ with functions $M:[m]\times [n] \to \R$, where $M(i,j)$ is the entry in row $i$ column $j$ of $M$.} Then it is easy to check that 
\begin{align*}
\disc(\SS,\col) &= \max_{S \in \SS} |(\mat\col)_S| = \|\mat \col\|_{\infty},\\
\disc(\SS) &= \min_{x \in \{-1,1\}^\uni} \|\mat\col\|_\infty.
\end{align*}
Inspired by these observations, we can then define the discrepancy of any $m\times n$ matrix $\mat$ and a coloring $\col \in \{-1,+1\}^n$ as $\disc(\mat,\col) \eqdef \|\mat\col\|_\infty$, and the discrepancy of the matrix $\mat$ as $\disc(\mat) \eqdef \min_{\col \in \{-1,+1\}^n} \|\mat\col\|_\infty$.

\paragraph{Application: Rounding}
Suppose that $y \in [0,1]^n$ satisfies the constraints $\mat y = b$ for some $m\times n$ matrix $\mat$ and $m$-dimensional vector $b$. For example, $y$ can be the solution to a linear programming relaxation of a combinatorial optimization problem. Matrix discrepancy allows us to round $y$ to a binary vector $z \in \{0,1\}^n$ so that the constraints are approximately satisfied. 

The simplest case is rounding a vector $y$ all of whose coordinates are equal to $\frac12$, i.e., $y = \frac12 \ones$, where $\ones$ is the all-ones vector. Then, if $\col \in \{-1,+1\}^n$ achieves $\|\mat \col\|_\infty = \disc(\mat)$, for the binary vector $z \eqdef  y + \frac12\col$ we have
\[
\left\|\mat z - b\right\|_\infty = \|\mat(z-y)\|_\infty
= 
\frac12 \|\mat \col\|_\infty = \frac{\disc(\mat)}{2}.
\]
This is, in fact, essentially the same observation as the one used in Lemma~\ref{lm:ch1-disc-epsapprox}. If $y$ is instead half-integral, i.e., its coordinates are in $\left\{0,\frac12, 1\right\}$, then we can apply the same construction, but only to the coordinates equal to $\frac12$. The resulting error in our rounding will be half the discrepancy of the submatrix of $\mat$ corresponding to these coordinates. In general, to round arbitrary vectors $y$ using this method, we need a bound on the maximum discrepancy of any submatrix of $\mat$. This motivates the definition of \df{hereditary discrepancy}:
\[
\herdisc(\mat) \eqdef \max_{J \subseteq [n]}\disc(\mat(*,J)),
\]
where $\mat(*,J)$ is the submatrix of $\mat$ consisting of the columns indexed by $J$. By applying the idea above to the bit representation of $y$, we get the following rounding result. 

\begin{theorem}\label{thm:ch1-transference}
  For any $m\times n$ matrix $\mat$,
  and any $y \in [0,1]^n$,
  there exists some $z \in \{0,1\}^n$ such that 
  \[
    \left\|\mat (z-y)\right\|_\infty
    \le
    \herdisc(\mat).
  \]
\end{theorem}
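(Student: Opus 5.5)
We follow the classical Lov\'asz--Spencer--Vesztergombi bit-by-bit rounding. First reduce to vectors with finite binary expansions. Fix $k \in \N$ and let $y^{(k)} \in [0,1]^n$ be a vector whose coordinates are integer multiples of $2^{-k}$ with $\|y - y^{(k)}\|_\infty \le 2^{-k}$. Then $\|\mat(y-y^{(k)})\|_\infty \le 2^{-k}\|\mat\|_{\infty\to\infty}$, where $\|\mat\|_{\infty\to\infty}$ is the maximum row $\ell_1$ norm. So it suffices to find, for each such $y^{(k)}$, a vector $z\in\{0,1\}^n$ with $\|\mat(z-y^{(k)})\|_\infty \le (1-2^{-k})\herdisc(\mat)$. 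Since $\{0,1\}^n$ is finite, some $z$ works for infinitely many $k$, and letting $k\to\infty$ gives the claimed bound $\herdisc(\mat)$.

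The core is an induction on $k$, rounding off one bit at a time. Set $y_k \eqdef y^{(k)}$. Given $y_j$ with coordinates in $2^{-j}\mathbb{Z}\cap[0,1]$, let $J \subseteq [n]$ be the set of coordinates whose $j$-th binary digit is $1$, i.e.\ those not in $2^{-(j-1)}\mathbb{Z}$. By definition of $\herdisc$, there is a coloring $\col\in\{-1,+1\}^J$ with $\|\mat(*,J)\col\|_\infty \le \herdisc(\mat)$. Define $y_{j-1} \eqdef y_j + 2^{-j}\col$ on $J$, unchanged off $J$. Each coordinate in $J$ is an odd multiple of $2^{-j}$, so after the shift it becomes a multiple of $2^{-(j-1)}$, and it stays in $[0,1]$. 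Moreover $\|\mat(y_{j-1}-y_j)\|_\infty \le 2^{-j}\herdisc(\mat)$. After $k$ steps $z \eqdef y_0 \in\{0,1\}^n$, and the triangle inequality gives total error $\sum_{j=1}^k 2^{-j}\herdisc(\mat) = (1-2^{-k})\herdisc(\mat)$, as required.

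The only delicate point is the boundary/limit issue. We cannot simply take $k$ large and absorb the truncation error, because the target bound is exactly $\herdisc(\mat)$ with no slack. The $(1-2^{-k})$ factor provides slack of $2^{-k}\herdisc(\mat)$, and the truncation error is $2^{-k}\|\mat\|_{\infty\to\infty}$. Both decay at the same rate, so the direct sum gives $\herdisc(\mat) + 2^{-k}(\|\mat\|_{\infty\to\infty}-\herdisc(\mat))$. This is why we use the finiteness/compactness argument: pass to a $z$ that recurs for infinitely many $k$ and take the limit. (If $\herdisc(\mat) = 0$, the argument still works trivially, since then every column is zero.) I expect this limiting step to be the main thing to state carefully. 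The bit-rounding induction itself is routine.
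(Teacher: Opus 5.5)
Your proposal is correct and is essentially the paper's proof: the same Lov\'asz--Spencer--Vesztergombi bit-by-bit rounding, which gives error $(1-2^{-k})\herdisc(\mat)$ for vectors whose coordinates are $k$-bit dyadic rationals. The only difference is that you spell out the limiting step that the paper dismisses as ``standard compactness''. Your pigeonhole argument over the finite set $\{0,1\}^n$ is valid. It is equivalent to observing that the set of $y$ admitting a good $z$ is a finite union of the closed sets $\{y:\|\mat(z-y)\|_\infty\le\herdisc(\mat)\}$, hence closed, and that it contains the dense set of dyadic vectors.
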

\begin{proof}
  Let us assume that there is some finite $k$ such that, for each $j \in [n]$, $y(j) \in 2^{-k} \mathbb{Z}$, \ie, that each coordinate $y(j)$
  can be written using at most $k$ bits after the radix point. By standard compactness arguments, it is
  enough to show that the theorem holds under this assumption. Suppose that, for every $j$,
  $b_k(j)$ is the $k$-th bit after the radix point of $y(j)$, and $y'(j)$ is the real number given by the first $k-1$ bits after the radix point. Then $y(j) = y'(j) + 2^{-k} b_k(j)$. Let $J = \{j: b_k(j) = 1\}$, and find a
  coloring $\col\in \{-1,+1\}^J$ which achieves
  \[
  \|\mat(*,J) \col\|_\infty = \disc(\mat(*,J)) \le \herdisc(\mat).
  \]
  We set, for all $j \in J$,
  \[
    y''(j) \eqdef  y(j) + \col(j)2^{-k} = y'(j) + 2^{-k} + \col(j)2^{-k},
  \]
  and $y''(j) \eqdef y'(j) = y(j)$ for $j \not \in J$. I.e., for every coordinate $y(j)$ whose $k$-th bit after the radix point is $1$, we round that bit up if $x(j) = +1$, or down otherwise.
  Clearly each 
  \(
  y''(j)
  \)
  can be written with at most $k-1$ bits after the radix point,
  and
  \[
  \|\mat (y''-y)\|_\infty = 2^{-k}\|\mat(*,J)\col\|_\infty\le 2^{-k}\herdisc(\mat).
  \]
  We then replace $y$ with $y''$ and $k$ with $k-1$, and inductively apply the same bit-rounding process until $k = 0$, at which point we have our vector $z \in \{0,1\}^n$. Summing the errors from rounding each bit, we get 
  \[
  \|\mat (z-y)\|_\infty \le (2^{-1} + 2^{-2} + \ldots + 2^{-k})\herdisc(\mat) \le \herdisc(\mat). \qedhere
  \]
\end{proof}

Theorem~\ref{thm:ch1-transference} can be used to derive a result similar to Theorem~\ref{thm:ch1-rectangles}. We let $\mat$ be the incidence matrix of $\RR_2|_P$, and $y = \frac{s}{n}\ones$. Applying Theorem~\ref{thm:ch1-transference} and letting $Q\eqdef \{p \in P:z(p) = 1\}$, we have that, for any rectangle $R \in \RR_2$,
\[
\left||Q\cap R| - \frac{n}{s}|P\cap R|\right| \le \herdisc(\RR_2|_P) \le \disc(\RR_2,n) \lesssim \log^{1.5}(2n).
\]
Moreover, since some rectangle contains all of $P$, we also have $||Q| - s| \lesssim \log^{1.5}(2n)$, so up to a constant factor increase in the bound above, we can assume that $|Q| = s$. Thus, dividing the inequality above by $s$ gives us that $Q$ is an $(\log^{1.5}n)/s$-approximation. Unfortunately, this is not quite enough to get the bound in Theorem~\ref{thm:ch1-rectangles} or any bound that depends entirely on $\varepsilon$. We can address this with a slight strengthening of Theorem~\ref{thm:ch1-transference}. Let us define a parametrized version of hereditary discrepancy by 
\[
\herdisc(\mat,s) \eqdef \max_{J\subseteq [n]: |J| \le s}\disc(\mat(*,J)).
\]
We then have the stronger rounding result stated next, which follows by essentially the same proof.
\begin{theorem}\label{thm:ch1-transference-sizes}
For any $m\times n$ matrix $\mat$, and any $y \in [0,1]^n$, there exists some $z \in \{0,1\}^n$ such that 
\[
    \left\|\mat(z-y)\right\|_\infty
    \le
    \sum_{k = 1}^\infty 2^{-k}\herdisc\left(\mat, 2^{k}\sum_{j = 1}^n y_j\right).
\]
\end{theorem}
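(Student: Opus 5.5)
The plan is to rerun the bit-by-bit rounding from the proof of Theorem~\ref{thm:ch1-transference}, tracking the size of the set $J$ of columns that gets colored at each stage. The bound $|J| \le 2^k \sum_j y_j$ at the stage that rounds the $k$-th bit is exactly what turns $\herdisc(\mat)$ into $\herdisc(\mat, 2^k\sum_j y_j)$. Throughout I use that $\herdisc(\mat,s)$ is nondecreasing in $s$, which is immediate from the definition.

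\emph{Reduction to dyadic vectors.} First I reduce to the case $y \in 2^{-K}\mathbb{Z}^n$ for some finite $K$. For a general $y$, let $y^{(K)}$ be the coordinatewise truncation of $y$ to $K$ bits after the radix point. Then $y^{(K)} \le y$ coordinatewise, so $\sum_j y^{(K)}_j \le \sum_j y_j$, and by monotonicity the right-hand side for $y^{(K)}$ is at most the right-hand side for $y$. Moreover $\|\mat(y-y^{(K)})\|_\infty \to 0$ as $K\to\infty$. Since $\{0,1\}^n$ is finite, some $z$ works for infinitely many $K$, and letting $K\to\infty$ gives the claim for $y$.

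\emph{The rounding step and the sign choice.} For dyadic $y$ with $K$ bits, I round bits $k = K, K-1, \dots, 1$ exactly as in the earlier proof. At the stage for bit $k$, the current vector $y^{\mathrm{cur}}$ lies in $[0,1]^n \cap 2^{-k}\mathbb{Z}^n$. Let $J$ be the set of coordinates whose $k$-th bit is $1$. Choose $\col\in\{-1,+1\}^J$ with $\|\mat(*,J)\col\|_\infty = \disc(\mat(*,J))$, and update $y^{\mathrm{cur}}(j) \leftarrow y^{\mathrm{cur}}(j) + 2^{-k}\col(j)$ for $j\in J$. The one new ingredient is that, since $-\col$ has the same discrepancy as $\col$, we may flip the sign to ensure $\sum_{j\in J}\col(j) \le 0$. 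With this choice, $\sum_j y^{\mathrm{cur}}_j$ never increases over the course of the process, so it stays at most $\sum_j y_j$. Coordinates also remain in $[0,1]$, since a coordinate with $k$-th bit $1$ lies in $[2^{-k}, 1-2^{-k}]$ before the update.

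\emph{Size bound and summing the errors.} This monotonicity is the key step. Each $j\in J$ has $y^{\mathrm{cur}}(j) \ge 2^{-k}$, so
\[
|J| \le 2^k \sum_j y^{\mathrm{cur}}_j \le 2^k \sum_j y_j .
\]
Hence the error incurred at stage $k$ is
\[
2^{-k}\disc(\mat(*,J)) \le 2^{-k}\herdisc\Big(\mat, 2^k\sum_j y_j\Big).
\]
Summing over $k = 1,\dots,K$ and bounding by the infinite series gives the stated inequality for the final $z\in\{0,1\}^n$. I do not expect a genuine obstacle. The only point needing care is the sign choice that keeps the running sum from growing: without it, $|J|$ at later stages could exceed $2^k\sum_j y_j$, and the parameter inside $\herdisc$ would not be controlled.
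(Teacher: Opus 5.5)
Your proof is correct and follows the paper's argument: rerun the bit-by-bit rounding of Theorem~\ref{thm:ch1-transference} and bound the stage-$k$ set by $|J|\le 2^k\sum_j y_j$. You also flip signs to keep $\sum_j y^{\mathrm{cur}}_j$ nonincreasing, the trick the paper only uses afterwards in Corollary~\ref{cor:ch1-transference}; this is what actually justifies the size bound once carries from earlier stages have changed the $k$-th bits, a point the paper's one-line proof passes over.
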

\begin{proof}
    We construct $z$ in the same way as in Theorem~\ref{thm:ch1-transference}, and simply observe that the set $J = \{j: b_k(j) = 1\}$ of coordinates of $y$ with $k$-th bit after the radix point equal to $1$ is of size at most $2^k\sum_{j = 1}^n y_j$. 
\end{proof}
Theorem~\ref{thm:ch1-transference-sizes} now allows us to recover Theorem~\ref{thm:ch1-rectangles} since $\herdisc(\RR_2|_P,s) \le \disc(\RR_2,s) \lesssim \log^{1.5}(2s)$. 

Moreover, as with Theorem~\ref{thm:ch1-rectangles}, the roundings in Theorems~\ref{thm:ch1-transference}~and~\ref{thm:ch1-transference-sizes} can found efficiently as long as we can find colorings with discrepancy bounded by the hereditary discrepancy. We return to this question in Chapter~\ref{ch:partial-alg}. 

\section{Vector Balancing and Approximate Carath\'eodory}

While the infinity norm is natural for the discrepancy of set systems, and for measuring how much a rounded solution to a system of linear equations violates the equations, in other applications of matrix discrepancy we may need to consider other norms. The definitions we made above can be generalized easily to such a setting. Suppose $\|\cdot\|_X$ is some norm on $\R^m$, and let $X$ be the corresponding normed space. Then, the discrepancy of a coloring $\col \in \{-1,+1\}^n$ with respect to an $m\times n$ matrix $\mat$ and $X$ is $\disc_X(\mat,\col) \eqdef \|\mat \col\|_X$. The discrepancy of $\mat$ with respect to $X$ is $\disc_X(\mat) \eqdef \min_{\col \in \{-1,+1\}^n} \|\mat\col\|_X$. Hereditary discrepancy is again defined by taking a maximum over submatrices:
\begin{align*}
    \herdisc_X(\mat,s) &\eqdef \max_{J\subseteq [n]: |J| \le s}\disc_X(\mat(*,J)),\\
    \herdisc_X(\mat) &\eqdef \max_{J\subseteq [n]}\disc_X(\mat(*,J)).
\end{align*}
It is easy to check that, since the proofs only use the triangle inequality and homogeneity properties of the $\ell_\infty$ norm, Theorems~\ref{thm:ch1-transference}~and~\ref{thm:ch1-transference-sizes} hold under any norm. Next we consider the $\ell_2$ norm, and give another geometric application of these rounding theorems.

\paragraph{Approximate Carath\'eodory Theorem.} Let $u_0, u_1, \ldots,
u_n$ be vectors in  $\R^m$, and let $v$ be a point in their convex hull. Carath\'eodory's theorem shows that there exists a subset $S
\subseteq \{0\} \cup [n]$ of cardinality at most $m+1$ so that $v$ is
also in the convex hull of $\{u_i: i \in S\}$. This is generally
tight, for example when $u_0 = 0$, $u_i$ is the $i$-th standard basis
vector of $\R^m$, and $v$ is the vector with every coordinate equal to
$\frac{1}{m+1}$. However, if we only ask $v$ to be \emph{close} to the
convex hull of $\{u_i: i \in S\}$, then perhaps we can hope to improve the
bound on the size of $S$, and, moreover, we can hope for bounds
that are independent of the dimension $m$.

Once we allow for approximation, we need to fix a way to measure distances, and a scale. We will, for now, pick the Euclidean, i.e., $\ell_2$ distance. As for fixing a scale, a
natural way to do so is to ask that the diameter
\(
\max_{i = 0}^n\max_{j = 0}^n\|u_i - u_j\|_2
\)
be bounded by $1$. 
Since everything in our set up is invariant under translation, we can
assume that $u_0 = 0$, and, by the diameter condition, $\|u_i\|_2 \le
1$ for all $i \in[n]$.

We will use discrepancy to approximate $v$ by the average of $s$ vectors, for a parameter $s$, so that the approximation error scales like $\min\left\{\frac{\sqrt{m}}{s}, \frac{1}{\sqrt{s}}\right\}$.
To this end, let us explicitly write $v$ as a convex combination of $u_0, \ldots u_n$ as
\begin{align*}
    v = \alpha_0 u_0 + \alpha_1 u_1 + \ldots + \alpha_n u_n
    = \alpha_1 u_1 + \ldots + \alpha_n u_n
\end{align*}
for some $\alpha_1, \ldots, \alpha_n \ge 0$ such that $\sum_{i=1}^n \alpha_i \le 1$. By Carath\'eodory's theorem we can assume that $n \le m$. Inspired by the proof of Theorem~\ref{thm:ch1-rectangles} via Theorem~\ref{thm:ch1-transference}, let us define $y \in \R^n$ by $y(i) \eqdef s \alpha_i$. Take $z \in \{0,1\}^n$ to be the rounding of $y$ guaranteed by Theorem~\ref{thm:ch1-transference} when used with the $\ell_2$ norm and the matrix $\mat$ with columns $u_1, \ldots, u_n$. Let $S \eqdef \{i: z(i) = 1\}$.\footnote{Technically, we use Theorem~\ref{thm:ch1-transference} to round the vector with coordinates $y(i) - \lfloor y(i)\rfloor$, and we first add $\lfloor y(i) \rfloor$ copies of $i$ to $S$ before adding also those $i$ for which $z(i) = 1$.} We then have that
\begin{equation}\label{eq:ch1-apxcar-rounding}
   \left\|sv - \sum_{i\in S} u_i\right\|_2 \le \herdisc_{\ell_2}(\mat). 
\end{equation}
Moreover,  for a uniformly random $\col \in \{-1,+1\}^n$, and a matrix $\mat$ with $n$ columns each of $\ell_2$ norm at most $1$, $\E \|\mat\col\|_2^2 \le n$. Therefore, $\disc_{\ell_2}(\mat) \le \sqrt{n}$, and, since this reasoning applies to any submatrix as well, $\herdisc_{\ell_2}(\mat) \le \sqrt{n}$. We have 
$
\left\|sv - \sum_{i\in S} u_i\right\|_2 \le \sqrt{n}\le \sqrt{m}.$

If we could guarantee that $|S| = s$, then dividing both sides of the inequality by $s$ would guarantee that 
\[
\left\|v - \frac{1}{|S|}\sum_{i\in S} u_i\right\|_2 \le \frac{\sqrt{m}}{s}.
\]
Unfortunately, we cannot automatically deduce that $|S|$ is close to $s$ from~\eqref{eq:ch1-apxcar-rounding}. Nevertheless, there is a simple workaround, given in the following corollary of Theorems~\ref{thm:ch1-transference}~and~\ref{thm:ch1-transference-sizes}.
\begin{corollary}\label{cor:ch1-transference}
    In both Theorem~\ref{thm:ch1-transference} and Theorem~\ref{thm:ch1-transference-sizes}, we can assume that $\sum_{j=1}^n z(j) \le \sum_{j=1}^n y(j)$.
\end{corollary}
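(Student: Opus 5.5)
The plan is to reuse the bit-by-bit rounding from the proof of Theorem~\ref{thm:ch1-transference} unchanged, exploiting one freedom in it: the choice of sign of the coloring. For any norm, $\|\mat(*,J)(-\col)\|_X = \|\mat(*,J)\col\|_X$. So whenever a coloring $\col \in \{-1,+1\}^J$ achieves $\disc(\mat(*,J))$, so does $-\col$. At each step we therefore choose, between $\col$ and $-\col$, the one with $\sum_{j\in J}\col(j) \le 0$. This choice does not change the error bound at that step, namely $2^{-k}\disc(\mat(*,J))$. The bound on $|J|$ used in Theorem~\ref{thm:ch1-transference-sizes} is also unaffected, as we check below.

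With this choice, the step that replaces $y$ by $y''$ satisfies
\[
\sum_j y''(j) - \sum_j y(j) = 2^{-k}\sum_{j\in J}\col(j) \le 0 .
\]
By induction over the $k$ rounding steps, the coordinate sum never increases. Hence the final $z\in\{0,1\}^n$ satisfies $\sum_j z(j) \le \sum_j y(j)$.

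For Theorem~\ref{thm:ch1-transference-sizes}, the bound on $|J|$ at bit level $k$ must still refer to the original $y$. It does: because the sum never increases, every intermediate vector $\tilde y$ at level $k$ has all its coordinates in $2^{-k}\mathbb{Z}$. Each coordinate whose $k$-th bit is $1$ contributes at least $2^{-k}$ to the sum, so
\[
|J| \le 2^k\sum_j \tilde y(j) \le 2^k \sum_j y(j),
\]
which is exactly what that proof needs.

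The only point needing care, and the main obstacle, is the reduction to $y$ with finitely many bits. If $y$ is not dyadic, I would approximate it by truncation: let $y_k(j) \eqdef 2^{-k}\lfloor 2^k y(j)\rfloor$. This gives $y_k \in [0,1]^n \cap 2^{-k}\mathbb{Z}^n$, $\sum_j y_k(j)\le \sum_j y(j)$, and $y_k\to y$. Applying the dyadic case to each $y_k$ yields some $z_k\in\{0,1\}^n$ with $\sum_j z_k(j)\le \sum_j y_k(j) \le \sum_j y(j)$ and the corresponding error bound for $y_k$.

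Since $\{0,1\}^n$ is finite, some fixed $z$ occurs as $z_k$ for infinitely many $k$. Letting $k\to\infty$ along those indices:
\begin{itemize}
\item for Theorem~\ref{thm:ch1-transference}, the bound $\|\mat(z-y_k)\|_X \le \herdisc_X(\mat)$ passes to the limit by continuity of the norm;
\item for Theorem~\ref{thm:ch1-transference-sizes}, the right-hand side for $y_k$ is at most that for $y$, because $\herdisc(\mat,\cdot)$ is monotone in its size argument and $\sum_j y_k(j)\le\sum_j y(j)$.
\end{itemize}
The constraint $\sum_j z(j)\le \sum_j y(j)$ holds for this $z$ by construction, which completes the argument.
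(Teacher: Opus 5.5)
Your proof is correct and follows the paper's approach: at each bit level, replace the coloring by its negation if necessary so that $\sum_{j\in J}\col(j)\le 0$, which leaves the discrepancy unchanged and makes the coordinate sum non-increasing. Your extra checks are valid details the paper leaves implicit: first, that the bound $|J|\le 2^k\sum_j y(j)$ in Theorem~\ref{thm:ch1-transference-sizes} holds for the intermediate vectors because their sums never increase; second, that truncating to $2^{-k}\lfloor 2^k y\rfloor$ lets the compactness step preserve both the sum constraint and the size-dependent error bound.
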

\begin{proof}
    Follows from the same proof we gave for the theorems, after noticing that we can always take the coloring $\col \in \{-1,+1\}^J$ to be such that $\sum_{j \in J}\col(j) \le 0$ since if this is not the case, we can just flip the colors without changing the discrepancy.
\end{proof}

We can now show the following result.
\begin{theorem}\label{thm:ch1-apxcarath-L2}
    Suppose that $u_0, \ldots, u_n$ are vectors in $\R^m$, and their diameter is at most $1$. For any vector $v$ in their convex hull, and any positive integer $s$, there exists a (multi-)set $S \subseteq \{0\} \cup [n]$ of size $s$ such that 
    \[
    \left\|v - \frac{1}{s} \sum_{i \in S} u_i\right\|_2 \lesssim \min\left\{\frac{\sqrt{m}}{s}, \frac{1}{\sqrt{s}}\right\}.
    \]
\end{theorem}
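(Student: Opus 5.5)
We follow the set-up already described before the statement. Translate so that $u_0 = 0$; then the diameter condition gives $\|u_i\|_2 \le 1$ for all $i \in [n]$. By Carath\'eodory's theorem we may assume $n \le m$. Write $v = \sum_{i=1}^n \alpha_i u_i$ with $\alpha_i \ge 0$ and $\sum_i \alpha_i \le 1$, and set $y(i) \eqdef s\alpha_i$. We put $\lfloor y(i)\rfloor$ copies of $i$ into $S$. We then round the fractional parts $y(i) - \lfloor y(i)\rfloor$ using Theorem~\ref{thm:ch1-transference-sizes} in the $\ell_2$ norm, with the matrix $\mat$ whose columns are $u_1,\dots,u_n$, together with Corollary~\ref{cor:ch1-transference}. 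This gives a multiset $S' \subseteq [n]$ with $|S'| \le \sum_i y(i) \le s$ and
\[
\Big\|sv - \sum_{i\in S'} u_i\Big\|_2 \le \sum_{k\ge1} 2^{-k}\herdisc_{\ell_2}\big(\mat, 2^k s\big).
\]
Finally we pad $S'$ with $s - |S'|$ copies of the index $0$ to get $S$ with $|S| = s$. Since $u_0 = 0$, the padding does not change the sum, so dividing by $s$ gives $\|v - \frac1s\sum_{i\in S} u_i\|_2 \le E/s$, where $E$ is the error bound above.

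The key ingredient is a bound on the hereditary discrepancy. For any $J$ with $|J| \le t$, a uniformly random coloring satisfies $\E\|\mat(*,J)\col\|_2^2 = \sum_{j\in J}\|u_j\|_2^2 \le \min\{t,n\}$. Hence
\[
\herdisc_{\ell_2}(\mat,t) \le \sqrt{\min\{t,n\}} \le \min\{\sqrt{t},\sqrt{m}\}.
\]
We plug this into the error sum and bound it in two ways.
\begin{itemize}
\item Using the $\sqrt m$ bound for every term, the error is at most $\sum_k 2^{-k}\sqrt m = \sqrt m$.
\item Using the $\sqrt t$ bound, the error is at most $\sum_k 2^{-k}\sqrt{2^k s} = \sqrt s\sum_k 2^{-k/2} \lesssim \sqrt s$.
\end{itemize}
Dividing by $s$ gives $\min\{\sqrt m/s,\, 1/\sqrt s\}$ up to a constant.

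The step that needs the most care is making sure the size-sensitive bound applies to the fractional rounding. In Theorem~\ref{thm:ch1-transference-sizes} the parameter is $2^k\sum_j y_j$, and it must be applied to the fractional parts, whose sum is at most $s$; monotonicity of $\herdisc(\mat,\cdot)$ handles this. The other subtle point is getting $|S|$ exactly $s$ rather than approximately $s$. This is why Corollary~\ref{cor:ch1-transference} is needed: it ensures $|S'| \le s$, so the slack can be filled with the zero vector $u_0$ at no cost. This requires the rounded count of the fractional part plus the floors to be at most $\sum_i y(i)$. That holds because the corollary gives $\sum_i z(i) \le \sum_i \big(y(i) - \lfloor y(i)\rfloor\big)$.
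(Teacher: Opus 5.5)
Your proposal is correct and follows the paper's proof. You translate so that $u_0=0$, reduce to $n\le m$, strip off the integer parts, and round the fractional parts via Theorem~\ref{thm:ch1-transference-sizes} with Corollary~\ref{cor:ch1-transference} to get $|S'|\le s$; then you pad with copies of $u_0$ and bound $\herdisc_{\ell_2}(\mat,t)\le\sqrt{\min\{t,n\}}$ by a random coloring. The only cosmetic difference is that you read both the $\sqrt{m}$ and $\sqrt{s}$ bounds off the single size-sensitive error sum, whereas the paper cites \eqref{eq:ch1-apxcar-rounding} for the former; since the bit-rounding construction is identical, this is the same argument.
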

\begin{proof}
    As above, we may assume $n \le m$, shift the vectors so that $u_0 = 0$, and write $v = \alpha_1 u_1 + \ldots + \alpha_n u_n$ for non-negative $\alpha_1, \ldots, \alpha_n$ that sum to at most $1$. Define $y \in \R^n$ by $y(i) \eqdef s \alpha_i$. If some $y(i)$ is larger than $1$, then we add $\lfloor y(i)\rfloor$ copies of $i$ to $S$, and replace $y(i)$ by $y(i) - \lfloor y(i)\rfloor$. We can now apply Corollary~\ref{cor:ch1-transference} to $y$ to get a vector $z \in \{0,1\}^n$, and add all $i$ such that $z(i) = 1$ to $S$. By the guarantee of the corollary, we have that $|S| \le s\sum_{i=1}^n\alpha_i = s$. To make sure $S$ has size exactly $s$, we add sufficiently many copies of $0$ to $S$. 
    
    Recall the observation above that $\disc_{\ell_2}(\mat,\col) \le \sqrt{n}$ for an $n$-column matrix $\mat$ with columns whose $\ell_2$ norms are at most $1$. Therefore $\herdisc(\mat,t) \le \sqrt{t}$ for such a matrix, and the theorem follows from the inequality \eqref{eq:ch1-apxcar-rounding} and the more precise bound
    \[
    \left\|sv - \sum_{i\in S} u_i\right\|_2 \le \sum_{k=1}^\infty 2^{-k}\herdisc_{\ell_2}(\mat,2^ks)
    \lesssim \sqrt{s}\sum_{k=1}^\infty 2^{-k/2}\lesssim \sqrt{s}.
    \qedhere \]
\end{proof}

The key quantity in the proof of Theorem~\ref{thm:ch1-apxcarath-L2} is a vector balancing constant. Suppose $K$ is some subset of $\R^m$, and $\|\cdot\|_X$ is, as above, a norm on $\R^m$. The vector balancing constants for $K$ and $X$ are defined by
\begin{align*}
  \vb(K,X,n) &\eqdef \sup\{\disc_X((u_i)_{i=1}^n): u_1, \ldots, u_n  \in K\},\\
  \vb(K,X) &\eqdef \sup_n \vb(K,X,n). 
\end{align*}
In our proof of Theorem~\ref{thm:ch1-apxcarath-L2} we used the simple observation that $\vb(B_{\ell_2},\ell_2,n) \le\sqrt{n}$, where $B_{\ell_2} \eqdef \{u: \|u\|_2 \le 1\}$ is the unit ball. The same proof shows that if the vectors $u_0, \ldots, u_n$ have diameter bounded by $1$ in the norm $\|\cdot\|_X$, then we can guarantee
\[
    \left\|v - \frac{1}{|S|} \sum_{i \in S} u_i\right\|_X \le
    \frac{1}{s} \sum_{k = 1}^\infty 2^{-k}\vb(B_X, X, 2^{k}s),
\]
where $B_X \eqdef \{u \in \R^m: \|u\|_X \le 1\}$.

There is also no reason why
the diameter of $u_0, \ldots u_n$ has to be bounded in the same norm in
which we measure the approximation. For example, in the next chapter,
we will see that $\vb(B_{\ell_1},\ell_\infty) \le 2$ (this is a
result of Beck and Fiala~\cite{BF81}). Then, using analogous reasoning to the above, we have that whenever $u_0, \ldots,
u_n$ have $\ell_1$ diameter at most $1$, for any point $v$ in their
convex hull we can find an average $v'$ of $s$ vectors from
$u_0, \ldots, u_n$ so that $\|v - v'\|_\infty \le
\frac{2}{s}$. Remarkably, it is an open problem whether this 
$\|v - v'\|_\infty \lesssim \frac{1}{s}$
error bound can be achieved under the weaker assumption that $u_0, \ldots,
u_n$ have Euclidean diameter at most $1$. Such an improvement would
follow from a positive solution to the following problem, due to
Koml\'os. 
\begin{problem}\label{prob:komlos}
  Prove or disprove the bound $\vb(B_{\ell_2^m},\ell_\infty^m)
  \lesssim 1$.
\end{problem}

Later in the book, we will see partial progress towards
Problem~\ref{prob:komlos}, and, in particular, a bound of
$O(\sqrt{\log m})$ due to Banaszczyk~\cite{Bana98}. We will see both
Banaszczyk's result in Chapters \ref{ch:bana-intro} and \ref{ch:bana-proof}, and also two different proofs in Chapter \ref{ch:algo-komlos-bana} and \ref{ch:gram-schmidt} that gives an efficient
algorithm to construct the coloring achieving this bound.\footnote{Very recently, Bansal and Jiang \cite{bansaljiang2025} have obtained an $O(\log^{1/4} m)$ discrepancy bound for the Koml\'{o}s problem, building on the techniques we will see in Chapter \ref{ch:algo-komlos-bana}. However, we will not discuss their result in this book.}

\section{Prefix Discrepancy and the Steinitz Problem}
\label{sec:ch1-steinitz}

Suppose an $m\times n$ matrix $\mat$ has columns $u_1, \ldots, u_n \in \R^m$. A useful strengthening of matrix discrepancy is prefix discrepancy, which requires that the same coloring simultaneously balances each prefix of columns $u_1, \ldots, u_i$. Formally, we define the \df{prefix discrepancy} of a matrix $\mat$ with columns $u_1, \ldots, u_n$, and a coloring $\col \in \{-1,+1\}^n$   as 
\[
\pdisc(\mat, \col) \eqdef
\max_{j=1}^n \left\|\sum_{i = 1}^j\col(i)u_i \right\|_\infty.
\]
The prefix discrepancy of $\mat$ is the minimum of $\pdisc(\mat, \col)$ over all colorings $\col \in \{-1,+1\}^n$. Similarly to matrix discrepancy, we can also define a variant $\pdisc_X(\mat)$ in which the $\ell_\infty$ norm is replaced by some other norm $\|\cdot\|_X$ on $\R^m$.

This is a natural generalization of discrepancy when we decide on the colors $\col(1), \ldots, \col(n)$ one at a time, e.g., because the column vectors $u_1, \ldots, u_n$ are specified online. We will also see that prefix discrepancy can be used to prove bounds on standard discrepancy in problems with some implicit ordering: this will be the approach we take for Problem~\ref{prob:tusnady} in Chapter~\ref{ch:gamma2}. Here we will see another application to a classical geometric problem. 

\paragraph{The Steinitz Problem.} Suppose we have vectors $u_1, \ldots, u_n$ in $\R^m$, so that each vector is ``short'',
\ie, $u_i \in [-1,+1]^m$ for all $i$. If
$u_1 + \ldots + u_n = 0$, then the partial sums $u_1 + \ldots + u_j$
trace a path in space from the origin back to itself. At intermediate
steps, the path can be arbitrarily far from $0$, but it always takes
short steps. Can we re-arrange the vectors, so that the path always
stays close to $0$, and, in particular, the norm
\( \left\|\sum_{i = 1}^j u_i\right\|_\infty\) of each partial sum is
always bounded by a function depending only on the dimension $m$? See
Figure~\ref{fig:ch1-steinitz} for an illustration.

\begin{figure}[htp]
  \centering
  \includegraphics{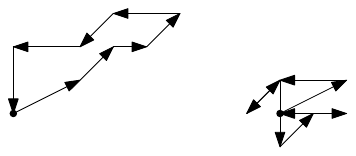}
  \caption[Rearrangement of a path]{A path of short steps returning to
    the origin, and a rearrangement of the steps so that the path
    stays much closer to the origin.}
  \label{fig:ch1-steinitz}
\end{figure}

This question was first asked (and answered affirmatively) by
Steinitz~\cite{Steinitz1913}. Note that the requirement that the vectors
sum to $0$ is not essential. In general, if $v = u_1 + \cdots + u_n$
is the sum of the vectors; then we can make sure that the path
traced by the partial sums stays close to the line from $0$ to $v$,
\ie, that
\(
\left\|\frac{j}{n}v - \sum_{i = 1}^j u_i\right\|_\infty  
\)
is as small as possible for all $j \in [n]$. This question is, however, essentially equivalent to the version where the vectors sum to $0$, which we can see by replacing each $u_i$ with $u_i - \frac{1}{n}v$. 

The Steinitz problem arises
naturally in many contexts, e.g.~in designing algorithms for integer
programming~\cite{EisenbrandW20}, and approximation algorithms for
scheduling problems~\cite{Sev78}. Let us use the notation $\st((u_i)_{i=1}^n)$ for how close we can keep the path to the line from $0$ to $v$, i.e., 
\[
\st((u_i)_{i=1}^n) \eqdef \min_{\pi \in S_n} \max_{j=1}^n \Bigg\|\frac{j}{n} v - \sum_{i=1}^j u_{\pi(i)}\Bigg\|_\infty,
\]
where $S_n$ is the set of permutations of $[n]$, and $v \eqdef \sum_{i = 1}^n u_i$. Once again, this question also makes sense for any norm $\|\cdot\|_X$ in place of the $\ell_\infty$ norm, and we denote the corresponding quantity by $\st_X((u_i)_{i=1}^n)$.

Prefix discrepancy is a very useful tool in bounding $\st((u_i)_{i=1}^n)$. From now on, let us return to the assumption $u_1 + \ldots + u_n = 0$. If some prefix sums are too large, then prefix discrepancy allows us to rearrange the vectors so that the prefix sum norms improve. The rearrangement is simple: we take a coloring $\col$, and place the vectors colored $+1$ first, followed by the vectors colored $-1$ in reverse order. For some intuition, let us take some prefix sum $u_1 + \ldots + u_i$. After the rearrangement, this prefix gets split into a prefix in the first half of the rearranged sequence (the vectors among $u_1, \ldots u_i$ colored $+1$), and a suffix in the second  half (the vectors colored $-1$). If the prefix discrepancy is small, then the new prefix and suffix will have sums approximately equal to $\frac12 \sum_{j = 1}^i u_i$, which has smaller norm than $\sum_{j = 1}^i u_i$. Since the vectors add up to $0$, the norms of the suffix sums equal the norms of their complementary prefix sums, so we have improved the norms of all prefix sums by this rearrangement.

The main theorem and its proof follow.
\begin{theorem}\label{thm:ch1-chobanyan}
  For any integer $n \ge 1$, and any
  vectors $u_1 \ldots, u_n \in \R^m$ such that $u_1 + \ldots + u_n = 0$, 
  \begin{equation}\label{eq:ch1-chobanyan}
    \st((u_i)_{i=1}^n)\le
    \max_{\sigma \in S_n} \pdisc((u_{\sigma(i)})_{i = 1}^n).
  \end{equation}
  where $S_n$ is the set of permutations of $[n]$.
\end{theorem}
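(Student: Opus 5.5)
The plan is to show that, starting from any ordering, a single ``recoloring and rearranging'' step contracts the maximum prefix-sum norm toward the prefix discrepancy, and then to apply this step to an optimal ordering. Let $D \eqdef \max_{\sigma \in S_n}\pdisc((u_{\sigma(i)})_{i=1}^n)$. Since $v = u_1+\cdots+u_n = 0$, the quantity $\st((u_i)_{i=1}^n)$ is simply the minimum over orderings $\pi$ of $\max_j \|\sum_{i\le j} u_{\pi(i)}\|_\infty$. There are finitely many permutations, so we may fix an ordering $\pi$ attaining this minimum $M^* \eqdef \st((u_i)_{i=1}^n)$. After relabeling, assume $\pi$ is the identity.

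\textbf{Rearrangement step.} Write $s_i \eqdef u_1+\cdots+u_i$, so that $\|s_i\|_\infty \le M^*$ for all $i$ and $s_n = 0$. Take a coloring $\col$ with $\|t_i\|_\infty \le D$ for all $i$, where $t_i \eqdef \sum_{j\le i}\col(j)u_j$; such a coloring exists by the definition of $D$ applied to the identity permutation. Build a new ordering as follows: first list the vectors with $\col(j)=+1$ in increasing order of $j$, then the vectors with $\col(j)=-1$ in decreasing order of $j$.

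\textbf{Bounding the new prefix sums.} Let $p_i \eqdef \sum_{j\le i,\,\col(j)=+1} u_j$, and note that $p_i = (s_i+t_i)/2$. The prefix sums of the new ordering are of two kinds.
\begin{itemize}
\item In the first block, every prefix sum equals $p_i$ for some $i$, so its norm is at most $(M^*+D)/2$.
\item In the second block, a prefix sum consists of all $+1$ vectors together with the $-1$ vectors of index greater than some $i$. It therefore equals
\[
p_n + \bigl((s_n - s_i) - (p_n - p_i)\bigr) = p_i - s_i = \frac{t_i - s_i}{2},
\]
where we used $s_n = 0$. Its norm is again at most $(M^*+D)/2$.
\end{itemize}
This bookkeeping, and in particular getting the suffix identity right (which is exactly where $s_n=0$ enters), is the only real computation and the step I expect to need the most care.

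\textbf{Conclusion.} The new ordering has maximum prefix-sum norm at most $(M^*+D)/2$. By minimality of $M^*$ we get $M^* \le (M^*+D)/2$, that is, $M^* \le D$, which is \eqref{eq:ch1-chobanyan}. Alternatively, one can iterate the step from an arbitrary ordering, so that the excess $M_k - D$ at least halves at each round. Using the minimizer directly avoids any limiting argument.
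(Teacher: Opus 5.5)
Your proof is correct and follows essentially the same argument as the paper. You take an optimal ordering and a low prefix-discrepancy coloring for it, put the $+1$ vectors first in order and the $-1$ vectors after them in reverse order, and bound every new prefix sum by $(\st + D)/2$ using $s_n = 0$. The only cosmetic differences are that you conclude directly from minimality ($M^* \le (M^*+D)/2$) instead of by contradiction, and that you write out the second-block identity $p_i - s_i = (t_i - s_i)/2$, which the paper dismisses as ``a similar calculation.''
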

\begin{proof}
  Let $\pi$ be the permutation of $[n]$ that minimizes $\max_{j = 1}^n\left\|\sum_{i = 1}^j u_{\pi(i)} \right\|_\infty$, i.e., achieves $\st((u_i)_{i=1}^n)$. Let
  $\col:[n]\to\{-1,+1\}$ achieve $\pdisc((u_{\pi(i)})_{i = 1}^n)$. We
  will show that if \eqref{eq:ch1-chobanyan} does \emph{not} hold, then we can choose
  another permutation $\pi'$ for which 
  \begin{equation}\label{eq:chobanyan-step}
  \max_{j = 1}^n\left\|\sum_{i = 1}^j u_{\pi'(i)} \right\|_\infty
  < 
  \max_{j = 1}^n\left\|\sum_{i = 1}^j u_{\pi(i)} \right\|_\infty = \st((u_i)_{i=1}^n),
  \end{equation}
  which contradicts the definition of $\st((u_i)_{i=1}^n)$. We choose $\pi'$ using the
  swapping operation alluded to above. We set $S = \{i: \col(i) = +1\}$, we place
  the vectors $u_i$ indexed by $i \in S$ first, ordered according to
  $\pi$, followed by the vectors indexed by $i \not \in S$, ordered in
  reverse order of $\pi$. Using $1_S(\cdot)$ for the indicator function of $S$, we have
  \begin{align*}
    \max_{j =1}^{|S|} \left\|\sum_{i = 1}^j u_{\pi'(i)} \right\|_\infty
    & =
    \max_{j=1}^n \left\|\sum_{i = 1}^j \indic_S(i)u_{\pi(i)}  \right\|_\infty =
      \max_{j=1}^n \left\|\sum_{i = 1}^j \frac{1 + \col(i)}{2} u_{\pi(i)}\right\|_\infty\\
    &\le
      \frac12\max_{j=1}^n \left\|\sum_{i = 1}^j u_{\pi(i)}\right\|_\infty
      +\frac12\max_{j=1}^n \left\|\sum_{i = 1}^j x(i)u_{\pi(i)}\right\|_\infty\\
    &\le 
      \frac12\st((u_i)_{i=1}^n)+ \frac12\pdisc((u_{\pi(i)})_{i = 1}^n).
  \end{align*}
  A similar calculation shows that
  \[
    \max_{j =|S| + 1}^n \left\|\sum_{i = 1}^j u_{\pi'(i)} \right\|_\infty
    \le
    \frac12\max_{j=1}^n \st((u_i)_{i=1}^n)   
    + \frac12\pdisc((u_{\pi(i)})_{i = 1}^n).  
  \]
  Thus, if \(\pdisc((u_{\pi(i)})_{i = 1}^n) < \st((u_i)_{i=1}^n)\), then \eqref{eq:chobanyan-step} holds. 
\end{proof}


Theorem~\ref{thm:ch1-chobanyan} is due to Chobanyan~\cite{Chobanyan94},
and is known as Chobanyan's transference theorem. The proof above only uses the triangle inequality for $\ell_\infty^m$, so the transference theorem in fact holds for $\st_X(\cdot)$ and $\pdisc_X(\cdot)$ for any normed space $X$. As with our other applications of discrepancy, it is not hard to see that if we can efficiently find the colorings achieving the prefix discrepancy bound, we can also efficiently find the re-arrangement in the theorem. 

In order to use Theorem~\ref{thm:ch1-chobanyan} to bound $\st((u_i)_{i=1}^n)$ for $u_1, \ldots, u_n \in [-1,1]^m$, we need to bound the partial coloring discrepancy $\pdisc(\mat)$ of any $m\times n$ matrix $\mat$ with entries in $[-1,1]$. A simple way to do so is to take a uniformly random coloring $\col:[n]\to \{-1,+1\}$.
Hoeffding's inequality and the union bound then give us 
\( \pdisc(M, \col) \lesssim \sqrt{n
  \log(m+n)}, \) with positive probability. This argument and Theorem~\ref{thm:ch1-chobanyan} then imply that this bound also holds for $\st((u_i)_{i=1}^n)$ whenever $u_1, \ldots, u_n \in [-1,1]^m$.
  
This is much less than the trivial bound $n$, unless $m$ is
exponential in $n$. But what about bounds that depend only on $m$? 
In
the next chapter, we will show that $\st((u_i)_{i=1}^n) \le 2m$ holds under the same assumptions, and, moreover, it holds for \emph{any} norm, and vectors $u_1, \ldots, u_n$ in its unit ball. Thus, the path of partial sums in the Steinitz
problem can be made to stay within a bounded distance from the origin, no matter how many steps we take! 

The next tantalizing problem is open.
\begin{problem}\label{prob:steinitz}
  Give tight bounds (in terms of $m$ and $n$) on $\st((u_i)_{i=1}^n)$ when $u_1, \ldots, u_n \in [-1,1]^m$, and on $\pdisc(M)$ for $m\times n$ matrices $M$ with entries in $[-1,1]$. 

  Similarly, give tight bounds (in terms of $m$ and $n$) on $\st_{\ell_2^m}((u_i)_{i=1}^n)$ when $u_1, \ldots, u_n \in B_{\ell_2^m} := \{x \in \R^m: \|x\|_2 \le 1\}$. Also give tight bounds on $\pdisc_{\ell_2^m}(\mat)$ for an $m\times n$ matrix $\mat$ with columns bounded by $1$ in $\ell_2$ norm.
\end{problem}

It is conjectured that the right bound in terms of $m$ for both problems is $O(\sqrt{m})$. However, the best known bound for both is $O(m)$,
which also holds for every $m$-dimensional
norm~\cite{Sev78,GS80,baranygrinberg}. Partial progress towards the
conjecture for $\ell_2^m$ was made by Banaszczyk~\cite{B12}, who
showed that, when $u_1, \ldots, u_m \in B_{\ell_2^m}$, 
\begin{align*}
\pdisc_{\ell_2^m}((u_i)_{i=1}^n) &\lesssim \sqrt{m} + \sqrt{\log n},\\
st_{\ell_2^m}((u_i)_{i=1}^n) &\lesssim \sqrt{m} +
\sqrt{\log n},
\end{align*}
where the second bound follows from the first via Theorem~\ref{thm:ch1-chobanyan}.

\section{Organization of the Book}

In this book we survey methods of proving upper bounds on the notions
of discrepancy introduced above: discrepancy of set systems and matrices, vector balancing in different normed spaces, prefix discrepancy. We emphasize
methods from high dimensional probability and convex geometry. We also
spend considerable attention on methods of bounding discrepancy that
also give efficient algorithms for finding a coloring. This
is an important consideration for applications of discrepancy theory
to computational problems.

There are several other book-length surveys of discrepancy theory. The monographs of Back and Chen~\cite{BeckChen-book}, and of Drmota and Tichy~\cite{DrmotaTichy97} are primarily about continuous discrepancy questions, and irregularities / uniformity of distribution. The books of Chazelle~\cite{chazelle2001discrepancy} and Matou\v{s}ek~\cite{matousek2010geometric} cover a mix of combinatorial discrepancy and continuous discrepancy, and the connection between the two. We differ from these books in our focus on combinatorial discrepancy, and our emphasis on geometric techniques and efficient algorithms. Most of the results and algorithms we describe in the later chapters of this book were discovered after Chazelle and Matou\v{s}ek's books were published. 

We begin with linear algebraic arguments for bounding discrepancy in
Chapter~\ref{ch:linalg}. These techniques are flexible, and we will
apply them to all problems in this introductory chapter, as well as
some new ones. They also give efficient algorithms. The bounds proved
with linear algebra, however, are often not tight. To address this, in
Chapter~\ref{ch:partial} we introduce Beck's partial coloring method,
and a more refined geometric variant of it due to Giannopoulos. 

While the classical
partial coloring method often (but not always!)  gives tight bounds on
discrepancy, its classic variant does not give efficient constructions
of low discrepancy colorings. In Chapter~\ref{ch:partial-alg}, we introduce two recent
algorithmic variants of the partial coloring method: one is based on
randomizing the linear algebraic method due to Lovett and Meka, and another is an algorithmic
version of Giannopoulos's partial coloring theorem, due to Rothvoss. We also explain how semidefinite programming can be used to efficiently construct colorings whose discrepancy is competitive against the hereditary discrepancy. 

Then, in
Chapter~\ref{ch:bana-intro}, we introduce a method of Banaszczyk to prove discrepancy upper bounds that goes beyond partial colorings. We
describe Banaszczyk's original proof and the powerful geometric ideas
it utilizes in Chapter~\ref{ch:bana-proof}. Banaszczyk's proof does not
give an efficient coloring algorithm, but we give another,
algorithmic, proof of his main result in
Chapter~\ref{ch:gram-schmidt} based on the so-called Gram-
Schmidt Walk. Before this, we also give a different algorithmic proof of the bound it implies for the Koml\'os conjecture (Problem~\ref{prob:komlos}) in Chapter~\ref{ch:algo-komlos-bana}, based on using semidefinite programming. 

In Chapter~\ref{ch:gamma2}, we turn to
the computational complexity of approximating discrepancy and
hereditary discrepancy, based on $\gamma_2$-norms from convex geometry. We show that an optimal application of
Banaszczyk's theorem allows approximating hereditary discrepancy of
arbitrary set systems and matrices up to polylogarithmic
factors. 

Finally, in Chapter~\ref{ch:online} we consider a model in
which the elements to be colored arrive online and must be colored as
they come. We describe the Self-Balancing Walk algorithm, which shows that almost optimal discrepancy upper bounds hold even in
this online setting.

\section{Notation}

For two quantities $A$ and $B$, that may depend on other parameters,
we use the notation $A\lesssim B$ to mean that $A \le CB$ for an
absolute constant $C > 0$, independent of all other quantities. We use
$A \gtrsim B$ to denote $B \lesssim A$. We also use standard $O(\cdot)$ and $\Omega(\cdot)$ notation.

We use the notation $[n] \eqdef \{1, \ldots, n\}$. We identify vectors
$u \in \R^n$ with functions $u:[n] \to \R$, and use $u(i)$ for the
$i$-th coordinate of $u$. Similarly, for a (finite) set $S$, we treat
$\R^S$ as both a vector space of real-valued vectors indexed by $S$,
and as the space of functions from $S$ to $\R$. For a set $T\subseteq
[n]$, and a vector $x \in \R^n$, we let $x(T)$ equal the sum $\sum_{i
  \in T}{x(i)}$, and similarly for $x \in \R^S$ and $T \subseteq S$.

We also extend this notation to matrices: the entry of an $m$ by $n$
real valued matrix $\mat$ in row $i$ and column $j$ is denoted
$\mat(i,j)$, \ie, we identify $\mat$ with a function from
$[m]\times [n]$ to $\R$. We use analogous notation when the rows and
columns of $\mat$ are indexed by sets. We let $\mat(S,T)$ be the
submatrix of $\mat$ indexed by the rows in $S$ and columns in
$T$. (Note that this is different from the meaning of $x(S)$ for a
vector $x$ above.) We use $\mat(*,S)$ instead of $\mat([m], S)$ and $\mat(S,*)$ instead of $\mat(S,[n])$. We also identify a sequence of vectors
$u_1, \ldots, u_n \in \R^m$ with the $m\times n$ matrix whose $i$-th
column is $u_i$.

The conventions above are also followed when $\R$ is replaced with
another set, e.g., $\{0,1\}$ or $\{-1,1\}$.

The $\ell_p^n$ norm is defined on $\R^n$ by $\|u\|_p \eqdef \left(\sum_{i
  =1}^n |u(i)|^p\right)^{1/p}$. We denote the standard inner product
on $\R^n$ by $\ip{u,v} \eqdef \sum_{i=1}^n u(i)v(i)$.

Let us, finally, collect the basic discrepancy definitions introduced
above.

For a finite dimensional normed space $X$ with norm $\|\cdot \|_X$, and a matrix $\mat$ with $n$ columns in $X$, we define
\begin{align*}
  \disc_X(\mat,\col)&\eqdef \left\|\mat\col\right\|_X;\\
  \disc_X(\mat) &\eqdef \min_{\col \in \{-1,+1\}^n}\disc_X(\mat,\col);\\
  \herdisc_X(\mat, s) &\eqdef
                                 \max_{J \subseteq [n]: |J| \le s}
                                 \disc_X(\mat(*,J));\\
  \herdisc_X(\mat) &\eqdef \max_{s} \herdisc_X(\mat, s).
\end{align*}
For a subset $K$ of $X$, we define
\begin{align*}
  \vb(K,X,n) &\eqdef \sup\{\disc_X((u_i)_{i=1}^n): u_1, \ldots, u_n  \in K\},\\
  \vb(K,X) &\eqdef \sup_s \vb(K,X,n). 
\end{align*}

When we just write $\disc$ and $\herdisc$ without a subscript, we mean
that $X = \ell_\infty^m$. For a set system $\SS$ over a universe
$\uni$, $\disc(\SS,\col)$, $\disc(\SS)$, $\herdisc(\SS,s)$ and
$\herdisc(\SS)$ are defined to equal, respectively,
$\disc(\mat,\col)$, $\disc(\mat)$, $\herdisc(\mat,s)$ and
$\herdisc(\mat)$ where $\mat$ is the incidence matrix of $\SS$. 

For a finite dimensional normed space $X$ with norm $\|\cdot \|_X$, and
vectors $u_1, \ldots, u_n$ in $X$, we define
\begin{align*}
  \pdisc_X((u_i)_{i = 1}^n, \col) &\eqdef
  \max_{j=1}^n \left\|\sum_{i = 1}^j\col(i)u_i \right\|_X;\\
  \pdisc_X((u_i)_{i=1}^n) &\eqdef \min_{\col \in \{-1,+1\}^n}\pdisc_X((u_i)_{i=1}^n,x).
\end{align*}
For a subset $K$ of $X$, we define
\[
  \sser(K, X) \eqdef \sup\{\pdisc_X((u_i)_{i=1}^n): u_1, \ldots, u_n \in K\}.
\]

The Steinitz
constant of \(u_1,\ldots, u_n\) is defined as 
\[
  \st_X((u_i)_{i=1}^n) = \min_{\pi \in S_n} \max_{j = 1}^n \Big\|\sum_{i = 1}^j
     u_{\pi(i)} - \frac{j}{n} \sum_{k = 1}^n u_k\Big\|_X.
\]
For a set \(K
\subseteq \R^m\), and a normed space \(X = (\R^m,\|\cdot\|_X)\), we  have
\begin{align*}
  &\st(K, X,n) \eqdef \sup\{\st_X((u_i)_{i=1}^n): u_1, \ldots, u_n \in K\},\\
  &\st(K, X) \eqdef \sup_n \st(K, X,n).
\end{align*}

\section{Bibliographic Notes}

Epsilon approximations were defined by Vapnik and Chervonenkis~\cite{VC71}. Discrepancy theory was used by Matou\v{s}ek, Welzl, and Wernisch to construct small $\varepsilon$-approximations of families of sets of bounded VC dimension~\cite{MatWW93}. Their argument is analogous to our proof of Theorem~\ref{thm:ch1-rectangles}. A general version of this argument can be found in Matou\v{s}ek's book~\cite{matousek2010geometric}.

Problem~\ref{prob:tusnady} was first asked by Tusn\'ady in the early
1980s. In particular, he asked if $\disc(\RR_2,n) \lesssim 1$. Beck
answered Tusn\'ady's question negatively~\cite{beck-rect}, by proving that
$\disc(\RR_2, n)$ is at least a universal constant times the minimum
Lebesgue measure discrepancy of an $n$-point set in the plane. Here,
the Lebesgue measure discrepancy of a set $P \subseteq [0,1]^d$ of
size $n$ equals $\sup_{R \in \RR_d} ||R \cap P| - n\lambda^d(R)|$,
where $\lambda^d$ is the Lebesgue measure in $\R^d$. Beck's proof that Lebesgue measure discrepancy is bounded by  $O(\disc(\RR_2,n))$ uses an argument similar to the proof of Theorem~\ref{thm:ch1-rectangles}.
Schmidt~\cite{schmidt-irregVII} proved the Lebesgue measure
discrepancy of any $n$-point set $P$ in $[0,1]^2$ is at least
$\Omega(\log n)$, which is tight. Together with Beck's result, Schmidt's lower bound shows that
$\disc(\RR_2,n) = \Omega(\log n)$. This remains the best known lower
bound on $\disc(\RR_2,n)$. In higher dimensions, Matou\v{s}ek,
Nikolov, and Talwar proved that $\disc(\RR_d,n) \gtrsim \log(n)^{d-1}$
in any constant dimension $d$~\cite{MNT}. We will cover this lower
bound in Chapter~\ref{ch:gamma2}. The strongest known upper bound for Tusn\'ady's
problem is $\disc(\RR_d,n) \lesssim \log(2n)^{d-0.5}$, and is due to
Nikolov~\cite{tusnady-ub}. We cover it in Chapter~\ref{ch:bana-intro}.

It is worth noting also that determining tight bounds on the best
possible Lebesgue measure discrepancy (with respect to axis-aligned
boxes) of an $n$-point set in $[0,1]^d$ is a notorious open problem for $d > 2$,
and the gaps in high dimensions are much bigger than they are for $\disc(\RR_d,n)$. The
best upper bounds are in $O(\log(n)^{d-1})$ and the best known lower
bounds are in $\Omega(\log(n)^{(d-1)/2 + \eta_d})$ for some
$\eta_d > 0$ that goes to $0$ with $d$~\cite{BilykLV08}. We refer to
the books of Beck and Chen~\cite{BeckChen90}, and
Matou\v{s}ek~\cite{matousek2010geometric} for further discussion of
this fascinating problem.

Theorem~\ref{thm:ch1-transference} is due to
Lov\'asz, Spencer, and Vesztergombi, who also defined the notion of
hereditary discrepancy~\cite{LSV}. 

The vector
balancing question can be traced back to work by Anatole Beck, who
showed an equivalence between the strong law of large numbers holding
in a Banach space $X$, and the condition that
$\min_{x \in \{-1,+1\}^n}\|x(1) v_1 + \ldots + x(n) v_n\|_X \le cn$ for
some fixed constant $c < 1$, all $n$, and all $v_1, \ldots, v_n$ in
the unit ball of $X$~\cite{ABeck62}. Around the same time, Dvoretzky asked what can be said in
general about the maximum of the quantity $\min_{x \in \{-1,1\}^n}\|x(1) v_1 + \ldots
+ x(n) v_n\|_X$ over all $v_1, \ldots, v_n$ in the unit ball of
$X$ (see the Unsolved Problems chapter of~\cite{Dvoretzky-problem}). 
Vector balancing problems were
investigated by Gluskin~\cite{Glu89}, Giannopoulos~\cite{Gia93},
Banaszczyk~\cite{Bana93,Bana98}, Dadush, Nikolov, Talwar,
Tomczak-Jaegermann~\cite{anynorm}, Reis and Rothvoss~\cite{ReisR23}, among others. 

The approximate Carath\'eodory theorem (proved with a different
argument) seems to be due to Maurey, see, e.g.,
\cite{Pisier81}. Maurey's proof, covered, for example, by
Vershynin~\cite{vershynin}, is based on randomly sampling from the
distribution given by the coefficients of a convex
combination. Similar results can be shown also using optimization
techniques, either using the Frank-Wolfe algorithm~\cite{CP23}, or using mirror
descent~\cite{MLVW17}. Theorem~\ref{thm:ch1-transference-sizes} is due to Dadush, Nikolov, Talwar,
Tomczak-Jaegermann~\cite{anynorm}, who also first observed the
connection to approximate Carath\'eodory theorems. This connection was
further pursued by Reis and Rothvoss~\cite{RR22-apxcar}.

A bound independent of $n$ on the Steinitz constant $\st_X((u_i)_{i=1}^n)$ for any finite
dimensional norm $X$ and vectors $u_1, \ldots, u_n$ in unit ball,
was first proved by Steinitz~\cite{Steinitz1913}. He needed such a
bound as a lemma in his higher-dimensional generalization of Riemann's theorem that any
conditionally convergent series in $\R$ can be rearranged to converge
to any number in $\R$. Steinitz used his lemma to show that the set of
values to which the rearrangements of a series of vectors in $\R^m$
converges is an affine subspace of $\R^m$. This result, and the lemma,
were rediscovered later by Kadec~\cite{Kadec53}.  The best bound for
the Steinitz constant in terms of the dimension $m$ of the normed space $X$ is $m$, and is due to Sevastjanov
and Grinberg~\cite{Sev78,GS80}. Theorem~\ref{thm:ch1-chobanyan} is due to Chobanyan~\cite{Chobanyan94}, and an algorithmic version of it was described by Harvey and Samadi~\cite{HarveySamadi14}.

\section{Recent Work not Covered Here}

Several results closely related to the topics covered in this  book have been made public since the writing of the book was completed. Aden-Ali~\cite{adenali2026} presented a computationally efficient (in fact, linear time) online discrepancy minimization algorithm that satisfies the same discrepancy guarantees as the inefficient algorithm of Kulkarni, Reis, and Rothvoss~\cite{rothvoss-online}, and significantly improves on the algorithm presented in Chapter~\ref{ch:online}. The algorithm of Aden-Ali (discovered by ChatGPT 5.5-Pro) gives a new constructive proof of Theorem~\ref{thm:bana2}, and resolves several open problems in this book. 

Shortly after the paper by Aden-Ali became available online, Altschuler and Tikhomirov~\cite{altschulerT2026} used similar techniques to give improved bounds for the Beck-Fiala problem, showing that the Beck-Fiala Conjecture (see Chapter~\ref{ch:linalg}) holds for set systems on $n$ elements of degree at least $\Omega(\log(n)^{1+\eta})$ for any fixed $\eta>0$.



\chapter{Linear Algebraic Methods}
\label{ch:linalg}

In this chapter we see the first example of a general and flexible
method for bounding the discrepancy of set systems and matrices. The
method is based on linear algebraic arguments, and happens to also
give efficient deterministic algorithms to compute low discrepancy
colorings. It is very versatile, and allows proving non-trivial, and
sometimes even tight bounds in many different settings: we will use it
to bound the discrepancy of set systems with bounded degree, and for systems of
axis-aligned boxes, and of set systems induced by permutations, and to
give bounds for vector balancing and for the Steinitz problem in
arbitrary norms. Some of the ideas developed here will form the basis of
more powerful randomized algorithms presented in later chapters.

\section{The Beck-Fiala Theorem}

Recall that a set system is a pair \((\SS,\uni)\), where $\SS$ is a
collection of subsets of the (finite) universe $\uni$. Let us use the notation
\(\ssdeg(\elem)\) for the degree of \(\elem \in \uni\), i.e., the number
of sets in \(\SS\) that \(\elem\) appears in. The maximum degree
\(\ssdeg(\SS)\) of \(\SS\) is the maximum degree \(\ssdeg(\elem)\) over all
\(\elem \in \uni\). The maximum degree of $\SS$ is a natural measure
of how sparse it is, and we will see that small degree set systems
have a low discrepancy. The next exercise is the easiest interesting
case of this phenomenon.

\begin{exercise}\label{ex:ch2-bf-graphs}
  Given an undirected graph $G = (V,E)$, define $\SS$ with universe $\uni=E$ and  sets $S_u$,
  for each vertex $u \in V$, consisting of all edges with endpoint
  $u$. Then $\ssdeg(\SS) = 2$. Show that if $G$ is
  bipartite, then $\disc(\SS) \le 1$. Show furthermore that $\disc(\SS)
  \le 2$ for all graphs $G$.
\end{exercise}

More generally we have the following fundamental question in
discrepancy theory.
\begin{problem}
  Determine the best possible  bound on the discrepancy \(\disc(\SS)\)
  of a set system $\SS$ as a function of \(\ssdeg(\SS)\).
\end{problem}

In this section, we prove the Beck-Fiala theorem, stated next.
\begin{theorem}\label{thm:ch2-bf}
  For any set system \((\SS, \uni)\) there exists a coloring \(\col\),
  computable in polynomial time, such that \(\disc(\SS,\col) \le 2\ssdeg(\SS)-1\).
\end{theorem}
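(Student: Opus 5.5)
We will use the classical iterated (linear-algebraic) rounding, or "floating variables", argument. Let $t \eqdef \ssdeg(\SS)$ and $n \eqdef |\uni|$. We keep a fractional coloring $\col \in [-1,1]^\uni$, initialized to $\col = 0$, so that $\col(S) = 0$ for every $S \in \SS$. Throughout the process we maintain two invariants. First, the set $F$ of \emph{floating} elements (those with $|\col(\elem)| < 1$) only shrinks, and once an element is frozen at $\pm 1$ it never changes. Second, every set $S$ that is currently \emph{active} satisfies $\col(S) = 0$, where we call $S$ active if $|S \cap F| > t$.

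The key counting step is the following. Every floating element lies in at most $t$ sets, so
\[
  t\,|F| \;\ge\; \sum_{S\ \mathrm{active}} |S\cap F| \;>\; t \cdot \#\{\text{active } S\}.
\]
Hence the number of active sets is strictly less than $|F|$. Consider the linear system in the variables $\col(F)$ given by the equations $\col(S) = 0$ for all active $S$, with the frozen coordinates treated as constants. The solution space is an affine subspace of dimension at least $|F| - \#\{\text{active}\} \ge 1$ that contains the current $\col$.

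We move $\col$ along a line in this subspace until some floating coordinate reaches $\pm 1$. Because $\col$ lies in the interior of $[-1,1]^F$, such a point exists. All active constraints remain satisfied, at least one more element gets frozen, and sets may become inactive but never become active again. After at most $n$ iterations, either $F = \emptyset$, or no set is active; in the latter case we can still move $\col$ as long as $F \neq \emptyset$, since there are no constraints left, and thus finish with every element frozen. Each step is one linear-algebra computation, so the whole procedure runs in polynomial time.

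To bound the discrepancy, fix a set $S$ and consider the last moment it was active; if it was never active, this moment is the start, when $|S| \le t$. At that moment $\col(S) = 0$, and afterwards only the elements of $S \cap F$, of which there are at most $t$, can change. The main obstacle is getting $2t-1$ rather than the naive $2t$, since each such coordinate can move by up to $2$. To handle this, let $k \le t$ be the number of elements of $S$ that are floating when $S$ becomes inactive, and split into cases.

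If $k \le t-1$, each of these coordinates moves by less than $2$, giving final $|\col(S)| < 2(t-1) + 2 \cdot 0$, which suffices.

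If $k = t$, we refine the argument. The coordinates are strictly inside $(-1,1)$ and their sum, together with the frozen part, is exactly $0$. Each changes by strictly less than $2$, so $|\col(S)| < 2t$. Since the final $\col(S)$ is an integer with the same parity as $|S|$, strictness alone only gives $\le 2t-1$ or $\le 2t-2$ depending on parity. Either way $|\col(S)| \le 2t-1$, and we need no more than this.

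In both cases, integrality of the final $\col(S)$ combined with the strict inequality $|\col(S)| < 2t$ yields $|\col(S)| \le 2t - 1 = 2\ssdeg(\SS) - 1$, completing the proof.
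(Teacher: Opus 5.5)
Your proposal is correct and is essentially the paper's proof: the same floating-variables linear-algebraic algorithm, the same double count showing there are fewer active sets than floating elements, and the same finish combining the strict bound $|\col(S)| < 2\ssdeg(\SS)$ with integrality of $\col(S)$. Two small remarks. The case split on $k$ is unnecessary, since one argument covers every case: each of the at most $\ssdeg(\SS)$ floating coordinates of $S$ moves by strictly less than $2$, and $\col(S)$ is an integer. Also, $S\cap F$ should be measured right after $S$ stops being active, not at its last active moment, as the paper does with $F_{t+1}$.
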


It should be surprising that
\emph{any} discrepancy bound can be shown purely in terms of $\ssdeg(\SS)$. A key
insight is that a set system with small maximum degree only has few ``large'' sets, i.e., sets of size \(\gg \ssdeg(\SS)\), as we show next.
\begin{lemma}\label{lm:ch2-bf-counting}
  For any set system \(\SS\) over a universe $\uni$ of size \(n\), and
  any \(s>0\),   \[
    |\{\set \in \SS: |\set| > s\}| < n\ssdeg(\SS)/s.
  \]
\end{lemma}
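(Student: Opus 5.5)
The plan is a double counting of the element–set incidences, i.e., of the number of ones in the incidence matrix of $\SS$. Let $\SS_{>s} \eqdef \{\set \in \SS: |\set| > s\}$ and $t \eqdef |\SS_{>s}|$. I will count the pairs $(\elem,\set)$ with $\elem \in \set$ and $\set \in \SS_{>s}$ in two ways.

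First, count by sets. Each set in $\SS_{>s}$ contributes more than $s$ pairs, so if $t \ge 1$ the total is strictly greater than $ts$. Second, count by elements. Each $\elem \in \uni$ lies in at most $\ssdeg(\elem) \le \ssdeg(\SS)$ sets of $\SS$, hence in at most that many sets of $\SS_{>s}$, so the total is at most $n\ssdeg(\SS)$. Combining the two counts gives $ts < n\ssdeg(\SS)$, and dividing by $s>0$ yields the claim.

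The only point needing care is the strict inequality in the degenerate case $t = 0$. There the count by sets gives $0$, not something strictly greater than $0$. In this case we need $0 < n\ssdeg(\SS)/s$, which holds as long as $n\ssdeg(\SS) > 0$, i.e., $\uni$ is nonempty and some element lies in some set. If $\ssdeg(\SS) = 0$, then every set is empty, and the statement reads $0 < 0$, which is false. So this case must be excluded by a harmless nondegeneracy assumption, or the conclusion read as $\le$ in this trivial situation.

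This bookkeeping is the only obstacle; the counting argument itself is immediate.
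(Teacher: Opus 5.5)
Your proof is correct and is the same double counting of element--set incidences that the paper uses; the paper counts all incidences of $\SS$ rather than only those involving large sets, which makes no difference. Your remark about the degenerate case is also right: when $n\ssdeg(\SS)=0$ the strict inequality reads $0<0$, and the paper's step ``$|I|>ks$'' silently fails there too when $k=0$, so the lemma should be read with that trivial case excluded (or with $\le$).
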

\begin{proof}
  Let \(I \eqdef \{(\set, \elem): \elem \in \set\}\) be the set of
  set-element incidences in \(\SS\). We will count \(|I|\) in two
  different ways. On the one hand, $|I| \le n\ssdeg(\SS)$ as each
  element of $\uni$ contributes at most $\ssdeg(\SS)$ incidences. 
  On the other hand, denoting by \(k\) the number of sets in \(\SS\)
  of size greater than \(s\), we have \(|I| > ks\).
  Combining the two inequalities gives \(ks < n\ssdeg(\SS)\), as desired.
\end{proof}

The proof of the Beck-Fiala theorem relies on two observations. The
first is that a set system consisting only of small sets has small
discrepancy. Indeed, \emph{any} coloring of a set of size \(s\) has
discrepancy at most \(s\). The second observation is that requiring that a few sets have small discrepancy still leaves many degrees of freedom for the coloring. In this chapter we formalize this second observation using linear algebra. In Chapter~\ref{ch:partial}, we will see a different formalization through geometry and Gaussian measure, and in Chapter~\ref{ch:partial-alg} we combine the two approaches.

\subsection{The Beck-Fiala Algorithm}
We now prove Theorem \ref{thm:ch2-bf}, using a linear algebraic approach.

 The key idea that enables using linear algebra is to relax the concept of a coloring to \df{fractional coloring} $\col \in [-1,+1]^\uni$. The algorithm will compute a sequence of fractional colorings $\col_0, \ldots, \col_T$,
 where initially, $\col_0$ assigns color $0$ to every element, and at each step $t$, $\col_t$ colors at least one more element with $\pm 1$ than $\col_{t-1}$. So by step $T \le n$, $\col_T \in \{-1,+1\}^\uni$.
 
 Whenever some color $\col_t(\elem)$ reaches $\{-1,+1\}$, we fix $\col_t(\elem)$ forever, and we also maintain the invariant that $\col_t(S) = \col_{t-1}(S)$ for all sets $S \in \SS$ that have more than $\ssdeg(\SS)$ elements that are not yet colored $\pm 1$. These are the ``dangerous sets'' -- with potential to incur large discrepancy. 
 The existence of such $\col_t$ will follow from Lemma~\ref{lm:ch2-bf-counting} and a simple linear algebraic argument.

\paragraph{The Formal Algorithm.} The coloring \(\col\) is computed by an iterative algorithm. At each step \(t\), the algorithm maintains a fractional coloring \(\col_t \in [-1,1]^\uni\), a set of fixed elements \(F_t \eqdef \{\elem \in \uni: |\col_{t-1}(\elem)| = 1\}\), and a set of ``dangerous'' sets \(\DD_t \eqdef \{\set \in \SS:|\set \setminus F_t| > \ssdeg(\SS)\}\). 
Initially, we set \(\col_0 \eqdef 0\), and the algorithm terminates when \(F_{t+1} = \uni\), at which point we output \(\col \eqdef \col_t\). 

To compute $\col_t$ from $\col_{t-1}$, in step \(t \ge 1\), we find a nonzero update vector $\Delta\col_t$ such that 
\begin{enumerate}
\item \(\Delta\col_t(\set) := \sum_{\elem \in \set}\Delta\col_t(\elem) = 0\) for all dangerous sets \(\set \in \DD_t\);\label{enum:bf-dangerous}
\item $\Delta\col_t(\elem) = 0$ for all fixed elements \(\elem \in F_t\).\label{enum:bf-fixed}
\end{enumerate}
We argue why such a $\Delta\col_t$ exists below. To compute the coloring $\col_t$, we set $\col_t = \col_{t-1} + \gamma \Delta \col_t$, where we choose $\gamma > 0$ to be the largest possible value such that $\col_t \in [-1,1]^\uni$. See Figure~\ref{fig:ch2-bflemma}. 

\begin{figure}[htp]
\centering
\includegraphics{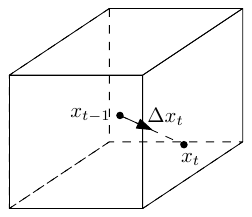}
\caption{We find \(\col_t\) by starting from \(\col_{t-1}\) and going in the direction of \(\Delta \col_t\) until we hit a boundary of the cube \([-1, +1]^\uni\).}
\label{fig:ch2-bflemma}
\end{figure}

\paragraph{Analysis.} Let us use the notation $\act_t \eqdef \uni \setminus F_t$ for the ``active'' elements, i.e., the ones not yet fixed. First we verify that, as long as $F_t \neq \uni$ (equivalently $\act_t \neq \emptyset$), there exists a $\Delta\col_t \neq 0$ satisfying \ref{enum:bf-dangerous}.~and~\ref{enum:bf-fixed}. The two conditions form a system of linear equations, and to show that the system has a nonzero solution $\Delta\col_t$, we will argue that it is under-constrained. By Lemma~\ref{lm:ch2-bf-counting}, applied to the set system \(\SS|_{\act_{t}}\) with \(s = \ssdeg(\SS) \ge \ssdeg(\SS|_{\act_{t}})\), we have that \(|\DD_t| < |\act_{t}|\). Thus, $\Delta \col_t$ must satisfy at most $|\DD_t| + |F_t| = |\DD_t| + (n-|\act_{t}|) < n$ linear equations, and the subspace of solutions to these equations has dimension at least $1$. This shows that we can find some $\Delta \col_t \neq 0$ satisfying \ref{enum:bf-dangerous}.~and~\ref{enum:bf-fixed}.

Next, the algorithm terminates in at most $n$ steps as $F_{t+1} \supsetneq F_{t}$, by the choice of $\gamma$ and thus $|F_t|$ increases by at least $1$ in each step. 

Finally, we analyze the discrepancy of the final coloring $\col = \col_T$. Notice that, as $F_t$ is increasing in $t$, \(\DD_{t} \subseteq \DD_{t-1}\), i.e., a set can only stop being dangerous once, and it never becomes dangerous again. Since initially $\col_0(S) = 0$ for all $S \in \SS$, we have that \(\col_t(\set) = 0\) as long as \(\set \in \DD_t\). 
To bound the final discrepancy, consider an arbitrary set \(\set \in \SS\), and suppose that it was last dangerous at time step \(t=t(S)\), i.e., \(\set \in \DD_{t} \setminus \DD_{t+1}\). Then,
\[
    |\col(\set)| \le |\col_{t}(\set)| + |\col(\set) - \col_{t}(\set)|
    = |\col(\set) - \col_{t}(\set)|.
\]
As the color stays unchanged for any element \(\elem \in F_{t+1}\) after time $t$, we have \(\col(\elem) = \col_t(\elem)\) and thus 
\begin{align*}
    & |\col(\set) - \col_{t}(\set)|
    =
    \Big|\sum_{\elem \in \set \setminus F_{t+1}} (\col(\elem) -  \col_t(\elem))\Big|\\
    &\le \sum_{\elem \in \set \setminus F_{t+1}} |\col(\elem) -  \col_t(\elem)| < 2|\set \setminus F_{t+1}| \le 2 \ssdeg(\SS).
\end{align*}
The strict inequality follows since \(\col(\elem) \in \{-1,+1\}\), and \(\col_t(\elem) \in (-1, +1)\) for any \(\elem \not\in F_{t+1}\), thus \(\col(\elem) - \col_t(\elem) \in (-2, 2)\). The final inequality follows as \(\set \not \in \DD_{t+1}\). 

We have now shown that \(|\col(\set)| < 2 \ssdeg(\SS)\), and, as \(|\col(\set)|\) is an integer,  we must have \(|\col(\set)| \le 2 \ssdeg(\SS)-1\). As \(\set\) was arbitrary, the result follows.

\paragraph{Improvements.} Theorem~\ref{thm:ch2-bf} is not tight, and Beck and Fiala
conjectured that the tight bound is \(\disc(\SS) \lesssim
\sqrt{\ssdeg(\SS)}\). The best known bounds (for general \(\ssdeg(\SS)\))  that only depend on
\(\ssdeg(\SS)\) are linear in \(\ssdeg(\SS)\), and, in fact, even the leading
constant \(2\) has not been improved.\footnote{Very recently, Bansal and Jiang \cite{bansaljiang2025} proved the conjectured $O(\sqrt{\ssdeg(\SS)})$ bound when $\ssdeg(\SS) = \Omega(\log^2 n)$.} Later in this book we will see
bounds that have only mild dependence on the number of elements
\(n\). 

For now, the next exercise gives a small improvement to
Theorem~\ref{thm:ch2-bf}.

\begin{exercise}\label{ex:ch2-bf-improvement}
  Modify the proof of Theorem~\ref{thm:ch2-bf} to show the improved
  bound $\disc(\SS) \le \max\{2\ssdeg(\SS) - 3, \ssdeg(\SS)\}$.

  \textsc{Hint:} By an argument analogous to
  Lemma~\ref{lm:ch2-bf-counting}, a set system on $n$ elements with maximum degree $s$
  has at most $n$ sets with size at least $s$. What happens when the
  set system has exactly $n$ such sets?
\end{exercise}

Note that, unlike Theorem~\ref{thm:ch2-bf},
Exercise~\ref{ex:ch2-bf-improvement} recovers the (tight) bound
$\disc(\SS) \le 2$ when $\ssdeg(\SS) \le 2$. 

\section{Template of a Linear Algebraic Algorithm}

In this section we generalize the approach taken in the proof of the Beck-Fiala theorem to formulate a template for an algorithm computing low discrepancy colorings using linear algebra. We will use it later for various applications. We keep the notation from the proof of Theorem~\ref{thm:ch2-bf} whenever possible.

\paragraph{The Generic Algorithm.} We start with
some fractional coloring $\col_0 \in [-1,1]^n$, usually
$\col_0 \eqdef 0.$ Then we construct a coloring in \emph{phases}, where in the $t$-th phase we compute $\col_t$. 

In phase $t$, we define a matrix $\mat_t$ of \emph{dangerous directions}, and a set of \emph{active} elements $\act_t \subseteq [n]$. The set of \emph{fixed} elements is always $F_t \eqdef \{i \in [n]: |\col_{t-1}(i)| = 1\}$, and we require $\act_t \cap F_t = \emptyset$. We choose the next coloring $\col_t$ so that
\begin{enumerate}
    \item $\mat_t\col_t = \mat_t \col_{t-1}$;
    \item $\col_t(i) = \col_{t-1}(i)$ for each
$i \not \in \act_{t}$.
\end{enumerate}
We also aim to fix as many elements as possible, i.e., to maximize $|F_{t+1}| - |F_t|$. At the first $T$ for which $F_{T+1} = [n]$, we output $\col_T$.

Intuitively, each row of $\mat_t$ defines a ``dangerous direction'', and moving the coloring in such a direction may incur too much discrepancy. For this reason we ask the coloring update $\Delta x_t = x_t - x_{t-1}$ to lie in the kernel of $\mat_t$, which is our ``safe subspace''.  

The proof of the Beck-Fiala theorem follows the template above. In it, each phase fixes the color of at least one element, leading up to $n$ phases. In other applications, we may want to fix more elements in a single phase so that the number of phases becomes much smaller. To compute $\col_t$ from $\col_{t-1}$ we shall use the following lemma. 

Recall that \(\mat(*, J)\) denotes the submatrix of \(\mat\) consisting of the columns indexed by the set \(J\).

\begin{lemma}\label{lm:ch2-linalg-gen}
  Let \(\mat\) be an \(m \times n\) matrix, let \(\col \in
  [-1,+1]^n\), and let \(\act \subseteq [n]\) be such that for any \(i
  \in \act\), \(|\col(i)| < 1\).  If \(\rank \mat(*,\act) <
  |\act|\), then there exists a \(\col' \in [-1,
  +1]^n\) such that
  \begin{enumerate}
  \item \(\mat \col' = \mat \col\);
  \item for any \(i \in [n] \setminus \act\), \(\col'(i) = \col(i)\)
  \item for at least \(|\act| - \rank \mat(*,\act)\) elements 
    \(i \in \act\), we have \(\col'(i) \in \{-1, +1\}\).
  \end{enumerate}
  Moreover, given \(\mat\) and \(\col\), \(\col'\) can be computed in
  polynomial time.
\end{lemma}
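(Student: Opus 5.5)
The plan is to iterate the single-step move from the Beck--Fiala algorithm, keeping track of a shrinking active set. Set $y_0 \eqdef \col$ and $\act^{(0)} \eqdef \act$. We will build a sequence $y_0, y_1, \ldots$ of fractional colorings. In each step $k$, let $\act^{(k)} \eqdef \{i \in \act : |y_k(i)| < 1\}$ be the elements of $\act$ that are still not fixed. We stop as soon as $\rank \mat(*,\act^{(k)}) = |\act^{(k)}|$, meaning the columns indexed by $\act^{(k)}$ are linearly independent. This includes the case $\act^{(k)} = \emptyset$.

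Otherwise the kernel of $\mat(*,\act^{(k)})$ is nontrivial, and we pick a nonzero vector $\Delta$ in it, computed by Gaussian elimination. We extend $\Delta$ by zeros outside $\act^{(k)}$ and set $y_{k+1} \eqdef y_k + \gamma\Delta$. Here $\gamma>0$ is the largest value keeping $y_{k+1} \in [-1,1]^n$. It is finite and positive because $\Delta \ne 0$ and $|y_k(i)|<1$ on the support of $\Delta$.

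This step preserves three properties:
\begin{itemize}
\item $\mat y_{k+1} = \mat y_k + \gamma \mat(*,\act^{(k)})\Delta = \mat y_k$;
\item coordinates outside $\act$ never change;
\item coordinates that reach $\pm 1$ are never moved again.
\end{itemize}
By maximality of $\gamma$, at least one more coordinate of $\act$ reaches $\pm1$, so $|\act^{(k)}|$ strictly decreases. The process therefore ends within $|\act|$ steps. Each step is polynomial time, since it consists of a kernel computation and a ratio test, so the whole procedure is polynomial.

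The only step needing care is the count in item~3. At termination, with final active set $\act^{*}$, the columns of $\mat$ indexed by $\act^{*}$ are linearly independent. Hence
\[
|\act^{*}| = \rank \mat(*,\act^{*}) \le \rank \mat(*,\act),
\]
because $\act^{*}\subseteq \act$ and deleting columns cannot increase rank. Every element of $\act\setminus\act^{*}$ has $y(i)\in\{-1,+1\}$, so at least $|\act| - \rank\mat(*,\act)$ elements are fixed. Setting $\col'$ to the final $y_k$ then gives all three claims of Lemma~\ref{lm:ch2-linalg-gen}.

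For polynomial time, I would also check that the bit sizes stay polynomial. One way is to choose each $\Delta$ as a basic kernel vector of the original matrix restricted to the current columns. Alternatively, note that the final point is a vertex of the polytope $\{y\in[-1,1]^n : \mat y = \mat\col,\ y_i=\col_i \text{ for } i\notin\act\}$. I expect this bookkeeping to be the only mildly delicate point.
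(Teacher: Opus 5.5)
Your proof is correct and takes the approach the paper intends. The paper leaves Lemma~\ref{lm:ch2-linalg-gen} as an exercise. Your step (move along a nonzero kernel vector of the currently active columns until a new coordinate hits $\pm 1$) is exactly the update from the proof of Theorem~\ref{thm:ch2-bf}, iterated until the active columns are linearly independent. Your rank count via the terminal independent set is sound. Your bit-size remark is easily completed: with polynomial-size kernel vectors, each step multiplies the common denominator by a single kernel entry, so encoding lengths grow only additively.
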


The proof of Lemma~\ref{lm:ch2-linalg-gen} is simple and left as an exercise.

Similar to the proof of the Beck-Fiala theorem, we start initially with $\col_0= 0^n$ and iteratively update the coloring $\col_t$ to $\col_{t+1}= \col_t + \Delta \col_t$ at each step using Lemma~\ref{lm:ch2-linalg-gen} with $x=\col_t$ and $x'=\col_{t+1}$.


One may wonder if we can gain something by picking the update $\Delta \col_t$ more carefully. In Chapter~\ref{ch:partial-alg} we will see an example of this, where the directions are picked randomly from some appropriate distribution.

Applying Lemma~\ref{lm:ch2-linalg-gen}  to the incidence matrix of
a set system gives the following immediate corollary that we record for ease of reference.

\begin{lemma}\label{lm:ch2-fewsets}
  Let \((\SS, \uni)\) be a set system, let \(\col\in [-1,+1]^\uni\) be a
  fractional coloring, and let
  \(\act \subseteq \uni\) be the set of elements with \(|\col(\elem)| < 1\). If \(|\SS|_{\act}| < |\act|\), then there is a \(\col' \in [-1,+1]^\uni\) such that
  \begin{enumerate}
  \item for all \(\set \in \SS\), \(\col'(\set) = \col(\set)\);
  \item for all \(\elem \in \uni\setminus\act\), \(\col'(\elem) = \col(\elem)\);
  \item for at least \(|\act|-|\SS_{|\act}|\) elements \(\elem \in
    \act\) we have
    \(\col'(\elem) \in \{-1,+1\}\).
  \end{enumerate}
  Moreover, given \(\SS\) and \(\col\), \(\col'\) can be computed in
  polynomial time.
\end{lemma}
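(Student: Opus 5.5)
The plan is to apply Lemma~\ref{lm:ch2-linalg-gen} directly to the incidence matrix. Identify $\uni$ with $[n]$ and let $\mat$ be the incidence matrix of $\SS$, with rows indexed by $\SS$ and columns by $\uni$. Then $\mat\col$ is the vector $(\col(\set))_{\set\in\SS}$, so the condition $\mat\col'=\mat\col$ is exactly conclusion~1. Take $\act$ to be the set of elements with $|\col(\elem)|<1$; this is precisely the hypothesis on $\act$ required by Lemma~\ref{lm:ch2-linalg-gen}.

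The one substantive step is to bound the rank, namely $\rank \mat(*,\act) \le |\SS|_{\act}|$. The columns of $\mat(*,\act)$ are the restrictions of the incidence matrix to the active elements. Each row of $\mat(*,\act)$ is the indicator vector of $\set\cap\act$. Two sets with the same restriction to $\act$ give identical rows, and the empty restriction gives a zero row. So the number of distinct nonzero rows is at most the number of distinct sets in $\SS|_{\act}$, and the row rank is at most $|\SS|_{\act}|$.

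Combining this with the hypothesis $|\SS|_{\act}|<|\act|$ gives $\rank\mat(*,\act)<|\act|$, so Lemma~\ref{lm:ch2-linalg-gen} applies. It yields $\col'\in[-1,+1]^\uni$ satisfying the following:
\begin{enumerate}
\item $\col'(\set)=\col(\set)$ for every $\set\in\SS$;
\item $\col'$ agrees with $\col$ outside $\act$;
\item at least $|\act|-\rank\mat(*,\act)\ge|\act|-|\SS|_{\act}|$ elements of $\act$ receive colors in $\{-1,+1\}$.
\end{enumerate}
Polynomial-time computability is inherited directly, since $\mat$ can be built from $\SS$ in polynomial time.

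There is no real obstacle here. The only point needing care is the rank bound, specifically that $|\SS|_{\act}|$ counts distinct restricted sets, so duplicate rows must be handled as above.
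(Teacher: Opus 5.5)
Your proposal is correct and is essentially the paper's argument: the paper records this lemma as an immediate corollary of Lemma~\ref{lm:ch2-linalg-gen} applied to the incidence matrix of $\SS$. Your bound $\rank \mat(*,\act)\le|\SS|_{\act}|$, obtained by noting that sets with the same restriction to $\act$ give duplicate rows and an empty restriction gives a zero row, is exactly the step the paper leaves implicit.
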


\paragraph{Simple Application.} As an application of Lemma~\ref{lm:ch2-linalg-gen}, we first show
a useful structural result about discrepancy, that the
hereditary discrepancy of any \(m\times n\) matrix is always achieved, up to a
factor of \(2\), on a submatrix consisting of at most \(m\) columns
from \(\mat\). For this reason, discrepancy bounds in terms of \(n\)
can often be automatically strengthened to hold in terms of
\(\min\{m,n\}\) instead.

\begin{theorem}\label{thm:ch2-col-reduction}
  Let \(\|\cdot\|_X\) be a norm on \(\R^m\), and let \(\mat\) be an
  \(m\times n\) matrix. Then there exists a set \(J \subseteq [n]\) of
  size \(|J| \le m\) such that
  \[
    \disc_X(\mat) \le 
     2 \herdisc_X(\mat(*,J)) \le 2\herdisc_X(\mat,m)
  \]
\end{theorem}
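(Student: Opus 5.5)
We will apply Lemma~\ref{lm:ch2-linalg-gen} once, starting from the fractional coloring $\col_0 \eqdef 0$, and then round the remaining fractional coordinates using Theorem~\ref{thm:ch1-transference}. The latter holds for any norm, as remarked after its proof.

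\textbf{Step 1: reduce to at most $m$ fractional coordinates.} Repeatedly apply Lemma~\ref{lm:ch2-linalg-gen} with $\mat$ itself and with $\act$ the set of coordinates $i$ where $|\col(i)|<1$. As long as $|\act| > m \ge \rank \mat(*,\act)$, the lemma fixes at least one more coordinate to $\pm 1$. It also preserves $\mat\col$, which therefore stays equal to $\mat\col_0 = 0$. When this stops we have a vector $y \in [-1,1]^n$ with $\mat y = 0$ whose set of fractional coordinates $J \eqdef \{i: |y(i)|<1\}$ has size $|J|\le m$.

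\textbf{Step 2: round the fractional part.} Rescale the coordinates in $J$ to the unit interval by setting $w(i) \eqdef (1+y(i))/2 \in [0,1]$ for $i \in J$. Apply Theorem~\ref{thm:ch1-transference} with the norm $\|\cdot\|_X$ to the matrix $\mat(*,J)$ and the vector $w$. This gives $z \in \{0,1\}^J$ with
\[
\|\mat(*,J)(z-w)\|_X \le \herdisc_X(\mat(*,J)).
\]
Now define the coloring $\col$ by $\col(i) \eqdef 2z(i)-1$ for $i\in J$ and $\col(i)\eqdef y(i)\in\{-1,+1\}$ for $i \notin J$. Then $\col-y$ is supported on $J$ and equals $2(z-w)$ there. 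Hence
\[
\|\mat\col\|_X = \|\mat\col - \mat y\|_X = 2\|\mat(*,J)(z-w)\|_X \le 2\herdisc_X(\mat(*,J)).
\]
This gives the first inequality. The second inequality is immediate: every subset of $J$ has size at most $|J| \le m$, so $\herdisc_X(\mat(*,J)) \le \herdisc_X(\mat,m)$ by definition.

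\textbf{Expected obstacle.} The only point needing care is confirming that Theorem~\ref{thm:ch1-transference} really applies in an arbitrary norm and to the submatrix $\mat(*,J)$, and that the hereditary discrepancy it produces is that of $\mat(*,J)$ rather than of all of $\mat$. Its proof only colors subsets of the columns being rounded, and uses only the triangle inequality and homogeneity, so both points hold. The constant $2$ comes exactly from the rescaling between $[0,1]$ and $[-1,1]$.
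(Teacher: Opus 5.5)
Your proof is correct and follows the paper's argument. The paper applies Lemma~\ref{lm:ch2-linalg-gen} once with $\act=[n]$ and $\col=0$: its guarantee of at least $n-\rank\mat$ newly fixed coordinates already leaves at most $\rank\mat\le m$ fractional ones. It then invokes Theorem~\ref{thm:ch1-transference} on $\mat(*,J)$, leaving implicit the rescaling between $[0,1]$ and $[-1,1]$ that produces the factor $2$. Your iteration and explicit rescaling are harmless variations. The iteration also covers the degenerate case $\rank\mat=n\le m$, where the lemma's hypothesis fails and one simply takes $J=[n]$; the paper does not address that case.
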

\begin{proof}
  We apply Lemma~\ref{lm:ch2-linalg-gen} with the matrix \(\mat\),
  \(\act = [n]\), and \(\col = 0\). This gives us a fractional
  coloring \(\col'\) for which \(\mat \col' = 0\), and the set
  \(J \eqdef \{i: |\col'(i)| < 1\}\) has size at most
  \(\rank \mat \le m\). 
  Let \(y\in [-1,+1]^J\) be \(\col'\)
  restricted to \(J\). 
  By Theorem~\ref{thm:ch1-transference}, there
  exists some coloring \(z \in \{-1,+1\}^J\) such that
  \(\|\mat(*,J)(z-y)\|_X \le 2\herdisc_X(\mat(*,J))\). 
  
  We can then
  extend \(\col'\) to a coloring \(\col''\in \{-1,+1\}^n\) by setting
  \[
    \col''(i) \eqdef
    \begin{cases}
      z(i) & i \in J\\
      \col'(i) & i \not \in J. 
    \end{cases} 
  \]
  We now have
  \[
  \disc_X(\mat,\col'') 
  \le
  \|\mat \col'\|_X + \|\mat(*,J)(z-y)\|_X = 2\herdisc_X(\mat(*,J)),
  \]
  as we needed to show.
\end{proof}

\section{Discrepancy of Permutations}

For our next application of the linear algebraic method, we study the
discrepancy of set systems consisting of prefixes of \(k\)
permutations. We are given \(k\) permutations \(\pi_1, \ldots, \pi_k\)
of \([n]\), and the set system \((\SS, [n])\) consists of prefixes of
the permutations, i.e., all sets of
the form \(\{\pi_i(1), \ldots, \pi_i(j)\}\) where \(i\) and \(j\)
range over \([k]\) and \([n]\), respectively. We say that \(\SS\)
is the set system induced by \(\pi_1, \ldots, \pi_k\). One of the
problems we will return to later in the book is determining the
discrepancy of such set systems.
\begin{problem}
  For a set system \((\SS, [n])\) induced by \(k\) permutations of
  \([n]\), determine the best possible bound on \(\disc(\SS)\) as a
  function of \(n\) and \(k\).
\end{problem}
In addition to being
natural set systems in their own right, \(k\) permutation systems are
useful when studying the discrepancy of rectangles, and they have 
applications to approximation algorithms. We will see the first connection later in this chapter. 

The discrepancy of set systems induced by \(k\) permutations exhibits
an interesting transition: it is equal to \(1\) when \(k\) is \(1\) or
\(2\), and can be as large as \(\Omega(\log n)\) for any \(k\ge
3\). When \(k = 1\), we can just color \(\pi(1), \ldots,
\pi(n)\) with alternating colors, in this order, and the resulting
coloring clearly has discrepancy \(1\). 

\medskip 
The case of \(2\) permutations
is more interesting, and we briefly sketch the construction of a
coloring \(\col\) with discrepancy \(1\). The idea is to ensure, for
every \(i \in [2]\), and \(j \in [\lfloor n/2\rfloor]\) that
\(\col(\pi_i(2j - 1)) \neq \col(\pi_i(2j))\). Then, any prefix
\(\set\) of either permutation can be partitioned into the pairs \(\{\pi_i(2j-1),
\pi_i(2j)\}\), and at most one additional element, and each pair
contributes discrepancy \(0\) to \(\col(\set)\).

To find such a coloring \(\col\) we construct a graph \(G = ([n], E)\), where the
edge set \(E\) consists of all such pairs \(\{\pi_i(2j-1),
\pi_i(2j)\}\).
Then, the coloring \(\col\) that we
need corresponds to a proper 2-coloring of the vertices of \(G\),
which exists as \(G\) is a union of two
matchings, and hence a disjoint union of paths and even length
cycles.


\paragraph{$k$ Permutations.}
The combinatorial argument above does not extend to \(3\) or more
permutations, but the linear algebraic method allows us to prove an \(O(k \log n)\) bound on the discrepancy of \(k\)
permutations. 

The idea is to divide the permutations into intervals, and
make sure that each interval receives zero total discrepancy, but with respect to a fractional coloring, rather than a full \(\pm 1\)
coloring. In order to find such a fractional coloring using
Lemma~\ref{lm:ch2-fewsets}, we need that the total number of intervals whose discrepancy we keep at \(0\) is small. In particular, we make sure that the number of intervals is at most half the number of active elements, and thus we can fix half of them in each phase. This guarantees we can find a proper coloring in $O(\log n)$ phases, giving the logarithmic discrepancy bound. 

Formally, we show the following.

\begin{theorem}\label{thm:ch2-kperm}
  For any integers \(k \ge 1\) and \(n\ge 1\), and any permutations \(\pi_1,
  \ldots, \pi_k\) of \([n]\), there exists a coloring \(\col\),
  computable in polynomial time, such that for the set system \((\SS,
  [n])\) induced by \(\pi_1, \ldots, \pi_k\) we have
  \(
  \disc(\SS) \lesssim k \log 2n.
  \)
\end{theorem}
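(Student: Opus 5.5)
We follow the template of the linear algebraic algorithm, running in phases $t = 1, 2, \ldots$ and applying Lemma~\ref{lm:ch2-fewsets} once per phase. We start from $\col_0 \eqdef 0$. In phase $t$, let $\act_t$ be the set of active elements, i.e., those with $|\col_{t-1}(i)| < 1$, and let $n_t \eqdef |\act_t|$.

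For each permutation $\pi_i$, consider the active elements in the order given by $\pi_i$. Cut this sequence into consecutive blocks, each containing $\lceil 2k \rceil$ (say $2k+1$) active elements, except possibly the last. Each such block is an interval of $\pi_i$ restricted to $\act_t$. Let $\mathcal{I}_t$ be the collection of all these blocks over all $k$ permutations. Then
\[
|\mathcal{I}_t| \le k\left\lceil \frac{n_t}{2k+1}\right\rceil,
\]
which is less than $n_t/2$ once $n_t \gtrsim k$. Apply Lemma~\ref{lm:ch2-fewsets} to the set system $\mathcal{I}_t$, with each block viewed as a subset of $\uni$ (by adding its inactive elements between the block endpoints, or simply as a set of active elements; only active elements matter since inactive coordinates are frozen). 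This gives $\col_t$ that keeps $\col_t(I) = \col_{t-1}(I)$ for every block and fixes at least $n_t - |\mathcal{I}_t| \ge n_t/2$ elements. Hence $n_{t+1} \le n_t/2$.

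Once $n_t \lesssim k$, we stop and round the remaining active elements arbitrarily. This costs at most $O(k)$ discrepancy on any set. The total number of phases is $O(\log 2n)$.

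For the analysis, fix a prefix $P = \{\pi_i(1), \ldots, \pi_i(j)\}$. Write the final discrepancy as a telescoping sum,
\[
\col(P) = \sum_t \big(\col_t(P) - \col_{t-1}(P)\big) + (\text{final rounding}).
\]
In phase $t$, only active elements change. The active elements of $P$ form a prefix of the $\pi_i$-ordered active sequence, so they are a union of complete blocks of $\mathcal{I}_t$ plus part of at most one block. Complete blocks contribute zero change. The partial block has at most $2k+1$ active elements, each of which moves by at most $2$. So the change in phase $t$ is $O(k)$, and summing over $O(\log 2n)$ phases gives $|\col(P)| \lesssim k \log 2n$. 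Polynomial time follows from Lemma~\ref{lm:ch2-fewsets}.

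The main point to get right is the counting: the block size $\Theta(k)$ must be chosen so that the number of constraints is at most $n_t/2$. Also, the prefix-versus-blocks decomposition has to be done with respect to the current active order in each phase, rather than the original order. With that, everything reduces to routine bookkeeping of constants.
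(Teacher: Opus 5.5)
Your overall plan is the paper's: phases driven by Lemma~\ref{lm:ch2-fewsets}, per-permutation blocks of $\Theta(k)$ consecutive active elements, and the observation that the active part of a prefix is a union of whole blocks plus $O(k)$ further active elements, so each phase moves every prefix sum by $O(k)$.

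The step that fails is the counting that you yourself single out as the main point. You also constrain the final, partial block of each permutation, so $|\mathcal{I}_t|$ can equal $k\lceil n_t/(2k+1)\rceil$. This is \emph{not} below $n_t/2$ for all $n_t\gtrsim k$. At $n_t=(j-1)(2k+1)+1$ the count is $kj$, while $n_t/2=k(j-1)+j/2$. So it exceeds $n_t/2$ for every $j<2k$, and bad values of $n_t$ occur all the way up to about $4k^2$. Concretely, take $k\ge 2$ and $n_t=2k+2$. Each permutation contributes a block of $2k+1$ and a singleton, giving $2k>k+1=n_t/2$ constraints, and Lemma~\ref{lm:ch2-fewsets} then only guarantees $2$ newly fixed elements. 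Hence $n_{t+1}\le n_t/2$ does not follow in the range $k\lesssim n_t\lesssim k^2$. If you instead stop once $n_t\lesssim k^2$, the arbitrary final rounding only gives $O(k^2)$, which is worse than $k\log 2n$ when $k\gg\log n$.

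The repair is easy. The paper does not constrain partial blocks. Each $\mathcal{I}_{i,t}$ consists of consecutive intervals of $\pi_i$ containing exactly $2k$ active elements, and the tail of fewer than $2k$ active elements is left free. Then $|\mathcal{I}_t|\le n_t/2$ for every $n_t$, and each phase at least halves the active set. For $n_t<2k$ there are no constraints and all remaining elements get fixed, so no separate rounding step is needed. In the analysis, the free tail is simply part of the remainder $R$, which has at most $2k-1$ active elements. This gives a change of at most $4k-2$ per phase over at most $\log_2(2n)$ phases.

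Alternatively, keep your blocks and use the correct estimate $|\mathcal{I}_t|<kn_t/(2k+1)+k<n_t/2+k$, together with $|\mathcal{I}_t|<n_t$ whenever $n_t\ge 2k+2$. Then $n_{t+1}-2k<(n_t-2k)/2$, so $n_t\le 2k+1$ is still reached after $O(\log 2n)$ phases. A third option is to use blocks of $4k$ active elements, for which your inequality does hold once $n_t\ge 4k$.
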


\paragraph{The Algorithm.} We apply our template. The initial coloring is set to \(\col_0 \eqdef 0\), and recall that the fixed elements are $F_t \eqdef \{i: |\col_{t-1}(i)| = 1\}$. We will always set $A_t \eqdef [n]\setminus F_t$. We compute $\col_t$ from $\col_{t-1}$ using Lemma~\ref{lm:ch2-fewsets}, applied to the coloring $\col_{t-1}$, the active set $\act_t$, and a set system $\II_t$ of at most $|\act_t|/2$ intervals that we describe next.

To define $\II_t$,  we construct a set system \(\II_{i,t}\) of size \(|\II_{i,t}| \le |\act_t|/2k\) for each permutation \(\pi_i\), and set \(\II_t = \II_{1,t} \cup \ldots \cup \II_{k,t}\). To construct \(\II_{i,t}\), take the interval \(\{\pi_i(1), \ldots, \pi_i(b_1)\}\) that contains exactly \(2k\) elements of \(\act_t\); then the next interval \(\{\pi_i(b_1+1), \ldots, \pi_i(b_2)\}\) that contains \(2k\) elements of \(\act_t\), and so on, until no such interval remains.

We run our generic algorithm with these set systems until the first time $T$ when $F_{T+1} = [n]$, and at that point we output $\col \eqdef \col_T$.

\paragraph{Analysis.} Lemma~\ref{lm:ch2-fewsets} guarantees that, after phase $t$, at least $|\act_t|/2$ new elements become fixed, i.e., $|\act_{t+1}| \le |\act_t|/2$. Since $T$ is the smallest integer for which $\act_{T+1} < 1$, we have $T \le \log_2(2n)$. 

Next we bound how the discrepancy changes from one fractional coloring to the next. Let us fix some prefix \(\set \in \SS\), where $\set = \{\pi_i(1), \ldots, \pi_i(j)\}$. We can partition $S \cap A_t$ into disjoint sets $I_1, \ldots, I_\ell \in \II_{t}$ and a ``remainder'' set $R$ of size at most $|R| \le 2k-1$, where $I_1, \ldots, I_\ell$ are all the sets $\II_{i,t} \subseteq \II_t$ fully contained in $\set\cap \act_t$. So we must have $I_1 \cup \ldots \cup I_\ell = \{\pi_i(1), \ldots, \pi_i(b)\} \cap \act_t$ for some $b \le j$, and also $|\{\pi_i(b+1), \ldots, \pi_i(j)\} \cap A_t| < 2k$.
Therefore, $R = \{\pi_i(b+1), \ldots, \pi_i(j)\}\cap\act_t$ has size at most $2k-1$.

Now, using that \(\col_t(I_j) = \col_{t-1}(I_j)\) for all \(I_1, \ldots, I_\ell\), we have
\begin{align}
    \left|\col_t(\set) - \col_{t-1}(\set)\right|
    &\le \sum_{j = 1}^\ell|\col_t(I_j) -  \col_{t-1}(I_j)|+ |\col_t(R) - \col_{t-1}(R)|\notag\\
    &= |\col_t(R) - \col_{t-1}(R)| \le 4k-2.\label{eq:ch2-perm-dchange}
\end{align}
As there are $T=\log_2(2n)$ phases,
and \(\col_0(\set) = 0\), 
the
final discrepancy of \(\set\) is at most 
\[ |\col_T(\set)| \le \sum_{t = 1}^T \left|\col_t(\set) - \col_{t-1}(\set)\right|  = O(k\log n).\]
%

\section{Discrepancy of Boxes}
\label{sec:ch2-tusnady}

In this section we apply the Beck-Fiala theorem (Theorem~\ref{thm:ch2-bf}), as well as Theorem~\ref{thm:ch2-kperm}, to give two simple bounds on the
discrepancy of axis-aligned boxes in \(\R^d\). Let us recall some
notation from the Introduction. We define \(\RR_d\) to be the set of
all axis-aligned boxes in \(\R^d\), i.e., all sets of the type
\( \rbox_{a,b} \eqdef \{u \in \R^d: a \le u \le b\}, \) where
\(a,b\in \R^d\), and \(a \le u \le b\) is understood to hold
coordinate by coordinate, i.e., \(a(i) \le u(i) \le b(i)\) for all
\(i \in [d]\). A set \(P\) of \(n\) points in \(\R^d\) induces the set
system \(\RR_d|_P = \{\rbox_{a,b} \cap P: a,b \in \R^d\}\) of subsets
of \(P\) induced by boxes. Recall Problem~\ref{prob:tusnady}, asking
us to determine tight bounds on 
  \[
    \disc(\RR_d, n) \eqdef \sup\{\disc(\RR_d|_P): P \subseteq \R^d, |P| = n\}.
  \]

To bound \(\disc(\RR_d, n)\), it is slightly easier to restrict the sets to 
\(
\AA_d|_P \eqdef \{\rbox_{0,b} \cap P: b \in \R^d\}
\)
of anchored boxes, i.e., boxes with one corner fixed at the origin, and bound
\(
\disc(\AA_d, n) \eqdef \sup\{\disc(\AA_d|_P): P \subseteq \R^d, |P| = n\}.
\)
This suffices, as each axis-aligned rectangle can be expressed as a $\pm$ combination of $2^d$ anchored boxes (we leave the simple proof as an exercise), and thus
  we have the inequalities
  \[
  \disc(\AA_d,n)  \le 
  \disc(\RR_d, n) \le 2^d \disc(\AA_d,n).
  \]
Moreover, we can always modify the points in $P$ so that $P$ is contained in the grid $[n]^d$, without changing the set system $\AA_d|_P$. This also gives us
   \[
     \disc(\AA_d, n) =
     \sup\{\disc(\AA_d|_P): P \subseteq [n]^d, |P| = n\}.
   \]

We will make use of the method of \df{canonical intervals}. For a non-negative
integer \(k\), we define $\CC_k$ to consist of consecutive disjoint intervals (restricted to the integers) of size $2^k$. For a given \(n\), we can take the set
system \(\CC\) to consist of all the intervals in
\(
\bigcup_{k = 0}^{\lfloor \log_2 n \rfloor}\CC_k
\)
that are entirely contained in \([n]\). \(\CC\) has small degree:
because each \(i\in [n]\) is contained in a unique canonical interval
in \(\CC_k\), the degree of \(\CC\) is bounded as \(\ssdeg(\CC) \le 1+ \log_2 n\). It is an easy exercise to show that every anchored interval $I = \{1, \ldots, i\} \subseteq [n]$ can be decomposed into the union of at most $1 + \log_2 n$ disjoint intervals from $\CC$, each from a different $\CC_k$.

A typical application of canonical intervals is given in the next theorem, which applies the Beck-Fiala theorem to bound the discrepancy of boxes. We only give a proof sketch and leave the details as an exercise. 

\begin{theorem}\label{thm:ch2-bftusnady}
  For any integers \(n \ge 1\) and \(d \ge 2\),
  \(\disc(\RR_d, n) \lesssim \log(2n)^{2d}\), and, moreover, for any
  fixed \(d\), a coloring of any set \(P\) of \(n\) points in \(\R^d\)
  achieving this discrepancy can be computed in time polynomial in
  \(n\).
\end{theorem}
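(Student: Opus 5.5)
We reduce to anchored boxes, use $\disc(\RR_d,n)\le 2^d\disc(\AA_d,n)$, and assume $P\subseteq[n]^d$. Since $d$ is fixed, the factor $2^d$ is a constant and can be absorbed into $\lesssim$. So it suffices to bound $\disc(\AA_d|_P)\lesssim \log(2n)^{2d}$, up to constants depending on $d$.

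We build a low-degree auxiliary set system from products of canonical intervals. Let $\CC$ be the canonical intervals of $[n]$. For each tuple $(C_1,\ldots,C_d)\in\CC^d$, define the canonical box $C_1\times\cdots\times C_d$, and let $\TT\eqdef\{(C_1\times\cdots\times C_d)\cap P\}$. This gives a set system on $P$ with at most $|\CC|^d\le (2n)^d$ sets, which is polynomial for fixed $d$.

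Each point $p\in P$ lies in at most $\ssdeg(\CC)\le 1+\log_2 n$ canonical intervals in each coordinate. Hence $p$ lies in at most $(1+\log_2 n)^d$ canonical boxes, so $\ssdeg(\TT)\le(1+\log_2 n)^d$.

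By Theorem~\ref{thm:ch2-bf}, we can compute in polynomial time a coloring $\col$ with $\disc(\TT,\col)\le 2(1+\log_2 n)^d$.

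Now take an anchored box $\rbox_{0,b}\cap P$. Since $P\subseteq[n]^d$, we may take $b\in[n]^d$, and then $\rbox_{0,b}\cap P=(\{1,\ldots,b(1)\}\times\cdots\times\{1,\ldots,b(d)\})\cap P$. Decompose each anchored interval $\{1,\ldots,b(i)\}$ into at most $1+\log_2 n$ disjoint canonical intervals. Taking products gives a partition of the anchored box into at most $(1+\log_2 n)^d$ disjoint canonical boxes. Intersecting with $P$ gives a disjoint union of sets of $\TT$, so
\[
|\col(\rbox_{0,b}\cap P)|\le (1+\log_2 n)^d\cdot 2(1+\log_2 n)^d\lesssim \log(2n)^{2d}.
\]
Multiplying by $2^d$ handles general boxes. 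Each of these is a signed combination of $2^d$ anchored boxes, and the coloring is the same one, so the bound transfers directly.

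The only step needing care is the bookkeeping. First, the relabeling of $P$ onto the grid $[n]^d$ must preserve the anchored-box set system, which the text already asserts. Second, the degree of the product system must be controlled, which comes down to each point lying in exactly one interval of each $\CC_k$.

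For running time, $\TT$ is computed by enumerating all canonical boxes. The Beck–Fiala algorithm is polynomial in $n$ and $|\TT|$, and $|\TT|$ is polynomial in $n$ for fixed $d$.
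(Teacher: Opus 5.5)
Your proof is correct and follows the paper's approach: reduce to anchored boxes on the grid $[n]^d$, apply the Beck--Fiala theorem to the product system of canonical boxes (degree at most $(1+\log_2 n)^d$), and decompose each anchored box into at most $(1+\log_2 n)^d$ disjoint canonical boxes. The paper only sketches the case $d=2$ and leaves general $d$ as an exercise, which you carry out; as in the paper's sketch, the $2^d$ factor from the anchored-box reduction makes the implied constant depend on $d$, and you rightly flag this.
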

\begin{proof}[Proof Sketch]
    For simplicity, we sketch the proof in the case $d=2$, which already captures the key ideas.
    As we saw above it is enough to bound the discrepancy of the set system $\AA_2|_P$ induced by anchored boxes on arbitrary $n$-point subset $P$ of $[n]^2$. In order to use the Beck-Fiala theorem, we instead consider a set system with low degree. To this end, let $\mathcal{CR}_2 := \CC\times \CC$ be the set system of canonical rectangles, i.e., rectangles whose sides are given by canonical intervals, and let $\SS := \mathcal{CR}_2|_P$ be its restriction to $P$. Then, since $\CC$ has degree at most $1+\log_2 n$, $\mathcal{CR}_2$, and therefore also $\SS$, has degree at most $(1+\log_2 n)^2$. The Beck-Fiala theorem (Theorem~\ref{thm:ch2-bf}) then implies that $\disc(\SS) \lesssim \log(2n)^2$.
    
    Moreover, since any anchored interval in $[n]$ can be written as the disjoint union of at most $1+\log_2 n$ canonical intervals, we have that any anchored rectangle $A \in \AA_2$ in $[n]^2$ can be written as the disjoint union of at most $(1+\log_2 n)^2$ canonical rectangles from $\mathcal{CR}_2$. Restricting to $P$, this means that any set in $\AA_2|_P$ is the disjoint union of at most $(1+\log_2 n)^2$ sets from $\SS$. Therefore, the coloring that achieves discrepancy $\disc(\SS) \lesssim \log(2n)^2$ also gives $\disc(\AA_2|_P) \lesssim \log(2n)^4$. 
\end{proof}

We can get a better bound by encoding one dimension of the boxes as permutations, only using canonical intervals on the other dimensions, and using Theorem~\ref{thm:ch2-kperm} in place of the Beck-Fiala theorem. 

\begin{theorem}\label{thm:ch2-tusnady}
  For any integers \(n \ge 1\) and \(d \ge 2\),
  \(\disc(\RR_d, n) \lesssim \log(2n)^{2d-1}\), and, moreover, for any
  fixed \(d\), a coloring of any set \(P\) of \(n\) points in \(\R^d\)
  achieving this discrepancy can be computed in time polynomial in
  \(n\).
\end{theorem}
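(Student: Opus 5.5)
As for Theorem~\ref{thm:ch2-bftusnady}, it suffices to bound \(\disc(\AA_d|_P)\) for an \(n\)-point set \(P \subseteq [n]^d\), since \(\disc(\RR_d,n) \le 2^d\disc(\AA_d,n)\) and \(2^d\) is a constant for fixed \(d\). The plan is to treat the last coordinate with permutations and the first \(d-1\) coordinates with canonical intervals. We will define a family of permutations of \(P\), apply Theorem~\ref{thm:ch2-kperm}, and then decompose every anchored box into few prefixes of these permutations.

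\textbf{The permutations.} Let \(\CC\) be the canonical intervals on \([n]\), and let \(\mathcal{CR}_{d-1} \eqdef \CC^{d-1}\) be the canonical boxes in the first \(d-1\) coordinates. For each \(C \in \mathcal{CR}_{d-1}\), consider the points \(P_C \eqdef \{p \in P: (p(1),\ldots,p(d-1)) \in C\}\) sorted by increasing last coordinate, with ties broken arbitrarily but consistently. The anchored sets \(\{p \in P_C: p(d) \le b\}\) are then prefixes of this ordering. These orderings are only orderings of subsets of \(P\), so we make them permutations as follows.

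Group the canonical boxes into classes indexed by \((k_1,\ldots,k_{d-1})\), where \(k_j\) is the level of the canonical interval in coordinate \(j\). Within one class the canonical boxes are pairwise disjoint and cover \([n]^{d-1}\) (up to the boundary). Hence the sets \(P_C\) in one class partition \(P\), except possibly for points lying outside the full canonical intervals; those points can be placed at the end and ignored. Concatenating the orderings of the \(P_C\) in a class, in any fixed order of the boxes, gives one permutation \(\pi_{k_1,\ldots,k_{d-1}}\) of \(P\). Each set \(\{p\in P_C: p(d)\le b\}\) equals the difference of two prefixes of this permutation: the prefix ending just before block \(C\), and the prefix ending at the last point of \(C\) with \(p(d)\le b\). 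The number of permutations is \(k = (1+\log_2 n)^{d-1}\).

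\textbf{Bounding the discrepancy.} Theorem~\ref{thm:ch2-kperm} gives, in polynomial time, a coloring \(\col\) with discrepancy \(\lesssim k\log(2n) \lesssim \log(2n)^{d}\) on all prefixes of all these permutations. Each block set therefore has discrepancy at most twice this. An anchored box \(\rbox_{0,b}\) restricted to \(P\) is the disjoint union, over canonical boxes \(C\) in a decomposition of \(\prod_{j<d}[1,b(j)]\), of the sets \(\{p\in P_C: p(d)\le b(d)\}\). That decomposition uses at most \((1+\log_2 n)^{d-1}\) canonical boxes, by taking products of the decompositions of anchored intervals. Summing gives
\[
\disc(\AA_d|_P) \lesssim (1+\log_2 n)^{d-1}\cdot \log(2n)^{d} \lesssim \log(2n)^{2d-1},
\]
which is the claim.

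\textbf{Main obstacle.} The delicate step is the bookkeeping of turning the per-class orderings into genuine permutations of \(P\) so that Theorem~\ref{thm:ch2-kperm} applies, namely the concatenation together with the prefix-difference argument. The approach fails if one instead uses a separate permutation for each canonical box: that gives about \(n^{d-1}\) permutations rather than polylogarithmically many.
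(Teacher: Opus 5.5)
Your proposal is correct and takes essentially the paper's approach: the paper also builds one permutation per level of canonical intervals, grouping points by canonical cell and sorting each group by the remaining coordinate. It then applies Theorem~\ref{thm:ch2-kperm} and splits each anchored box into at most $(1+\log_2 n)^{d-1}$ permutation intervals, each the difference of two prefixes. The paper writes this out only for $d=2$ (canonical intervals in the $y$-coordinate, sorting by $x$) and leaves general $d$ as an exercise; your level tuples $(k_1,\ldots,k_{d-1})$ carry out exactly that extension, with the same $d$-dependent constant.
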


\begin{proof}[Proof Sketch]
  Once again, we focus on the case $d=2$, leaving the extension to higher dimensions as an exercise. As before, we only need to bound the discrepancy of \(\AA_d|_P\) for an arbitrary $n$-point set \(P \subseteq [n]^2\).  To reduce this
  problem to the discrepancy of permutations, we construct
  \(m \le 1 + \log_2 n\) permutations \(\pi_1, \ldots, \pi_m\)
  of \(P\), one for each integer \(k\) between \(0\) and
  \(\lfloor \log_2 n \rfloor\). We will think of each permutation as an
  ordering of \(P\). The permutation $\pi_k$ is defined so that all points of $P$ whose $y$-coordinates are in the same canonical interval in $\CC_k$ are listed contiguously, and in increasing order of the $x$-coordinate  (see Figure~\ref{fig:ch2-boxes-perm} for an example).

  \begin{figure}[htp]
    \centering
    \includegraphics{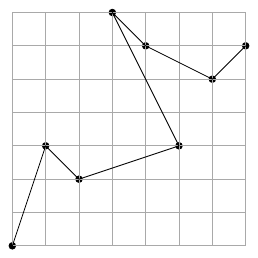}
    \caption{The permutation \(\pi_2\) for a pointset in \([8]^2\), listing the
      points with \(y\)-coordinate in \(\{1,2,3,4\}\) first, in
      increasing order of their \(x\)-coordinate, and then listing the
      points with \(y\)-coordinate in \(\{5,6,7,8\}\). The bottom-left
      corner is \((1,1)\).}
    \label{fig:ch2-boxes-perm}
  \end{figure}

  If \(\SS\) is the set system induced by \(\pi_1, \ldots,
  \pi_m\) on \(P\), then, by Theorem~\ref{thm:ch2-kperm},
  \[
  \disc(\SS) \lesssim m \log(2n) \lesssim \log(2n)^2.
  \]
  The theorem is now implied by the bound
  \begin{equation}\label{eq:ch2-tusnady}
    \disc(\AA_d|_P, \col) \le 2(1 + \log_2 n) \disc(\SS, \col),
  \end{equation}
  valid for any coloring $\col$ of $P$.
  Notice that, since we can express any interval of a permutation (i.e., a set
  of the type \(\{\pi(i), \ldots, \pi(j)\}\) for \(i \le j\)) as the
  difference of two prefixes, the discrepancy of any interval of any
  of the permutations \(\pi_k\) is bounded by \(2\disc(\SS,
  \col)\).
  To prove \eqref{eq:ch2-tusnady}, it is then enough show that every anchored box in $\AA_2|_P$ can be written as the disjoint union of $1+\log_2 n$  intervals of the permutations $\pi_1, \ldots, \pi_m$. Take an arbitrary anchored rectangle $\rbox_{0,b} \cap P$. Decomposing its vertical side $[0, b(2)]$ into a disjoint union of canonical intervals allows us to write $\rbox_{0,b}$ as a disjoint union $(\rbox'_1 \cap P) \cup \ldots\cup (\rbox'_\ell\cap P)$ of  $\ell \le 1+\log_2 n$ rectangles, where the vertical side of each $\rbox'_i$ is a canonical interval, and the horizontal side is $[0,b(1)]$. The vertical side of $\rbox'_i$ being a canonical interval implies that all points in $\rbox'_i\cap P$ are listed contiguously in one of the permutations $\pi_k$, i.e., they form an interval of $\pi_k$. This proves our claim, and therefore inequality \eqref{eq:ch2-tusnady} and the theorem.
\end{proof}

\section{Vector Balancing}

Suppose that \(\|\cdot\|_X\) is a norm on \(\R^m\), and recall our
notation for the discrepancy of an \(m\times n\) matrix \(\mat\)
measured in terms of \(\|\cdot\|_X\):
\[
\disc_X(\mat) \eqdef \min_{\col \in \{-1,+1\}^n} \|\mat \col\|_X.
\]
Recall further that the vector balancing constant of a set \(K
\subseteq \R^m\) with respect to the normed space \(X\) is
\[
  \vb(K, X) \eqdef \sup\{\disc_X((u_i)_{i=1}^n): u_1, \ldots, u_n \in
  K, n \in \N\}.
\]

It is natural to set \(K\) to be the unit ball \(B_X = \{u \in \R^m:
\|u\|_X \le 1\}\) of \(X\), and ask how \(\vb(B_X, X)\) can be bounded
in terms of \(m\) and of geometric properties of \(B_X\). The triangle
inequality shows that \(\disc_X(\mat) \le \sum_{i = 1}^n \|u_i\|_X \le
n\) for any matrix \(\mat = (u_i)_{i = 1}^n\) with columns in \(B_X\), 
and this bound holds for \emph{any} coloring \(\col \in \{-1,
+1\}^n\). This bound is weak when \(n\) is much bigger than \(m\), but
the linear algebraic method
allows us to reduce to the
\(n \le m\) case, and show a bound for an arbitrary normed space \(X\)
that only depends on the dimension. 
\begin{theorem}\label{thm:ch2-gen-vb}
  Let \(\|\cdot\|_X\) be a norm on \(\R^m\) with unit ball
  \(B_X\).
  Then \(\vb(B_X, X) \le 2m\), and moreover, for any matrix
  $M$ with columns \(u_1, \ldots, u_n \in B_X\), a
  coloring \(\col\) achieving \(\disc_X(M) \le 2m\) can
  be computed in polynomial time. 
\end{theorem}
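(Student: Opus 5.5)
We use the linear algebraic template with the dangerous directions being all of $\R^m$: we keep $\mat\col_t$ fixed at $0$ while most of the coordinates get rounded. Concretely, apply Lemma~\ref{lm:ch2-linalg-gen} to $\mat$ with $\col = 0$ and $\act = [n]$. As long as $n > \rank\mat$, this yields a fractional coloring $\col' \in [-1,+1]^n$ with $\mat\col' = 0$ and at most $\rank \mat \le m$ coordinates strictly inside $(-1,1)$. If we prefer to invoke the lemma repeatedly, we take $\act$ to be the currently fractional coordinates each time. Each application rounds at least $|\act| - \rank\mat(*,\act) \ge |\act| - m$ of them, so after at most $n$ polynomial-time applications we reach a fractional coloring $\col'$ with $\mat\col' = 0$ and a set $J = \{i : |\col'(i)| < 1\}$ of size $|J| \le m$. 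If $n \le m$ from the start, we simply take $\col' = 0$ and $J = [n]$.

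Next we round the remaining coordinates in $J$ arbitrarily: set $\col(i) = \col'(i)$ for $i \notin J$, and $\col(i) \in \{-1,+1\}$ arbitrary, say $\sign$ of $\col'(i)$ with ties broken to $+1$, for $i \in J$. Then
\[
\|\mat \col\|_X = \|\mat(\col - \col')\|_X \le \sum_{i \in J} |\col(i) - \col'(i)|\,\|u_i\|_X \le \sum_{i\in J} |\col(i)-\col'(i)|.
\]
Each term is at most $2$, which already gives the bound $2|J| \le 2m$. With sign rounding each term is in fact at most $1$, giving $m$, but $2m$ is what is claimed, so the crude bound suffices.

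There is no real obstacle here. The only points to check are the following. First, Lemma~\ref{lm:ch2-linalg-gen} requires $\rank \mat(*,\act) < |\act|$, which holds whenever $|\act| > m$; the iteration therefore stops only once $|\act| \le m$. Second, the whole procedure runs in polynomial time, since each application of the lemma is polynomial and there are at most $n$ of them. Alternatively, one could invoke Theorem~\ref{thm:ch2-col-reduction} together with the trivial bound $\herdisc_X(\mat,m) \le m$, which gives $2m$ directly. We will present the direct argument, since it makes the algorithm explicit.
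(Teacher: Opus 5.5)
Your proof is correct and is essentially the paper's argument. The paper invokes Theorem~\ref{thm:ch2-col-reduction}, which is exactly your application of Lemma~\ref{lm:ch2-linalg-gen} with $\col = 0$ and $\act = [n]$ leaving at most $m$ fractional coordinates, together with the trivial bound $\herdisc_X(\mat,m)\le m$. The only difference is that you round the leftover coordinates directly rather than through the transference theorem. This makes the polynomial-time algorithm more explicit, and with sign rounding it gives the improved bound $m$ that the paper notes in the remark after the theorem.
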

\begin{proof}
  To show that \(\disc_X(\mat) \le
  2m\), by Theorem~\ref{thm:ch2-col-reduction} it suffices to
  show that \(\herdisc_X(\mat,m) \le  m\). The latter follows trivially from the triangle inequality.
\end{proof}

\remark The bound above can be improved to \(\vb(B_X, X) \le m\) by modifying the proof of Theorem~\ref{thm:ch2-col-reduction} in this special case. In particular, rounding the coordinates of the vector $y \in [-1,+1]^J$ in the proof of Theorem~\ref{thm:ch2-col-reduction} to the nearest element of $\{-1,+1\}$, instead of using discrepancy based rounding. We leave the details as an exercise. 

This improvement to Theorem~\ref{thm:ch2-gen-vb} is tight for the \(\ell_1^m\) norm, since
\(\disc_{\ell^m_1}(I) = m\), where \(I\) is the \(m\times m\) identity
matrix. On the other hand, a better bound is possible for every
\(\ell^m_p\) norm when \(p > 1\), as shown in the next theorem. 

\begin{theorem}\label{thm:ch2-ellp-vb}
  Let
  \(B_p^m =\{u\in\R^m:\|u\|_p\le 1\}\) be the unit ball of 
  \(\ell_p^m\). 
  Then,  \(\vb(B_p^m, \ell_p^m) \lesssim m^{1/p},\) for \(1 \le p \le 2\),
  and  \(\vb(B_p^m, \ell_p^m) \lesssim
  \min\{\sqrt{p},\sqrt{\log 2m}\}\sqrt{m}\) for \(p \ge 2\).
\end{theorem}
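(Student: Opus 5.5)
By Theorem~\ref{thm:ch2-col-reduction}, it suffices to show that $\herdisc_{\ell_p^m}(\mat, m)$ satisfies the claimed bounds for any matrix $\mat$ with columns in $B_p^m$. Since a submatrix of such an $\mat$ again has columns in $B_p^m$, the task reduces to bounding $\disc_{\ell_p^m}$ for at most $k \le m$ vectors $u_1,\ldots,u_k \in B_p^m$. The factor $2$ lost in Theorem~\ref{thm:ch2-col-reduction} is absorbed into $\lesssim$. For this reduced problem I would use a uniformly random coloring $\col\in\{-1,+1\}^k$ and bound $\E\|\mat\col\|_p$.

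\emph{Case $1\le p\le 2$.} The simplest route avoids probability. By the triangle inequality, any coloring gives $\|\sum_i \col(i)u_i\|_p \le k \le m$, which is too weak, so randomness is needed. By Jensen's inequality, $\E\|\mat\col\|_p \le (\E\|\mat\col\|_p^p)^{1/p}$, and
\[
\E\|\mat\col\|_p^p=\sum_{j=1}^m \E\Big|\sum_{i=1}^k \col(i)u_i(j)\Big|^p \le \sum_{j=1}^m \Big(\sum_{i=1}^k u_i(j)^2\Big)^{p/2},
\]
where the last step uses Jensen's inequality again (since $p\le 2$) and $\E(\sum_i\col(i)a_i)^2=\sum_i a_i^2$. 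Because $p/2\le 1$, the map $t\mapsto t^{p/2}$ is subadditive, so the right-hand side is at most $\sum_j\sum_i |u_i(j)|^p=\sum_i\|u_i\|_p^p\le k\le m$. This gives $\E\|\mat\col\|_p\le m^{1/p}$, and some coloring achieves this bound.

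\emph{Case $p\ge 2$.} I would use Khintchine's inequality: $\E|\sum_i\col(i)a_i|^p \le (C\sqrt p)^p(\sum_i a_i^2)^{p/2}$. This gives
\[
\E\|\mat\col\|_p^p \lesssim (C\sqrt p)^p \sum_{j}\Big(\sum_i u_i(j)^2\Big)^{p/2}=(C\sqrt p)^p\Big\|\sum_i u_i^{2}\Big\|_{p/2}^{p/2},
\]
where $u_i^2$ denotes the coordinatewise square. By the triangle inequality in $\ell_{p/2}$ (valid because $p/2\ge1$), $\|\sum_i u_i^2\|_{p/2}\le \sum_i\|u_i^2\|_{p/2}=\sum_i\|u_i\|_p^2\le k\le m$. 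Hence $\E\|\mat\col\|_p\lesssim \sqrt{p}\sqrt m$.

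For the $\sqrt{\log 2m}$ bound when $p>\log 2m$, I would compare norms. With $q=\max\{2,\log 2m\}\le p$, we have $B_p^m\subseteq m^{1/q-1/p}B_q^m$ and $\|v\|_p\le\|v\|_q$. Applying the $q$ case then gives a bound of $\sqrt q\, m^{1/q-1/p}\sqrt m\lesssim\sqrt{\log 2m}\,\sqrt m$, since $m^{1/q}\le e$. Alternatively, one can use Hoeffding's inequality and a union bound in $\ell_\infty$ directly.

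The main obstacle is the $p\ge2$ case. It requires Khintchine's inequality with the correct $\sqrt p$ growth of the constant, which the paper has not stated but which is standard and can be derived from Hoeffding's tail bound by integrating. Together with the observation that $\sum_i u_i^2$ has small $\ell_{p/2}$ norm, these are the two facts I expect to carry the proof.
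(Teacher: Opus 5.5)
Your proof is correct and follows the same route as the paper: reduce to at most $m$ columns via Theorem~\ref{thm:ch2-col-reduction}, bound the expected $\ell_p$ norm of a uniformly random signed sum, and use norm comparison when $p \ge \log 2m$. The one difference is that the paper cites the type-$p$ and type-$2$ inequalities for $\ell_p$ from Ledoux--Talagrand, while you derive them directly, using Jensen and the subadditivity of $t\mapsto t^{p/2}$ for $p\le 2$, and scalar Khintchine with the triangle inequality in $\ell_{p/2}$ for $p\ge 2$; this makes your argument self-contained.
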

\begin{proof}
  As in the proof of Theorem~\ref{thm:ch2-gen-vb}, let \(u_1, \ldots,
  u_n \in B_p\), and define the matrix \(\mat \eqdef (u_i)_{i =    1}^n\). 
  By Theorem~\ref{thm:ch2-col-reduction}, it is enough to bound
  \(\herdisc_X(\mat(*,J))\) for any \(J\subseteq [n]\) of size at most
  \(m\), i.e., it suffices to show the discrepancy bounds
  with \(n\) in place of \(m\):
  \begin{align}
    &1 \le p \le 2 \implies \disc_{\ell_p^m}(\mat) \lesssim n^{1/p},\\
    &p \ge 2 \implies \disc_{\ell_p^m}(\mat) \lesssim \min\{\sqrt{p},\sqrt{\log 2m}\}\sqrt{n}.
  \end{align}

  Let us start with the Euclidean case \(p=2\). Then, for \(\col \in \{-1,+1\}^n\)
  chosen uniformly at random, we have the parallelogram law
  \begin{equation}\label{eq:ch2-paralellogram}
    \E\|\mat \col\|_2^2 = 
    \E \Big\|\sum_{i = 1}^n \col(i)u_i\Big\|_2^2 = \sum_{i = 1}^n
    \|u_i\|_2^2 \le n.
  \end{equation}
  So there must then exist at least one coloring \(\col\) such that
  \(\|\mat \col\|_2^2 \le n\), and thus \(\disc_{\ell_2^m}(\mat) \le
  \sqrt{n}\).

  Interestingly, an approximate version of the upper bound in
  \eqref{eq:ch2-paralellogram} holds for any \(p \ge 2\) by the
  type-\(2\) inequality for \(\ell_p\)~\cite[Chapter
  9]{LedouxTalagrand91}. The inequality states that, for a uniformly
  random \(\col \in \{-1,+1\}^n\),
  \begin{equation}\label{eq:ch2-type2}
    \E \Big\|\sum_{i = 1}^n \col(i)u_i\Big\|_p^2 \lesssim {p} \sum_{i = 1}^n
    \|u_i\|_p^2 \le p n.
  \end{equation}
  This implies that, for \(p \ge 2\),
  \(
  \disc_{\ell_p^m}(\mat) \lesssim \sqrt{pn}.
  \)
  
  By the inequalities \(\|x\|_q \le \|x\|_p \le m^{\frac1p -
    \frac1q} \|x\|_q\), which hold for any \(1 \le p \le q \le \infty\), all \(\ell_p^m\) norms are equivalent up to a constant
  for \(p \ge \log m\). Therefore, for \(p \ge \max\{2,\log
  m\}\), we also have the stronger bound
  \(
  \disc_{\ell_p^m}(\mat) \lesssim \sqrt{\log(2m) n}.
  \)
  This takes care of the case \(p \ge 2\).

  The appropriate analog of the type-2 inequality~\eqref{eq:ch2-type2} for \(1 \le p < 2\) is the type-\(p\) inequality, which states
  that for a uniformly random \(\col \in \{-1,+1\}^n\),
  \[
    \E \Big\|\sum_{i = 1}^n \col(i)u_i\Big\|_p^p \lesssim \sum_{i = 1}^n
    \|u_i\|_p^p \le n.
  \]
  This implies that, for \(1 \le p \le 2\), we have 
  \(
  \disc_{\ell_p^m}(\mat) \lesssim n^{1/p}.
  \)
\end{proof}


The example of the \(m\times m\) identity matrix again shows that the
bounds in Theorem~\ref{thm:ch2-ellp-vb} are tight up to constants for
\(1 \le p \le 2\). The example of an appropriately scaled Hadamard matrix (i.e., an $m\times m$ matrix $M$ with entries in $\{-1,+1\}$ and pairwise orthogonal rows and columns) shows that the bounds are also tight up to the \(\max\{\sqrt{p},\sqrt{\log 2m}\}\)
term for \(p \ge 2\): we leave the details of the argument as an exercise.

As we will see later 
however, a better bound holds for \(p = \infty\) (equivalently, for $p \ge \log m$): this is a landmark result of Spencer (Theorem~\ref{thm:ch3-spencer}), which we prove in Chapter~\ref{ch:partial}.

Let us finally remark that the proof of the Beck-Fiala theorem
(Theorem~\ref{thm:ch2-bf}) can be generalized to show that \(\vb(B_1^m,
\ell_\infty^m) \le 2\). We leave this generalization as an
exercise. 

We recall here the  Koml\'os problem
(Problem~\ref{prob:komlos} from the Introduction), asking to determine
if \(\vb(B_2^m, \ell_\infty^m) \lesssim 1\). A positive resolution of this problem also implies a
positive resolution of the Beck-Fiala conjecture mentioned earlier.
In general, it is an interesting problem to determine how the vector
balancing constant \(\vb(B_p^m, \ell_q^m)\) depends on \(m\) when
\(p\) is much smaller than \(q\).

\section{Prefix Discrepancy and Signed Series}

Next, we recall the Steinitz problem, where given \(n\) ``short''
vectors \(u_1, \ldots, u_n \in \R^m\) whose sum is \(0\), we ask
if the vectors can be rearranged so that all partial sums are
bounded independently of \(n\). As we saw in the Introduction, the Steinitz problem is closely related to the
prefix discrepancy of a matrix \(M = (u_i)_{i = 1}^n\), defined as
\[
  \pdisc_X(M) \eqdef \min_{\col \in \{-1, +1\}^n}
  \max_{j=1}^n \Big\|\sum_{i = 1}^j \col(i) u_i\Big\|_X,
\]
where \(\|\cdot\|_X\) is a norm. Namely, by
Theorem~\ref{thm:ch1-chobanyan} (Chobanyan's transference theorem), 
the Steinitz
constant of \(M = (u_i)_{i = 1}^n\), 
\[
  \st_X(M) = \min_{\pi \in S_n} \max_{j = 1}^n \Big\|\sum_{i = 1}^j
     u_i - \frac{j}{n} \sum_{k = 1}^n u_k\Big\|_X,
\]
satisfies the bound
\[
  \st_X(M) \le \max_{\pi \in S_n}\pdisc_X(\pi \circ M),
\]
where \(\pi \circ M \eqdef (u_{\pi(i)})_{i = 1}^n\) is the rearrangement of
the columns of \(M\) according to \(\pi\). In other words, the
worst-case prefix discrepancy over re-arrangements of the columns of
\(M\) bounds the Steinitz constant.

Let us recall the notation for the worst-case Steinitz constant
and the worst-case prefix discrepancy: for a set \(K
\subseteq \R^m\), and a normed space \(X = (\R^m,\|\cdot\|_X)\), we  have
\begin{align*}
  &\st(K, X) \eqdef \sup\{\st_X((u_i)_{i=1}^n): u_1, \ldots, u_n \in K, n \in \N\},\\
  &\sser(K, X) \eqdef \sup\{\pdisc_X((u_i)_{i=1}^n): u_1, \ldots, u_n \in K, n \in \N\}.
\end{align*}
Chobanyan's lemma then implies that \(\st(K,X) \le \sser(K,X)\) for any
\(K\) and \(X\). 
The next theorem bounds these by $2m$.
This significantly strengthens Theorem~\ref{thm:ch2-gen-vb}, by extending the bound there for discrepancy to that for prefix discrepancy.

\begin{theorem}\label{thm:ch2-gen-st}
  Let \(\|\cdot\|_X\) be a norm on \(\R^m\) with unit ball
  \(B_X\).
  Then
  \[
    \st(B_X, X) \le \sser(B_X, X) \le 2m.
  \]
  Moreover, for any matrix \(M\) with columns \(u_1, \ldots, u_n \in B_X\), a
  coloring \(\col\) achieving \(\pdisc_X(M) \le 2m\) can
  be computed in polynomial time. 
\end{theorem}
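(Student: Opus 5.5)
The first inequality \(\st(B_X,X)\le\sser(B_X,X)\) is immediate from Theorem~\ref{thm:ch1-chobanyan} (Chobanyan's transference theorem). That theorem holds in any normed space, and reordering the vectors keeps them in \(B_X\). The reduction to vectors summing to \(0\) replaces \(u_i\) by \(u_i-\frac1n\sum_k u_k\). This can double norms, but I will not worry about constants here. The real content is \(\sser(B_X,X)\le 2m\), which I would prove by a linear algebraic algorithm in the template of Lemma~\ref{lm:ch2-linalg-gen}. The idea is the Bárány–Grinberg argument: keep a sliding window of at most \(m\) "active" fractional columns, and keep all earlier prefix sums exactly zero in the fractional sense.

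Concretely, I process \(j=1,\dots,n\) and maintain a fractional coloring \(\col\in[-1,1]^j\) of the first \(j\) columns with two properties. First, \(\sum_{i\le j}\col(i)u_i=0\). Second, the set \(\act\) of indices \(i\le j\) with \(|\col(i)|<1\) has \(|\act|\le m\). When column \(j+1\) arrives, I set \(\col(j+1)=0\), so \(\act\) grows to at most \(m+1\) and the sum is still zero. Then I apply Lemma~\ref{lm:ch2-linalg-gen} to the \(m\times(j+1)\) matrix \((u_1,\dots,u_{j+1})\) with active set \(\act\). Since \(\rank \mat(*,\act)\le m<|\act|\), the lemma fixes at least one more element to \(\pm1\), keeps \(\mat\col\) unchanged (still \(0\)), and does not touch coordinates outside \(\act\). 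I repeat until \(|\act|\le m\). Elements already fixed at \(\pm1\) are never changed again. At the end I round the remaining (at most \(m\)) active coordinates arbitrarily to \(\pm1\). Each step is a polynomial-time linear algebra computation, which gives the algorithmic claim.

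To bound the prefix norms, I define \(\col^{(j)}\) as the fractional coloring right after step \(j\), and \(\col\) as the final coloring. Every coordinate \(i\le j\) that was fixed during the first \(j\) steps keeps its value forever, so \(\col(i)=\col^{(j)}(i)\) for those \(i\). Since \(\sum_{i\le j}\col^{(j)}(i)u_i=0\), we get
\[
\Big\|\sum_{i\le j}\col(i)u_i\Big\|_X=\Big\|\sum_{i\le j,\ i\in\act^{(j)}}(\col(i)-\col^{(j)}(i))u_i\Big\|_X\le 2|\act^{(j)}|\le 2m,
\]
using \(\|u_i\|_X\le1\) and \(|\col(i)-\col^{(j)}(i)|\le 2\).

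The step needing most care is checking that later updates never alter coordinates \(i\le j\) outside \(\act^{(j)}\). This holds because Lemma~\ref{lm:ch2-linalg-gen} only moves active coordinates, and fixed coordinates never return to the active set. The other point to verify is that the active set is exactly the set of unfixed coordinates, so that "fixed" and "inactive" coincide throughout and the \(\le m\) bound on \(|\act^{(j)}|\) controls the error. If the mean-subtraction in the first part costs a factor of \(2\), I would instead apply Theorem~\ref{thm:ch1-chobanyan} directly to vectors that already sum to zero and are in \(B_X\), which matches the stated bound.
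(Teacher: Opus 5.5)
Your proposal is correct and is essentially the paper's proof. Adding columns one at a time and fixing a coordinate whenever the active set reaches size $m+1$ is exactly the paper's sliding window of the first $m+1$ unfixed indices, and your $\col^{(j)}$ is the paper's $\col_{t_j}$ (the last coloring before the window leaves $[j]$), after which at most $m$ coordinates of $[j]$ move, each by at most $2$.

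The paper, like you, obtains the first inequality simply by citing Chobanyan's theorem. Note, however, that your zero-sum fallback does not cover the general definition of $\st(B_X,X)$, which ranges over sequences in $B_X$ with arbitrary sums. For such sequences the mean-subtraction route only gives $\st(B_X,X)\le 2\sser(B_X,X)$.
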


\paragraph{The Algorithm.} We use our template via Lemma~\ref{lm:ch2-linalg-gen}. However, unlike previous examples, we will choose the active coordinates \(\act\) more carefully (among the coordinates that have not yet reached \(+1\) or \(-1\)). 
Also, we can assume that \(m < n\),
otherwise the theorem holds trivially for any arbitrary coloring by the triangle inequality for \(\|\cdot\|_X\).

As usual, let $\col_{t-1}$ be the coloring at the  beginning of step $t$, and 
initially, \(\col_0 \eqdef
  0\).  At each step \(t
  \ge 1,\) we choose \(\act_{t}\) to consist of the first \(m+1\) indices
  \(i_1, \ldots, i_{m+1}\) satisfying \(|\col_{t-1}(i_j)|<1\) for all
  \(j \in [m+1]\). In particular, at $t=1$, \(\act_1 \eqdef [m+1]\). 
In other words, $A_t$ can be viewed as a sliding window of the first $m+1$ strictly fractional coordinates.  
At step $t$, applying Lemma~\ref{lm:ch2-linalg-gen} to \(\mat\),
  \(\col_{t-1}, A_t \)
  gives a fractional coloring
  \(\col_{t}\) such that
  \begin{enumerate}
  \item \(\mat \col_t = \mat x_{t-1}\);
  \item for all \(i \in [n]\setminus\act_{t}\), \(\col_t(i) = \col_{t-1}(i)\);
  \item at least one element \(i \in \act_{t}\) satisfies
    \(|\col_t(i)| = 1\).
  \end{enumerate}

We terminate whenever fewer
than \(m+1\) such indices are left -- once that happens,
  say at step \(T\), we set \(\col(i) \eqdef \col_T(i)\) whenever
  \(|\col_T(i)| = 1\), and otherwise we set \(\col(i)\) arbitrarily, and output $\col$.

\paragraph{Analysis.} As the rank of \(\mat(*,\act_{t})\) is at most \(m < |\act_{t}| = m+1\), at each step $t$, at least one variable in $A_t$ reaches $\pm 1$, and the algorithm terminates in at most $n$ steps.

To analyze the (prefix) discrepancy, let us fix some arbitrary \(j \in [n]\), and track how the discrepancy 
$d_t = \|\sum_{i=1}^j x_t(i) u_i \|_X$ evolves over time. Clearly, $d_0=0$ as each $x_0(i)=0$ initially.
The key observation is that as long as $A_t \subseteq [j]$, i.e., the sliding window $A_t$ is contained in the first $j$ coordinates, the discrepancy $d_t$ of this prefix stays zero.

To see this, let $\max(\act_t)$ denote the largest coordinate in $\act_t$, and let $t_j$ be the last time step when
  \( \max(\act_t)\leq j\).   
  As \(\max (\act_s)\) is non-decreasing in $s$ by our choice of
  \(\act_s\), we have that, for all \(1 \le s \le t_j\), \(\act_{s} \subseteq
  [j]\). Since the colorings \(\col_s\) and \(\col_{s-1}\) are equal
  outside \(\act_{s}\),
  \[
    \sum_{i=1}^j (\col_s(i) - \col_{s-1}(i))u_i =  \mat (\col_s -
    \col_{s-1}) = 0,
  \]
  and thus $d_{s}-d_{s-1} = 0$.
  Adding up these equalities over \(s \in [t_j]\), and using
  \(d_0 = 0\) yields $d_{t_j}=0$. 

  To finish the analysis, notice that the final coloring \(\col\) can differ 
  from the coloring
  \(\col_{t_j}\)
  in at most \(m\) coordinates in
  \([j]\). This is because for any coordinate \(i \le j\),  \(\col(i)\) can differ from  \(\col_{t_j}(i)\)
  only if \(i \in \act_{t_j+1} \cap [j]\), and there are only
  \(m\) such \(i\) by our choice of $t_j$.
  As \(|\col(i) -\col_{t_j}(i)|\leq 2\) for any such $i$ and $\|u_i\|_X\leq 1$,  by the triangle inequality we have that $d_t \leq 2m$. This completes the analysis of the algorithm.
  

\remark
This bound can be slightly improved to \(\sser(B_X, X)\le 2m-1\), by optimizing the proof above further.
For $\st(B_X, X)$, a bound of \(\st(B_X, X) \le m\) can also be shown for all
normed spaces \(X\) using a different linear algebraic argument. Both bounds are tight up to a constant factor for \(X =
\ell_1^m\). 

Recall the conjecture from the introduction that both bounds can be improved to
\(O(\sqrt{m})\) for \(X = \ell_2^m\) and \(X = \ell_\infty^m\). We remark that
there is \emph{no \(m\)-dimensional normed space}, let alone these
two, for which a bound that is asymptotically better than \(O(m)\) has been shown. 

\section{Bibliographic Notes}

Theorem~\ref{thm:ch2-bf} is due to Beck and Fiala~\cite{BF81}, who
also pioneered the linear algebraic method presented in this
chapter. Exercise~\ref{ex:ch2-bf-improvement} is due to Bednarchak and
Helm~\cite{BH97-beckfiala}. The best known upper bound on discrepancy purely in terms of
the degree is $2\ssdeg(\SS) - \log^*(\ssdeg(\SS))$ and is due to
Bukh~\cite{Bukh16}. Very recently, Bansal and Jiang showed an $O(\sqrt{\ssdeg(\SS)})$ bound whenever $\ssdeg(\SS) = \Omega(\log^2n)$. For general $\ssdeg(\SS)$, they obtain an $\widetilde{O}(\sqrt{\ssdeg(\SS)} + \sqrt{\log n})$ bound, where $\widetilde{O}(\cdot)$ hides factors polynomial in $\log \log n$.
Their techniques also give an $O(\log^{1/4} m)$ discrepancy bound for the Koml\'{o}s problem.

The application of the Beck-Fiala theorem to the discrepancy of
axis-aligned boxes in Theorem~\ref{thm:ch2-bftusnady} is due
to Beck~\cite{beck-rect}. Theorems~\ref{thm:ch2-kperm}~and~\ref{thm:ch2-tusnady} are due to
Bohus~\cite{Bohus}. Beck asked whether the $\log n$ term can be
removed in Theorem~\ref{thm:ch2-kperm} for $3$ (or more)
permutations. He also proved that the discrepancy of $2$ permutations
is at most $1$: see~\cite{spencer-lectures}. Newman, Neiman, and Nikolov
answered Beck's question negatively and showed that the $O(\log n)$ bound is
tight for $3$ permutations~\cite{beck3perm}. Franks gave a simplified proof of this
result~\cite{Franks21}.  While the dependence on $n$ in
Theorem~\ref{thm:ch2-kperm} is tight, it is possible to improve the
dependence on $k$~\cite{SST-perms}, as we will see in the next
chapter. An upper bound on the discrepancy of $k$ permutations that is
simultaneously tight in terms of both $k$ and $n$ is not known.

Theorems~\ref{thm:ch2-gen-vb}~and~\ref{thm:ch2-gen-st}
are due to B\'ar\'any and Grinberg~\cite{baranygrinberg}. 
Sevastjanov, and Sevastjanov and Grinberg used linear algebraic
arguments to give better bounds for the Steinitz constant, showing
that it is bounded by $d$ in any $d$-dimensional normed
space~\cite{Sev78,GS80}. Their proof develops a linear algebraic
argument directly for the Steinitz problem, rather than going through
discrepancy, and gives a better bound than
Theorem~\ref{thm:ch2-gen-st}. This argument, and more applications of
linear algebraic methods to vector balancing and related problems can
be found in B\'ar\'any's survey~\cite{B08}. 

Linear algebraic arguments similar to the ones in this chapter have
also found applications in the design of approximation algorithms for
computationally hard problems. An early example is the work of
Karmarkar and Karp on the bin packing
problem~\cite{KarmarkarK82}. Many more algorithms using these
techniques are surveyed by Lau, Ravi, and
Singh~\cite{iterative-book}. 














\chapter{Partial Coloring Methods}\label{ch:partial}

The linear algebraic method from Chapter~\ref{ch:linalg} is a powerful
tool for taking advantage of the structure of a set system when
proving discrepancy upper bounds. We saw that it can be used to give significantly improved
discrepancy upper bounds 
over those 
achieved by random colorings for many natural set systems, e.g.,
set systems of bounded degree, and set systems induced by rectangles
or by permutations. 

However, it is not at all clear how to use this method to match, let alone improve the upper bound $\disc(\SS)
\lesssim \sqrt{n \log(2n)}$ for a set system $\SS$ of $O(n)$ subsets of a
universe of $n$ elements, achieved by a random coloring.
In this
chapter, we develop the method of partial colorings, which combines the
power of randomization with the ability of the linear algebraic method
to take advantage of weak dependencies between sets in a set
system. 
We give a number of applications of the method, including a
proof of Spencer's Six Standard Deviations Suffice Theorem showing the discrepancy upper bound
above can be improved to $\disc(\SS) \lesssim \sqrt{n}$.

\section{Beck's Partial Coloring Lemma}

To build some intuition for partial coloring methods, we start with
the simplest of them: the partial coloring lemma of Beck. 

A general
theme of discrepancy upper bounds is decomposing a set system into
a potentially large number of small sets, and a small number of
potentially large sets. We saw this in the proof of the Beck-Fiala
theorem in Chapter~\ref{ch:linalg}, where we showed that a set system
\(\SS\) of maximum degree \(\ssdeg(\SS)\) on a universe of size \(n\)
has at most \(\frac{n}{c}\) sets of size greater than
\(c\ssdeg(\SS)\). This is useful, since it is easier to control the
discrepancy of small sets, and when the large sets are not too many,
keeping their discrepancy low imposes a small number of constraints on
our colorings. In Chapter~\ref{ch:linalg} we used linear algebra to
formalize this intuition. Beck's lemma below gives another
formalization, based on a beautiful combination of combinatorial and
probabilistic reasoning.

In preparation, we prove a simple combinatorial lemma, showing that in
any large enough set of colorings, at least two colorings must
disagree on many elements. Stronger statements are known, but we state
one with an elementary proof, as it suffices for our needs.

\begin{lemma}\label{lm:ch3-hamming}
  Suppose \(\mathcal{C} \subseteq \{-1,+1\}^n\) is a set of colorings
  of size \(|\mathcal{C}| \ge 2^{n/2}\). Then there exist two colorings
  \(\col,\col' \in \mathcal{C}\) such that
  \[
    |\{i: \col(i) \neq \col'(i)\}| \ge n/10.
  \]
\end{lemma}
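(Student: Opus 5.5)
The plan is a volume (Hamming ball) counting argument. Fix any coloring \(\col \in \mathcal{C}\). Suppose, for contradiction, that every pair of colorings in \(\mathcal{C}\) disagrees on fewer than \(n/10\) coordinates. Then all of \(\mathcal{C}\) lies in the Hamming ball of radius \(r \eqdef \lceil n/10\rceil - 1 < n/10\) centered at \(\col\). That is, each \(\col' \in \mathcal{C}\) is determined by the set \(\{i : \col(i) \neq \col'(i)\}\), and this set has size less than \(n/10\). It follows that
\[
|\mathcal{C}| \le \sum_{k < n/10} \binom{n}{k}.
\]
It therefore suffices to show that this sum is strictly smaller than \(2^{n/2}\). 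That contradicts the hypothesis \(|\mathcal{C}| \ge 2^{n/2}\).

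The key step is the standard estimate \(\sum_{k \le \alpha n}\binom{n}{k} \le (e/\alpha)^{\alpha n}\) for \(\alpha \le 1/2\). An elementary proof: for \(\alpha n \ge 1\), multiply the sum by \(\alpha^{\alpha n}\) and compare with \(\sum_k \binom{n}{k}\alpha^k \le (1+\alpha)^n \le e^{\alpha n}\). With \(\alpha = 1/10\) this gives \((10e)^{n/10}\). Since \(10e < 27.2 < 32 = 2^5\), we get \((10e)^{n/10} < 2^{5n/10} = 2^{n/2}\), which is what we need.

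The only delicate point, which I expect to be the main (if minor) obstacle, is the range of small \(n\). For \(n < 10\) the condition "fewer than \(n/10\) disagreements" means \(k = 0\) only. The sum is then \(1\), which is less than \(2^{n/2}\) for every \(n \ge 1\). The estimate above is also valid for all \(n \ge 1\) without assuming \(\alpha n \ge 1\), because \(\alpha^{k} \ge \alpha^{\alpha n}\) for every \(k \le \alpha n\). So in fact no case split is needed.

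The strict inequality \(|\mathcal{C}| < 2^{n/2}\) then yields the contradiction. Hence some pair \(\col, \col'\) differs in at least \(n/10\) coordinates.
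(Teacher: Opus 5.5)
Your proof is correct and uses the same Hamming-ball counting argument as the paper: fix $\col\in\mathcal{C}$ and show that the ball of radius about $n/10$ around it contains fewer than $2^{n/2}$ colorings. The only difference is the binomial tail estimate: the paper cites the entropy bound $2^{nH(1/10)}$, while you prove $\sum_{k\le \alpha n}\binom{n}{k}\le (e/\alpha)^{\alpha n}$ directly, giving $(10e)^{n/10}<2^{n/2}$, which is sufficient here.
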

\begin{proof}
  Let us fix any $\col \in \mathcal{C}$. The
  number of colorings \(\col'\in \{-1,+1\}^n\) for which \(|\{i
  : \col(i) \neq \col'(i)\}| \le n/10\) is
  \[
    \sum_{i = 0}^{\lfloor n/10\rfloor} {n \choose i}
    \le
    2^{n H(1/10)} < 2^{n/2} \le |\mathcal{C}|.
  \]
  where \(H(p) \eqdef -p \log_2 p - (1-p)\log_2(1-p)\) is the binary
  entropy function, and we used standard estimates on the volumes of
  Hamming balls (e.g.~Proposition 3.3.3 in~\cite{guruswami2012essential}).
  Then there must be at least one $\col' \in \mathcal{C}$
  for which $|\{i: \col(i) \neq \col'(i)\}| \ge
  n/10$.
\end{proof}

We are now ready to prove Beck's partial coloring lemma. The lemma applies to set systems $\SS$ partitioned into a set system $\SS_1$, typically consisting of the large sets, which should be relatively few, and a set system $\SS_2$, typically consisting of the small sets. The lemma guarantees that one can find a partial coloring (\ie, a coloring that's allowed to assign color $0$ to a constant fraction of elements), which has discrepancy $0$ on sets in $\SS_1$, and has discrepancy matching the one achieved by a random coloring on $\SS_2$.

\begin{lemma}\label{lm:ch3-beck}
  Let $\SS_1$ and $\SS_2$ be set systems on a set $\uni$ of size $n
  \ge 3$
  such that  $|S| \le s$ for all $S \in \SS_2$, and
  \[
  \prod_{S \in \SS_1}{(2|S| + 1)} \le 2^{(n-1)/5}.
  \]
  Then there exists a {partial coloring} $\col:\uni \to \{-1, 0, +1\}$
  such that
  \begin{enumerate}
  \item $\col(S) = 0$ for all $S\in \SS_1$;
  \item $|\col(S)| \lesssim \sqrt{s \log(2|\SS_2|)}$ for
    all $S \in \SS_2$;
  \item \(|\{\elem \in \uni: \col(\elem) \neq 0\}| \ge n/10.\)
  \end{enumerate}
  \end{lemma}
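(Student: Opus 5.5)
The proof is Beck's pigeonhole argument, taking differences of full colorings. I will partition the $2^n$ colorings $\{-1,+1\}^\uni$ into classes by their behaviour on $\SS_1$ and $\SS_2$, find a class with at least $2^{n/2}$ members, and apply Lemma~\ref{lm:ch3-hamming} to get two colorings $\col_1,\col_2$ in that class that differ on at least $n/10$ elements. Then $\col \eqdef (\col_1-\col_2)/2$ takes values in $\{-1,0,+1\}$ and is nonzero exactly where $\col_1$ and $\col_2$ differ, which gives property 3.

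First I restrict to good colorings. Let $\Delta \eqdef C\sqrt{s\log(2|\SS_2|)}$ for a suitable constant $C$. By Hoeffding's inequality, a uniformly random $\col'\in\{-1,+1\}^\uni$ satisfies $|\col'(S)|>\Delta$ with probability at most $2\exp(-\Delta^2/(2s))$ for each $S\in\SS_2$. With $C$ large enough, a union bound shows that at least half of all colorings, i.e. at least $2^{n-1}$, satisfy $|\col'(S)|\le\Delta$ for every $S\in\SS_2$. (If $\SS_2$ is empty, every coloring is good.)

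Next I classify the good colorings by the vector $(\col'(S))_{S\in\SS_1}$. Each $\col'(S)$ is an integer in $[-|S|,|S|]$, so there are at most $\prod_{S\in\SS_1}(2|S|+1)\le 2^{(n-1)/5}$ classes. By pigeonhole, some class has at least $2^{n-1}/2^{(n-1)/5}=2^{4(n-1)/5}$ colorings, and $2^{4(n-1)/5}\ge 2^{n/2}$ holds exactly when $3n\ge 8$, i.e. for $n\ge 3$. This is where the hypothesis $n\ge 3$ is used.

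Lemma~\ref{lm:ch3-hamming} applied to this class gives $\col_1,\col_2$ differing on at least $n/10$ elements. Since they lie in the same class, $\col(S)=(\col_1(S)-\col_2(S))/2=0$ for all $S\in\SS_1$, which is property 1. Since both are good, $|\col(S)|\le(|\col_1(S)|+|\col_2(S)|)/2\le\Delta$ for all $S\in\SS_2$, which is property 2. The step most likely to need care is the constant bookkeeping: getting a good fraction of $1/2$ from the union bound, and checking that the exponent arithmetic with $(n-1)/5$ reaches $n/2$. Both go through as computed above.
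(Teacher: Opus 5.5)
Your proposal is correct and matches the paper's proof essentially step for step. Both arguments keep the at least $2^{n-1}$ colorings that are good on $\SS_2$ (via Hoeffding and a union bound), pigeonhole them into at most $2^{(n-1)/5}$ classes by their values on $\SS_1$, use $4(n-1)/5 \ge n/2$ for $n \ge 3$, and apply Lemma~\ref{lm:ch3-hamming} to take $\col = (\col_1-\col_2)/2$; the only difference is cosmetic, the paper's threshold $\sqrt{2s\ln(4|\SS_2|)}$ versus your $C\sqrt{s\log(2|\SS_2|)}$, where $C=2$ already suffices with the natural logarithm.
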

\begin{proof}
  Let $\col'$ be chosen uniformly at random from $\{-1, +1\}^\uni$. By the union bound and Hoeffding's inequality,
  \[
    \Pr\left[\max_{S \in \SS_2} |\col'(S)| > t \right]
    \le 2|\SS_2| e^{-t^2/2s}.
  \]
  When \(t \eqdef \sqrt{2s \ln(4|\SS_2|)},\) this is at most $1/2$, and therefore the set
  \[
    \mathcal{C} \eqdef \left\{\col'\in \{-1,+1\}^\uni : \disc(\SS_2,\col')\le \sqrt{2s \ln(4|\SS_2|)}\right\}
  \]
  has size $|\mathcal{C}| \ge 2^{n-1}$. 

  On the other hand, the number of different values that the vector
  $(\col'(S))_{S \in \SS_1}$ can take as \(\col'\) ranges over
  \(\{-1,+1\}^\uni\) is at most $\prod_{S \in
    \SS_1}{(2|S| + 1)}$, which is bounded by $2^{(n-1)/5}$ by
  assumption. 
  By the pigeonhole
  principle, there must be a set $\mathcal{C}' \subseteq \mathcal{C}$
  of colorings of $\uni$ of size  $|\mathcal{C}'| \ge
|\mathcal{C}|/2^{(n-1)/5} \ge 2^{4(n-1)/5}$ such that any two
  $\col', \col'' \in \mathcal{C}'$ satisfy $\col'(S) = \col''(S)$ for
  all $S\in \SS_1$. As \(4(n-1)/5 \ge n/2\) for \(n
  \ge 3\), applying Lemma~\ref{lm:ch3-hamming} to \(\mathcal{C}'\)
  shows that there exist two colorings \(\col',\col''\in
  \mathcal{C}'\) that differ on at least
  \(n/10\) elements of \(\uni\). 
  We can now take $\col \eqdef \frac12(\col' - \col'')$ as our
  partial coloring. As \(\col(\elem) \in \{-1,1\} \) whenever \(\col'(\elem)
  \neq \col''(\elem)\), \(\col\) colors properly at least
  \(n/10\) elements of \(\uni\), proving the third property of \(\col\). The
  first property holds because \(\col'(S) = \col''(S)\) for all \(S
  \in \SS_1\), so \(\col(S) = \frac12(\col'(S) - \col''(S)) = 0\). The
  second property holds since \(\col',\col''\in \mathcal{C}\), so for
  any \(S \in \SS_2\) we have
  \[
    |\col(S)| \le \frac12(|\col'(S)| + |\col''(S)|) \le \sqrt{2s \ln(4|\SS_2|)}. \qedhere
  \]
\end{proof}

Note that the proof of
Lemma~\ref{lm:ch3-beck}, unlike the linear algebraic arguments in
Chapter~\ref{ch:linalg}, does not suggest an efficient algorithm to
find the partial coloring. The main issue is the use of the pigeonhole
argument on the exponentially large set of colorings \(\mathcal{C}\)
to construct $\mathcal{C}'$.

Lemma~\ref{lm:ch3-beck} is a refinement of the discrepancy bounds we can
get from a uniformly random coloring. In particular, if $\SS_1$ is
empty, then we can, with high probability, achieve the same
discrepancy on $\SS_2$ as in the lemma with a full uniformly random
coloring. Being able to set the discrepancy of some sets to $0$,
however, allows us to prove discrepancy bounds that are not achieved
by a random coloring (except with exponentially small probability). As an illustration, we
prove a bound for the Beck-Fiala problem. Later in the chapter
we will see more advanced techniques that allow proving sharper
bounds. 

\begin{theorem}\label{thm:ch3-bf-weak}
  For any set system \((\SS, \uni)\) with maximum degree
  \(\ssdeg(\SS)\) and \(|\uni| = n > 1\), 
  we have \(\disc(\SS) \lesssim \sqrt{\ssdeg(\SS)} \log(n)^{2}\). 
\end{theorem}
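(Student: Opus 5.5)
We will prove the bound by iterating Lemma~\ref{lm:ch3-beck} for $O(\log n)$ rounds, each of which properly colors a constant fraction of the still-uncolored elements. Write $\Delta\eqdef\ssdeg(\SS)$. The final discrepancy of a set is at most the sum of its discrepancy increments over the rounds, so it suffices to show that one round adds at most $O(\sqrt{\Delta}\log n)$ to $|\col(S)|$ for every $S\in\SS$.

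Consider a round, and let $\uni'$ be the set of currently uncolored elements, with $n'\eqdef|\uni'|$. Restrict all sets to $\uni'$; the maximum degree stays at most $\Delta$. We split the restricted sets by size, using a threshold $s\eqdef C\Delta\log(2n')$ for a large constant $C$:
\begin{itemize}
\item[] $\SS_2$ consists of the restricted sets of size at most $s$;
\item[] $\SS_1$ consists of the restricted sets of size greater than $s$.
\end{itemize}

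\emph{Small sets.} $\SS_2$ is a system of sets of size at most $s$, and we may discard duplicate or empty sets. Each element lies in at most $\Delta$ sets, so $|\SS_2|\le n'\Delta$. We may assume $\Delta\le n$, since otherwise the Beck--Fiala bound $2\Delta$ can be used when $\Delta$ is small, or the trivial bound applies. Then $\log(2|\SS_2|)\lesssim\log n$, and Lemma~\ref{lm:ch3-beck} gives discrepancy $\lesssim\sqrt{s\log n}\lesssim\sqrt{\Delta}\log n$ on these sets. It is fine that we use $\log n$ rather than $\log n'$ here.

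\emph{Large sets.} For $\SS_1$ we need to check the hypothesis of Lemma~\ref{lm:ch3-beck}. This is the main computation. By Lemma~\ref{lm:ch2-bf-counting}, the number of sets of size greater than $2^k s$ is less than $n'\Delta/(2^k s)$. Grouping the large sets dyadically by size,
\[
\sum_{S\in\SS_1}\log_2(2|S|+1)\lesssim\sum_{k\ge0}\frac{n'\Delta}{2^k s}\,\bigl(k+\log(2s)\bigr)\lesssim\frac{n'\Delta\log(2s)}{s}.
\]
Since $\log(2s)\lesssim\log(2n')$ once $\Delta\le n$, this is at most $n'/C'$ for a large enough choice of $C$. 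Hence $\prod_{S\in\SS_1}(2|S|+1)\le 2^{(n'-1)/5}$ when $n'\ge3$, and the small cases can be colored arbitrarily at cost $O(1)$.

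\emph{One round.} Lemma~\ref{lm:ch3-beck} now yields a partial coloring $\col_t$ on $\uni'$ with the following properties:
\begin{itemize}
\item[] $\col_t(S\cap\uni')=0$ for every large set;
\item[] $|\col_t(S\cap\uni')|\lesssim\sqrt{\Delta}\log n$ for every small set;
\item[] at least $n'/10$ elements of $\uni'$ receive a color in $\{-1,+1\}$.
\end{itemize}
We fix these colors permanently.

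\emph{Combining the rounds.} After $t$ rounds at most $(9/10)^t n$ elements remain uncolored, so $O(\log n)$ rounds suffice, plus a constant-size remainder handled arbitrarily. Each original set $S$ gains at most $O(\sqrt{\Delta}\log n)$ per round, because its restriction is either large (gain $0$) or small. Summing over the $O(\log n)$ rounds gives $\disc(\SS)\lesssim\sqrt{\Delta}\log(n)^2$.

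The main obstacle is the large-set constraint computation: $s$ must carry the $\log n$ factor precisely so that the entropy cost $\log(2|S|+1)$ of each large set is absorbed. This is where one logarithm is lost, and the other comes from the number of rounds.
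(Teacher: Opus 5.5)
Your argument is the same as the paper's proof. You split the restricted sets at a threshold $s\approx\Delta\log n$, apply Lemma~\ref{lm:ch3-beck} with the large sets forced to zero discrepancy, and iterate for $O(\log n)$ rounds. One logarithm is lost in $s$ and the other in the number of rounds. However, your preliminary reduction to $\Delta\le n$ is wrong as stated. For $n<\Delta<n^2/\log(n)^4$, the trivial bound $\disc(\SS)\le n$ exceeds $\sqrt{\Delta}\log(n)^2$, and the Beck--Fiala bound $2\Delta-1$ is even larger, so neither covers this range. The correct reduction is the paper's. If $\Delta>n^2$ then $\disc(\SS)\le n<\sqrt{\Delta}$, so you may assume $\Delta\le n^2$. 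This is all your argument needs: it still gives $|\SS_2|\le n'\Delta\le n^3$, hence $\log(2|\SS_2|)\lesssim\log n$, and the small-set bound $\sqrt{s\log(2|\SS_2|)}\lesssim\sqrt{\Delta}\log(2n)$ in every round.

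Relatedly, your justification of $\log(2s)\lesssim\log(2n')$ ``once $\Delta\le n$'' fails in later rounds, where $n'$ can be much smaller than $n$. The right reason is that $\SS_1\neq\emptyset$ forces $s<|S|\le n'$; if $\SS_1$ is empty there is nothing to check. In fact every large restricted set has $|S|\le n'$. Lemma~\ref{lm:ch2-bf-counting} then gives $\sum_{S\in\SS_1}\log_2(2|S|+1)<(n'\Delta/s)\log_2(2n'+1)\le n'/C'$ directly. This is the paper's one-line computation (it uses $2|S|+1\le 2n+1$), so the dyadic grouping is unnecessary. With these two corrections your proof is complete.
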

\begin{proof}
  Let us assume that \(n \ge 3\), as otherwise the theorem is
  trivially true. Similarly, we can assume
  that \(\ssdeg(\SS) \le n^2\), since the trivial upper
  bound of \(\disc(\SS) \le n\) holds for any coloring.
  As in the proof of Lemma~\ref{lm:ch2-bf-counting}, 
  the number of sets in \(\SS\) satisfies \(|\SS| \leq n\ssdeg(\SS) \le
  n^3\). 
  
  We define \(\SS_1 \eqdef \{\set \in \SS: |\set| > s\}\) and
  \(\SS_2 \eqdef \{\set \in \SS: |\set| \le s\}\) for a parameter
  \(s\) that we choose later. By Lemma~\ref{lm:ch2-bf-counting},
  \(
  |\SS_1| < n\ssdeg(\SS)/s,
  \)
  and, therefore,
  \[
    \prod_{\set \in \SS_1}{(2|\set| + 1)} \le (2n+1)^{n\ssdeg(\SS)/s}
    = 2^{n\log_2(2n+1)\ssdeg(\SS)/s}.
  \]
  In order to apply Lemma~\ref{lm:ch3-beck}, we need that the exponent
  on the right is at most \((n-1)/5\), so we set
  \(
  s \eqdef C\log_2(2n+1) \ssdeg(\SS),
  \)
  for a sufficiently large constant $C>0$.
  Then the lemma gives us a partial coloring \(\col:\uni \to \{-1, 0,
  +1\}\) with discrepancy
  \[
    \disc(\SS, \col) \lesssim  \sqrt{\ssdeg(\SS)\log_2(n) \log(2|\SS_2|)}
    \lesssim  \sqrt{\ssdeg(\SS)} \log(2n),
  \]
   where we used the trivial
  bound \(|\SS_2| \le |\SS| \le  n^3\). Moreover,
  Lemma~\ref{lm:ch3-beck} guarantees that the set \(\act \eqdef
  \{\elem \in \uni: \col(\elem) = 0\}\) satisfies \(|\act|
  \le 9n/10\).

  To obtain a full coloring, we can iterate this process. In
  particular, as $\ssdeg(\SS|_\act) \le \ssdeg(\SS)$, we can
  inductively
  find a coloring \(\col'\) of the restriction \(\SS|_\act\), and assign the color in $\col'$ to the elements that are given color $0$ by $\col$. As there are $O(\log n)$ iterations, the overall discrepancy can increase by a factor at most $O(\log n)$, giving the bound $O(\sqrt{\ssdeg(\SS)}\log(n)^2)$, as desired. 
\end{proof}

\section{A Stronger Partial Coloring Lemma, and Applications}

While Beck's partial coloring lemma is a powerful tool, it often gives
suboptimal bounds with large poly-logarithmic factors, as we saw in
the case of the Beck-Fiala problem. Intuitively, one reason for this
is that the lemma only allows setting two types of discrepancy bounds
for the partial coloring: a set either receives \(0\) discrepancy, or
the discrepancy it would get under a random coloring. More precise
bounds can be derived by allowing more freedom in setting discrepancy
bounds. In this section, we state a stronger partial coloring lemma
that follows this approach, and present some applications of it. We
prove the lemma later in the chapter, as a corollary of a more general
geometric result.

\subsection{Statement}

Next we state this stronger partial coloring lemma. Rather than
restrict ourselves to set systems, we adopt a more general linear
algebraic formalism. The proof of Lemma~\ref{lm:ch3-partial} is deferred to Section~\ref{sec:ch3-giann}.

\begin{lemma}\label{lm:ch3-partial}
  Let \(v_1, \ldots, v_m \in \R^n\) be such that \(\|v_i\|_2 \le 1\)
  for each \(i\), and let \(\lambda_1, \ldots, \lambda_m\) be
  positive reals. For any \(s > 0\), define the function
  \begin{equation}\label{eq:ch3-partial}
    \parcol(s) :=  \begin{cases}
      1.5\exp(-s^2/8) & s\ge 4\\
      \ln\left(20/s^2\right) & s < {4}
    \end{cases}.
  \end{equation}
  If \(\sum_{i=1}^m \parcol(\lambda_i) \le n/8\), then there exists a partial coloring
  \(\col \in \{-1,0,+1\}^n\) such that, for all \(i \in [m]\),
  \(|\ip{v_i,\col}| \le \lambda_i\|v_i\|_2\), and \(|\{j: \col(j) \neq 0\}|
  \ge n/10\). 
\end{lemma}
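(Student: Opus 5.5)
The plan is the geometric route the paper announces: deduce the lemma from Giannopoulos's partial coloring theorem via Gaussian measure. Define the symmetric convex body
\[
K \eqdef \{y \in \R^n : |\ip{v_i,y}| \le \lambda_i\|v_i\|_2 \text{ for all } i \in [m]\},
\]
an intersection of $m$ symmetric slabs. The target statement is the standard one: if a symmetric convex body $K\subseteq\R^n$ has Gaussian measure $\gamma_n(K) \ge e^{-n/8}$, then there is a partial coloring $\col \in \{-1,0,+1\}^n \cap 2K$ with at least $n/10$ nonzero coordinates. We then apply it to $\frac12 K$ in place of $K$, which is where the constants in $\parcol$ and the factor $1/8$ would be tuned; the $n/10$ matches Lemma~\ref{lm:ch3-hamming}.

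\textbf{Step 1: lower-bound $\gamma_n(K)$ slab by slab.} Each slab $\{y: |\ip{v_i,y}|\le \lambda_i\|v_i\|_2\}$ has Gaussian measure $\Pr[|g|\le \lambda_i]$ with $g$ a standard normal, since $\ip{v_i,G}/\|v_i\|_2$ is standard Gaussian. By \v{S}id\'ak's lemma (Gaussian correlation for symmetric slabs), $\gamma_n(K) \ge \prod_i \Pr[|g|\le\lambda_i]$. So it suffices that $-\ln\Pr[|g|\le\lambda] \le \parcol(\lambda)$ up to the constants in the hypothesis. Both cases are elementary:
\begin{itemize}
\item For $\lambda \ge 4$, the tail bound $\Pr[|g|>\lambda]\le 2e^{-\lambda^2/2}$ is tiny, so $-\ln(1-x)\le 1.5x$ gives a bound well below $1.5e^{-\lambda^2/8}$.
\item For $\lambda<4$, the density on $[-\lambda,\lambda]$ is at least $e^{-\lambda^2/2}/\sqrt{2\pi}$ for small $\lambda$ (for moderate $\lambda$ use a constant lower bound). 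This gives $\Pr[|g|\le\lambda]\gtrsim\lambda$, hence $-\ln \Pr \le \ln(C/\lambda^2)$ for a suitable constant.
\end{itemize}
The slack in $\parcol$ (the $1/8$ in the exponent and the $20$) is presumably there to absorb the rescaling by $1/2$ from Step 2: working with $\frac12 K$ replaces $\lambda_i$ by $\lambda_i/2$. So we check $-\ln\Pr[|g|\le\lambda/2]\le \parcol(\lambda)$, giving $\gamma_n(\frac12K)\ge e^{-n/8}$.

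\textbf{Step 2: from Gaussian measure to a partial coloring.} This is the main obstacle. The plan is a pigeonhole argument in the spirit of Lemma~\ref{lm:ch3-beck}, with the count of colorings replaced by a volume count. Consider the translates $x + \frac12 K$ for $x\in\{-1,+1\}^n$.
\begin{enumerate}
\item Show $\sum_{x}\gamma_n(x+\frac12 K) \ge 2^n e^{-?}\gamma_n(\frac12 K)$. By symmetry and log-concavity, $\gamma_n(x+L)\ge e^{-\|x\|_2^2/2}\gamma_n(L)$ for symmetric $L$; averaging $\pm x$ with Jensen gives this.
\item The weight $e^{-n/2}$ is too costly, so we instead use the measure-uniformity trick. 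Work with the Gaussian measure of $\frac12K$ intersected with a large cube, or equivalently compare against the $\ell_\infty$ structure through the volume of $\frac12K \cap [-1,1]^n$. By a Gaussian-to-volume comparison, $\vol(\frac12 K\cap [-1,1]^n)\ge 2^n \gamma_n(\frac12 K)\cdot c^n$.
\item By pigeonhole on the translates $x+\frac12K\cap[-1,1]^n$, which lie in $[-2,2]^n$ of volume $4^n$, some point is covered by at least $2^{n}\cdot(\text{volume ratio})\ge 2^{n/2}$ translates. That is, there is a set $\mathcal{C}$ of at least $2^{n/2}$ colorings with pairwise differences $x-x'\in K$.
\item Lemma~\ref{lm:ch3-hamming} then gives $x,x'\in\mathcal{C}$ differing in at least $n/10$ coordinates. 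Take $\col=\frac12(x-x')\in\{-1,0,1\}^n\cap \frac12\cdot 2\cdot\frac12K$, suitably scaled, which satisfies $|\ip{v_i,\col}|\le\lambda_i\|v_i\|_2$.
\end{enumerate}
The delicate part is getting the Gaussian-to-volume comparison in item 2 with the right exponent so that $e^{-n/8}$ suffices for $2^{n/2}$ overlaps. If this is too lossy, I would fall back to the cleaner argument using Gaussian measure directly. Fix $G$ Gaussian; the expected number of colorings $x$ with $G\in x+\frac12K$, weighted appropriately, is controlled by $\gamma_n(\frac12K)$ versus the density ratio. Then adjust the constants in $\parcol$ accordingly.
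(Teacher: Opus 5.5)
Your Step~1 matches the paper. \v{S}id\'ak's lemma (Lemma~\ref{lm:ch3-sidak}) together with $\gamma^1([-\lambda/2,\lambda/2])\ge 1-e^{-\lambda^2/8}\ge e^{-\parcol(\lambda)}$ gives $\gamma^n(\frac12K)\ge e^{-n/8}$, where $K$ is your body with slab widths $\lambda_i\|v_i\|_2$. (To get the constant $1.5$ at $\lambda=4$ you need the tail bound $\Pr[|g|\ge s]\le e^{-s^2/2}$, not $2e^{-s^2/2}$.)

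The gap is in Step~2, and it cannot be fixed by adjusting constants.
\begin{itemize}
\item Shifting by $x\in\{-1,+1\}^n$ costs $e^{-\|x\|_2^2/2}=e^{-n/2}$ in Lemma~\ref{lm:ch3-gauss-shift}. So your counting only yields $\sum_x\gamma^n(x+\frac12K)\ge 2^ne^{-n/2}\gamma^n(\frac12K)$.
\item Here $2^ne^{-n/2}=(2/\sqrt e)^n\approx 1.21^n$, which is already smaller than the $2^{n/2}\approx 1.41^n$ that Lemma~\ref{lm:ch3-hamming} requires, even if $\gamma^n(\frac12K)=1$. Your Gaussian fallback therefore cannot be repaired by tuning constants, and $\parcol$ is fixed by the statement anyway.
\item The volume workaround fails outright. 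The $2^n$ translates $x+(\frac12K\cap[-1,1]^n)$ each have volume at most $2^n$ and all lie in $[-2,2]^n$, which has volume $4^n$. Their average multiplicity is therefore at most $1$, so no pigeonhole argument can give a point covered by $2^{n/2}$ of them, whatever Gaussian-to-volume comparison you use.
\end{itemize}

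The missing idea is to center the translates at the half-integral points $\frac12x$. The shift then has length $\sqrt n/2$ and costs only $e^{-n/8}$. For a standard Gaussian $g$, let $C(g)\eqdef|\{x\in\{-1,+1\}^n: g\in\frac12x+\frac12K\}|$. Then
\[
\E[C(g)]=\sum_x\gamma^n\left(\tfrac12x+\tfrac12K\right)\ge 2^ne^{-n/8}\gamma^n\left(\tfrac12K\right)\ge(2e^{-1/4})^n\ge 2^{n/2},
\]
so some $g$ has $C(g)\ge 2^{n/2}$. Lemma~\ref{lm:ch3-hamming} then gives two colorings $x',x''$ in this set that differ in at least $n/10$ coordinates. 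By symmetry and convexity of $K$,
\[
\col\eqdef\tfrac12(x'-x'')=\left(\tfrac12x'-g\right)+\left(g-\tfrac12x''\right)\in\tfrac12K+\tfrac12K=K,
\]
that is, $|\ip{v_i,\col}|\le\lambda_i\|v_i\|_2$ for all $i$. This is exactly Lemma~\ref{lm:ch3-gian} applied to $\frac12K$, which is how the paper concludes.
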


To apply Lemma~\ref{lm:ch3-partial} to a set system
$\SS \eqdef \{S_1, \ldots, S_m\}$, we can choose each $v_i$ to be the indicator vector of the set $S_i$, and, if we want the bound $|\col(S_i)| \le d_i$,
we can set $\lambda_i \eqdef d_i/\sqrt{|S_i|}$.

We can now compare
Lemmas~\ref{lm:ch3-beck}~and~\ref{lm:ch3-partial}. Suppose that
$\SS_1 = \{S_1, \ldots S_k\}$, and $\SS_2 = \{S_{k+1}, \ldots, S_m\}$,
where any set in $\SS_2$ has size at most $s$, and define
$v_1, \ldots, v_m$ as above. If we set
$\lambda_i = 1/\sqrt{2|S_i|}$ for $i \in [k]$, and
$\lambda_i = \sqrt{8\ln(12|\SS_2|)}$ for each $i \in \{k+1, \ldots, m\}$, then a partial
coloring $\col$ such that $|\ip{v_i,\col}| \le \lambda_i$ for all
$i \in [m]$ would satisfy $|\col(S)| < 1$, and thus
$\col(S) = 0$ for $S \in \SS_1$, and also
$|\col(S)| \lesssim \sqrt{s \log(2|\SS_2|)}$ for $S \in \SS_2$, as
in Lemma~\ref{lm:ch3-beck}. Lemma~\ref{lm:ch3-partial} guarantees such
a coloring if
\[
  \sum_{S \in \SS_1} \ln(40|S_i|) \le (n-1)/8.
\]
For comparison, the requirement in Lemma~\ref{lm:ch3-beck}
can be re-written as
\[
  \sum_{S \in \SS_1} \log_2(2|S| + 1) \le (n-1)/5,
\]
which is of the same form. Thus,  Lemma~\ref{lm:ch3-partial} allows us
to recover an analogous statement to
Lemma~\ref{lm:ch3-beck}. However, Lemma~\ref{lm:ch3-partial}  gives
us much more freedom in setting the $\lambda_i$ parameters. We
illustrate how this freedom translates to better discrepancy bounds
with the next two applications.

\subsection{Applications}


\paragraph{Spencer's Theorem.}
For our first application we consider an arbitrary set system
\((\SS, \uni)\) of \(m \eqdef |\SS|\) sets on a universe of size \(n
\eqdef |\uni|\). Recall that a random coloring achieves a discrepancy
of \(O(\sqrt{n \log m})\), and, in the absence of additional structure,
it is reasonable to conjecture that this is tight. Surprisingly, this
is not the case, and a celebrated result of Spencer shows that, for
any \(m \ge n\),
\(\disc(\SS) \lesssim \sqrt{n \log(2m/n)}\). In particular, when $m=O(n)$, we get a discrepancy bound of \(O(\sqrt{n})\),
showing that the logarithmic term incurred by a random coloring is
unnecessary. In fact, Spencer showed that when \(m = n\) the
discrepancy is at most \(6\sqrt{n}\), and his result is often
referred to as the Six Standard Deviations Suffice theorem.

Using Lemma~\ref{lm:ch3-partial}, we prove the following slightly stronger statement for matrices with bounded entries.
\begin{theorem}\label{thm:ch3-spencer}
  For any \(m \times n\) matrix \(\mat \in [-1, +1]^{m\times n}\),
  where $m \ge n$, we have
  \(\disc(\mat) \lesssim \sqrt{n\log(2m/n)}\). 
\end{theorem}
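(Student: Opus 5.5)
We will prove the bound by iterating the partial coloring lemma (Lemma~\ref{lm:ch3-partial}). At each round we use it to color at least a tenth of the currently uncolored columns, and we keep the discrepancy added in each round small.

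\textbf{One round.} Suppose $A \subseteq [n]$ is the set of uncolored elements, with $|A| = n'$. Let $v_i \in \R^{A}$ be row $i$ of $\mat(*,A)$, normalized by $\sqrt{n'}$. Since the entries lie in $[-1,+1]$, we have $\|v_i\|_2 \le 1$. Set every $\lambda_i \eqdef \lambda$ for a single threshold $\lambda$. The hypothesis of Lemma~\ref{lm:ch3-partial} then reads $m\,\parcol(\lambda) \le n'/8$.
\begin{itemize}
\item If $m \ge n'$, choose $\lambda = C\sqrt{\log(2m/n')}$ with $C$ a large constant. Then $\parcol(\lambda) = 1.5\exp(-\lambda^2/8) \le (n'/2m)^{C^2/8}\cdot 1.5 \le n'/(8m)$, so the condition holds.
\item If $m < n'$, the regime $m\le n$ no longer applies. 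We may still pad with zero rows, or simply take $\lambda$ constant with $\parcol(\lambda) \le 1/8$.
\end{itemize}
The lemma yields $\col$ on $A$ with entries in $\{-1,0,+1\}$, at least $n'/10$ of them nonzero, and
\[
|\ip{\mat(i,A),\col}| \le \lambda \sqrt{n'} \lesssim \sqrt{n'\log(2m/n')}.
\]
We then restrict to the elements still colored $0$ and recurse. The final coloring is the sum of the partial colorings, so by the triangle inequality the discrepancy is at most the sum of the per-round bounds.

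\textbf{Summing the rounds.} In round $k$ we have $n_k \le (9/10)^k n$, and round $k$ contributes $\lesssim \sqrt{n_k\log(2m/n_k)}$. The main point to check is that this series is dominated by its first term, i.e.\ that the growth of $\log(2m/n_k)$ does not overwhelm the geometric decay of $n_k$. Write $n_k = \theta^k n$ with $\theta = 9/10$. Then
\[
\log(2m/n_k) = \log(2m/n) + k\log(1/\theta) \le (1+k)\log(2m/n)\cdot O(1),
\]
using $\log(2m/n) \ge \log 2$. Therefore the sum is
\[
\lesssim \sqrt{n\log(2m/n)}\sum_k \theta^{k/2}\sqrt{1+k} \lesssim \sqrt{n\log(2m/n)}.
\]
The process stops once $n_k$ falls below a constant, and those last elements can be colored arbitrarily at $O(1)$ extra cost. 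This also removes the restriction $n\ge$ const.

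\textbf{Obstacles.} The step needing the most care is the choice of $\lambda$ so that $m\,\parcol(\lambda)\le n'/8$ holds uniformly, including the boundary case where $\lambda<4$ (the $\ln(20/s^2)$ branch of $\parcol$). We handle it by choosing $C \ge 4/\sqrt{\log 2}$, so that $\lambda\ge 4$ always holds. The other point is the summation argument above, which is what avoids an extra $\log n$ factor.
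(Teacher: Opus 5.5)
Your proposal is correct and follows the paper's own argument. Like the paper, you apply Lemma~\ref{lm:ch3-partial} to the rows of $\mat$ with a uniform threshold $\lambda=\Theta\bigl(\sqrt{\log(2m/n')}\bigr)$ (the paper takes $\lambda_i=\sqrt{8\ln(12m/n)}$). This colors a tenth of the remaining elements with discrepancy $O\bigl(\sqrt{n'\log(2m/n')}\bigr)$; you then recurse and check that the resulting series is dominated by its first term. Your case $m<n'$ never actually arises, since the number of uncolored elements only decreases from $n\le m$.
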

Spencer's result follows with $M$ as the incidence matrix of \(\SS\).

The core of the proof of Theorem~\ref{thm:ch3-spencer} is the
following partial coloring result.
\begin{lemma}\label{lm:ch3-spencer-partial}
For any \(m \times n\) matrix \(\mat \in [-1, +1]^{m\times n}\),
  where $m \ge n$, there exists a partial coloring
  \(\col \in \{-1,0,+1\}^n\) such that
  \(\|\mat \col\|_\infty \lesssim \sqrt{n\log(2m/n)}\) and
  \(|\{i: \col(i) \neq 0\}| \ge n/10\).
\end{lemma}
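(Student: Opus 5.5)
We apply Lemma~\ref{lm:ch3-partial} directly to the rows of \(\mat\), using a single uniform value of \(\lambda\) for all rows. Let \(r_1, \ldots, r_m \in [-1,+1]^n\) be the rows of \(\mat\). If a row is zero we drop it, since it imposes no constraint. Otherwise we set \(v_i \eqdef r_i/\sqrt{n}\). Since every entry lies in \([-1,+1]\), we have \(\|r_i\|_2 \le \sqrt{n}\), so \(\|v_i\|_2 \le 1\) as the lemma requires. We take \(\lambda_i \eqdef \lambda \eqdef \sqrt{8\ln(C m/n)}\) for a suitable absolute constant \(C\), chosen large enough that \(\lambda \ge 4\) for every \(m \ge n\). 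For instance \(C = 12 e^{2}\) suffices: it gives \(\lambda^2 \ge 8\ln(12e^2) > 16\).

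Next we verify the hypothesis of Lemma~\ref{lm:ch3-partial}. Since \(\lambda \ge 4\), the first branch of \(\parcol\) applies, and
\[
\sum_{i=1}^m \parcol(\lambda_i) \le m \cdot 1.5\exp(-\lambda^2/8) = 1.5\, m\cdot \frac{n}{Cm} = \frac{1.5\,n}{C}.
\]
This is at most \(n/8\) once \(C \ge 12\). The lemma then yields a partial coloring \(\col \in \{-1,0,+1\}^n\) with at least \(n/10\) nonzero coordinates. For every \(i\) it satisfies
\[
|\ip{r_i,\col}| = \sqrt{n}\,|\ip{v_i,\col}| \le \sqrt{n}\,\lambda\,\|v_i\|_2 \le \lambda\sqrt{n} \lesssim \sqrt{n\log(2m/n)}.
\]
The last step holds because \(\ln(Cm/n) \le \ln(C/2) + \ln(2m/n)\), and \(\ln(2m/n) \ge \ln 2\) when \(m \ge n\), so \(\ln(Cm/n) \lesssim \ln(2m/n)\).

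All the real difficulty sits inside Lemma~\ref{lm:ch3-partial}, which we take as given, so this argument is essentially bookkeeping. The one point needing care is the choice of constants. We need \(\lambda\) large enough to land in the exponential branch of \(\parcol\), so that each row contributes about \(n/m\) to the budget rather than a constant. We also need to absorb the constant inside the logarithm, which is exactly where the assumption \(m \ge n\) is used. One could worry that normalizing by \(\sqrt{n}\) instead of \(\|r_i\|_2\) is lossy for sparse rows, but it is harmless here: the target bound is uniform in \(i\), so the cruder normalization costs nothing.
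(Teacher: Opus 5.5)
Your proof is correct and is essentially the paper's argument: apply Lemma~\ref{lm:ch3-partial} to the rows of $\mat$ with the uniform choice $\lambda_i = \sqrt{8\ln(Cm/n)}$. This lands in the exponential branch of $\parcol$ and keeps $\sum_i \parcol(\lambda_i) \le n/8$; the paper takes $C = 12$, which gives exactly $n/8$.

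The only difference is that the paper plugs in the unnormalized rows, relying on the conclusion $|\ip{v_i,\col}| \le \lambda_i\|v_i\|_2$ being invariant under rescaling $v_i$, whereas you normalize by $\sqrt{n}$ explicitly. By that same homogeneity, your normalization loses nothing, even for sparse rows.
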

\begin{proof}
  We use Lemma~\ref{lm:ch3-partial}, where vector $v_i$ is the $i$-th
  row of $\mat$, and
  \(
  \lambda_i \eqdef \sqrt{8\ln(12m/n)}.
  \)
  Then the statement follows from Lemma~\ref{lm:ch3-partial}.
\end{proof}

We can show
Theorem~\ref{thm:ch3-spencer} by iteratively applying
Lemma~\ref{lm:ch3-spencer-partial}  to obtain
a full coloring. The key observation in the case of Spencer's theorem
is that the discrepancy of partial colorings decreases rapidly
with \(n\) and thus the discrepancy of the full coloring only increases by a constant factor.

\begin{proof}[Proof of Theorem~\ref{thm:ch3-spencer}]
  Let \(\col\) be as in Lemma~\ref{lm:ch3-spencer-partial}
  Then the set \(\act \eqdef
  \{\elem \in \uni: \col(\elem) = 0\}\) has size bounded by \(|\act|
  \le 9n/10\). We inductively find a coloring \(\col'\) of
  \(\mat(*,\act)\), and assign color \(\col(i)\) to any \(i \not \in
  \act\), and color \(\col'(i)\) to any \(i \in \act\). 
  The overall discrepancy is then
  bounded by
  \[
    \disc(\mat) \lesssim
    \sum_{k = 0}^{\lfloor\log_{10}n\rfloor} \sqrt{n (0.9)^k\log\left(\frac{2m}{(0.9)^kn}\right)}
    \lesssim \sqrt{n\log\left(\frac{2m}{n}\right)},
  \]
  which proves the theorem. 
\end{proof}

\paragraph{The Beck-Fiala Problem.}
Next, we see how to use Lemma~\ref{lm:ch3-partial} to give an
improvement to Theorem~\ref{thm:ch3-bf-weak}. 

\begin{theorem}\label{thm:ch3-bf}
  For any set system \((\SS, \uni)\) with maximum degree
  \(\ssdeg(\SS)\) and \(|\uni| = n\), 
  we have \(\disc(\SS) \lesssim \sqrt{\ssdeg(\SS)} \log(n)\). 
\end{theorem}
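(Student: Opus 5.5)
The plan is to prove a partial coloring version first and then iterate, as in the proof of Theorem~\ref{thm:ch3-spencer}. Write $\Delta \eqdef \ssdeg(\SS)$, and assume $n\ge 3$ and $\Delta\le n^2$; otherwise the bound is trivial. The partial coloring claim is: for any set system of maximum degree at most $\Delta$ on $n'$ elements, there is $\col\in\{-1,0,+1\}^{n'}$ with at least $n'/10$ nonzero entries and $|\col(S)|\lesssim\sqrt{\Delta}$ for every set $S$. Iterating this, the active set shrinks by a factor of $9/10$ each round, and the degree never increases under restriction. So after $O(\log n)$ rounds every element is colored, and the final discrepancy is at most $O(\log n)$ times $O(\sqrt{\Delta})$. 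Unlike Spencer's theorem, the per-round bound does not decay geometrically here, which is exactly why we pay a $\log n$ factor.

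To prove the partial coloring claim, apply Lemma~\ref{lm:ch3-partial} with $v_S$ the normalized indicator vector $\indic_S/\sqrt{|S|}$ of each set $S$. We target $|\col(S)|\le C\sqrt{\Delta}$, so we set $\lambda_S \eqdef C\sqrt{\Delta/|S|}$. The requirement is $\sum_S \parcol(\lambda_S)\le n'/8$, and we verify it by grouping sets dyadically by size. Let $\SS^{(k)}$ be the sets with $|S|\in(2^{k-1}\Delta,2^{k}\Delta]$ for $k\ge 1$, together with the sets of size at most $\Delta$.

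For a set of size at most $\Delta$, $\lambda_S\ge C$, so $\parcol(\lambda_S)\le 1.5e^{-C^2/8}$. The number of such sets is not controlled by $n'$: there can be up to $n'\Delta$ small sets, and $e^{-C^2/8}n'\Delta$ need not be at most $n'/8$. This is the main obstacle. It is resolved by noting that sets of size at most $C\sqrt{\Delta}$ automatically satisfy the target, since their discrepancy is bounded by their size. Those sets can therefore be dropped from the lemma entirely.

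For the remaining sets of size $s\ge C\sqrt\Delta$, Lemma~\ref{lm:ch2-bf-counting} gives that their number is at most $n'\Delta/s$ in each dyadic range. We must then bound
\[
\sum_{j}\frac{n'\Delta}{2^{j}}\,\parcol\!\left(C\sqrt{\Delta/2^{j}}\right)
\]
over dyadic sizes $2^j$ from $C\sqrt{\Delta}$ upward. For $2^j\le\Delta$ the argument of $\parcol$ is at least $C$, so the term is $n'(\Delta/2^j)\,e^{-C^2\Delta/(8\cdot 2^j)}$. Writing $r=\Delta/2^j\ge1$, this equals $n'\,r\,e^{-C^2r/8}$, and summing over dyadic $r$ gives $O(n'e^{-C^2/8})$. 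For $2^j>\Delta$, with $r=\Delta/2^{j}<1$, the term is $n'\,r\,\ln(20 2^j/(C^2\Delta)) = n'\,r\ln(20/(C^2r))$. This sums over dyadic $r<1$ to $O(n'/C^{2}\cdot\text{const})$ after absorbing constants; more carefully, $\sum_i 2^{-i}(\ln(20/C^2)+i\ln 2)$ is a constant that becomes small relative to $1/8$ once the sizes are scaled appropriately. If that constant does not come out below $1/8$ directly, I would instead use $\lambda_S = C\sqrt{\Delta/|S|}\cdot\sqrt{1+\log(|S|/\Delta)}$-type slack, or simply rescale the target to $C\sqrt\Delta$ with $C$ large. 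Increasing $C$ makes both $\ln(20/(C^2 r))$ and the exponential tail small, so for a large absolute $C$ the total is at most $n'/8$.

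Lemma~\ref{lm:ch3-partial} then yields the partial coloring with $|\col(S)|\le C\sqrt{\Delta}$ for all the large sets, and the small sets are fine trivially. The step I expect to need the most care is this summation check: handling the many small sets through the trivial bound, and making sure the logarithmic terms $\parcol(\lambda)$ for very large sets sum to a small multiple of $n'$ via the count from Lemma~\ref{lm:ch2-bf-counting}.
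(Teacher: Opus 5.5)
Your proposal is essentially the paper's proof. Both apply Lemma~\ref{lm:ch3-partial} with $\lambda_S \propto \sqrt{\ssdeg(\SS)/|S|}$, group the sets dyadically by size, count each group with Lemma~\ref{lm:ch2-bf-counting}, and iterate the resulting $O(\sqrt{\ssdeg(\SS)})$ partial coloring over $O(\log n)$ rounds. Normalizing the indicator vectors is the right way to meet the hypothesis $\|v_i\|_2\le 1$.

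The summation check, which you rightly flag as the delicate step, contains a slip: the case split must be where $\lambda_S = 4$, not at $|S| = \Delta$.

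\begin{itemize}
\item With $r = \Delta/|S|$ and $\lambda_S = C\sqrt{r}$, the logarithmic branch of $\parcol$ applies only for $r < 16/C^2$. For $16/C^2 \le r < 1$ you are still in the exponential branch.
\item In that range $\ln(20/(C^2 r))$ is not $\parcol(\lambda_S)$; it is even negative once $C^2 r > 20$. This is why your expression $\sum_i 2^{-i}(\ln(20/C^2) + i\ln 2)$ does not make sense and you had to hedge.
\item Splitting correctly, the dyadic class at $r = 2^{-i}\cdot 16/C^2$ contributes $O\bigl(n' \cdot \tfrac{16}{C^2}2^{-i}\ln(\tfrac{5}{4}\, 2^{i})\bigr)$. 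The class at $r = 2^{i}\cdot 16/C^2$ contributes $O\bigl(n'\cdot\tfrac{24}{C^2}2^{i}e^{-2^{i+1}}\bigr)$.
\item The total is therefore $O(n'/C^2)$, including the factor $2$ you dropped from Lemma~\ref{lm:ch2-bf-counting}. This is below $n'/8$ for a large absolute $C$, with no extra slack factor needed.
\end{itemize}

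This is exactly how the paper verifies the condition in Lemma~\ref{lm:ch3-bf-partial}, splitting at the size $t^*$ where $\lambda = 4$.

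Discarding the sets of size at most $C\sqrt{\Delta}$ is harmless but unnecessary. The sum $\sum r e^{-C^2 r/8}$ over dyadic $r \ge 16/C^2$ is $O(1/C^2)$, so the possibly $\Theta(n'\Delta)$ small sets never threaten the budget, and the paper keeps them.
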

Again, we will iteratively apply the following partial coloring bound.

\begin{lemma}\label{lm:ch3-bf-partial}
  For any set system \((\SS, \uni)\) with maximum degree
  \(\ssdeg(\SS)\), there
  exists a partial coloring \(\col \in \{-1, 0,+1\}^n\) such that 
  \(\disc(\SS,\col) \lesssim \sqrt{\ssdeg(\SS)}, \) and \(|\{i: \col(i) \neq 0\}|
  \ge n/10\).
\end{lemma}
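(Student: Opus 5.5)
Apply Lemma~\ref{lm:ch3-partial} with a degree-sensitive choice of the parameters $\lambda_i$. Write $\Delta \eqdef \ssdeg(\SS)$ and $n=|\uni|$. Let $v_S$ be the normalized indicator vector $\indic_S/\sqrt{|S|}$ for each $S\in\SS$, so that $\|v_S\|_2= 1$. Choose the threshold $\lambda_S$ so that $\lambda_S\sqrt{|S|}\lesssim\sqrt{\Delta}$, since then $|\col(S)| = \sqrt{|S|}\,|\ip{v_S,\col}| \le \lambda_S\sqrt{|S|}$. Concretely, set
\[
\lambda_S \eqdef C\sqrt{\Delta/|S|}
\]
for a large absolute constant $C$. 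This gives $|\col(S)|\le C\sqrt{\Delta}$ for every set directly. Lemma~\ref{lm:ch3-partial} also supplies the required property $|\{j:\col(j)\neq0\}|\ge n/10$, so the whole proof reduces to checking the budget condition $\sum_{S}\parcol(\lambda_S)\le n/8$.

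To check the budget, group the sets dyadically by size. Let $\SS_k \eqdef \{S: 2^{k-1}\Delta < |S|\le 2^k\Delta\}$ for $k\ge1$, and let $\SS_0$ be the small sets with $|S|\le\Delta$. By Lemma~\ref{lm:ch2-bf-counting}, or directly from the incidence count $\sum_S|S|\le n\Delta$, we get $|\SS_k| < n/2^{k-1}$ for $k\ge1$.

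For large sets ($k\ge1$) we have $\lambda_S \ge C2^{-k/2}$. Assuming $C\le 4$ or otherwise in the log regime, $\parcol(\lambda_S)\le \ln(20\cdot 2^k/C^2)\lesssim k+1-\ln C^2$. The contribution of $\SS_k$ is therefore at most $(n/2^{k-1})\ln(20\cdot2^k/C^2)$, and summing over $k\ge1$ gives at most $n\sum_k 2^{1-k}(k\ln 2+\ln(20/C^2))$. This total can be made at most $n/16$ by taking $C$ large. When $\lambda_S\ge4$ we instead use the Gaussian regime, $\parcol\le 1.5e^{-\lambda_S^2/8}$, which is even smaller.

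The main obstacle is the small sets. Their number is not bounded by $n/\Delta$ in any useful way: there can be up to $n\Delta$ of them. For $|S|=\sigma\le\Delta$ we have $\lambda_S = C\sqrt{\Delta/\sigma}\ge C$, so $\parcol(\lambda_S)\le1.5e^{-C^2\Delta/(8\sigma)}$. Weight each set by $|S|$: the count of sets with $|S|\in(\Delta 2^{-j-1},\Delta2^{-j}]$ is at most $n\Delta/(\Delta2^{-j-1}) = n2^{j+1}$. Their total contribution is then $\lesssim n\sum_{j\ge0}2^{j+1}e^{-C^22^{j}/8}$, which is at most $n/16$ for $C$ large. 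This is the delicate step. First I would check that the doubly-exponential decay $e^{-C^2 2^j/8}$ beats the $2^j$ growth in count. Second, I would handle sets of size below $1$ trivially: singletons already have discrepancy at most $1\le\sqrt\Delta$, so they impose no real constraint and can even be dropped. Adding the two budgets gives $\sum_S\parcol(\lambda_S)\le n/8$, and Lemma~\ref{lm:ch3-partial} yields the claimed partial coloring with $\disc(\SS,\col)\le C\sqrt{\Delta}$.
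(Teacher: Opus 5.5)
Your overall strategy is the paper's: the same thresholds $\lambda_S=C\sqrt{\Delta/|S|}$ (the paper's constant $C$ is your $C^2$), dyadic size classes counted via Lemma~\ref{lm:ch2-bf-counting}, and a split between the logarithmic and Gaussian regimes of $\parcol$. Your small-set estimate is fine. Your large-set estimate, however, is not valid as written.

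The bound $\parcol(\lambda_S)\le\ln(20\cdot 2^k/C^2)$ is only available when $\lambda_S<4$; your hypothesis ``$C\le 4$'' contradicts taking $C$ large. Moreover, for every class with $2^k<C^2/20$ its right-hand side is negative, so it cannot bound the positive quantity $\parcol(\lambda_S)$. Nor is the Gaussian value $1.5e^{-\lambda_S^2/8}$ ``even smaller'' than a negative number. Indeed, your final expression evaluates to $n\sum_{k\ge1}2^{1-k}\bigl(k\ln 2+\ln(20/C^2)\bigr)=n(4\ln 2+2\ln 20-4\ln C)$. It is small for large $C$ only because illegitimate negative terms cancel the positive ones.

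The repair is to split the classes at the crossover $2^k\approx C^2/16$, exactly as the paper splits at $t^*$.

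\emph{Case $2^k\le C^2/16$.} Every $S\in\SS_k$ has $\lambda_S\ge C2^{-k/2}\ge 4$, so $\SS_k$ contributes at most $\frac{n}{2^{k-1}}\cdot 1.5e^{-C^22^{-k}/8}$. For the class $j$ steps below the crossover, $C^22^{-k}\ge 16\cdot 2^j$ and $2^{1-k}< 64\cdot 2^j/C^2$. Hence these classes contribute $O(n/C^2)$ in total.

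\emph{Case $2^k>C^2/16$.} Here $\ln(20\cdot 2^k/C^2)>\ln(5/4)>1.5e^{-2}$, so it is a legitimate bound in both regimes, and $2^{1-k}<32/C^2$. These classes contribute at most $\frac{32n}{C^2}\sum_{i\ge0}2^{-i}\ln(\tfrac52\, 2^i)=O(n/C^2)$.

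So the smallness comes from the geometric prefactor $2^{1-k}$ already being $O(1/C^2)$ where the logarithmic regime begins, not from $\ln(20/C^2)\to-\infty$. With this correction, and discarding the empty set (for which $\lambda_S$ is undefined), your argument goes through and coincides with the paper's.
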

\begin{proof}
  Recall that, by Lemma~\ref{lm:ch2-bf-counting}, for any $s > 0$ we
  have that the number of sets in $\SS$ of size larger than $s$ is at
  most $n\ssdeg(\SS)/s$. We use this observation together with
  Lemma~\ref{lm:ch3-partial} to find a partial coloring.

  Let $\SS \eqdef \{S_1, \ldots, S_m\}$, and, as usual, let $v_i$ be the indicator vector of $S_i$.  Let
  \(\lambda_i := \sqrt{C\ssdeg(\SS)/|S_i|}\) for a large enough
  constant \(C > 0\) to be decided later.
  For any integer \(t\) between \(0\) and
  \(\lceil \log_2(n)\rceil\), let \(I(t)\) consist of those
  \(i \in [m]\) for which \(|S_i| \in (2^{t-1},2^{t}]\). By
  Lemma~\ref{lm:ch2-bf-counting}, $|I(t)| < 2n\ssdeg(\SS)/2^t$. We
  can then write the sum in Lemma~\ref{lm:ch3-partial} as
  \begin{align*}
    \sum_{i = 1}^m \parcol(\lambda_i)
    =
      \sum_{t = 0}^{\lceil \log_2(n)\rceil} \sum_{i \in I(t)} \parcol(\lambda_i)
    &<       \sum_{t = 0}^{\lceil \log_2(n)\rceil}
      \frac{2n\ssdeg(\SS)}{2^t} \parcol\left(\sqrt{C\ssdeg(\SS)/2^t}\right),
  \end{align*}
  where, for the  inequality, we used that the function
  \(\parcol(s)\) defined in \eqref{eq:ch3-partial} is non-increasing with $s$.
  It is therefore enough to verify this
  \begin{equation}\label{eq:ch3-komlos-sum}
    \sum_{t = 0}^{\lceil \log_2(n)\rceil}
    \ssdeg(\SS)2^{-t} \parcol\left(\sqrt{C\ssdeg(\SS)/2^t}\right) \le \frac{1}{16}
  \end{equation}
  for a large enough constant $C$. Consider the value $t^* :=
  \log_2\left(C\ssdeg(\SS)/16\right)$, which is the value of $t$ for which
  \(\sqrt{C\ssdeg(\SS)/2^t} = 4.\)
  For any $t > t^*$,  we have
  \[
    \ssdeg(\SS)2^{-t} \parcol\left(\sqrt{C\ssdeg(\SS)/2^{t}}\right)
    = \frac{16}{C}\cdot 2^{-(t-t^*)}\ln\left(5\cdot
          2^{t-t^*}/4\right).
  \]
  It is clear that the sum over terms $t > t^*$ is $O\left(1/C\right)$.

  For
  any $t \le t^*$, we have
  \[
    \ssdeg(\SS)2^{-t} \parcol\left(\sqrt{C\ssdeg(\SS)/2^{t}}\right)
    =
    \frac{24}{C}\cdot 2^{t^* - t}e^{-2^{t^*-t+1}}.
  \]
  Again, it is easily seen that the sum over terms  $t <
  t^*$ is $O(1/C)$. Thus \eqref{eq:ch3-komlos-sum} holds for a large enough $C$.
\end{proof}

The next exercise extends   Theorem~\ref{thm:ch3-bf} to a bound for
the Koml\'os problem.
\begin{exercise}\label{ex:ch3-komlos}
  Modify the proof of Lemma~\ref{lm:ch3-bf-partial} and
  Theorem~\ref{thm:ch3-bf} to show that for any $m\times n$ matrix
  \(\mat\) with columns $u_1, \ldots, u_n$ such that
  $\|u_j\|_2 \le 1$ for each $j$, we have
  \(\disc(\mat)\lesssim \log(2n).\)
\end{exercise}

Matou\v{s}ek's book~\cite{matousek2010geometric} gives a number of other applications of
Lemma~\ref{lm:ch3-partial}. 

\section{Proof of the Partial Coloring Lemma}
\label{sec:ch3-giann}

Our next goal is to prove Lemma~\ref{lm:ch3-partial}. The original
proofs of the lemma, due to a sequence of works by Spencer, Boppana, and Matou\v{s}ek, are
similar to the proof of Beck's partial coloring lemma
(Lemma~\ref{lm:ch3-beck}), and relied on a combination of probabilistic, combinatorial,
and information theoretic arguments. Here we describe a different
geometric approach, due to Giannopoulos~\cite{Gia93}, that
allows us to prove essentially equivalent results, while being more
flexible. In Section~\ref{sect:ch3-corr}, we will see that using the geometric
arguments directly can also sometimes give easier proofs of existence of
good partial colorings than applying  Lemma~\ref{lm:ch3-partial}. 
Moreover, we will see later in the book that the geometric viewpoint is 
helpful in developing efficient algorithms.

\subsection{A Geometric Partial Coloring Lemma}

The first step we take is to encode discrepancy constraints as a
centrally symmetric convex body \(K \subseteq \R^n\), \ie, a convex
set \(K\) with non-empty interior such that \(K = -K\). For example,
if \(\mat\) is an \(m \times n\) matrix, the constraint
\(\disc(\mat,\col) \le t\) can be encoded as \(\col \in tK\) where
\(K \eqdef \{\col \in \R^n: \|\mat\col\|_\infty \le 1\}\). The
convexity and symmetry of \(K\) follow from the triangle inequality,
homogeneity, and symmetry of the \(\ell_\infty^m\) norm. This encoding
of the problem is the key link between discrepancy and geometry
exploited in the arguments below.

Finding a good
partial coloring with discrepancy at most \(t\) is now equivalent to
finding some point of \(\{-1, 0, +1\}^n\) with many nonzero
coordinates inside of \(K\). Intuitively, this task should be easier
when \(K\) is large, and the measure of largeness that will be
fruitful for us is Gaussian measure: the probability that a standard
Gaussian random vector \(\grv\) lies in \(K\). We use the notation
\[
  \gamma^n(K) \eqdef \Pr[g \in K] = 
  {(2\pi)^{-n/2}}\int_K e^{-\|x\|_2^2/2}dx
\]
for the $n$-dimensional Gaussian measure.

We first need a standard lemma about how
Gaussian measure changes when we shift \(K\).

\begin{lemma}\label{lm:ch3-gauss-shift}
  Let \(K\) be a centrally symmetric set in \(\R^n\). Then,
  for any \(t \in \R^n\), we have
  \(
    \gamma^n(K+t) \ge e^{-\|t\|_2^2/2} \gamma^n(K).
  \)
\end{lemma}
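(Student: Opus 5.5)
The plan is to write the shifted measure as an integral over $K$ and exploit the symmetry $K=-K$ by averaging the integrand over $x$ and $-x$. Substituting $y = x - t$,
\[
\gamma^n(K+t) = (2\pi)^{-n/2}\int_{K+t} e^{-\|x\|_2^2/2}\,dx = (2\pi)^{-n/2}\int_K e^{-\|y+t\|_2^2/2}\,dy .
\]
Expanding the square gives $e^{-\|y+t\|_2^2/2} = e^{-\|t\|_2^2/2}\, e^{-\|y\|_2^2/2}\, e^{-\ip{y,t}}$. So the claim reduces to
\[
\int_K e^{-\ip{y,t}} e^{-\|y\|_2^2/2}\,dy \;\ge\; \int_K e^{-\|y\|_2^2/2}\,dy .
\]

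Next I use central symmetry. The map $y\mapsto -y$ preserves $K$, Lebesgue measure, and the Gaussian weight. Hence the left-hand integral equals the same integral with $e^{\ip{y,t}}$ in place of $e^{-\ip{y,t}}$. Averaging the two expressions gives
\[
\int_K \cosh(\ip{y,t})\, e^{-\|y\|_2^2/2}\,dy .
\]
Since $\cosh(s)\ge 1$ for every real $s$, this is at least $\int_K e^{-\|y\|_2^2/2}\,dy$. Multiplying through by $(2\pi)^{-n/2}e^{-\|t\|_2^2/2}$ yields $\gamma^n(K+t)\ge e^{-\|t\|_2^2/2}\gamma^n(K)$.

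There is no real obstacle here. The only step needing care is the symmetrization, which uses exactly $K=-K$ together with the evenness of the Gaussian density; convexity is not needed. Equivalently, one could apply Jensen's inequality to the probability measure $\gamma^n$ conditioned on $K$. Under that measure $\E[\ip{g,t}\mid g\in K]=0$ by symmetry, so $\E[e^{-\ip{g,t}}\mid g\in K]\ge e^{0}=1$. I would present the $\cosh$ version, since it is the most elementary.
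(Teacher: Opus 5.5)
Your proof is correct and is essentially the paper's argument. The paper also averages the integrand over $x$ and $-x$ using $K=-K$, and its AM--GM step $\frac12\bigl(e^{-\|t+x\|_2^2/2}+e^{-\|t-x\|_2^2/2}\bigr)\ge e^{-(\|t+x\|_2^2+\|t-x\|_2^2)/4}$ becomes exactly your inequality $\cosh(\ip{x,t})\ge 1$ once the squares are expanded.
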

\begin{proof}
  By the symmetry of \(K\), we have $K+t = -K+t$, so
  \begin{align}
    \gamma^n(K+t) &= \frac12 (\gamma^n(-K + t) + \gamma^n(K+t))\notag\\
                  &=  
                  {(2\pi)^{-n/2}}\int_K \frac{1}{2} (e^{-\|t+x\|_2^2/2} +e^{-\|t-x\|_2^2/2})\, dx \notag\\
                  & \geq 
                  {(2\pi)^{-n/2}} \int_K (e^{-(\|t+x\|_2^2 +\|t-x\|_2^2)/4})\, dx
                   \tag{A.M. $\geq$ G.M.} \\
                  & = 
                  {(2\pi)^{-n/2}} \int_K (e^{-\|t\|^2_2/2 -\|x\|_2^2/2}) \, dx =
                    e^{-\|t\|_2^2/2} \gamma^n(K). \notag \ \ \ \ \qedhere
  \end{align}
\end{proof}

We are now ready to prove a  partial coloring lemma due to
Giannopoulos's, that will in turn imply Lemma~\ref{lm:ch3-partial}.

\begin{lemma}\label{lm:ch3-gian}
  Suppose that \(K \subseteq \R^n\) is a centrally symmetric convex
  body such that \(\gamma^n(K) \ge  2^{-n/4}\).
  Then there exists a partial coloring \(\col \in \{-1, 0, +1\}^n\)
  such that \(\col \in 2K\) and \(|\{i: \col(i) \neq 0\}| \ge n/10\). 
  %
\end{lemma}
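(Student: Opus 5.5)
The plan is a pigeonhole argument over translates of $K$ centered at the cube vertices. The key counting fact comes from Lemma~\ref{lm:ch3-gauss-shift}. Consider the $2^n$ translates $K + \col'$ for $\col' \in \{-1,+1\}^n$. For each of them, $\|\col'\|_2^2 = n$, so
\[
\gamma^n(K+\col') \ge e^{-n/2}\gamma^n(K) \ge e^{-n/2}2^{-n/4}.
\]
Summing over all $2^n$ vertices gives
\[
\sum_{\col'} \gamma^n(K+\col') \ge 2^{n} e^{-n/2} 2^{-n/4} = 2^{n(3/4 - \frac{1}{2}\log_2 e)}.
\]
Since $\frac12\log_2 e \approx 0.72$, this total is $2^{cn}$ with $c \approx 0.03 > 0$. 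The total Gaussian measure is $1$. By averaging, some point $z \in \R^n$ therefore lies in at least $2^{cn}$ of the translates. Equivalently, the set $\mathcal{C} = \{\col' : z \in K + \col'\}$ has size at least $2^{cn}$.

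The next step is to extract two colorings from $\mathcal{C}$ that are far apart in Hamming distance. For any $\col', \col'' \in \mathcal{C}$ we have $z - \col' \in K$ and $z - \col'' \in K$. By central symmetry and convexity,
\[
\tfrac12(\col'' - \col') \in K, \quad\text{so}\quad \col' - \col'' \in 2K.
\]
Set $\col = \tfrac12(\col' - \col'')$, which lies in $\{-1,0,+1\}^n$. This vector is nonzero exactly where $\col'$ and $\col''$ differ, and $\col \in K \subseteq 2K$ (the slack explains the factor $2$ in the statement). So it only remains to find $\col', \col'' \in \mathcal{C}$ that differ in at least $n/10$ coordinates.

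This last step is the main obstacle. Lemma~\ref{lm:ch3-hamming} needs $|\mathcal{C}| \ge 2^{n/2}$, but the averaging above only yields $2^{cn}$ with a tiny $c$. The fix is to replace Lemma~\ref{lm:ch3-hamming} with the Hamming ball estimate used in its proof: it suffices that $2^{cn}$ exceeds the size of a Hamming ball of radius $n/10$, roughly $2^{nH(1/10)} \approx 2^{0.47n}$, which it does not. So I expect the crude count to fail, and I would sharpen it.

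The first thing I would try is to average against the uniform measure on a neighborhood rather than the Gaussian. Alternatively, I would exploit the inequality more carefully: restrict attention to points $z$ near the origin, or count translates $K + \col'$ restricted to a scaled cube where the Gaussian density is comparable. If neither works, I would run the pigeonhole with $2K$ replaced by a smaller scaling of $K$, giving up some of the factor $2$, so that the overlap count becomes $2^{n/2}$ or more. Concretely, I would use $\tfrac12 K$, whose translates $\tfrac12K + \tfrac12\col'$ shifted by vectors of norm $\sqrt{n}/2$ lose only $e^{-n/8}$. This requires lower-bounding $\gamma^n(\tfrac12 K)$ in terms of $\gamma^n(K)$, which I expect to be the delicate point.
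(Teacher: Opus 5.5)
There is a genuine gap: the proof does not close. Your framework is the right one: pigeonhole a point $z$ into many translates of $K$, then take half the difference of two far-apart colorings. But the choice of shifts breaks the count.

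With shifts by the full vertices $\col'\in\{-1,+1\}^n$, Lemma~\ref{lm:ch3-gauss-shift} costs $e^{-n/2}$ per translate, so the overlap is only about $2^{0.03n}$. As you observe, that is far below the roughly $2^{H(1/10)n}$ needed to beat a Hamming ball of radius $n/10$; it only gives two colorings differing in about $0.003n$ coordinates.

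Your proposed repair via $\tfrac12 K$ also fails. It needs $\gamma^n(\tfrac12 K)$ to be bounded below comparably to $\gamma^n(K)$, and no such bound holds for small-measure bodies. Take the cube $K=[-a,a]^n$ with $\gamma^n(K)=2^{-n/4}$. Then $a\approx 1.41$ and $\gamma^n(\tfrac12K)\approx 2^{-0.95n}$, so $2^n e^{-n/8}\gamma^n(\tfrac12K)<1$ and the count yields nothing. Your remark that the factor $2$ in the statement is ``slack'' is a misreading. That factor is exactly what makes the argument work.

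The missing idea is to keep the body $K$ and halve the shifts instead, i.e.\ consider the translates $K+\col'/2$.
\begin{itemize}
\item Since $\|\col'/2\|_2^2=n/4$, Lemma~\ref{lm:ch3-gauss-shift} gives $\gamma^n(K+\col'/2)\ge e^{-n/8}2^{-n/4}$.
\item Summing over all $2^n$ vertices gives at least $2^{3n/4}e^{-n/8}\ge 2^{n/2}$, because $e\le 4$.
\item So some $z$ lies in at least $2^{n/2}$ of these translates. Lemma~\ref{lm:ch3-hamming} then applies verbatim and gives $\col',\col''$ in this set differing in at least $n/10$ coordinates.
\item By symmetry of $K$, $\col'/2-z\in K$ and $z-\col''/2\in K$. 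Adding these gives $\tfrac12(\col'-\col'')\in K+K=2K$.
\end{itemize}
The factor $2$ is precisely the price of the half-shifts that make the Gaussian shift loss affordable. This is the paper's proof, phrased there via the expectation $\E[C(g)]$ of the number of translates containing a standard Gaussian point $g$.
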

\begin{proof}
 The idea will be to find a set \(\mathcal{C} \subseteq \{-1,+1\}^n\) of size at
  least \(2^{n/2}\) such that any two colorings
  \(\col',\col'' \in \mathcal{C}\) satisfy
  \(\col'-\col'' \in O(1)\cdot K\). Then $\col = (\col'-\col'')/2$ for any \(\col',\col''
  \in \mathcal{C}\) that
  disagree in at least \(n/10\) coordinates gives the desired partial coloring.

  Let \(\grv\) be a standard Gaussian in \(\R^n\), and define
  \[
  C(g) \eqdef |\{\col \in \{-1,+1\}^n: g \in   \col/2 +K\}|.
  \]
  We have
  \begin{align*}
    \E[C(g)] &= \sum_{\col \in \{-1,+1\}^n} \gamma^n\left(\col/2 + K\right).
  \end{align*}
  By Lemma~\ref{lm:ch3-gauss-shift}, each term on the right
  is at least \(e^{-n/8}\gamma^n(K) \ge e^{-n/8}2^{-n/4}\), and thus
  \[
    \E[C(g)] \ge 2^{3n/4}e^{-n/8} \ge 2^{n/2}.\]
Pick some $g$ for which 
  \(C(g) \ge 2^{n/2}\) and let $\mathcal{C}$ be the set $\{\col \in \{-1,+1\}^n: g \in   \col/2 +K\}$, i.e., the set whose size is $C(g)$. By Lemma~\ref{lm:ch3-hamming},
  there must exist two \(\col',\col'' \in \mathcal{C}\) for which
  \(
  |\{i: \col'(i) \neq \col''(i)\}| \ge n/10.
  \)
  Then we take our partial coloring as \(\col \eqdef \frac12(\col' -
  \col'')\). Note that $g - \col'/2 \in K$, or equivalently, $ \col'/2 - g \in K$, and $g -  \col''/2
  \in K$ by the definition of
  $\mathcal{C}$ and the symmetry of $K$. This
  implies
  \[
    \col = \left(\frac12 \col'  - g\right) + \left(g - \frac12 \col''\right) \in 2K.
  \]
  Moreover, \(\col(i) \neq 0\) whenever \(\col'(i) \neq \col''(i)\),
  which happens for at least \(n/10\) choices of \(i\). This
  proves the lemma.
\end{proof}

\begin{remark}
  The constants in the statement of Lemma~\ref{lm:ch3-gian}, and also Lemma~\ref{lm:ch3-hamming}, 
were not optimized and are somewhat arbitrary.  In general, for any $c<1$ and $\gamma^n(K) \ge 2^{-cn}$ we can conclude
that there exists a partial coloring $x \in O_c(1) \cdot K$ with $\Omega_c(n)$ nonzero coordinates. 
The proof method, however, does
not extend to $K$ such that $\gamma^n(K) \le 2^{-n}$. 
\end{remark}

The following exercise suggests a strengthening of
Lemma~\ref{lm:ch3-gian}.
\begin{exercise}\label{ex:ch3-gian-unit}
  Let $K \subseteq \R^m$ be a symmetric convex body such that
  \(\gamma^m(K) \ge 2^{-n/4}\), and let \(\mat \eqdef (u_i)_{i =
    1}^n\) be an \(m\times n\) matrix whose columns satisfy
  \(\|u_i\|_2 \le 1\) for all \(i\).  Prove that there  exists a
  partial coloring \(\col \in \{-1, 0, +1\}^n\) such that \(\mat \col
  \in 2 K\) and \(|\{i: \col(i) \neq 0\}| \ge n/10\).
\smallskip 

  \textsc{Hint:} Define $C(g) \eqdef |\{\col \in \{-1,+1\}^n: g \in K +
  \frac12 \mat\col\}|$, and follow the proof of
  Lemma~\ref{lm:ch3-gian} to show that
  \[
    \E[C(g)] \ge 2^{-n/4}\sum_{\col \in \{-1,+1\}^n} e^{-\|\mat\col\|_2^2/8}.
  \]
  Now use the A.M.-G.M. inequality to lower bound the sum on
  the right. 
\end{exercise}

  Exercise~\ref{ex:ch3-gian-unit} allows us to define $K$ as a set of
good discrepancy vectors as opposed to a set of good fractional
colorings. For example, suppose that, in the context of the Koml\'os
problem, we have an $m \times n$ matrix $\mat$ all of whose columns
have $\ell_2$ norm at most $1$. Then setting $K = [-t,+t]^m$ for $t \ge \sqrt{2\ln(2m)}$ gives $\gamma^m(K) \ge \frac12 \ge
2^{-n/4}$ (for $n \ge 4$) and shows that there exists a partial
coloring $\col$ such that $\|\mat\col\|_\infty \lesssim
\sqrt{\log(2m)}$. In Chapter~\ref{ch:bana-intro} we will see a
different geometric argument by Banaszczyk that gives the same
conclusion for a \emph{full} coloring $\col$.

\paragraph{Comparison with Random Coloring.}
It is worth comparing Lemma~\ref{lm:ch3-gian} to what we can get from
a random coloring.
Suppose that \(K\) is a symmetric
convex set in \(\R^n\), and $\col \in \{-1,+1\}^n$ is
chosen uniformly at random, \ie, every coordinate is an independent
and uniform sample from $\{-1,+1\}$. Let us denote by $\|\cdot\|_K
\eqdef \inf\{t \ge 0: x \in tK\}$ the norm with unit ball $K$.
Because a uniformly random $\col \in \{-1,+1\}^n$ is more concentrated
than a standard Gaussian $\grv \in \R^n$, $\E\|\col\|_K$ is dominated
by $\E\|\grv\|_K$. More precisely, we have the inequality (inequality
(4.8) in Ledoux and Talagrand's book~\cite{LedouxTalagrand91})
\begin{align*}
  \E\|\col\|_K \le \sqrt{\frac{\pi}{2}}\E\|\grv\|_K.
\end{align*}
Therefore, there always exists a \emph{full} coloring
\(\col \in \{-1,+1\}^n\) that lies in \(\sqrt{\frac{\pi}{2}}\E[\|\grv\|_K] \cdot
K\).  Since it is a Lipschitz function of a Gaussian, the random
variable \(\|\grv\|_K\) is tightly concentrated around its median
(see, e.g., Theorem 5.2.2 in~\cite{vershynin}). From this, it is not
hard to show that when \(\gamma^n(K) \ge \frac12\),
\(\E[\|\grv\|_K] \lesssim 1\), and, therefore, we can find a full
coloring in a constant scaling of \(K\). (A similar argument is
carried out in detail in Chapter~\ref{ch:bana-intro}.)

Thus, a random
coloring works well in the large measure setting of
\(\gamma^n(K) \ge \frac12\). The power of Lemma~\ref{lm:ch3-gian}, and
the reason why it allows us to prove tighter bounds than possible by a
random coloring, is that it allows us to work in the ``small ball''
setting, in which \(\gamma^n(K)\) is allowed to be exponentially
small. 

As a simple illustration, consider the incidence matrix $\mat$
of the set system of all prefix intervals $\{1, \ldots, i\}$, $i \in
[n]$, on the universe $[n]$, and
\(K\eqdef\{\col \in \R^n: \|\mat \col\|_\infty \le 1\}.\)
Then
\[
  \|\col\|_K = \max_{i = 1}^n |\col(1) + \ldots + \col(i)|,
\]
and $\|\col\|_K \gtrsim \sqrt{n}$ with high probability for a
uniformly random coloring $\col$.  This is much worse than the
discrepancy of $\mat$, which is just $1$ for $\col$ that alternates
between $-1$ and $+1$. On the other hand, classical small ball
inequalities imply that $\gamma^n(tK) \gtrsim e^{-cn/t^2}$ for a
constant $c > 0$, which is enough to get $\gamma^n(tK) \ge 2^{-n/4}$
for a constant $t$. Thus, Lemma~\ref{lm:ch3-gian} is strong enough to
give a partial coloring with constant discrepancy. 
Later, we expand on this example, and use the small ball inequality to show that any set system induced by $k$ permutations has
a partial coloring of discrepancy $O(\sqrt{k})$. The $k$ permutations
example will also show that Lemma~\ref{lm:ch3-gian} cannot be
strengthened to give a full coloring, since there are set systems
induced by $3$ permutations on $[n]$ that have discrepancy
$\Omega(\log n)$.

\subsection{Gaussian Correlation}

To derive Lemma~\ref{lm:ch3-partial} from Lemma~\ref{lm:ch3-gian} we
need a way to prove lower bounds on the Gaussian measure $\gamma^n(K)$
of the convex body
\[
K\eqdef \{\col \in \R^n: |\ip{v_i,\col}| \le 0.5 \lambda_i\|v_i\|_2 \ \ \forall i
\in [m]\}
\]
under the assumptions of Lemma~\ref{lm:ch3-partial}. Then any $\col \in 2K$ would satisfy $|\ip{v_i,\col}| \le \lambda_i\|v_i\|_2$. The only
property of \(K\) we will use is that it is the intersection of slabs
of bounded width: we have
\(
K = \bigcap_{i = 1}^m S_i,
\)
where
\(
S_i \eqdef \{\col \in \R^n: |\ip{v_i,\col}| \le 0.5 \lambda_i\|v_i\|_2 \}
\)
is a symmetric slab, \ie, the region of width \(\lambda_i\|v_i\|_2\) between two hyperplanes orthogonal to
\(v_i\). The following key lemma, due to Sidak~\cite{Sidak67}, shows
why this property is useful. 

\begin{lemma}[Sidak's Lemma]\label{lm:ch3-sidak}
  Suppose that \(K = -K\) is a centrally symmetric closed convex set in
  \(\R^n\), and that \(S \eqdef \{x \in \R^n: |\ip{u,x}| \le t\}\) is a
  symmetric slab defined by the vector \(u \in \R^n\). Then,
  \begin{equation}\label{eq:ch3-sidak}
  \gamma^n(K \cap S) \ge \gamma^n(K) \gamma^n(S).
  \end{equation}
  In particular, if \(K = \bigcap_{i = 1}^m S_i\) and \(S_1, \ldots,
  S_m\) are symmetric slabs defined as \(S_i \eqdef \{x \in \R^n:
  |\ip{u_i,x}| \le t_i\}\) for some vectors \(u_1, \ldots, u_m \in
  \R^n\) s.t.~$\|u_i\|_2 = 1$ for all $i$, and $t_1, \ldots, t_m > 0$, then
  \[
    \gamma^n(K) \ge \prod_{i = 1}^n \gamma^n(S_i) =
    \prod_{i =  1}^n\gamma^1\left(\left[-t_i,t_i\right]\right).
  \]
\end{lemma}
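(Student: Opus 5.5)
We reduce to a one-dimensional statement by rotating and factoring the Gaussian measure. Rotational invariance of $\gamma^n$ lets us assume $u = \|u\|_2 e_1$, and after rescaling $t$ we may take $u=e_1$. Then $S = \{x: |x(1)| \le t\}$ and $\gamma^n(S) = \gamma^1([-t,t])$. Write $x = (s, y)$ with $s \in \R$ and $y \in \R^{n-1}$. For each $s$, let $K_s \eqdef \{y \in \R^{n-1}: (s,y) \in K\}$ be the section of $K$, and set $f(s) \eqdef \gamma^{n-1}(K_s)$. By Fubini, $\gamma^n(K\cap S) = \int_{-t}^{t} f\,d\gamma^1$ and $\gamma^n(K) = \int_\R f\,d\gamma^1$.

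The key structural claim is that $f$ is even and non-increasing in $|s|$. Evenness follows from $K=-K$, which gives $K_{-s} = -K_s$, together with the symmetry of $\gamma^{n-1}$. For monotonicity, convexity of $K$ gives $\frac{1}{2}(K_s + K_{-s}) \subseteq K_0$, and more generally $K_{\lambda s} \supseteq \lambda K_s + (1-\lambda)\cdot\frac{1}{2}(K_s+K_{-s})$ for $\lambda\in[0,1]$. Rather than route through Brunn--Minkowski, the cleaner path is the translation argument of Lemma~\ref{lm:ch3-gauss-shift}: we want $\gamma^{n-1}(K_{s}) \ge \gamma^{n-1}(K_{s'})$ for $|s|\le|s'|$. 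Since $K_{s}$ contains $\frac{1}{2}\big((1+\lambda)K_{s'} + (1-\lambda)K_{-s'}\big)$ for suitable $\lambda$, it suffices to use log-concavity of $\gamma^{n-1}$ (Prékopa--Leindler). That gives $\gamma^{n-1}(K_s) \ge \gamma^{n-1}(K_{s'})^{(1+\lambda)/2}\gamma^{n-1}(K_{-s'})^{(1-\lambda)/2} = \gamma^{n-1}(K_{s'})$, using evenness of $f$. This step is the main obstacle. It requires log-concavity of Gaussian measure, which is not proved earlier in the paper, so we would cite it as a standard fact. An alternative is to note that $f$ is log-concave (again by Prékopa--Leindler applied to the log-concave density restricted to convex $K$) and even, hence maximized at $0$ and non-increasing on $[0,\infty)$.

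With monotonicity in hand, the inequality is a one-dimensional Chebyshev-type (FKG) correlation. The functions $f(s)$ and $\indic_{[-t,t]}(s)$ are both non-increasing in $|s|$, so under the law of $|s|$ for $s\sim\gamma^1$ they are positively correlated: $\E[f\cdot\indic_{[-t,t]}] \ge \E[f]\,\E[\indic_{[-t,t]}]$. One way to see this is that $(f(a)-f(b))(\indic(a)-\indic(b))\ge 0$ for all $a,b\ge 0$, and integrating over independent copies gives the claim. This is exactly~\eqref{eq:ch3-sidak}.

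For the \emph{in particular} part, we induct on $m$. Apply~\eqref{eq:ch3-sidak} with $K$ replaced by $S_1\cap\dots\cap S_{m-1}$, which is symmetric, closed and convex, and $S$ replaced by $S_m$. Finally, $\gamma^n(S_i) = \gamma^1([-t_i,t_i])$ because $\ip{u_i,g}$ is a standard one-dimensional Gaussian when $\|u_i\|_2=1$. The product in the statement should run over $i\in[m]$.
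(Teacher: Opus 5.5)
Your proof is correct and follows essentially the same route as the paper: rotate so the slab is orthogonal to a coordinate axis, show the slice function $f(s)=\gamma^{n-1}(K_s)$ is even and log-concave via Gaussian Brunn--Minkowski/Pr\'ekopa--Leindler (the paper proves this just before the lemma as Lemma~\ref{lm:ch3-bm-gauss}), hence non-increasing in $|s|$, and finish with a one-dimensional correlation inequality. The only difference is packaging. The paper writes $\E[f(g)\indic\{|g|\le t\}]$ with the layer-cake formula and applies the trivial interval case to each symmetric superlevel set $\{f\ge r\}$, whereas you apply Chebyshev's association inequality directly, which needs only measurability and monotonicity of $f$ in $|s|$ rather than any regularity of the superlevel sets. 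Your remark that the final product should run over $i\in[m]$ is also right.
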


Lemma~\ref{lm:ch3-sidak} allows us to reduce estimating the Gaussian
measure of \(K\) to estimating Gaussian measures
of slabs, and it is easy to see that the Gaussian measure of a slab can be computed as the Gaussian measure of a one-dimensional interval. We can get a pretty good handle on the
latter using standard concentration inequalities, for example.

We first derive Lemma~\ref{lm:ch3-partial} from
Lemmas~\ref{lm:ch3-gian}~and~\ref{lm:ch3-sidak}. Then, later in this
section, we give a self contained short proof of
Lemma~\ref{lm:ch3-sidak}.

\begin{proof}[Proof of Lemma~\ref{lm:ch3-partial}]
  As mentioned above, we work with the convex body 
  \(
  K\eqdef \bigcap_{i = 1}^m S_i,
  \)
  defined by the slabs \(S_i := \{x \in \R^n: |\ip{v_i,x}| \le
  0.5 \lambda_i\|v_i\|_2 \}\). Equivalently, we can define 
  \[
  S_i = \{x \in \R^n: |\ip{u_i,x}| \le 0.5\lambda_i\}
  \]
  for $u_i \eqdef \frac{v_i}{\|v_i\|}$, and
  Lemma~\ref{lm:ch3-sidak} gives us
  \[
    \gamma^n(K) \ge \prod_{i=1}^m\gamma^n(S_i)
    \ge
    \prod_{i=1}^m\gamma^1\left([-0.5\lambda_i, 0.5\lambda_i]\right)
  \]
  Using the inequality \(\Pr[|g| \ge s] \le e^{-s^2/2}\) for a
  standard one-dimensional Gaussian \(g\), we have  $\gamma^1([-0.5\lambda_i, 0.5\lambda_i]) \geq 1-e^{-\lambda_i^2/8}$, and thus
  \begin{equation}
    \label{eq:ch3-partial-K}
    \gamma^n(K) \ge \prod_{i=1}^m(1-e^{-\lambda_i^2/8}).
  \end{equation}

  We now have the inequalities
  \begin{equation}\label{eq:ch3-exp-ineq}
    1-e^{-s^2/8} \ge
    \begin{cases}
      e^{-1.5 e^{-s^2/8}} & s\ge {4}\\
      s^2/20 & s \le {4}
    \end{cases},
  \end{equation}
  where the right hand side is $e^{-\parcol(s)}$, for the function $\parcol(s)$ defined in \eqref{eq:ch3-partial}.
  Both inequalities can be
  verified by observing that the function $f_{L,c}(t) := e^{-Lt} -
  (1-ct)$ is convex in $t$ and satisfies $f_{L,c}(0) = 0$ for all $L$ and
  $c$. Then, $f_{L,c}(t) \le 0$ for all $t \in [0,a]$ if and only if
  $f_{L,c}(a) \le 0$. The first inequality then follows from
  substituting $e^{-s^2/8}$ for $t$ and setting $L := 1.5$, $c := 1$, $a
  := e^{-2}$. The second inequality follows from substituting
  $s^2/8$ for $t$, and setting $L := 1$, $c := 2/5$, $a :=
  2$. 

  Plugging in \eqref{eq:ch3-exp-ineq} into \eqref{eq:ch3-partial-K}, we get
  \[
    \gamma^n(K) \ge \exp\left(-\sum_{i=1}^m\parcol(\lambda_i)\right) \ge
    e^{-n/8} \ge 2^{-n/4},
  \]
  where the penultimate inequality is by assumption. The lemma now
  follows from Lemma~\ref{lm:ch3-gian}.
\end{proof}

\begin{remark}
  Lemma~\ref{lm:ch3-partial} clearly holds with $h(s) :=
  -\ln\left(1-e^{-s^2/2}\right)$ in place of $\parcol(s)$, and the function
  $\parcol(s)$ is simply an upper bound on $h(s)$. In applications, however,
  $\parcol(s)$ is usually easier to use. 
\end{remark}

\paragraph{Proof of Sidak's Lemma.} We now take a slight detour and give a short proof of
Lemma~\ref{lm:ch3-sidak}, following an argument of Giannopoulos (see
the proof and discussion in~\cite{SzW99}). Let us first introduce one of the central
theorems in geometric analysis: the Prekopa-Leindler inequality.

\begin{lemma}\label{lm:ch3-pl}
  Let \(\lambda \in (0,1)\), and let \(f,g,h:\R^n \to \R_{\ge 0}\) be
  three non-negative measurable functions satisfying
  \begin{equation}\label{eq:ch3-pl-ass}
    h(\lambda x + (1-\lambda)y) \ge f(x)^\lambda g(y)^{1-\lambda}
  \end{equation}
  for all \(x,y \in \R^n\). Then we have
  \[
    \int_{\R^n} h(x) dx \ge \left(\int_{\R^n} f(x) dx\right)^\lambda\left(\int_{\R^n} g(x) dx\right)^{1-\lambda}.
  \]
\end{lemma}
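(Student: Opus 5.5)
We prove the Prékopa--Leindler inequality (Lemma~\ref{lm:ch3-pl}) by induction on the dimension $n$. The one-dimensional case is the heart of the argument. The inductive step is a routine Fubini computation that uses the one-dimensional case once more.

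\emph{One-dimensional case.} First we reduce to $f,g$ bounded with positive integrals: otherwise truncate them, or note that the claim is trivial if one integral vanishes. We may also rescale $f$ and $g$ by positive constants. Hypothesis \eqref{eq:ch3-pl-ass} is invariant under $f\mapsto af$, $g\mapsto bg$, $h\mapsto a^\lambda b^{1-\lambda}h$, so we can normalize $\sup f=\sup g=1$. For $0\le s<1$ consider the superlevel sets $\{f>s\}$ and $\{g>s\}$; both are nonempty. By \eqref{eq:ch3-pl-ass},
\[
\{h>s\}\supseteq \lambda\{f>s\}+(1-\lambda)\{g>s\}.
\]
The one-dimensional Brunn--Minkowski inequality $|A+B|\ge |A|+|B|$ for nonempty measurable $A,B\subseteq\R$ then gives
\[
|\{h>s\}|\ge \lambda|\{f>s\}|+(1-\lambda)|\{g>s\}|.
\]
To prove this Brunn--Minkowski inequality, approximate $A$ and $B$ from inside by compact sets, and translate them so that $\max A=0=\min B$. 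Then $A\cup B\subseteq A+B$ and $A\cap B=\{0\}$.

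Now integrate over $s\in[0,1)$ using the layer-cake formula:
\[
\int h\ \ge\ \int_0^1|\{h>s\}|\,ds\ \ge\ \lambda\int f+(1-\lambda)\int g\ \ge\ \Big(\int f\Big)^\lambda\Big(\int g\Big)^{1-\lambda},
\]
where the last step is AM--GM. This step, and the measurability and approximation details behind it, is where the real work lies. The key trick is the normalization $\sup f=\sup g$, which ensures all the superlevel sets are nonempty for the same range of $s$.

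\emph{Inductive step.} Write $x=(x',x_n)$ with $x'\in\R^{n-1}$. Fix $x_n,y_n$ and set $z_n=\lambda x_n+(1-\lambda)y_n$. The functions $h(\cdot,z_n)$, $f(\cdot,x_n)$, $g(\cdot,y_n)$ on $\R^{n-1}$ satisfy \eqref{eq:ch3-pl-ass}. By the induction hypothesis, $H(z)\eqdef\int h(x',z)\,dx'$ satisfies
\[
H(\lambda x_n+(1-\lambda)y_n)\ge F(x_n)^\lambda G(y_n)^{1-\lambda},
\]
where $F$ and $G$ are defined analogously. Measurability of $F,G,H$ follows from Fubini. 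Applying the one-dimensional case to $F,G,H$ and using Fubini again, $\int_{\R^n}h=\int H$ and similarly for $f$ and $g$, which completes the induction.
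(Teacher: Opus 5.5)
The paper gives no proof of this lemma; it only refers to Lecture~5 of Ball's survey and to Barthe's survey. Your argument is a correct, self-contained proof. It is the standard level-set proof, and each step checks out: the reduction by truncation and monotone convergence, the scaling normalization $\sup f=\sup g=1$, the inclusion $\{h>s\}\supseteq\lambda\{f>s\}+(1-\lambda)\{g>s\}$, the one-dimensional Brunn--Minkowski inequality via inner compact approximation (which also sidesteps the possible non-measurability of the sumset), the layer-cake formula, weighted AM--GM, and the Fubini induction.

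The short proofs the paper points to share your inductive step but treat the one-dimensional case by transport. With $F=\int f$ and $G=\int g$, one defines increasing maps $u,v:(0,1)\to\R$ by $\int_{-\infty}^{u(t)}f=tF$ and $\int_{-\infty}^{v(t)}g=tG$, sets $w=\lambda u+(1-\lambda)v$, and substitutes $x=w(t)$ in $\int h$. Since $u'=F/f(u)$, $v'=G/g(v)$ and $w'\ge (u')^\lambda (v')^{1-\lambda}$ by AM--GM, the integrand is at least $F^\lambda G^{1-\lambda}$.

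That route needs neither Brunn--Minkowski nor the sup-normalization, and it produces the product $F^\lambda G^{1-\lambda}$ directly. Its cost is a regularity or approximation step: a.e.\ differentiability of monotone maps and a change of variables, or first assuming $f,g$ positive and continuous. Your route works directly with measurable functions, at the cost of the Brunn--Minkowski lemma. As a by-product, in dimension one it gives the stronger linear bound $\int h\ge\lambda\int f+(1-\lambda)\int g$ whenever $\sup f=\sup g$.

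One point to tighten in the inductive step. If $f,g,h$ are only Lebesgue measurable, the sections $f(\cdot,x_n)$, $g(\cdot,y_n)$, $h(\cdot,z_n)$ are measurable only for almost every value of the last coordinate. So $F,G,H$ are defined only almost everywhere and may equal $+\infty$, whereas your one-dimensional case needs the hypothesis for \emph{all} $x_n,y_n$. This is easily repaired in either of two ways:
\begin{itemize}
\item set $F=G=0$ and $H=+\infty$ on the exceptional null sets; or
\item at the outset, replace $f,g$ by smaller Borel versions and $h$ by a larger Borel version, each equal almost everywhere to the original.
\end{itemize}
Then allow $[0,\infty]$-valued functions in the one-dimensional case, which your truncation step already accommodates.
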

We do not prove this inequality here, but refer the reader to Lecture~5 in Ball's
survey~\cite{Ball-modern}, or Barthe's survey~\cite{Barthe06} for short proofs. A classical
application of Lemma~\ref{lm:ch3-pl} is to prove the Brunn-Minkowski
inequality which states that, under
appropriate measurability assumptions, for any sets \(A, B \subseteq
\R^n\), and any \(\lambda \in (0,1)\),
\begin{equation}\label{eq:ch3-bw}
  \vol^n(\lambda A + (1-\lambda)B) \ge \vol^n(A)^\lambda\vol^n(B)^{1-\lambda}.
\end{equation}
Here, \(\vol^n(\cdot)\) denotes \(n\)-dimensional volume, \ie,
Lebesgue measure, and \(\lambda A + (1-\lambda)B\) is the Minkowski
sum \(\{\lambda x + (1-\lambda)y: x \in A, y \in B\}\).
To prove Sidak's lemma, we, instead, need a version of the
Brunn-Minkowski inequality for Gaussian measure.
\begin{lemma}\label{lm:ch3-bm-gauss}
  For any Borel sets \(A,B \subseteq \R^n\), and any \(\lambda \in
  (0,1)\), we have
  \[
    \gamma^n(\lambda A + (1-\lambda)B) \ge \gamma^n(A)^\lambda\gamma^n(B)^{1-\lambda}.
  \]
\end{lemma}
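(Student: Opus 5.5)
The plan is to derive the Gaussian Brunn--Minkowski inequality directly from the Pr\'ekopa--Leindler inequality (Lemma~\ref{lm:ch3-pl}). The Gaussian density $\phi(x) \eqdef (2\pi)^{-n/2}e^{-\|x\|_2^2/2}$ is log-concave, and Pr\'ekopa--Leindler absorbs log-concave weights without loss. Concretely, I would set
\[
f \eqdef \phi\cdot 1_A,\qquad g \eqdef \phi\cdot 1_B,\qquad h \eqdef \phi\cdot 1_{\lambda A+(1-\lambda)B}.
\]
Then $\int f = \gamma^n(A)$, $\int g = \gamma^n(B)$, and $\int h = \gamma^n(\lambda A + (1-\lambda)B)$, so the conclusion of Lemma~\ref{lm:ch3-pl} is exactly the desired inequality. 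It remains to verify the hypothesis~\eqref{eq:ch3-pl-ass}.

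Fix $x,y\in\R^n$. If $x\notin A$ or $y\notin B$, the right side of~\eqref{eq:ch3-pl-ass} is $0$ and there is nothing to prove. Otherwise $z\eqdef\lambda x+(1-\lambda)y\in\lambda A+(1-\lambda)B$, so $1_{\lambda A+(1-\lambda)B}(z)=1$, and we need $\phi(z)\ge\phi(x)^\lambda\phi(y)^{1-\lambda}$. After taking logarithms and cancelling the normalizing constant, this reduces to
\[
\|\lambda x+(1-\lambda)y\|_2^2\le\lambda\|x\|_2^2+(1-\lambda)\|y\|_2^2,
\]
which is convexity of $\|\cdot\|_2^2$. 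One can also check it directly: the difference equals $\lambda(1-\lambda)\|x-y\|_2^2\ge 0$.

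The only real obstacle is measurability. For Borel sets $A,B$, the Minkowski combination $\lambda A+(1-\lambda)B$ is analytic and hence universally measurable, so $h$ is Lebesgue measurable and $\gamma^n$ of it makes sense. If one wants to avoid this point, one can first prove the inequality for compact $A,B$, where the Minkowski sum is compact, and then pass to general Borel sets by inner regularity of $\gamma^n$. This works because $\lambda A'+(1-\lambda)B'\subseteq\lambda A+(1-\lambda)B$ for compact $A'\subseteq A$ and $B'\subseteq B$. For the application to Sidak's lemma, $A$ and $B$ are closed convex sets, so this is harmless.
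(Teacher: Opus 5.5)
Your proposal is correct and follows the paper's proof essentially verbatim. The paper applies Pr\'ekopa--Leindler (Lemma~\ref{lm:ch3-pl}) to the same three functions $p\cdot 1_A$, $p\cdot 1_B$, $p\cdot 1_{\lambda A+(1-\lambda)B}$, and verifies the hypothesis by multiplying the log-concavity of the Gaussian density with the corresponding inequality for indicators. Your measurability remark is more careful than the paper's: the paper asserts that the Minkowski combination of Borel sets is Borel, but in general it is only analytic (hence universally measurable), exactly as you say, and your compact-approximation fallback also works.
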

\begin{proof}
  Let \(p(x) \eqdef (2\pi)^{-n/2} e^{-\|x\|_2^2/2} \) be the
  Gaussian density function. 
  Aiming to use Lemma~\ref{lm:ch3-pl}, we define \(f,g,h:\R^n\to \R_{\ge 0}\)  by
  \begin{align*}
    f(x) &\eqdef p(x) 1\{x \in A\};\\
    g(y) &\eqdef p(y) 1\{y \in B\};\\
    h(z) &\eqdef p(z) 1\{z \in \lambda A + (1-\lambda)B\}.
  \end{align*}
  Measurability of \(f,g,h\) follows from the assumption that \(A\) and
  \(B\) are Borel (and, therefore, so is \(A + B\)), and from the
  continuity of \(p\). 
  We need to then verify that \eqref{eq:ch3-pl-ass} holds for these \(f\), \(g\), and
  \(h\). Let us fix some \(x,y \in \R^n\). It is easy to check that \(p(x)\) 
  is log-concave, \ie,
  \begin{align}
    p(\lambda x + (1-\lambda)y) \ge
    p(x)^\lambda p(y)^{1-\lambda}.\label{eq:ch3-gauss-lc}
  \end{align}
  Moreover, if \(x \in A\) and \(y \in B\), then, by definition, \(\lambda
  x + (1-\lambda)y \in \lambda A + (1-\lambda)B\), which implies that 
  \begin{equation}\label{eq:ch3-ind-lc}
    1\{\lambda x + (1-\lambda)y \in \lambda A + (1-\lambda)B\}
    \ge
    1\{x \in A\}^\lambda 1\{y \in B\}^{1-\lambda}.
  \end{equation}
  Multiplying \eqref{eq:ch3-gauss-lc} and \eqref{eq:ch3-ind-lc}
  together, we get \eqref{eq:ch3-pl-ass}. The lemma is now implied by
  Lemma~\ref{lm:ch3-pl}. 
\end{proof}


\begin{exercise}
  Modify the proof of Lemma~\ref{lm:ch3-bm-gauss} to prove the
  inequality \eqref{eq:ch3-bw}. Then derive from it the stronger
  inequality
  \[
    \vol^n(\lambda A + (1-\lambda)B)^{1/n} \ge \lambda \vol^n(A)^{1/n} +
    (1-\lambda)\vol^n(B)^{1/n},
  \]
  using the elementary fact that $\vol^n(\lambda A) = \lambda^n
  \vol^n(A)$ for any $\lambda \ge 0$.
\end{exercise}

\begin{figure}[htp]
  \centering
  \includegraphics[scale=0.9, trim=0 0 0 0]{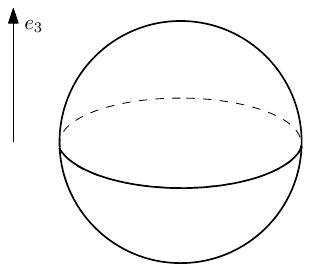}
  \caption[Slice of a ball]{The set $K_0$ where $K$ is a
    three-dimensional Euclidean ball centered at the origin. $K_0$ is
    a slice of the ball given by intersecting it with a horizontal
    plane through the origin.}
  \label{fig:ball-slices}
\end{figure}

\begin{figure}[htp]
  \centering
  \includegraphics[scale=0.5, trim = 0 0 0 0]{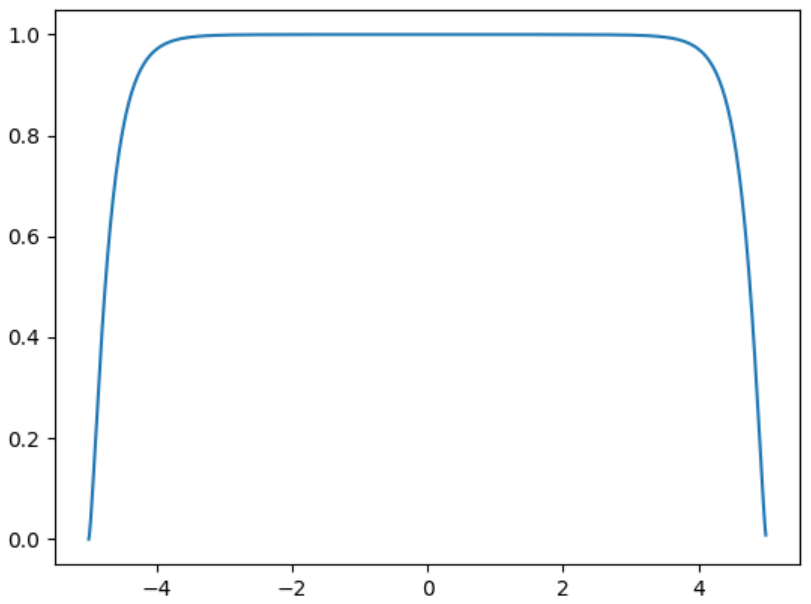}
  \caption[Measures of slices of a ball]{The Gaussian measure of $K_t$
    as a function of $t$, where $K$ is a three-dimensional Euclidean
    ball of radius $5$. Here $K_t$ is a two-dimensional circle of
    radius $\sqrt{25-t^2}$. Notice the measure is quasiconcave, i.e.,
    the set $\{t: \gamma^3(K_t) \ge \alpha\}$ is an interval for any
    \(\alpha\).}
  \label{fig:ball-slices-vol3d}
\end{figure}

Let \(K\subseteq \R^n\) be a closed convex
set. Lemma~\ref{lm:ch3-bm-gauss} allows us to characterize how the
Gaussian measure of sections of \(K\) changes as we slice it in a given
direction. For ease of notation, we choose the direction to be
\(e_n\), the \(n\)-th standard basis vector. This does not really cost
us any generality, since the standard Gaussian measure is invariant
under rotations. Let us then define the function
\(
  f_K(t) \eqdef \gamma^{n-1}(K_t),
\)
where \(K_t \eqdef \{y \in \R^{n-1}: (y,t) \in K\}\) is the slice of
 \(K\) consisting of points whose  \(n\)-th coordinate is equal to \(t\). See
Figures~\ref{fig:ball-slices}~and~\ref{fig:ball-slices-vol3d} for an example. Lemma~\ref{lm:ch3-bm-gauss} now tells us that \(f_K\) is
log-concave, \ie, for any \(s,t \in \R\) and any \(\lambda \in
(0,1)\),
\begin{equation}
  \label{eq:ch3-logc}
  f_K(\lambda s + (1-\lambda)t) \ge f_K(s)^\lambda f_K(t)^{1-\lambda}.
\end{equation}
Indeed, by the convexity of \(K\), \(\lambda K_s + (1-\lambda)K_t
\subseteq K_{\lambda s + (1-\lambda)t}\), and
Lemma~\ref{lm:ch3-bm-gauss} gives
\[
  f_K(\lambda s + (1-\lambda)t)
  \ge
  \gamma^{n-1}(\lambda K_s + (1-\lambda)K_t)
  \ge f_K(s)^\lambda f_K(t)^{1-\lambda}.
\]

Notice that when \(K\) is centrally symmetric, i.e.,
\(K = -K\), the function \(f_K\) is even: \(f_K(-t) = f_{-K}(t) =
f_K(t)\). Together with the log-concavity inequality
\eqref{eq:ch3-logc}, used with $\lambda \eqdef \frac12$ and $s = -t$, this means that \(f_K\) is maximized at
\(0\). Moreover, observe that $\gamma^n(K)$ is the expectation of $f_K(g)$ for a one-dimensional Gaussian $g$:
\begin{align}
\gamma^n(K) &= \frac{1}{(2\pi)^{n/2}} \int_K e^{-\|x\|_2^2/2}dx\notag\\
&= \frac{1}{(2\pi)^{n/2}} \int_{-\infty}^\infty \int_{K_t}e^{-(\|y\|_2^2 + t^2)/2}dy dt\notag\\
&= \frac{1}{\sqrt{2\pi}} \int_{-\infty}^\infty e^{-t^2/2} f_K(t)dt.\label{eq:ch3-int-slices}
\end{align}
Altogether, we have that, for centrally symmetric $K$, $f_K(0)\ge\gamma^n(K)$.



Equipped with the log-concavity of slice measures (inequality
\eqref{eq:ch3-logc}), we are ready to prove Sidak's lemma.

\begin{proof}[Proof of Lemma~\ref{lm:ch3-sidak}]
  We first note that the lemma is trivial for \(n=1\). In that case,
  \(K\) and \(S\) are both closed intervals symmetric around
  \(0\), and \(K \cap S\) is the smaller of the two intervals. We have
  \[
    \gamma^1(K \cap S) = \min\{\gamma^1(K),\gamma^1(S)\}
    \ge \gamma^1(K)\gamma^1(S).
  \]

  Inequality \eqref{eq:ch3-logc} and a simple computation let us
  reduce the higher dimensional case to the one dimensional one. We can  assume, by rescaling $u$, that $S = \{x \in \R^n: |\ip{u,x}| \le t\}$ and $\|u\|_2 = 1$. By
  the rotation invariance of the standard Gaussian measure, we may also
  assume that \(u = e_n\), the $n$-th standard basis vector.  Therefore,
  \(
  S = \left\{x \in \R^n: x_n \in \left[-t,t\right]\right\}.
  \)
  Note that, using \(\grv\) for a standard Gaussian random variable in \(\R^n\),
  \begin{align*}
    \gamma^n(S) = \Pr\left[|\grv(n)| \le t\right]
    = \gamma^1\left(\left[-t,t\right]\right).
  \end{align*}
  Applying \eqref{eq:ch3-int-slices} to $K \cap S$, and noticing that $f_{K \cap S}(s)$ equals $f_K(s)$ if $s \in [-t,t]$, and $0$ otherwise, we have
  \begin{equation}\label{eq:ch3-int-sidak}
      \gamma^n(K \cap S) = \frac{1}{\sqrt{2\pi}} \int_{-t}^{t} e^{-s^2/2} f_K(s)ds.
  \end{equation}
  The right hand side is just the expected value of $f_K(g)1\{g \in [-t,t]\}$ for a standard $1$-dimensional Gaussian random variable $g$. Using integration by parts, this expectation equals
  \begin{equation} \label{eq:ch3-int-parts}
      \int_{0}^\infty \Pr[f_K(g)1\{g \in [-t,t]\} \ge r]dr
      = \int_{0}^\infty \gamma^1\left(I_K(r)\cap\left[-t,t\right]\right)dr,
  \end{equation}
  where \(I_K(r) \eqdef \{s\in \R: f_K(s) \ge r\}\). 
  By the log-concavity of \(f_K\) (inequality \eqref{eq:ch3-logc}), the set
  \(I_K(r)\) is convex, \ie, an
  interval. Moreover, \(f_K\) is clearly continuous, so \(I_K(r)\) is
  closed, and, for \(K\) symmetric around the origin, \(f_K\) is even, and \(I_K(r)\) is also
  symmetric around the origin. Thus, by the one-dimensional case of
  the lemma,
  \[
    \gamma^1\left(I_K(r)\cap \left[-t, t\right]\right)
    \ge
    \gamma^1(I_K(r)) \gamma^1\left(\left[-t,t\right]\right)
    = \gamma^1(I_K(r)) \gamma^n(S).
  \]
  Plugging this inequality back into \eqref{eq:ch3-int-sidak}~and~\eqref{eq:ch3-int-parts}
  gives us
  \begin{align*}
    \gamma^n(K \cap S) 
    &\ge \gamma^n(S) \int_{0}^\infty \gamma^1(I_K(r))dr\\
    &= \gamma^n(S) \int_{0}^\infty \Pr[f_K(g) \ge r]dr\\
    &= \gamma^n(S) \E[f_K(g)]\\
    &= \gamma^n(S)\int_{-\infty}^\infty e^{-s^2/2}f_K(s)ds\\
    &= \gamma^n(S)\gamma^n(K).
  \end{align*}
  Above, the first equality uses the definition of $I_K(r)$, the second uses integration by parts again, with $g$ a standard 1-dimensional Gaussian, and the last equality uses \eqref{eq:ch3-int-slices}.
\end{proof}

Let us now state a significant generalization
of Lemma~\ref{lm:ch3-sidak}: Royen's Gaussian correlation inequality.
\begin{theorem}\label{thm:ch3-gauss-cor}
  For any two closed convex sets \(K, L\) symmetric around the origin
  (\ie, \(K = -K\) and \(L = -L\)), we have
  \begin{equation}\label{eq:ch3-gauss-cor}
  \gamma^n(K \cap L) \ge \gamma^n(K) \gamma^n(L).
  \end{equation}
\end{theorem}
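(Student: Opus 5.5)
We would follow Royen's strategy: reduce to polytopes cut out by slabs, then interpolate the covariance between the two groups of slabs.

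\emph{Step 1: reduction to intersections of slabs.} A closed symmetric convex set is a countable intersection of symmetric slabs $\{x: |\ip{u,x}|\le t\}$. Gaussian measure is continuous along decreasing sequences of sets. Hence it suffices to prove \eqref{eq:ch3-gauss-cor} when
\[
K=\{x: |\ip{u_i,x}|\le t_i,\ 1\le i\le k\}, \qquad L=\{x: |\ip{u_i,x}|\le t_i,\ k< i\le m\}.
\]
Set $X_i\eqdef \ip{u_i,\grv}$ for a standard Gaussian $\grv$. Then $X=(X_1,\dots,X_m)$ is a centered Gaussian vector with covariance
\[
C=\begin{pmatrix} C_{11} & C_{12}\\ C_{21} & C_{22}\end{pmatrix}.
\]
By a further perturbation we may assume $C$ is positive definite. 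The claim becomes
\[
\Pr\bigl[|X_i|\le t_i\ \forall i\bigr]\ \ge\ \Pr\bigl[|X_i|\le t_i,\ i\le k\bigr]\cdot\Pr\bigl[|X_i|\le t_i,\ i>k\bigr].
\]

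\emph{Step 2: interpolation.} For $\tau\in[0,1]$ let
\[
C(\tau)\eqdef\begin{pmatrix} C_{11} & \tau C_{12}\\ \tau C_{21} & C_{22}\end{pmatrix}.
\]
Each $C(\tau)$ is positive semidefinite, being a convex combination of $C$ and the block-diagonal matrix $\diag(C_{11},C_{22})$. Define $F(\tau)\eqdef\Pr_{X\sim N(0,C(\tau))}[|X_i|\le t_i\ \forall i]$. Then $F(0)$ is the product of the two marginal probabilities and $F(1)$ is the left-hand side. So it suffices to show $F'(\tau)\ge 0$. This is the analogue, with many slabs on each side, of what Lemma~\ref{lm:ch3-sidak} did with a single slab. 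The slicing argument there does not extend, because the relevant quantity is no longer one-dimensional.

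\emph{Step 3 (main obstacle): monotonicity via the Gaussian-squared (multivariate gamma) law.} We would pass to $Z_i\eqdef X_i^2/2$, so the event is $\{Z_i\le t_i^2/2\ \forall i\}$. The Laplace transform of $Z$ is
\[
\E e^{-\ip{\lambda,Z}}=\det\bigl(I+\Lambda C(\tau)\bigr)^{-1/2},\qquad \Lambda=\diag(\lambda).
\]
We would differentiate in $\tau$ and expand $\det(I+\Lambda C(\tau))=\sum_{J}\det C(\tau)_J\prod_{j\in J}\lambda_j$ over subsets $J\subseteq[m]$. The aim is to show that
\[
\frac{\partial}{\partial \tau}\det\bigl(I+\Lambda C(\tau)\bigr)^{-1/2}
\]
is a nonnegative combination, with coefficients of the form $-\partial_\tau\det C(\tau)_J\ge0$, of terms $\prod_{j\in J}\lambda_j\cdot(\text{a Laplace transform of the same multivariate gamma type})$. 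The needed sign $\partial_\tau \det C(\tau)_J\le 0$ should follow from the formula
\[
\det C(\tau)_J=\det C_{J_1}\det C_{J_2}\det\bigl(I-\tau^2 M_J\bigr),
\]
where $M_J$ is positive semidefinite with eigenvalues in $[0,1]$ (a product of partial-correlation-type matrices). Expanding this determinant in $\tau^2$ should give the sign.

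Inverting the Laplace transform, a factor $\prod_{j\in J}\lambda_j$ corresponds to taking mixed partial derivatives $\partial_J$ of the distribution function of $Z$. Mixed derivatives of a distribution function are sub-densities and hence nonnegative. This would yield $F'(\tau)$ as a nonnegative sum of such densities, which gives $F'(\tau)\ge0$.

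We expect the genuine difficulty to lie here: in verifying the sign structure, and in justifying the Laplace inversion and the gamma-type representation when the shape parameter $1/2$ is not an integer. We would first try to handle this through a limiting argument in which the shape parameter is an integer, say $n/2$ with $n$ even, and then recover shape $1/2$ from the analytic dependence on the parameter.
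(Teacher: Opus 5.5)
Your outline is Royen's argument, which the paper does not reproduce; it only cites Royen's paper and an exposition. Steps 1 and 2 are fine, and so is the determinant formula. The gap is in Step 3, exactly where you flag it, and the remedy you propose would not close it.

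After differentiating you get
\[
\partial_\tau \det\bigl(I+\Lambda C(\tau)\bigr)^{-1/2}=\tfrac12\sum_{J} c_J(\tau)\prod_{j\in J}\lambda_j\cdot\det\bigl(I+\Lambda C(\tau)\bigr)^{-3/2},\qquad c_J(\tau)\eqdef-\partial_\tau\det C(\tau)_J\ge 0.
\]
So the whole argument hinges on $\det(I+\Lambda C(\tau))^{-3/2}$ being the Laplace transform of a probability law with a density. After inversion, the mixed derivatives $\partial_J$ act on the distribution function of \emph{that} law, not on the distribution function of $Z$.

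The missing observation is that $3/2$ is a half-integer. Indeed, $\det(I+\Lambda C)^{-3/2}=\bigl(\det(I+\Lambda C)^{-1/2}\bigr)^3$ is the Laplace transform of $W$ with $W_i\eqdef\frac12\sum_{l=1}^3 (X^{(l)}_i)^2$, where $X^{(1)},X^{(2)},X^{(3)}$ are i.i.d.\ $N(0,C(\tau))$. Hence
\[
\partial_\tau\Pr[Z\le s]=\tfrac12\sum_J c_J(\tau)\,\partial_J\Pr[W\le s]\ge0 .
\]
No boundary terms appear, because $W$ has a density and so $\Pr[W\le s]=0$ whenever some $s_j=0$. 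What remains is routine regularity of that density to justify the inversion and the differentiation. Neither shape $1/2$ (realized by $Z$) nor shape $3/2$ (realized by $W$) causes any difficulty.

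The analytic continuation you propose cannot work. Knowing $F'(\tau)\ge0$ for integer shapes gives no information at shape $1/2$: a function analytic in the shape parameter can be nonnegative at every integer and negative in between. Moreover, for a general shape $\alpha$, $\det(I+\Lambda C)^{-\alpha}$ need not be the Laplace transform of a positive measure at all; with three or more coordinates this does fail for suitable $C$ and $\alpha$. In that case there is no sub-density to appeal to.

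A smaller point concerns the sign of $\partial_\tau\det C(\tau)_J$. Expanding $\det(I-\tau^2M_J)$ in powers of $\tau^2$ gives alternating signs, so the expansion does not show it. It comes instead from the factorization $\prod_k(1-\tau^2\mu_k)$, where the eigenvalues $\mu_k$ of $M_J$ lie in $[0,1)$: each factor is positive and nonincreasing on $[0,1]$.
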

Theorem~\ref{thm:ch3-gauss-cor}, whose proof we omit, was known as the
Gaussian correlation conjecture and was open for many years until it
was proved by Royen~\cite{Royen14} (see also the
exposition~\cite{LM-royen}). We will use it next to bound the discrepancy of set systems induced by
permutations.
\section{Discrepancy of Permutations and Gaussian Correlation}\label{sect:ch3-corr}
Recall that the set system \((\SS,[n])\) induced by \(k\) permutations
\(\pi_1, \ldots, \pi_k\) of \([n]\) consists of prefix sets of the type
\(\{\pi_i(1), \ldots, \pi_i(j)\}\). In Chapter~\ref{ch:linalg} we used
the linear algebraic method to show that the discrepancy of such a set
system is bounded by \(O(k\log(n))\). While the dependence on \(n\) in
this bound is optimal for a constant \(k \ge n\), the dependence on
\(k\) can be improved using the partial coloring method. For example,
a careful application of Lemma~\ref{lm:ch3-partial} shows that there
exists a partial coloring of discrepancy \(O(\sqrt{k})\), as indicated
in the following exercise. In this exercise we use the notion of
canonical interval defined in Chapter~\ref{ch:linalg}.

\begin{exercise}\label{ex:ch3-kperms}
  Let \(\pi_1, \ldots, \pi_k\) be permutations of \([n]\). Consider
  the set system \((\TT,[n])\) consisting of the sets \(\pi_i(C)
  \eqdef \{\pi_i(j): j \in C\}\) for
  all \(i \in [k]\) and all canonical intervals \(C \in \CC\) as
  defined in Chapter~\ref{ch:linalg}. Using Lemma~\ref{lm:ch3-partial},
  show that there exists a partial coloring \(\col\in \{-1, 0,+1\}^n\)
  such that, for any \(C \in \CC_p \subseteq \CC\), \(|\col(C)| \lesssim
  \sqrt{k}\cdot\phi(p)\) and \(|\{i: \col(i) \neq 0\}| \ge
  n/10\). Here, \(\phi(p)\) should be a function on the
  positive integers, chosen so that \(\sum_{p = 0}^{\infty}\phi(p)\)
  converges to an absolute constant.

  Then, using canonical intervals (see Section~\ref{sec:ch2-tusnady}) conclude that \(\col\)
  satisfies \(|\col(\set)|\lesssim \sqrt{k}\) for all sets \(\set\) in
  the set system \(\SS\) induced by \(\pi_1, \ldots, \pi_k\).
\end{exercise}
We now take a different approach to show the same
result. Recall that we proved Lemma~\ref{lm:ch3-partial} using Sidak's
lemma (Lemma~\ref{lm:ch3-sidak}) and Giannopoulos's partial coloring
lemma (Lemma~\ref{lm:ch3-gian}). We will replace Sidak's lemma with
the deeper correlation inequality of Royen
(Theorem~\ref{thm:ch3-gauss-cor}), which will allow us to make the
calculations in the proof significantly simpler.

To apply Lemma~\ref{lm:ch3-gian} to the 
bound the discrepancy of a set system \((\SS,[n])\) induced by
\(k\) permutations \(\pi_1, \ldots, \pi_k\) on \([n]\), we need to
estimate the Gaussian measure of $tK$, where $K$ is the symmetric convex set
\begin{equation}\label{eq:ch3-K-perms}
K :=\left\{\col \in \R^n: \max_{\set \in \SS} |\col(\set)| \le 1\right\}.
\end{equation}
Instead of writing \(K\) as the
intersection of \(|\SS|\) many slabs, one for each set \(\set \in
\SS\), we define \(k\) convex sets \(K_1, \ldots, K_k\) where
\begin{equation}\label{eq:ch3-Ki}
K_i :=\left\{\col \in \R^n: \max_{\set \in \SS_i} |\col(\set)| \le 1\right\}
\end{equation}
and \(\SS_i\) consists of all prefix sets of the form \(\{\pi_i(1),
\ldots, \pi_i(j)\}\) for one fixed permutation \(\pi_i\). Clearly, \(K
= K_1 \cap \ldots \cap K_k\). 

We will show that, for some \(t \lesssim
\sqrt{k}\), \(\gamma^n(tK_i) \ge 2^{-n/(4k)}\), and then \(\gamma^n(tK) \ge
2^{-n/4}\) follows from Theorem~\ref{thm:ch3-gauss-cor}.
Notice that $\gamma^n(tK_i)$ is simply
\[
  \Pr[\max_{j = 1}^n\left|g(\pi_i(1)) + \ldots +
    g(\pi_i(j))\right| \le t],
\]
for a standard Gaussian
 vector \(\grv \in \R^n\). 
 
 Since the distribution of \(g\) is
invariant under permuting coordinates, this is equivalent to
estimating
\[
  \Pr\left[\max_{j = 1}^n\left|\grv(1) + \ldots + \grv(j))\right| \le t\right].
\]
That is, the probability that an \(n\) step
random walk on the real line with standard Gaussian steps starting at
\(0\) stays between \(-t\) and \(+t\). This is a question we
can answer using standard results from probability theory. It is
convenient to work instead with a continuous random process, in
particular, Brownian motion. Standard Brownian motion is a random
process $\{B(s): s \ge 0\}$ such that
\begin{itemize}
\item $B(0) = 0$ with probability 1;
\item for all integers $k > 1$, and all times $0 \le s_1 \le \ldots
  \le s_m$, the increments $B(s_1)-B(0)$, $B(s_2) - B(s_{1})$, $\ldots$, $B(s_m) -
  B(s_{m-1})$ are jointly independent;
\item for all $s \ge 0$ and $\Delta\ge 0$, $B(s+\Delta) - B(s)$ is
  distributed as a Gaussian with mean $0$ and variance $\Delta$;
\item $B(s)$ is almost surely continuous as a function of $s$.
\end{itemize}
For a proof of the non-trivial fact that Brownian motion exists,
see~\cite{PeresMorters}. It is well-known that Brownian motion
satisfies the following small ball inequality: for all $t > 0$, 
\begin{equation}
  \label{eq:ch3-bm-small}
  \Pr\Big[\sup_{s \in [0,1]} |B(s)| \le t\Big] \gtrsim \exp(-\pi^2/8t^2).
\end{equation}
For a derivation, see,
\eg, equation (7.1) of Ledoux's lecture notes~\cite{Ledoux96}. We
claim that \eqref{eq:ch3-bm-small} implies a similar bound for
$\max_{j = 1}^n\left|\grv(1) + \ldots + \grv(j)\right|$.
To see this, note that the vectors $(\grv(1), \ldots, \grv(n))$ and
$\sqrt{n}(B(1/n) - B(0),\ldots, B(1) - B((n-1)/n))$ are identically
distributed. Therefore,
\begin{align}
  \Pr\left[\max_{j = 1}^n\left|\grv(1) + \cdots + \grv(j)\right| \le t\right]
  &=
    \Pr\left[\max_{j = 1}^n\left|B(j/n)\right| \le t/\sqrt{n}\right]\notag\\
  &\ge
    \Pr\Big[\sup_{s \in [0,1]} |B(s)| \le t/\sqrt{n}\Big]\notag\\
  &\gtrsim    \exp(-\pi^2n/8t^2).\label{eq:ch3-smallball}
\end{align}

We can now prove the existence of a partial
coloring of discrepancy \(O(\sqrt{k})\) for any set system induced by
\(k\) permutations.

\begin{lemma}\label{lm:ch3-perms-partial}
  For any set system \((\SS,[n])\)
  induced by \(k\geq 1\) permutations of \([n]\), there exists a partial
  coloring \(\col \in \{-1, 0,+1\}^n\) such that for every
  \(\set \in \SS\), \(|\col(S)| \lesssim \sqrt{k}\), and
  \(|\{i: \col(i) \neq 0\}| \ge n/10\).
\end{lemma}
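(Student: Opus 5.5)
We will apply Giannopoulos's partial coloring lemma (Lemma~\ref{lm:ch3-gian}) to the body $tK$, with $K$ as in \eqref{eq:ch3-K-perms} and $t = C\sqrt{k}$ for a suitable absolute constant $C$. The lemma then yields a partial coloring $\col \in \{-1,0,+1\}^n$ with $\col \in 2tK$ and at least $n/10$ nonzero coordinates. Membership $\col \in 2tK$ means exactly $|\col(\set)| \le 2t \lesssim \sqrt{k}$ for every $\set \in \SS$, which is the claim. So the whole task reduces to checking $\gamma^n(tK) \ge 2^{-n/4}$.

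To estimate $\gamma^n(tK)$, write $tK = tK_1 \cap \cdots \cap tK_k$ with $K_i$ as in \eqref{eq:ch3-Ki}. Each $tK_i$ is a closed, centrally symmetric convex set, since it is an intersection of symmetric slabs. Applying Royen's inequality (Theorem~\ref{thm:ch3-gauss-cor}) inductively, first to $tK_1$ and $tK_2\cap\cdots\cap tK_k$ and so on, gives
\[
\gamma^n(tK) \ge \prod_{i=1}^k \gamma^n(tK_i).
\]
It therefore suffices to show $\gamma^n(tK_i) \ge 2^{-n/(4k)}$ for each $i$.

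By permutation invariance of the Gaussian, $\gamma^n(tK_i) = \Pr[\max_{j\le n}|\grv(1)+\cdots+\grv(j)|\le t]$. The small ball estimate \eqref{eq:ch3-smallball} bounds this from below by $c\exp(-\pi^2 n/(8t^2))$ for an absolute constant $c>0$. With $t = C\sqrt{k}$ the exponent is $\pi^2 n/(8C^2 k)$, which is at most $\frac{\ln 2}{8}\cdot\frac{n}{k}$ once $C$ is large.

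The main obstacle is the constant $c$: it could spoil the bound when $n/k$ is small. We need $c \ge 2^{-n/(8k)}$, which holds if $n/k$ is larger than some absolute constant $N_0$. When $n \le N_0 k$, the trivial bound $|\col(\set)| \le n \lesssim k$ is not good enough, so instead we enlarge $t$ to $C\sqrt{k}+C'\sqrt{n/k}$. The probability that $\max_j|\sum_{l\le j}\grv(l)| \le t$ is then at least $1/2$ by Doob/Kolmogorov or Gaussian tail bounds once $t\gtrsim\sqrt{n}$. In this regime $\sqrt{n} \le \sqrt{N_0 k}$, so $t \lesssim \sqrt{k}$ remains valid, and $1/2 \ge 2^{-n/(4k)}$ whenever $n \ge 4k$. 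For $n<4k$ the trivial bound $n \lesssim \sqrt{nk} \lesssim \sqrt{k}\cdot 2$ suffices up to a constant, since $n < 4k$ gives $n \le 2\sqrt{nk}\cdot$ const and $n\le\sqrt{4k\cdot n}$. Note that $n\le 2\sqrt{k n}$ is not $\lesssim\sqrt{k}$ in general, so for tiny $n$ we rely on $t\ge\sqrt{n}$-type estimates instead. Taking $t \gtrsim \sqrt{k}\ge\sqrt{n/4}$ makes $\gamma^n(tK_i)$ bounded below by a constant close to $1$, for example $\ge 2^{-1/4}\ge 2^{-n/(4k)}$, by choosing $C$ large via Doob's maximal inequality. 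Combining the regimes gives $\gamma^n(tK_i)\ge 2^{-n/(4k)}$ throughout, and the lemma follows.
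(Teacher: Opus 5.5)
Your architecture matches the paper's: Giannopoulos's lemma applied to $tK$, Royen's inequality to reduce to $\gamma^n(tK_i)\ge 2^{-n/(4k)}$, and the Brownian small-ball estimate \eqref{eq:ch3-smallball} when $n/k$ exceeds a large constant. The intermediate regime $4k\le n\le N_0k$ is also fine. There a constant lower bound such as $1/2$ suffices, because $2^{-n/(4k)}\le 1/2$. (The term $C'\sqrt{n/k}$ is presumably meant to be $C'\sqrt{n}$, which is $\lesssim\sqrt{k}$ in that range.)

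\textbf{The gap is in the regime $n<4k$, and specifically $n<k$.} There you lower bound $\gamma^n(tK_i)$ by a constant such as $2^{-1/4}$ and then use $2^{-1/4}\ge 2^{-n/(4k)}$. That inequality holds only when $n\ge k$. Royen's inequality leaves no slack here: all the bodies $tK_i$ have the same Gaussian measure, so you need exactly $\gamma^n(tK_1)^k\ge 2^{-n/4}$. The threshold $2^{-n/(4k)}$ tends to $1$ as $n/k\to 0$, so no constant lower bound can work. What is needed is a deficit estimate $1-\gamma^n(tK_i)\lesssim n/k$. The ``trivial bound'' remarks in that paragraph should be discarded; as you note yourself, $n$ is not $\lesssim\sqrt{k}$ when $\sqrt{k}\ll n\ll k$.

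The tool you mention already gives the fix if it is applied quantitatively rather than to get a constant. Take $t=C\sqrt{k}$. Kolmogorov's (Doob's) maximal inequality gives $1-\gamma^n(tK_i)\le n/t^2=n/(C^2k)$. Since $1-2^{-x}\ge x/2$ for $x\in[0,1]$ (by concavity), this is at most $1-2^{-n/(4k)}$ for every $n\le 4k$ once $C^2\ge 8$.

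The paper does the same thing with the Gaussian tail bound $\Pr[\max_j|\grv(1)+\cdots+\grv(j)|\ge t]\le 2e^{-t^2/(2n)}$. It takes $t\asymp\sqrt{n\log(Ck/n)}$, which is $\lesssim\sqrt{k}$ whenever $n\le Ck$. This covers your intermediate and small regimes in a single step, leaving only the small-ball case $n\ge Ck$.
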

\begin{proof}
  As discussed above, it is enough to show that, for each
  \(i \in [k]\), \(\gamma^n(tK_i) \ge 2^{-n/4k}\) for some
  \(t \lesssim \sqrt{k}\). Let us first handle the case $n \ge Ck$ for
  a large enough constant $C>0$. By \eqref{eq:ch3-smallball}, we
  have
  \[
    \gamma^n(tK_i) =
    \Pr\left[\max_{j = 1}^n\left|\grv(1) + \ldots +  \grv(j)\right|  \le t\right]
    \ge c e^{-\pi^2n/8t^2},
  \]
  for a constant \(c > 0\). If $C \ge \frac{8\ln(1/c)}{\ln(2)}$, then
  it is enough to set $t \eqdef \sqrt{\frac{\pi^2 k}{\ln(2)}} \lesssim
  \sqrt{k}$ for the right hand side to be at least
  $2^{-n/4k}$.

  The proof in the case $n \le Ck$ is analogous, but, instead of
  \eqref{eq:ch3-smallball}, we use the bound
  \begin{equation}
    \label{eq:ch3-rw-tail}
    \Pr\left[\max_{j = 1}^n\left|\grv(1) + \ldots + \grv(j)\right|  \ge t\right]
    \le
    2e^{-t^2/2n}.    
  \end{equation}
  This inequality can be derived from an analogous one for Brownian
  motion (see, e.g., Theorem 2.18 in M\"{o}rters and
  Peres's book~\cite{PeresMorters}) as we did for
  \eqref{eq:ch3-smallball}. Therefore, $\gamma^n(tK_i) \ge 1- 2
  e^{-\frac{t^2}{2n}}$ for every $i \in [k]$, and  it suffices to make sure that
  \(
  1- 2e^{-\frac{t^2}{2n}} \ge 2^{-\frac{n}{4k}}.
  \) 
    Analogously to inequality \eqref{eq:ch3-exp-ineq}, we can argue that this inequality holds
  for some $t \lesssim \sqrt{n\log\left({Ck}/{n}\right)}$. We can
  now check that, because $n \le Ck$, the right hand side is bounded
  by $O(\sqrt{k})$.
\end{proof}

The following theorem follows from Lemma~\ref{lm:ch3-perms-partial} by
completing the partial coloring to a full coloring in the usual way. 
\begin{theorem}\label{thm:ch3-perms}
  For any integer \(k \ge 1\), and any set system \((\SS,[n])\)
  induced by \(k\) permutations of \([n]\), we have
  \(\disc(\SS) \lesssim \sqrt{k}\log(2n).\)
\end{theorem}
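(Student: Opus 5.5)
We derive Theorem~\ref{thm:ch3-perms} from Lemma~\ref{lm:ch3-perms-partial} by the same iterative scheme used for Theorem~\ref{thm:ch3-spencer} and Theorem~\ref{thm:ch3-bf}. The key structural fact is that the class of set systems induced by $k$ permutations is closed under restriction. If $\act \subseteq [n]$ and $\pi_1,\ldots,\pi_k$ are permutations of $[n]$, then each $\pi_i$ induces an ordering of $\act$: list the elements of $\act$ in the order in which they appear in $\pi_i$. Every restricted prefix $\{\pi_i(1),\ldots,\pi_i(j)\}\cap\act$ is then a prefix of this induced ordering (or empty). Hence $\SS|_\act$ is contained in a set system induced by $k$ permutations of $\act$, and Lemma~\ref{lm:ch3-perms-partial} applies to it with $n$ replaced by $|\act|$.

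The construction runs in phases. Set $\act_1 \eqdef [n]$. In phase $t$, apply Lemma~\ref{lm:ch3-perms-partial} to the system induced on $\act_t$. This gives a partial coloring $\col_t \in \{-1,0,+1\}^{\act_t}$ with $|\col_t(\set\cap\act_t)| \le C\sqrt{k}$ for every $\set\in\SS$, and with at least $|\act_t|/10$ nonzero entries. Let $\act_{t+1} \eqdef \{i \in \act_t : \col_t(i) = 0\}$, so that $|\act_{t+1}| \le \frac{9}{10}|\act_t|$. After $T \le \lceil\log_{10/9} n\rceil + 1 \lesssim \log(2n)$ phases the active set is empty. The final coloring is $\col(i) \eqdef \col_t(i)$, where $t$ is the unique phase with $i \in \act_t\setminus\act_{t+1}$. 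The sets $\act_t\setminus\act_{t+1}$ partition $[n]$, so $\col \in \{-1,+1\}^n$ is well defined.

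For any $\set \in \SS$,
\[
\col(\set) = \sum_{t=1}^T \col_t(\set \cap \act_t),
\]
because $\col_t$ vanishes on $\act_{t+1}$ and the pieces $\set\cap(\act_t\setminus\act_{t+1})$ partition $\set$. The triangle inequality then gives $|\col(\set)| \le T\cdot C\sqrt{k} \lesssim \sqrt{k}\log(2n)$, which is the claimed bound.

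The main point to check is the restriction step: Lemma~\ref{lm:ch3-perms-partial} bounds the discrepancy of prefixes of the restricted permutations, and these are exactly the traces on $\act_t$ of the original prefixes. Empty traces are harmless since their discrepancy is $0$. The constant in Lemma~\ref{lm:ch3-perms-partial} is uniform in $n$, so it applies in every phase. Unlike in Spencer's theorem, the per-phase bound does not shrink with $|\act_t|$, so the logarithmic factor is unavoidable in this argument. This is consistent with the $\Omega(\log n)$ lower bound for $3$ permutations.
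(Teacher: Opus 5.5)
Your proposal is correct and is the paper's own argument: the paper derives Theorem~\ref{thm:ch3-perms} from Lemma~\ref{lm:ch3-perms-partial} by iterating the partial coloring over $O(\log(2n))$ phases in the usual way, with each phase costing $O(\sqrt{k})$. Your explicit check that restricting a $k$-permutation system to the active set yields a subsystem of a $k$-permutation system on that set is the one detail the paper leaves implicit, and you handle it correctly.
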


As mentioned in Chapter~\ref{ch:linalg}, 
the dependence on $n$ in Theorem~\ref{thm:ch3-perms} is tight for any
$k \ge 3$ \cite{beck3perm}. It is open, however, whether the bound $\sqrt{k}\log(2n)$
is asymptotically tight both in $k$ and $n$. It is consistent with current knowledge that this bound can be improved to $O(\sqrt{k} + \log n)$, for example.
Note that
Lemma~\ref{lm:ch3-perms-partial} implies that the assumptions in
Lemma~\ref{lm:ch3-gian} are not sufficient to guarantee the existence
of a full, rather than partial, coloring. If they were sufficient,
then we would be able prove a constant upper bound on the discrepancy
of any set system induced by $3$ permutations, contradicting the lower
bound of Neiman, Newman, and Nikolov~\cite{beck3perm}.

An interesting question we close with is whether other classical small
ball inequalities for Gaussian processes, as well as Royen's
correlation inequality, can be useful in proving discrepancy upper
bounds.  For example, the small ball inequality for the Brownian sheet
(see again Chapter 7 of Ledoux's notes~\cite{Ledoux96}) appears to be
related to Tusn\'ady's problem (Problem~\ref{prob:tusnady}) in $2$
dimensions.

\section{Bibliographic Notes}

The partial coloring method was first developed by Beck, who proved
Lemma~\ref{lm:ch3-beck}~\cite{beck-roth,Beck88}. He used his method to
show that the discrepancy of arithmetic progressions restricted to
$[n]$ is bounded by $n^{1/4}$ up to a factor polynomial in
$\log(n)$. This nearly matches a lower bound of Roth~\cite{roth-ap}. 

The partial coloring method of Beck was
further refined by Spencer~\cite{spencer1985} in order to prove
Theorem~\ref{thm:ch3-spencer}. Spencer's techniques are combinatorial
and do not use the geometric ideas presented in
Section~\ref{sec:ch3-giann}. The method was further refined by
Matou\v{s}ek~\cite{Matousek-halfspaces} and Matou\v{s}ek and
Spencer~\cite{MatousekSpencer-ap}. The latter paper formulated a
general partial coloring lemma similar to
Lemma~\ref{lm:ch3-partial}. Matou\v{s}ek and Spencer proved their
partial coloring lemma using an information theoretic argument due to
Boppana, initially proposed as a simplification of Spencer's proof of
Theorem~\ref{thm:ch3-spencer}~\cite{alon2000probabilistic}. Their
paper also removed the logarithmic terms in Beck's upper bound for the
discrepancy of arithmetic progressions, matching Roth's lower bound
up to a universal constant. The Matou\v{s}ek and Spencer
partial coloring lemma is sometimes also known as the entropy method.

In our presentation of the partial coloring method we depart from the
combinatorial approach and instead adopt geometric techniques
developed in the study of the vector balancing problem. The geometric
approach presented here is originally due to~Gluskin~\cite{Glu89}, who independently
proved the same discrepancy upper bound as
Spencer~\cite{spencer1985}. Lemma~\ref{lm:ch3-gian} is due to
Giannopoulos~\cite{Gia93}, who strengthened and simplified Gluskin's
results. 

Theorem~\ref{thm:ch3-bf} and Exercise~\ref{ex:ch3-komlos} are due to Giannopoulos~\cite{Gia93}, who
used Lemma~\ref{lm:ch3-gian}, and a geometric estimate of the Gaussian
measure of slices of a cube, and, independently, to
Srinivasan~\cite{Srinivasan97} who used Lemma~\ref{lm:ch3-partial}. As already mentioned, a stronger upper bound of
$O(\sqrt{\log n})$ for the Koml\'os problem was given by
Banaszczyk~\cite{Bana98} using a completely different argument not
based on partial colorings. We explain Banaszczyk's argument and its
various applications in
Chapters~\ref{ch:bana-intro}~and~\ref{ch:bana-proof}.

Lemma~\ref{lm:ch3-perms-partial} and Theorem~\ref{thm:ch3-perms} are
due to unpublished work of Spencer, Srinivasan, and
Tetali~\cite{SST-perms}. Their proof used Lemma~\ref{lm:ch3-partial},
along the lines of Exercise~\ref{ex:ch3-kperms}. The proof we present
here using Royen's correlation inequality and the small ball
inequality for Brownian motion has not been published before. We note
that some proofs of the small ball inequality \eqref{eq:ch3-bm-small}
use arguments similar to Spencer, Srinivasan, and Tetali's
argument. Other proofs rely on special properties of Brownian motion,
\eg, its connection to the Dirichlet problem.

Many other applications of the partial coloring method -- to
Tusn\'ady's problem, to giving tight bounds on the discrepancy of
halfspaces and other geometric sets, and others -- are surveyed in
Matou\v{s}ek's book~\cite{matousek2010geometric}.



\chapter{Algorithms for Partial Coloring}
\label{ch:partial-alg}
In Chapter \ref{ch:partial}, we saw the partial coloring method, and how it can give substantially improved bounds for various problems.
For a long time however, no efficient algorithms were known for these methods, which prevented the use of discrepancy in many algorithmic applications. Interestingly, in recent years, several algorithmic approaches have been developed for these, based on ideas from probability, linear algebra, geometry and optimization.

In this chapter, we describe two such elegant results. First, the Edge-Walk algorithm due to Lovett and Meka, that gives an algorithmic version of Theorem \ref{lm:ch3-partial}. Second, an algorithm due to Rothvoss for the more general setting of arbitrary symmetric convex bodies in Theorem \ref{lm:ch3-gian}.
In Section \ref{sec:vec-herdisc}, we describe the notion of vector discrepancy and an algorithm for finding low discrepancy colorings based on semidefinite programming.

\section{Lovett-Meka Algorithm}
\label{sec:lm}
Recall the setting of Theorem \ref{lm:ch3-partial}, where we are given $m$ vectors $v_1, \ldots, v_m \in \R^n$, and real parameters $\lambda_1,\ldots, \lambda_m\geq 0$, and the goal is to find a partial coloring $x$ such that $ |\ip{v_i,x}| \leq \lambda_i \|v_i\|_2 $. 

We will show the following result.


\begin{theorem}
\label{th:lm}
Let $v_1, \ldots, v_m \in \R^n$,  and let $x_0 \in [-1,1]^n$ be some arbitrary starting point, and let   $\lambda_1,\ldots,\lambda_m \geq 0$ be such that 
\begin{equation}
\label{eqcond21}
 \sum_i \exp(-\lambda_i^2/16) \leq n/16.
 \end{equation}
 Let $\delta>0$ be an arbitrarily small parameter.
Then there is a randomized algorithm that runs in time polynomial in $n,m$ and $1/\delta$, and,  with constant probability, finds $x\in [-1,1]^n$ such that
\smallskip 

(i) $|\{i: |x(i)| \geq 1-\delta\}| \ge \frac{n}{2}$, and 

\smallskip

(ii)  $|\ip{v_i,x-x_0}| \leq \lambda_i \|v_i\|_2$ for each $i\in [m]$.
\end{theorem}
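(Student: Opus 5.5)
We will analyze the Edge-Walk algorithm: a discretized Brownian motion started at $x_0$ that moves in the subspace orthogonal to all \emph{tight} constraints. Fix a small step size $\gamma$ (polynomially related to $\delta$, e.g.\ $\gamma \approx \delta/\sqrt{\log(nm/\delta)}$) and a number of steps $T \approx K/\gamma^2$ for a constant $K$. At step $t$, we declare a coordinate $j$ \emph{frozen} if $|x_{t-1}(j)| \ge 1-\delta$, and a constraint $i$ \emph{tight} if $|\ip{v_i,x_{t-1}-x_0}| \ge \lambda_i\|v_i\|_2 - \delta$. Let $V_t$ be the subspace orthogonal to $e_j$ for all frozen $j$ and to $v_i$ for all tight $i$. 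We set $x_t = x_{t-1} + \gamma G_t$, where $G_t$ is a standard Gaussian in $V_t$. Because $\gamma$ is tiny, a union bound over the $\le nmT$ events shows that with high probability no single step moves any coordinate, or any $\ip{v_i,\cdot}$ with $\|v_i\|_2=1$ normalization, by more than $\delta$. Consequently $x_t$ never leaves $[-1,1]^n$ and no constraint is ever violated. This gives (ii) and feasibility for free; the real content is (i).

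For (i) we use a variance (energy) argument. Since the increments are orthogonal martingale differences,
\[
\E\|x_T-x_0\|_2^2 = \gamma^2\sum_{t}\E\dim V_t .
\]
The left side is at most $4n$ because $x_T,x_0\in[-1,1]^n$; in fact we should use the sharper bound $\E\|x_T\|_2^2-\|x_0\|_2^2\le n$. Next, suppose for contradiction that with probability at least some constant the final number of frozen coordinates is below $n/2$. Then at every step fewer than $n/2$ coordinates are frozen, and as long as fewer than $n/4$ constraints are tight, $\dim V_t \ge n - n/2 - n/4 = n/4$. The sum is then at least $T\gamma^2 n/4 = Kn/4$, which exceeds $n$ for $K$ large, a contradiction. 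So it remains to show that with high probability few constraints ever become tight.

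\textbf{Main obstacle.} The key step, and the one we expect to be hardest, is bounding the expected number of constraints that become tight. For each $i$, the process $\ip{v_i,x_t-x_0}/\|v_i\|_2$ is a martingale with increments of variance at most $\gamma^2$ per step, so its total quadratic variation is at most $T\gamma^2 = K$. A martingale concentration inequality (Freedman, or Azuma in its Gaussian form) then gives
\[
\Pr[\text{constraint } i \text{ ever tight}] \le 2\exp\!\bigl(-(\lambda_i-\delta)^2/(2K)\bigr).
\]
To make this comparable to $\exp(-\lambda_i^2/16)$ we need $K\le 8$ or so. This is in tension with the requirement $K$ large from the energy argument, and the constants in \eqref{eqcond21} must be balanced carefully. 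Our plan is to use $\E\|x_T\|^2 - \|x_0\|^2 \le n$ and require only $K/4 \cdot(\text{fraction}) > 1$, choosing e.g.\ $K=8$ with thresholds $n/2$ frozen and $n/8$ tight. Then the expected number of tight constraints is $\le 2\sum_i e^{-\lambda_i^2/16}\le n/8$ after absorbing $\delta$. Markov's inequality bounds the probability of more than $n/4$ tight constraints by $1/2$. Finally, we combine this with Markov on the energy: with constant probability both events hold, and the energy contradiction forces at least $n/2$ frozen coordinates. We will tune the thresholds in this last step if the constants do not close at first attempt.
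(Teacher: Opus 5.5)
Your algorithm and your two estimates are the paper's: the energy identity $\E\|x_T-x_0\|_2^2=\gamma^2\sum_t\E\dim V_t\le n$, and the martingale bound $\Pr[\text{$i$ ever tight}]\le 2e^{-(\lambda_i-\delta)^2/(2K)}$ with $K=T\gamma^2$. The gap is in how you combine them. Two separate Markov bounds joined by a union bound cannot give a positive success probability, and tuning thresholds does not help.

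The tail bound can be compared with the hypothesis $\sum_i e^{-\lambda_i^2/16}\le n/16$ only when $2K\le 16$. For $K>8$ the sum $\sum_i e^{-\lambda_i^2/(2K)}$ is unbounded, since $m$ is arbitrary. So the most you get is $\E[\#\text{tight}]\lesssim n/8$. With a tight-threshold $\alpha n$, Markov gives $\Pr[\#\text{tight}\ge\alpha n]\le 1/(8\alpha)$. On the event $\{\#\text{frozen}<n/2,\ \#\text{tight}<\alpha n\}$ the energy is at least $K(1/2-\alpha)n$, so that event has probability at most $1/(K(1/2-\alpha))\ge 1/(8(1/2-\alpha))$. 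Your total failure bound is therefore at least $\frac18\bigl(\frac{1}{1/2-\alpha}+\frac{1}{\alpha}\bigr)\ge 1$ for every $\alpha\in(0,1/2)$, by AM--HM. For example, $K=8$ and $\alpha=1/4$ give exactly $\frac12+\frac12$.

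The missing step is to add the two estimates in expectation and apply Markov only once.
\begin{itemize}
\item Because $\dim V_t$ is non-increasing, $\gamma^2\sum_t\dim V_t\ge K\dim V_T$ pointwise, so $\E\dim V_T\le n/K$.
\item Pointwise, $n-|F_T|\le \dim V_T+|D_T|$, where $F_T$ and $D_T$ are the frozen coordinates and the tight constraints.
\item Hence $\E[n-|F_T|]\le n/K+\E|D_T|$, and Markov applied to the nonnegative variable $n-|F_T|$ finishes.
\end{itemize}

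The paper takes $K=4$, i.e.\ $\ell=4/\gamma^2$. Then $\E\dim W_\ell\le n/4$. Using $(\lambda_i-\delta)_+^2\ge\lambda_i^2/2-\delta^2$ for $\delta\le 0.1$, the bound $2e^{-(\lambda_i-\delta)_+^2/8}$ becomes at most $2.2\,e^{-\lambda_i^2/16}$. So $\E|D_\ell|\le 1.1n/8$, $\E|F_\ell|\ge 0.6n$, and $|F_\ell|\ge n/2$ with probability at least $0.2$.

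That choice also fixes a second weak point in your plan. At $K=8$ there is no slack in the exponent, and the ratio $e^{-(\lambda_i-\delta)^2/16}/e^{-\lambda_i^2/16}=e^{(2\lambda_i\delta-\delta^2)/16}$ is unbounded in $\lambda_i$. So ``absorbing $\delta$'' would need an extra argument, for instance noting that only constraints with $\lambda_i\lesssim\sqrt n$ can ever become tight and taking $\delta\ll 1/\sqrt n$. With $K=4$, the factor-$2$ slack in the exponent absorbs $\delta$ directly.
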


\begin{remark} We set $\delta = 1/\text{poly}(n,m)$ to keep the running time polynomial, 
 but it is convenient to think of $\delta$ as $0$. So (i) says that the colors of at least $n/2$ elements reach $\pm 1$, and (ii) controls the discrepancy incurred for each vector.
 
The ``colors'' $x(i)$ produced by the algorithm in Theorem \ref{th:lm} lie in $[-1,1]$, in contrast to $\{-1,0,1\}$ in Theorem \ref{lm:ch3-partial}. 
But this makes no difference in applications as the starting point $x_0$ is allowed to be arbitrary, and we can iterate the algorithm for $O(\log n)$ rounds to obtain a full coloring.
In fact, this flexibility in colors allows the condition in \eqref{eqcond21} to be more relaxed than in \eqref{eq:ch3-partial}. In particular, the contribution of any $\lambda_i$ to the left side in \eqref{eqcond21} never exceeds $1$, while $\parcol(\lambda_i)$ in \eqref{eq:ch3-partial} gets arbitrarily large as $\lambda_i$ approaches $0$.
\end{remark}





\subsection{The Algorithm}
At a high level, the algorithm is based on combining the linear algebraic approach from  Chapter \ref{ch:linalg} with probabilistic techniques.

\medskip
\noindent {\bf The idea.}
The algorithm starts at the point $x_0$, and performs a standard Brownian motion (approximated by a random walk with tiny Gaussian increments). See Figure \ref{fig:lm} below. Let $x_t$ denote the coloring at time $t$. If some coordinate $x_t(j)$ reaches $\pm 1$, its value is fixed and no longer updated (i.e.,~all subsequent steps of the walk are orthogonal to $e_j$, the $j$-th standard basis vector).
Moreover, if any discrepancy constraint becomes tight, i.e.,~$|\ip{v_i, x_t-x_0}| = \lambda_i \|v_i\|_2$, all the subsequent steps of the walk are orthogonal to $v_i$. 

Clearly, by design, the coloring $x_t$ stays inside the cube $[-1,1]^n$, and no discrepancy constraint  is violated.  
Notice that every time some coordinate reaches $\pm 1$, or a discrepancy constraint becomes tight, the dimension of the subspace where the walk is performed may reduce by $1$.
The key idea of the analysis is to show that the condition \eqref{eqcond21} ensures that many coordinates are likely to reach $\pm 1$, before the walk gets ``stuck''. 
To see why this might be the case, consider the case when all $\lambda_i$ are equal, and, therefore, equal to a large enough constant $C$. Suppose also for simplicity, that $x_0 = 0$. Then the boundary of the strip $\{x: |\ip{v_i, x}| \le C \|v_i\|_2\}$, which encodes a discrepancy constraint, is at Euclidean distance $C$ from the origin, while the boundary of the cube $[-1,1]^n$ is at distance $1$ from the origin. So, intuitively, the Brownian motion is more likely to hit the boundary of the cube before a discrepancy constraint becomes tight.

\begin{figure}[hbtp!]
    \centering
\includegraphics[scale=0.7]{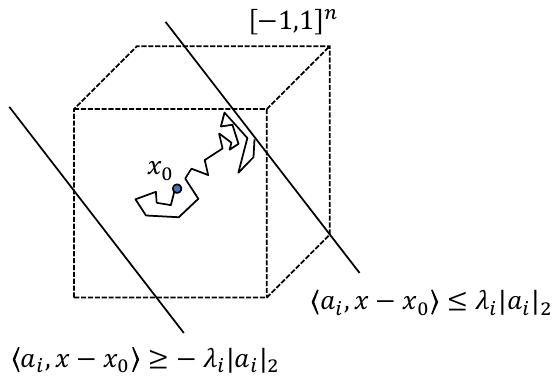}
    \caption{Lovett-Meka Algorithm. The random walk starting from $x_0$, and hitting a discrepancy constraint.}
    \label{fig:lm}
\end{figure}
\paragraph{The Formal Algorithm.}
Fix a small step size $\gamma > 0$, such that $\delta \ge c \gamma \log^{1/2}(2nm/\gamma)$ for a sufficiently large constant $c$, to be determined later. 
The algorithm proceeds in discrete time steps $t=1,2,\ldots,\ell=4/\gamma^2$.
Let $x_{t-1}$ denote the coloring at the beginning of time step $t$ (equivalently, at the end of time step  $t-1$). 
At each $t=1,\ldots,\ell$, the coloring $x_{t-1}$ is updated to $x_t$ as follows:

\smallskip

(i) Let 
$F_{t}:=\{j \in [n]: |x_{t-1}(j)|> 1-\delta \}$
denote the set of tight (\ie, \emph{fixed}) elements thus far, and let \[D_t := \{i \in [m]: |\ip{v_i,x_{t-1}-x_0}| \geq  (\lambda_i -\delta)\|v_i\|_2 \}\] denote the set of tight discrepancy constraints thus far (\ie, the \emph{dangerous} directions).

\smallskip

(ii) Let $W_t := \{ u \in \R^n: u(j)=0 \text{ for all } j \in F_t,\, \ip{v_i,u}=0 \text{ for all } i \in D_t\}$
be the subspace orthogonal to the tight constraints.

\smallskip 
(iii) 
Sample a random standard Gaussian vector  $g_t \sim N(W_t)$  in the subspace $W_t$, and set  $x_t = x_{t-1} + \gamma g_t$.

Above, for a linear subspace $W$ of $\R^n$, we use the notation $N(W)$ to denote the standard Gaussian distribution restricted to $W$, and $g \sim N(V)$ to mean that $g$ is a random variable distributed according to $N(V)$. 


\subsection{Analysis}
We show that the algorithm satisfies the guarantee in Theorem \ref{th:lm}. 

Notice that at any time $t$, we have that $x_t-x_0 = \sum_{s=1}^t \gamma g_{s}$, where each $g_s \sim N(W_s)$. As the subspace $W_s$ depends on $x_{s-1}$, which in turn depends on $g_1,\ldots,g_{s-1}$, the coloring $x_t$ evolves as a martingale with Gaussian increments.

So let us begin by recalling some basic properties of Gaussians and martingales. The analysis is elementary, and follows directly from these properties.

\medskip 
\noindent {\bf Some properties of Gaussians.} Let us use the notation $N(\mu,\sigma^2)$ for the one-dimensional Gaussian distribution with mean $\mu$ and variance $\sigma^2$, and $g\sim N(\mu,\sigma^2)$ to denote that $g$ is a random variable distributed according to $N(\mu,\sigma^2)$. If $g_1 \sim N(\mu_1,\sigma_1^2)$  and $g_2 \sim N(\mu_2,\sigma_2^2)$ are independent Gaussians, their sum is also Gaussian with $g_1 + g_2 \sim N(\mu_1 + \mu_2, \sigma_1^2 + \sigma_2^2)$ . 
This implies that if $g = (g(1),\ldots,g(n))$ is the standard gaussian random vector in $\R^n$, with $g(1),\ldots,g(n)$ independent samples from $ N(0,1)$, then for any $u \in \R^n$, we have  \[\ip{g,u}  = \sum_i g(i) u(i) \sim N(0,\|u\|_2^2).\]
For a linear subspace $W$, and $N(W)$, as above, the standard Gaussian distribution restricted to $W$, rotational invariance implies that a random vector $g \sim N(W)$ can be written as 
$g=  g(1) w_1+ \cdots + g(d) w_d$ for any orthonormal basis
$\{w_1,\ldots,w_d\}$ for $W$ and $g(1),\ldots,g(d)$ independent samples from $N(0,1)$. 
This implies the following useful facts.
\begin{lemma}
\label{lem:gaussian-subspace}
Let $W$ be a subspace of $\mathbb{R}^n$ of dimension $d$ and $g\sim N(W)$. Then for all $u \in \mathbb{R}^n$,  $\langle g,u \rangle \sim N(0,\sigma^2)$ with $\sigma^2 \leq \|u\|_2^2$. Moreover, $\sum_{i=1}^n \E[g(i)^2] = d$.
\end{lemma}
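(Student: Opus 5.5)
The plan is to work directly from the representation recalled just before the statement. Since $d = \dim W$, we can fix an orthonormal basis $w_1, \ldots, w_d$ of $W$. Then $g \sim N(W)$ can be written as $g = \sum_{k=1}^d g(k) w_k$, where $g(1), \ldots, g(d)$ are independent samples from $N(0,1)$. Both claims are then short computations with this expansion.

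\textbf{First claim.} Fix $u \in \R^n$. Then $\ip{g,u} = \sum_{k=1}^d g(k)\ip{w_k,u}$ is a linear combination of independent standard Gaussians. By the closure of independent Gaussians under sums noted above, it is distributed as $N(0,\sigma^2)$ with $\sigma^2 = \sum_{k=1}^d \ip{w_k,u}^2$. This is the squared norm of the orthogonal projection of $u$ onto $W$. Extending $w_1,\ldots,w_d$ to an orthonormal basis of $\R^n$ and applying Parseval (Bessel's inequality) gives $\sigma^2 \le \|u\|_2^2$.

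\textbf{Second claim.} By linearity, $\sum_{i=1}^n \E[g(i)^2] = \E\|g\|_2^2$; here $g(i)$ denotes the $i$-th coordinate of the vector $g$, not the coefficient in the expansion. Orthonormality of the $w_k$ gives $\|g\|_2^2 = \sum_{k=1}^d g(k)^2$ for the coefficients. Taking expectations yields $d$. Equivalently, we can apply the first claim with $u = e_i$ to get $\E[g(i)^2] = \|P_W e_i\|_2^2$, and then sum over $i$: $\sum_i \|P_W e_i\|_2^2 = \tr(P_W) = d$.

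There is no real obstacle. The only points needing care are the notational clash between coordinates of $g$ and coefficients in the basis, which should be handled by naming the coefficients differently, and the justification of the rotational-invariance representation, which the paper has already stated and we will take as given.
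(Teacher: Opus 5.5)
Your proposal is correct and follows essentially the same route as the paper: the first claim is the paper's observation that $\ip{g,u}=\ip{g,P_W u}\sim N(0,\|P_W u\|_2^2)$ with $\|P_W u\|_2\le\|u\|_2$, and your alternative for the second claim (summing $\E[g(i)^2]=\|P_W e_i\|_2^2$ over $i\in[n]$) is exactly the paper's computation. Your primary route for the second claim, computing $\E\|g\|_2^2$ as the expected sum of the $d$ squared orthonormal-basis coefficients, is an equally valid and slightly more direct shortcut.
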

\begin{proof}
Let $P$ be the orthogonal projection onto $W$.
 As $g\in W$, for any $u \in \R^n$, $\langle g,u \rangle  =  \langle g,Pu\rangle$ and  hence $\ip{g,u} \sim N(0,\|Pu\|^2)$. Clearly, $\|Pu\|_2 \leq \|u\|_2$. 
 
 For the second part, if $w_1,\ldots,w_d$ is an orthonormal basis for $W$, and $e_1, \ldots, e_n$ is the standard basis of $\R^n$, then by the above observation, 
 \[
 \E[g(i)^2] = \E[\ip{g, e_i}^2] = \|Pe_i\|_2^2 =\sum_{j=1}^d \ip{e_i,w_j}^2.
 \]
 We thus have
 \begin{align*}
     \sum_{i=1}^n \E[g(i)^2]
     = \sum_{i=1}^n \sum_{j=1}^d \ip{e_i,w_j}^2
     = \sum_{j=1}^d \sum_{i=1}^n \ip{e_i,w_j}^2
     = \sum_{j=1}^d \|w_j\|_2^2 = d,
 \end{align*}
 where the third equality follows as $\{e_1,\ldots,e_n\}$ is an orthonormal basis for $\mathbb{R}^n$.
 \end{proof}
Finally, we need some concentration properties for martingales with Gaussian increments.
Recall that for  $g \sim N(0,1)$, for any $\lambda \geq  0$, \[\Pr[ |g| \geq \lambda] \leq 2\exp(-\lambda^2/2).\]
(In fact, the factor $2$ is not necessary.) More generally, we have the following.
\begin{lemma}
\label{ch4:lm-martingale-bound}
Let $g_1,\ldots,g_\ell$ be random variables over $\R$ such that, for all $1 \leq t \leq \ell$,  $g_t$ only depends on $g_1,\ldots,g_{t-1}$, and the law of $g_t$ conditioned on $g_1, \ldots g_{t-1}$ is, with probability $1$, equal to that of a Gaussian with mean zero and variance at most $\sigma^2$.
  Then for any $\lambda > 0$, \[\Pr[|g_1 + \cdots + g_\ell| \geq  \lambda  \sqrt{\ell}\sigma] \leq  2\exp(-\lambda^2/2).\]
\end{lemma}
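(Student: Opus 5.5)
We bound the probability by the standard exponential moment (Chernoff) method applied to the martingale $S_t \eqdef g_1 + \cdots + g_t$, with $S_0 \eqdef 0$. By symmetry it is enough to show $\Pr[S_\ell \ge \lambda\sqrt{\ell}\sigma] \le \exp(-\lambda^2/2)$; the same bound for $-S_\ell$ then follows by applying it to $-g_1,\ldots,-g_\ell$, which satisfy the same hypotheses, and a union bound gives the factor $2$.

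The key step is a bound on the moment generating function of $S_\ell$. Fix $\theta > 0$ and let $\mathcal{F}_{t-1}$ denote conditioning on $g_1,\ldots,g_{t-1}$. By hypothesis, conditioned on $\mathcal{F}_{t-1}$, $g_t$ is distributed as $N(0,\sigma_t^2)$ for some (random, $\mathcal{F}_{t-1}$-measurable) $\sigma_t^2 \le \sigma^2$. The Gaussian moment generating function then gives
\[
\E[e^{\theta g_t} \mid \mathcal{F}_{t-1}] = e^{\theta^2\sigma_t^2/2} \le e^{\theta^2\sigma^2/2}
\]
almost surely. Since $S_{t-1}$ is determined by $g_1,\ldots,g_{t-1}$, we can condition and pull it out:
\[
\E[e^{\theta S_t}] = \E\big[e^{\theta S_{t-1}}\,\E[e^{\theta g_t}\mid \mathcal{F}_{t-1}]\big] \le e^{\theta^2\sigma^2/2}\,\E[e^{\theta S_{t-1}}].
\]
Inducting on $t$ from $t=1$ up to $t=\ell$ yields $\E[e^{\theta S_\ell}] \le e^{\ell\theta^2\sigma^2/2}$.

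Markov's inequality applied to $e^{\theta S_\ell}$ then gives
\[
\Pr[S_\ell \ge \lambda\sqrt{\ell}\sigma] \le \exp\big(\ell\theta^2\sigma^2/2 - \theta\lambda\sqrt{\ell}\sigma\big).
\]
Choosing $\theta = \lambda/(\sqrt{\ell}\sigma)$ makes the exponent equal to $-\lambda^2/2$. (If $\sigma = 0$, every $g_t$ is $0$ almost surely and the claim is trivial.)

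The only step that needs care is the conditioning argument, namely that the conditional variance may itself be random. This causes no difficulty: the bound $\sigma_t^2 \le \sigma^2$ holds almost surely, so the conditional moment generating function is bounded uniformly. This is exactly the hypothesis stated in the lemma, so the argument should go through without further obstacles.
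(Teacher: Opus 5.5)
Your proof is correct and follows the paper's argument: bound the conditional Gaussian moment generating function almost surely, induct via the tower property to get $\E[e^{\theta S_\ell}]\le e^{\ell\theta^2\sigma^2/2}$, then apply Markov's inequality. Your choice $\theta=\lambda/(\sqrt{\ell}\sigma)$ and your explicit symmetric treatment of the lower tail (which is where the factor $2$ comes from) are more careful than the paper's write-up, which takes $\beta=\lambda/\sqrt{\ell}$ and handles only the upper tail.

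One small correction concerns the degenerate case. When $\sigma=0$ the statement is not trivial but actually false as written: the event $|S_\ell|\ge 0$ has probability $1$, which exceeds $2e^{-\lambda^2/2}$ once $\lambda>\sqrt{2\ln 2}$. So the lemma implicitly requires $\sigma>0$.
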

\begin{proof}
Recall that for $g \sim N(0,\sigma^2)$, its moment generating function is $\E[ \exp(\beta g)] = \exp(\beta^2\sigma^2/2)$ for all $\beta \geq 0$. 
Let $S_t =g_1 + \cdots + g_t$. Then
\begin{align*}
\E[\exp(\beta S_t)]   = & \E[ \E[ \exp(\beta (S_{t-1} + g_t)) | g_1,\ldots,g_{t-1}]] \\
=  & \E  [\exp(\beta S_{t-1}) \E[ \exp(\beta g_t) | g_1,\ldots,g_{t-1}]] \\ 
\leq &  \E [\exp(\beta S_{t-1})] \exp(\beta^2\sigma^2/2),
\end{align*}
and hence by induction $\E[\exp(\beta S_\ell)] \leq \exp(\beta^2 \ell\sigma^2/2)$.
For any $\beta\geq 0$,
 \[\Pr[S_\ell \geq \lambda \sqrt{\ell}\sigma] = \Pr[\exp(\beta S_\ell) \geq \exp{ \beta \lambda \sqrt{\ell}\sigma}] \leq  \exp(\beta^2 \ell\sigma^2/2 -\beta  \lambda \sqrt{\ell}\sigma)\] by Markov's inequality, and making the choice $\beta = \lambda /\sqrt{\ell}$ that minimizes the left hand side gives the result.
\end{proof}

\medskip
\noindent{\bf Basic properties of the algorithm.} 
Let us note some simple observations about the algorithm.
\begin{enumerate}
    \item Once an element is fixed, its value does not change and it remains fixed. Similarly, once. Once a discrepancy constraint becomes tight, it stays tight since subsequent coloring updates are orthogonal to $v_i$. So $F_t$, $D_t$ can only increase with time, and, in particular, $\text{dim}(W_t)$ is non-increasing.
\item \label{property:lm-2} The final coloring $x_\ell$ returned by the algorithm satisfies $x_\ell \in [-1,1]^n$ and $\ip{v_i,x_\ell - x_0} \leq \lambda_i \|v_i\|_2 $ for all $i \in [m]$ with high probability. 
Indeed, consider the first time $t$ when $x_t(j)>1$. Then it must be that $x_{t-1}(j)< 1-\delta$ (otherwise $x_{t-1}(j)$ would not have been updated at time $t$), and thus it must be that $g_t(j) \geq \delta/\gamma$. 
As $g_t(j) = \ip{g_t,e_j} \sim N(0,\sigma^2)$ with $\sigma^2 \leq 1$ (recall Lemma \ref{lem:gaussian-subspace}),
this probability is at most $2 \exp(-(\delta/\gamma)^2/2) \leq (mn/\gamma)^{-c}$, for $c$ arbitrarily large,  by the choice of $\gamma$. 
An identical argument works for discrepancy constraints, and the claim follows by a union bound over the $O(mn/\gamma^2)$ choices for $i,j$ and $t$.
\end{enumerate}

 \noindent{\bf The main argument.} It remains to show that  $|F_\ell| \geq n/2$ (that is, $|x_{\ell}(i)| \geq 1-\delta$ for at least $n/2$ elements) with some constant probability.
As $|F_\ell| \leq n$, 
by Markov's inequality applied to $n-|F_\ell|$,
it suffices to show that $\E[|F_\ell|] \geq 0.6n$. 

This will follow from the following two claims.
First, many constraints must become tight on average. In particular,
\begin{lemma}
\label{lem:lm-vl}
$\E[|F_{\ell}| + |D_{\ell}|] \geq 3n/4$.
\end{lemma}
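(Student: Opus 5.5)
We track the energy $\E\|x_\ell - x_0\|_2^2$, bounding it from below by the number of remaining degrees of freedom and from above by the number of tight constraints. Write $x_\ell - x_0 = \gamma\sum_{t=1}^{\ell} g_t$. The increments are martingale differences: $g_t$ has mean zero conditioned on the past. Hence the cross terms vanish in expectation, and
\[
\E\|x_\ell - x_0\|_2^2 = \gamma^2 \sum_{t=1}^{\ell} \E\|g_t\|_2^2 = \gamma^2\sum_{t=1}^{\ell}\E[\dim W_t],
\]
where the last equality uses the second part of Lemma~\ref{lem:gaussian-subspace}. Since $W_t$ is cut out by at most $|F_t|+|D_t|$ linear constraints, $\dim W_t \ge n - |F_t| - |D_t|$. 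Because $F_t$ and $D_t$ only grow, this is at least $n - |F_\ell| - |D_\ell|$ for every $t$. Using $\ell = 4/\gamma^2$, we get the lower bound
\[
\E\|x_\ell - x_0\|_2^2 \ge 4\bigl(n - \E[|F_\ell|+|D_\ell|]\bigr).
\]

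For the upper bound, note that $x_\ell$ and $x_0$ both lie in $[-1,1]^n$ (on the high-probability event of property~\ref{property:lm-2}). So $\|x_\ell - x_0\|_2^2 \le 4n$ deterministically on that event. That alone is too weak: it only says $4(n - \E[\cdot]) \le 4n$. I would therefore bound the expected gain more carefully, separating coordinates from discrepancy constraints. Instead of counting the full dimension, compare against the constraints directly.

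The natural fix is to split the energy along coordinates. We have $\E\sum_j (x_\ell(j)-x_0(j))^2$, where each coordinate contributes at most $4$. A coordinate that is not fixed at the end has $|x_\ell(j)|\le 1-\delta$, which does not help directly. So I would instead use the following cleaner accounting.

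Let $d_t = \dim W_t$. While $|F_t|+|D_t| < 3n/4$ we have $d_t \ge n/4$. Suppose, for contradiction, that $\E[|F_\ell|+|D_\ell|] < 3n/4$. Then the energy satisfies $\E\|x_\ell-x_0\|_2^2 \ge 4(n/4) = n$. This is not yet a contradiction with $4n$, so the constant needs care: the energy bound from the cube must be compared to the per-coordinate bound. I would redo the bookkeeping with the increments of individual quantities. The relevant identity is $\E[\|x_\ell-x_0\|^2] = \gamma^2\sum_t \E d_t$, and this quantity also equals
\[
\sum_j \E(x_\ell(j)-x_0(j))^2 + \dots,
\]
which I bound using that each unfixed coordinate moves in a bounded range. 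The likely intended route, however, is to \emph{decrease} the time horizon's effect: use $\ell = 4/\gamma^2$ together with a bound showing $\E|D_\ell| \le n/16$ or similar, which belongs to the second claim. So here I only aim for the counting inequality.

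The main obstacle is therefore getting the constants to close. Concretely, I must show that $\gamma^2\ell\,\E[n-|F_\ell|-|D_\ell|] \le \E\|x_\ell-x_0\|^2$ beats the cube bound. I would first try strengthening the cube bound to the per-coordinate form $\E(x_\ell(j)-x_0(j))^2 \le 4$ combined with orthogonality, or alternatively lengthening $\ell$ to make the inequality $4(n-\E[\cdot])\le n$ hold. Whichever normalization the algorithm's $\ell$ supports, the structure is: energy equals $\gamma^2\sum_t\E\dim W_t$, $\dim W_t \ge n-|F_\ell|-|D_\ell|$, energy is at most $O(n)$, and rearranging gives the claim.
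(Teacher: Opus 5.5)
The energy lower bound you derive, $\E\|x_\ell-x_0\|_2^2=\gamma^2\sum_{t=1}^{\ell}\E[\dim W_t]\ge 4\bigl(n-\E[|F_\ell|+|D_\ell|]\bigr)$, is correct and is the same computation as in the paper. The gap is the matching upper bound, which your proposal never supplies. The diameter bound $\|x_\ell-x_0\|_2^2\le 4n$ (equivalently $4$ per coordinate) only gives the vacuous $\E[|F_\ell|+|D_\ell|]\ge 0$, and the remedies you suggest do not work. Lengthening $\ell$ changes the algorithm and breaks Lemma~\ref{lem:lm-dl}, whose tail bound $2\exp(-(\lambda_i-\delta)_+^2/8)$ relies on exactly $\gamma^2\ell=4$ to match condition~\eqref{eqcond21}. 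A bound on $\E|D_\ell|$ plays no role in this lemma.

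The missing idea is to use the martingale structure in the upper bound as well, not only in the lower bound. Since $x_0$ is deterministic and the increments $\gamma g_t$ have conditional mean zero, $\E\langle x_\ell-x_0,x_0\rangle=0$. Hence
\[
\E\|x_\ell-x_0\|_2^2=\E\|x_\ell\|_2^2-\|x_0\|_2^2\le \E\|x_\ell\|_2^2\le n .
\]
Per coordinate, the correct bound is therefore $\E(x_\ell(j)-x_0(j))^2=\E[x_\ell(j)^2]-x_0(j)^2\le 1$, not $4$. Combined with your lower bound this gives $4\bigl(n-\E[|F_\ell|+|D_\ell|]\bigr)\le n$, which is the lemma.

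This is exactly the paper's argument, phrased there by tracking $\|x_t\|_2^2$ directly. It uses $\E[\|x_t\|_2^2-\|x_{t-1}\|_2^2\mid x_{t-1}]=\gamma^2\dim W_t$, so $\E\|x_\ell\|_2^2\ge\gamma^2\ell\,\E[\dim W_\ell]=4\,\E[\dim W_\ell]$, together with $\|x_\ell\|_2^2\le n$.

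One further point: the bound $\E\|x_\ell\|_2^2\le n$ needs $\E[x_\ell(j)^2]\le 1$ unconditionally, as in the paper's footnote. You cannot simply restrict to the high-probability event of property~\ref{property:lm-2} inside these expectation identities, because conditioning on that event destroys the mean-zero property of the increments.
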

Second, not many discrepancy constraints can become tight. In particular, 
\begin{lemma}
\label{lem:lm-dl}
 Assuming $\delta \le 0.1$, $\E[|D_{\ell}|] \leq (1.1) n/8$.
\end{lemma}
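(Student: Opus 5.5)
The plan is to bound, for each $i\in[m]$ separately, the probability that $i\in D_\ell$ by roughly $2.2\exp(-\lambda_i^2/16)$, and then sum over $i$ using \eqref{eqcond21}. Summing gives
\[
\E[|D_\ell|]=\sum_i \Pr[i\in D_\ell]\le 2.2\sum_i \exp(-\lambda_i^2/16)\le 2.2\cdot\frac{n}{16}=(1.1)\frac{n}{8}.
\]
The only inputs are Lemma~\ref{lem:gaussian-subspace} and the martingale tail bound of Lemma~\ref{ch4:lm-martingale-bound}.

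\textbf{Step 1 (reduce to the endpoint).} Fix $i$ and set $S_t\eqdef\ip{v_i,x_t-x_0}=\sum_{s=1}^t \gamma\ip{g_s,v_i}$. By the basic properties of the algorithm, once $i$ enters $D_s$ it stays in $D_{s'}$ for all $s'\ge s$. Moreover, every later increment $g_{s'}\in W_{s'}$ is orthogonal to $v_i$, so $S$ is frozen from then on. Hence $i\in D_\ell$ implies $|S_\ell|=|S_{\ell-1}|\ge(\lambda_i-\delta)\|v_i\|_2$. It therefore suffices to bound $\Pr[|S_\ell|\ge(\lambda_i-\delta)\|v_i\|_2]$, and no maximal inequality is needed.

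\textbf{Step 2 (tail bound).} Conditioned on $g_1,\dots,g_{s-1}$, the subspace $W_s$ is determined. By Lemma~\ref{lem:gaussian-subspace}, the increment $\gamma\ip{g_s,v_i}$ is then a centered Gaussian with variance at most $\gamma^2\|v_i\|_2^2$; a zero variance, once the constraint is frozen, is allowed. Applying Lemma~\ref{ch4:lm-martingale-bound} with $\sigma=\gamma\|v_i\|_2$ and $\ell=4/\gamma^2$ gives $\sqrt{\ell}\,\sigma=2\|v_i\|_2$. Hence, whenever $\lambda_i\ge\delta$,
\[
\Pr[i\in D_\ell]\le 2\exp\bigl(-(\lambda_i-\delta)^2/8\bigr).
\]

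\textbf{Step 3 (absorb $\delta$).} The main point to check is the comparison with $\exp(-\lambda_i^2/16)$. Expanding the square,
\[
\frac{(\lambda-\delta)^2}{8}-\frac{\lambda^2}{16}=\frac{(\lambda-2\delta)^2-2\delta^2}{16}\ge-\frac{\delta^2}{8}\ge -0.00125,
\]
where the last inequality uses $\delta\le 0.1$. Therefore $2\exp(-(\lambda_i-\delta)^2/8)\le 2e^{0.00125}\exp(-\lambda_i^2/16)\le 2.2\exp(-\lambda_i^2/16)$.

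The degenerate case $\lambda_i<\delta$, where the threshold is nonpositive, is handled by the trivial bound $\Pr[i\in D_\ell]\le1$. In that case $\lambda_i^2/16<0.001$, so $1\le 2.2\exp(-\lambda_i^2/16)$ still holds. Summing over $i$ as displayed at the start completes the proof.
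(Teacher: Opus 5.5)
Your proposal is correct and follows essentially the same route as the paper. It reduces to the endpoint event via orthogonality of later updates, applies Lemma~\ref{ch4:lm-martingale-bound} with $\sqrt{\ell}\,\sigma = 2\|v_i\|_2$, and absorbs $\delta$ at a cost of $e^{\delta^2/8}$ (you complete the square where the paper uses $(a+b)^2\le 2a^2+2b^2$ and $(\lambda_i-\delta)_+$, with identical outcome) before summing against \eqref{eqcond21}; like the paper, you tacitly need $v_i\neq 0$ (a zero row has threshold $0$, so it lies in $D_\ell$ with probability $1$, and the martingale lemma needs $\sigma>0$), which is harmless since such rows impose no constraint and can be discarded beforehand.
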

We now prove Lemmas \ref{lem:lm-vl} and \ref{lem:lm-dl}.

\begin{proof}[Proof of Lemma \ref{lem:lm-vl}]
    As $\dim(W_\ell) \geq  n -  |F_\ell| - |D_{\ell}|$, it suffices to show that $\E[\dim(W_\ell)] \leq n/4$.
To do this, we track how $\|x_t\|_2^2$ evolves.

To this end, notice that since
 $x_t  = x_{t-1}+ \gamma g_t$, we have
\[\E[ \|x_t\|_2^2 - \|x_{t-1}\|_2^2 | x_{t-1}]  = \E[ 2 \gamma \ip{x_{t-1}, g_t} + \|g_t\|^2 | x_{t-1} ] = \gamma^2 \dim(W_t),\]
where we use that $\E[\ip{x_{t-1}, g_t}|x_{t-1}]=0$ and $\E[\|g_t\|^2 ] = \dim(W_t)$ by Lemma \ref{lem:gaussian-subspace}.
Summing up over all time steps and averaging, and observing that $\dim(W_t)$ is non-increasing, this gives  us
\[\E[\|x_\ell\|_2^2] = \|x_0\|_2^2 + \sum_{1 \leq t\leq \ell}  \gamma^2\,  \E[\dim(W_t)]  \geq  \gamma^2  \ell \, \E[ \dim(W_\ell)] \]
As $\ell = 4/\gamma^2$, this gives $\E[ \dim(W_\ell)] \leq \E[\|x_{\ell}\|_2^2]/4$. 
The result then follows, as $\|x_\ell\|_2^2 \leq n$ trivially for any $x_{\ell} \in [-1,1]^n$.\footnote{Actually, we need some care as $x_\ell$ could end up outside $[-1,1]^n$. However, the argument in Observation \ref{property:lm-2} implies that for any $j \in [n]$, $\E[x_\ell(j)^2] \leq (1-\delta)^2 + \gamma^2 < 1$. So we have $\E[\|x_{\ell}\|_2^2] < n$, which suffices for the proof of the Lemma.}
\end{proof}

\begin{proof}[Proof of Lemma \ref{lem:lm-dl}]
Consider some vector $v_i$. The corresponding discrepancy constraint becomes tight if $|\ip{v_i, x_t-x_0}| \geq (\lambda_i -\delta)\|v_i\|_2$ at some time $t$. As the subsequent coloring updates are orthogonal to $v_i$, this is same as $|\ip{v_i, x_\ell-x_0}| \geq (\lambda_i -\delta)\|v_i\|_2$ (i.e.,~the constraint is tight at time $\ell$). Let us call this event $E_i$.

As $\langle v_i,x_\ell -x_0\rangle $ is a martingale with Gaussian increments, with variance at most $\gamma^2 \|v_i\|_2^2$ per step and $\ell=4/\gamma^2$, by Lemma \ref{ch4:lm-martingale-bound} we have 
\[  \Pr[E_i] \leq 2 \exp(-(\lambda_i -\delta)_+^2/8),\]
     where we denote $(\lambda_i -\delta)_+ = \max(0,\lambda_i -\delta)$. So,
     \[ \E [ |D_\ell|] = \sum_i \Pr[E_i]  \leq \sum_i 2 \exp(-(\lambda_i -\delta)_+^2/8).\]
To relate this to condition \eqref{eqcond21}, we can use that, by the Cauchy-Schwarz inequality, 
\(
\lambda_i^2 = ((\lambda_i-\delta) + \delta)^2 \le 2(\lambda_i-\delta)^2 + 2\delta^2.
\)
Therefore, since $\delta \le 0.1$, $(\lambda_i -\delta)^2_+  \geq \lambda_i^2/2 -  \delta^2 \geq \lambda_i^2/2 - 0.01$.
Then $\Pr[E_i] \leq 2 \exp( -\lambda_i^2/16 + 0.01/8) \leq (2.2) \exp(-\lambda_i^2/16)$, and \eqref{eqcond21} gives
\[ \E[|D_\ell|] = \sum_i \Pr[E_i]  \leq \sum_i (2.2) \exp(-\lambda_i^2/16) \leq  (1.1) n/8. \qedhere\]
\end{proof}

This completes the proof of Theorem \ref{th:lm}.
\section{Rothvoss' Algorithm}
\label{sec:ch4-rothvoss}
While the Lovett-Meka algorithm suffices for most applications of the partial coloring method, it does not easily generalize to general convex bodies, so it does not give a constructive proof of Giannopoulos's partial coloring Lemma (Lemma~\ref{lm:ch3-gian}). Recall that Lemma~\ref{lm:ch3-gian} guarantees that any symmetric convex body $K$ in $\R^n$ of Gaussian measure at least $2^{-n/4}$ contains a partial coloring $\col$ with at least $\frac{n}{10}$ colored coordinates. Attempting to apply the Lovett-Meka algorithm to compute such a coloring, we are faced with the problem that the body $K$ can be defined by infinitely many inequalities, making it impossible to apply the condition~\eqref{eqcond21}.

Using an entirely different approach, Rothvoss was able to give an algorithmic version of Lemma~\ref{lm:ch3-gian}, stated in the next theorem.

\begin{theorem}
\label{thm:rothvoss-convex-discrepancy}
    Let $\epsilon$ be a sufficiently small constant, let $\delta>0$ be sufficiently small compared to $\epsilon$,\footnote{The constants $\epsilon,\delta$ can be computed easily from the proof below, but we avoid plugging specific values to avoid clutter.} and let $K \subseteq \R^n$ be a symmetric convex body with Gaussian measure $\gamma^n(K) \geq \exp(-\epsilon n)$. Then there is a randomized polynomial time algorithm that, with probability $1-\exp(-\Omega(n))$ (with the implied constant depending on $\epsilon$ and $\delta$), finds a point $x \in K \cap[-1,1]^n$ with $|\{i: x(i)\in \{-1,1\}\}| \ge \delta n$.  
\end{theorem}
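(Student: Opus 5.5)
Our plan follows Rothvoss's approach. Sample a standard Gaussian $g \in \R^n$ and output the point $x$ of $K \cap [-1,1]^n$ closest to $g$ in Euclidean distance, i.e.\ $x = \arg\min\{\|x-g\|_2 : x \in K\cap[-1,1]^n\}$. This is a convex program solvable in polynomial time, given a membership or separation oracle for $K$. The work is to show that, with probability $1-\exp(-\Omega(n))$, at least $\delta n$ of the cube constraints are tight at $x$. We argue by contradiction: suppose fewer than $\delta n$ coordinates satisfy $|x(i)|=1$.

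\textbf{Step 1 (the optimum is far from $g$).} Since $\gamma^n(K)\ge e^{-\epsilon n}$, Lemma~\ref{lm:ch3-gauss-shift} and standard concentration show that with high probability $g$ is at distance $\gtrsim \sqrt{\epsilon n}$ or less from $K$. More importantly, $g$ is typically at distance $\Omega(\sqrt n)$ from the cube $[-1,1]^n$, since $\|g\|_2 \approx \sqrt n$ and a constant fraction of coordinates have $|g(i)|>1$ by a constant margin. We will use the following picture: $x$ lies on the boundary of $K\cap[-1,1]^n$, and $g-x$ lies in the normal cone there. That cone is generated by a normal of $K$ (if $x\in\partial K$) together with the coordinate vectors $\pm e_i$ for the tight cube coordinates $i\in I$, where $|I|<\delta n$.

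\textbf{Step 2 (reduce to a low-dimensional face).} If $|I|<\delta n$, let $Q=\{y: y(i)=x(i)\ \forall i\in I\}$. Then $x$ is also the closest point to $g$ in $K \cap \{|y(i)|\le 1\}$ intersected with the slab of the free coordinates, and locally $x$ is the projection of $g$ onto $K\cap Q$-type sets. The key claim is that $x$ is essentially the closest point of $K\cap S$ to $g$, where $S$ is a set of the form $\{y : y(i)\in[-1,1]\ \forall i\in I\}$ involving only $\delta n$ strips. Rather than argue this directly, we will bound, via a union over subsets, the probability that
\[
d\bigl(g, K\cap\textstyle\bigcap_{i\in I}\{|y(i)|\le 1\}\bigr) \ge \Omega(\sqrt n)
\]
for some small $I$. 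This follows because the distance from $g$ to $K\cap[-1,1]^n$ is at least the distance to the cube, which is $\Omega(\sqrt n)$; the minimizer over the larger set coincides with $x$ when all cube constraints outside $I$ are slack.

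\textbf{Step 3 (measure and concentration).} For a fixed $I$ with $|I|\le\delta n$, the body $K_I = K\cap\bigcap_{i\in I}\{|y(i)|\le 1\}$ has, by Lemma~\ref{lm:ch3-sidak}, Gaussian measure at least $e^{-\epsilon n}\gamma^1([-1,1])^{\delta n}\ge e^{-(\epsilon+\delta)n}$. Blow up $K_I$ by Euclidean distance $r$. Gaussian isoperimetry, or the concentration of the distance function, which is $1$-Lipschitz, gives
\[
\Pr\bigl[d(g,K_I)\ge r\bigr]\le \exp\Bigl(-\bigl(r-\sqrt{2(\epsilon+\delta) n}\bigr)_+^2/2\Bigr).
\]
Take $r=c\sqrt n$, where $c$ is the constant lower bound on $d(g,[-1,1]^n)/\sqrt n$. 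For $\epsilon,\delta$ small this probability is $e^{-\Omega(n)}$. Union bounding over the $\binom{n}{\delta n}\le e^{H(\delta)n}$ choices of $I$ still gives $e^{-\Omega(n)}$ for $\delta$ small relative to $c$. So with high probability every such $K_I$ is within $c\sqrt n/2$ of $g$, while $K\cap[-1,1]^n$ is at distance $\ge c\sqrt n$. Hence the projection cannot have fewer than $\delta n$ tight coordinates.

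\textbf{Main obstacle.} The step expected to be hardest is the equality in Step 2. We need to show that if only the coordinates in $I$ are tight at $x$, then $x$ is the closest point to $g$ in $K_I$ itself, not merely a local minimizer. Convexity resolves this: $x$ minimizes the convex function $\|y-g\|^2$ over $K\cap[-1,1]^n$, and the constraints $|y(i)|\le1$ for $i\notin I$ are slack at $x$. The KKT conditions therefore involve only $K$ and the constraints in $I$, so $x$ is a global minimizer over $K_I$ as well. Hence $d(g,K_I)=d(g,K\cap[-1,1]^n)\ge d(g,[-1,1]^n)$, which yields the contradiction above. The remaining routine checks are the fixing of constants: $\delta \ll c$ with $H(\delta)$ small, then $\epsilon$ small. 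We also need the $1-e^{-\Omega(n)}$ lower bound on $d(g,[-1,1]^n)$, which follows from Hoeffding's inequality applied to the independent terms $(|g(i)|-1)_+^2$.
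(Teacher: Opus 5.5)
Your proposal is correct and follows essentially the paper's route. It uses the same projection algorithm and the same key fact that dropping the slack box constraints does not change the minimizer; your local-equals-global (normal cone) justification is equivalent to the paper's convex-combination argument. It then combines Sidak, Gaussian isoperimetry and a union bound over small coordinate sets, and compares against the $\Omega(\sqrt n)$ distance from $g$ to the cube. The differences are minor: fattening by $c\sqrt n$ instead of $\sqrt{8(\epsilon+\delta)n}$ only requires $\delta$ small relative to an absolute constant rather than to $\epsilon$, and your Hoeffding step needs the unbounded summands $(|g(i)|-1)_+^2$ truncated (e.g.\ at $1$), or you can use the paper's Chernoff bound on the indicators of $|g(i)|\ge 2$.
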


 \subsection{Algorithm}
The algorithm can be described in a single line!

 Sample a random Gaussian $g \sim N(0,I_n)$, and output the point $x^*$ in $K \cap [-1,1]^n$ closest to $g$.
  That is, output
\begin{equation}
    \label{eq:rothvoss-algorithm}
x^* = \arg\min \,\{ \|x-g\|_2 : x \in K \cap [-1,1]^n\}.
\end{equation}

That's it! See Figure~\ref{fig:ch4-rothvoss} for an illustration.

\begin{figure}[hbtp!]
    \centering
\includegraphics[scale=0.88, trim= {0mm 0mm 0mm 5mm}]{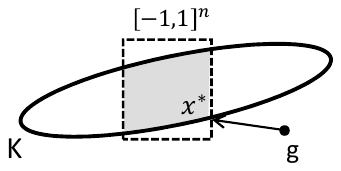}
    \caption{Rothvoss' Algorithm. The shaded area indicates $K\cap [-1,1]^n$, and $x^*$ is the closest point in $K\cap [-1,1]^n$ to the random Gaussian $g$.}
    \label{fig:ch4-rothvoss}
\end{figure}

Notice that $x^*$ can be determined efficiently, to any desired accuracy, by solving a convex program, as $f(x):=\|x-g\|_2$ is a convex function for any fixed $g$, and $K \cap [-1,1]^n$ is a convex set.
All one needs is a membership oracle for $K$.
In fact, the objective function is strictly convex\footnote{i.e.,~$f(\lambda x + (1-\lambda) y) < \lambda f(x) + (1-\lambda) f(y)$ whenever $x\neq y$, $\lambda \in (0,1)$. Indeed, we can equivalently consider the objective $\|x-g\|^2_2$ and here, $\|\lambda x + (1-\lambda) y-g\|_2^2 - \lambda \|x-g\|_2^2 - (1-\lambda) \|y-g\|_2^2 = -\lambda(1-\lambda) \|x-y\|_2^2$ for all $g$.} and thus the optimum $x^*$ is always unique. 

\subsection{Analysis}
We now show that the algorithm satisfies Theorem \ref{thm:rothvoss-convex-discrepancy}. The analysis is very elegant, and uses two key observations about how 
the objective value $v^*:=\|x^*-g\|_2$ of the program \eqref{eq:rothvoss-algorithm} behaves. Note that both $x^*$
and $v^*$ are functions of $g$.
\medskip 

(i) With high probability, $v^* \geq c\sqrt{n}$, where $c>0$ is a universal constant independent of $\epsilon$. 

\medskip
(ii) If $x^*$ has fewer than $\delta n$ coordinates $x^*(i) \in \{-1,1\}$, then with high probability, $v^*$ must be small, roughly $O(\sqrt{(\epsilon + \delta \ln 1/\delta) n})$. 


\medskip 
\noindent The result then follows directly from these observations.

\medskip

In light of this, let us take a brief detour to understand how the distance
$d(g,A):= \min\{\|x-g\|_2: x\in A\}$
of a point $g$ to a set $A \subset \R^n$ behaves, when $g \sim N(0,I_n)$ is chosen randomly. 

\medskip 
\noindent {\bf Fattenings and Gaussian Isoperimetric Inequality.}
 For $\delta>0$, let $A_\delta = \{x:d(x,A)\leq \delta\}$, the $\delta$-fattening of $A$, denote the set consisting of points within distance $\delta$ from $A$.
 Clearly, for any $g$,
 as $d(g,A) \leq \delta$ if and only if $g \in A_\delta$, we have that, for a standard Gaussian $g$ in $\R^n$,  \[\Pr[d(g,A) \leq \delta] = \gamma^n(A_\delta).\] 
 In other words, we can understand the distribution of the distance $d(g,A)$ by looking at the Gaussian measure of fattenings of $A$. 

 \smallskip
 The key tool to get a handle on $\gamma^n(A_\delta)$ is the classical Gaussian isoperimetric (GI) inequality~\cite{Borell75}, which states that among all sets with equal Gaussian measure, fattenings increase the measure of halfspaces the least. This is a Gaussian analog of the more classical isoperimetric inequality for volume (Lebesgue measure), which states that among all sets of equal volume, the Euclidean sphere has the smallest surface area. While the classical isoperimetric inequality is easily derived from the Brunn-Minkowski inequality \eqref{eq:ch3-bw}, the Gaussian isoperimetric inequality is a deeper fact. 
  
Formally, a halfspace is a set of the form $H:= \{x \in \R^n: \ip{u,x} \leq t\}$ for some unit vector $u \in \R^n$, and scalar $t\in \R$. The $\delta$-fattening of $H$ is $H_\delta:= \{x \in \R^n: \ip{u,x} \leq t+\delta\}$. By the rotational symmetry of Gaussians, we may assume that $u=e_1$ and thus,  
\[\gamma^n(H) = \Phi(t) = \gamma^1((-\infty,t]) = \int_{-\infty}^t (2\pi)^{-1/2} \exp(-t^2/2) \,dt.\] 

\begin{theorem}[GI Inequality]
\label{thm:gii}
    Let $A \subset \R^n$ be any measurable set and $H$ be any halfspace so that $\gamma^n(A) = \gamma^n(H)$. Then for every $\delta>0$, we have that  $\gamma^n(A_\delta) \geq  \gamma^n(H_\delta)$.
Equivalently,
for a set $A$, define the Gaussian isoperimetric function $t(A) := \Phi^{-1}(\gamma^n(A))$. 
Then, \[t(A_\delta) \geq t(H_\delta) = t(H)+\delta = t(A) + \delta.\]
\end{theorem}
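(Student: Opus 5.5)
The plan is to reduce the statement to an infinitesimal (boundary-measure) inequality and then integrate it in $\delta$. Write $\varphi=\Phi'$ for the standard Gaussian density, and let $I\eqdef\varphi\circ\Phi^{-1}:[0,1]\to[0,\infty)$ be the \emph{Gaussian isoperimetric profile}. For a closed set $A$, its Gaussian boundary measure is
\[
\gamma^+(A)\eqdef\liminf_{\delta\to 0^+}\frac{\gamma^n(A_\delta)-\gamma^n(A)}{\delta}.
\]
The core claim is $\gamma^+(A)\ge I(\gamma^n(A))$ for every closed $A$. It is enough to treat closed $A$, since $A_\delta=(\bar A)_\delta$ and $\gamma^n(\bar A)\ge\gamma^n(A)$; note $\Phi^{-1}$ is increasing.

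Given the core claim, set $u(\delta)\eqdef\Phi^{-1}(\gamma^n(A_\delta))$. The fattenings compose as $(A_\delta)_h\subseteq A_{\delta+h}$, so the claim applied to the closed set $A_\delta$ gives
\[
\liminf_{h\to0^+}\frac{\gamma^n(A_{\delta+h})-\gamma^n(A_\delta)}{h}\;\ge\; I(\gamma^n(A_\delta))=\varphi(u(\delta)).
\]
By the chain rule for $\Phi^{-1}$, the lower right Dini derivative of $u$ is therefore at least $1$ wherever $0<\gamma^n(A_\delta)<1$. The function $u$ is nondecreasing, so a standard Dini-derivative lemma gives $u(\delta)\ge u(0)+\delta$, which is exactly $t(A_\delta)\ge t(A)+\delta$. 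The degenerate cases $\gamma^n(A)\in\{0,1\}$ are trivial. For the halfspace $H=\{x_1\le t\}$, a direct computation gives $H_\delta=\{x_1\le t+\delta\}$, so $t(H_\delta)=t(H)+\delta$. This yields both forms of the statement.

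To prove the core claim, I would establish Bobkov's functional inequality: for every smooth $f:\R^n\to[0,1]$ with bounded gradient, and $g$ a standard Gaussian vector,
\[
I\big(\E f(g)\big)\le \E\sqrt{I(f(g))^2+\|\nabla f(g)\|_2^2}.
\]
Applying this to $f_\eps$, a smooth approximation of $x\mapsto\max\{0,1-d(x,A)/\eps\}$, and letting $\eps\to0$, the left side tends to $I(\gamma^n(A))$. The term $I(f_\eps)$ is supported on $A_\eps\setminus A$, which has measure tending to $0$, while $\|\nabla f_\eps\|\le 1/\eps$ on $A_\eps\setminus A$. Hence the right side is at most $(\gamma^n(A_\eps)-\gamma^n(A))/\eps+o(1)$, which gives $\gamma^+(A)\ge I(\gamma^n(A))$.

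For Bobkov's inequality I would use the Ornstein--Uhlenbeck semigroup $P_sf(x)=\E f(e^{-s}x+\sqrt{1-e^{-2s}}\,g)$, with generator $L=\Delta-\ip{x,\nabla}$. Define
\[
\Psi(s)\eqdef P_s\Big(\sqrt{I(P_{t-s}f)^2+\|\nabla P_{t-s}f\|^2}\Big).
\]
As $t\to\infty$, $\Psi(0)\to I(\E f)$ and $\Psi(t)$ equals the right-hand side, so it suffices to show $\Psi'(s)\ge0$. Differentiating in $s$ reduces this to a pointwise inequality for $G=P_{t-s}f$:
\[
L\sqrt{I(G)^2+\|\nabla G\|^2}-\frac{I(G)I'(G)LG+\ip{\nabla G,\nabla LG}}{\sqrt{I(G)^2+\|\nabla G\|^2}}\ \ge\ 0 .
\]
The ingredients are the identity $II''=-1$, the OU commutation $\nabla LG=L\nabla G-\nabla G$ (a Bochner formula with $\Gamma_2\ge\Gamma$), and Cauchy--Schwarz on the Hessian terms. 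This computation is the main obstacle, and the step I expect to be messiest.

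If the computation resists, I would fall back on Bobkov's original route. That route first proves the two-point inequality $I\big(\tfrac{a+b}{2}\big)\le\tfrac12\sqrt{I(a)^2+\tfrac{(a-b)^2}{4}}+\tfrac12\sqrt{I(b)^2+\tfrac{(a-b)^2}{4}}$ on $\{-1,1\}$, again using only $II''=-1$. It then tensorizes to $\{-1,1\}^N$, and passes to the Gaussian via the central limit theorem applied to $f\big((\eps_1+\dots+\eps_N)/\sqrt N,\dots\big)$, with the discrete gradients converging to $\|\nabla f\|$.
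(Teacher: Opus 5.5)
Your proof is correct, and it takes a genuinely different route from the paper, which does not prove Theorem~\ref{thm:gii} at all. The paper quotes it from Borell and Sudakov--Tsirelson, and later, in the notes to Chapter~\ref{ch:bana-proof}, only remarks that it follows from Ehrhard's inequality (Theorem~\ref{thm:ehrhard}), which is itself quoted without proof.

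That implicit route is short. For closed $A$, write $A_\delta=\lambda(\lambda^{-1}A)+(1-\lambda)\big(\tfrac{\delta}{1-\lambda}B_{\ell_2^n}\big)$, apply Ehrhard, and let $\lambda\to1$, using $\gamma^n(\lambda^{-1}A)\to\gamma^n(A)$ and $\Phi^{-1}(\gamma^n(rB_{\ell_2^n}))/r\to1$ as $r\to\infty$. This gives the integrated statement $t(A_\delta)\ge t(A)+\delta$ directly, with no boundary-measure or Dini-derivative step. It also reuses a tool the book needs anyway for convexity of Gaussian symmetrizations. The cost is that Ehrhard's inequality is much deeper than what it is used for here.

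Your route is self-contained by comparison. The sharp Bobkov inequality needs only $II''=-1$ and the Ornstein--Uhlenbeck Bochner identity. You pass from it to $\gamma^+(A)\ge I(\gamma^n(A))$, and the integration step is sound precisely because $u$ is nondecreasing, so it has no downward jumps, which is what the Dini lemma requires. As a bonus you obtain the functional inequality, and your fallback through the two-point inequality and the CLT even gives a discrete-cube version.

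Your pointwise inequality does hold, but be precise about what it uses. Set $\Xi=I(G)^2+\|\nabla G\|^2$ and use $\Gamma_2(G)=\|\nabla^2G\|_{\mathrm{HS}}^2+\|\nabla G\|^2$. The $-\|\nabla G\|^2$ coming from $II''=-1$ cancels the curvature term. What remains is $\Xi\,\big(I'(G)^2\|\nabla G\|^2+\|\nabla^2G\|_{\mathrm{HS}}^2\big)\ge\|I(G)I'(G)\nabla G+\nabla^2G\,\nabla G\|^2$, which follows from two applications of Cauchy--Schwarz. So you need the full Bochner identity, not just $\Gamma_2\ge\Gamma$. The coefficient $1$ on $\|\nabla G\|^2$ is exactly the one that curvature $1$ permits.

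Two small repairs are needed:
\begin{enumerate}
\item $\Psi(t)=P_t\big(\sqrt{I(f)^2+\|\nabla f\|^2}\big)$ is not the right-hand side itself. It tends to it as $t\to\infty$; alternatively, integrate against $\gamma^n$ and use invariance.
\item Replace $f$ by $(1-2\eta)f+\eta$, or note that $P_{t-s}f\in(0,1)$ for $s<t$, so that $I$ is only differentiated where it is smooth.
\end{enumerate}
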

This gives the following key fact --- if $\gamma^n(A)$ is not too small, then the measure concentrates close to $A$.
More quantitatively,

\begin{lemma} 
 \label{lm:ch4:measure-conc}
 Let $A\subset \R^n$ be a set with $\gamma^n(A) = \exp(-\beta n) \leq 1/2$. Then,
   $\Pr_{g\sim N(0,I_n)}[d(g,A) \geq  (8\beta n)^{1/2} ] < \exp(-\beta n)$.
\end{lemma}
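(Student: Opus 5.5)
We will apply the GI inequality (Theorem~\ref{thm:gii}) to $A$ and translate the resulting bound on $t(A_\delta)$ into a Gaussian tail bound. Set $t_0 := t(A) = \Phi^{-1}(\gamma^n(A))$. Since $\gamma^n(A) \le 1/2$, we have $t_0 \le 0$. The event $d(g,A) \ge \delta$ is, up to the boundary, the complement of $A_\delta$. So for $\delta := (8\beta n)^{1/2}$ it suffices to show
\[
1 - \gamma^n(A_\delta) \le 1-\Phi(t_0+\delta) < \exp(-\beta n).
\]
The first inequality is exactly Theorem~\ref{thm:gii}. The plan is therefore to control $t_0$ from below and then use the standard Gaussian tail bound for $1-\Phi$.

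Lower bound on $t_0$. We have $\Phi(t_0) = \exp(-\beta n)$ with $t_0 \le 0$. By symmetry, $1-\Phi(|t_0|) = \exp(-\beta n)$. The tail bound $1-\Phi(s) \le \exp(-s^2/2)$ for $s \ge 0$ (the bound quoted before Lemma~\ref{ch4:lm-martingale-bound}, without the factor 2, which holds for one-sided tails) then gives $\exp(-\beta n) \le \exp(-t_0^2/2)$. Hence $|t_0| \le (2\beta n)^{1/2}$.

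Upper tail at $t_0+\delta$. Now
\[
t_0 + \delta \ge (8\beta n)^{1/2} - (2\beta n)^{1/2} = (2\beta n)^{1/2} \ge 0.
\]
Applying the tail bound again,
\[
1-\Phi(t_0+\delta) \le \exp\left(-(t_0+\delta)^2/2\right) \le \exp(-\beta n).
\]
To obtain the strict inequality claimed, we either use the strict version of the tail bound for $s>0$ (when $\beta>0$, we have $t_0+\delta>0$), or observe that $\Pr[d(g,A)\ge\delta]$ could differ from $1-\gamma^n(A_\delta)$ only on the boundary of $A_\delta$. The cleanest route is to apply GI with a slightly smaller $\delta'<\delta$ and take the limit.

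The only step requiring care is the one-sided tail bound $1-\Phi(s)\le e^{-s^2/2}$ (Chernoff: $\E e^{\lambda g}=e^{\lambda^2/2}$ with $\lambda=s$ gives this directly) together with the strictness. Neither is a real obstacle, so the argument is essentially a two-line computation once GI is invoked.
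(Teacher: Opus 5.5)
Your argument is correct and is the paper's proof. As there, you use $\Phi(t)\le e^{-t^2/2}$ for $t\le 0$ to get $t(A)\ge -(2\beta n)^{1/2}$, apply the GI inequality to get $t(A_\delta)\ge (2\beta n)^{1/2}$ for $\delta=(8\beta n)^{1/2}$, and finish with the upper tail bound.

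Your care over strictness and over the event $\{d(g,A)\ge\delta\}$ versus $\R^n\setminus A_\delta=\{d(g,A)>\delta\}$ goes beyond the paper, which only establishes $\gamma^n(A_\delta)\ge 1-e^{-\beta n}$. However, the two fixes you list are not alternatives; you need both. The strict bound $1-\Phi(s)<e^{-s^2/2}$ gives strictness. The $\delta'\uparrow\delta$ limit (or a measure-zero argument for $\{d(g,A)=\delta\}$) passes from $>$ to $\ge$.
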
  
Let us first recall that $\Phi(0)=1/2$, and that  $\Phi(t) \leq \exp(-t^2/2)$ for $t \leq 0$, and by symmetry,  $\Phi(t) \geq 1- \exp(-t^2/2)$ for $t \geq 0$.

   \begin{proof}
Let $\delta := (8\beta n)^{1/2}$. 
We will show that $\gamma^n(A_\delta) \geq 1- \exp(-\beta n)$.
We use the notation in Theorem \ref{thm:gii}.

As $\Phi(t(A)) =\gamma^n(A) =\exp(-\beta n)\leq 1/2$, we have $t(A) \leq 0$ (as $\Phi(0)=1/2$). Using $\Phi(t) \leq \exp(-t^2/2)$ for $t \leq 0$, this gives $\exp(-\beta n) \leq \exp(-t(A)^2/2)$, and hence $t(A) \geq -(2\beta n)^{1/2}$.

By Theorem \ref{thm:gii}, $t(A_\delta) \geq -(2\beta n)^{1/2} + \delta = (2\beta n)^{1/2}$, and as $\Phi(t) \geq 1-\exp(-t^2/2)$ for $t>0$, this gives 
$\gamma^n(A_\delta) \geq 1- \exp(-\beta n)$.
\end{proof}

We now get back to proving Theorem \ref{thm:rothvoss-convex-discrepancy}.
We first show that the objective value $v^*=\|x^*-g\|_2$ in $\eqref{eq:rothvoss-algorithm}$ is typically quite large. 
 \begin{lemma}
 \label{lm:ch4:large-value}
        $\Pr_{g \sim N(0,I_n)}[v^* \leq  \sqrt{n}/6 ] = \exp(-\Omega(n))$.
        
    \end{lemma}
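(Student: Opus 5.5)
The plan is to show that $g$ is, with overwhelming probability, far from the whole cube $[-1,1]^n$. Since $x^* \in K\cap[-1,1]^n \subseteq [-1,1]^n$, we have $v^* \ge d(g,[-1,1]^n)$, so it suffices to prove
\[
\Pr\big[d(g,[-1,1]^n) \le \sqrt{n}/6\big] = \exp(-\Omega(n)).
\]
Note that $K$ plays no role here, which is consistent with the constant $c$ in observation (i) being independent of $\epsilon$.

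The distance to the cube splits coordinatewise. We have $d(g,[-1,1]^n)^2 = \sum_{i=1}^n (|g(i)|-1)_+^2$, because projecting onto a product of intervals clips each coordinate separately. The summands $Y_i := (|g(i)|-1)_+^2$ are i.i.d. with mean $\mu := \E Y_1 = 2\int_1^\infty (s-1)^2 \phi(s)\,ds$, where $\phi$ is the standard normal density. The one computation that needs actual numbers is checking that $\mu$ exceeds $1/36$ by a constant margin. I expect $\mu \approx 0.16$. A cheap lower bound is $\mu \ge \Pr[|g|\ge 2]\cdot 1 \approx 0.045 > 1/36 \approx 0.028$, which already suffices.

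Next, lower-tail concentration for the nonnegative sum. Since the $Y_i$ are unbounded above, I would avoid Hoeffding and use a Chernoff bound on the lower tail. Fix $\beta>0$. By Markov's inequality applied to $e^{-\beta\sum_i Y_i}$,
\[
\Pr\Big[\sum_i Y_i \le n/36\Big] \le e^{\beta n/36}\,\big(\E e^{-\beta Y_1}\big)^n .
\]
Because $Y_1 \ge 0$, we have $e^{-\beta Y_1} \le 1 - \beta Y_1 + \beta^2 Y_1^2/2$, and hence $\E e^{-\beta Y_1} \le \exp(-\beta\mu + \beta^2 \E Y_1^2/2)$. Here $\E Y_1^2$ is a finite constant, since Gaussian moments are finite. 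Choosing $\beta$ to be a small constant then makes the exponent $-n\big(\beta(\mu - 1/36) - O(\beta^2)\big) = -\Omega(n)$.

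An alternative is to replace $Y_i$ by the bounded indicators $1\{|g(i)|\ge 2\}$, so that $\sum_i Y_i \ge \#\{i : |g(i)|\ge 2\}$. A plain Hoeffding or Chernoff bound then shows that this count is at least $n/36$ except with probability $e^{-\Omega(n)}$, because its mean $0.045n$ exceeds $n/36$ by a constant fraction. This route is probably the cleanest, and it avoids any moment-generating-function computation.

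The main obstacle is purely numerical: confirming that the mean of $(|g|-1)_+^2$ beats $1/36$ with room to spare. The indicator trick settles this with a single Gaussian tail value, $\Pr[|g|\ge 2]\approx 0.0455$. After that, everything reduces to a standard Chernoff bound for a sum of i.i.d. bounded variables.
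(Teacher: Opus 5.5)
Your proposal is correct and matches the paper's proof. In both, $v^*$ is bounded below by $d(g,[-1,1]^n)$, each coordinate with $|g(i)|\ge 2$ contributes at least $1$ to the squared distance (with probability about $0.0455 \ge 1/25 > 1/36$), and a Chernoff bound finishes; your moment-generating-function argument for the unbounded summands $(|g(i)|-1)_+^2$ is also valid but not needed, since the indicator route you describe is exactly the one the paper takes.
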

\begin{proof}
As the domain $K \cap [-1,1]^n$ of $x$ in \eqref{eq:rothvoss-algorithm} is contained in  $[-1,1]^n$, it suffices to show that $\Pr[d(g,[-1,1]^n) \leq \sqrt{n}/6] = \exp(-\Omega(n))$.

Consider the squared  distance $d^2(g,[-1,1]^n) $.
Then each coordinate $i$ with $|g(i)|\geq 2$ contributes at least $1$, and this happens independently with probability $2 \gamma^1([2,\infty)) \geq 1/25$ for each $i$. Thus by standard Chernoff bounds $\Pr[d^2(g,[-1,1]^n) < n/36] = \exp(-\Omega(n))$.
\end{proof}

 \paragraph{Lower bound on the number of $\pm 1$ coordinates in $x^*$.}
To get a handle on when $x^*$ has few $\pm 1$ coordinates, we need a basic observation about convex programs.


For a subset of coordinates $I \subseteq [n]$,
define the set $S(I):= 
[-1,1]^I \times \R^{[n]\setminus I}$. In particular, $S([n])=[-1,1]^n$.  We have the following.
\begin{lemma}
\label{lm:ch4-rothvoss-convex}
    Fix some $g \in \R^n$. Let $x_g^*$ be the (unique) solution to \eqref{eq:rothvoss-algorithm},\footnote{We put the subscript $g$ to emphasize the dependence on $g$.} and let $I_g = \{i \in [n]: x_g^*(i) \in \{-1,1\}\}$ be the set of coordinates for which the constraints $x(i) \in [-1,1]$ are tight at $x_g^*$. Then \[\arg\min\{ \|x-g\|_2: x \in K \cap S(I_g)\} = x_g^*.\]

In other words, the optimizer to the convex program \eqref{eq:rothvoss-algorithm}  does not change upon dropping the constraints $x(i) \in [-1,1]$ for $i \notin I_g$.
\end{lemma}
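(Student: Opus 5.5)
The plan is to argue by contradiction using only convexity of the feasible regions and strict convexity of the objective. Write $P \eqdef K\cap[-1,1]^n$ and $Q \eqdef K\cap S(I_g)$. Since $[-1,1]^n = S([n]) \subseteq S(I_g)$, we have $P\subseteq Q$, and $x_g^*\in P\subseteq Q$. The set $Q$ is closed and convex, and the objective $\|x-g\|_2$ is strictly convex, as noted after \eqref{eq:rothvoss-algorithm}. So the minimizer $y^*$ of $\|x-g\|_2$ over $Q$ exists and is unique, and it suffices to show $y^* = x_g^*$.

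Suppose instead that $y^*\neq x_g^*$. Then $\|y^*-g\|_2 < \|x_g^*-g\|_2$, because $x_g^*\in Q$ and the minimizer over $Q$ is unique. For $\lambda\in(0,1]$, consider the points $z_\lambda \eqdef (1-\lambda)x_g^* + \lambda y^*$ on the segment between them.
\begin{itemize}
\item Each $z_\lambda$ lies in $Q$ by convexity, and in particular $z_\lambda\in K$.
\item By strict convexity, $\|z_\lambda - g\|_2 < \|x_g^*-g\|_2$ for every $\lambda\in(0,1]$.
\end{itemize}

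The crux is to show that $z_\lambda\in[-1,1]^n$ for all sufficiently small $\lambda>0$. We check the coordinates in two groups.
\begin{itemize}
\item For $i\in I_g$: both $x_g^*(i)$ and $y^*(i)$ lie in $[-1,1]$, the latter because $y^*\in S(I_g)$. Hence $z_\lambda(i)\in[-1,1]$ for every $\lambda\in(0,1]$.
\item For $i\notin I_g$: by definition of $I_g$, $|x_g^*(i)|<1$. Set $\eta \eqdef \min_{i\notin I_g}\bigl(1-|x_g^*(i)|\bigr)>0$, which is a minimum over finitely many positive numbers. Since $|z_\lambda(i)-x_g^*(i)| \le \lambda\|y^*-x_g^*\|_\infty$, choosing $\lambda \le \eta/\|y^*-x_g^*\|_\infty$ gives $|z_\lambda(i)|\le 1$ for all such $i$.
\end{itemize}

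For such a small $\lambda$, the point $z_\lambda$ belongs to $P$ and is strictly closer to $g$ than $x_g^*$. This contradicts the optimality of $x_g^*$ for \eqref{eq:rothvoss-algorithm}, so $y^* = x_g^*$.

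This is the standard fact that non-binding constraints can be dropped near an optimum of a convex program. I expect no real obstacle here. The only points needing care are the existence and uniqueness of the minimizer over $Q$, and the finiteness of the minimum defining $\eta$; the latter is clear since there are only $n$ coordinates.
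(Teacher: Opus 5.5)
Your proof is correct and follows essentially the same argument as the paper: if the minimizer over $K\cap S(I_g)$ differed from $x_g^*$, you move slightly from $x_g^*$ toward it, stay inside $[-1,1]^n$ because the coordinates outside $I_g$ have slack, and contradict the optimality of $x_g^*$. Your explicit choice of $\lambda$ via the slack $\eta$ just makes precise the paper's claim that a suitable interpolation parameter exists.
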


\begin{proof}
Let us denote \[y_g^*=\arg\min\{ \|x-g\|_2: x \in K \cap S(I_g)\},\] and
suppose for the sake of contradiction that $y_g^*\neq x_g^*$.
Then, it must be that  $\|y_g^*-g\|_2 < \|x_g^*-g\|$, by strong convexity and as there are fewer constraints on $y_g^*$ (as $[-1,1]^n \subset  S(I_g^*)$).
Also, clearly, 
$y_g^*(i),x_g^*(i)  \in [-1,1]$ for each $i\in I_g^*$ (as these coordinate constraints are enforced). 

Next, crucially, by the definition of $I_g^*$ we have that $x_g^*(i) \in (0,1)$ for $i\notin I_g^*$.
So even though $y_g^*(i)$ may not lie in $[-1,1]$ for $i \notin I_g^*$, there exists some $\lambda \in (0,1)$ such that the point \[\tilde{x} = \lambda x_g^* + (1-\lambda) y_g^*\] lies in $[-1,1]^n$. As both $x_g^*,y_g^* \in K$ and $K$ is convex, we also have that $\tilde{x} \in K \cap [-1,1]^n$. This implies that $\tilde{x}$ is also a feasible solution to \eqref{eq:rothvoss-algorithm}. However, by strong convexity, \[\|\tilde{x}-g\|^2_2 < \lambda \|x_g^*-g\|_2^2 + (1-\lambda) \|y_g^*-g\|_2^2 <  \|x_g^*-g\|^2_2,\] which contradicts the optimality of $x_g^*$.
\end{proof}

For a subset of coordinates $I \subset [n]$, consider the objective value  
\[  v_g^*(I) := \text{min}\{ \|x-g\|_2: x \in K \cap S(I)\},\]
 in \eqref{eq:rothvoss-algorithm} when the constraints $x(i) \in [-1,1]$ are enforced only for $i\in I$. 
 
Clearly, the objective to \eqref{eq:rothvoss-algorithm} satisfies  $v^*_g =  v^*_g([n])\geq v^*_g(I)$ for all $I \subset [n]$ (as the domain $K \cap [-1,1]^n \subset K\cap S(I) $). 
On the other hand, 
Lemma \ref{lm:ch4-rothvoss-convex} gives that  \begin{equation}
    \label{eq:ch-rothvoss-val-relation}
    v^*_g = v^*_g(I_g) \text{ for every $g \in \R^n$}.
\end{equation}

\paragraph{Completing the proof.}
We now show that, with high probability over the choice of $g$, the value $v^*_g(I)$ is small for {\em every} $I$ such that $|I| \leq \delta n$.
By Lemma \ref{lm:ch4:large-value} and \eqref{eq:ch-rothvoss-val-relation}, this would imply that $|I_g|> \delta n $ with high probability, giving Theorem \ref{thm:rothvoss-convex-discrepancy}.
\begin{lemma}
For any fixed subset $I \subset [n]$ with $|I|\leq \delta n$, 
    \[\Pr_g[ v_g^*(I) \geq \sqrt{8(\epsilon+\delta) n}] \leq \exp(-(\epsilon+ \delta) n)   .\] 
\end{lemma}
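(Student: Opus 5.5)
The plan is to apply Lemma~\ref{lm:ch4:measure-conc} to the set $A \eqdef K \cap S(I)$. By definition $v_g^*(I) = d(g, A)$, so it suffices to show $\gamma^n(A) \ge \exp(-(\epsilon+\delta)n)$. If $\gamma^n(A) > 1/2$, then $\gamma^n(A)$ is in particular at least $\exp(-(\epsilon+\delta)n)$. In that case we can apply the lemma to a subset of $A$ of measure exactly $\exp(-(\epsilon+\delta)n)$, since distance to a subset only increases, or we can note that concentration is even stronger. So the whole task reduces to a lower bound on the Gaussian measure of $K\cap S(I)$.

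To lower bound $\gamma^n(K\cap S(I))$, note that $S(I) = [-1,1]^I\times \R^{[n]\setminus I}$ is a symmetric convex set. It is the intersection of the $|I|$ symmetric slabs $\{x: |x(i)|\le 1\}$, $i\in I$. Applying Sidak's lemma (Lemma~\ref{lm:ch3-sidak}) once per slab, starting from the symmetric convex body $K$, gives
\[
\gamma^n(K\cap S(I)) \ge \gamma^n(K)\,\gamma^1([-1,1])^{|I|} \ge e^{-\epsilon n}\, \gamma^1([-1,1])^{\delta n}.
\]
Royen's inequality (Theorem~\ref{thm:ch3-gauss-cor}) would give the same bound in one step. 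Since $\gamma^1([-1,1]) \approx 0.68 > e^{-1}$, this is at least $e^{-\epsilon n}e^{-\delta n} = \exp(-(\epsilon+\delta)n)$. This is the only place where symmetry and convexity of $K$ are used.

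Write $\gamma^n(A) = \exp(-\beta n)$ with $\beta \le \epsilon+\delta$. If $\beta$ happens to be smaller, the statement still follows by monotonicity: pick a measurable subset $A'\subseteq A$ with $\gamma^n(A') = \exp(-(\epsilon+\delta)n)$, which is at most $1/2$ for any positive $\epsilon+\delta$ and large $n$. Then $d(g,A)\le d(g,A')$, and Lemma~\ref{lm:ch4:measure-conc} with $\beta = \epsilon+\delta$ gives $\Pr[d(g,A') \ge \sqrt{8(\epsilon+\delta)n}] < \exp(-(\epsilon+\delta)n)$, which is the claim.

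The only mildly delicate points are the existence of such a subset $A'$ (continuity of Gaussian measure, e.g. intersecting with growing balls) and the constant $\gamma^1([-1,1])\ge e^{-1}$. The main conceptual step is recognizing that the correlation inequality controls the cost of adding the $|I|$ cube constraints.
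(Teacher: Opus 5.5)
Your proposal is correct and follows the paper's proof: apply Lemma~\ref{lm:ch4:measure-conc} to $A = K\cap S(I)$, and lower bound $\gamma^n(A) \ge e^{-\epsilon n}\gamma^1([-1,1])^{|I|} \ge \exp(-(\epsilon+\delta)n)$ via Sidak's lemma (Lemma~\ref{lm:ch3-sidak}) and $\gamma^1([-1,1]) \ge 1/e$. Your detour through a subset $A'\subseteq A$ of measure exactly $\exp(-(\epsilon+\delta)n)$ only makes explicit what the paper leaves implicit, namely that the proof of Lemma~\ref{lm:ch4:measure-conc} needs just the inequality $\gamma^n(A)\ge e^{-\beta n}$, not equality.
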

\begin{proof}
As $v_g^*(I) = d(g,K\cap S(I))$, we apply Lemma \ref{lm:ch4:measure-conc} with $A= K \cap S(I)$.
To bound  $\gamma^n(A)$ from below, note that  $S(I) = \cap_{i\in I} S_i$ is an intersection of strips $S_i = \{x: |x(i)|\leq 1\}$,  and thus by the Sidak's Lemma \ref{lm:ch3-sidak}, 
   \[\gamma^n(K \cap S(I))  \geq \exp(-\epsilon n) \cdot (\gamma^1([-1,1]))^{|I|} \geq \exp(-(\epsilon+ \delta) n),\]
   where we use that $\gamma^1([-1,1]) \geq 0.682 \geq 1/e$ and $|I|\leq \delta n$. 
\end{proof}
Recall that, by standard estimates (e.g., Proposition 3.3.3 in~\cite{guruswami2012essential}), there are at most $2^{H(\delta)n}$ subsets $I \subset [n]$ with $|I|\leq \delta n$, where $H(\delta) = -\delta \log_2 \delta - (1-\delta)\log_2(1-\delta)$ is the binary entropy function. Applying a union bound over all such subsets, we have that,  with probability at least  $1- 2^{H(\delta) n} \exp(-(\epsilon+\delta)n)$, 
 \[  v^*_g(I) \leq \sqrt{8 (\epsilon + \delta)n} \,\,\,\, \text{ for all $I$ with }  |I|\leq \delta n.\]
%
By \eqref{eq:ch-rothvoss-val-relation} this means that 
\[
\Pr[v_g^* \le \sqrt{8 (\epsilon + \delta)n}]
\ge 
1-2^{H(\delta) n} \exp(-(\epsilon + \delta)n)
- \Pr[|I_g| \ge \delta n].
\]
Choosing $\epsilon$ to be a sufficiently small constant, and $\delta \leq c \eps/ \ln (1/\eps)$ for sufficiently small $c$, and using Lemma \ref{lm:ch4:large-value}, we have that $|I_g| \ge \delta n$ with probability exponentially close to $1$. This completes the proof of Theorem \ref{thm:rothvoss-convex-discrepancy}. 

\section{Vector Discrepancy and SDPs}
\label{sec:vec-herdisc}
The partial coloring method can often give close to tight bounds for various general classes of matrices. E.g. any matrix $\mat$ with $m=n$ rows and entries in $[-1,1]$ has $O(\sqrt{n})$ discrepancy.
However, one can ask a stronger question ---
given {\em any} specific matrix $\mat$, can we efficiently find a coloring with discrepancy close to $\disc(\mat)$?

\smallskip 

Unfortunately, this is too much to hope for and discrepancy is very hard to approximate in general. For example, we will prove the following result in Chapter \ref{ch:bana-intro}.
\begin{theorem}
  For a set system $(\SS, [n])$ with $|\SS| = O(n)$ sets, it is $\mathsf{NP}$-hard to distinguish whether  $\disc(\SS) = 0$ or  $\disc(\SS) = \Omega(\sqrt{n})$.
\end{theorem}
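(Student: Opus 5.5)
The plan is a gap-preserving reduction from a hard constraint satisfaction problem, followed by an amplification step that uses a Hadamard matrix to turn a constant fraction of unbalanced sets into $\Omega(\sqrt{n})$ discrepancy.

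\textbf{Starting problem.} We start from a bounded-occurrence variant of the $2$-$2$ set splitting problem, obtained via the PCP theorem and standard gap reductions. An instance is a set system $T_1,\ldots,T_m$ on a universe of size $n$. Every $T_j$ has exactly $4$ elements, and every element lies in at most $B=O(1)$ sets, so $m=\Theta(n)$. It is $\mathsf{NP}$-hard to distinguish two cases:
\begin{itemize}
\item (YES) some coloring $\col\in\{-1,+1\}^n$ satisfies $\col(T_j)=0$ for all $j$;
\item (NO) every coloring has $\col(T_j)\neq 0$ for at least $\eps m$ sets, for a constant $\eps>0$.
\end{itemize}
I will assume this hardness result as a black box. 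The main obstacle is justifying it. It requires a PCP-based gap result for $2$-$2$ splitting, together with expander replacement to bound occurrences. I would try to derive it from gap NAE-3SAT via the standard gadget that adds one fresh variable per clause. Note that in the NO case each unbalanced $T_j$ has $|\col(T_j)|\in\{2,4\}$.

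\textbf{Amplification.} Pad $m$ up to a power of two by adding empty sets, and let $H$ be the $m\times m$ Sylvester Hadamard matrix. Write $H=2A-J$, where $A$ is a $0/1$ matrix and $J$ is the all-ones matrix. For every row $a$ of $A$, put the set $\bigcup_{j:a(j)=1}T_j$ into the new system. To keep these as genuine sets rather than multisets, I would treat them as rows of the matrix $AM_T$, where $M_T$ is the incidence matrix of $T_1,\ldots,T_m$. If genuine sets are required, I would first partition the instance into $O(B^2)$ subfamilies of pairwise disjoint $T_j$'s, using bounded degree and greedy coloring of the intersection graph, and apply the construction to each subfamily separately, aiming for a constant fraction of the unbalanced sets in one subfamily. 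Also add the sets $\bigcup_j T_j$ over each subfamily. The new system has $O(m)=O(n)$ sets on the same $n$ elements.

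\textbf{YES and NO cases.} In the YES case, the splitting coloring gives $\col(T_j)=0$ for every $j$, so every union has discrepancy $0$.

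In the NO case, fix any coloring $\col$ and let $y=(\col(T_j))_j$. Then $y$ has at least $\eps' m$ entries of absolute value at least $2$, so $\|y\|_2\ge 2\sqrt{\eps' m}$. Orthogonality of the rows of $H$ gives $\|Hy\|_2^2=m\|y\|_2^2$, so some row $h$ of $H$ satisfies $|\ip{h,y}|\ge\|y\|_2\gtrsim\sqrt{m}$. Since $Ay=\tfrac12(Hy+Jy)$, either the union indexed by that row of $A$ has $|\col(\cdot)|\gtrsim\sqrt m$, or the full union $\ones^\top y$ does; in both cases discrepancy is $\Omega(\sqrt m)$. Hence $\disc\gtrsim\sqrt{m}=\Omega(\sqrt n)$ for every coloring, while the YES case gives $0$.

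The construction is clearly polynomial time, so it transfers $\mathsf{NP}$-hardness. Apart from the hardness result for the starting problem, the remaining steps are routine.
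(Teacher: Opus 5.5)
Your proposal is correct and follows essentially the same route as the paper: Guruswami's bounded-occurrence $2$-$2$ set splitting hardness is taken as a black box, the instance is greedily partitioned into $O(1)$ subfamilies of pairwise disjoint sets, and each subfamily is amplified with the $0/1$ version $A$ of a Hadamard matrix having an all-ones row, using $\|Hy\|_2^2 = m\|y\|_2^2$ and the case split on $\ones^\top y$ to get $\|Ay\|_\infty \ge \frac13\|y\|_2$. The one correction is that the partition step is mandatory rather than optional: for overlapping $T_j$'s, the discrepancy of $\bigcup_{j: a(j)=1} T_j$ is not the corresponding entry of $Ay$, and $AM_T$ is not the incidence matrix of a set system.
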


This hardness result also suggests that LP or SDP based convex programming relaxations for discrepancy are unlikely to give useful information about the actual discrepancy $\disc(\mat)$.

While this is true in a sense, it turns out that SDPs can be very useful. In fact, the first algorithmic results for partial colorings were based on SDPs and random walks. In general, SDPs gives a flexible and powerful way to ``design'' random walks. Let us begin by exploring relaxations for discrepancy more closely. 

\medskip

\noindent {\bf LP relaxation.}
Recall that a linear program (LP) consists of variables $x(1),\ldots,x(n) \in \R$, viewed as a vector $x = (x(1),\ldots,x(n)) \in \R^n$, and the goal is to optimize some linear objective subject to some linear constraints. LPs can be solved optimally in polynomial time.

Given an $m\times n$ matrix $\mat$ with rows $v_1, \ldots, v_m \in \R^n$, the following is a natural LP relaxation for discrepancy,
\[ \min t \,\,\,\,  \text{ s.t. } \,\,    -t\leq  \ip{v_i ,x} \leq t,  \,\,\, \forall i \in [m]   \,\,\, \text{ and } \, -1 \leq x(j) \leq 1, \,\,\,\forall  j \in [n].\]
However this LP always has the trivial solution $x=\mathbf{0}$ with objective $t=0$, and hence is not at all useful.


\paragraph{Semidefinite Programs (SDPs).} 
These 
are a more general class of optimization problems, and an SDP can be viewed as an LP with
variables of the form $X(i,j)$ for $1 \leq i,j \leq n$, arranged as entries of an $n\times n$ matrix $X$, where we require that $X$ be symmetric and positive semidefinite (PSD), denoted  by $X \succeq 0$.

Formally, for matrices $A,B$, let $\ip{A, B} = \text{Tr}(A^TB) = \sum_{ij} A(i,j) B(i,j)$ denote the usual trace inner product for matrices. A SDP has the form
\[
\max \ip{C,X}   \quad \text{s.t.} \quad  \ip{A_k, X}  \leq  b_k, \quad 1 \leq k \leq m,
\quad X  \succeq  0  \]
where $C,A_1,\ldots,A_m \in \R^{n\times n}$ are arbitrary, and it can be solved in polynomial time to any desired accuracy under mild assumptions. 

Recall that a matrix $X \succeq 0$ is PSD if and only if it is the Gram matrix of some vectors $w_1,\ldots,w_n \in \mathbb{R}^n$, i.e.,~$X(i,j)= \langle w_i,w_j \rangle$. So we can equivalently view SDPs as vector programs --- where the variables are the vectors $w_i$ and we can impose arbitrary linear constraints on their inner products (but not on the $w_i$ themselves).

\medskip

\noindent {\bf SDP relaxation for discrepancy.} 
Given a matrix $\mat \in \R^{m\times n}$, and some target discrepancy bound $\lambda$, consider the following SDP, where the colors $x(j)\in \pm 1$ are allowed to be arbitrary unit vectors, and the analogous quantity for the discrepancy $|\sum_j \mat(i,j) x(j)|$ of row $i$ becomes $\|\sum_j \mat(i,j) w_j\|_2$. See Figure \ref{fig:ch4-sdp} for an example.
\begin{equation}
\label{eq:disc-sdp}
  \left\|\sum_{j} \mat(i,j) w_j\right\|_2^2 \leq\lambda^2 \quad \text{for } i \in [m],  \qquad \|w_j\|_2^2  =  1  \quad {j\in [n].}
\end{equation}
\begin{figure}[hbtp!]
    \centering
\includegraphics[scale=0.88, trim= {0mm 0mm 0mm 5mm}]{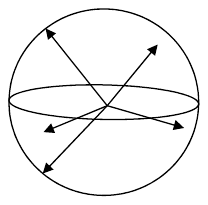}
    \caption{The unit vectors $w_j \in \R^n$ in the SDP solution.}
    \label{fig:ch4-sdp}
\end{figure}

Let us define the vector discrepancy of matrix as follows.
\begin{definition} We define
the vector discrepancy 
    $\vdisc(\mat)$ of a matrix $\mat$ as the smallest $\lambda\geq 0$ for which \eqref{eq:disc-sdp} is feasible.
\end{definition}
Clearly \ref{eq:disc-sdp} implies $\vdisc(\mat) \leq \disc(\mat)$.
However, at first glance, vector colorings do not seem to give useful information. E.g.,~for any matrix $\mat$ with entries in $[-1,1]$ (as in Spencer's theorem~\ref{thm:ch3-spencer}), the solution $w_i=e_i$, the $i$-th standard basis vector, is always feasible with $\lambda = n^{1/2}$.

Interestingly however, this SDP becomes quite useful when $\lambda \ll n^{1/2}$, as it gives non-trivial correlations between the vectors $w_i$ that we can exploit. In particular, even though $\disc(\mat)$ is hard to approximate (as we will prove in Chapter~\ref{ch:gamma2}), we can show the following instance-wise result.

\begin{theorem}
\label{thm:pseduo-alg}
Given any $m\times n$ matrix $\mat$, one can find a coloring with discrepancy $O((\log m \log n)^{1/2} \herdisc(\mat))$ in polynomial time.
\end{theorem}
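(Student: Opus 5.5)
The plan is to run the Bansal SDP-guided random walk, in the spirit of the Lovett--Meka algorithm of Section~\ref{sec:lm}, with the Gaussian increment replaced by one shaped by a vector coloring. The starting observation is that hereditary discrepancy upper bounds vector discrepancy on every column subset. For any $J\subseteq[n]$ we have $\vdisc(\mat(*,J)) \le \disc(\mat(*,J)) \le \herdisc(\mat)=:\lambda$. So at any point, for the current set $A_t$ of alive (not yet fixed) coordinates, we can solve the SDP~\eqref{eq:disc-sdp} for $\mat(*,A_t)$ with bound $\lambda$. This gives unit vectors $w_j$, $j\in A_t$, with $\|\sum_{j\in A_t}\mat(i,j)w_j\|_2\le\lambda$ for every row $i$.

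Each step of the walk then does the following.
\begin{enumerate}
\item Sample a standard Gaussian $r$.
\item Move each alive coordinate by $x_t(j)=x_{t-1}(j)+\gamma\ip{w_j,r}$, with a small step size $\gamma$.
\item Freeze a coordinate once $|x_t(j)|\ge 1-\delta$, as in Section~\ref{sec:lm}.
\end{enumerate}
Each coordinate's increment is then $N(0,\gamma^2)$, because $\|w_j\|_2=1$. The increment of row $i$ is $\gamma\ip{\sum_j \mat(i,j)w_j, r}$, which is Gaussian with variance at most $\gamma^2\lambda^2$. After the final fractional coordinates are rounded off trivially (this costs $O(\delta n\max|\mat(i,j)|)$, which is negligible for $\delta=1/\poly$), the output is a full coloring.

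The discrepancy analysis uses Lemma~\ref{ch4:lm-martingale-bound}. Over $\ell$ steps, $\ip{v_i,x_\ell}$ is a martingale with Gaussian increments of variance at most $\gamma^2\lambda^2$. It therefore exceeds $O(\gamma\sqrt{\ell\log m})\,\lambda$ with probability at most $1/m^{2}$, and a union bound over the rows handles all of them at once. This stays valid even though the SDP is re-solved adaptively at each step, since the bound only needs the conditional variance to be at most $\gamma^2\lambda^2$ given the past. The goal is to take $\gamma^2\ell = O(\log n)$, which gives discrepancy $O(\sqrt{\log m\log n})\,\lambda$, as claimed.

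The main obstacle is the termination bound: showing that $\gamma^2\ell=O(\log n)$ total variance suffices to fix all coordinates with constant probability. The first approach I would try is the energy argument from Lemma~\ref{lem:lm-vl}, now carried out in phases. Each alive coordinate gains variance $\gamma^2$ per step, so $\E\|x_t\|_2^2$ grows by $\gamma^2|A_t|$ per step while staying at most $n$. Consequently, within $O(1/\gamma^2)$ steps a constant fraction of the currently alive coordinates must freeze in expectation; a Markov argument, applied per phase, gives this with constant probability. Repeating this over $O(\log n)$ phases, in each of which the alive set shrinks by a constant factor, gives total variance $O(\log n)$.

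The delicate point is the per-phase energy bound. The quantity to control is $\sum_{j\in A}(1-x(j)^2)$ over the current alive set $A$, rather than $n-\|x\|^2$ globally; it must be shown that this quantity decreases at rate $\gamma^2|A|$ and is at most $|A|$ at the start of the phase. It also has to be checked that freezing at $1-\delta$ and overshoot are controlled, exactly as in Observation~\ref{property:lm-2} of the Lovett--Meka analysis.
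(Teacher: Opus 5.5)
Your proposal is correct and is essentially the paper's proof. It uses the same SDP-guided Gaussian walk with $\lambda=\herdisc(\mat)$, which is feasible since $\vdisc(\mat(*,A_t))\le\disc(\mat(*,A_t))\le\herdisc(\mat)$; because $\herdisc(\mat)$ is not known, in practice $\lambda$ is found by searching, since infeasibility at any step certifies $\lambda<\herdisc(\mat)$. It then applies the same martingale bound from Lemma~\ref{ch4:lm-martingale-bound} with $\gamma^2\ell=O(\log n)$ and the same cheap final rounding.

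The one difference is termination, where the paper avoids your ``delicate point'' by running the energy argument one coordinate at a time. Each alive $x_t(j)$ is a martingale with $N(0,\gamma^2)$ increments, so from any starting value it survives a block of $2/\gamma^2$ steps with probability at most $1/2$. Hence it survives all $2\log n$ blocks with probability at most $1/n^2$, and a union bound over $j$ finishes. Your phase version also works, but it is cleanest if you iterate $\E[|A_{\mathrm{end}}|\mid \text{past}]\le |A_{\mathrm{start}}|/4$ across phases rather than use a per-phase Markov bound.
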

We first describe the algorithm and then prove Theorem \ref{thm:pseduo-alg}.
\subsection{Algorithm}
At a high level, the algorithm performs a random walk as in the Lovett-Meka algorithm. However, instead of taking a random step in a subspace, the random increments for coordinates are carefully correlated so that the discrepancy increment for each row of $\mat$ is small. In particular, at each step it solves the SDP in \eqref{eq:disc-sdp}  (restricted to coordinates with colors not yet $\pm 1$) with $\lambda = \herdisc(\mat)$, and generates a step of the walk by using this solution as the covariance matrix.  
We now describe this formally.



\paragraph{ Algorithm.} Fix a small step size $\gamma = \max_{i,j}|\mat(i,j)|/n^2$  and let $\delta = 4 \gamma \log(2mn/\gamma)$. 
Let $\lambda = \herdisc(\mat)$.\footnote{While we do not know $\herdisc(\mat)$, we can do a binary search on $\lambda$.}  

Let $x_{t-1}$ denote the coloring and let $F_t = \{j: |x_{t-1}(j)| > 1-\delta\}$ be the set of fixed elements at the end of time step $t-1$. 
Initialize $x_0(j) =0$ for $j\in [n]$ and $F_1= \emptyset$. 

\medskip 

 \noindent At each time $t=1,2,\ldots,\ell = (4 \log n)/\gamma^2$, do the following. 
 
\smallskip 

(i)  Solve the SDP
 \begin{align*}
     & \|\sum_{j} \mat(i,j) w^t_j\|_2^2  \leq   \lambda^2 \quad \forall i\in [m],\\
     &  \|w^t_j\|_2^2  =   1   \text{ if }  j \not\in F_t,\text{ and }
   \| w^t_j\|_2^2 =  0 \text{ otherwise}. 
 \end{align*}
 

(ii) Sample $g_t \sim N(0,I_n)$, and set 
\[x_t(j) = x_{t-1}(j) + \gamma \langle g_t,w^t_j \rangle \text{ for each $j\in [n]$}.\]
\,\,\,\,\,\,\,\,\, Set $F_{t+1} = \{j: |x_t(j)| > 1-\delta\}$.

\medskip 
\noindent Output the final coloring $x(j)=\text{sign}(x_{\ell}(j))$.

\subsection{Analysis}
The ideas are similar to those in Section \ref{sec:lm}.

First, note that the SDP is always feasible at each time $t$. This is because we set $\lambda = \herdisc(\mat)$, and hence  $\disc(\mat(*,[n]\setminus F_t)) \le \lambda.$ 

Let us now consider how the colorings $x_t$ and the row discrepancies evolve.
Fix some element $j$. Its color $x_t(j)$ starts at $0$ and evolves as a martingale with updates 
\(\Delta x_t(j)= \gamma \langle w^t_j,g_t\rangle,\) 
which are distributed as a Gaussian with mean $0$ and variance $\gamma^2 \|w^t_j\|^2$.
Similarly, for the $i$-th row $v_i$ of $\mat$, its discrepancy $x_t(v_i) := \sum_{j} \mat(i,j) x_t(j)$ is $0$ at $t=0$, and evolves as a martingale with increments \[\sum_j \mat(i,j) \Delta x_t(j)  = \sum_{j} \gamma \langle g_t,\sum_j \mat(i,j) w^t_j \rangle,\]
where the right hand side is a Gaussian with mean $0$ and variance equal to \(\gamma^2 \left\|\sum_j \mat(i,j) w^t_j\right\|^2.\)

\medskip
\noindent {\bf The discrepancy bound.}
 Fix some row $i$. As $\| \sum_j \mat(i,j) w^t_j\|^2 \leq \lambda^2$ by the SDP constraint, the discrepancy increment  $\Delta x_t(v_i)$ is a Gaussian with mean $0$ and variance bounded above by $\gamma^2 \lambda^2$. 
So 
by Lemma \ref{ch4:lm-martingale-bound}, 
\[
\Pr [ |x_\ell(v_i)| \geq \ell^{1/2}  \gamma \lambda \cdot c (\log m)^{1/2}] \leq  2\exp(-(c^2 \log m)/2).\]
Plugging $\ell = (4 \log n)/\gamma^2$ and choosing $c$ large enough, a union bound over the $m$ rows gives that $\disc(\mat,\col) = O(\lambda\, (\log m \log n)^{1/2})$ whp.

\medskip
\noindent {\bf All elements become fixed.}
As long as $j \not \in F_t$ is alive, the SDP constraint ensures that 
$\|w^t_j\|_2=1$. So, $\Delta x_t(j)$ is a mean $0$ Gaussian with variance $\gamma^2$, and at each step,
$x_{t}^2(j)$ increases  on average by \[\E[x_{t}^2(j)|x_{t-1}(j)] - x^2_{t-1}(j)  = \E[(\Delta x_t(j))^2|x_{t-1}] =\gamma^2.\] 
A similar argument as in Lemma \ref{lem:lm-vl} gives that $x_t(j)$ will reach $\pm 1-\delta$ in
$(2/\gamma^2)$ steps with probability at least $1/2$ (irrespective of where it started).
Dividing $\ell = 4\log n/\gamma^2$ into $2 \log n$ intervals of size $2/\gamma^2$, the probability that $j$ stays active (i.e., is not fixed) till the end of the algorithm is at most $1/n^2$. The result then follows by a union bound over the $n$ elements.

\medskip
\noindent
{\bf Rounding Error.}
We condition on the event that at the end of the algorithm all elements are fixed, i.e., $F_{\ell+1} = [n]$.
As a color is not updated once $|x_t(j)|\geq 1-\delta$, our choice of $\delta$ ensures that  $|x_\ell(j)| \leq 1$ for all $j$ with high probability.
So, rounding the $x_\ell(j)$ to $\pm 1$ at the end incurs error at most $n \delta =  o(\max_{ij}|\mat(i,j)|)$ per row, which is negligible as $\herdisc(\mat) \geq \max_{ij} |\mat(i,j)|$.

\section{Bibliographic Notes}
\label{sec:ch4-notes}
The first algorithmic approach for partial coloring was given by Bansal \cite{B10}, who gave efficient algorithms for various applications such as the $O(n^{1/2})$ bound for Spencer's problem with $m=O(n)$ sets, and the $O(t^{1/2} \log n)$ bound for the Beck-Fiala problem. His approach was based on SDPs and random walks. Theorem \ref{thm:pseduo-alg} is also from \cite{B10}. 

Even though the approach of \cite{B10} gave efficient algorithms, the algorithms relied on the existence of partial coloring as given by Theorem \ref{lm:ch3-partial} to argue the feasibility of the underlying SDPs, and hence was not fully constructive.
The first truly constructive algorithm for the partial coloring method was given by Lovett and Meka \cite{lovettmeka}, which we described in Section \ref{sec:lm},  

The algorithm in Section \ref{sec:ch4-rothvoss}, for general convex bodies, is due to Rothvoss \cite{Rothvoss17}.
Another elegant algorithm, that also works for general symmetric convex bodies $K$, is due to Eldan and Singh \cite{EldanS18}. This algorithm is also very simple to state ----- Pick a uniformly random direction $\theta \in \R^n$ and solve the linear program $\max \ip{\theta,x}$ subject to $x \in K \cap [-1,1]$. 

The Gaussian Isoperimetric inequality was proved by Sudakov and Tsirelson \cite{sudakov-tsirelson78} and independently by Borell \cite{Borell75}. Several other proofs are now known and we refer to the note by Ledoux \cite{Ledoux-survey}. We will see a generalization of this inequality called  Ehrhard's inequality in Chapter \ref{ch:bana-proof}.

Interestingly Rothvoss's algorithm enjoys a guarantee similar to Theorem~\ref{thm:pseduo-alg}. Dadush, Nikolov, Talwar, and Tomczak-Jaegermann showed that, using Rothvoss's algorithm with $K = \{\col \in \R^n: \|\mat \col\|_{\infty} \le C\herdisc(\mat)\}$ for a large enough constant $C$, we get a partial coloring with discrepancy $O(\herdisc(\mat))$. We can then extend the partial coloring to a full coloring $\col$ with discrepancy $O(\log(n)\herdisc(\mat))$~\cite{anynorm}. They also showed that this holds more generally for discrepancy measured in any norm.

The SDP-based approach in Section \ref{sec:vec-herdisc} was also used in \cite{BDG16}, with some additional constraints, to obtain the first algorithmic bound for the Koml\'{o}s problem, matching Banaszczyk's non-constructive  bound \cite{Bana98}. We will see Banaszczyk's method in Chapter \ref{ch:bana-intro} and the algorithm of \cite{BDG16} in Chapter \ref{ch:algo-komlos-bana}.

Several other algorithmic approaches have also been developed. 
These are typically stated for specific problems, but often the underlying idea is more generally applicable.
Harvey, Schwartz and Singh \cite{HSS14}  developed an algorithm based on inscribing ellipsoids and random walks.  
Rothvoss and Reis \cite{ReisR23} consider another variant that combines ideas from the algorithms in Section \ref{sec:lm} and \ref{sec:ch4-rothvoss}.
More recently, deterministic algorithms based on 
barrier potential functions \cite{BansalLV22} and regularized optimization \cite{PesentiV23} have also been developed. Remarkably, the approach of \cite{PesentiV23}, in addition to being algorithmic, also improves the constant for Spencer's problem for $m=n$ sets from  $5.32$ in \cite{spencer1985} to $3.67$.

Beyond discrepancy, these approaches have also led to several new results in areas such as approximation algorithms, computational geometry, machine learning, differential privacy and graph sparsification. We do not discuss these here due to lack of space, but would like to highlight the breakthrough result of Rothvoss \cite{Rothvoss16} and Hoberg and Rothvoss  \cite{HobergR19}, who used these ideas to obtain an additive $O(\log n)$ approximation for the classical bin packing problem, improving on the long-standing $O(\log^2 n)$ bound by Karmarkar and Karp \cite{KarmarkarK82}. For a general connection between discrepancy and rounding techniques, we refer to \cite{Bansal19}.

There has also been a lot of recent interest in designing discrepancy algorithms with fast running times \cite{JainSS23, JambulapatiRT24}. In Chapter \ref{ch:online}, we will see a beautiful algorithm due to Alweiss, Liu and Sawhney \cite{alweiss2020discrepancy}, that runs in time linear in $\text{nnz}(\mat)$, the number of non-zero entries of $\mat$, and also works in the online setting.

Matou\v{s}ek \cite{Matousek-detlb} used the connection between vector discrepancy and hereditary discrepancy in Section \ref{sec:vec-herdisc}, together with SDP duality, to show a remarkable result that the hereditary discrepancy of any matrix $\mat$ is tightly characterized by the so-called determinant lower bound (see Chapter \ref{ch:gamma2} for the definition). The lower bound  $\detlb(\mat) \leq \herdisc(\mat)$ was shown by Lov\'asz, Spencer and Vesztergombi~\cite{LSV}, and we give an exposition of their proof in Chapter~\ref{ch:gamma2}. Matou\v{s}ek showed the upper bound $\herdisc(\mat) \leq O(\log n)^{3/2} \detlb(\mat)$.
This was further improved by Jiang and Reis \cite{JiangReis22} to a logarithmic bound, which is also the best possible~\cite{LiNikolov24}.
We will explore some related ideas further in Chapter \ref{ch:gamma2}.

\chapter{Banaszczyk's Method }\label{ch:bana-intro}

A problem with the partial coloring method is that it requires $O(\log
n)$ rounds to obtain a full coloring, often resulting in an extra
$O(\log n)$ factor loss in the overall discrepancy bound. For example,
for the Koml\'os problem  (Problem~\ref{prob:komlos}), we incurred $O(\log n)$ discrepancy in
Exercise~\ref{ex:ch3-komlos}, even though there exists a partial
coloring with  $O(1)$ discrepancy.

In a remarkable result~\cite{Bana98}, Banaszczyk gave a different way
to find a full coloring directly, which often gives better
bounds\footnote{Though it does not seem to be directly comparable with
  the partial coloring method. For example, we do not know how to
  obtain the $O(\sqrt{n})$ for Spencer's problem using Banaszczyk's
  result}, and, in particular, improves the bound 
to $O(\sqrt{\log n})$. 
This was the best known bound for the Koml\'os problem until very recently \cite{bansaljiang2025}.

 We now state Banaszczyk's theorem. 
 In  addition to giving an improved bound for the Koml\'os problem,
 it gives a powerful and general geometric result which has many other
 surprising applications. We  will discuss some of them in this chapter, and others later in the book. 

\begin{theorem}[Banaszczyk~\cite{Bana98}]\label{thm:bana}
Let $K$ be an arbitrary closed convex set in $\mathbb{R}^m$ with Gaussian measure $\gamma^m(K)\ge 1/2$, and let $v_1,\dots,v_n\in\mathbb{R}^m$ be arbitrary vectors with length $\|v_i\|_2$  at most $1/5$. Then there exist signs $x(1),\ldots,x(n) \in \{-1,1\}$ such that $\sum_{i=1}^n x(i) v_i \in K$. 
\end{theorem}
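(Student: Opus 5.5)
The plan is to argue by induction on $n$, peeling off one vector at a time. Given $K$ and $v_n$, I want a new closed convex set $K'$ with two properties. First, $\gamma^m(K')\ge \gamma^m(K)\ge 1/2$. Second, every $y\in K'$ satisfies $y+v_n\in K$ or $y-v_n\in K$, i.e.
\[
K'\subseteq (K-v_n)\cup(K+v_n).
\]
Applying the induction hypothesis to $K'$ and $v_1,\dots,v_{n-1}$ gives signs with $y=\sum_{i<n}x(i)v_i\in K'$. Choosing $x(n)$ according to which of $y\pm v_n$ lies in $K$ then finishes the step. The base case $n=0$ asks that $0\in K$. This holds because a closed convex set of Gaussian measure at least $1/2$ must contain the origin: otherwise a separating halfspace not containing $0$ would contain $K$, and such a halfspace has measure $<1/2$.

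The whole proof therefore rests on one geometric lemma: for closed convex $K$ with $\gamma^m(K)\ge 1/2$ and $\|u\|_2\le 1/5$, there is a closed convex $K*u\subseteq (K-u)\cup(K+u)$ with $\gamma^m(K*u)\ge\gamma^m(K)$.

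To construct $K*u$, I would rotate so that $u=s e_m$ with $s=\|u\|_2$ and slice $K$ by lines parallel to $u$. Each nonempty section is an interval $[a(y),b(y)]$, with $a$ convex and $b$ concave in $y\in\R^{m-1}$. The union $(K-u)\cup(K+u)$ has the full section $[a-s,b+s]$ exactly where $b-a\ge 2s$. I would take $K*u$ to be the set whose section over such $y$ is $[a(y)-s,\,b(y)+s]$. Over thin parts I would use something smaller, e.g. the convex hull of the thick part intersected with $K\pm u$. I would then check that this set is convex: the region $\{b-a\ge 2s\}$ is convex because $b-a$ is concave.

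The measure comparison is where the hypothesis $\gamma^m(K)\ge 1/2$ and the constant $1/5$ enter. In one dimension, an interval $[a,b]$ gains Gaussian mass by being widened to $[a-s,b+s]$ when it is thick. It loses mass when the interval is thin and is dropped or shrunk. So the task is to show that the gain on thick sections dominates the loss on thin ones.

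I expect the main obstacle to be proving this measure inequality. My first attempt would be to show that, since $\gamma^m(K)\ge 1/2$, the Gaussian mass of $K$ is concentrated on thick sections: most of the measure sits where the section length is large compared with $s\le 1/5$. I would control the thin region via the log-concavity of section measures, as in \eqref{eq:ch3-logc}, together with the Gaussian isoperimetric inequality, Theorem~\ref{thm:gii}, in the direction $u$. The widening gain on each thick section is roughly $s(\phi(a)+\phi(b))$. I would compare this with the loss by a one-dimensional calculus estimate, and the constant $1/5$ should be exactly what makes these numbers balance.

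If this direct estimate fails, the fallback is an Ehrhard-type inequality (the Gaussian analogue of Brunn--Minkowski for $\Phi^{-1}\circ\gamma$). With it I would compare $\gamma^m(K*u)$ with $\gamma^m(K)$ by viewing $K$ as a midpoint combination of $K-u$ and $K+u$.
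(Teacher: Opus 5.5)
Your reduction is exactly the paper's. The base case goes by separation, and the induction goes through a body $K*u\subseteq (K-u)\cup(K+u)$ with $\gamma^m(K*u)\ge\gamma^m(K)$ (Theorem~\ref{thm:bana-star}). Even your $K*u$ is the paper's: thick fibers are widened to $[a-s,b+s]$ and thin fibers are discarded. Nothing needs to be kept over thin fibers, since the thick part alone is already convex because $b-a$ is concave.

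But that lemma is the whole content of the theorem, and you have not proved it. The widening gain $\approx s(\phi(a)+\phi(b))$ lives on thick fibers and the loss on thin fibers, i.e.\ over different $y$. So no fiberwise one-dimensional calculus estimate can close the argument. You need a global mechanism that turns $\gamma^m(K)\ge 1/2$ into a quantitative comparison between the mass of $K$ over the thin region and the total gain. ``Most of the mass sits on thick sections'' does not supply this, and you give no argument that log-concavity of slice measures as in \eqref{eq:ch3-logc} suffices; the paper needs the stronger concavity of $\Phi^{-1}\circ\gamma$.

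Your fallback goes the wrong way. From $K=\tfrac12(K-u)+\tfrac12(K+u)$, Ehrhard's inequality gives $\Phi^{-1}(\gamma^m(K))\ge\tfrac12\Phi^{-1}(\gamma^m(K-u))+\tfrac12\Phi^{-1}(\gamma^m(K+u))$. That is an upper bound on the translates in terms of $K$, not a lower bound on $\gamma^m(K*u)$.

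The missing idea (Chapter~\ref{ch:bana-proof}) is to use Ehrhard's inequality to \emph{symmetrize}:
\begin{itemize}
\item Replace each fiber $K_y$ along $u$ by the half-line $(-\infty,f_K(y)]$, where $f_K(y)=\Phi^{-1}(\gamma^1(K_y))$. Theorem~\ref{thm:ehrhard} makes $f_K$ concave.
\item One-dimensional isoperimetry gives $f_{K*u}\ge f_K+s$ on thick fibers. Thin fibers satisfy $f_K<-p:=\Phi^{-1}(\gamma^1([-s,s]))$, and $s\le 1/5$ is used exactly to get $p\ge 1$, since $\gamma^1([-1/5,1/5])\le\Phi(-1)$.
\item It then suffices to show that the truncated shift $\{(t,y): f_K(y)\ge -p,\ t\le f_K(y)+s\}$ has at least the measure of $\{(t,y): t\le f_K(y)\}$.
\item A second Ehrhard symmetrization along $u^\perp$ reduces this to the plane, with a concave decreasing boundary $h_K$. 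Replace $h_K$ by its chord through $x=-p$ and $x=-s/2$.
\item Pair each lost left-infinite horizontal segment with one or two gained segments of length $s$. The comparison uses log-concavity of $\overline{\Phi}$ and the numerical fact $\overline{\Phi}(1)^2\ge\overline{\Phi}(2)$.
\item The hypothesis $\gamma^m(K)\ge 1/2$ enters as $h_K(0)\ge 0$. This places each lost segment at least as far from the horizontal axis as its paired gained segments, so the height factors work in your favor.
\end{itemize}
Without this, or an equivalent argument, your proposal is a correct reduction but not a proof.
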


\begin{remark}
    At first glance, the constants $1/2$ and $1/5$ above may seem
    somewhat arbitrary. However, $\gamma^m(K)\geq 1/2$ ensures that
    the origin ${0}$ is in $K$, which is necessary if say, each
    $v_i={0}$. Similarly, some upper bound on $\|v_i\|_2$ is also
    necessary; e.g.,~if $K$ is the strip $\{x \in \R^m: |x(1)|\leq
    a\}$ with $a\approx 0.6745$ so that $\gamma^m(K)=1/2$, then no
    signs may exist if $\|v_i\|_2>a$, e.g.,~if $n=1$ and $v_1=be_1$
    (i.e., the vector with $b$ in the first coordinate and $0$'s
    everywhere else) for some $b>a$.
\end{remark}
For most applications, these constants do not really matter and there is a lot of slack. For example, if $K$ is the unit $\ell_2$-ball of radius $\sqrt{m}$, then $\gamma^{m}(K) = \Theta(1)$, but $\gamma^m(2K)$ is exponentially close to $1$.

\section{Applications}
The ability to choose the body $K$ in Theorem \ref{thm:bana} makes the result quite powerful. Let us consider some  examples. 

\subsection{Koml\'os Problem}
Theorem \ref{thm:bana} directly gives the following for the Koml\'{o}s problem.
    
\begin{theorem}
\label{thm:komlos-bana}
Given vectors $v_1,\ldots,v_n \in \R^m$ with $\|v_i\|_2 \leq 1$, there
exists a coloring $\col \in \{-1,+1\}^n$ such that   $\|\sum_i \col(i)
v_i\|_\infty \lesssim \sqrt{\log(2m)}$.
\end{theorem}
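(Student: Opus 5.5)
We will derive Theorem~\ref{thm:komlos-bana} directly from Banaszczyk's Theorem~\ref{thm:bana} by choosing $K$ to be a suitably scaled cube and rescaling the vectors. Let $v_1,\ldots,v_n\in\R^m$ with $\|v_i\|_2\le 1$, and put $u_i \eqdef v_i/5$, so that $\|u_i\|_2\le 1/5$ as Theorem~\ref{thm:bana} requires. For a parameter $t>0$ to be fixed below, take
\[
K \eqdef [-t,t]^m = \{y\in\R^m: \|y\|_\infty\le t\},
\]
which is closed and convex.

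The only computation is to verify $\gamma^m(K)\ge 1/2$ for some $t\lesssim\sqrt{\log(2m)}$. Since $K$ is a product set, we have $\gamma^m(K)=\gamma^1([-t,t])^m = (1-\Pr[|g|>t])^m$ for a standard Gaussian $g$. Using the tail bound $\Pr[|g|>t]\le 2e^{-t^2/2}$ recalled in Chapter~\ref{ch:partial-alg}, together with Bernoulli's inequality (or simply a union bound over the $m$ coordinates), we get
\[
\gamma^m(K)\ge 1-2m e^{-t^2/2}.
\]
This is at least $1/2$ once $t\eqdef\sqrt{2\ln(4m)}$, and $\sqrt{2\ln(4m)}\lesssim\sqrt{\log(2m)}$ for all $m\ge 1$. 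This matches the remark after Exercise~\ref{ex:ch3-gian-unit}, where the same cube was used for partial colorings.

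Theorem~\ref{thm:bana}, applied to $K$ and $u_1,\ldots,u_n$, then yields signs $\col\in\{-1,+1\}^n$ with $\sum_i \col(i)u_i\in K$, that is, $\|\sum_i\col(i)v_i\|_\infty\le 5t\lesssim\sqrt{\log(2m)}$. All the real difficulty lies in Theorem~\ref{thm:bana} itself, which we take as given. The only points needing care are that the Gaussian measure bound holds uniformly for $m\ge1$ and that the factor $5$ from rescaling is absorbed into the constant.
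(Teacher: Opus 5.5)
Your proposal is correct and follows the paper's proof: apply Theorem~\ref{thm:bana} to the cube $[-t,t]^m$ with $t\asymp\sqrt{\log(2m)}$, chosen via a union bound over coordinates so that $\gamma^m(K)\ge 1/2$. You also make explicit the rescaling by $1/5$, which the paper's short proof leaves implicit, and your choice $t=\sqrt{2\ln(4m)}$ simply absorbs the factor $2$ in the Gaussian tail bound.
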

\begin{proof}
    We apply Theorem~\ref{thm:bana} to $K\eqdef[-t,t]^m$ with $t$ large enough so that $\gamma^m(K)\geq 1/2$. As
    \[\gamma^m(K) = \Pr[ g \in K] = \Pr[\|g\|_\infty \leq t] \geq 1/2 ,\]   
    where $g=(g(1),\ldots,g(m))$ is a standard Gaussian random vector
    in $\R^m$. We have
     $\Pr[|g_i| \geq t] \leq e^{-t^2/2}$ and setting $t= \sqrt{2\ln(2m)}$ suffices by a union bound over the $m$ coordinates.
\end{proof}

\begin{remark}
\label{rem:bana-komlos-trick}
   This also gives an $O(\sqrt{\log n})$ bound using a simple
   trick. \footnote{Even though the span of $v_1,\ldots,v_n$ has
     dimension at most $n$, we cannot assume that $m=n$ here as the
     coordinate directions of $\R^m$ may not be aligned with this
     subspace.} Let $\mat$ denote the matrix with columns
   $v_1,\ldots,v_n$, and let the rows of $\mat$ be $u_1, \ldots, u_m
   \in \R^n$. Then $\sum_{i=1}^m\sum_{j=1}^n \mat(i,j)^2 \leq n$  as
   each $\|v_j\|_2\leq 1$, and thus there are at most $n^2$ rows $u_i$
   with $\|u_i\|_2^2 \geq 1/n$. For the remaining rows, by
   Cauchy-Schwarz, any coloring $\col \in \{-1,+1\}^n$  achieves
   discrepancy at most
   \(
     |\ip{u_i,\col}| \le \|u_i\|_2 \|\col\|_2 \le 1,
   \)
   as $\|\col\|_2 = \sqrt{n}$. Thus we can assume that $m\leq n^2$.
\end{remark}

\subsection{Discrepancy and the $\gamma_2$ Norm}

The above example can be substantially generalized by changing the
choice of the body $K$. In Theorem~\ref{thm:komlos-bana}, we chose $K$
to be the $\ell_\infty^m$ ball, sufficiently scaled up, and bounded
the discrepancy of a matrix $\mat$ whose columns have $\ell_2$ norm at
most $1$. On the other hand, if $\mat$ has rows with $\ell_2$ norm at
most $1$, we can choose $K$ to be $\{x: \|\mat x\|_\infty \le t\}$ for
a large enough $t$, and apply Theorem~\ref{thm:bana} to the standard
basis vectors. (For this case, Theorem~\ref{thm:bana} is an overkill,
and picking a uniformly random coloring gives the same result.) The
following result interpolates between these two cases.


\begin{lemma}\label{lm:fact-ub}
  For any $m\times n$ matrix $\mat$ that can be written as the matrix
  product $\mat = UV$ of two matrices $U \in \R^{m\times k}$ and $V\in
  \R^{k\times n}$, where each row
  $u_i$ of $U$ satisfies $\|u_i\|_2 \le r$, and each column $v_j$ of
  $V$ satisfies $\|v_j\|_2 \le c$, 
  we have
  \(
  \disc(\mat) \lesssim rc \sqrt{\ln(2m)}.
  \)
\end{lemma}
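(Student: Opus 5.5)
We apply Banaszczyk's theorem (Theorem~\ref{thm:bana}) in the intermediate space $\R^k$. The columns $v_1, \ldots, v_n$ of $V$ play the role of the vectors to be signed, and the convex body is chosen so that membership of $Vx$ in it forces $\|\mat x\|_\infty = \|U V x\|_\infty$ to be small. By homogeneity we first rescale: replacing $U$ by $U/r$ and $V$ by $V/(5c)$ multiplies $\mat$ by $1/(5rc)$. So it suffices to show that $\disc(\mat) \lesssim \sqrt{\ln(2m)}$ when every row $u_i$ of $U$ has $\|u_i\|_2 \le 1$ and every column $v_j$ of $V$ has $\|v_j\|_2 \le 1/5$.

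Define the body
\[
K \eqdef \{y \in \R^k : |\ip{u_i, y}| \le t \ \ \forall i \in [m]\},
\]
which is a closed, convex, symmetric intersection of $m$ slabs. For a standard Gaussian $g \in \R^k$, each $\ip{u_i,g}$ is a centered Gaussian with variance $\|u_i\|_2^2 \le 1$, so $\Pr[|\ip{u_i,g}| > t] \le 2e^{-t^2/2}$. A union bound over the $m$ rows gives $\gamma^k(K) \ge 1 - 2m e^{-t^2/2} \ge 1/2$ once $t \eqdef \sqrt{2\ln(4m)} \lesssim \sqrt{\ln(2m)}$. Alternatively, Sidak's lemma (Lemma~\ref{lm:ch3-sidak}) gives a product lower bound, but the union bound suffices here, exactly as in the proof of Theorem~\ref{thm:komlos-bana}.

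Theorem~\ref{thm:bana}, applied in $\R^k$ with this $K$ and the vectors $v_1,\ldots,v_n$ of length at most $1/5$, then yields signs $\col \in \{-1,+1\}^n$ with $V\col = \sum_j \col(j) v_j \in K$. Consequently $|(\mat \col)_i| = |\ip{u_i, V\col}| \le t$ for all $i$, that is, $\|\mat\col\|_\infty \le t$. Undoing the rescaling gives $\disc(\mat) \le 5rc\,t \lesssim rc\sqrt{\ln(2m)}$.

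There is no real obstacle once Theorem~\ref{thm:bana} is available. The only points needing care are bookkeeping: the Gaussian measure must be computed in the dimension $k$ of the middle space (not in $m$ or $n$), using that each $\ip{u_i,g}$ has variance at most $\|u_i\|_2^2$; and the constant $1/5$ in Theorem~\ref{thm:bana} must be absorbed into the rescaling of $V$.
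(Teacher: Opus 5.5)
Your proposal is correct and matches the paper's proof: both apply Theorem~\ref{thm:bana} in $\R^k$ to the body $\{y : \|Uy\|_\infty \le t\}$ with $t \lesssim r\sqrt{\ln(2m)}$ and the rescaled columns $\frac{1}{5c}v_j$. Both also lower bound $\gamma^k(K) \ge 1/2$ by a union bound over the $m$ rows. The only differences are cosmetic: you normalize $r$ away up front and carry the harmless factor $2$ in the Gaussian tail bound, which changes $\ln(2m)$ to $\ln(4m)$.
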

\begin{proof}
  Let us define the convex body
  \[
    K = \{ y \in \R^k: \|Uy \|_{\infty} \leq r\sqrt{2\ln{(2m)}}  \}
  \]
  To prove the lemma, we will apply Banaszczyk's theorem to the body
  $K$ and the column vectors of the matrix
  $\frac{1}{5c} V$. We first show that
  the Gaussian measure of $K$ is at least $\frac12$. For a standard
  Gaussian random vector $\grv$ in $\R^k$, by the union bound we have
  that, for $t \eqdef r\sqrt{2\ln(2m)}$,
  \begin{align*}
    \gamma^k(K) & = \Pr[\grv \in K]
    = \Pr[\|U\grv\|_\infty \le t]
    \ge 1 - \sum_{i = 1}^m \Pr[|\ip{u_i,\grv}| \ge t]\\
    &\ge 1 - \sum_{i = 1}^m \exp(-t^2/2\|u_i\|_2^2)
      \ge 1 - m \exp(-t^2/2r^2) \ge \frac12.
  \end{align*}
  Above we used that $\ip{u_i,\grv}$ is distributed like a
  one-dimensional Gaussian with mean $0$ and variance
  $\|u_i\|_2^2$.
  
  Now Theorem~\ref{thm:bana} applied to the columns $\frac{1}{5c}v_1,
  \ldots, \frac{1}{5c}v_n$ of $\frac{1}{5c}V$ gives that there exists a $\pm 1$ coloring $\col$ such that
    $\sum_{j=1}^n \col(j) \frac{v_j}{5c}\in K$ or equivalently that $V\col \in 5c K$.
  Recalling the definition of $K$, this gives
  \[
    \|\mat\col\|_\infty = \|UV\col\|_\infty \le 5rc\sqrt{2\ln{(2m)}}. \qedhere
  \]
\end{proof}

\medskip
\noindent {\bf The $\gamma_2$-norm of a matrix $M$.}
The infimum of the product $rc$ over all factorizations $\mat = UV$ is
called the $\gamma_2$-norm of $\mat$, and denoted
$\gamma_2(\mat)$. In this notation, Lemma~\ref{lm:fact-ub} can be
restated as
\begin{equation}
  \label{eq:fact-ub}
  \disc(\mat) \le \gamma_2(\mat)  \sqrt{\ln(2m)}
  \quad \text{for all matrices $\mat \in \R^{m\times n}$}
\end{equation}

It has several interesting
properties and is closely related to the hereditary discrepancy of
$\mat$. In Chapter~\ref{ch:gamma2} we explore this further, and
develop a general method of proving upper and lower bounds on
hereditary discrepancy using the $\gamma_2$ norm.

\section{Vector Discrepancy for Koml\'os and the Cloning Trick}
\label{sec:komlos-sdp-bound}


Recall that in vector discrepancy,
instead of asking for a $\pm$
coloring,
we relax each $\col(j)$ to be some vector on the unit sphere in
$\R^d$ for an arbitrary dimension $d$. As we saw in Section \ref{sec:vec-herdisc}, vector coloring is
equivalent to a semidefinite programming relaxation of discrepancy.

We now give a surprising application of Banaszczyk's theorem to show a constant upper bound for the vector discrepancy of instances of the Koml\'os problem, i.e., matrices with columns whose $\ell_2$ norm is at most $1$.
The proof uses what we call ``the cloning trick'', which has several
other applications.


\begin{theorem} 
\label{thm:komlos-vec-disc}
For any vectors $v_1, \ldots, v_n \in \R^m$ with $\|v_j\|_2 \le 1$ for
all $j \in [n]$, there exists a positive integer $d$ and a coloring
$x:[n] \to \R^d$ with $\|x(j)\|_2 = 1$ for all $j \in [n]$ such that the maximum vector discrepancy over all rows $i\in [m]$ satisfies
\begin{equation}\label{eq:komlos-vec-disc}
  \max_{i = 1}^m \Bigg\|\sum_{j = 1}^n x(j) v_j(i) \Bigg\|_2 \lesssim 1.
\end{equation}
\end{theorem}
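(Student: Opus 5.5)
The idea is to apply Banaszczyk's theorem (Theorem~\ref{thm:bana}) not to $v_1,\ldots,v_n$ but to many ``clones'' of them, and then read off a vector coloring from the resulting signs. Fix a large integer $d$. Replace each $v_j$ by $d$ copies $v_j/\sqrt{d}$, indexed by pairs $(j,k)$ with $k\in[d]$. The Koml\'os bound (Theorem~\ref{thm:komlos-bana}) is useless here, because it loses $\sqrt{\log m}$. So I will instead choose the body $K$ to take advantage of the fact that the clones are tiny and numerous. Given signs $\epsilon(j,k)\in\{-1,+1\}$, I will define the vector color of $j$ as
\[
x(j) \eqdef \frac{1}{\sqrt{d}}\sum_{k=1}^d \epsilon(j,k)\, e_k \in \R^d .
\]
This is automatically a unit vector. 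For each row $i$, we get $\bigl\|\sum_j x(j)v_j(i)\bigr\|_2^2 = \frac1d\sum_{k=1}^d \bigl(\sum_j \epsilon(j,k) v_j(i)\bigr)^2$. So the vector discrepancy of row $i$ is the root-mean-square, over the $d$ ``copies'' $k$, of the signed sum of the $k$-th clone system in coordinate $i$.

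To encode this as a single balancing problem, I will work in $\R^{m\times d}$ and place clone $(j,k)$ as the vector $w_{j,k}\eqdef v_j\otimes e_k$, i.e.\ $v_j$ in the $k$-th block. Its Euclidean norm is $\|v_j\|_2\le 1$; I rescale by $1/5$ as Theorem~\ref{thm:bana} requires. The signed sum is then the matrix $Y$ whose $k$-th column is $\sum_j\epsilon(j,k)v_j$. The target body is
\[
K \eqdef \Bigl\{ Y\in\R^{m\times d} : \tfrac{1}{d}\textstyle\sum_{k=1}^d Y(i,k)^2 \le C^2 \ \ \forall i\in[m]\Bigr\},
\]
a closed convex set. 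I need $\gamma^{md}(K)\ge 1/2$ for a constant $C$, once $d$ is large in terms of $m$. For a Gaussian matrix $G$, each row average $\frac1d\sum_k G(i,k)^2$ is a normalized $\chi^2_d$ variable. By standard chi-square concentration it exceeds $C^2\ge 4$ with probability at most $e^{-cd}$. A union bound over the $m$ rows gives $\gamma^{md}(K)\ge 1-me^{-cd}\ge 1/2$ once $d\gtrsim \log(2m)$.

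Theorem~\ref{thm:bana} then yields signs with $\sum_{j,k}\epsilon(j,k)\,w_{j,k}/5\in K$, i.e.\ $Y\in 5K$. Hence every row has $\frac1d\sum_k Y(i,k)^2\le 25C^2$, which is exactly \eqref{eq:komlos-vec-disc} with constant $5C$. The main point to check is the measure estimate for $K$; this is where the cloning pays off. Without cloning ($d=1$) we would need $\max_i|G(i)|\le C$, which forces $C\approx\sqrt{\log m}$. Averaging $d$ independent coordinates per row instead concentrates sharply around $1$, so a constant $C$ suffices. Everything else is routine bookkeeping, with no real use of $1/\sqrt{d}$ scaling beyond the definition of $x(j)$.
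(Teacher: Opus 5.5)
Your proposal is correct and is essentially the paper's own proof. It uses the same cloning trick with the clones $v_j\otimes e_k$, the same body bounding the row-wise sum of squares over blocks (made large via $\chi^2_d$ concentration and a union bound once $d\gtrsim\log(2m)$), and the same vector coloring $x(j)=d^{-1/2}(\epsilon(j,1),\ldots,\epsilon(j,d))$. The $v_j/\sqrt{d}$ copies mentioned in your first paragraph are never actually used and should be dropped; your construction with $v_j\otimes e_k$ is the one that works.
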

\begin{proof}
The idea is to create several disjoint copies of the instance and
apply Banaszczyk's theorem (with a suitable body $K$) to obtain
several $\pm 1$ colorings, such that for each row $i$, the colorings
have low discrepancy on {\em average}. We can these combine these
colorings into a single vector coloring by associating each coloring to  one coordinate of the vector
coloring. The key point is that requiring each row to have low discrepancy on average is a much more relaxed condition, and we do not need to lose the $\sqrt{\log n}$ factor that we incur, if we are only allowed a single coloring.  The details follow.

Let $d$ be a parameter that we will set later. (It will turn out that $d =
O(\log m)$ suffices.)  Let, further, $e_1, \ldots, e_d$ be the
standard basis of $\R^d$, and define $v_{\ell,j} \eqdef e_\ell\otimes v_j \in \R^{dm}$, where $\otimes$ is the Kronecker product. I.e., vector
$v_{\ell,j}$ consists of $d$ blocks of $m$ coordinates each, and each
block is all zeroes, except the $\ell$-th block, which is equal to
$v_j$. See Figure~\ref{fig:ch6a-cloning} for a picture of the block
structure of $(v_{\ell,j})_{\ell,j}$.
We have $\|v_{\ell,j}\|_2 = \|e_\ell\|_2 \|v_j\|_2  = \|v_j\|_2
\le 1$,
by basic properties of Kronecker products.

\begin{figure}[htp]
  \centering
  \includegraphics{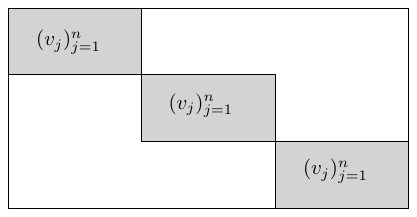}
  \caption{The block structure of $(v_{\ell,j})_{\ell,j}$ for
    $d = 3$. All unshaded entries are $0$'s. }
  \label{fig:ch6a-cloning}
\end{figure}


Consider now the convex body  $K = \cap_{i=1}^m K_i$ in $\R^{dm}$ defined by the intersection of the bodies
\[K_i=\{ y\in \R^{dm} :  \sum_{\ell=1}^d y(\ell,i)^2 \leq 2d\},\]
where we use $y(\ell,i)$ as shorthand for $y((\ell-1)m + i)$. In other
words, we think of vectors in $\R^{dm}$ as having $d$ blocks of
$m$ coordinates each, and $y(\ell, i)$ is the $i$-th coordinate in the
$\ell$-th block.
We claim that $K$ has large Gaussian measure.
\begin{claim} 
  For $d \lesssim \log(2m)$ large enough,  $\gamma^{dm} (K) \geq 1/2$.   
\end{claim}
\begin{proof}
  As $K = \cap_{i=1}^m K_i$, by the union bound it suffices to show
  that $\gamma^{dm} (\R^{dm} \setminus K_i ) = \Pr[ \sum_{\ell=1}^d
  \grv(\ell,i)^2 > 2 d] \leq 1/2m$,  where the $g(\ell,i)$ are
  independent standard Gaussian random variables.

  For each $i$ and $\ell$, we have $\E[g(\ell,i)^2]=1$ and $g(\ell,i)^2$ has subexponential tails, and thus by 
 standard tail bounds, \eg, \cite[Corollary 2.8.3]{vershynin}, we have
 \[\Pr\Big[\sum_{\ell=1}^d g(i,\ell)^2   \geq (1+\eps) d \Big]  \leq  2\exp(- cd \min (\eps,\eps^2)),\]
for some fixed $c>0$. Plugging $\eps=1$, $d \ge \frac{1}{c}
\ln(4m)$ gives the result.
\end{proof}
Consider the coloring $x' \in \{-1,1\}^{dn}$
obtained by applying Theorem \ref{thm:bana}
to the $dn$ vectors $v_{\ell,j}$ and the body $K$. Let us use
$x'(\ell,j)$ for the color (\ie, value in $\{-1, +1\}$) given to
$v_{\ell,j}$.  Let $y$ denote the
resulting discrepancy vector $\sum_{\ell=1}^d\sum_{j = 1}^n x'(\ell,j)
v_{\ell,j}$.
 Then $\frac{1}{5} y \in K$, and, by the definition of $K$, we have
 that  $\frac{1}{5} y \in K_i$ for each $i\in [m]$. Thus, $\sum_{\ell=1}^d y(\ell,i)^2 \leq 10 d$.
 Also notice that, as $v_{\ell,j}(\ell,i) = v_{j}(i)$ for each $\ell$,
 and $v_{\ell,j}(\ell',i) = 0$ whenever $\ell \neq \ell'$, the entries of $y$ are
\[  y(\ell,i)  = \sum_{j=1}^n x'(\ell,j) v_{\ell,j}(\ell,i) =  \sum_{j=1}^n x'(\ell,j) v_{j}(i).\]
This gives us that
\begin{equation}
\label{eq:cloning-trick-bound}
\sum_{\ell=1}^d \Big(\sum_{j=1}^n x'(\ell,j) v_{j} (i)\Big)^2 =
\sum_{\ell=1}^d y(\ell,i)^2  \leq 50 d.
\end{equation}
Consider the vector coloring $x:[n] \to \R^d$ with vectors
\[x(j) \eqdef \frac{1}{\sqrt{d}} (x'(1,j),\ldots,x'(d,j)) \qquad \text{ for $j \in [n]$}.\]
Clearly, $\|x(j)\|_2=1$ as each $x'(\ell,j)$ is in $\{-1,+1\}$. 
Moreover, for each $i\in [m]$, the vector discrepancy is at most  
    \begin{align*}
      \bigg\| \sum_{j=1}^n x(j) v_j(i) \bigg\|_2^2
      & = \frac{1}{d} \sum_{\ell=1}^d \Big( \sum_{j=1}^n x'(\ell,j) v_j(i)\Big)^2   = \frac{1}{d} \sum_{\ell=1}^d  y(\ell,i)^2 \leq 10,  
         \end{align*}
   where the last two steps follow from \eqref{eq:cloning-trick-bound}.
\end{proof}

Let us mention another equivalent view of
Theorem~\ref{thm:komlos-vec-disc}. Instead of defining the vectors
$x(j) = \frac{1}{\sqrt{d}} (x'(1,j), \ldots, x'(d,j))$, we think of
$x'$ as defining a random coloring. In particular, we can set
$x(j) \eqdef x(\ell,j)$, where $\ell \in [d]$ is chosen uniformly at
random. Then the proof of Theorem~\ref{thm:komlos-vec-disc} shows that
\begin{equation}\label{eq:ch6a-veckomlos-rand}
  \max_{i = 1}^m \E\Big(\sum_{j} x(j) v_j(i)\Big)^2 \lesssim 1.
\end{equation}

Theorem~\ref{thm:komlos-vec-disc} in fact holds with the implied constant on the right hand side of \ref{eq:komlos-vec-disc} equal to $1$, which is clearly best possible. We give a different proof of a more general version of this better upper bound in Chapter~\ref{ch:online}. 

\section{The Main Geometric Core}
\label{sec:bana-core}
The original proof of Theorem \ref{thm:bana} by Banaszczyk was non-algorithmic and based on clever convex geometric argument and properties of Gaussian measures. 
The main technical core of his proof is the following geometric fact. See also Figure \ref{fig:bana:kstar}.
\begin{theorem}
\label{thm:bana-star}
For any convex body $K \in \R^m$ with $\gamma^m(K) \geq 1/2$ and any vector $u \in \R^m$ with $\|u\|_2 \leq 1/5$, there is a convex body 
$K*u$ contained in $(K-u) \cup (K+u)$ such that $\gamma^m(K*u) \geq \gamma^m(K)$. 
\end{theorem}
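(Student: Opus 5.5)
The plan is to use Banaszczyk's construction: let
\[
K*u \eqdef \big((K-u)\cap(K+u)\big)\ \cup\ \mathrm{conv}\Big(\tfrac{1}{2}\big(\text{a suitable part}\big)\Big),
\]
and more concretely to take
\[
K*u \eqdef \Big\{ y + s u :\ y \in (K-u)\cap (K+u)\Big\}\ \text{restricted to } |s|\le 1 .
\]
An equivalent description is $K*u = \{z : z \pm u\text{-segment condition}\}$. The simplest candidate to check is
\[
K*u \eqdef \bigcup_{|s|\le 1}\Big( \big((K-u)\cap(K+u)\big) + s u\Big)\cap\big((K-u)\cup(K+u)\big).
\]
I would instead work with the cleaner object $K*u \eqdef \mathrm{conv}\big((K-u)\cap(K+u) \cup \ldots\big)$ defined slice by slice along the direction $u$. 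First rotate so that $u = \|u\|_2 e_m$, and write $K$ through its one-dimensional fibers $K\cap (y+\R e_m)$, which are intervals $[a(y),b(y)]$ with $a$ convex and $b$ concave. The body $K*u$ is then defined fiberwise. When the fiber has length at least $2\|u\|$, the fiber of $(K-u)\cup(K+u)$ is an interval $[a-\|u\|,b+\|u\|]$, and we keep the sub-interval of the same length $b-a$ whose Gaussian measure is maximal, i.e. the one shifted by up to $\|u\|$ toward the origin. When the fiber is shorter, the union is disconnected, and we take the shifted copy of $[a,b]$ by $\pm\|u\|$ that is closer to $0$, equivalently the shift toward the origin. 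Convexity of $K*u$ is checked fiberwise: the new endpoints are $a - \phi$ and $b-\phi$, where $\phi(y) = \mathrm{clip}_{[-\|u\|,\|u\|]}\big((a+b)/2\big)$. One must verify that $a-\phi$ stays convex and $b-\phi$ stays concave. I expect this to fail in general, so I would replace the clipped midpoint by a shift chosen to keep convexity, e.g. the minimal convex/concave envelope. This is where care is needed.

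The measure comparison is one-dimensional. On each fiber, the standard Gaussian measure of an interval of fixed length increases as its center moves toward $0$. Since we translate every fiber of $K$ by at most $\|u\|$ toward the origin, each fiber's $\gamma^1$-measure does not decrease. Integrating over $y$ with respect to $\gamma^{m-1}$ as in \eqref{eq:ch3-int-slices} then gives $\gamma^m(K*u)\ge\gamma^m(K)$. Containment in $(K-u)\cup(K+u)$ holds because each fiber is a translate of the original fiber by some $t\in[-\|u\|,\|u\|]$. Shifts by exactly $\pm\|u\|$ land in $K\mp u$. For intermediate $t$ with a long fiber, $[a-t,b-t]\subseteq[a-\|u\|,b+\|u\|]$, which is the union fiber provided $b-a\ge 2\|u\|$.

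The main obstacle is therefore reconciling convexity of the fiberwise-shifted body with the containment requirement for short fibers. For short fibers, only shifts of exactly $\pm\|u\|$ are allowed, yet a convex choice of shift must vary continuously. This is where $\gamma^m(K)\ge 1/2$ and $\|u\|\le 1/5$ must enter, and Banaszczyk's actual argument uses them. Short fibers (length $<2\|u\|\le 2/5$) occur only near the boundary of the projection of $K$. There I would try to show they can simply be discarded, taking the fiber to be empty, while the gain from long fibers compensates. The compensating estimate would come from the Gaussian isoperimetric/Ehrhard-type concavity of $\Phi^{-1}(\gamma(K_t))$, with the hypothesis $\gamma^m(K)\ge1/2$ ensuring that $0$ lies deep enough in $K$ that the mass lost in thin fibers is at most the mass gained by recentering. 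I expect this quantitative trade-off to be the hardest step, and I would first test it on halfspaces and slabs, where $\|u\|\le 1/5$ should be close to tight.
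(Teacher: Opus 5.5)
You have a genuine gap here, and it is not only the convexity problem you flag. Even if you ignore convexity, the rule ``translate long fibers toward $0$, discard short ones'' cannot give $\gamma^m(K*u)\ge\gamma^m(K)$. Take $K$ to be a Euclidean ball centered at the origin with $\gamma^m(K)=1/2$ and $m\ge 2$. Every fiber along $u$ is centered at $0$, so recentering gains nothing. Meanwhile the fibers of length $<2\|u\|_2$ near the boundary carry positive Gaussian mass, so your body has strictly smaller measure than $K$.

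The missing idea is this: on a long fiber (length $b-a\ge 2r$, with $r=\|u\|_2$) you may keep the \emph{whole} fiber $[a-r,b+r]$ of $(K-u)\cup(K+u)$, not just a length-$(b-a)$ piece of it. The paper sets $K_1=\{(t,y)\in K: |K_y|\ge 2r\}$ and
\[
K*u=(K_1-u)\cup(K_1+u)=K_1+[-u,u].
\]
Since $y\mapsto|K_y|$ is concave, $K_1$ is convex, and $K*u$ is a Minkowski sum of convex sets, so convexity is automatic. Your first candidate $((K-u)\cap(K+u))+[-u,u]$ is fiberwise exactly $K_1$, so it can only lose measure. Replacing $[-u,u]$ by $[-2u,2u]$ turns it into precisely the right body.

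With the right body, the whole difficulty is the measure comparison, which your proposal leaves as ``the hardest step.'' On a long fiber the gain comes from the one-dimensional isoperimetric bound $\Phi^{-1}(\gamma^1(K_y+[-r,r]))\ge\Phi^{-1}(\gamma^1(K_y))+r$. The loss consists of the short fibers, all of which satisfy $\Phi^{-1}(\gamma^1(K_y))<-p$, where $-p\eqdef\Phi^{-1}(\gamma^1([-r,r]))$. The paper compares the two in five steps:
\begin{enumerate}
\item Ehrhard-symmetrize along $u$, so that $f_K(y)=\Phi^{-1}(\gamma^1(K_y))$ is concave.
\item Shrink $I_u(K*u)$ to the convex body given by $f_K+r$ on $\{f_K\ge -p\}$.
\item Ehrhard-symmetrize again along $u^\perp$, reducing to a planar region under a concave decreasing curve $h_K$.
\item Replace $h_K$ by its chord through $x=-p$ and $x=-r/2$.
\item Pair the lost and gained horizontal intervals and compare them using log-concavity of $\overline{\Phi}$.
\end{enumerate}
The hypotheses enter exactly in the last step. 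From $\gamma^m(K)\ge 1/2$ you get $h_K(0)\ge 0$, so the chord satisfies $L(0)\ge 0$, and the lost intervals sit at heights of larger absolute value than the gained intervals they are paired with. From $\|u\|_2\le 1/5$ you get $p\ge 1$, hence $\overline{\Phi}(p)^2/\overline{\Phi}(2p)\ge\overline{\Phi}(1)^2/\overline{\Phi}(2)\ge 1$ for the far pairs. Without this quantitative argument, or some substitute for it, the proposal does not establish the theorem.
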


\begin{figure}[hbtp!]
    \centering
\includegraphics[scale=0.5, trim={3mm 2mm 2mm 3mm}]{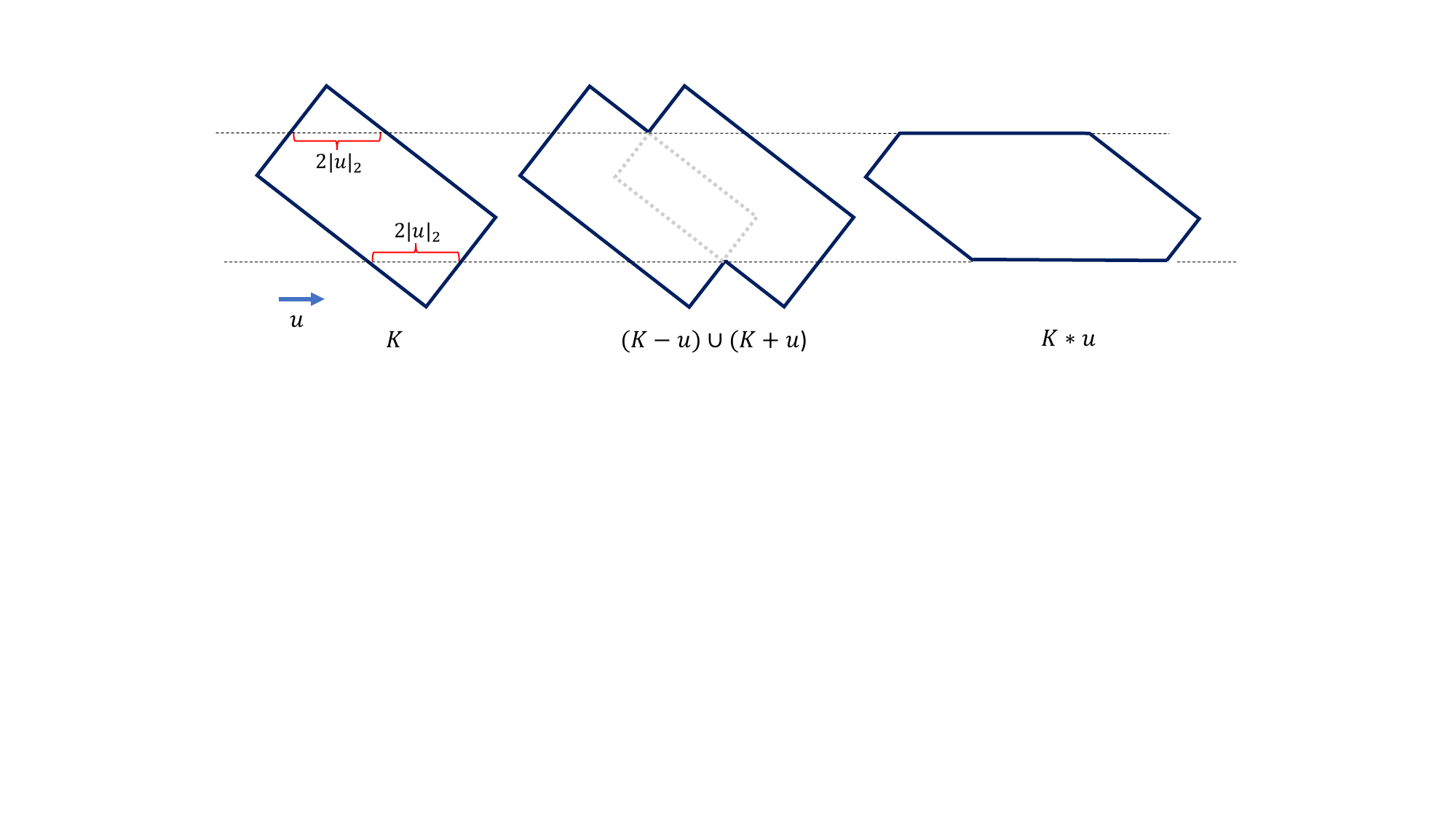}
    \caption{The body $K*u$ obtained from $K$.}
    \label{fig:bana:kstar}
\end{figure}

Chapter \ref{ch:bana-proof} will be devoted entirely to the proof of Theorem \ref{thm:bana-star}.
For now, it does not matter how the body $K*u$ is obtained, and let us see how Theorem \ref{thm:bana} follows directly from Theorem \ref{thm:bana-star}. 
\begin{proof}(Theorem \ref{thm:bana}).
We apply induction on the number of vectors $n$.

The base case of $n=0$ holds as the origin $\mathbf{0}$ is
in $K$. Indeed, if $\mathbf{0} \notin K$, then, because $K$ is convex
and closed,
there must be a hyperplane $h$ separating $\mathbf{0}$ and $K$, \ie,
there is some $y \in \R^m$ and $\alpha < 0$ such that $\ip{x,y} \le  \alpha$ for all $x \in
K$ (this is the hyperplane separator theorem, see~\cite[Corollary~11.4.2]{Rockafellar}). However,
\[
\gamma^m(K) \le \gamma^m(\{x\in \R^m: \ip{x,y} \le \alpha\}) =
\Pr[\ip{\grv,y} \le \alpha] < \frac12,
\]
for a standard Gaussian random vector $\grv \in \R^m$, contradicting
the assumption $\gamma^m(K) \geq 1/2$.

Suppose the result holds for $n-1$.
Consider the convex body $K'=K*v_n$. Then $\gamma^m(K') \geq 1/2$ by Theorem \ref{thm:bana-star}, and  
by the inductive hypothesis 
there exist signs $x_1,\ldots,x_{n-1} \in \{-1,1\}$ such that 
$u := x_1 v_1 + \cdots + x_{n-1} v_{n-1} \in K'$. Now as $K' \subseteq (K-v_n) \cup (K+ v_n)$, 
either $u \in K-v_n$ in which case we set $x_n=1$ so that 
$x_1v_1+ \ldots + x_{n-1}v_{n-1} + x_n v_n =  u+ v_n \in K$, or, otherwise
$u \in K+v_n$ and we can set $x_{n}=-1$.
\end{proof}
Despite the simplicity of this inductive argument, it is unclear
how to use it to obtain an efficient algorithm to find the
coloring $x$. In general, a convex body $K$ can be very complicated
and require exponential size to describe it in any reasonably accurate
way. Even if the initial body $K$ is simple (like the unit
cube or the Euclidean ball), a few applications of the $*$ operator
above might, potentially, make it very complicated. In particular,
there is no known way to construct an efficient membership oracle for
the intermediate bodies in the inductive proof of
Theorem~\ref{thm:bana} above using a membership oracle for the initial $K$.

 



\section{Equivalence to Sub-Gaussianity}
\label{sec:bana-subgaussian}
Recall that a Gaussian random variable $\grv$ with mean $0$ and
variance $\sigma^2$ has a  moment generating function
$ \E[\exp(\lambda \grv)] = \exp(\lambda^2 \sigma^2/2)$ for all $\lambda
\in \R$. This property motivates the following (standard) definition.

\begin{definition}[sub-Gaussian Random Variable]
We say that a mean $0$ real-valued random variable $X$ is $\sigma$-\df{sub-Gaussian} if  
\[ \E[ \exp(\lambda X)] \leq \exp(\sigma^2 \lambda^2/2) \quad \text{for all $\lambda \in \R$}.\]  
\end{definition}
As the name suggests, if $X$ is $\sigma$-sub-Gaussian then it has sub-Gaussian tails.\footnote{Sometimes in the literature, this property is referred to $\sigma^2$-sub-Gaussian instead, but we use $\sigma$-subguassian throughout the text.} Indeed, setting $\lambda = t/\sigma^2$ above and applying Markov's inequality gives that for any $t>0$,
\begin{align*}\Pr[ X \geq t] &  \le \inf_{\lambda \ge 0} \Pr[\exp(\lambda X)  \geq
  \exp(\lambda t)] \\ 
  & \leq \inf_{\lambda \ge 0} \exp(\lambda^2 \sigma^2/2 - \lambda t) = \exp(-t^2/2\sigma^2).\end{align*}
Notice that if $X_1,\ldots,X_n$ are independent and $X_i$ is $\sigma_i$-sub-Gaussian with mean-zero, the sum $X = \sum_i X_i$ is $\left(\sum_i \sigma_i^2\right)^{1/2}$-sub-Gaussian as
\[ \E\exp(\lambda X) = \prod_i \E[\exp(\lambda X_i)] \leq \prod_i \exp(\sigma_i^2 \lambda^2/2).\]
Also, any $X$ distributed on $[-c,c]$ with mean-zero is
$c$-sub-Gaussian: this result is also known as Hoeffding's lemma.

The definition above is extended to random vectors by asking that each
one-dimensional marginal is sub-Gaussian.

\begin{definition}[sub-Gaussian Random Vector] A vector valued random
  variable $X \in \R^m$ with mean $0$ is $\sigma$-sub-Gaussian if for all vectors $x \in \R^m$, we have that  
\[ \E \exp( \ip{X,x})] \leq \exp(\sigma^2 \|x\|^2/2).\]
Equivalently, the projection $\ip{X,\theta}$ of $X$ in any direction
$\theta$, $\|\theta\|_2 = 1$ is a $\sigma$-sub-Gaussian real-valued
random variable.  
\end{definition} 

\paragraph{An equivalent version of Banaszczyk's Theorem.}
Surprisingly,  Theorem \ref{thm:bana} 
is equivalent (up to various $O(1)$ factors) to a statement that does not involve mentioning any convex body $K$.


Consider the following statement.
\begin{theorem}
\label{thm:bana-subgaussian}
    Given arbitrary vectors $v_1,\ldots,v_n \in \R^m$ with $\|v_j\|_2
    \leq 1$ for every $j \in [n]$, there is a probability distribution
    $D$ over colorings $x \in \{-1,1\}^n$ such that the discrepancy
    vector $Y \eqdef \sum_{j=1}^n x(j) v_j$ is $O(1)$-sub-Gaussian
    when $x$ is sampled according to $D$.
\end{theorem}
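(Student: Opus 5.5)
The plan is to derive Theorem~\ref{thm:bana-subgaussian} from Theorem~\ref{thm:bana} by a minimax (duality) argument. The set of achievable discrepancy vectors is the finite set $\{\mat\col : \col \in \{-1,+1\}^n\}$, where $\mat$ has columns $v_1,\dots,v_n$. We want a distribution $D$ on this set such that $\E_D \exp(\ip{Y,\theta}) \le \exp(C\|\theta\|_2^2)$ for every $\theta \in \R^m$; the mean-zero requirement will be handled separately.

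\textbf{Step 1: reduce to a finite zero-sum game.} Consider the game in which the colorer picks $\col$ and the adversary picks a test function
\[
f_\theta(y) \eqdef \exp\bigl(\ip{y,\theta} - C\|\theta\|_2^2\bigr).
\]
We need $\min_D \sup_\theta \E_D f_\theta(Y) \le 1$. The colorer's strategy space is finite, so the set of vectors $(\E_D f_\theta)_\theta$ is convex, and the payoff is linear in $D$. By a minimax theorem (e.g.\ Sion's, or directly via separating hyperplanes), it is enough to show the following. For every probability measure $\mu$ on $\theta$'s (which we may take finitely supported after a compactness/truncation argument), there is a single coloring $\col$ with
\[
\E_{\theta\sim\mu} f_\theta(\mat\col) \le 1.
\]
This step is the main technical obstacle. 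The test functions are unbounded and the adversary's space is not compact, so I would first truncate to $\|\theta\|_2 \le R$, apply minimax there, and let $R\to\infty$ using the finiteness of the colorer's set. Alternatively, one can avoid minimax by building $D$ from the distributional form of the cloning trick.

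\textbf{Step 2: handle a fixed mixture with Banaszczyk's theorem.} Define
\[
F(y) \eqdef \E_{\theta\sim\mu} \exp\bigl(\ip{y,\theta} - C\|\theta\|_2^2\bigr),
\qquad
K \eqdef \{y : F(y) \le 1\}.
\]
Since $F$ is convex and continuous, $K$ is a closed convex set. By Theorem~\ref{thm:bana} applied to the vectors $v_j/5$ and the scaled body $K/5$, it suffices to show $\gamma^m(K/5) \ge 1/2$, that is, $\Pr_g[F(5g) \le 1] \ge 1/2$. By Markov's inequality, $\Pr[F(5g) > 1] \le \E F(5g)$. Using the Gaussian moment generating function,
\[
\E F(5g) = \E_{\theta} \exp\bigl(25\|\theta\|_2^2/2 - C\|\theta\|_2^2\bigr).
\]
With $C = 13$ this is at most $\E_\theta e^{-\|\theta\|_2^2/2}$, which is below $1/2$ unless $\mu$ puts a lot of mass near $\theta = 0$. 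Near $0$ the test functions are close to $1$, so this case needs separate care. I would fix it by replacing $f_\theta$ with $f_\theta - 1$ adjusted appropriately, or by subtracting an additive constant. More simply, I would prove the slightly weaker bound $\E_D \exp\ip{Y,\theta} \le 2\exp(C\|\theta\|_2^2)$, which requires only $\E F(5g) \le 2\cdot\tfrac12$. Then the threshold becomes $F \le 2$, Markov gives measure at least $1/2$, and a constant factor of $2$ in the moment generating function can be absorbed into $O(1)$-sub-Gaussianity by enlarging the constant: for $\|\theta\|$ small, the factor is handled using the mean-zero property and the boundedness $\|Y\|$ in second-order terms.

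\textbf{Step 3: enforce mean zero.} Symmetrize by replacing $D$ with the mixture of $D$ and $-D$, i.e.\ flipping all signs with probability $1/2$. This makes $Y$ symmetric, hence mean zero. It preserves the moment generating function bound because the bound is symmetric in $\theta$. For symmetric $Y$, $\E\exp\ip{Y,\theta} \le 2e^{C\|\theta\|^2}$ implies $O(1)$-sub-Gaussianity via the standard equivalence of sub-Gaussian characterizations (tail bound $\Rightarrow$ moment generating function with a worse constant). This completes the plan.
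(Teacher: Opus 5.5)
Your proof is correct, but it follows a different route from the paper. What you give is essentially the minimax argument of Dadush, Garg, Lovett and Nikolov, which the bibliographic notes mention only as an alternative.

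The paper instead sketches a cloning argument. It takes many disjoint copies of the instance, applies Theorem~\ref{thm:bana} once to a suitable body in the product space, and takes $D$ to be the uniform distribution over the copies' colorings (details deferred to Kulkarni, Reis and Rothvoss). In Chapter~\ref{ch:gram-schmidt} it also proves the statement directly via the Gram--Schmidt walk, with constant $\sqrt{20}$, an efficient sampler, and no use of Theorem~\ref{thm:bana}.

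Your route is shorter. Dualizing reduces everything to one convex body $\{F\le 2\}$ per mixture $\mu$, whose Gaussian measure is controlled by a single Markov bound. Your final choice of $C=13$ with threshold $2$ is sound: $\E F(5g)\le 1$, and the sublevel set is closed and convex, as Theorem~\ref{thm:bana} requires. The price is that $D$ comes only from a separating hyperplane in $\R^{2^n}$, and the constants degrade.

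By comparison, the cloning route turns one coloring of the cloned instance directly into $D$. As the paper notes, it also extends to the prefix and online settings (Theorem~\ref{thm:bana-prefix-subgaussian}). The Gram--Schmidt walk additionally yields an algorithmic proof of Theorem~\ref{thm:bana} via Theorem~\ref{thm:bana-subg-suff}.

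Two places in your write-up should be tightened.

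First, Step~1 needs no truncation. The colorer's strategy set is finite, so consider the set $S$ of vectors $(\E_{\theta\sim\mu} f_\theta(\mat\col))_{\col\in\{-1,+1\}^n}$ over finitely supported $\mu$. It is a convex subset of $\R^{2^n}$, and by your Step~2 it is disjoint from the open set $\{z : z_\col > 2 \text{ for all } \col\}$. Any separating functional must be nonnegative, because that set is upward closed. Normalizing it gives $D$ with $\E_D f_\theta(Y)\le 2$ for every $\theta$.

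Second, drop the appeal to boundedness of $\|Y\|_2$ when absorbing the factor $2$. $\|Y\|_2$ can be of order $n$, so that route would give an $n$-dependent constant. Your Step~3 justification (symmetric, hence mean zero, plus the standard equivalence of sub-Gaussian characterizations) is the right one. Concretely, fix a unit vector $u$, set $X=\ip{Y,u}$ and $\lambda_0=\sqrt{\ln 2/C}$.
\begin{itemize}
\item By symmetry, $\E e^{\lambda X}=\E\cosh(\lambda X)$.
\item Since $\cosh(sa)-1\le s^2(\cosh a-1)$ for $|s|\le 1$, and $\E\cosh(\lambda_0 X)\le 4$, you get $\E e^{\lambda X}\le 1+3\lambda^2/\lambda_0^2$ for $|\lambda|\le\lambda_0$.
\item For $|\lambda|\ge\lambda_0$, simply $2e^{C\lambda^2}\le e^{2C\lambda^2}$.
\end{itemize}
Together these give $\E e^{\lambda X}\le e^{3C\lambda^2/\ln 2}$ for all $\lambda$, so $Y$ is $O(\sqrt{C})$-sub-Gaussian.
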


It turns out that Theorems~\ref{thm:bana}~and~\ref{thm:bana-subgaussian} are equivalent up to constant factors. We sketch the argument below.

To show that Theorem \ref{thm:bana} implies Theorem~\ref{thm:bana-subgaussian} one can
 use a cloning trick similar to that in Section~\ref{sec:komlos-sdp-bound}.
The idea is to make multiple copies of the instance and apply Theorem
\ref{thm:bana} with a suitably defined convex body $K$, and view the
uniform distribution on the colorings for these copies as the
distribution $D$. This is similar to the alternative view of
Theorem~\ref{thm:komlos-vec-disc} above, but, instead of asking for
the second moment bound \eqref{eq:ch6a-veckomlos-rand}, we ask that
the random vector $\sum_{j = 1}^n x(j) v_j$ is $O(1)$-sub-Gaussian. 
The details of showing this are analogous to the proof of
Theorem~\ref{thm:komlos-vec-disc}, but are more involved, as one needs
to make exponentially many copies. We refer to \cite{rothvoss-online} for details.

The reverse implication will be more relevant for us, since we will
use it to give an algorithm for Theorem~\ref{thm:bana} in Chapter
\ref{ch:gram-schmidt}. In particular, we will show that a
distribution $D$ satisfying Theorem~\ref{thm:bana-subgaussian} can be
sampled efficiently.

 Intuitively, Theorem \ref{thm:bana-subgaussian} should imply Theorem \ref{thm:bana} because 
 the random vector $Y$ in Theorem \ref{thm:bana-subgaussian} is $O(1)$ sub-Gaussian, and as $\gamma^m(K)\geq 1/2$, a standard Gaussian $\grv \in \R^m$ lies in $K$ with probability at least $1/2$.
So $Y$, being ``more concentrated'' than a Gaussian with variance
$O(1)$ in every direction, should also lie in some $O(1)$-factor scaling of $K$ with
constant probability. 

While this intuition can be made rigorous, a
significant hurdle is that the definition of a sub-Gaussian random
vector only requires that the one-dimensional marginals are
sub-Gaussian, and does not make any requirements about
higher-dimensional marginals. Nevertheless, the following comparison
theorem, which is a deep result of Talagrand, allows us to get around
this obstacle. We refer to Talagrand's book~\cite{Talagrand-book} for
a proof; see also~\cite[Section 8.6]{vershynin}.

\begin{theorem}[Talagrand's Comparison Theorem]\label{thm:ch6a-talagrand}
  For any bounded set $T \subseteq \R^m$, and any $\sigma$-sub-Gaussian
  random vector in $\R^m$, we have
  \[
    \E \sup_{t \in T} \ip{X,t} \lesssim \sigma \,\,  \E\sup_{t \in T}
    \ip{\grv, t},
  \]
  where $\grv$ is a standard Gaussian random vector. 
\end{theorem}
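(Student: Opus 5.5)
The natural route is generic chaining, comparing both sides to Talagrand's functional
\[
\gamma_2(T) \eqdef \inf_{(T_k)} \sup_{t\in T}\sum_{k\ge 0} 2^{k/2}\, d(t,T_k),
\]
where the infimum is over admissible sequences of subsets $T_k\subseteq T$ with $|T_0|=1$ and $|T_k|\le 2^{2^k}$, and $d$ is Euclidean distance. I would prove the theorem in two halves:
\begin{align*}
\E\sup_{t\in T}\ip{X,t} &\lesssim \sigma\,\gamma_2(T) && \text{(upper bound for any sub-Gaussian process)},\\
\gamma_2(T) &\lesssim \E\sup_{t\in T}\ip{\grv,t} && \text{(majorizing measure lower bound)}.
\end{align*}
Chaining these gives the statement. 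Throughout I would reduce to finite $T$ by monotone convergence, since $T$ is bounded and both suprema are limits over finite subsets.

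For the upper bound, the first step is that $\sigma$-sub-Gaussianity of the vector $X$ gives sub-Gaussian increments. For $s,t\in T$, $\ip{X,t-s}$ is $\sigma\|t-s\|_2$-sub-Gaussian, so
\[
\Pr\big[|\ip{X,t-s}|\ge u\,\sigma\|t-s\|_2\big]\le 2e^{-u^2/2}.
\]
Next I fix a near-optimal admissible sequence and let $\pi_k(t)$ be a nearest point of $T_k$ to $t$. I would write the telescoping sum
\[
\ip{X,t}-\ip{X,\pi_0(t)}=\sum_{k\ge 1}\ip{X,\pi_k(t)-\pi_{k-1}(t)}.
\]
At level $k$ there are at most $2^{2^{k+1}}$ pairs $(\pi_k(t),\pi_{k-1}(t))$. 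A union bound with deviation $u\,2^{k/2}\sigma\|\pi_k(t)-\pi_{k-1}(t)\|_2$ costs probability $\lesssim 2^{2^{k+1}}e^{-u^2 2^k/2}$, which is summable in $k$ for $u$ a large constant and decays like $e^{-cu^2}$. Using $\|\pi_k(t)-\pi_{k-1}(t)\|_2\le d(t,T_k)+d(t,T_{k-1})$ and integrating the tail in $u$ gives $\E\sup_t\ip{X,t}\lesssim\sigma\gamma_2(T)$. Here $\E\ip{X,\pi_0(t)}=0$ because $\pi_0(t)$ is the same point for every $t$. This half is routine.

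The lower bound $\gamma_2(T)\lesssim\E\sup_t\ip{\grv,t}$ is the Fernique--Talagrand majorizing measure theorem, and it is the main obstacle. My approach would be to build the admissible sequence by a greedy partitioning scheme. The inputs are Sudakov minoration, which says that $N$ points with pairwise distances at least $a$ force the expected supremum to be $\gtrsim a\sqrt{\log N}$ (proved by Slepian comparison with independent Gaussians), and Gaussian concentration of the supremum over small balls. Together these yield a ``growth condition'': a set containing $N$ well-separated pieces, each carrying a large functional value, has functional value larger by $a\sqrt{\log N}$. I would then use the functional $F(A)=\E\sup_{t\in A}\ip{\grv,t}$ in Talagrand's partitioning theorem. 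At each stage I split a cell by repeatedly carving out the piece of maximal $F$-value at the appropriate scale, and the growth condition makes the sum $\sum_k 2^{k/2}\Delta(A_k(t))$ telescope against $F(T)$. Getting the bookkeeping of scales and cell diameters right is the delicate part. If that fails I would defer to \cite{Talagrand-book} for this step, using only the upper bound proved above.
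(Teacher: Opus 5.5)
Your proposal is correct and takes the same route as the sources the paper cites; the paper gives no proof of its own and points to Talagrand's book and Section 8.6 of Vershynin, where the argument is the generic chaining bound $\E\sup_{t\in T}\ip{X,t}\lesssim\sigma\gamma_2(T)$ for sub-Gaussian increments combined with the Fernique--Talagrand majorizing measure theorem $\gamma_2(T)\lesssim\E\sup_{t\in T}\ip{\grv,t}$. Your chaining half is complete as written. Your outline of the lower bound (Sudakov minoration plus Gaussian concentration giving a growth condition for $F(A)=\E\sup_{t\in A}\ip{\grv,t}$, fed into Talagrand's partitioning scheme) is the standard proof, and its delicate bookkeeping is deferred to Talagrand's book both by you and by the paper.
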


To use Theorem~\ref{thm:ch6a-talagrand}, we need the notion of a polar set, defined for a set $K
\subseteq \R^m$ by
\[
  K^\circ \eqdef \{y \in \R^m: \ip{x,y} \le 1 \,\,\,\, \forall x \in K\}.
\]
As a consequence of the hyperplane separator theorem, when $K$ is convex, closed, and
contains $0$, $K^{\circ \circ} = K$~\cite[Chapter 5]{Matousek-discgeom}. 

We give the main implication next. To avoid some technicalities, we
consider only the case where $\gamma^m(K)\geq 3/4$. This does not make
a difference for most applications of Theorem~\ref{thm:bana}.

\begin{theorem}\label{thm:bana-subg-suff}
Suppose that the  random vector $Y$ in Theorem \ref{thm:bana-subgaussian}  is $\sigma$-sub-Gaussian.
There is a universal constant $c > 0$ such that, for any closed convex
set $K \subseteq \R^m$ containing $0$ with Gaussian measure $\gamma^m(K) \geq 3/4$, we have 
\[\Pr[ Y \in c\sigma K] \geq 1/2.\]
\end{theorem}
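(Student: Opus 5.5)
We will pass through the gauge (Minkowski functional) $\|y\|_K \eqdef \inf\{t\ge 0: y \in tK\}$ of $K$. By the bipolar identity $K^{\circ\circ}=K$, which holds since $K$ is closed, convex and contains $0$, this gauge is a supremum of linear functionals:
\[
\|y\|_K = \sup_{z \in K^\circ} \ip{y,z}.
\]
Because $K$ may be unbounded (in which case $\|\cdot\|_K$ is only a seminorm and can vanish on some directions), we will truncate. We apply Talagrand's comparison theorem (Theorem~\ref{thm:ch6a-talagrand}) to the bounded sets $T_R \eqdef K^\circ \cap R B_{\ell_2}$ and let $R\to\infty$ at the end using monotone convergence. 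This gives
\[
\E \|Y\|_K \lesssim \sigma\, \E\|\grv\|_K,
\]
and Markov's inequality then yields $\Pr[\|Y\|_K \ge 2\E\|Y\|_K]\le 1/2$. So $Y \in c\sigma K$ with probability at least $1/2$, provided we can show $\E\|\grv\|_K \lesssim 1$.

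The main obstacle is therefore the bound $\E\|\grv\|_K \lesssim 1$ using only $\gamma^m(K)\ge 3/4$. We will use Gaussian concentration for the gauge, through the Gaussian isoperimetric inequality (Theorem~\ref{thm:gii}). Let $r>0$ be the radius of the largest Euclidean ball centered at $0$ contained in $K$, allowing $r=\infty$ if $K$ is a whole space.
\begin{enumerate}
\item First we show that $r$ is bounded below by an absolute constant. If $r$ were small, there would be a supporting halfspace $\{x:\ip{x,\theta}\le r\}$ containing $K$, with $\|\theta\|_2=1$. Then $\gamma^m(K)\le \Phi(r)$, and $\Phi(r)\ge 3/4$ forces $r\ge \Phi^{-1}(3/4)\approx 0.67$.
\item Since $rB_{\ell_2}\subseteq K$, for any $y$ and $t\ge0$ we have $K + t B_{\ell_2} \subseteq K + (t/r)K = (1+t/r)K$ by convexity. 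By Theorem~\ref{thm:gii}, $t(K_t)\ge \Phi^{-1}(3/4)+t$, so
\[
\Pr\big[\|\grv\|_K > 1+t/r\big] \le \Pr[\grv\notin K_t] \le e^{-t^2/2}.
\]
\item Integrating this tail bound gives $\E\|\grv\|_K \le 1 + O(1/r) = O(1)$.
\end{enumerate}
This establishes $\E\|\grv\|_K\lesssim 1$ and completes the argument.

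The technicalities are mild. The assumption $\gamma^m(K)\ge3/4$ is used in step~1, to make $\Phi^{-1}(3/4)>0$ give a strict lower bound on $r$. Talagrand's theorem is stated for bounded $T$, which is why we truncate. Since $T_R$ contains $0$, the suprema over $T_R$ are nonnegative, and they increase to $\|\cdot\|_K$ as $R\to\infty$, so monotone convergence passes the inequality to the limit. The constant $c$ is then an absolute constant collected from Talagrand's inequality, the factor $2$ from Markov's inequality, and the bound on $\E\|\grv\|_K$.
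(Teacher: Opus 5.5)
Your proof is correct and follows essentially the same route as the paper: a supporting-halfspace argument puts a ball of constant radius around $0$ inside $K$, Talagrand's comparison theorem is applied to the polar $K^\circ$, the bound $\E\|\grv\|_K \lesssim 1$ comes from Gaussian concentration, and Markov's inequality finishes. The differences are cosmetic. Your truncation to $K^\circ\cap RB_{\ell_2^m}$ is unnecessary, since your step~1 already gives $K^\circ\subseteq r^{-1}B_{\ell_2^m}$, and the paper applies Talagrand to $K^\circ$ directly; note also that unboundedness of $K$ is not what would make $K^\circ$ unbounded. Your inclusion $K+tB_{\ell_2^m}\subseteq(1+t/r)K$ combined with Theorem~\ref{thm:gii} is the same Lipschitz-concentration argument the paper uses, carried out directly from isoperimetry.
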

\begin{remark}
    Notice that if we could sample efficiently from the distribution $D$, then this also give an efficient algorithm to find a coloring $x$ with $\sum_j x_j v_j \in 2cc'K$, by sampling a couple of times from $D$.
\end{remark}
\begin{proof}
Let $B_{\ell_2^m}$ denote the unit $\ell_2^m$-ball. We claim that $\frac12
B_{\ell_2^m}\subseteq K$. Otherwise, there is a point $u$, $\|u\|_2 = \frac12$,
that is not in $K$, and, since $K$ is closed and convex, by the
hyperplane separator theorem there is a vector $y \in \R^m$ such that
$\ip{x,y} \le 1$ but $\ip{u,y} > 1$. By the Cauchy-Schwarz inequality,
$\ip{u,y} > 1$ implies $\|y\|_2 > 2$. Then, for the halfspace $H
\eqdef \{x \in \R^m: \ip{x,y} \le 1\}$ we have
\[
  \gamma^m(K) \le \gamma^m(H) = \Pr[\ip{\grv,y} \le 1] <
  \Phi(1/\|y\|_2) \leq \Phi(1/2) < 3/4.
\]
Above, $\grv$ is a standard Gaussian vector in $\R^m$, $\Phi$ is the
CDF of the one-dimensional standard Gaussian,  and we used the
fact that $\ip{\grv,y}$ is a one-dimensional Gaussian with mean $0$
and standard deviation $\|y\|_2 > 2$. This inequality contradicts
$\gamma^m(K) \ge 3/4$.

Using polarity, we have that $tK = \{x \in \R^m: \ip{x,y} \le t \
\forall y \in K^\circ\}$, for any $t \ge
0$. Theorem~\ref{thm:ch6a-talagrand} gives us 
\begin{equation}\label{eq:ch6a-comp}
  \E \sup_{z \in K^\circ} \ip{Y,z} \le c_1\sigma \E\sup_{z \in
    K^\circ} \ip{\grv,z}
\end{equation}
for a standard Gaussian random vector $\grv \in \R^m$ and some
absolute constant $c_1 >
0$. 

We want to show that $\E\sup_{z \in  K^\circ} \ip{\grv,z} \lesssim
1$. Since $\frac12 B_{\ell_2^m} \subseteq K$, the Cauchy-Schwarz theorem
implies that $K^\circ \subseteq 2B_{\ell_2^m}$, and the function
$f_K(x) \eqdef \sup_{z \in K^\circ} \ip{x,z}$
is $2$-Lipschitz with respect to the $\ell_2^m$ norm. (Note that $f_K$
is just the norm with unit ball $K$ when $K = -K$ and $K$ is bounded
and has nonempty interior.) Suppose, for contradiction, that
$\E[f_K(\grv)] > C$ for some sufficiently large constant $C > 1$. Then, the
Gaussian concentration inequality~\cite[Corollary~8.6.2]{vershynin} gives us that, for
some absolute constant $c_2 > 0$,
\[
  \gamma^m(K) = \Pr[f_K(\grv) \le 1]
  \leq \Pr[f_K(\grv) \le \E[f_K(\grv)] - (C-1)] \le e^{-c_2 (C-1)^2}.
\]
Choosing $C$ sufficiently large contradicts $\gamma^m(K) \ge
\frac34$, so $\E\sup_{z \in  K^\circ} \ip{\grv,z} = \E[f_K(\grv)]
\le C$. Together with \eqref{eq:ch6a-comp}, this gives us
\(
\E \sup_{z \in K^\circ} \ip{Y,z}\le c_1C\sigma ,
\)
as well. 
Now, by Markov's inequality, $\sup_{z \in K^\circ} \ip{Y,z} \le 2c_1C\sigma$ with probability at
least $\frac12$,  and
therefore, $Y \in (2c_1 C)K$ with the same probability.
\end{proof}

It is not hard to extend Theorem~\ref{thm:bana-subg-suff} to \emph{symmetric} closed
convex sets $K$ with $\gamma^m(K) \ge \frac12$. The main observation
is that such a set $K$ still must contain a Euclidean ball of some
fixed constant radius $r$, since, if it did not, it would be
sandwiched between two hyperplanes of distance less than $2r$
apart. Extending the theorem to non-symmetric $K$ is, however,
non-trivial. For example, if $K$ is the halfspace $\{x \in \R^m: x(1)
\le 0\}$, then $\gamma^m(K) = \frac12$, but $K$ does not contain a
Euclidean ball of any positive radius around the origin, and its polar
set $K^\circ$ is unbounded, so $\E \sup_{z \in K^\circ} \ip{Y,z} =
\infty$. Nevertheless, it is possible to algorithmically reduce the
case of non-symmetric $K$ with $\gamma^m(K) \ge \frac12$ to the
symmetric case: we refer to~\cite{DGLN-journal} for the details.

\section{Prefix discrepancy}
In many applications of discrepancy, one might actually want to bound the discrepancy of every prefix sum, instead of just the overall sum $\sum_i x(i) v_i$. One example is the application to the Steinitz Problem in Section~\ref{sec:ch1-steinitz}, which we revisit later in this section.
The following result, also due to Banaszczyk, controls the discrepancy of every prefix sum, at the expense of a slight increase in the Gaussian measure of $K$. 

\begin{theorem}\label{thm:bana2}
 Given arbitrary vectors $v_1,\dots,v_n\in\mathbb{R}^m$ with $\|v_i\|_2 \leq 1/5$ and any
convex body $K\subseteq \mathbb{R}^m$ with $\gamma^m(K)\ge 1-1/(2n)$,
there exists a $\pm 1$ coloring $x$ such that $\sum_{j=1}^k x(j) v_j \in K$ for each $k\in [n]$.
\end{theorem}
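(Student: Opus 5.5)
We would prove Theorem~\ref{thm:bana2} by the same backward induction that derives Theorem~\ref{thm:bana} from Theorem~\ref{thm:bana-star}, but carrying along an intersection of bodies so that every prefix constraint is enforced simultaneously. The only geometric input is Theorem~\ref{thm:bana-star}, applied to a body of Gaussian measure at least $1/2$.

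Define bodies $L_n, L_{n-1}, \ldots, L_0$ backwards. Set $L_n \eqdef K$ and, for $k = n, n-1, \ldots, 1$,
\[
L_{k-1} \eqdef (L_k * v_k) \cap K .
\]
For $k=1$ we only need $L_0 \eqdef L_1 * v_1$, without intersecting with $K$, since there is no empty-prefix constraint. Each $L_k$ is convex, because $L_k * v_k$ is convex and $K$ is convex.

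The measure claim is that $\gamma^m(L_k) \ge 1 - (n-k+1)/(2n)$ for $k \ge 1$. We prove it by backward induction on $k$.
\begin{itemize}
\item The base case $k=n$ is the hypothesis $\gamma^m(K)\ge 1-1/(2n)$.
\item For the inductive step, suppose $\gamma^m(L_k) \ge 1 - (n-k+1)/(2n) \ge 1/2$ for some $k\ge 2$. Since $k \ge 2$ gives $n-k+1 \le n-1 < n$, this bound is indeed at least $1/2$, so Theorem~\ref{thm:bana-star} applies to $L_k$ and $v_k$.
\item It gives $\gamma^m(L_k * v_k) \ge \gamma^m(L_k)$.
\item A union bound on complements then yields $\gamma^m(L_{k-1}) \ge \gamma^m(L_k) - (1-\gamma^m(K)) \ge 1 - (n-k+2)/(2n)$.
\item At $k=1$ we have $\gamma^m(L_1) \ge 1 - n/(2n) = 1/2$, and one more application of Theorem~\ref{thm:bana-star} gives $\gamma^m(L_0) \ge 1/2$.
\end{itemize}
By the base-case argument in the proof of Theorem~\ref{thm:bana}, a closed convex body of measure at least $1/2$ contains $0$, so $0 \in L_0$.

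Next we choose signs forward, maintaining the invariant $u_k \eqdef \sum_{j\le k} x(j)v_j \in L_k$. Initially $u_0 = 0 \in L_0$. Given $u_{k-1} \in L_{k-1} \subseteq L_k * v_k \subseteq (L_k - v_k)\cup(L_k+v_k)$, choose $x(k)\in\{\pm1\}$ so that $u_k = u_{k-1} + x(k)v_k \in L_k$, exactly as in the proof of Theorem~\ref{thm:bana}. For $1 \le k \le n-1$ we have $L_k \subseteq K$ by construction, and $L_n = K$, so every prefix sum $u_k$ lies in $K$.

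The step that needs the most care is the bookkeeping of measures. Theorem~\ref{thm:bana-star} only preserves measure and requires the input to have measure at least $1/2$, so we must verify that the accumulated losses from intersecting with $K$ at each of the $n-1$ intermediate steps never push the measure below $1/2$ before the last application. The $1/(2n)$ slack in the hypothesis is exactly what makes this work. A minor technical point is that Theorem~\ref{thm:bana-star} is stated for convex bodies; $L_k$ is closed and convex with positive measure, so we assume it qualifies (passing to closures if needed).
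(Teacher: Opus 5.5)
Your proposal is correct and follows essentially the same argument as the paper: the same backward construction $K_{j} = (K_{j+1} * v_{j+1}) \cap K$, the identical measure bound $\gamma^m(K_j) \ge 1/2 + (j-1)/(2n)$ with a loss of $1/(2n)$ per intersection, and then a forward greedy choice of signs. The only cosmetic difference is that you obtain the first sign from the extra body $L_0 = L_1 * v_1 \ni 0$, whereas the paper applies Theorem~\ref{thm:bana} to $K_1$ with the single vector $v_1$; these amount to the same step.
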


Similarly to Theorem \ref{thm:bana}, this result also follows from Theorem \ref{thm:bana-star} based on a clever induction argument.  
\begin{proof}
Consider the convex bodies $K_n,K_{n-1},\ldots,K_1$ defined
iteratively as follows:  $K_n\eqdef K$ and $K_{j}\eqdef(K_{j+1}*v_{j+1}) \cap K$, for $j=n-1,\ldots,1$.  

We first show by backwards induction that $\gamma^m(K_j) \geq 1/2 + (j-1)/2n$. This implies that $\gamma^m(K_j) \geq 1/2$ for all $j\geq 1$.

The base case of $j=n$ holds as $\gamma^m(K_n) = \gamma^m(K) \ge 1-1/2n$.
Suppose the claim holds for some $j \leq n$. Then 
\begin{align*}\gamma^{m}(K_{j-1}) &  = \gamma^m((K_{j}*v_{j}) \cap
                                      K) \geq  \gamma^m(K_{j}*v_{j}) - \gamma^m(\R^m\setminus {K}) \\
& \geq    \gamma^m(K_{j})  - \gamma^m(\R^m\setminus{K}) \geq  \frac{1}{2} +\frac{j-1}{2n} -\frac{1}{2n} = \frac{1}{2} + \frac{ j-2}{2n},
\end{align*}
where we use that $\gamma^m(K_{j}*v_{j}) \geq \gamma^m(K_{j})$ as
$\gamma^m(K_{j}) \geq 1/2$ by Theorem \ref{thm:bana-star}, and that
$\gamma^m(\R^m \setminus {K})\leq 1/(2n)$.

We now show that any sequence $x(1), \ldots, x(k) \in \{-1,+1\}$ satisfying
$\sum_{j=1}^\ell x(j) v_j \in K_\ell$ for all $\ell \in [k]$, can be extended to
$x(1), \ldots, x(k+1) \in \{-1,+1\}$ such that $\sum_{j=1}^{k+1} x(j)
v_j \in K_{k+1}$. This will imply the result as the final sequence
$x(1),\ldots,x(n)$ will satisfy $\sum_{j=1}^k x(j) v_j \in K_k
\subseteq K$ for all $k\in [n]$.

To initialize the sequence, we pick some $x(1)$ such that $x(1) v_1 \in K_1$. Such an $x(1)$ exists by Theorem \ref{thm:bana} as $\gamma^m(K_1) \geq 1/2$. 
Then, for $k\ge 1$, let $x(1),\ldots,x(k)$ be the sequence chosen so
far, and define $y_k\eqdef\sum_{j=1}^k x(j) v_j$.  Then
\[y_k\in K_k \subseteq K_{k+1}*v_{k+1} \subseteq (K_{k+1}+v_{k+1}) \cup
  (K_{k+1}- v_{k+1}),\] and at least one of $y_k+v_{k+1}$ or $y_k-v_{k+1}$ must lie in $K_{k+1}$. We set $x(k+1)$ accordingly and extend the sequence.
\end{proof}

It is possible to use the proof of Theorem~\ref{thm:bana2} and the
cloning trick in order to formulate a statement similar to
Theorem~\ref{thm:bana-subgaussian} for prefix sums. We state this
theorem next, and refer to the work of Kulkarni, Reis, and Rothvoss~\cite{rothvoss-online} for the
proof.
\begin{theorem}\label{thm:bana-prefix-subgaussian}
  Given arbitrary vectors $v_1,\ldots,v_n \in \R^m$ with $\|v_i\|_2
  \leq 1$, there is a distribution $D$ over colorings $x \in
  \{-1,1\}^n$ such that, for each $k \in [n]$, the prefix discrepancy
  vector $Y_k \eqdef \sum_{j=1}^k x(j) v_j$ is $O(1)$-sub-Gaussian when
  $x$ is sampled from $D$.
\end{theorem}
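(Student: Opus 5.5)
We will derive Theorem~\ref{thm:bana-prefix-subgaussian} from the prefix version of Banaszczyk's theorem, Theorem~\ref{thm:bana2}, by the cloning trick of Section~\ref{sec:komlos-sdp-bound}. The distribution $D$ will be uniform over $N$ colorings $x^{(1)},\ldots,x^{(N)}$ obtained from a single application of Theorem~\ref{thm:bana2} to $N$ disjoint copies of the instance. The key point is that Theorem~\ref{thm:bana2} colors the cloned vectors in a prescribed order. So we interleave the copies \emph{block by block in time}: for $j=1,\ldots,n$ we list $e_1\otimes v_j,\ldots,e_N\otimes v_j$ (scaled by $1/5$). After the $j$-th block, the prefix sum equals $\sum_\ell e_\ell\otimes Y^{(\ell)}_j$, where $Y^{(\ell)}_j=\sum_{i\le j}x^{(\ell)}(i)v_i$.

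The sub-Gaussian requirement is that for every $k$ and every unit $\theta\in\R^m$,
\[
\frac1N\sum_{\ell=1}^N \exp(\lambda\ip{Y_k^{(\ell)},\theta})\le \exp(C\lambda^2).
\]
This is infinitely many constraints, so we first discretize. It suffices to control $\frac1N\sum_\ell \exp(\ip{Y^{(\ell)}_k,\theta}) \le e^{C\|\theta\|^2}$ for $\theta$ ranging over a finite net $\mathcal N$ of a bounded region in the span of $v_1,\ldots,v_n$ (dimension at most $n$). Larger $\theta$ are handled by the crude bound $\|Y_k\|_2\le n$ together with a scale-dependent net, or by noting that sub-Gaussianity at moderate scales plus boundedness gives it at all scales with worse constants. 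The body is then
\[
K=\bigcap_{\theta\in\mathcal N}\Big\{y\in\R^{Nm}:\ \frac1N\sum_{\ell}\exp(\ip{y_\ell,\theta})\le e^{C\|\theta\|_2^2}\Big\},
\]
where $y_\ell$ is the $\ell$-th block. Each constraint is convex, since a sum of exponentials of linear functions is convex. The same $K$ is used at all intermediate prefixes. Prefixes in the middle of a block are handled by noting that they differ from a block-end prefix only in the later copies; alternatively, we simply require membership in $K$ only at block ends, by applying Theorem~\ref{thm:bana2} to the intersection body where intermediate steps are only required to lie in a huge ball.

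The measure estimate comes next. For a standard Gaussian $g\in\R^{Nm}$, the blocks $g_\ell$ are i.i.d.\ and $\E\exp(\ip{g_\ell,\theta})=e^{\|\theta\|^2/2}$. For the empirical average to exceed $e^{C\|\theta\|^2}$, the average of $N$ i.i.d.\ log-normal variables must exceed $e^{(C-1/2)\|\theta\|^2}$ times its mean. We need this failure probability to be at most $1/(2nN|\mathcal N|)$, so that $\gamma^{Nm}(K)\ge 1-1/(2nN)$ as Theorem~\ref{thm:bana2} demands on $nN$ vectors.

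The main obstacle is that the log-normal has heavy tails, so $N$ must be taken exponentially large, mirroring the remark after Theorem~\ref{thm:bana-subgaussian} for the non-prefix case. I would first try Markov's inequality on the average directly. This gives failure probability $e^{-(C-1/2)\|\theta\|^2}$, which is too weak for small $\|\theta\|$. So I would instead use a truncated second-moment or Bernstein argument, restrict the net to $\|\theta\|\ge\theta_0$ (handling small $\theta$ by a separate variance constraint $\frac1N\sum_\ell\ip{y_\ell,\theta}^2\le C$), and choose $N=\exp(\poly(n,\log|\mathcal N|))$ so that the union bound closes. Finally, we pass from the net back to all $\theta$ via Lipschitz continuity of $\theta\mapsto\log\frac1N\sum_\ell e^{\ip{Y^{(\ell)}_k,\theta}}$, whose gradient is bounded by $\max_\ell\|Y^{(\ell)}_k\|\le n$.
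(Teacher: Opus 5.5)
Your route is the one the paper indicates. The paper gives no proof: it only says the theorem follows from Theorem~\ref{thm:bana2} plus the cloning trick with exponentially many copies, and defers to \cite{rothvoss-online}. Interleaving the copies block by block is the right way to make block-end prefixes of the cloned sequence coincide with simultaneous $k$-prefixes of all copies. Intermediate prefixes need no treatment: with a single body $K$, Theorem~\ref{thm:bana2} puts them in $K$ as well, and you simply ignore them.

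The step that fails as written is small $\theta$. Sub-Gaussianity forces $\E W=0$ for $W=\ip{Y_k,\theta}$; the paper's definition requires mean zero, and the bound $\E e^{\lambda W}\le e^{C\lambda^2}$ for all small $\lambda$ already implies it. Nothing in your construction makes the uniform distribution on $x^{(1)},\ldots,x^{(N)}$ have mean zero. The constraint $\frac1N\sum_\ell\ip{y_\ell,\theta}^2\le C$ does not help. For example, $W\equiv 1$ has second moment $1$ and satisfies $\E e^{\pm sW}\le e^{Cs^2}$ for all $s\ge\theta_0$ once $C\theta_0\ge 1$. Yet $\E e^{\lambda W}=e^{\lambda}>e^{C\lambda^2}$ for small $\lambda>0$.

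The fix is to symmetrize: take $\mathcal N=-\mathcal N$ and let $D$ be uniform over the $2N$ colorings $\pm x^{(\ell)}$. Then $\E e^{\lambda W}=\E\cosh(\lambda W)$. The bound at an absolute-constant scale $\theta_0$ gives $\E W^{2k}\le (2k)!\,\theta_0^{-2k}e^{C\theta_0^2}$, hence $\E\cosh(\lambda W)\le 1+O(\lambda^2)$ for $|\lambda|\le\theta_0/2$. The variance constraint then becomes unnecessary, but $\theta_0$ must not depend on $n$ or $N$.

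The measure estimate, which you leave as a plan, does close. Note first why Markov alone cannot work: the target $1/(2nN|\mathcal N|)$ itself shrinks with $N$, so you need bounds decaying faster than $1/N$.

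Since $\|Y_k\|_2\le n$, only $\theta_0\le\|\theta\|\le n$ in the span of the $v_i$ matters. A $1/n$-net of that region has $\log|\mathcal N|=O(n\log n)$, independent of $N$. The gradient bound $n$ makes the net error an additive $1$ in the log-mgf, which is absorbed because $\|\theta\|\ge\theta_0$.

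Write $s=\|\theta\|$ and $Z_\ell=e^{\ip{g_\ell,\theta}-s^2/2}$, and split into two cases.
\begin{itemize}
\item If $s^2\ge c_0\ln N$, Markov gives failure probability $e^{-(C-1/2)s^2}\le N^{-(C-1/2)c_0}$.
\item If $s^2<c_0\ln N$, then $\Pr[\max_\ell Z_\ell>N^{1/4}]\le N\exp(-(\ln N)^2/(32s^2))\le N^{1-1/(32c_0)}$. Hoeffding on the average truncated at $N^{1/4}$ gives $\exp(-\Omega(\sqrt N))$, since the required deviation $e^{(C-1/2)s^2}-1$ is at least a constant.
\end{itemize}
Take $c_0$ small, $C$ a large absolute constant, and $N=n^{\Theta(n)}$. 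Then every term is at most $N^{-3}$, which closes the union bound.
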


Unlike Theorem~\ref{thm:bana-subgaussian}, there is no known efficient
algorithm to sample from the distribution $D$. More generally, it is
an intriguing open problem how to make Theorem \ref{thm:bana2}
algorithmic. 

\subsection{Prefix Koml\'os Problem and the Steinitz Lemma}
Theorem \ref{thm:bana2} directly implies the following bound for the prefix version of the Koml\'os problem.

\begin{theorem}[Prefix Koml\'os]\label{thm:prefix-komlos}
  Given vectors $v_1,\dots,v_n\in\mathbb{R}^m$ with $\|v_j\|_2 \leq 1$, 
  there exists a coloring $x:[n]\to\{-1,+1\}$ such that  $\|\sum_{j=1}^k x(j) v_j\|_2 \lesssim \sqrt{\log(2n)}$ for all $k\in [n]$. I.e., the signed series constant satisfies 
  \[
  \sser(B_{\ell_2^m},\ell_\infty^m,n) \lesssim \sqrt{\log(2n)}.
  \]
\end{theorem}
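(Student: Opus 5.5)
The plan is to apply Theorem~\ref{thm:bana2} (Banaszczyk's prefix theorem) with $K$ a scaled cube. The norm in the displayed bound is $\ell_\infty^m$, as the notation $\sser(B_{\ell_2^m},\ell_\infty^m,n)$ indicates, so that is the bound I will prove.

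\textbf{Reduction to $m\le n^2$.} Let $\mat$ have columns $v_1,\dots,v_n$ and rows $u_1,\dots,u_m\in\R^n$. As in Remark~\ref{rem:bana-komlos-trick}, $\sum_i\|u_i\|_2^2\le n$, so at most $n^2$ rows satisfy $\|u_i\|_2^2\ge 1/n$. For every other row, any coloring $x$ and any $k\in[n]$ satisfy, by Cauchy--Schwarz,
\[
\Big|\sum_{j\le k}x(j)\mat(i,j)\Big|\le \|u_i\|_2\sqrt{k}\le 1 .
\]
Hence every prefix is automatically controlled on these coordinates. I will therefore only need to control the set $R$ of at most $n^2$ ``heavy'' coordinates.

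\textbf{Choice of body.} Set
\[
K\eqdef\{y\in\R^m: |y(i)|\le t \ \ \forall i\in R\},\qquad t\eqdef\sqrt{2\ln(4|R|n)}\lesssim\sqrt{\log(2n)} .
\]
Then $K$ is a closed convex set (an intersection of slabs). By the union bound and the Gaussian tail estimate $\Pr[|g|\ge t]\le 2e^{-t^2/2}$,
\[
\gamma^m(K)\ge 1-2|R|e^{-t^2/2}\ge 1-\frac{1}{2n}.
\]
Only the heavy coordinates constrain $K$, which is the reason for the reduction.

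\textbf{Applying the prefix theorem.} Apply Theorem~\ref{thm:bana2} to the vectors $v_j/5$, which have norm at most $1/5$, and the body $K$. This gives $x\in\{-1,+1\}^n$ with $\sum_{j\le k}x(j)v_j\in 5K$ for every $k\in[n]$. Consequently every prefix sum has $|\cdot|\le 5t\lesssim\sqrt{\log(2n)}$ on the coordinates in $R$. On the remaining coordinates it is at most $1$ by the reduction step. Together these give the claimed $\ell_\infty^m$ prefix bound, uniformly in $m$.

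\textbf{Main obstacle.} The only delicate point is avoiding a $\sqrt{\log m}$ dependence. The measure requirement $1-1/(2n)$ forces $t\gtrsim\sqrt{\log(mn)}$ if all $m$ coordinates are constrained, and the reduction to at most $n^2$ heavy rows is what makes $\log(mn)$ become $O(\log n)$. I expect the reduction to carry over verbatim from Remark~\ref{rem:bana-komlos-trick}, because the Cauchy--Schwarz bound holds for every prefix simultaneously.
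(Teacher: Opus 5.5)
Your proof is correct and follows the paper's argument: apply Theorem~\ref{thm:bana2} to the rescaled vectors $v_j/5$ with $K$ a cube of half-width $t\lesssim\sqrt{\log(2mn)}$, after using Remark~\ref{rem:bana-komlos-trick} to reduce to at most $n^2$ relevant rows. Constraining only the heavy coordinates, so that $K$ is a cylinder over a cube, is an explicit way of carrying out the paper's ``assume $m\le n^2$'' step, and you are right that the intended bound is in $\ell_\infty^m$, not $\ell_2$.
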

\begin{proof}
We apply Theorem~\ref{thm:bana2} with $K \eqdef [-t,t]^m$ and $t>0$ such
that  \[\gamma^m(K) \geq 1-1/2n.\]  As in the proof of
Theorem~\ref{thm:komlos-bana}, we see that setting $t \lesssim \sqrt{\log(2mn)}$ suffices.
By Remark \ref{rem:bana-komlos-trick}, we can also assume that $m\leq n^2$.
\end{proof}

A similar proof shows the following bound, which gets tantalizingly
close to resolving Problem~\ref{prob:steinitz} for the case of the
Euclidean norm.

\begin{theorem}\label{thm:sser-ellp}
  Let $p \ge 2$ and suppose that the vectors $v_1, \ldots, v_n \in
  \R^m$ each satisfy $\|v_j\|_2 \le 1$. There exists a coloring $x:[n]
  \to \{-1,+1\}$ such that $\|\sum_{j=1}^kx(j)v_k\|_p \lesssim
  \sqrt{p}m^{1/p} + \sqrt{\log n}$. I.e., the signed series constant satisfies 
  \[
  \sser(B_{\ell_2^m},\ell_p^m,n) \lesssim \sqrt{p}m^{1/p} + \sqrt{\log n}.
  \]
\end{theorem}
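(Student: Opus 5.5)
We apply Theorem~\ref{thm:bana2} to the scaled vectors $\frac15 v_1,\ldots,\frac15 v_n$ and the convex body
\[
K \eqdef \{y \in \R^m : \|y\|_p \le t\},
\]
where $t$ is chosen so that $\gamma^m(K) \ge 1 - 1/(2n)$. Theorem~\ref{thm:bana2} then gives signs $x(1),\ldots,x(n)$ with $\sum_{j=1}^k x(j)v_j \in 5K$ for every $k\in[n]$. This means every prefix sum has $\ell_p$ norm at most $5t$. So the whole proof reduces to bounding the Gaussian tail of $\|\grv\|_p$ for a standard Gaussian $\grv\in\R^m$. The target is
\[
\Pr\left[\|\grv\|_p > t\right] \le \frac{1}{2n}
\quad\text{for some}\quad t \lesssim \sqrt{p}\,m^{1/p} + \sqrt{\log(2n)}.
\]

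The tail bound has two parts: control of the mean and concentration around it.

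\emph{Mean.} By Jensen's inequality,
\[
\E\|\grv\|_p \le \left(\E\sum_{i=1}^m |\grv(i)|^p\right)^{1/p} = m^{1/p}\left(\E|\grv(1)|^p\right)^{1/p}.
\]
The standard moment bound for a one-dimensional Gaussian, $(\E|\grv(1)|^p)^{1/p} \lesssim \sqrt{p}$, then gives $\E\|\grv\|_p \lesssim \sqrt{p}\,m^{1/p}$.

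\emph{Concentration.} Since $p\ge 2$, we have $\|y\|_p \le \|y\|_2$, so $y\mapsto \|y\|_p$ is $1$-Lipschitz with respect to the Euclidean norm. The Gaussian concentration inequality (the same one used in the proof of Theorem~\ref{thm:bana-subg-suff}, \cite[Corollary~8.6.2]{vershynin}) then gives
\[
\Pr\left[\|\grv\|_p \ge \E\|\grv\|_p + s\right] \le e^{-c s^2}
\]
for an absolute constant $c>0$. Taking $s = \sqrt{\ln(2n)/c}$ makes this probability at most $1/(2n)$. Setting $t \eqdef \E\|\grv\|_p + s$ therefore gives $\gamma^m(K)\ge 1-1/(2n)$ with $t \lesssim \sqrt{p}\,m^{1/p} + \sqrt{\log(2n)}$, which is the claimed bound. (For $n=1$ the statement is trivial, so the $\log(2n)$ versus $\log n$ distinction is irrelevant.)

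The only step needing care is the Gaussian moment estimate and the exact form of the concentration inequality; both are standard. The rest is a direct plug-in to Theorem~\ref{thm:bana2}, exactly as in the proof of Theorem~\ref{thm:prefix-komlos}, with the $\ell_p$ ball replacing the cube. Unlike that proof, no reduction to $m\le n^2$ is needed here, because the $m$-dependence already enters only through the $m^{1/p}$ factor.
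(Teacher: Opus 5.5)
Your proposal is correct and follows essentially the same route as the paper: apply Theorem~\ref{thm:bana2} with $K = tB_{\ell_p^m}$, bound $\E\|\grv\|_p \lesssim \sqrt{p}\,m^{1/p}$ via Jensen and Gaussian moments, and use Gaussian concentration for the $1$-Lipschitz function $\|\cdot\|_p$ to get $\gamma^m(K)\ge 1-\frac{1}{2n}$ with $t \lesssim \sqrt{p}\,m^{1/p}+\sqrt{\log(2n)}$. You also make explicit the rescaling of the vectors by $\frac15$ and the trivial $n=1$ case, both of which the paper leaves implicit.
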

\begin{proof}
  We apply Theorem~\ref{thm:bana2} with $K = tB_{\ell_p^m}$, where
  $B_{\ell_p^m}$ is the unit ball of the $\ell_p^m$ norm, and $t$ is
  chosen so that $\gamma^m(K) \ge 1 - 1/2n$. 
  By Jensen's inequality
  and standard bounds on the $p$-th moment of a
  Gaussian~\cite[Proposition 2.5.2]{vershynin}, we have
  \[
    \E[\|\grv\|_p] \le \E[\|\grv\|_p^p]^{1/p}
    = \E\left[\sum_{i=1}^m |\grv(i)|^p\right]^{1/p} \lesssim \sqrt{p}m^{1/p}.
  \]
  By the Gaussian concentration inequality, since $\|\cdot\|_p$ is
  1-Lipschitz with respect to the Euclidean norm when $p \ge 2$,
  \[
    \Pr[\|\grv\|_p > \E[\|\grv\|_p] + s] \le e^{-c s^2}
  \]
  for an absolute constant $c > 0$. Choosing $s \eqdef c'
  \sqrt{\log(2n)}$ for a large enough constant $c' > 0$ makes  the
  right hand side at most $\frac{1}{2n}$. Since the left hand side
  equals $\gamma^m(\R^m\setminus K)$ for $t = \E[\|\grv\|_p] + s
  \lesssim \sqrt{p}m^{1/p} + \sqrt{\log(2n)}$, this proves the lemma.
\end{proof}

If Theorem~\ref{thm:sser-ellp} could be improved by removing the
additive $\sqrt{\log 2n}$ term, then it would give tight bounds for
Problem~\ref{prob:steinitz} for any constant $p \ge 2$. In particular, via
Chobanyan's transference theorem (Theorem~\ref{thm:ch1-chobanyan}), it would imply
\[
  \st(B_{\ell_p^m}, \ell_p^m) \le m^{\frac12 - \frac1p}\st(B_{\ell_2^m}, \ell_p^m) 
  \le m^{\frac12 - \frac1p}\sser(B_{\ell_2^m}, \ell_p^m) 
  \lesssim \sqrt{pm},
\]
which is tight for constant $p \ge 2$. 

Another open
problem is to make Theorem~\ref{thm:sser-ellp} or Theorem~\ref{thm:prefix-komlos},
algorithmic. For both theorems, there is no known efficient algorithm to find
the coloring $x$ given the vectors $v_1, \ldots, v_n$.



\subsection{Prefix Discrepancy and the $\gamma_2$ Norm}

Theorem~\ref{thm:bana2} also immediately implies that
Lemma~\ref{lm:fact-ub} can be extended to bound prefix discrepancy, at
the cost of a small increase in the logarithmic factor. Here we recall
that the prefix discrepancy of an $m\times n$ real matrix $\mat = (w_i)_{i=1}^n$ is
the quantity
\[
  \pdisc((w_i)_{i = 1}^n) \eqdef
  \min_{\col:[n]\to\{-1,+1\}}\max_{j=1}^n \left\|\sum_{i = 1}^j\col(i)w_i \right\|_\infty.
\]
This definition coincides with our definition of
$\pdisc_{\ell_\infty^m}$ from Chapter~\ref{ch1-intro}.

\begin{lemma}\label{lm:fact-ub-prefix}
  For any $m\times n$ matrix $\mat$ 
  we have
  \[
  \pdisc(\mat) \lesssim \gamma_2(\mat) \sqrt{\ln(2mn)}.
  \]   
\end{lemma}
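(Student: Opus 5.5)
We mirror the proof of Lemma~\ref{lm:fact-ub}, replacing Theorem~\ref{thm:bana} by its prefix version, Theorem~\ref{thm:bana2}. Fix a factorization $\mat = UV$ with $U\in\R^{m\times k}$, $V\in\R^{k\times n}$, rows $u_1,\ldots,u_m$ of $U$ satisfying $\|u_i\|_2\le r$ and columns $v_1,\ldots,v_n$ of $V$ satisfying $\|v_j\|_2\le c$. It suffices to prove $\pdisc(\mat)\lesssim rc\sqrt{\ln(2mn)}$. Taking the infimum of $rc$ over factorizations then gives the bound with $\gamma_2(\mat)$. If the infimum is not attained, we take factorizations with $rc\le \gamma_2(\mat)+\eta$ and let $\eta\to 0$; since there are only finitely many colorings, this passes to the limit.

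Define the symmetric closed convex body
\[
K\eqdef\{y\in\R^k:\ \|Uy\|_\infty\le t\},\qquad t\eqdef r\sqrt{2\ln(4mn)}.
\]
For a standard Gaussian $\grv\in\R^k$, each $\ip{u_i,\grv}$ is a centered Gaussian with variance at most $r^2$. The union bound then gives
\[
\gamma^k(\R^k\setminus K)\le \sum_{i=1}^m\Pr[|\ip{u_i,\grv}|>t]\le 2m\,e^{-t^2/2r^2}=\frac{1}{2n},
\]
so $\gamma^k(K)\ge 1-1/(2n)$, exactly the hypothesis of Theorem~\ref{thm:bana2}. The only change from the non-prefix case is the extra factor $n$ inside the logarithm, needed to push the measure from $1/2$ up to $1-1/(2n)$.

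Apply Theorem~\ref{thm:bana2} to the vectors $\frac{1}{5c}v_1,\ldots,\frac{1}{5c}v_n$, which have Euclidean norm at most $1/5$, and to the body $K$. This yields $\col\in\{-1,+1\}^n$ with $\sum_{j\le \ell}\col(j)v_j\in 5cK$ for every $\ell\in[n]$. Multiplying by $U$, the $\ell$-th prefix sum of the columns of $\mat$ is $\sum_{j\le \ell}\col(j)\mat(*,j)=U\sum_{j\le\ell}\col(j)v_j$, and so
\[
\Big\|\sum_{j\le \ell}\col(j)\mat(*,j)\Big\|_\infty\le 5ct=5rc\sqrt{2\ln(4mn)}\lesssim rc\sqrt{\ln(2mn)}
\]
for every $\ell$. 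This proves the lemma.

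There is no real obstacle, since Theorem~\ref{thm:bana2} does all the work. The one point to check is the measure computation: it must reach $1-1/(2n)$, not just $1/2$, and this is what forces $\ln(2mn)$ in place of $\ln(2m)$.
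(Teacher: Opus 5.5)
Your proposal is correct and is the paper's proof: you apply Theorem~\ref{thm:bana2} to the scaled columns $\frac{1}{5c}v_j$ and the body $K=\{y:\|Uy\|_\infty\le t\}$, with $t$ chosen so that the union bound gives $\gamma^k(K)\ge 1-\frac{1}{2n}$. Your limiting argument over finitely many colorings is unnecessary; the bound holds for every factorization, so you can take the infimum directly, and in any case the paper observes that the infimum defining $\gamma_2(\mat)$ is attained.
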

\begin{proof}
  By the definition of $\gamma_2(\mat)$, we need to show  that if $\mat$ can be written as the matrix
  product $\mat = UV$ of two matrices $U \in \R^{m\times k}$ and $V\in
  \R^{k\times n}$, where each row
  $u_i$ of $U$ satisfies $\|u_i\|_2 \le r$, and each column $v_j$ of
  $V$ satisfies $\|v_j\|_2 \le c$, then $\pdisc(\mat) \lesssim rc\sqrt{\ln(2mn)}$.
  Similarly to the proof of Lemma~\ref{lm:fact-ub}, we define the convex body
  \[
    K = \{ y \in \R^k: \|Uy \|_{\infty} \leq r\sqrt{2\ln{(2mn)}}  \}.
  \]
  Arguing as in the proof of Lemma~\ref{lm:fact-ub},
  we have $\gamma^m(K) \ge 1 - \frac{1}{2n}$.
  Now Theorem~\ref{thm:bana2} applied to the columns $\frac{1}{5c}v_1,
  \ldots, \frac{1}{5c}v_n$ of $\frac{1}{5c}V$ implies the lemma.
\end{proof}

In Chapter~\ref{ch:gamma2} we will use Lemma~\ref{lm:fact-ub-prefix}
to prove the best known upper bound for Tusn\'ady's problem (Problem~\ref{prob:tusnady}).

\section{Bibliographic Notes}

Theorems~\ref{thm:bana}~and~\ref{thm:komlos-bana} are due to Banaszczyk \cite{Bana98}. The
original proof was non-algorithmic and based on deep results from convex geometry. We will describe Banaszczyk's proof in detail in
Chapter \ref{ch:bana-proof}. 

Lemma~\ref{lm:fact-ub} is implicit in Larsen's work on data structure
lower bounds~\cite{disc-larsen}, and was made explicit by
Matou\v{s}ek, Nikolov, and Talwar~\cite{MNT}. 

The cloning trick in Section \ref{sec:komlos-sdp-bound} is due to
unpublished work by Shachar Lovett, Raghu Meka, and Oded
Regev. Nikolov~\cite{nikolov2013komlos} gave a different and direct proof of
Theorem~\ref{thm:komlos-vec-disc} where the implied constant on the
right hand side of \eqref{eq:komlos-vec-disc} is equal to $1$. Yet another proof of Theorem~\ref{thm:komlos-vec-disc}, also with the constant $1$ on the right hand side, was given by Dadush, Garg, Lovett, and Nikolov~\cite{DGLN16}.
Interestingly, it is unclear how to show this tight
bound using the cloning trick, as the body $K$ in
Theorem~\ref{thm:bana} needs to be scaled by $5$ if the vectors $v_j$
have unit length. We give another (previously unpublished) proof of the tight upper bound in Chapter~\ref{ch:online}.

The equivalence between
Theorems~\ref{thm:bana}~and~\ref{thm:bana-subgaussian} was shown by
Dadush, Garg, Lovett and Nikolov \cite{DGLN16}, motivated by the
question of finding an efficient algorithm for Theorem
\ref{thm:bana}. A direct proof of Theorem \ref{thm:bana-subgaussian},
and an efficient sampling algorithm for the distribution $D$, were given
later by Bansal, Dadush, Garg and Lovett~\cite{BansalDGL19} using an algorithm called the Gram-Schmidt Walk. We will
describe this algorithm and its analysis in detail in Section~\ref{ch:gram-schmidt}.

The argument we present here that Theorem \ref{thm:bana} implies
Theorem \ref{thm:bana-subgaussian} via the cloning trick is
different from the Dadush, Garg, Lovett, and Nikolov argument, which
was based on the minimax theorem.  The cloning trick based argument
for
Theorems~\ref{thm:bana-subgaussian}~and~\ref{thm:bana-prefix-subgaussian}
appears in unpublished work of Nikolov. Kulkarni, Rothvoss and Reis
apply a similar argument to prove that the sequence of signs can even
be computed online, albeit via an inefficient
algorithm~\cite{rothvoss-online}. We consider online algorithms for
discrepancy minimization in more detail in Chapter~\ref{ch:online}.

Theorem \ref{thm:bana2} for prefix discrepancy, and
Theorem~\ref{thm:prefix-komlos} for the prefix Koml\'os problem are
also due to Banaszczyk, in a more recent work~\cite{B12}. An intriguing open problem is to
find an efficient algorithmic version of either theorem. The
best known algorithmic bound in the prefix setting is $O(\log mn)$, which
remarkably also works in the online
setting~\cite{alweiss2020discrepancy}, and is based on a beautiful algorithm called the Self-Balancing Walk, that we will describe in detail
in Chapter \ref{ch:online}.  See also \cite{rothvoss-online,
  ChewiGRT22, LiuSS22} for related progress. 
  Finally, we remark that an algorithmic
$O((\log mn)^{1/2})$ discrepancy bound for the prefix Koml\'{o}s
problem would already be interesting to obtain in the special case
where each vector $v_i$ has only two non-zero entries, as it would
have implications for approximation algorithms for scheduling
problems~\cite{BansalRS22}.







\chapter{Proof of Banaszczyk's Theorem}
\label{ch:bana-proof}
We now describe the original non-constructive proof of  Theorem \ref{thm:bana} due to Banaszczyk \cite{Bana98}. 
The proof uses several non-trivial and interesting properties of convex bodies and Gaussian measures, that are not widely used in theoretical computer science. So we give a detailed exposition of these ideas.


\paragraph{Proof Overview.}

Recall from Section \ref{sec:bana-core} that the key technical core underlying Banaszczyk's theorem was the following geometric result.

\begin{theorem}
\label{thm:bana-star-2}
Given any convex body $K \subset \R^m$ with Gaussian measure $\gamma^m(K) \geq 1/2$ and vector $u \in \R^m$ with $\|u\|_2 \leq 1/5$,\footnote{   Let us denote $r = \|u\|_2$. The only property of $r$ we need is that $\gamma^1([-r,r]) \leq \gamma^1((-\infty,-1])$.
%
 In particular, $\gamma^1([-1/5,1/5])\approx0.1585$ and $\gamma^1((-\infty,-1]) \approx 0.1586$.} there is a convex body 
$K*u$ contained in $(K-u) \cup (K+u)$ such that $\gamma^m(K*u) \geq \gamma^m(K)$. 
\end{theorem}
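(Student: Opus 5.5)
Rotate coordinates (the Gaussian measure is rotation invariant) so that $u = r e_m$ with $r = \|u\|_2 \le 1/5$, and write points as $(y,t)$ with $y\in\R^{m-1}$, $t\in\R$. For each $y$, the fiber $K_y \eqdef \{t : (y,t)\in K\}$ is an interval $[a(y),b(y)]$, possibly empty. Let $\ell(y) \eqdef b(y)-a(y)$ denote its length. By convexity of $K$, $\ell$ is concave on the projection of $K$. Hence the set $L \eqdef \{y : \ell(y)\ge 2r\}$ is convex.

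I would define
\[
K*u \eqdef \bigl(K + [-u,u]\bigr)\cap (L\times\R),
\]
the Minkowski sum of $K$ with the segment, restricted to the long fibers. This is an intersection of two convex sets, so it is convex. Its fiber over $y\in L$ is $[a(y)-r,\,b(y)+r]$. Because $\ell(y)\ge 2r$, this interval equals $[a-r,b-r]\cup[a+r,b+r]$, which is exactly the fiber of $(K-u)\cup(K+u)$. So $K*u\subseteq (K-u)\cup(K+u)$, and the containment part of Theorem~\ref{thm:bana-star-2} is immediate.

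For the measure inequality, write $\gamma^m(K)=\int \gamma^1([a(y),b(y)])\,d\gamma^{m-1}(y)$ and similarly for $K*u$. Then
\[
\gamma^m(K*u)-\gamma^m(K)=\int_L \bigl(\gamma^1([a-r,b+r])-\gamma^1([a,b])\bigr)\,d\gamma^{m-1} \;-\; \int_{L^c}\gamma^1([a,b])\,d\gamma^{m-1}.
\]
The loss term is at most $\gamma^1([-r,r])\,\gamma^{m-1}(L^c\cap\pi(K))$, because an interval of length $<2r$ has Gaussian measure at most that of the centered interval $[-r,r]$. The gain on a long fiber is the Gaussian mass of the two end pieces of width $r$. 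I would lower bound this gain by comparing it to the density at the endpoints, roughly $r\,(\varphi(a)+\varphi(b))$ up to constants.

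The main obstacle is showing gain $\ge$ loss using only $\gamma^m(K)\ge 1/2$ together with the footnoted numerical fact $\gamma^1([-r,r])\le\gamma^1((-\infty,-1])$. A fiberwise comparison will not work. Many fibers can be short while the long fibers sit far out in the tails, where the widening gains little.

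What I would try first is the Gaussian isoperimetric inequality (Theorem~\ref{thm:gii}), or its functional strengthening, Ehrhard's inequality, mentioned in Chapter~\ref{ch:partial-alg}. The aim is to show that the set of short fibers has small Gaussian measure compared with the boundary mass of $K$ in direction $u$. Concretely, Ehrhard's inequality says that $y\mapsto \Phi^{-1}\bigl(\gamma^1(K_y)\bigr)$ is concave. Together with $\gamma^m(K)\ge 1/2$, this should let me reduce the whole comparison to a two-dimensional extremal case. The natural candidate is $K$ a wedge or halfspace bounded by lines inclined to $e_m$. In that case the inequality becomes an explicit one-dimensional statement, and the condition $\gamma^1([-r,r])\le\gamma^1((-\infty,-1])$ is exactly what should make it hold. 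I expect this extremal reduction, and verifying that the halfspace-type sets are the worst case, to be the hardest and most delicate step.
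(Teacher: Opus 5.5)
\textbf{There is a genuine gap: the measure inequality is not proved.} Your body $K*u=(K+[-u,u])\cap(L\times\R)$ is exactly the paper's $(K_1-u)\cup(K_1+u)$, and your convexity and containment arguments are correct. But the inequality $\gamma^m(K*u)\ge\gamma^m(K)$, which is the whole content of the theorem, appears only as a plan.

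\emph{First missing idea: from fiber length to $f_K$.} Your loss set $L^c$ is defined by fiber \emph{length}, but Gaussian symmetrization along $u$ only remembers $f_K(y)\eqdef\Phi^{-1}(\gamma^1(K_y))$. The bridge is one-dimensional Gaussian isoperimetry, applied fiber by fiber. Set $-p\eqdef\Phi^{-1}(\gamma^1([-r,r]))$; then $p\ge 1$ is exactly the footnote's hypothesis. Every fiber of length $<2r$ has $f_K(y)<-p$, and every long fiber satisfies $\Phi^{-1}(\gamma^1(K_y+[-r,r]))\ge f_K(y)+r$. So after symmetrizing along $u$ (which preserves Gaussian measure and, by Ehrhard, convexity), $I_u(K*u)$ contains $K'=\{(y,t): f_K(y)\ge -p,\ t\le f_K(y)+r\}$. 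It then suffices to show $\gamma^m(K')\ge\gamma^m(I_u(K))$, where $I_u(K)=\{(y,t):t\le f_K(y)\}$. This is a statement about the single concave function $f_K$: the loss is enlarged to $\{f_K<-p\}$, and the gain becomes an exact shift by $r$.

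\emph{Second missing idea: the reduction to two dimensions.} Concavity of $f_K$ alone does not get you there; you need a second Ehrhard symmetrization, now orthogonal to $u$. The slices of $I_u(K)$ at $t=x$ are the superlevel sets $\{f_K\ge x\}$. The slices of $K'$ are $\{f_K\ge -p\}$ for $x<-p+r$ and $\{f_K\ge x-r\}$ otherwise. The function $h(x)\eqdef\Phi^{-1}(\gamma^{m-1}(\{f_K\ge x\}))$ is concave and decreasing, and $\gamma^m(K)\ge 1/2$ gives $h(0)\ge 0$.

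\emph{Third missing idea: the two-dimensional comparison.} Your guess that a half-plane bounded by an inclined line is the extremal case points the right way. In the paper, however, it comes not from an extremal principle over $K$ but from a specific replacement. One replaces $h$ by the chord $L$ through $(-p,h(-p))$ and $(-r/2,h(-r/2))$.
\begin{itemize}
\item By concavity, this enlarges the lost region, i.e.\ the half-lines $(-\infty,x)$ with $x\le -p$ at heights above $h(-p)$.
\item Since $a\mapsto\gamma^1([a,a+r])$ is unimodal with maximum at $a=-r/2$, it decreases the measure of the gained region, i.e.\ the segments $(x,x+r)$ with $x\ge -p$.
\end{itemize}
The resulting half-plane comparison pairs the lost half-line ending at $-p-d$ with the gained segment $(d,d+r)$ when $d\ge p$. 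When $0\le d<p$, it splits that half-line equally between $(d,d+r)$ and $(-d,-d+r)$. The needed one-dimensional inequalities use three facts:
\begin{itemize}
\item log-concavity of $\overline{\Phi}$, which makes $d\mapsto(\overline{\Phi}(d)-\overline{\Phi}(d+r))/\overline{\Phi}(d+p)$ non-decreasing;
\item $p\ge 1$ together with the numerical fact $\overline{\Phi}(1)^2\ge\overline{\Phi}(2)$;
\item the identity $\overline{\Phi}(p)=2(\overline{\Phi}(0)-\overline{\Phi}(r))$.
\end{itemize}
Finally, $L(0)\ge 0$ ensures that the Gaussian weight of the heights favors the gained segments. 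None of these steps appears in your proposal, so $\gamma^m(K*u)\ge\gamma^m(K)$ remains unproved.
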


The body $K*u$ in Theorem \ref{thm:bana-star-2} is obtained in a natural way from $K$ and $u$, see Figure \ref{fig:bana-proof-1} and Section \ref{sec:bana-proof-body} for a formal description.
The transformation of $K$ to $K*u$ loses some points from $K$ and gains some new points, and the proof is based on showing that the Gaussian measure gained is at least the measure lost.

A key difficulty however is that unlike the standard Lebesgue measure, the Gaussian measure changes in complicated ways upon moving points. E.g.,~while shifting $K$ to $K+u$ does not change the Lebesgue measure, it is much harder to relate $\gamma^m(K+u)$ to $\gamma^m(K)$, 
as the Gaussian density of each point $x \in K$ changes from $\exp(-\|x\|^2/2)$ to $\exp(-\|x+u\|^2/2)$  depending on $x$. 

The key idea is to apply several interesting transformations to $K$ and $K*u$, while preserving various properties, and eventually reduce them to very simple two dimensional bodies. In fact, our final task reduces to comparing the Gaussian measures of  $1$-dimensional intervals! 
These final computations however are still quite delicate and use the log-concavity of Gaussian measure in non-trivial ways.

\medskip
\noindent{\bf Organization.}
The rest of the chapter is organized as follows. We describe the body $K*u$ in Section \ref{sec:bana-proof-body} and describe some facts about Gaussians including the notion of Ehrhard symmetrization. In Section \ref{sec:bana-proof-symmetrize-u}, we apply the symmetrization along $u$ to transform $K$ and $K*u$ into simpler bodies, but still lying in $\R^m$. In Section \ref{sec:bana-proof-reducing-two-dimensions} we transform them further to $2$-dimensional bodies and finally in Section \ref{sec:bana-proof-comparing-1-dim-intervals} we compare their volumes and discuss some facts about log-concave measures.

In Section \ref{sec:bana-proof-ehrhard} we discuss Ehrhard's inequality, which is used in the transformations in Sections \ref{sec:bana-proof-symmetrize-u} and \ref{sec:bana-proof-reducing-two-dimensions}.

\section{The body $K*u$} 
\label{sec:bana-proof-body}
Given $K$ and $u$, we describe how $K*u$ is obtained.

First, by rotational symmetry of the Gaussian, we can assume that $u=r e_1$, where $r\leq 1/5$. 
As $u=re_1$, let us view a point $x =(x_1,\ldots,x_m) \in \R^m$ as $(x_1,y) \in \R \times \R^{m-1}$ where $y=(x_2,\ldots,x_{m}) \in \R^{m-1}$. 

For each $y \in \R^{m-1}$, let   $K_y := \{t: (t,y)\in K\}$ denote the segment obtained by intersecting $K$ with the horizontal line $\{(t,y): t \in \R\}$.
Consider the body 
\[K_1 \;\eqdef\; \{(t,y)\in\R\times\R^{m-1} : (t,y)\in K,\ |K_y|\ge 2r\}.\]
 consisting of segments of $K$ along $u$ that are at least $2r$ wide.

The body $K*u$ (see Figure \ref{fig:bana-proof-1}) is defined as \begin{equation}
\label{eq:bana-proof-kstaru}
    K*u = (K_1-u) \cup (K_1+u).
\end{equation}
\begin{figure}[hbtp!]
    \centering
\includegraphics[scale=0.55, trim={10mm 2mm 10mm 3mm}]{bana-figures/figure-bana-1.pdf}
    \caption{The convex body $K*u$ obtained from $K$ and $u$.}
    \label{fig:bana-proof-1}
\end{figure}
In other words, we remove points where $K$ is thinner than $2r$ along $u$, and extend $K$ by $u$ and $-u$ where it is wider than $2r$ along $u$.
Convexity of $K*u$ follows since, for each fixed $y$, $(K*u)_y$ is an interval
(in fact $K_y+[-r,r]$ when non-empty), and the set of $y$ with non-empty fibers,
together with the endpoints of these fibers, vary affinely in $y$ because $K$ is convex.





To prove Theorem \ref{thm:bana-star-2}, we need to show that the Gaussian measure we lose consisting of the segments $K_y$ with $|K_y|<2r$, is no more that the amount we gain by extending the intervals $K_y$ to $K_y + [-r,r]$ whenever $|K_y| \geq 2r$.
Figure \ref{fig:bana-proof-2} shows the region lost by red and the region gained by blue.
\begin{figure}[hbtp!]
    \centering
    \includegraphics[scale=0.55, trim={15mm 2mm 15mm 3mm}]{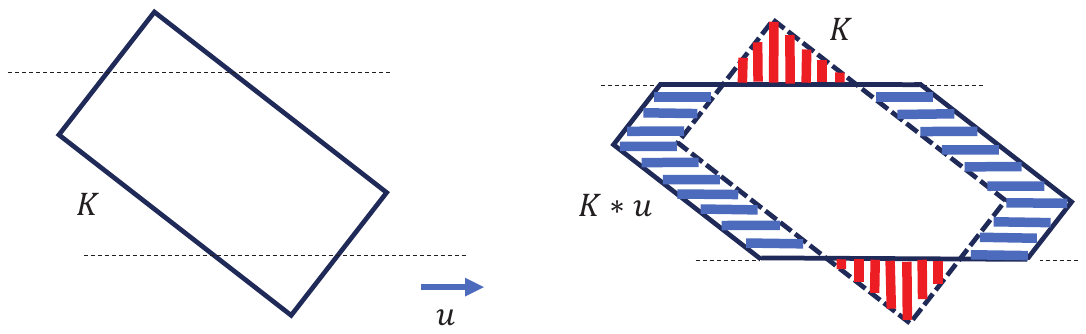}
    \caption{The region gained (blue) and lost (red) in $K*u$ compared to $K$.}
    \label{fig:bana-proof-2}
\end{figure}
\paragraph{Some Gaussian Facts.}
Let $\phi(x) = (2 \pi)^{-1/2} \exp(-x^2/2)$ denote the standard Gaussian density.
In $d$-dimensions the Gaussian density is a product distribution with \[\phi(x) = \prod_{i=1}^d \phi(x_i)  = (2 \pi)^{-d/2} \exp(-\|x\|^2/2) \text{ for $x = (x_1,\ldots,x_d)$}.\]

For a set $A \subset \R$, let $\gamma^1(A) = \int_{A} \phi(x) dx$ denote its measure, and let 
\[  \Phi(x) = \gamma^1(-\infty,x) = (2\pi)^{-1/2} \int_{-\infty}^x \exp(-t^2/2) dt,\]
denote the cumulative distribution function of $\phi$.

The function $\Phi^{-1}:[0,1] \rightarrow  \R \cup \{-\infty,\infty\}$ will play an important role.
For a set $A\subset \R$, we define the left-infinite interval  \[I(A):=(-\infty,\Phi^{-1}(\gamma^1(A))].\]   Note that $I(A)$ same Gaussian measure as $A$, as $\gamma^1((-\infty,x]) = \Phi(x)$ for all $x$ and thus $\gamma^1(I(A))= \gamma^1(A)$.

When working with Gaussian measures, it is often convenient to replace a set $A$ by $I(A)$, as it makes it more extremal while  preserving the Gaussian measure. 
In particular for $A\subset \R$, let  \[A+[-r,r] := \{x+y:x\in A, y\in [-r,r]\},\]
denote the expansion of $A$ by $r$.
Then, by the Gaussian isoperimetric inequality (in $\R$) we have that 
 \begin{equation}
     \label{eq:bana-proof-gsi}
      \gamma^1(I(A)+[-r,r]) \leq \gamma^1(A+[-r,r])
 \end{equation}
  for all $r\geq 0$.
Also notice that $\Phi^{-1}(\gamma^1(I(A)+[-r,r]))$ is simply $ \Phi^{-1}(\gamma^1(A)) + r$. So we can write \eqref{eq:bana-proof-gsi} as
\begin{equation}
    \label{eq:bana-proof-gsi-cor} \Phi^{-1}(\gamma^1(A+[-r,r]))\geq  \Phi^{-1}(  \gamma^1(I(A))) +r.
\end{equation}

\paragraph{ Gaussian symmetrization along a direction.}
For a set $A \subset \R^d$, and say $u=e_1$, the Gaussian symmetrization $I_u(A)$ of $A$ along $u$ is obtained by replacing each horizontal segment $A_y = \{t:(t,y)\in A\}$ by the segment $I(A_y)$.
Formally,
\[ I_u(A) := \{ (t,y): y\in\R^{m-1}, t \leq \Phi^{-1}(\gamma^1(A_y))\}.\]
For example, Figure \ref{fig:bana-proof-3} shows the body $I_u(K)$ obtained from $K$.

Notice that $\gamma^d(I_u(A)) =\gamma^d(A)$ as $\gamma^1(I(A_y)) = \gamma^1(A_y)$ for each segment $A_y$ (and the coordinates other than $x_1$ do not change).

A very interesting fact proved by Ehrhard \cite{Ehrhard1983} is the following.
\begin{theorem}
\label{thm:baby-ehrhard}
    If $A \subset \R^d$ is convex, then its Gaussian symmetrization $I_u(A)$ along any direction $u$ is also convex.
\end{theorem}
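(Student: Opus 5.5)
Throughout, take $u=e_1$ and write points of $\R^d$ as $(t,y)$. By definition, $I_u(A)=\{(t,y): t\le f(y)\}$ with $f(y)\eqdef\Phi^{-1}(\gamma^1(A_y))$, where $f(y)=-\infty$ when $A_y=\emptyset$. So $I_u(A)$ is the region lying below the graph of $f$, and it is convex exactly when $f$ is concave as a function into $\R\cup\{-\infty\}$. The plan is therefore to prove
\[
\Phi^{-1}\big(\gamma^1(A_{\lambda y+(1-\lambda)z})\big)\ \ge\ \lambda\,\Phi^{-1}\big(\gamma^1(A_y)\big)+(1-\lambda)\,\Phi^{-1}\big(\gamma^1(A_z)\big).
\]
Convexity of $A$ gives $\lambda A_y+(1-\lambda)A_z\subseteq A_{\lambda y+(1-\lambda)z}$, and $\gamma^1$ is monotone. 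Hence the claim reduces to the one-dimensional Ehrhard inequality
\[
\Phi^{-1}\big(\gamma^1(\lambda I+(1-\lambda)J)\big)\ge \lambda\Phi^{-1}(\gamma^1(I))+(1-\lambda)\Phi^{-1}(\gamma^1(J))
\]
for closed intervals $I=A_y$ and $J=A_z$. These slices are intervals because $A$ is convex. If either of them is empty, the right-hand side is $-\infty$ and there is nothing to prove.

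To prove the one-dimensional inequality, I would first reduce to half-lines, using the Gaussian isoperimetry already recorded in \eqref{eq:bana-proof-gsi-cor}. I do not expect this reduction to work directly for general intervals, so the concrete strategy is a case analysis on the interval endpoints. Write $I=[a,b]$ and $J=[c,e]$. Then $\lambda I+(1-\lambda)J=[\lambda a+(1-\lambda)c,\ \lambda b+(1-\lambda)e]$, so the whole inequality is a statement about the function
\[
F(a,b)\eqdef\Phi^{-1}\big(\Phi(b)-\Phi(a)\big)
\]
on $\{a\le b\}$. Concretely, I need to show that $F$ is jointly concave. When $a=-\infty$, we have $F(-\infty,b)=b$, which is linear, so the inequality is an equality; this is the extremal half-line case. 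For finite $a$, I would verify concavity by computing the Hessian of $F$ and using the identity $(\Phi^{-1})'(s)=1/\phi(\Phi^{-1}(s))$ together with the Gaussian tail bound.

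The main obstacle is the joint concavity of $F$. A cleaner route I would try first is to write the sublevel condition $\{(a,b):F(a,b)\ge s\}$ as $\{\Phi(b)-\Phi(a)\ge \Phi(s)\}$. I would then show that its boundary curve $b=\Phi^{-1}(\Phi(a)+\Phi(s))$ is convex in $a$ and moves appropriately with $s$. Convexity in $a$ follows from the log-concavity of $\phi$, since $\phi(a)/\phi(b)$ is monotone along the curve. This gives quasi-concavity of $F$. Combined with the $1$-homogeneity-like translation behaviour $F(a+h,b+h)$, and with $F$ being linear along the half-line direction, this should upgrade quasi-concavity to concavity.

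If this route stalls, I would fall back on citing Ehrhard's original symmetrization argument (or Borell's proof of the general Ehrhard inequality, discussed in Section~\ref{sec:bana-proof-ehrhard}). Applying that inequality to the slices gives the result immediately. Closedness of $I_u(A)$ follows from the continuity of $y\mapsto\gamma^1(A_y)$ on the interior of the projection of $A$.
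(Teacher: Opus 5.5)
Your reduction is exactly the paper's argument in Section~\ref{sec:bana-proof-ehrhard}: the inclusion $\lambda A_y+(1-\lambda)A_z\subseteq A_{\lambda y+(1-\lambda)z}$, monotonicity of $\gamma^1$, and Ehrhard's inequality (Theorem~\ref{thm:ehrhard}, which the paper also uses as a black box) applied to the one-dimensional slices give concavity of $f$, so with your fallback citation the proof is complete. Do discard the quasi-concavity route, though: $F(a,b)=\Phi^{-1}(\Phi(b)-\Phi(a))$ has no translation equivariance ($F(a+s,b+s)\neq F(a,b)+s$, and $F(-\infty,b)=b$ holds only on the boundary), quasi-concavity of $F$ only encodes $\gamma^1(\lambda I+(1-\lambda)J)\ge\min\{\gamma^1(I),\gamma^1(J)\}$, and passing from it to concavity is the whole content of one-dimensional Ehrhard---whereas your Hessian route does close, with the diagonal entries of the right sign because $h=F(a,b)\le\min\{b,-a\}$ and $x/\phi(x)$ is increasing, and the determinant condition $h\bigl(a\phi(b)-b\phi(a)\bigr)\ge ab\,\phi(h)$ following, after a case split on signs, from a tail comparison using $\phi(x)/x=\overline{\Phi}(x)+\int_x^\infty\phi(t)\,t^{-2}\,dt$ for $x>0$.
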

We will discuss this in more detail in Section \ref{sec:bana-proof-ehrhard}. 
In Section \ref{sec:bana-proof-reducing-two-dimensions}, we will also consider a more general notion called Ehrhard symmetrization, where symmetrizing $A \in \R^d$ along a subspace $U$ gives a $d-\text{dim}(U)+1$ dimensional body.
This will be used to reduce our problem to two dimensions. 

\section{Symmetrizations of $K$  and $K*u$ along $u$}
\label{sec:bana-proof-symmetrize-u}
Let us consider the symmetrizations of $K$ and $K*u$ along $u$. So,
\[I_u(K) := \{(t,y): y\in \R^{m-1}, t \leq f_K(y)\},\]
where we denote $f_K(y) = \Phi^{-1}(\gamma^1(K_y))$ for ease of notation. 

\begin{figure}[hbtp!]
    \centering
\includegraphics[scale=0.58, trim={15mm 4mm 0 3mm }]{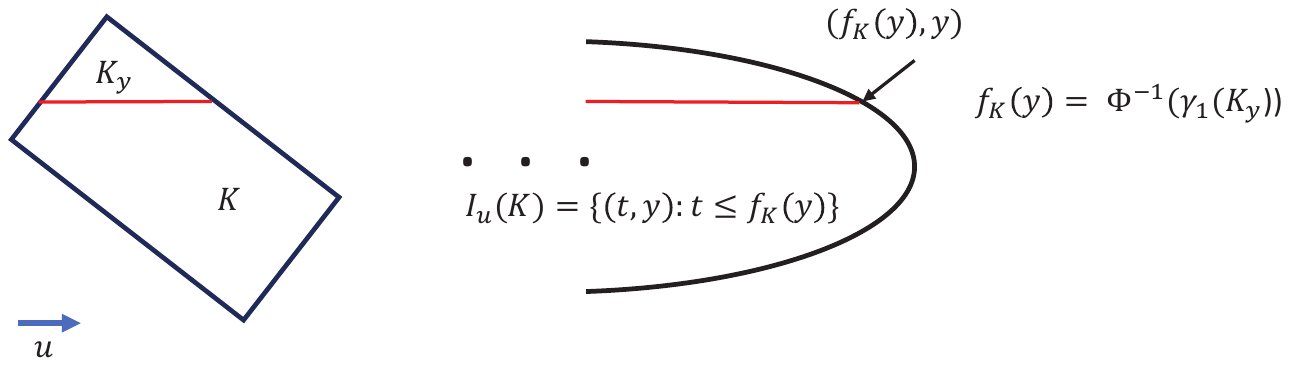}
    \caption{
    The symmetrization of $K$ along $u$. The interval $K_y$ of $K$ is transformed to the interval $(-\infty,f_K(y)$. The body $I_u(K)$ extends infinitely to the left.}
    \label{fig:bana-proof-3}
\end{figure}

Similarly, 
\[I_u(K*u) =\{(t,y): y \in \R^{m-1},t \leq f_{K*u}(y)\},\]
where $f_{K*u}(y) =\Phi^{-1}(\gamma^1((K*u)_y))$.  See Figure \ref{fig:bana-proof-4}.
Notice that by the way $K*u$ is obtained from $K$, we have
\begin{align} f_{K*u}(y) = \begin{cases}  -\infty   \qquad & |K_y|< 2r\\ 
\Phi^{-1} ( \gamma^1(K_y+[-r,r]))  \qquad & |K_y| \geq 2r. 
\end{cases} \label{eq:fkstartu} 
\end{align}
By Theorem \ref{thm:baby-ehrhard}, both $I_u(K)$ and $I_u(K*u)$ are convex. Hence the functions of $f_K(y)$ and $f_{K*u}(y)$ are concave.
Also as symmetrization does not change the Gaussian measure, our goal is the same as showing that $\gamma^m(I_u(K*u))\geq \gamma^m(I_u(K))$.

\begin{figure}[hbtp!]
    \centering
\includegraphics[scale=0.6, trim={10mm 5mm 0 5mm }]{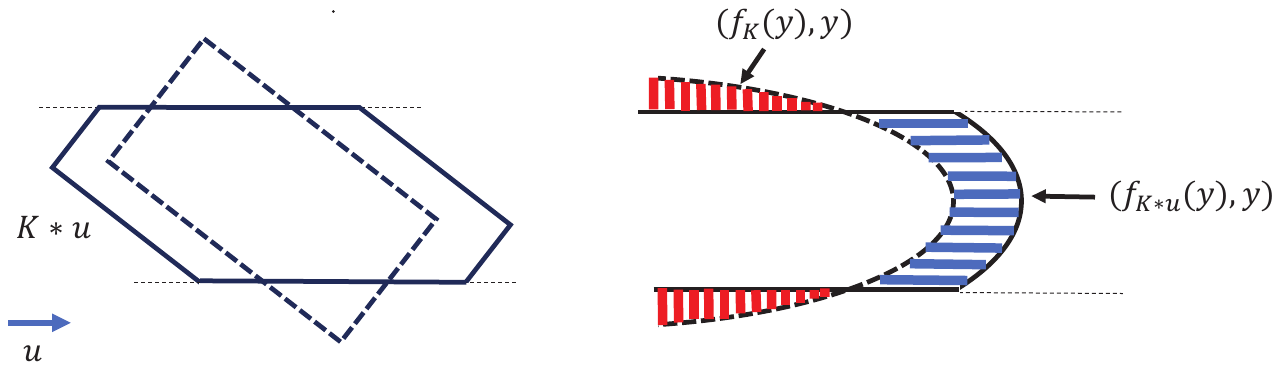}
    \caption{The body $I_u(K*u)$ obtained by symmetrizating $K^*$ along $u$, and described by the function $f_{K*u}(y)$. The blue and red regions depict the volume gained and lost by $I_u(K*u)$ over $I_u(K)$.}
    \label{fig:bana-proof-4}
\end{figure}

\paragraph{The body $K'$.}
We now shrink the body $I_u(K*u)$ and replace it a simpler body $K'$ that
 is more closely related to $I_u(K)$. 
This is depicted in Figure \ref{fig:bana-proof-5}. 
Define the threshold 
\begin{equation}
\label{eq:bana-pf-defn-p}
    -p:=\Phi^{-1}(\gamma^1([-r,r])).
\end{equation}
Recall that $r$ was chosen in Theorem~\ref{thm:bana-star-2} to satisfy $\gamma^1([-r,r]) \leq \gamma^1(-\infty,-1)$. This implies that $-p \leq -1$ (equivalently $p\geq 1)$.

Consider the body 
\begin{equation}
    \label{eq:kprime}
    K'=\{(t,y): y \in \R^{m-1}, t \leq g_K(y)\},
\end{equation}
where $g_K:\R^{m-1}\rightarrow \R$ is defined as
 \begin{align} g_K(y) = \begin{cases}   -\infty  \qquad & \text{ if } f_K(y) < -p \\
 f_K(y)+r   \qquad & \text{ otherwise},
 \end{cases}
 \label{eq:gky}
\end{align}

\begin{figure}[hbtp!]
    \centering
\includegraphics[scale=0.6, trim={10mm 0 10mm 0 }]{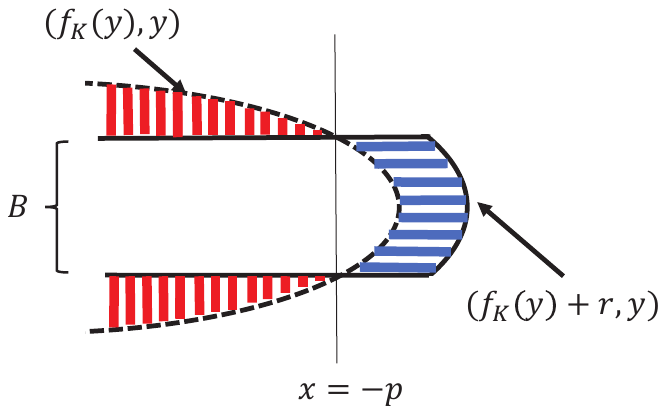}
    \caption{The body $K'$ obtained by shrinking $I_u(K*u)$. The blue region only decreases and the red region only increases.}
    \label{fig:bana-proof-5}
\end{figure}

Notice that $K'$ only depends on $f_K(y)$ and is more closely related to $I_u(K)$. $K'$ is  also convex, as $K' + (r,0)$ is the intersection of the translated convex set $I_u(K)+(r,0)$
with the convex superlevel set $\{(t,y): f_K(y)\ge -p\}$.

We now show that $K' \subset I_u(K*u)$.
\begin{lemma} For all $y$,  $g_K(y)  \leq f_{K*u}(y)$  and thus $K'\subset I_u(K*u)$.
\end{lemma}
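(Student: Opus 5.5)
We have to show, pointwise in $y \in \R^{m-1}$, that $g_K(y) \le f_{K*u}(y)$. Once this holds, the inclusion $K' \subseteq I_u(K*u)$ follows at once: a point $(t,y) \in K'$ satisfies $t \le g_K(y) \le f_{K*u}(y)$, so it lies in $I_u(K*u)$. The plan is a case analysis on $y$, following the two branches in the definition \eqref{eq:gky} of $g_K$ and the two branches in \eqref{eq:fkstartu} for $f_{K*u}$.

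\emph{Case $f_K(y) < -p$.} Here $g_K(y) = -\infty$ and the inequality is trivial.

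\emph{Case $f_K(y) \ge -p$.} First we show that this forces $|K_y| \ge 2r$.

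1. Since $\Phi^{-1}$ is increasing, $f_K(y) \ge -p$ means $\gamma^1(K_y) \ge \gamma^1([-r,r])$ by the definition \eqref{eq:bana-pf-defn-p} of $p$.
2. $K_y$ is an interval. Among all intervals of a given length, the centered one has the largest Gaussian measure, because $\phi$ is even and unimodal.
3. So if $|K_y| < 2r$, then $\gamma^1(K_y) \le \gamma^1([-|K_y|/2,|K_y|/2]) < \gamma^1([-r,r])$. The strict inequality holds because $\phi > 0$. This contradicts step 1, so $|K_y| \ge 2r$.
4. The degenerate case $K_y = \emptyset$ needs no separate treatment: then $\gamma^1(K_y) = 0 < \gamma^1([-r,r])$, so it falls in the first case. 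This uses $r > 0$; if $u = 0$ the theorem is trivial anyway.

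Now that $|K_y| \ge 2r$, \eqref{eq:fkstartu} gives $f_{K*u}(y) = \Phi^{-1}(\gamma^1(K_y + [-r,r]))$. The one-dimensional Gaussian isoperimetric consequence \eqref{eq:bana-proof-gsi-cor}, applied with $A = K_y$, gives
\[
\Phi^{-1}(\gamma^1(K_y+[-r,r])) \ge \Phi^{-1}(\gamma^1(K_y)) + r = f_K(y) + r = g_K(y).
\]
This is exactly the required inequality.

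The main step needing care is the argument that $f_K(y) \ge -p$ implies $|K_y| \ge 2r$, i.e.\ steps 2--3. We use the elementary fact that, for fixed length, the Gaussian measure of an interval is maximized when it is centered at the origin. The proof is immediate by differentiating $a \mapsto \Phi(a+\ell) - \Phi(a)$, whose derivative $\phi(a+\ell) - \phi(a)$ vanishes only at $a = -\ell/2$, where the function attains its maximum. Boundary conventions, such as $\Phi^{-1}(0) = -\infty$, are handled by the degenerate-case remark in step 4.
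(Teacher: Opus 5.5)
Your proof is correct and is essentially the paper's argument, using the same two ingredients: among intervals of a fixed length, the centered one has the largest Gaussian measure, and the one-dimensional isoperimetric bound \eqref{eq:bana-proof-gsi-cor}. The only difference is organizational: you split cases on $f_K(y)$ versus $-p$ rather than on $|K_y|$ versus $2r$, which amounts to proving the contrapositive of the paper's first step, and you additionally justify the centered-interval fact and the degenerate cases.
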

\begin{proof}
Fix some $y$. We first show that if $f_{K*u}(y)=-\infty$, then $g_K(y)$ is also $-\infty$. Indeed, if $f_{K*u}(y)=-\infty$ this means that $|K_y|<2r$ by \eqref{eq:fkstartu}. But then $\gamma^1(K_y) < \gamma^1([-r,r])$  
 as $[-r,r]$ has the largest Gaussian measure among all $2r$ length intervals. So
\[f_K(y) = \Phi^{-1}(\gamma^1(K_y)) < \Phi^{-1}(\gamma^1([-r,r]))= -p,\]
and  thus $g_K(y)= -\infty$ by definition \eqref{eq:gky}.

Now, suppose $|K_y|\geq 2r$. Then $f_{K*u}(y) = \Phi^{-1}(\gamma^1(K_y+[-r,r]))$ by \eqref{eq:fkstartu}, and 
by the  Gaussian isoperimetric inequality \eqref{eq:bana-proof-gsi-cor},
\[
    f_{K*u}(y) \geq  \Phi^{-1}(\gamma^1(K_y)) +r = f_K(y) +r \geq  g_K(y).  \qedhere  \]
\end{proof} 

As $K' \subset I_u(K*u)$, to prove Theorem \ref{thm:bana-star-2} it suffices to show that $\gamma^m(K') \geq \gamma^m(I_u(K))$.

\section{Reducing to two dimensions}
\label{sec:bana-proof-reducing-two-dimensions}
Abusing notation a bit, let us use $K$ to denote $I_u(K)$ (as  we will not need the original body $K$ anymore).

Our goal is to compare the Gaussian measures of  $K$ and $K'$ and show that the lost measure $\gamma^m(K\setminus K')$ (depicted by red in Figure \ref{fig:bana-proof-5}) is at most the gained measure $\gamma^m(K'\setminus K)$ (depicted by blue).

\paragraph{Slices.} We can compare $K$
 and $K'$ nicely by looking at their $(m-1)$ dimensional slices orthogonal to $x_1$.
 For $x \in \R$, let 
 \[K_x:= \{y \in \R^{m-1}: (x,y) \in K\}\] 
denote the slice of $K$ passing through $x$. 
Similarly, let $K'_x:= \{y \in \R^{m-1}: (x,y) \in K'\}$. 
Clearly,
the measure of $K,K'$ can be expressed in terms of their slices as,
\[\gamma^m(K) = \int_x\gamma^{m-1}(K_x) \phi_1(x) dx \, \text{ and } \, \gamma^m(K') = \int_x \gamma^{m-1}(K'_x) \phi_1(x) dx.\]

The slices of $K'$ are closely related to those of $K$.
Define the set \begin{equation}
    B :=\{y \in \R^{m-1}: f_K(y) \geq -p\}.
    \label{eq:banaB}
\end{equation}
As $f_K$ is concave (as $K$ is convex), the slice of $K$ at $x=-p$ is exactly $K_{-p}=B$. So by the definition of $K'$ in \eqref{eq:kprime} and \eqref{eq:gky} we have,
\begin{align}
    K'_x = \begin{cases} 
       B  &  \text{for $x < -p+r$} \\
       K_{x-r}  & \text{for $x \geq  -p+r$}.
       \end{cases}
       \label{eq:k'slices}
\end{align} 
In words, for $x \geq -p+r$ the slice $K'_x$ is a slice of $K$ shifted to the right by $r$. 
And for any $x \leq -p+r$,  the slice $K'_x$ is just $B$. See Figure \ref{fig:bana-proof-5}.

\paragraph{Another Ehrhard Symmetrization.}
We now apply another symmetrization, but this time along the subspace $W=\text{span}(e_2,\ldots,e_m)$. This will reduce $K$ and $K'$ to two-dimensional convex bodies.

 In this symmetrization step, we replace the slice $K_x \in \R^{m-1}$ of $K$ by the (one-dimensional) interval $I_x=(-\infty, \Phi^{-1}(\gamma^{m-1}(K_x))]$. Notice that $\gamma^1(I_x) = \gamma^{m-1}(K_x)$. 
 
 Formally, define the function $h_K:\R \rightarrow \R$ as
\[h_K(x) = \Phi^{-1} (\gamma^{m-1}(K_x)).\]
Then the symmetrization  of $K$ along $W$ is the 2-d body 
\[  K_W  :=  \{(x,t): x\in R, t \leq h_K(x)\}.\]
Similarly, we define $h_{K'}= \Phi^{-1} (\gamma^{m-1}(K'_x))$ for $K'$, and let $K'_W$ be the symmetrization of $K'$ along $W$.
See Figure \ref{fig:bana-proof-6}.

\begin{figure}[hbtp!]
    \centering
\includegraphics[scale=0.61, trim={6mm 5mm 10mm 10mm}]{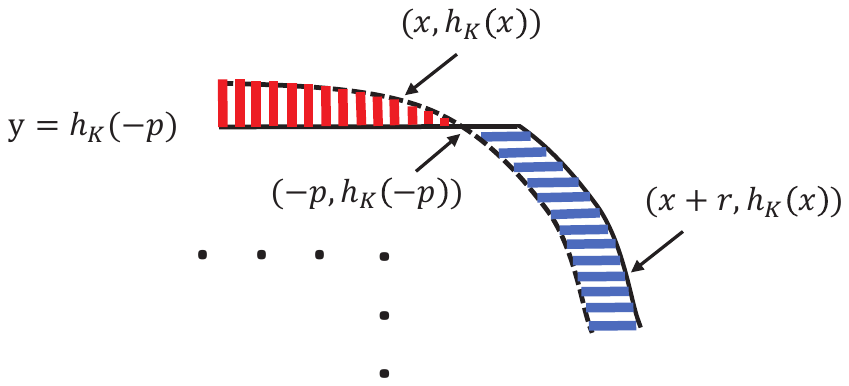}
    \caption{The two-dimensional bodies $K_W$ and $K'_W$ given by $h_K(x)$ and $h_{K'}(x)$. The bodies extend infinitely to the left and to the bottom}
    \label{fig:bana-proof-6}
\end{figure}

The bodies $K_W$ and $K'_W$ have the following properties. First, it is easily checked that $\gamma^2(K_W)  = \gamma^m(K)$ and $\gamma^2(K'_W) = \gamma^m(K')$ as symmetrization is measure-preserving.
The second remarkable property which will follow from Ehrhard's inequality in Section \ref{sec:bana-proof-ehrhard} is that this symmetrization preserves convexity.
\begin{proposition} As $K,K'$ are convex,
   $K_W, K'_W$ are also convex. 
\end{proposition}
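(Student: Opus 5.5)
The plan is to show that $h_K$ and $h_{K'}$ are concave functions from $\R$ to $\R\cup\{-\infty,+\infty\}$. Convexity of $K_W$ and $K'_W$ then follows at once, since each is the hypograph $\{(x,t): t \le h(x)\}$ of such a function. The proof is the same for $K$ and $K'$ and uses only that each is a convex subset of $\R^m$, which we have already checked. So I argue for a general convex body $L \in \{K, K'\}$ with slices $L_x \subseteq \R^{m-1}$ and $h_L(x) = \Phi^{-1}(\gamma^{m-1}(L_x))$.

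First step: the slices behave convexly in $x$. Fix $x, x' \in \R$ and $\lambda \in (0,1)$. If $y \in L_x$ and $y' \in L_{x'}$, then $(x,y), (x',y') \in L$. By convexity of $L$, the point $(\lambda x + (1-\lambda)x', \lambda y + (1-\lambda) y')$ lies in $L$. Hence
\[
\lambda L_x + (1-\lambda) L_{x'} \subseteq L_{\lambda x + (1-\lambda)x'},
\]
and each slice $L_x$ is convex. This is the same observation used in the proof of Sidak's lemma.

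Second step: apply Ehrhard's inequality, which will be discussed in Section~\ref{sec:bana-proof-ehrhard} and which I take as given. In dimension $m-1$ it states that for convex sets $A, B$ and $\lambda \in (0,1)$,
\[
\Phi^{-1}\bigl(\gamma^{m-1}(\lambda A + (1-\lambda)B)\bigr) \ge \lambda \Phi^{-1}(\gamma^{m-1}(A)) + (1-\lambda)\Phi^{-1}(\gamma^{m-1}(B)).
\]
Take $A = L_x$ and $B = L_{x'}$. Monotonicity of $\gamma^{m-1}$ and of $\Phi^{-1}$, combined with the inclusion from the first step, gives
\[
h_L(\lambda x + (1-\lambda)x') \ge \lambda h_L(x) + (1-\lambda) h_L(x'),
\]
which is the required concavity. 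This is exactly the functional form of Theorem~\ref{thm:baby-ehrhard}, applied here with a codimension-one subspace in place of a single direction.

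Third step: handle the infinite values. A slice may be empty, giving $h_L = -\infty$. A slice may also have full measure, giving $h_L = +\infty$; this can happen for $x$ very negative, because $I_u(K)$ extends infinitely to the left. If either $h_L(x)$ or $h_L(x')$ is $-\infty$, the inequality is trivial. If both are at least finite-or-$+\infty$, the slices are nonempty, so the intermediate slice is nonempty as well. The set $\{x : h_L(x) > -\infty\}$ is then an interval, and $\{x : h_L(x) = +\infty\}$ is also an interval by the same inequality under the convention $\Phi^{-1}(1) = +\infty$. The hypograph is therefore convex, being a union of full vertical lines over an interval together with the hypograph of a finite concave function on a containing interval.

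The main obstacle is Ehrhard's inequality itself. It is genuinely deeper than the Gaussian Brunn--Minkowski inequality (Lemma~\ref{lm:ch3-bm-gauss}): the log-concavity that Lemma~\ref{lm:ch3-bm-gauss} provides is too weak, because we need concavity after composing with $\Phi^{-1}$. Once that inequality is available, the rest is routine.
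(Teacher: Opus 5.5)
Your proof is correct and follows the paper's route. The paper also obtains concavity of $x\mapsto\Phi^{-1}(\gamma^{m-1}(L_x))$ from the slice inclusion $\lambda L_x+(1-\lambda)L_{x'}\subseteq L_{\lambda x+(1-\lambda)x'}$ together with Ehrhard's inequality, and concludes that the hypograph is convex. Your handling of infinite values is more careful than the paper's, with one small fix: in the third step you need the intermediate slice to have positive measure, not merely to be nonempty, and this follows from Lemma~\ref{lm:ch3-bm-gauss} (or from Ehrhard's inequality itself).
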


\paragraph{The structure of $K_W$ and $K'_W$.} It is useful to understand the structure of $K_W$ and $K'_W$ more closely.

For $K$, we have $h_K(-p) = \Phi^{-1}(\gamma^{m-1}(B))$ by definition of $B$ in \ref{eq:banaB}.
As $x\mapsto K_x$ is decreasing in $x$ and $h_K(x) = \Phi^{-1}(\gamma_{m-1}(K_x))$, the function $h_K$ is decreasing. Convexity of $K_W$ also implies that $h_K$ is concave.
 See Figure \ref{fig:bana-proof-6}. 

For $K'$ by \eqref{eq:k'slices} we have,
  \begin{align*} 
   h_{K'}(x)= \begin{cases}
       h_K(-p) & \text{for $x< -p+r$}\\
       h_K(x-r)& \text{for $x\geq -p+r$}.
   \end{cases}
   \end{align*}
Notice that the boundaries of $K_W$ and $K'_W$ (given by the curves $(x,h_K(x))$ and $(x,h_{K'}(x))$) intersect at the point $P:=(-p,h_K(-p))$. See Figure \ref{fig:bana-proof-7}.

Finally, comparing $K_W$ to $K'_W$, we lose the (red) region $A$ and  gain the (blue) region $B$. Our goal is to show that  $\gamma^2(A) \leq \gamma^2(B)$.

\paragraph{The final simplification: Reducing $h_K$ to a line.}
At this stage, the only complicated quantity left is the function $h_K$. We now show how to replace $h_K$ by a line such that
 $\gamma^2(A)$ can only increase while $\gamma^2(B)$ can only decrease.
 
Let
$L(x)$ denote the line through the points $P=(-p,h_K(-p))$ and $Q = (-r/2,h_K(-r/2))$ on the curve $h_K(x)$, and let us replace the curve $h_K(x)$ by $L(x)$,  see Figure \ref{fig:bana-proof-7}.

Let us redefine the red region $A$ and blue region $B$ according to $L(x)$ and call them $C$ and $D$.  See Figure \ref{fig:bana-proof-7}.

\begin{figure}[hbtp!]
    \centering
\includegraphics[scale=0.6, trim={10mm 2mm 2mm 3mm}]{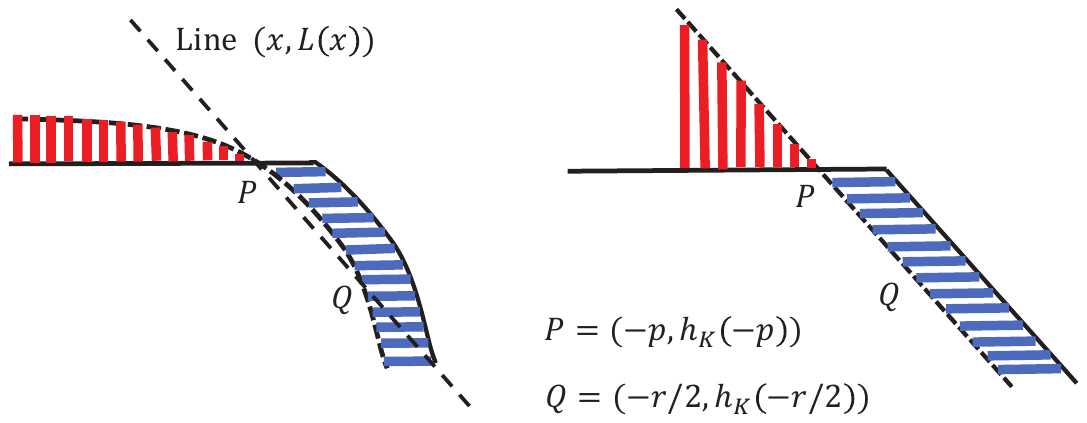}
    \caption{Replacing the curve $h_K(x)$ by the line $L(x)$. On the right the red and blue regions are defined according to $L(x)$.}
    \label{fig:bana-proof-7}
\end{figure}

\begin{lemma}
\label{lem:bana-hk-to-line}
Replacing $h_K(x)$ by $L(x)$ only increases $\gamma^2(A)$ and only decreases $\gamma^2(B)$. More precisely,
 $\gamma^2(C) \geq \gamma^2(A)$ and   $\gamma^2(D) \leq \gamma^2(B)$.
\end{lemma}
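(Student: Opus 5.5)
The plan is to compare the regions pointwise: vertical slices for the lost regions $A$ and $C$, and horizontal slices for the gained regions $B$ and $D$. In both cases the only input is that $h_K$ is concave and non-increasing, together with one elementary fact about the one-dimensional Gaussian measure. First I would make the regions explicit. Since $h_K$ is non-increasing and $h_{K'}(x)=h_K(-p)$ for $x<-p+r$, $h_{K'}(x)=h_K(x-r)$ otherwise, we get
\[
A=\{(x,t): x<-p,\ h_K(-p)<t\le h_K(x)\}, \qquad C=\{(x,t): x<-p,\ L(-p)<t\le L(x)\},
\]
where $L(-p)=h_K(-p)$. Also $B=K'_W\setminus K_W$ lies in $\{x\ge -p,\ t\le h_K(-p)\}$, and $D$ is its analogue with $h_K$ replaced by $L$. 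The line $L$ is the chord of $h_K$ through $P$ and $Q$, whose first coordinates are $-p<-r/2$. Concavity of $h_K$ therefore gives two facts: $h_K\ge L$ on $[-p,-r/2]$, and $h_K\le L$ outside this interval. Note also that $L$ is non-increasing, since $h_K$ is.

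\emph{The lost region.} For $x<-p$ we have $h_K(x)\le L(x)$. Hence the vertical fibre $(h_K(-p),h_K(x)]$ of $A$ is contained in the fibre $(L(-p),L(x)]$ of $C$. So $A\subseteq C$, and $\gamma^2(A)\le\gamma^2(C)$ is immediate.

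\emph{The gained region.} Vertical slices fail here, because on $[-p,-r/2]$ the curve $h_K$ lies above $L$, so the inclusion goes the wrong way. I would instead slice horizontally. Fix $t\le h_K(-p)$ and let $a(t)=\inf\{x\ge -p: h_K(x)<t\}$, a generalized inverse, which handles flat pieces of $h_K$. Since $K'_W$ is $K_W$ shifted right by $r$ below height $h_K(-p)$, and $K'_W$ contains the full strip there for $x<-p+r$, the horizontal slice of $B$ at height $t$ is an interval of length $r$, namely $[a(t),a(t)+r]$ up to endpoints. Likewise the slice of $D$ is $[b(t),b(t)+r]$ with $b=L^{-1}$. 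By Fubini, $\gamma^2(B)=\int \phi(t)\,\gamma^1([a(t),a(t)+r])\,dt$, and similarly for $D$. It then suffices to show $\gamma^1([a,a+r])\ge\gamma^1([b,b+r])$ for each $t$.

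The one-dimensional fact needed is that $c\mapsto\gamma^1([c-r/2,c+r/2])$ is even and decreasing in $|c|$. So it suffices to show that the midpoint $a+r/2$ is at least as close to $0$ as $b+r/2$. There are two cases. If $h_K(-r/2)\le t\le h_K(-p)$, then $a,b\in[-p,-r/2]$, and $h_K\ge L$ there gives $b\le a\le -r/2$; thus $b+r/2\le a+r/2\le 0$. If $t<h_K(-r/2)=L(-r/2)$, then $a,b\ge -r/2$, and $h_K\le L$ to the right of $Q$ gives $-r/2\le a\le b$; thus $0\le a+r/2\le b+r/2$. In both cases the $B$-interval is closer to the origin, so $\gamma^2(D)\le\gamma^2(B)$.

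The main obstacle I expect is the bookkeeping in this horizontal-slice step. One has to verify carefully that each horizontal slice of $B$ really is a single interval of length exactly $r$, including on the part $[-p,-p+r)$ where $h_{K'}$ is the constant $h_K(-p)$. One also has to handle the generalized inverse when $h_K$ has flat pieces or takes the value $-\infty$. The comparison itself is the clean case analysis above, split at the height of $Q$. The choice of $Q$ at $x=-r/2$ is exactly what makes the midpoint $a+r/2$ switch sign at the right height.
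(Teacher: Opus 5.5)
Your proof is correct and follows the paper's argument. Like you, the paper gets $A\subseteq C$ from the chord comparison (it uses horizontal segments $(-\infty,h_K^{-1}(y)]$ instead of your vertical fibres, which is equivalent), and it compares the length-$r$ horizontal segments of $B$ and $D$ at each height, using that such an interval loses Gaussian measure when moved away from the origin and that the direction of the shift flips at $x=-r/2$. Your generalized-inverse parametrization $a(t)$ organizes the Fubini step a bit more cleanly than the paper's parametrization by left endpoints $I_x=(x,x+r)$.
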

\begin{proof}
We show that the measure of each horizontal segment in $B$ only decreases and for those in $A$ it only increases.

Recall that $h_K$ is concave and decreasing.
The horizontal segments $(-\infty,h_K^{-1}(y))$ that comprise the red region $A$ can only get larger as $h_k^{-1}(y) \leq L^{-1}(y)$ for $y \geq h_K(-p)$. So this only increases $\gamma^2(A)$.

The blue region $B$ consists of segments $I_x=(x,x+r)$ for $x \geq -p$.
Notice that for any segment $I_a = [a,a+r]$ of length $r$, if $a < -r/2$ its Gaussian measure $\gamma^1(I_a)$ decreases 
 upon shifting $I_a$ to the left, and if $a\geq  -r/2$ then $\gamma^1(I_a)$ decreases upon shifting $I_a$ to the right.

Since $h_K$ is concave and $L$ is the chord between the points $(-p,h_K(-p))$ and $(-r/2,h_K(-r/2))$, we have $L(x)\ge h_K(x)$ for $x\ge -r/2$ and $L(x)\le h_K(x)$ for $x\le -p-d$.
So when we replace $h_K$ by $L(x)$ the interval $I_x$ moves leftward for each $x \in [-p,-r/2)$  and so $\gamma^1(I_x)$ only decreases. Similarly for $x > r/2$,  the interval $I_x$ moves rightward and $\gamma^1(I_x)$ also only decreases. Thus $\gamma^2(B)$ only decreases.  See Figure \ref{fig:bana-proof-7}.
\end{proof}

\section{Comparing $1$-dimensional intervals}
\label{sec:bana-proof-comparing-1-dim-intervals}
By Lemma \ref{lem:bana-hk-to-line}, to prove Theorem \ref{thm:bana-star-2} it suffices to show that $\gamma^2(C) \leq \gamma^2(D)$. We do this next, which will finish off the proof.

 Notice that $C$ consists of the left-infinite horizontal intervals $\ell_{x} := (-\infty,x)$ for $x\leq -p$.
And $D$ consists of horizontal intervals  $r_x := (x,x+r)$ for $x \geq -p$.
Also, each $\ell_x$ and $r_x$ is located at height $y=L(x)$.

\paragraph{Pairing intervals.} To show that $\gamma^2(C) \leq \gamma^2(D)$, we will pair the intervals of $C$ and $D$ as follows, and show the inequality for each pair. 
\begin{enumerate}
    \item Type 1: For each $d \geq p$, pair the interval $\ell_{-p-d}$ in $C$ with $r_{d}$.
    \item Type 2: For each $d \in [0,p)$, pair the interval $\ell_{-p-d}$ to extent half each with the intervals $r_{-d}$ and $r_{d}$. 
\end{enumerate}
Figure \ref{fig:bana-8} showing this pairing.

\begin{figure}[hbtp!]
    \centering
\includegraphics[scale=0.6, trim={3mm 2mm 2mm 3mm}]{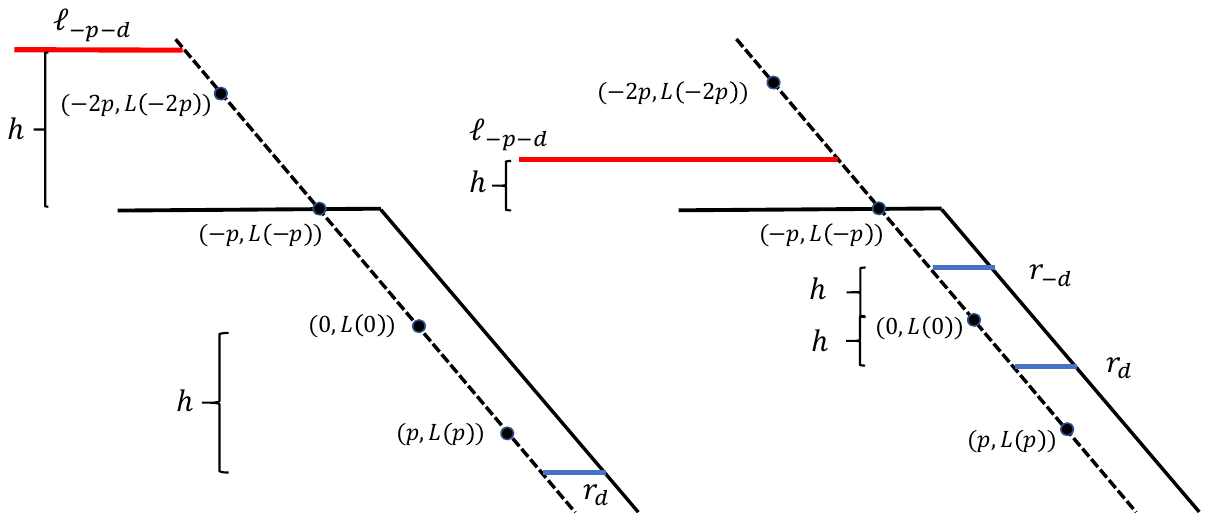}
    \caption{On the left is a type 1 pairing between the red interval $\ell_{-p-d}$ and $r_d$.
    On the right is a type 2 pairing. }
    \label{fig:bana-8}
\end{figure}

The rest of the section will be devoted to showing the following. 
\begin{theorem}
\label{thm:bana-final-1d-comp}
   For type 1 pairs, $\gamma^1(\ell_{-p-d}) \leq \gamma^1 (r_{d})$ for every $d\geq p$. 
    For type 2 pairs,  $\gamma^1(\ell_{-p-d}) \leq (\gamma^1 (r_{d})+ \gamma^1(r_{-d}))/2$ for each $d\in [0,p)$.
\end{theorem}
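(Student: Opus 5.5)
The plan is to reduce both inequalities to explicit one-variable statements about $\Phi$ and $\phi$, using only two facts: $\Phi(-p)=\gamma^1([-r,r])$, which is the definition \eqref{eq:bana-pf-defn-p}, and $p\ge 1$, $r\le 1/5$, which is the footnote to Theorem~\ref{thm:bana-star-2}. Write $\gamma^1(\ell_{-p-d})=\Phi(-p-d)$ and $\gamma^1(r_x)=\Phi(x+r)-\Phi(x)$. I will also use the two standard Mills-type bounds on Gaussian tails,
\[
\frac{t}{1+t^2}\phi(t)\le \Phi(-t)\le \frac{\phi(t)}{t}\qquad (t>0),
\]
together with the elementary estimate $\Phi(x+r)-\Phi(x)\ge r\,\phi(x+r)$ for $x\ge 0$.

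\emph{Type 1} ($d\ge p\ge 1$). The left side is a tail, $\Phi(-p-d)\le \phi(p+d)/(p+d)$. The right side is at least $r\phi(d+r)$. It therefore suffices to show
\[
\frac{\phi(p+d)}{\phi(d+r)}\le r(p+d).
\]
The ratio on the left equals $\exp\bigl(-\tfrac12(p-r)(p+2d+r)\bigr)$, which is at most $\exp(-\tfrac12(p-r)\cdot 3p)$ since $d\ge p$. Because $p\ge 1>r$, this decays like $e^{-c p^2}$, while the right side is at least $2rp$. So the inequality should follow from a short check at the worst case $d=p$, and the gap only widens as $d$ grows, since the exponent grows linearly in $d$ while $r(p+d)$ also increases. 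I would verify monotonicity in $d$ by differentiating the ratio.

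\emph{Type 2} ($0\le d<p$). Here the left side is $\Phi(-p-d)$ and the right side is $\tfrac12[\gamma^1([d,d+r])+\gamma^1([-d,-d+r])]$. I will treat the right side as a function $R(d)$ and the left side as $\ell(d)$, and compare their values at the endpoints together with their derivatives. For the derivatives, $\ell'(d)=-\phi(p+d)$ and $R'(d)=\tfrac12[\phi(d+r)-\phi(d)+\phi(-d)-\phi(-d+r)]$. Since $\ell$ decreases with rate at least $\phi(2p)$, I would try to show $\ell'\le R'$ on $[0,p)$. Then $R-\ell$ is nondecreasing, and the inequality reduces to the single endpoint $d=0$.

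At $d=0$ the statement reads $\Phi(-p)\le \gamma^1([0,r])$. But $\Phi(-p)=\gamma^1([-r,r])=2\gamma^1([0,r])$, so the inequality cannot hold there in the form written. This endpoint check is the main obstacle. It shows that the type 2 pairing must carry the correct weight: the ray $\ell_{-p-d}$ is split between two partners, and the bound needed for the measure comparison is $\Phi(-p-d)\le \gamma^1(r_d)+\gamma^1(r_{-d})$, which holds with equality at $d=0$. I would therefore carry out the monotonicity argument above for the inequality as stated. Where it fails, namely near $d=0$, I would record that the accounting in $\gamma^2(C)\le\gamma^2(D)$ in fact requires only this summed version. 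I would then prove the summed version by the same derivative comparison. This last step uses $\phi(-d)-\phi(-d+r)\ge 0$ together with log-concavity of $\phi$, which gives $\phi(d)-\phi(d+r)\le \phi(-d+r)-\phi(-d)+2\phi(p+d)$ for $d\le p$.
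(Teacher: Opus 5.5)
You are right that the type~2 inequality, as literally stated, is false. At $d=0$ one has $\Phi(-p)=\gamma^1([-r,r])=2\gamma^1([0,r])$, which is twice the right-hand side. The measure comparison only needs $\gamma^1(\ell_{-p-d})\le\gamma^1(r_d)+\gamma^1(r_{-d})$. That unhalved form is exactly what the paper's own argument delivers, since it proves $\gamma^1(\ell_{-p-d})\le 2\gamma^1(r_d)$; the factor $\tfrac12$ is a slip in the statement.

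Your type~1 reduction is also sound, but the ``short check at $d=p$'' is really a one-parameter inequality, $\exp\bigl(-\tfrac12(p-r)(3p+r)\bigr)\le 2rp$ for all $r\in(0,1/5]$. It holds only because of the coupling $\Phi(-p)=\gamma^1([-r,r])$; with $p=1$ and $r$ tiny it is false. You need something like $\gamma^1([-r,r])\le 2r\phi(0)$ together with the lower Mills bound to get $r\ge \frac{p}{2(1+p^2)}e^{-p^2/2}$. The resulting margin at $p=1$ is thin: $e^{-0.78}\approx 0.46$ against $0.5$.

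The genuine gap is in your type~2 argument. Let $S(d)=\gamma^1(r_d)+\gamma^1(r_{-d})$. Since $\phi$ is even, $S'(d)=\phi(d+r)-\phi(d-r)\le 0$, so
\[
(S-\ell)'(d)=\phi(d+r)-\phi(d-r)+\phi(p+d).
\]
This is not nonnegative on $[0,p)$. The first two terms are roughly $-2rd\,\phi(d)$, which outweighs $\phi(p+d)$ for $d$ near $p$. For instance, with $r=1/5$ (so $p\approx 1$) and $d=0.9$, it equals $\phi(1.1)-\phi(0.7)+\phi(1.9)\approx 0.218-0.312+0.066<0$. So $S-\ell$ is not monotone, and the reduction to the endpoint $d=0$ fails, even though $S-\ell$ itself stays positive. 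The ingredients you cite do not repair this:
\begin{itemize}
\item $\phi(-d)-\phi(-d+r)\ge 0$ holds only for $d\le r/2$; beyond that the interval $[-d,-d+r]$ moves away from the origin and loses mass.
\item Your displayed inequality is not the condition $(S-\ell)'\ge 0$. That condition reads $\phi(d)-\phi(d+r)\le \phi(-d)-\phi(-d+r)+\phi(p+d)$; in your version the middle term has the opposite sign and the factor $2$ is spurious.
\end{itemize}

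The missing idea is to compare ratios rather than differences. Write
\[
f(d)=\frac{\gamma^1(r_d)}{\Phi(-p-d)}=\Bigl(1-\frac{\overline{\Phi}(d+r)}{\overline{\Phi}(d)}\Bigr)\cdot\frac{\overline{\Phi}(d)}{\overline{\Phi}(d+p)}.
\]
By log-concavity of $\overline{\Phi}$, this is a product of nonnegative nondecreasing functions. Hence $f(d)\ge f(0)=\tfrac12$, which gives $\Phi(-p-d)\le 2\gamma^1(r_d)\le S(d)$; the last step holds because $r_{-d}$ has the same length as $r_d$ and its center is at least as close to the origin.

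The same monotonicity of $f$ reduces type~1 to $d=p$, and there the paper avoids your numerical coupling entirely. It shows $\overline{\Phi}(p)-\overline{\Phi}(p+r)\ge\overline{\Phi}(p)^2$, again by log-concavity. It then uses $\overline{\Phi}(p)^2/\overline{\Phi}(2p)\ge\overline{\Phi}(1)^2/\overline{\Phi}(2)\ge 1$.
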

Even though Theorem \ref{thm:bana-final-1d-comp} only talks about the $1$-d measure of intervals (while their contribution to $\gamma^2(C), \gamma^2(D)$ also depends on their height $y$). However, the lemma below shows that this is not a problem and the heights $y$ only work in our favor.
\begin{lemma}
        Theorem \ref{thm:bana-final-1d-comp} implies that $\gamma^2(C) \leq \gamma^2(D)$.
\end{lemma}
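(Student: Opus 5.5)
The plan is to write both $\gamma^2(C)$ and $\gamma^2(D)$ as one-dimensional integrals over the parameter $x$ indexing the horizontal intervals. Then compare the integrands pair by pair, using Theorem~\ref{thm:bana-final-1d-comp} for the $1$-dimensional measures and a separate argument for the height weights $\phi(L(x))$.

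\textbf{Step 1: change of variables.} The line $L$ is decreasing with some constant slope $-s<0$. Every interval $\ell_x$ (for $x\le -p$) and $r_x$ (for $x\ge -p$) sits at height $y=L(x)$. Changing variables $y=L(x)$, $dy=s\,dx$, in $\gamma^2=\int \gamma^1(\text{horizontal section at } y)\,\phi(y)\,dy$ gives
\[
\gamma^2(C)=s\int_{0}^{\infty}\gamma^1(\ell_{-p-d})\,\phi(L(-p-d))\,dd,
\qquad
\gamma^2(D)=s\int_{-p}^{\infty}\gamma^1(r_x)\,\phi(L(x))\,dx .
\]
I would split the second integral into $x\ge p$, which gives the type 1 terms with $x=d$, and $x\in[-p,p)$. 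The latter I rewrite as $\int_0^p\big(\gamma^1(r_d)\phi(L(d))+\gamma^1(r_{-d})\phi(L(-d))\big)\,dd$, which gives the type 2 terms. It then suffices to prove two pointwise inequalities:
\[
\gamma^1(\ell_{-p-d})\phi(L(-p-d))\le \gamma^1(r_d)\phi(L(d)) \quad (d\ge p),
\]
\[
\gamma^1(\ell_{-p-d})\phi(L(-p-d))\le \tfrac12\big(\gamma^1(r_d)\phi(L(d))+\gamma^1(r_{-d})\phi(L(-d))\big) \quad (d\in[0,p)).
\]
Integrating these over $d$ gives $\gamma^2(C)\le\gamma^2(D)$.

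\textbf{Step 2: reduce to a weight comparison.} Given Theorem~\ref{thm:bana-final-1d-comp}, both pointwise inequalities follow once the height weights satisfy $\phi(L(-p-d))\le\phi(L(\pm d))$ in the relevant ranges. Since $\phi$ is even and decreasing in $|y|$, this means $|L(-p-d)|\ge|L(\pm d)|$. Because $L$ is decreasing and $-p-d\le -d\le d$, we have $L(-p-d)\ge L(-d)\ge L(d)$. So it is enough to show $L(-p-d)+L(d)\ge 0$ and $L(-p-d)+L(-d)\ge 0$. For an affine $L$ these say $L$ is nonnegative at the midpoints $-p/2$ and $-(p+2d)/2\le -p/2$. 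By monotonicity, both follow from $L(-p/2)\ge 0$.

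\textbf{Step 3: the sign condition, which I expect to be the main obstacle.} We know $p\ge 1\ge r$, so $-r/2\ge -p/2$. Since $L$ is decreasing, $L(-p/2)\ge L(-r/2)=h_K(-r/2)$, and it suffices to show $h_K(-r/2)\ge 0$. The argument goes through $K_W$:
\begin{itemize}
\item $K_W$ is convex by the Proposition on Ehrhard symmetrization.
\item $K_W$ is closed, and $\gamma^2(K_W)=\gamma^m(K)\ge 1/2$.
\item The separating-hyperplane argument from the proof of Theorem~\ref{thm:bana} then shows $(0,0)\in K_W$, i.e.\ $h_K(0)\ge 0$.
\item Since $h_K$ is decreasing, $h_K(-r/2)\ge h_K(0)\ge 0$.
\end{itemize}
The delicate points are making sure the symmetrized body $K$ still has measure $\ge 1/2$ (symmetrization is measure preserving) and that the orientation conventions for $L$, $\ell_x$ and $r_x$ match the figure. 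Once these are checked, the lemma follows.
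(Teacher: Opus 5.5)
Your proof is correct and is essentially the paper's argument, made more explicit. The shared Jacobian $s$ of $y=L(x)$ justifies pairing sections by $d$, Theorem~\ref{thm:bana-final-1d-comp} handles the one-dimensional measures, and the height weights favor $D$ because $L$ is a non-increasing line with $L(-p/2)\ge 0$. You obtain this from $L(-p/2)\ge L(-r/2)=h_K(-r/2)\ge h_K(0)\ge 0$ via monotonicity of $h_K$, whereas the paper uses $L(0)\ge h_K(0)\ge 0$ via the chord property of the concave $h_K$; when $s=0$, both $C$ and $D$ are empty.

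One caveat concerns the factor $\tfrac12$ in your type-2 pointwise inequality, which you should drop. Your own formula for $\gamma^2(D)$ on $[-p,p)$ shows that the right-hand side only needs to be $\gamma^1(r_d)\phi(L(d))+\gamma^1(r_{-d})\phi(L(-d))$. With the $\tfrac12$, the type-2 bound as stated in the theorem is actually false at $d=0$, where $\gamma^1(\ell_{-p})=\gamma^1([-r,r])=2\gamma^1(r_0)$. What the paper's proof of that theorem really gives is $\gamma^1(\ell_{-p-d})\le 2\gamma^1(r_d)\le\gamma^1(r_d)+\gamma^1(r_{-d})$, and that is exactly what your integral needs.
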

\begin{proof}
The key observation is that $L(0)\geq 0$. 

Indeed as $\gamma^2(K_W) = \gamma^m(K) \geq 1/2$, the  origin $(0,0) \in K_W$ and thus $h_K(0)\geq 0$. By design, as $L(x) \geq h_K(x)$ for $x \geq -r/2$ (see Figure \ref{fig:bana-proof-7}), we have that $L(0)\geq 0$.

Let us first consider type $1$ interval pairs $\ell_{-p-d}$ and $r_{d}$ for $d\geq p$. Their $y$-coordinates are $L(-p-d)$ and $L(d)$ respectively. As $L(0) \geq 0$ and $L(x)$ is a line, we have that $|L(-p-d)| \geq |L(d)|$. So the factor $(2\pi)^{-1/2} \exp(-y^2/2)$ in the Gaussian density $\phi(x,y)$ due to the $y$-coordinate is only smaller for the interval $\ell_{-p-d}$.

Similarly, consider type $2$ interval pairs $\ell_{-p-d}$ and $r_{-d}, r_d$, for $0 \leq d \leq p$. Again as $L(0)\geq 0$, their $y$-coordinates satisfy $|L(-p-d)| \geq |L(-d)|\geq |L(d)|$, and the result follows by the argument above.
\end{proof}

\subsection{Comparing intervals and log-concavity}
We now focus on proving Theorem \ref{thm:bana-final-1d-comp}. This is the final piece which will complete the proof of Theorem \ref{thm:bana-star-2}.

Let us denote $\overline{\Phi}(x) = 1-\Phi(x) = \int_{t\geq x} \phi(t) dt$. We can write \[\gamma^1(r_d)= \overline{\Phi}(d)-\overline{\Phi}(d+r) 
\text{ and } \gamma^1(\ell_{-p-d}) =\Phi(-p-d) = \overline{\Phi}(p+d)\]
where we use that $\Phi(-x)= \overline{\Phi}(x)$ for all $x$.

As we are interested in the ratio of $\gamma^1(r_d)$ and $\gamma^1(\ell_{-p-d})$ as $d$ varies, let us define the function
\begin{equation}
\label{eq:bana-proof-fd}
     f(d) := \frac{\gamma^1(r_d)}{\gamma^1(\ell_{-p-d})} = \frac{\overline{\Phi}(d)-\overline{\Phi}(d+r)} {\overline{\Phi}(p+d)}.
\end{equation} 
The following property of $f$ will be very useful.

\begin{lemma}
    \label{bana:increasing-fn-lemma}
   For any fixed $r,p \geq 0$, the function $f(d)$ in \eqref{eq:bana-proof-fd} is non-decreasing for all $d\in \R$.
\end{lemma}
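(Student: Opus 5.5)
We need to show that $f(d) = (\overline{\Phi}(d)-\overline{\Phi}(d+r))/\overline{\Phi}(p+d)$ is non-decreasing in $d$. The plan is to reduce this to log-concavity of the Gaussian tail function. Write
\[
f(d) = \frac{\overline{\Phi}(d)-\overline{\Phi}(d+r)}{\overline{\Phi}(p+d)} = \frac{\overline{\Phi}(d)}{\overline{\Phi}(d+p)}\left(1 - \frac{\overline{\Phi}(d+r)}{\overline{\Phi}(d)}\right).
\]
It then suffices to show two things: that $d\mapsto \overline{\Phi}(d)/\overline{\Phi}(d+p)$ is non-decreasing and positive, and that $d\mapsto \overline{\Phi}(d+r)/\overline{\Phi}(d)$ is non-increasing, so that $1-\overline{\Phi}(d+r)/\overline{\Phi}(d)$ is non-decreasing and non-negative. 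The product of two non-negative non-decreasing functions is non-decreasing, which gives the lemma.

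Both monotonicity claims are the same statement: for any fixed $s\ge 0$, the ratio $\overline{\Phi}(d+s)/\overline{\Phi}(d)$ is non-increasing in $d$. Equivalently, $\log\overline{\Phi}(d+s)-\log\overline{\Phi}(d)$ is non-increasing, which holds exactly when $\log\overline{\Phi}$ is concave, since increments of a concave function over a fixed-length window decrease. The first claim uses $s=p\ge 0$ and takes reciprocals; the second uses $s=r\ge0$.

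So the key step, and the main obstacle, is proving that $\overline{\Phi}$ is log-concave on $\R$. I would get this from the Prékopa–Leindler inequality (Lemma~\ref{lm:ch3-pl}), in the same way Lemma~\ref{lm:ch3-bm-gauss} was derived. For $x,y$ and $\lambda\in(0,1)$, take the half-lines $A=[x,\infty)$ and $B=[y,\infty)$. Then $\lambda A+(1-\lambda)B=[\lambda x+(1-\lambda)y,\infty)$, and Lemma~\ref{lm:ch3-bm-gauss} in dimension one gives
\[
\overline{\Phi}(\lambda x+(1-\lambda)y)\ge \overline{\Phi}(x)^\lambda\overline{\Phi}(y)^{1-\lambda},
\]
which is exactly log-concavity of $\overline{\Phi}$.

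As an alternative route, if a calculus proof is preferred, one can show directly that $\phi(d)/\overline{\Phi}(d)$, the derivative of $-\log\overline{\Phi}$, is non-decreasing. This follows from the Mills-ratio-type inequality $\overline{\Phi}(d)\,d\le\phi(d)$ for $d>0$, which is trivial for $d\le0$. The Prékopa–Leindler route avoids that computation. One caveat to handle is that $\overline{\Phi}>0$ everywhere, so all the ratios and logarithms above are well defined, and the argument holds for all $d\in\R$ as stated.
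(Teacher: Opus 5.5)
Your proof is correct and follows essentially the same route as the paper. You use the same factorization of $f(d)$ into $\overline{\Phi}(d)/\overline{\Phi}(d+p)$ and $1-\overline{\Phi}(d+r)/\overline{\Phi}(d)$, and derive the monotonicity of both factors from the log-concavity of $\overline{\Phi}$, which you obtain from Lemma~\ref{lm:ch3-bm-gauss} applied to half-lines; your explicit check that both factors are non-negative, so their product is non-decreasing, fills in a detail the paper leaves implicit.
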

The proof of Lemma \ref{bana:increasing-fn-lemma} and later arguments use log-concavity, so let us recall this briefly, focusing only on the properties we need.
\paragraph{Log-concavity.} A non-negative function $f:\R^d \rightarrow \R$ is called log-concave if $\log f$ is concave.
Equivalently, for all $x,y$  and $0 \leq \lambda \leq  1$,
\begin{equation}
    \label{eq:log-concavity-equiv1}
    f(\lambda x + (1-\lambda) y) \geq f(x)^\lambda f(y)^{1-\lambda}.
\end{equation}  
Here we will only be interested in functions $f:\R \rightarrow \R$. We have the following equivalent characterization for such functions.
\begin{proposition}
\label{prop:log-conc-non-decreasing}
A smooth function $f:\R\rightarrow \R$ is log-concave iff $f(x)/f(x+a)$ is non-decreasing in $x$ for all $a\geq 0$.  
\end{proposition}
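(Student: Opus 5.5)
The plan is to prove both directions through the derivative of $\log f$, where $f:\R\to\R$ is smooth and, as is implicit in taking logarithms, strictly positive. Fix $a\ge 0$ and set $\psi_a(x)\eqdef \log f(x)-\log f(x+a)$, so that $f(x)/f(x+a)=e^{\psi_a(x)}$. Since the exponential is increasing, the ratio $f(x)/f(x+a)$ is non-decreasing in $x$ exactly when $\psi_a$ is. Smoothness gives $\psi_a'(x)=(\log f)'(x)-(\log f)'(x+a)$. The whole statement therefore comes down to the standard fact that a smooth function is concave if and only if its derivative is non-increasing.

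For the forward direction, assume $f$ is log-concave. Then $\log f$ is a smooth concave function, so $(\log f)'$ is non-increasing. For $a\ge 0$ this gives $(\log f)'(x)\ge (\log f)'(x+a)$, hence $\psi_a'\ge 0$ everywhere. So $\psi_a$ is non-decreasing, and therefore so is the ratio.

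For the converse, assume $x\mapsto f(x)/f(x+a)$ is non-decreasing for every $a\ge 0$. Then each $\psi_a$ is non-decreasing, so $\psi_a'(x)\ge 0$, which says $(\log f)'(x)\ge(\log f)'(x+a)$ for all $x$ and all $a\ge0$. Thus $(\log f)'$ is non-increasing, and $\log f$ is concave. Equivalently, \eqref{eq:log-concavity-equiv1} holds.

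The only point that needs care is the equivalence between concavity and a monotone derivative in the converse direction. I would establish it via the mean value theorem. An alternative that avoids derivatives entirely is to use the chord form: for $x<y$, concavity of $\log f$ is equivalent to the statement that the slopes $(\log f(x+a)-\log f(x))/a$ are non-increasing in $x$, which is exactly $\psi_a$ being non-decreasing. Taking $a=(y-x)/2$ and iterating gives midpoint concavity, and continuity then gives full concavity.
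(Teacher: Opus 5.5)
Your proof is correct and follows the same route as the paper. Like the paper, you differentiate $\log\bigl(f(x)/f(x+a)\bigr)=\log f(x)-\log f(x+a)$ and use that a smooth function is concave exactly when its derivative is non-increasing; your converse spells out what the paper dismisses as ``follows similarly'', and both arguments tacitly take $f>0$. Your derivative-free chord alternative, via midpoint concavity plus continuity, is also valid but not needed.
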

\begin{proof}
If $f$ is log-concave then $\log f$ is concave and hence its derivative $(\log f)'$ is non-increasing. So for any fixed $a\geq 0$,
\[ \left(\log \frac{f(x)}{f(x+a)}\right)' =  (\log f(x))' - (\log f(x+a))' \geq 0.\]
So $\log (f(x)/f(x+a))$ is non-decreasing, which implies that $f(x)/f(x+a)$ is also non-decreasing. The converse also follows similarly.
\end{proof}
Clearly the Gaussian density $\phi(x)$ is log-concave. We also have the following.
\begin{proposition}
    Both $\Phi$ and $\overline{\Phi}$ are log-concave.
\end{proposition}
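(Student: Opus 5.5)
Both claims follow from the Prékopa--Leindler inequality (Lemma~\ref{lm:ch3-pl}), applied in one dimension. The idea is that $\Phi$ is the one-dimensional marginal of a log-concave function, and such marginals are log-concave.

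\emph{The case of $\Phi$.} Write
\[
\Phi(x)=\int_{\R} \phi(t)\,1\{t\le x\}\,dt = \int_{\R} F(x,t)\,dt,
\qquad F(x,t)\eqdef \phi(t)1\{t\le x\}.
\]
First, $F$ is log-concave on $\R^2$. The factor $\phi(t)$ is log-concave because $-t^2/2$ is concave. The factor $1\{t\le x\}$ is the indicator of a convex set (a half-plane), so it is log-concave too. A product of log-concave functions is log-concave.

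Now fix $x,y\in\R$ and $\lambda\in(0,1)$, and define three functions of $t$:
\[
h(t)\eqdef F(\lambda x+(1-\lambda)y,\,t),\qquad f(t)\eqdef F(x,t),\qquad g(t)\eqdef F(y,t).
\]
Log-concavity of $F$ at the points $(x,t_1)$ and $(y,t_2)$ gives
\[
h(\lambda t_1+(1-\lambda)t_2)\ge f(t_1)^\lambda g(t_2)^{1-\lambda}.
\]
This is exactly the hypothesis~\eqref{eq:ch3-pl-ass}. Lemma~\ref{lm:ch3-pl} then yields
\[
\Phi(\lambda x+(1-\lambda)y)\ge \Phi(x)^\lambda\Phi(y)^{1-\lambda},
\]
which is the log-concavity inequality~\eqref{eq:log-concavity-equiv1} for $\Phi$.

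If one prefers to avoid the general inequality, it is enough to check the simpler condition $1\{t\le \lambda x+(1-\lambda)y\}\ge 1\{t_1\le x\}^\lambda 1\{t_2\le y\}^{1-\lambda}$ at $t=\lambda t_1+(1-\lambda)t_2$, exactly as in the proof of Lemma~\ref{lm:ch3-bm-gauss}. That lemma already gives the conclusion: take $A=(-\infty,x]$ and $B=(-\infty,y]$, note that $\lambda A+(1-\lambda)B=(-\infty,\lambda x+(1-\lambda)y]$, and Lemma~\ref{lm:ch3-bm-gauss} applied with $n=1$ is literally log-concavity of $\Phi$.

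\emph{The case of $\overline{\Phi}$.} By symmetry of $\phi$ we have $\overline{\Phi}(x)=\Phi(-x)$. Composing a log-concave function with the affine map $x\mapsto -x$ preserves log-concavity, so $\overline{\Phi}$ is log-concave.

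No step is a real obstacle. The only care needed is to allow the value $0$ (so that $\log 0=-\infty$) when treating the indicator as log-concave. Alternatively, since $\Phi$ is smooth and positive, one could argue via Proposition~\ref{prop:log-conc-non-decreasing}: show that $\phi/\Phi$ is non-increasing. This reduces to the inequality $x\Phi(x)+\phi(x)>0$, which holds because its derivative is $\Phi(x)>0$ and the expression tends to $0$ as $x\to-\infty$. I would take the Prékopa--Leindler route, which is one line given Lemma~\ref{lm:ch3-bm-gauss}.
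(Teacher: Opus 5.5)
Your proof is correct and follows essentially the same route as the paper. The paper also derives log-concavity from Lemma~\ref{lm:ch3-bm-gauss} applied to half-lines; it takes $A=[x,\infty)$, $B=[y,\infty)$ to handle $\overline{\Phi}$ directly, whereas you take left half-lines for $\Phi$ and then reflect. The paper likewise mentions the direct derivative check as an alternative.
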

\begin{proof}
It is an interesting exercise to show directly that $(\log \Phi)'$ 
and $(\log \overline \Phi)'$ are non-increasing
(this requires the useful fact $\phi'(x) = -x \phi(x)$ about the Gaussian density).

A more slick way is using the Prekopa-Leindler inequality. In particular, the log-concavity of $\overline{\Phi}$ follows by applying Lemma \ref{lm:ch3-bm-gauss} with $A = [x,\infty)$ and $B = [y,\infty)$ and using the definition \eqref{eq:log-concavity-equiv1}. 
\end{proof}

Lemma \ref{bana:increasing-fn-lemma} now follows by writing $f(d)$ in \eqref{eq:bana-proof-fd} as 
\[f(d) = \left( 1 - \frac{\overline{\Phi}(d+r)}{\overline{\Phi}(d)}\right)\left(\frac{\overline{\Phi}(d)}{\overline{\Phi}(d+p)} \right) \]
and observing that both the terms above are non-decreasing by Lemma \ref{prop:log-conc-non-decreasing} as $r,p\geq 0$.

We can now prove Theorem \ref{thm:bana-final-1d-comp}.

\paragraph{Type $1$ pairs.} 
Recall that these consist of intervals
$r_d$ and $\ell_{-p-d}$ for $d\geq p$.
By Lemma \ref{bana:increasing-fn-lemma}  we have 
\begin{equation}
    \frac{\gamma^1(r_{d}) }{\gamma^1(\ell_{-p-d})} = f(d) \geq f(p) =  \frac{\overline{\Phi}(p)-\overline{\Phi}(p+r)}{\overline{\Phi}(2p)}
    \label{bana:eq-final-comp-pair-1}
\end{equation}
We will show that $f(p) \geq 1$ in two steps.

\begin{lemma}
\label{lem:type1:comp1}
$\overline{\Phi}(p)-\overline{\Phi}(p+r) \geq \overline\Phi(p)^2$ and thus $f(p) \geq \overline\Phi(p)^2/\overline\Phi(2p)$.
\end{lemma}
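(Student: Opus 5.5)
The plan is to rewrite both sides as Gaussian measures of intervals and compare them using the two one-dimensional estimates $\gamma^1(I)\le |I|\max_I\phi$ and $\gamma^1(I)\ge |I|\min_I\phi$, together with the Mills ratio bound $\overline{\Phi}(p)\le \phi(p)/p$ for $p>0$. The second claim, $f(p)\ge \overline\Phi(p)^2/\overline\Phi(2p)$, follows at once from the first together with \eqref{bana:eq-final-comp-pair-1}. So everything reduces to showing
\[
\gamma^1([p,p+r]) \;=\; \overline{\Phi}(p)-\overline{\Phi}(p+r)\;\ge\; \overline{\Phi}(p)^2 .
\]
By the definition \eqref{eq:bana-pf-defn-p} and the symmetry $\Phi(-p)=\overline\Phi(p)$, we have $\overline{\Phi}(p)=\gamma^1([-r,r])$. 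Moreover $p\ge 1$ and $0<r\le 1/5$.

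\textbf{Step 1 (lower bound on the left side).} Since $\phi$ is decreasing on $[p,p+r]$,
\[
\gamma^1([p,p+r])\ge r\,\phi(p+r)=r\,\phi(p)\,e^{-pr-r^2/2}.
\]

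\textbf{Step 2 (upper bound on the right side).} Write $\overline\Phi(p)^2$ as the product of two factors and bound each in a different way. For the first, use $\overline\Phi(p)=\gamma^1([-r,r])\le 2r\phi(0)$. For the second, use the Mills ratio bound $\overline\Phi(p)\le\phi(p)/p$. This gives
\[
\overline\Phi(p)^2\le \frac{2r\phi(0)\phi(p)}{p}.
\]
Both bounds carry a factor $r\phi(p)$, so the inequality reduces to the scalar condition
\[
p\,e^{-pr-r^2/2}\;\ge\;2\phi(0)=\sqrt{2/\pi}\approx 0.798 .
\]
The Mills ratio bound itself is elementary: integrate $\phi(t)\le (t/p)\phi(t)$ over $t\ge p$ and use $\phi'(t)=-t\phi(t)$.

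\textbf{Step 3 (the scalar inequality; expected main obstacle).} At the extreme case $r=1/5$, where $p\approx 1$, the left side is about $e^{-0.22}\approx 0.80$. The margin is therefore tiny, which matches the footnote to Theorem~\ref{thm:bana-star-2} noting that the constants are tight. I would handle this in two parts.

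First, observe that $p=p(r)$ is decreasing in $r$, since $\gamma^1([-r,r])$ increases with $r$. So it suffices to treat $r\in(0,1/5]$ with the exact relation $\overline\Phi(p)=2\Phi(r)-1$.

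Second, for small $r$ the quantity $p$ grows like $\sqrt{2\ln(1/r)}$ while $pr\to 0$, so the condition holds with plenty of room. The real danger is only near $r=1/5$. There I would verify the condition numerically at $r=1/5$, and then argue that $q(r)=p(r)e^{-p(r)r-r^2/2}$ is decreasing on $(0,1/5]$. This should follow because $p$ decreases while $pr$ and $r^2$ increase; alternatively, check $q\ge 0.8$ on a fine grid, using the monotonicity of each factor between grid points.

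If the margin at $r=1/5$ turns out too thin, the fallback is to sharpen Step 1. Instead of the crude bound $r\phi(p+r)$, use log-concavity of $\phi$ (Proposition~\ref{prop:log-conc-non-decreasing}) to get $\gamma^1([p,p+r])\ge \phi(p)(1-e^{-pr})/p$. Since $(1-e^{-pr})/p \ge r e^{-pr/2}$ by convexity, this improves the exponent from $pr+r^2/2$ to $pr/2$ and gives comfortable slack.
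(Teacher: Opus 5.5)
Your main argument is correct, but it takes a different and much more fragile route than the paper. The paper uses no density estimates. By log-concavity of $\overline{\Phi}$ (Proposition~\ref{prop:log-conc-non-decreasing}), the map $x\mapsto 1-\overline{\Phi}(x+r)/\overline{\Phi}(x)$ is non-decreasing. So its value at $x=p$ is at least its value at $x=0$, which is exactly $1-2\overline{\Phi}(r)=\gamma^1([-r,r])=\overline{\Phi}(p)$; multiplying by $\overline{\Phi}(p)$ finishes. That argument is exact and needs no numerics. It also works for every $r$ with $p\ge 0$, so the hypothesis $r\le 1/5$ (i.e.\ $p\ge 1$) plays no role in this lemma; it is needed only in Lemma~\ref{lem:type2:comp2}.

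Your route trades log-concavity of $\overline{\Phi}$ for three lossy pointwise bounds: $r\phi(p+r)$, $2r\phi(0)$, and Mills' ratio. The resulting scalar inequality $p\,e^{-pr-r^2/2}\ge\sqrt{2/\pi}$ survives only up to about $r=1/5$. There $p\approx 1.0006$ and $q\approx 0.803$ versus $0.798$, while at $r=0.3$ one gets $q\approx 0.55$ even though the lemma still holds. So the thin margin you see is an artifact of your estimates, not the tightness discussed in the footnote to Theorem~\ref{thm:bana-star-2}.

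Two details in Step 3 should be repaired.

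\emph{Monotonicity of $pr$.} The claim that $pr$ increases with $r$ needs proof. Differentiating $\overline{\Phi}(p)=2\Phi(r)-1$ gives $p'=-2\phi(r)/\phi(p)$, hence $(pr)'=p-2r\phi(r)/\phi(p)$. This is $\ge 0$ because, for $p\ge 1$, we have $p\phi(p)\ge\phi(p)/p\ge\overline{\Phi}(p)=\gamma^1([-r,r])\ge 2r\phi(r)$. With that, $q$ is a product of positive non-increasing functions, and a single numerical check at $r=1/5$ suffices.

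\emph{The fallback.} Your fallback has the inequality reversed. Since $\log\phi$ is concave, its tangent at $p$ lies above it, so $\phi(t)\le\phi(p)e^{-p(t-p)}$. Thus $\phi(p)(1-e^{-pr})/p$ is an \emph{upper} bound on $\gamma^1([p,p+r])$. A correct lower bound uses the chord instead: $\phi(t)\ge\phi(p)e^{-(p+r/2)(t-p)}$ on $[p,p+r]$. Luckily the fallback is not needed.
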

\begin{proof}
Applying Lemma \ref{prop:log-conc-non-decreasing}  with $x=p$ and $a=r$, and as $r> 0$, we have that 
$\overline{\Phi}(p)/\overline{\Phi}(p+r)$ is non-decreasing in $p$. 
As $p\geq 0$, this gives 
\begin{align*}
    1-\frac{\overline{\Phi}(p+r)}{\overline{\Phi}(p)}  &  \geq  1-\frac{\overline{\Phi}(r)}{\overline{\Phi}(0)} = 1-2\overline{\Phi}(r) \qquad \qquad  \text{(as $\overline{\Phi}(0)=1/2$)}\\
& = \overline{\Phi}(-r) - \overline{\Phi}(r) = \gamma^1([-r,r])= \Phi(-p) =\overline{\Phi}(p) 
\end{align*} 
where the last equality uses the definition of $p$ in \eqref{eq:bana-pf-defn-p}. 
\end{proof}
\begin{lemma}
\label{lem:type2:comp2}
$\overline\Phi(p)^2/\overline\Phi(2p) \geq \overline\Phi(1)^2/\overline\Phi(2) \geq 1$. 
\end{lemma}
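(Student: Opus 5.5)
The plan is to prove the two inequalities separately. The first is a monotonicity statement in $p$, and the second is a numerical check at $p=1$. Recall that $p\ge 1$, by the choice of $r$ in Theorem~\ref{thm:bana-star-2}.

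\emph{Step 1: the ratio $\overline\Phi(p)^2/\overline\Phi(2p)$ is non-decreasing in $p\ge 0$.} Set $g(p) \eqdef \log\overline\Phi(p)$, which is concave by the log-concavity of $\overline\Phi$ established above. We need $2g(p)-g(2p)$ to be non-decreasing. Its derivative is $2g'(p)-2g'(2p) = 2\bigl(g'(p)-g'(2p)\bigr)$. Since $g'$ is non-increasing and $p\le 2p$ for $p\ge 0$, we get $g'(p)\ge g'(2p)$, so the derivative is non-negative. Equivalently, Proposition~\ref{prop:log-conc-non-decreasing} with $a=p$ compares $\overline\Phi(x)/\overline\Phi(x+p)$ at $x=0$ and $x=p$, but the derivative argument is cleaner because the shift $a=p$ varies with $p$. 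Smoothness of $\overline\Phi$ makes the calculus legitimate. Monotonicity and $p\ge1$ then give $\overline\Phi(p)^2/\overline\Phi(2p)\ge \overline\Phi(1)^2/\overline\Phi(2)$.

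\emph{Step 2: the numerical inequality $\overline\Phi(1)^2\ge\overline\Phi(2)$.} We have $\overline\Phi(1)\approx 0.1587$, so $\overline\Phi(1)^2\approx 0.02518$, while $\overline\Phi(2)\approx 0.02275$. The inequality holds with a margin of about $10\%$. To make this rigorous without relying on tables, I would use elementary bounds. For the lower bound on $\overline\Phi(1)$, the footnote to Theorem~\ref{thm:bana-star-2} already records $\gamma^1((-\infty,-1])\approx 0.1586$, and any bound $\overline\Phi(1)\ge 0.155$ suffices. For the upper bound on $\overline\Phi(2)$, the Mills ratio gives $\overline\Phi(x)\le \phi(x)/x$, so $\overline\Phi(2)\le \phi(2)/2\approx 0.0270$. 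That is too weak. I would instead use the sharper bound $\overline\Phi(x)\le \phi(x)\,x/(x^2+1)$ for the upper side, which gives $\phi(2)\cdot 2/5\approx 0.0216$.

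The main obstacle is Step 2: the margin is small, so crude tail bounds such as $e^{-x^2/2}$ fail. Rigor requires standard sharp Gaussian tail estimates or numerical evaluation. Step 1 is routine given log-concavity.
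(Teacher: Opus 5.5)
Your Step~1 is the same as the paper's proof: the paper also shows that $x\mapsto\overline\Phi(x)^2/\overline\Phi(2x)$ is non-decreasing by differentiating its logarithm and using that $(\log\overline\Phi)'$ is non-increasing, and then evaluates at $p\ge 1$. For the final inequality $\overline\Phi(1)^2\ge\overline\Phi(2)$, the paper only says it ``can be checked numerically.'' Your values $0.02518$ versus $0.02275$ are correct, so up to that point you match the paper.

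The flaw is in the rigorous version of Step~2. The bound $\overline\Phi(x)\le \phi(x)\,x/(x^2+1)$ goes the wrong way. The standard Mills-ratio bounds are $\frac{x}{x^2+1}\phi(x)\le\overline\Phi(x)\le\frac{\phi(x)}{x}$ for $x>0$, so $\frac25\phi(2)\approx 0.0216$ is a \emph{lower} bound on $\overline\Phi(2)$. Indeed it is smaller than the value $0.02275$ you quoted yourself, so Step~2 as written proves nothing.

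A correct sharper upper bound is the third convergent of Laplace's continued fraction, $\overline\Phi(x)\le\frac{x^2+2}{x^3+3x}\,\phi(x)$. To prove it, let $h(x)$ be the right side minus the left side. Then $h'(x)=-6\phi(x)/\bigl(x^2(x^2+3)^2\bigr)<0$ and $h(x)\to 0$ as $x\to\infty$, so $h>0$. At $x=2$ this gives $\overline\Phi(2)\le\frac37\phi(2)<0.0232$.

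The lower bound on $\overline\Phi(1)$ also needs justification. The footnote's ``$\approx 0.1586$'' is only a numerical value, not a bound. Instead, integrate $e^{-t^2/2}\le 1-\frac{t^2}{2}+\frac{t^4}{8}$ over $[0,1]$. This gives $\overline\Phi(1)\ge\frac12-\frac{103}{120\sqrt{2\pi}}>0.157$, hence $\overline\Phi(1)^2>0.0246>0.0232$, which completes the argument.
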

\begin{proof}
   Consider the function $g(x) =\overline{\Phi}(x)^2/\overline{\Phi}(2x)$. We claim that $\log g(x)$ and hence $g(x)$ is non-decreasing in $x$. Indeed
   \[(\log g(x))' = (2 \log \overline{\Phi}(x) - \log \overline{\Phi}(2x))'= 2\frac{\overline{\Phi}'(x)}{\overline{\Phi}(x)} -  2\frac{\overline{\Phi}'(2x)}{\overline{\Phi}(2x)} \geq 0,\]
where the inequality follows as $\overline{\Phi}'(x)/\overline{\Phi}(x) = (\log (\overline{\Phi}(x))'$ is non-increasing by the log-concavity of $\overline{\Phi}(x)$. 

As $p \geq 1$ by \eqref{eq:bana-pf-defn-p} and the assumption $\gamma^1([-r,r]) \leq \gamma^1((-\infty,-1])$, we get $g(p) \geq g(1) = \overline\Phi^2(1)/\overline\Phi(2)$, which can be checked numerically to be at least $1$.
\end{proof}
By \eqref{bana:eq-final-comp-pair-1} together with  Lemma \ref{lem:type1:comp1} and \ref{lem:type2:comp2} we gave that $\gamma^1(r_d) \geq \gamma^1(\ell_{-p-d})$ for all $d\geq p$.


\paragraph{Type 2 pairs.}
We now  consider the case when $d\in [0,p)$ and show that \[\frac{\gamma^1(r_{d})+\gamma^1(r_{-d})}{\gamma^1(\ell_{-p-d})}\geq 1.\] 
First, it suffices to show that $\gamma^1(r_d)/\gamma^1(\ell_{-p-d}) \geq 1/2$. This is because 
for any $d\geq 0$  
\[ \gamma^1(r_{d}) = \gamma^1([d,d+r]) \leq \gamma^1([-d,-d+r]) = \gamma^1(r_{-d}),\] as both intervals have length $r$ and the latter is closer to the origin.

Now Lemma \ref{bana:increasing-fn-lemma} and as $d\geq 0$,
\[  \frac{\gamma^1(r_d)}{\gamma^1(\ell_{-p-d})} = f(d) \geq f(0)=  \frac{\overline{\Phi}(0)-\overline{\Phi}(r)}{ \overline{\Phi}(p)} = \frac{1}{2}.\]
 Here, the last step uses that $\overline{\Phi}(p) = \overline{\Phi}(-r) - \overline{\Phi}(r) = 2 (\overline{\Phi}(0)-\overline{\Phi}(r))$,  
by the definition \eqref{eq:bana-pf-defn-p} of $p$.

\section{Ehrhard's inequality}
\label{sec:bana-proof-ehrhard}
We now describe perhaps the most important Gaussian isoperimetric-type inequality.

Let $\Phi$ denote the Gaussian distribution function
\[  \Phi(x) = \gamma^1((-\infty,x]) = (2\pi)^{-1/2}\int_{-\infty}^x \exp(-t^2/2) dt.\]
The function $\Phi^{-1}:[0,1] \rightarrow  \R \cup \{-\infty,\infty\}$ will play an important role.

\begin{theorem}
\label{thm:ehrhard} [Ehrhard-Borell inequality] \cite{Ehrhard1983, BORELL2003}
 For any measurable sets $A, B \subset \R^n$ and $0 \leq \lambda  \leq 1$, 
\begin{equation}
\label{ineq:ehrhard}
\Phi^{-1}(\gamma^n(\lambda A + (1-\lambda)B)) \geq \lambda \Phi^{-1} (\gamma^n(A)) + (1-\lambda) \Phi^{-1} (\gamma^n(B)).
\end{equation}
\end{theorem}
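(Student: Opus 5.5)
The plan is to prove a functional version of \eqref{ineq:ehrhard} by running the heat semigroup and applying a parabolic maximum principle, in the style of Borell. The set version then follows by specialization and approximation.

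\textbf{Functional statement.} Let $P_t$ be the heat semigroup, $P_tf(x)=\E f(x+\sqrt t\,\grv)$. Let $f,g,h:\R^n\to(0,1)$ be smooth, with all derivatives bounded, and suppose
\[
\Phi^{-1}(h(\lambda x+(1-\lambda)y))\ge \lambda\Phi^{-1}(f(x))+(1-\lambda)\Phi^{-1}(g(y))\quad\text{for all }x,y.
\]
The claim is that the same inequality holds with $f,g,h$ replaced by $P_tf,P_tg,P_th$ for every $t\ge0$. Evaluating at $t=1$ and $x=y=0$ gives $\Phi^{-1}(\int h\,d\gamma^n)\ge\lambda\Phi^{-1}(\int f\,d\gamma^n)+(1-\lambda)\Phi^{-1}(\int g\,d\gamma^n)$.

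\textbf{Reduction of the set version.} Approximate $A$ and $B$ from inside by compact sets; Minkowski combinations of compact sets are compact, hence measurable. Then mollify the indicators $1_A$, $1_B$, $1_{\lambda A+(1-\lambda)B}$. Some care is needed to keep the hypothesis in the smoothed form:
\begin{itemize}
\item Squeeze the indicators into $[\eps,1-\eps]$.
\item Slightly enlarge the target set before smoothing $h$.
\end{itemize}
Then let $\eps\to0$ using monotone convergence.

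\textbf{Key computation.} Write $u_f=\Phi^{-1}(P_tf)$, and similarly $u_g,u_h$. Since $v=P_tf$ solves $\partial_tv=\frac12\Delta v$, and $\Phi'=\phi$, $\phi'(s)=-s\phi(s)$, the chain rule gives
\[
\partial_t u_f=\tfrac12\left(\Delta u_f-u_f|\nabla u_f|^2\right).
\]
Define on $\R^n\times\R^n$
\[
C(t,x,y)=u_h(\lambda x+(1-\lambda)y)-\lambda u_f(x)-(1-\lambda)u_g(y),
\]
and the degenerate elliptic operator $L=\frac12\sum_i(\partial_{x_i}+\partial_{y_i})^2$. This operator corresponds to moving $x$ and $y$ by the same Brownian motion. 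The point of this choice is that $L$ acts on $u_h(\lambda x+(1-\lambda)y)$, $u_f(x)$ and $u_g(y)$ as $\frac12\Delta$ of each function, because $\lambda+(1-\lambda)=1$.

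At a spatial critical point of $C$, the equations $\nabla_xC=0$ and $\nabla_yC=0$ force $\nabla u_h=\nabla u_f=\nabla u_g=:\xi$. The first-order terms then combine to give
\[
\partial_tC=LC-\tfrac12|\xi|^2\,C .
\]
So at a negative minimum we have $LC\ge0$ and $-\frac12|\xi|^2C\ge0$, hence $\partial_tC\ge0$. A standard perturbation argument (consider $C+\delta t$, or $e^{\kappa t}C$) turns this into a proof that $\min C$ cannot become negative, given $C(0,\cdot)\ge0$.

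\textbf{Main obstacle.} The hard part is the rigor of the maximum principle on the noncompact domain $\R^{2n}$, where the minimum of $C$ might escape to infinity, and where $u_f$ can be unbounded if $P_tf$ approaches $0$ or $1$. I would first try to handle this by working with functions that are constant in $(0,1)$ outside a large ball. For such functions $u_f,u_g,u_h$ are bounded with bounded gradients, uniformly on $[0,1]$, and $C$ tends to a constant at infinity (or one adds a small penalty $\delta(|x|^2+|y|^2)$ and checks that the extra terms have the right sign). An alternative route, which avoids PDE boundary issues, is van Handel's stochastic argument: realize $P_tf$ along Brownian paths and compare martingales. I would keep that in reserve if the penalty argument becomes messy.
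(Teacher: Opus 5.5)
The paper gives no proof of this inequality. It quotes it from Ehrhard (convex sets) and Borell (general sets), and uses it only for convex sets, to show that Gaussian symmetrization preserves convexity. Your outline is Borell's 2003 heat-flow argument, and its core is correct. Indeed $u=\Phi^{-1}(P_tf)$ satisfies $\partial_tu=\frac12(\Delta u-u|\nabla u|^2)$, and your $L$ acts as $\frac12\Delta$ on each of the three terms. At a spatial critical point $\partial_tC=LC-\frac12|\xi|^2C$, and the slack $C+\eta(1+t)$ closes the first-touching argument wherever the minimum is attained. However, two steps fail as written. Both are repairable.

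\emph{The reduction to smooth $(0,1)$-valued functions.} A symmetric squeeze into $[\eps,1-\eps]$ cannot satisfy the hypothesis. Take any pair with $f(x)=1-\eps$ and $g(y)=\eps$. The hypothesis then forces $\Phi^{-1}(h(\lambda x+(1-\lambda)y))\ge(2\lambda-1)\Phi^{-1}(1-\eps)$. So for $\lambda\ge\frac12$ you need $h\ge\frac12$ on $\lambda\{f=1-\eps\}+(1-\lambda)\{g=\eps\}$; for $\lambda<\frac12$ swap the roles of $f$ and $g$. When $B$ is bounded this set contains the complement of a bounded set, and enlarging the target set does not help. For example, with $A=B$ a small ball you get $\int h\,d\gamma^n\gtrsim\frac12$, so the limit says nothing about $\gamma^n(\lambda A+(1-\lambda)B)$.

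The fix is to make the levels asymmetric in the $\Phi^{-1}$ scale. Take compact $K_A\subseteq A$, $K_B\subseteq B$, and let $K=\lambda K_A+(1-\lambda)K_B$. Set $f=\Phi\circ\psi_A$, $g=\Phi\circ\psi_B$, $h=\Phi\circ\psi$, where:
\begin{itemize}
\item $\psi_A$ is smooth, equal to $M$ on $K_A$ and to $-N$ off the $\rho$-neighbourhood $K_A^\rho$, and similarly for $\psi_B$;
\item $\psi$ takes values in $[-R,M]$, equals $M$ on $K^\rho$ and equals $-R$ off $K^{2\rho}$;
\item the levels satisfy $\min(\lambda,1-\lambda)N\ge M+R$.
\end{itemize}
Check the hypothesis in two cases. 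If $x\in K_A^\rho$ and $y\in K_B^\rho$, then $\lambda x+(1-\lambda)y\in K^\rho$, and $M$ dominates the right side. Otherwise the right side is at most $\max(\lambda M-(1-\lambda)N,(1-\lambda)M-\lambda N)\le-R$. This gives $\int f\,d\gamma^n\ge\Phi(M)\gamma^n(K_A)$ and $\int h\,d\gamma^n\le\gamma^n(K^{2\rho})+\Phi(-R)$. Now let $\rho\to0$ (using that $K$ is closed), then $M,R\to\infty$, then use inner regularity. Your inner approximation by compact sets correctly sidesteps the measurability of $\lambda A+(1-\lambda)B$.

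\emph{Behaviour at infinity.} Even for such functions (constant outside a ball, with values in a compact subinterval of $(0,1)$), $C$ does not tend to a constant at infinity. Sending $x\to\infty$ with $y$ fixed gives a limit depending on $u_g(t,y)$. Sending $x\to\infty$ with $z=\lambda x+(1-\lambda)y$ fixed gives a limit depending on $u_h(t,z)$.

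The penalty version works, but the extra terms have no sign. At a minimum of $C+\delta(|x|^2+|y|^2)+\eta(1+t)$ one has $\nabla u_f(x)=\nabla u_h(z)+2\delta x/\lambda$ and $\nabla u_g(y)=\nabla u_h(z)+2\delta y/(1-\lambda)$. This produces errors of order $\delta(|x|+|y|)+\delta^2(|x|^2+|y|^2)$, plus $2n\delta$ from $L$ applied to the penalty. At such a point $\delta(|x|^2+|y|^2)\le2\sup|u|$, and $u,\nabla u$ are bounded. Hence these errors are $O(\sqrt\delta)$ and are absorbed by $\eta$ once $\delta$ is small compared to $\eta^2$. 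With these two repairs the argument is complete.
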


More pictorially, imagine associating the set $A$ with the interval $I_A=(-\infty,x_A)$ with the ``same'' Gaussian measure, i.e.,~$\gamma^n(A) = \gamma^1((-\infty,x_A])$. 
Notice that $\Phi^{-1} (\gamma^n(A)) =x_A$. 
Similarly, consider the interval $I_B=(-\infty,x_B)$ for set $B$. Theorem \ref{thm:ehrhard} says that point $x_\lambda$ corresponding to the set $\lambda A + (1-\lambda) x_B$ satisfies $x_\lambda \geq \lambda x_A + (1-\lambda) x_B$.

Let us also contrast this with the Gaussian Brunn-Minkowski inequality in Lemma \ref{lm:ch3-bm-gauss}, which states that 
\begin{equation}
    \label{eq:bm-bana-pf}\log \gamma^n(\lambda A+ (1-\lambda)B) \geq \lambda \log \gamma^n(A) + (1-\lambda) \log \gamma^n(B).
\end{equation}
Then \eqref{ineq:ehrhard} says that $\Phi^{-1} \circ \gamma^n$ is concave, while \eqref{eq:bm-bana-pf} shows that $\log \circ \,\gamma^n $ is concave.
It turns out that Ehrhard's inequality is stronger than Brunn-Minkowski for the Gaussian measure.

\paragraph{Gaussian symmetrization.}
We discuss Gaussian symmetrization more generally. Let $W$ be a subspace of $\R^n$ of dimension $n-k$, and let us decompose $\R^n = W^\perp \oplus W$ and view the point $x \in \R^n$ as $x = (y,w)$ where $ w= \Pi_W(x) $ and $y=\Pi_{W^\perp}(x)$.

Let $A \subset \R^n$ be a measurable set.
For $y \in {W}^\perp$,  consider the affine subspace $y + W$ and let $A_y = A \cap (y+W)$
be the $n-k$ dimensional slice obtained by intersecting $A$ with this affine subspace.
Consider the function $f:W^\perp \rightarrow \R$ defined as $f(y) = \Phi^{-1}(\gamma^{n-k}(A_y))$.
The Gaussian symmetrization of $A$ along $W$ is the $k+1$-dimensional body  
\[ A' =  \{(t,y): y \in W^{\perp}, t \leq f(y)\}.\] 
In Sections \ref{sec:bana-proof-symmetrize-u} and \ref{sec:bana-proof-reducing-two-dimensions}, we applied this for subspaces $W = \text{span}(e_1)$ and $W = \text{span}(e_2,\ldots,e_{m})$.

Observe that by construction $\gamma^{k+1}(A') = \gamma^n(A)$.
Moreover importantly, if $A$ is convex then Theorem \ref{thm:ehrhard} implies that the function $f(y)$ is concave and hence that $A'$ is also convex.

\section{Bibliographic Notes}
The proof here is due to Banaszczyk \cite{Bana98}, but our exposition is more detailed.

Theorem \ref{thm:ehrhard} was proved by
Ehrhard \cite{Ehrhard1983} for the case when $A$ and $B$ are convex, and the inequality for general case was proved by Borell \cite{BORELL2003}.
Several other proofs are also known \cite{handel2018, neeman20}.
Interestingly, it follows from \cite{Ehrhard1983},  that proving Theorem \ref{thm:ehrhard} for the case of $n=1$ implies the result for general $n$.

Ehrhard's inequality also implies various other inequalities such as  the Gaussian Isoperimetric inequality, Bobkov's inequality and Gross logarithmic Sobolev inequality. A nice survey is in \cite{Latala-icm02}.



\chapter{Hereditary Discrepancy and Factorization Norms}
\label{ch:gamma2}

We have now seen several tools for proving upper bounds on the
discrepancy of set systems and matrices, some of which also give
efficient algorithms for computing a coloring achieving the
discrepancy bound. In this chapter we address the question of the
complexity of approximating discrepancy and hereditary discrepancy. In
addition to showing some hardness results, we will show that
hereditary discrepancy is approximated well by the
$\gamma_2$ matrix factorization norm. Beyond giving an approximation
algorithm for hereditary discrepancy, this result has a number of
applications to discrepancy theory. It gives a powerful tool for
giving short proofs of upper and lower bounds on discrepancy, and for
proving various stability properties of discrepancy. 

\section{Complexity of Computing Discrepancy}

Perhaps the most basic computational question about discrepancy is
whether $\disc(A)$ can be computed efficiently given a matrix $A$ as
input. An easier question is whether the promise that $\disc(A)$ is
small, \eg, at most a universal constant, or even $0$, is enough to
efficiently find a coloring whose discrepancy is smaller than
Spencer's worst-case bound $O(\sqrt{n\log(2m/n)})$. Unfortunately, the
answer to both these questions is negative, as long as $\mathsf{P}
\neq \mathsf{NP}$, as shown in the following theorem.

\begin{theorem}\label{thm:ch7-disc-hard}
  For a set system $(\SS, [n])$ with $|\SS| = O(n)$ sets, it is $\mathsf{NP}$-hard to distinguish between the cases: (i) $\disc(\SS) = 0$,
or (ii) $\disc(\SS) = \Omega(\sqrt{n})$.
\end{theorem}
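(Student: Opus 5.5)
My plan is to follow the Charikar--Newman--Nikolov strategy: reduce from an $\mathsf{NP}$-hard problem with a constant-factor gap, namely a version of \textsc{Max-2-2-Set-Splitting} (equivalently, NAE-SAT restricted so that every clause has exactly four literals, with no negations, and every variable occurs in a bounded number of clauses). I will assume the known PCP-based result that for such bounded-occurrence instances it is $\mathsf{NP}$-hard to distinguish between two cases. In the first, all clauses are split; here each clause is a 4-set $\{a,b,c,d\}$ with a pairing into $\{a,b\},\{c,d\}$, and the clause is satisfied when each pair is bichromatic. In the second, every coloring leaves at least an $\eps$ fraction of clauses unsplit. The completeness side is then easy. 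If every clause is split, the coloring gives discrepancy $0$ on each pair. So I will build the set system from unions of pairs, and these automatically have discrepancy $0$.

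The construction takes the universe to be the $n$ variables. Let $P_1,\dots,P_k$ be the pairs, with $k=\Theta(n)$ by the bounded-occurrence assumption. Fix a random, or explicitly designed, $0/1$ matrix $H$ with $m=O(n)$ rows, for example a suitably scaled Hadamard-like or random Bernoulli matrix. Each row $r$ defines the set $S_r=\bigcup_{j: H(r,j)=1}P_j$. To keep the sets honest I also need the clause structure: each clause contributes two pairs, and we want the set system to ``see'' unsplit clauses. I will therefore index the matrix by clauses. For each clause $C_j=\{a,b,c,d\}$, the contribution of $C_j$ to $S_r$ is $\{a,b\}$ or $\{c,d\}$, or is weighted through $H$. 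Choosing the set system as $\{\bigcup_{j\in R}C_j\}$ alone loses information. Instead, I take $S_r=\bigcup_j P_{j,\sigma_r(j)}$, picking one of the two pairs per clause according to a sign pattern. Since a satisfied clause has $x(P_{j,1})=x(P_{j,2})=0$, completeness is preserved.

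Soundness is the main obstacle. Suppose every coloring leaves $\eps k$ clauses unsplit. An unsplit clause $j$ has, for some pair, $x(P_{j,1})+x(P_{j,2})=0$ and $x(P_{j,1})=\pm2$. Let $y\in\{-2,0,2\}^k$ be the vector of values $x(P_{j,1})$, restricted to clauses where it is nonzero, so that it has at least $\eps k$ nonzero entries. With the sign patterns, the discrepancies of the sets are $(Hy)_r$ plus corresponding terms. I then need a lemma: for a suitable $m\times k$ matrix $H$ with $m=O(k)$, every vector $y\in\{-2,0,2\}^k$ with $\Omega(k)$ nonzeros satisfies $\|Hy\|_\infty\gtrsim\sqrt{k}$. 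For a random $\pm$ or $0/1$ matrix this is a union bound: fix $y$, and each row sum is anticoncentrated, being larger than $c\sqrt{\eps k}$ with constant probability, independently across rows. So the probability that all $m$ rows are small is $e^{-\Omega(m)}$, which beats the $3^k$ choices of $y$ once $m=Ck$. This yields a randomized reduction. To get a deterministic $\mathsf{NP}$-hardness statement, I would instead use an explicit family, such as Hadamard or linear-code-based constructions whose rows restricted to any linear-size support have large inner product with any nonzero sign vector. This derandomization is where I expect the most care to be needed.

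Finally, I combine the pieces. The sets of the two pairs used by a row are translated to actual sets by taking $\sigma_r(j)$ from $H(r,j)\in\{\pm1\}$. The clause contributes $P_{j,1}$ or $P_{j,2}$, and since $x(P_{j,2})=-x(P_{j,1})$ on unsplit clauses, and the other clauses contribute $0$ or bounded amounts, the set discrepancy equals $(Hy)_r$ up to split-clause terms that vanish. This gives $\disc(\SS)=\Omega(\sqrt{n})$ in the soundness case, with $|\SS|=O(n)$.
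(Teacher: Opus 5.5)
Your reduction does not go through as set up, for three concrete reasons.

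\textbf{The source problem.} With a \emph{fixed} pairing $\{a,b\},\{c,d\}$ per clause, and ``satisfied'' meaning both pairs are bichromatic, the perfect-completeness case is just the question of whether the graph formed by all the pairs is bipartite. That is decidable in linear time, so this gap problem is not $\mathsf{NP}$-hard and cannot serve as the source of the reduction.

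The $\mathsf{NP}$-hard problem (Theorem~\ref{thm:ch7-SS}) only asks for $\col(T)=0$ on each $4$-set $T$. Under that notion a fixed pairing need not be bichromatic. For example, $\col(a)=\col(b)=1$ and $\col(c)=\col(d)=-1$ splits $T$ but gives $\col(P_{j,1})=-\col(P_{j,2})=2$. So split clauses contribute $\pm 2$ to $\col(S_r)$, and completeness fails.

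Soundness fails too. An unsplit clause has $\col(P_{j,1})+\col(P_{j,2})=\col(T)\neq 0$, so your premise $\col(P_{j,2})=-\col(P_{j,1})$ is false for exactly the clauses you want to detect. The identity $\col(S_r)=(Hy)_r$ therefore breaks.

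\textbf{Overlapping pairs.} Pairs from different clauses share variables, since each variable lies in up to $4$ clauses. Hence $\col\bigl(\bigcup_j P_{j,\sigma_r(j)}\bigr)\neq\sum_j \col(P_{j,\sigma_r(j)})$ in general; for instance the union of $\{a,b\}$ and $\{b,c\}$ has odd size and so nonzero discrepancy. This breaks completeness even in the idealized setting.

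\textbf{Derandomization.} You leave this open, and a randomized reduction does not give $\mathsf{NP}$-hardness.

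The paper avoids all three issues by working with whole $4$-sets rather than pairs.
\begin{itemize}
\item It greedily partitions $\TT$ into $k\le 17$ subfamilies $\TT_\ell$ of pairwise disjoint sets (Lemma~\ref{lm:ch7-partition}). Each incidence matrix $B_\ell$ then has at most one $1$ per column.
\item Let $A=\frac12(H+J)$, built from a Hadamard matrix $H$ with an all-ones row. Then $\mat_\ell=AB_\ell$ is again a $0/1$ matrix, i.e.\ a genuine set system.
\item We have $\mat_\ell\col=Az$ for $z=B_\ell\col=(\col(T))_{T\in\TT_\ell}$. Completeness is simply $z=0$.
\item For soundness, some $\ell$ has at least $\alpha m/17$ nonzero integer entries in $z$, so $\|z\|_2\ge\sqrt{\alpha m/17}$.
\item The explicit estimate $\|Az\|_\infty\ge\frac13\|z\|_2$ (Lemma~\ref{lm:ch7-hadamard}) finishes the argument. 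It follows from $\|Hz\|_\infty^2\ge\frac1m\|Hz\|_2^2=\|z\|_2^2$, with the all-ones row absorbing the shift by $J$.
\end{itemize}
This deterministic estimate replaces your random-matrix anticoncentration lemma. Your sign-pattern device for producing $\pm1$ coefficients is unnecessary once you use $A=\frac12(H+J)$ together with the all-ones-row trick.
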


We will establish Theorem~\ref{thm:ch7-disc-hard} by reduction from
the $2$-$2$ Set Splitting problem, whose definition and hardness are given below. 

\begin{theorem}\label{thm:ch7-SS}(Guruswami~\cite{venkat-SS})
For a set system $(\TT, [n])$ with $m = O(n)$ sets, where $|T| = 4$
for all $T \in \TT$, and each $j \in [n]$ appears in at most $4$
sets, it is $\mathsf{NP}$-hard to distinguish between the cases:
\begin{enumerate}
\item $\disc{(\TT)} = 0$.
\item for all $\col \in \{\pm 1\}^n$, $|\{T \in \TT: \col(T) \not=
  0 \}| \geq \alpha m$, where $\alpha \approx 1/22$. 
\end{enumerate}
\end{theorem}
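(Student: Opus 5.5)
The plan is to prove the hardness by a PCP reduction with \emph{perfect completeness}, since case (i) requires an exact $2$-$2$ split of every set. This rules out starting from a linear-equations problem such as Max-E3-Lin-2. Instead I would take as the outer verifier a Label Cover instance with perfect completeness and soundness $\eta$, for an arbitrarily small constant $\eta$, produced by the PCP theorem plus parallel repetition. I would first regularize the instance so that every vertex has bounded degree.

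The inner verifier encodes each label by a folded long code $A:\{-1,1\}^{L}\to\{-1,1\}$ satisfying $A(-f)=-A(f)$. Folding gives balancedness for free: the two positions $f$ and $-f$ always receive opposite colors.

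For an edge $(u,v)$ with projection $\pi$, the test reads four bits: $A_u(f)$, $A_u(f')$, $B_v(g)$ and $B_v(g')$. The functions are chosen, with a small amount of noise, so that two conditions hold:
\begin{itemize}
\item every pair of long codes of a satisfying labeling (dictators composed consistently with $\pi$) has the four values summing to $0$, i.e.\ the test is a $2$-$2$ split;
\item accepting forces a correlation of the form $A_u(f)A_u(f')B_v(g)B_v(g')=1$ together with a balance condition.
\end{itemize}
The elements of the set system are the long-code positions, identified up to folding, and each possible test becomes one $4$-set, repeated according to its probability. Completeness, i.e.\ $\disc(\TT)=0$, is then immediate: take the coloring given by the long codes of a satisfying labeling.

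Soundness is argued in the contrapositive. Suppose some $\col$ leaves fewer than $\alpha m$ sets unsplit. Arithmetize the acceptance predicate of the $4$-set as a low-degree polynomial in the four $\pm1$ values, and expand the acceptance probability in the Fourier bases of $A_u$ and $B_v$. The noise damps the high-degree Fourier terms. Acceptance probability bounded away from the random-assignment value then forces a nonnegligible mass of correlated low-degree coefficients $\hat A_u(\alpha),\hat B_v(\beta)$ with $\pi(\beta)\supseteq\alpha$. Decoding labels by sampling from these squared Fourier weights then satisfies more than an $\eta$ fraction of Label Cover edges, a contradiction.

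The main obstacle, and where I expect the specific constant $\alpha\approx1/22$ to come from, is the final step of bounding the occurrences of each element by $4$ while keeping the instance unweighted with $m=O(n)$ sets. I would replace each heavily used position by a cloud of copies, and tie the copies together with equality-forcing $4$-sets built on a bounded-degree expander. Each such gadget set, for instance $\{a,b,c,\bar c\}$-type sets that enforce $a\neq b$ via already-balanced auxiliary elements, must be $2$-$2$ splittable in the yes case, so completeness is preserved. In the no case, any coloring that is inconsistent on a cloud must violate a constant fraction of the gadget sets by expansion. Tracking the losses in the expander mixing and in the ratio of gadget sets to test sets dilutes the inner soundness gap, roughly $1/12$ for the basic $2$-$2$ test, to about $1/22$. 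I would first try to make this accounting precise with a degree-$3$ expander, so that together with one test set each element has degree at most $4$, and check numerically that the fraction of unsplit sets stays near $1/22$.
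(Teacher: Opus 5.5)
The paper gives no proof of this statement. It is imported as a black box from Guruswami's PCP-based work, so your outline has to stand on its own, and its central step is false. Your soundness paragraph claims that acceptance probability bounded away from the random-assignment value forces correlated low-degree Fourier mass; that value is $\binom{4}{2}/2^4=3/8$, the probability that a uniform coloring splits a $4$-set $2$-$2$. No reduction whose output constraints are $2$-$2$ splitting constraints can have this property unless $\mathsf{P}=\mathsf{NP}$.

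To see this, fix a coloring $\col$. The number of the six pairs inside a $4$-set that receive different colors is $4$, $3$ or $0$, according as the set is split $2$-$2$, $3$-$1$ or $4$-$0$. If $\disc(\TT)=0$, the multigraph that places a $K_4$ on every set has a cut containing $4m$ of its $6m$ edges. Goemans--Williamson then efficiently finds a coloring cutting at least $0.878\cdot 4m$ edges. If $s$ sets are split $2$-$2$ by that coloring, then $3m+s\ge 0.878\cdot 4m$, i.e.\ $s\ge 0.51m$. The same argument works via Max-$2$-Lin-$2$ if folding leaves negated literals in the constraints. Combined with a NO-case guarantee of at most $(3/8+\epsilon)m$ splittable sets, this would decide Label Cover in polynomial time.

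So the soundness threshold of any such test is an explicit constant, at least about $0.51$. It is determined by terms of the acceptance polynomial, such as pairwise correlations, that do \emph{not} decay for non-decodable tables and must be bounded by hand; only the remaining terms can be handled by Fourier decoding. Exhibiting a test with perfect completeness whose threshold is bounded away from $1$ is precisely the content of the theorem, and your outline does not do it. Your later figure of an inner gap of about $1/12$ presupposes a threshold near $11/12$. That contradicts your own soundness paragraph and is not derived anywhere.

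The inner test is also left unspecified, and ``a small amount of noise'' is not compatible with perfect completeness: independent noise makes dictators fail with positive probability. With your query pattern $A_u(f),A_u(f'),B_v(g),B_v(g')$, write $S\eqdef\{a: f(a)=f'(a)\}$. Perfect completeness for every pair of dictators consistent with $\pi$ then forces $g=g'=-f\circ\pi$ on $\pi^{-1}(S)$, and $g'=-g$ outside $\pi^{-1}(S)$ (where $f'=-f$ by definition of $S$). In particular, whenever $S=\emptyset$, every pair of folded tables passes. All rejection therefore comes from the coordinates in $S$. Their distribution has to be chosen, and their effect analyzed explicitly, to obtain a concrete soundness constant.

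Two further steps need more than you give.
\begin{itemize}
\item Folding is not free in a set system. Identifying $f$ with $-f$ produces constraints on literals, which must be simulated by gadgets. Equality can be forced by pure $2$-$2$ sets, e.g.\ $\{a,p,q,r\},\{b,p,q,r\}$ force $\col(a)=\col(b)$. Violations of these gadgets in the NO case must then be charged to the soundness bound.
\item Your sets $\{a,b,c,\bar c\}$ presuppose ``already balanced'' pairs. This is circular unless those pairs are also built from such gadgets, while still keeping every element in at most $4$ sets.
\end{itemize}
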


Note that Theorem~\ref{thm:ch7-SS} already shows that it is
$\mathsf{NP}$-hard to decide if a set system has discrepancy $0$, or
discrepancy at least $2$. By composing a $2$-$2$ Set Splitting
instance, in an appropriate way, with a set system satisfying a robust
discrepancy lower bound of $\Omega(\sqrt{n})$, we can amplify this gap and prove Theorem~\ref{thm:ch7-disc-hard}. The incidence matrix
of the set system used in this composition is given by the following lemma. In it, we
use the notion of a Hadamard matrix, i.e., an $m \times m$ matrix with
entries in $\{-1,+1\}$ such that $H^T H = mI$. Such matrices are known
to exist, and can be efficiently constructed, for all $m$ that are
powers of $2$ (among other values of $m$). 

\begin{lemma}\label{lm:ch7-hadamard}
  Let $H$ be an $m\times m$ Hadamard matrix such that
  one of its rows is the all-ones vector. Then the matrix
  $A$ obtained by replacing each $-1$ entry of
  $H$  with $0$ satisfies
  \(
  \|A\col\|_\infty \ge \frac{1}{3}\|\col\|_2
  \)
  for  any $\col \in \R^m$.
\end{lemma}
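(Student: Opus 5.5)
The idea is to write $A$ as an affine combination of $H$ and the all-ones matrix, and then use the orthogonality of $H$. Let $J$ be the $m\times m$ all-ones matrix. Since each entry of $A$ equals $(H(i,j)+1)/2$, we have $A = \frac12(H + J)$, and hence for any $\col\in\R^m$
\[
A\col = \tfrac12 H\col + \tfrac12 \Big(\sum_{j=1}^m \col(j)\Big)\ones .
\]
Let $s \eqdef \sum_j \col(j)$. By assumption some row $i_0$ of $H$ is the all-ones vector, so $(H\col)(i_0) = s$ and $(A\col)(i_0) = s$. Because the entries of $A$ are in $\{0,1\}$, this coordinate is exactly the sum of $\col$. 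In particular $\|A\col\|_\infty \ge |s|$.

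Next, we bound the remaining part using $H^TH = mI$. This gives $\|H\col\|_2 = \sqrt{m}\|\col\|_2$, so $\|H\col\|_\infty \ge \|H\col\|_2/\sqrt m = \|\col\|_2$. Write $H\col = 2A\col - s\ones$. Then
\[
\|\col\|_2 \le \|H\col\|_\infty \le 2\|A\col\|_\infty + |s| \le 3\|A\col\|_\infty,
\]
where the last step uses $|s|\le \|A\col\|_\infty$ from the first step. Dividing by $3$ gives the claim.

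The only delicate point is controlling the shift term $s\ones$. A naive bound such as $|s|\le\sqrt m\|\col\|_2$ would destroy the estimate. It is handled precisely by the hypothesis that $H$ contains an all-ones row, which makes $|s|$ appear as a coordinate of $A\col$. Everything else is routine norm comparison, so I expect no real obstacle.
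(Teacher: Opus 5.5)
Your proof is correct and follows the paper's argument: write $A=\frac12(H+J)$, use $H^TH=mI$ to get $\|H\col\|_\infty\ge\|\col\|_2$, and use the all-ones row to see that $s=\sum_j \col(j)$ is itself a coordinate of $A\col$. The only cosmetic difference is that you combine these in the single chain $\|\col\|_2\le 2\|A\col\|_\infty+|s|\le 3\|A\col\|_\infty$, whereas the paper splits into two cases according to whether $|(A\col)_1|\le\frac13\|\col\|_2$.
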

\begin{proof}
  Let $\col \in \R^m$ be arbitrary. We first observe that, by
  comparing the $\ell_\infty$ and $\ell_2$ norms, 
  \begin{align}
  \label{eq:hadamard}
    \| H\col \|_{\infty}^2
    \geq \frac{1}{m} \|H\col \|_2^2
    = \frac1m \col^T  H^T H \col
    = \col^T \col =\|\col \|_2^2.
  \end{align}
  Notice that $A = \frac12 (H + J)$ where $J$ is the $m\times m$
  all-ones matrix. Suppose, without loss of generality, that the first
  row of $H$ is the all-ones vector, and, therefore, the first row of
  $A$ is also the all-ones vector. Then
  \[
    \|A\col\|_\infty = \frac12 \|H\col + J\col\|_\infty
    = \frac12 \max_{i = 1}^m |(H\col)_i + (A\col)_1|.
  \]
  By \eqref{eq:hadamard}, we know that $|(H\col)_{i^*}|
  \ge \|\col\|_2$ for some $i^*$. If $|(A\col)_1| \le \frac13\|\col\|_2$, then, by the
  triangle inequality,
  \[
    \|A\col\|_\infty 
    \ge \frac12 (|(H\col)_{i^*}| - |(A\col)_1|) \ge  \frac13 \|\col\|_2.
  \]
  Otherwise, if $|(A\col)_1| > \frac13\|\col\|_2$, then 
  \(
  \|A\col\|_\infty  \ge |(A\col)_1| > \frac13\|\col\|_2.
  \)
\end{proof}

Notice that the requirement that the Hadamard matrix $H$ has an all-ones
row is not restrictive. We can replace every entry $H(i,j)$ in $H$ by
$H(i,j)H(1,j)$, and still get a Hadamard matrix, but now all entries
in the first row are equal to $1$. 

Theorem~\ref{thm:ch7-SS} and Lemma~\ref{lm:ch7-hadamard} allow for a
very simple hardness reduction that shows a statement analogous to
Theorem~\ref{thm:ch7-disc-hard}, but for matrices with entries in
$\{0,4\}$, as opposed to set systems. The reduction simply takes the
$m\times n$ incidence matrix $B$ of the set system $\TT$ in
Theorem~\ref{thm:ch7-SS}, pads it with $0$'s if necessary to make sure
that $m$ is a power of $2$, and outputs a matrix $\mat \eqdef A B$ for
the $m\times m$ matrix $A$ defined in Lemma~\ref{lm:ch7-hadamard}.

\begin{exercise}
  Analyze the reduction described in the paragraph above.
\end{exercise}

In order to get a reduction that, instead, outputs a set
system, we need a simple decomposition lemma, stated next. 

\begin{lemma}\label{lm:ch7-partition}
  Suppose that $(\TT, [n])$ is a set system such that each set in $\TT$
  has at most $s$ elements, and each element $j \in [n]$
  appears in at most $t$ sets in $\TT$. Then $\TT$ can be partitioned
  in polynomial time
  into $k \le st+1$ disjoint set systems $\TT_1, \ldots, \TT_k$, where
  each $\TT_i$ consists of disjoint subsets of $[n]$.
\end{lemma}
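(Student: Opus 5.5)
The plan is to reduce the statement to greedy vertex coloring of a \emph{conflict graph} on the sets of $\TT$. Define the graph $G$ whose vertex set is $\TT$, with an edge between two distinct sets $T, T' \in \TT$ whenever $T \cap T' \neq \emptyset$. A partition of $\TT$ into subfamilies each consisting of pairwise disjoint sets is exactly a proper vertex coloring of $G$, with the color classes playing the role of $\TT_1, \ldots, \TT_k$. So it suffices to show that $G$ can be properly colored with at most $st+1$ colors in polynomial time.

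The key step is bounding the maximum degree of $G$. Fix $T \in \TT$. Any set $T'$ adjacent to $T$ shares some element $j \in T$ with it. There are at most $s$ choices of $j$. Each such $j$ lies in at most $t$ sets of $\TT$, and one of those is $T$ itself, so it contributes at most $t-1$ neighbors. Hence $\deg_G(T) \le s(t-1) \le st$. (A crude bound of $st$ suffices here, since the statement only asks for $st+1$ classes.)

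Finally, we apply the standard greedy coloring. Process the sets of $\TT$ in any order, and assign each one the smallest color in $\{1, \ldots, st+1\}$ not already used by its previously colored neighbors. At most $st$ colors are blocked at each step, so a free color always exists. Building $G$ takes time polynomial in $|\TT|$ and $n$, and the greedy pass is linear in the size of $G$, so the whole procedure runs in polynomial time. Setting $\TT_i$ to be the sets receiving color $i$ gives the required partition into $k \le st+1$ subfamilies of pairwise disjoint sets. No step is a genuine obstacle; the only point needing care is the degree count, and in particular not double counting $T$ itself.
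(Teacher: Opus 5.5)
Your proposal is correct and matches the paper's argument: the paper also places each set greedily into the first subfamily that contains no set intersecting it, using that each set meets at most $st$ others, so some index $\ell \le st+1$ is always available. Phrasing this as greedy coloring of a conflict graph, with the slightly sharper degree bound $s(t-1)$, is only a repackaging of that same argument.
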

\begin{proof}
  We partition $\TT$ greedily, similarly to the greedy coloring
  algorithm for bounded degree graphs. We list the sets in $\TT$ in some
  order as $T_1, \ldots, T_m$, and, for each $i$, starting
  from $i=1$ up to $i=m$, we add $T_i$ into $\TT_\ell$, for the smallest $\ell$
  for which $\TT_\ell$ does not contain any set intersecting
  $T_i$. Since each $T_i$ can intersect at most $st$ other sets in
  $\TT$, there is always some $\ell \le st+1$ so that $T_i$ can
  be added to $\TT_\ell$.
\end{proof}

\begin{proof}[Proof of Theorem~\ref{thm:ch7-disc-hard}]
  Let $\TT$ be the set system from Theorem~\ref{thm:ch7-SS}, and let
  us partition it into $k\le 17$ set systems $\TT_1, \ldots, \TT_k$,
  each consisting of disjoint sets, using
  Lemma~\ref{lm:ch7-partition}. Let $B_\ell$ be the incidence matrix
  of $\TT_\ell$. We can assume that each $B_\ell$ has dimensions $m$
  by $n$ for some $m$ which is a power of $2$, by padding with
  zeros. Let us take $H$ to be an $m\times m$ Hadamard matrix, and
  define $A$ as in Lemma~\ref{lm:ch7-hadamard}. We now define $\mat_\ell \eqdef A B_\ell$, for
  $\ell \in [k]$. Notice that, since $\TT_\ell$ consists of disjoint
  sets, $B_\ell$ has at most a single $1$ in each column, so
  $A B_\ell$ is an $m\times n$ matrix with entries in $\{0,1\}$. We
  can then interpret $\mat_\ell$ as the incidence matrix of a set
  system $\SS_\ell$ on $[n]$. We define the set system $\SS$ as the
  union $\SS_1\cup \ldots \cup \SS_k$. $\SS$ has at most $km \le 17m =
  O(n)$ sets and can clearly be constructed in polynomial time.

  To finish the proof, we claim that the following two implications
  hold for an absolute constant $c > 0$:
  \begin{align}
    \disc(\TT) = 0 &\implies \disc(\SS) = 0;\label{eq:ch7-compl-sets}\\
    \min_{\col \in \{-1,+1\}^n} |\{T \in \TT: \col(T) \not=  0 \}| \geq \alpha m
                   &\implies \disc(\SS) \ge c\sqrt{\alpha m}.\label{eq:ch7-sound-sets}
  \end{align}
  The theorem then follows from Theorem~\ref{thm:ch7-SS}.

  We first prove \eqref{eq:ch7-compl-sets}. Since $\disc(\TT)$ is $0$,
  there exists a coloring $\col \in \{-1,+1\}^n$ for which $\disc(\TT,
  \col) = 0$. Then $B_\ell \col = 0$ for all $\ell \le k$, since each
  row of each $B_\ell$ is the indicator vector of a set in $\TT$. Therefore,
  \[
    \disc(\SS,\col) = \max_{\ell = 1}^k \disc(\SS_\ell,\col)
    =
    \max_{\ell=1}^k \|\mat_\ell \col\|_\infty
    =
    \max_{\ell=1}^k \|AB_\ell \col\|_\infty = 0.
  \]

  Finally, we prove \eqref{eq:ch7-sound-sets}. Take any $\col \in
  \{-1,+1\}^n$, and suppose that $|\{T \in \TT: \col(T) \not=  0 \}|
  \geq \alpha m$. This means that for some $\ell \le k$,
  \[
    |\{T \in \TT_\ell: \col(T) \not=  0 \}| \geq \alpha m/k
    \ge \alpha m/17.
  \]
  Therefore, $B_\ell\col$ has at least $\frac{\alpha m}{17}$ non-zero
  entries. Since all entries in $B_\ell\col$ are integers, this means
  that $\|B_\ell\col\|_2 \ge \sqrt{\alpha
      m/17}$. So, by Lemma~\ref{lm:ch7-hadamard},
  \[
    \disc(\SS,\col) \ge \|\mat_\ell\col\|_\infty
    =
    \|AB_\ell \col\|_\infty
    \ge \frac13 \sqrt{\alpha m/17}. 
  \]
  Since $\col \in \{-1,+1\}^n$ was arbitrary, this completes the proof
  of \eqref{eq:ch7-sound-sets}, 
and,
  therefore, the theorem. 
\end{proof}

\section{Approximating Hereditary Discrepancy}
  
In contrast with the strong hardness result for discrepancy given by
Theorem~\ref{thm:ch7-disc-hard}, we will see in the rest of this
chapter that hereditary discrepancy can be approximated efficiently up
to multiplicative poly-logarithmic factors. This is one concrete
example of hereditary discrepancy being a ``better behaved''
discrepancy notion. We will see other examples later on, also in this
chapter.

\subsection{Upper Bounds}

Recall that the hereditary discrepancy $\herdisc(\mat)$ of a matrix
$\mat$ is the maximum discrepancy $\disc(\mat(*,J))$ for any subset
$J$ of the columns of $\mat$. Since this is a maximum over an exponential
number of quantities, each of them intractable to approximate, it is
far from obvious that we can find useful tractable upper bounds on
$\herdisc(\mat)$. In this subsection we show that this is, in fact,
possible. 

Our starting point is the observation that if $f$ is a function on
$m\times n$ matrices which is monotonically non-increasing under
taking submatrices, \ie, $f(\mat(*,J)) \le f(\mat)$ for any $J
\subseteq [n]$, and that also satisfies $\disc(\mat) \le f(\mat)$, then we
automatically get the stronger conclusion $\herdisc(\mat) \le
f(\mat)$. Indeed, for any $J \subseteq [n]$, we have
\[
  \disc(\mat(*,J)) \le f(\mat(*,J)) \le f(\mat).
\]
We call such a function $f$ a {\em hereditary upper bound} on discrepancy.

We use the notation $\|\mat\|_{1\to 2}$ for the maximum $\ell_2$ norm
of a column of the matrix $\mat$, and $\|\mat\|_{2\to\infty}$ for the
maximum $\ell_2$ norm of a row of the matrix $\mat$.\footnote{More
  generally, for an $m\times n$ matrix $\mat$, and
  $p,q \in [1,\infty]$,
  $\|\mat\|_{p\to q} \eqdef \sup_{x\neq 0} \frac{\|\mat
    x\|_q}{\|x\|_p}$. }

Notice first that both $c\sqrt{\log(2m)}\|\mat\|_{1\to 2}$ and
$c\sqrt{\log(2m)}\|\mat\|_{2\to\infty}$ are hereditary upper bounds on
discrepancy (for some constant $c>0$). The first follows from the Banaszczyk  upper bound for
the Koml\'os problem (Theorem~\ref{thm:komlos-bana}), and the second
from Hoeffding's inequality and a union bound, applied to a uniformly
random coloring.
Both these upper bounds can be, unfortunately, very loose, {\em e.g.},~consider
the all-ones matrix. 

However, we already saw in Lemma~\ref{lm:fact-ub}
that these bounds can be interpolated in a useful way using
Banaszczyk's theorem. Lemma~\ref{lm:fact-ub}, in the formulation
in~\eqref{eq:fact-ub}, shows that, for any $m\times n$ matrix $\mat$,
\(
\disc(\mat) \lesssim \gamma_2(\mat)\sqrt{\log(2m)},
\)
where
\begin{equation}\label{eq:ch7-gamma2-defn}
 \gamma_2(\mat) \eqdef \inf\{\|U\|_{2\to\infty}\|V\|_{1\to2}: UV = \mat\}.
\end{equation}
The notation $\gamma_2$ comes from functional analysis,
where this function was first studied in connection with factoring
linear operators between Banach spaces through a Hilbert space. 

The following corollary follows from the above and the observation
that $\gamma_2(\mat)$ is a hereditary upper bound on discrepancy.

\begin{corollary}[of Lemma~\ref{lm:fact-ub}]\label{cor:ch7-fact-ub} 
  For any $m\times n$ matrix $\mat$, we have
  \[
    \herdisc(\mat) \lesssim \sqrt{\ln(2m)} \gamma_2(\mat).
  \]
\end{corollary}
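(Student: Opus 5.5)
The plan is to combine Lemma~\ref{lm:fact-ub} (in the form \eqref{eq:fact-ub}) with the observation made just above about hereditary upper bounds. Concretely, I will show that the function $f(\mat) \eqdef C\sqrt{\ln(2m)}\,\gamma_2(\mat)$, for the absolute constant $C$ implicit in Lemma~\ref{lm:fact-ub}, satisfies two properties. First, $\disc(\mat) \le f(\mat)$ for every $m\times n$ matrix. Second, $f(\mat(*,J)) \le f(\mat)$ for every $J\subseteq[n]$. Then for any $J$ we get $\disc(\mat(*,J)) \le f(\mat(*,J)) \le f(\mat)$, and taking the maximum over $J$ yields the corollary.

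The first property is exactly Lemma~\ref{lm:fact-ub}. For any factorization $\mat = UV$ with row norms of $U$ at most $r$ and column norms of $V$ at most $c$, the lemma gives $\disc(\mat)\lesssim rc\sqrt{\ln(2m)}$. Taking the infimum over factorizations gives $\disc(\mat)\lesssim \gamma_2(\mat)\sqrt{\ln(2m)}$. Since the infimum in \eqref{eq:ch7-gamma2-defn} need not be attained, I will apply the lemma to factorizations with $rc \le \gamma_2(\mat)+\eps$ and let $\eps\to 0$.

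The second property is monotonicity of $\gamma_2$ under deleting columns. Given $\mat = UV$, we have $\mat(*,J) = U\,V(*,J)$. Here $\|U\|_{2\to\infty}$ is unchanged, and $\|V(*,J)\|_{1\to 2} \le \|V\|_{1\to2}$, because we only drop columns. Hence every factorization of $\mat$ induces one of $\mat(*,J)$ with no larger product, so $\gamma_2(\mat(*,J))\le\gamma_2(\mat)$. The number of rows $m$ does not change, so the $\sqrt{\ln(2m)}$ factor is the same on both sides.

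There is no real obstacle here. The only point requiring a moment's care is the infimum/limit argument above, and the fact that the row count is preserved when passing to column submatrices.
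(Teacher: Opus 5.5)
Your proposal is correct and matches the paper's proof: the paper also notes that $\mat(*,J) = U\,V(*,J)$ with $\|V(*,J)\|_{1\to 2}\le\|V\|_{1\to2}$, so that $\gamma_2(\mat(*,J))\le\gamma_2(\mat)$, and then applies Lemma~\ref{lm:fact-ub} to each column submatrix. Your explicit $\eps$-argument for the infimum is a harmless extra detail; the paper simply takes the minimum over factorizations.
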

\begin{proof}
  This follows immediately from Lemma~\ref{lm:fact-ub}
  and the observation that $\gamma_2(\mat)$ is monotonically
  non-increasing under taking submatrices. Indeed, for any $J
  \subseteq [n]$, and any  factorization $\mat = UV$, we have that
  $\mat(*,J) = UV(*,J)$, and $\|V(*,J)\|_{1\to 2} \le \|V\|_{1\to 2}$. Minimizing over all choices of $U$ and $V$
  gives us $\gamma_2(\mat(*,J)) \le \gamma_2(\mat)$. Thus, by
  Lemma~\ref{lm:fact-ub}, we have
  \[
    \disc(\mat(*,J)) \lesssim \sqrt{\ln(2m)} \gamma_2(\mat(*,J))
    \le
    \sqrt{\ln(2m)} \gamma_2(\mat).
  \]
  Taking the maximum on the left hand side over all choices of $J\subseteq[n]$ gives the
  corollary. 
\end{proof}

Note that the all-ones matrix is no longer a bad example for
Corollary~\ref{cor:ch7-fact-ub}, since both the hereditary discrepancy
and the $\gamma_2$ norm of this matrix are equal to $1$. 

\begin{exercise}\label{ex:ch7-fact-ub}
  Give an example in which Corollary~\ref{cor:ch7-fact-ub} is tight up
  to constant factors. That is, show that for infinitely many values of
  $m$, there exists an $m\times n$ matrix $\mat$ for which
  $\disc(\mat) \gtrsim \sqrt{\ln(2m)} \gamma_2(\mat)$.
\end{exercise}

We now show that $\gamma_2(\mat)$ also gives a nearly tight
lower bound on hereditary discrepancy (see Theorem
\ref{thm:ch7-fact-lb} for the formal statement). This will be our goal in the rest of this section.

\subsection{Semidefinite Program Formulation}
We start  by reformulating
$\gamma_2(\mat)$ as a semidefinite program (SDP). One important
consequence of this formulation is that $\gamma_2(\mat)$ is
efficiently computable, and, therefore, so is the upper bound we
proved. By semidefinite programming duality, this formulation also gives a dual formulation of
$\gamma_2(\mat)$ as a maximization problem. This dual formulation will
be essential in giving a lower bound on $\herdisc(\mat)$ in terms of
$\gamma_2(\mat)$.

Recall the definition of $\gamma_2(\mat)$ in
\eqref{eq:ch7-gamma2-defn}. Note first that we can assume that the
infimum is over factorizations $\mat = UV$ for which
$\|U\|_{2\to\infty} = \|V\|_{1\to 2} = \sqrt{\gamma_2(\mat)}$. This is because, for any $c >
0$, we can replace $U$ by $cU$ and $V$ by $\frac1c V$ to get another
factorization, and choosing
$c \eqdef \sqrt{\frac{\|V\|_{1\to   2}}{\|U\|_{2\to\infty}}}$ makes
the two matrix norms equal.

Note also that, since $\gamma_2(\mat) \le \|I\|_{2\to \infty} \|\mat\|_{1\to 2}=  \|\mat\|_{1\to 2}$, we can
further assume that $\|U\|_{2\to\infty}$ and $\|V\|_{1\to 2}$ are both
bounded by $\|\mat\|_{1\to 2}^{1/2}$. Then it follows by a standard
compactness argument that the infimum in the definition of
$\gamma_2(\mat)$ is achieved. 

Let us now consider some factorization $\mat = UV$ of the $m\times n$
matrix $\mat$ into a $m \times k$ matrix $U$ and a $k\times n$ matrix
$V$. The matrix
\[
  X= \begin{pmatrix}
    U\\
    V^T
  \end{pmatrix}
  \begin{pmatrix}
    U^T  & V 
  \end{pmatrix}
  =
  \begin{pmatrix}
    UU^T & UV\\
    V^TU^T & V^T V
  \end{pmatrix}
  =
  \begin{pmatrix}
    UU^T & \mat\\
    \mat^T & V^T V
  \end{pmatrix}
\]
is positive semidefinite, since it equals $W^T W$ for
$W = \begin{pmatrix}U^T&  V\end{pmatrix}$. Notice that the entry $X(i,i)$, for 
$1 \le i \le m$, equals $\|U(i,*)\|_2^2$, and the entry $X(m + j,m+j)$  equals $\|V(*,j)\|_2^2$ for
$1 \le j \le n$.

Conversely, suppose that $X$
is a semidefinite $(m+n) \times (m+n)$ matrix such that the submatrix
formed by the first $m$ rows and last $n$ columns equals $\mat$. Since
$X$ is positive semidefinite, we can write it as $W^T W$ for some
$(m+n) \times (m+n)$ matrix $W$. Defining $U$ to be the transpose of
the submatrix of $W$ formed by taking its first $m$ columns, and $V$
to be the submatrix of $W$ formed by its last $n$ columns, we get a
factorization $\mat = UV$. Moreover,
$\max_{i = 1}^m X(i,i) = \|U\|_{2\to\infty}^2$, and
$\max_{j=1}^n X(m+j,m+j) = \|V\|_{1\to 2}^2$. 
This equivalence directly gives the following lemma.
\begin{lemma}\label{lm:ch7-fact-sdp}
  For any $m\times n$ matrix $\mat$ we have
  \begin{align}
    \gamma_2(\mat) = &\min t \label{eq:ch7-sdp-obj} \\
                 \text{s.t. \,\,\, }    &X(i,m+j) = X(m+j,i) =\mat(i,j) \ \ \forall i\in [m], j \in [n]
                    \label{eq:ch7-sdp-eq}\\
                     &X(i,i) \le t\ \ \forall i \in [m+n]\\
                     &X \succeq 0\label{eq:ch7-sdp-psd}
  \end{align}
  where $X$ ranges over symmetric $(m+n) \times (m+n)$
  real matrices. 
\end{lemma}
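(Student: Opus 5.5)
The plan is to prove the equality by showing two inequalities between $\gamma_2(\mat)$ and the optimal value of the SDP \eqref{eq:ch7-sdp-obj}--\eqref{eq:ch7-sdp-psd}, essentially formalizing the correspondence between factorizations and Gram matrices described just before the statement. Throughout, I will use the normalization already observed: the infimum in \eqref{eq:ch7-gamma2-defn} is attained, and we may take an optimal factorization with $\|U\|_{2\to\infty} = \|V\|_{1\to 2} = \sqrt{\gamma_2(\mat)}$.

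First I will show that the SDP value is at most $\gamma_2(\mat)$. Take a balanced optimal factorization $\mat = UV$ with $U \in \R^{m\times k}$ and $V\in\R^{k\times n}$. Set $W \eqdef \begin{pmatrix}U^T & V\end{pmatrix}\in \R^{k\times(m+n)}$ and $X \eqdef W^TW$. Then $X \succeq 0$ and $X$ is symmetric. Its off-diagonal block is $UV = \mat$, so \eqref{eq:ch7-sdp-eq} holds. The diagonal entries are $X(i,i) = \|U(i,*)\|_2^2 \le \|U\|_{2\to\infty}^2 = \gamma_2(\mat)$ and $X(m+j,m+j) = \|V(*,j)\|_2^2 \le \|V\|_{1\to2}^2 = \gamma_2(\mat)$. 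Hence $(X,t)$ with $t = \gamma_2(\mat)$ is feasible.

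Next I will show the reverse inequality: for any feasible $(X,t)$, $\gamma_2(\mat)\le t$. Since $X\succeq 0$, write $X = W^TW$, for instance via the spectral decomposition. Let $U$ be the transpose of the first $m$ columns of $W$ and $V$ the last $n$ columns. Then $UV$ equals the top-right block of $X$, which is $\mat$. Also $\|U\|_{2\to\infty}^2 = \max_{i\le m} X(i,i) \le t$ and $\|V\|_{1\to2}^2 = \max_j X(m+j,m+j)\le t$. Hence $\gamma_2(\mat) \le \|U\|_{2\to\infty}\|V\|_{1\to 2} \le t$.

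Combining the two bounds gives equality, and the minimum in the SDP is attained by the construction in the first step. I expect no real obstacle. The only points needing care are the attainment and balancing of the infimum, which the preceding paragraphs establish by rescaling and compactness, and noting that the two factors may be taken with equal norms so that the product bound $\sqrt{t}\cdot\sqrt{t} = t$ matches.
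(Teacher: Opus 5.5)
Your proposal is correct and follows the paper's argument essentially verbatim: a balanced factorization $\mat = UV$ gives the Gram matrix $X = W^TW$ with $W = \begin{pmatrix}U^T & V\end{pmatrix}$, which is feasible with $t = \gamma_2(\mat)$. Conversely, writing any feasible $X$ as $W^TW$ and splitting the columns of $W$ recovers a factorization with $\|U\|_{2\to\infty}\|V\|_{1\to 2}\le t$. The only care needed is the attainment and balancing of the infimum (or an $\varepsilon$-slack argument), which the paper handles by the same rescaling and compactness remarks you cite.
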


Semidefinite programs share two important properties with linear
programs: they are efficiently solvable, in the sense that an
approximately optimal solution can be found in polynomial time to any
degree of accuracy; they satisfy a duality theory, i.e., we can 
derive a dual maximization semidefinite program to a given
minimization semidefinite program, so that the optimal values of the
two programs are equal.\footnote{These statements are imprecise: in
  order to be efficiently solvable and satisfy strong duality,
  semidefinite programs need to be sufficiently ``nice''. This turns
  out not to be an issue for our application.} Then, using
Lemma~\ref{lm:ch7-fact-sdp}, we can show that
$\gamma_2(\mat)$ is efficiently computable, and has a dual
maximization formulation. These facts, whose proofs we omit, are
captured in the following lemma.


\begin{lemma}\label{lm:ch7-fact-dual}
  For any $m\times n$ matrix $\mat$, and any $\varepsilon > 0$, we can
  compute a factorization $\mat = UV$ so that $\|U\|_{2\to\infty}
  \|V\|_{1\to 2} \le (1+\varepsilon)\gamma_2(\mat)$ in time polynomial
  in $m$, $n$, the maximum bitsize of any entry of $\mat$, and in
  $\log(1/\varepsilon)$.

  Moreover, $\gamma_2(\mat)$ satisfies the dual formula
  \begin{equation}
    \label{eq:ch7-fact-dual}
    \gamma_2(\mat) =
    \max\{\|P\mat Q\|_{\tr}: P, Q \text{ diagonal }, \tr(P^2) = \tr(Q^2) = 1\}.
  \end{equation}
  Above, $\|P\mat Q\|_{\tr}$ is the trace norm of the matrix $P\mat Q$,
  i.e., the sum of its singular values. 
\end{lemma}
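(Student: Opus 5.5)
The plan is to combine the SDP formulation of Lemma~\ref{lm:ch7-fact-sdp} with standard semidefinite programming machinery (polynomial-time solvability and strong duality), and then to simplify the abstract dual SDP into the trace-norm formula \eqref{eq:ch7-fact-dual}.

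\emph{Computability.} The SDP in Lemma~\ref{lm:ch7-fact-sdp} is feasible and well-behaved. I will use the following explicit strictly feasible point:
\[
X = \begin{pmatrix} cI & \mat\\ \mat^T & cI\end{pmatrix}, \qquad c > \|\mat\|_{2\to 2},
\]
which is positive definite. The constraint set can also be restricted to a bounded region, using the a priori bound $t \le \|\mat\|_{1\to 2}$ observed earlier in the text. With a strictly feasible point and a bounded feasible region, the ellipsoid method (or an interior point method) returns an $X$ whose objective is within a factor $1+\varepsilon$ of $\gamma_2(\mat)$. The running time is polynomial in $m$, $n$, the bitsize, and $\log(1/\varepsilon)$. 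If the solver returns $X$ that is only approximately PSD, I will add a small multiple of the identity to repair this, which costs only a $(1+\varepsilon)$ factor. I then factor $X = W^TW$ by Cholesky or an eigendecomposition and read off $U$ and $V$ exactly as in the proof of Lemma~\ref{lm:ch7-fact-sdp}. This gives $\|U\|_{2\to\infty}\|V\|_{1\to2}\le t$. Balancing the two norms by rescaling, as done earlier, is not needed here because $\max_i X(i,i) \le t$ already bounds both norms by $\sqrt t$.

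\emph{Dual formula, easy direction.} I will prove weak duality directly, without the general theory. Take any factorization $\mat = UV$ and diagonal $P, Q$ with $\tr(P^2)=\tr(Q^2)=1$. Then
\[
\|P\mat Q\|_{\tr} = \|(PU)(VQ)\|_{\tr} \le \|PU\|_F \|VQ\|_F,
\]
where the inequality follows from the Cauchy--Schwarz inequality for the trace norm. Now
\[
\|PU\|_F^2 = \sum_i P(i,i)^2 \|U(i,*)\|_2^2 \le \|U\|_{2\to\infty}^2,
\]
and similarly $\|VQ\|_F^2 \le \|V\|_{1\to2}^2$. Hence the maximum in \eqref{eq:ch7-fact-dual} is at most $\gamma_2(\mat)$.

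\emph{Dual formula, reverse direction (main obstacle).} Here I will write the Lagrangian dual of the SDP, using Slater's condition (supplied by the strictly feasible point above) to get strong duality. The dual variables are a PSD matrix $Y$ and nonnegative weights $p_i, q_j$ with $\sum_i p_i+\sum_j q_j = 1$ on the diagonal constraints. The dual then reads
\[
\max\ 2\langle \mat, R\rangle \quad \text{s.t.} \quad \begin{pmatrix} D_p & -R\\ -R^T & D_q\end{pmatrix}\succeq 0 .
\]
The hardest step is converting this into the trace-norm form. I will use the fact that this block matrix is PSD exactly when $R = D_p^{1/2} C D_q^{1/2}$ for some contraction $C$ with $\|C\|_{2\to2}\le 1$. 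Substituting, the objective becomes
\[
2\langle D_p^{1/2}\mat D_q^{1/2}, C\rangle,
\]
and maximizing over contractions $C$ gives $2\|D_p^{1/2}\mat D_q^{1/2}\|_{\tr}$. This quantity is homogeneous of degree $1/2$ in each of $p$ and $q$. Splitting the total mass as $\sum_i p_i = a$ and $\sum_j q_j = 1-a$, the objective becomes $2\sqrt{a(1-a)}$ times the normalized trace norm. This factor is maximized at $a=1/2$, where $2\sqrt{a(1-a)} = 1$. Setting $P = D_p^{1/2}$ and $Q = D_q^{1/2}$ after normalizing each to unit trace of its square then yields exactly \eqref{eq:ch7-fact-dual}.

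Finally, I need the maximum in \eqref{eq:ch7-fact-dual} to be attained. This follows from compactness of the set of admissible $(P,Q)$ and continuity of the trace norm.
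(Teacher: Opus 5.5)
Your proposal is correct and follows the route the paper indicates. The paper omits the proof, saying only that both claims follow from the SDP of Lemma~\ref{lm:ch7-fact-sdp} via polynomial-time solvability and SDP duality; your Slater point, Lagrangian dual, contraction characterization of the PSD block matrix, and balancing at $a=1/2$ supply exactly the omitted steps.

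One small repair is needed in the algorithmic part. Since $c>\|\mat\|_{2\to 2}\ge\|\mat\|_{1\to 2}$, your Slater point never lies in the box $t\le\|\mat\|_{1\to 2}$, so bound the region by $t\le 2c$ instead. Also, to turn the solver's additive error into a $(1+\varepsilon)$ factor, use $\gamma_2(\mat)\ge\max_{i,j}|\mat(i,j)|$, which is at least $2^{-b}$ for a nonzero $\mat$ with $b$-bit rational entries.
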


\subsection{Lower Bound}
To lower bound the hereditary discrepancy by $\gamma_2(\mat)$ up to logarithmic factors, we need a general tool for proving lower bounds on hereditary
discrepancy. 
We next define an intermediate quantity called the determinant lower bound $\detlb(\mat)$. We use the connection between rounding and hereditary discrepancy, discussed Chapter~\ref{ch1-intro}, and a covering argument, to show that $\detlb(\mat)$ is a lower bound on hereditary discrepancy. Finally, we relate  $\gamma_2(\mat)$ and $\detlb(\mat)$.

\paragraph{The Determinant Lower bound and Hereditary Discrepancy.}
The determinant lower bound
  of an $m\times n$ matrix $\mat$ is defined as
  \begin{equation}
    \label{eq:ch7-detlb-defn}
    \detlb(\mat) \eqdef \max_k \max_{B \in S_k(\mat)} |\det(B)|^{1/k},
  \end{equation}
  where $S_k(\mat)$ is the set of $k\times k$ submatrices of
  $\mat$.
We have the following lower bound on $\herdisc(\mat)$.
\begin{lemma}\label{lm:ch7-detlb}
  For any $m\times n$ matrix $\mat$ we have the inequality
  \[
    \herdisc(\mat) \ge \frac12 \detlb(\mat).
  \]
\end{lemma}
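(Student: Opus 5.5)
We will prove the lemma by the classical Lov\'asz--Spencer--Vesztergombi argument, combining the rounding theorem (Theorem~\ref{thm:ch1-transference}) with a volume/covering argument. Fix $k$ and a $k\times k$ submatrix $B$ of $\mat$ with rows $I$ and columns $J$, $|I|=|J|=k$, and $\det(B)\neq 0$. Since $\herdisc(B)\le \herdisc(\mat(*,J))\le\herdisc(\mat)$ (restricting rows can only decrease the $\ell_\infty$ norm of $\mat(*,J')\col$ for every $J'\subseteq J$), it suffices to show $\herdisc(B)\ge \frac12|\det B|^{1/k}$. Set $D\eqdef\herdisc(B)$.

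The key step is a covering statement. By Theorem~\ref{thm:ch1-transference} applied to $B$, for every $y\in[0,1]^k$ there is $z\in\{0,1\}^k$ with $\|B(y-z)\|_\infty\le D$, i.e.\ $By\in Bz+[-D,D]^k$. Hence the parallelepiped $B[0,1]^k$ is covered by the $2^k$ cubes $Bz+[-D,D]^k$, $z\in\{0,1\}^k$. Comparing volumes gives only $|\det B|\le 2^k(2D)^k$, which loses a factor $2$. To get the sharp constant, I will instead use the periodic version: the rounding works for any $y\in\R^k$ with $z\in\Z^k$ (round the fractional part and add the integer part), so the translates $Bz+[-D,D]^k$, $z\in\Z^k$, cover all of $\R^k$. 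Since these translates are indexed by the lattice $B\Z^k$, whose fundamental domain has volume $|\det B|$, a covering of $\R^k$ by lattice translates of a body $C$ forces $\vol(C)\ge|\det B|$.

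I will justify this last fact by the standard density argument: intersect with a large cube $[-R,R]^k$; the number of lattice points $Bz$ with $Bz+[-D,D]^k$ meeting $[-R,R]^k$ is $(2R)^k/|\det B|\,(1+o(1))$ as $R\to\infty$, and these translates cover $[-R,R]^k$, so $(2D)^k\cdot (2R)^k(1+o(1))/|\det B|\ge (2R)^k$. Letting $R\to\infty$ gives $(2D)^k\ge|\det B|$, i.e.\ $D\ge\frac12|\det B|^{1/k}$. Maximizing over $k$ and $B$ yields the lemma.

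The main obstacle is only bookkeeping: making the lattice-counting limit rigorous (or alternatively, working in the torus $\R^k/B\Z^k$, where the image of $[-D,D]^k$ must cover a torus of volume $|\det B|$, so its volume $(2D)^k$ is at least that). I would present the torus version to avoid the limit.
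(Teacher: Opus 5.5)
Your proof is correct. It uses the same ingredients as the paper's proof: rounding via Theorem~\ref{thm:ch1-transference}, followed by a volume comparison for $K=\{x:\|Bx\|_\infty\le\herdisc(B)\}$, whose volume is $(2\herdisc(B))^k/|\det B|$.

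There is one difference, and it matters. The paper's written proof covers $[0,1]^k$ by the $2^k$ translates $K+z$, $z\in\{0,1\}^k$. That only gives $\herdisc(\mat)\ge\frac14\detlb(\mat)$, and the paper leaves the stated constant $\frac12$ as an exercise: strengthen \eqref{eq:ch7-covering} to $\vol^k(K)\ge 1$.

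Your periodic rounding shows that the $\mathbb{Z}^k$-translates of $K$ cover all of $\R^k$. Combined with the fundamental-domain (torus) comparison, where $\vol(F)\le\sum_{z}\vol\bigl(F\cap(z+K)\bigr)=\vol^k(K)$ for $F=[0,1)^k$, this is exactly that strengthening. So your argument proves the lemma with the constant as stated, rather than the weaker bound the paper proves in the text.
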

\begin{proof}
We prove a slightly weaker bound
$\herdisc(\mat) \ge  \detlb(\mat)/4$,
  and leave the tighter bound as an exercise.
  We show that for any $k\times k$ square matrix $B$,
  \(
  \herdisc(B) \ge (1/4) |\det(B)|^{1/k}.
  \)
  The claimed bound then follows directly as $\herdisc(\mat) \ge
  \max_k \max_{B \in S_k(\mat)} \herdisc(B)$.   
  
\smallskip
  It follows from the transference theorem (Theorem~\ref{thm:ch1-transference}) that 
  \[
    \forall y \in [0,1]^k \ \exists z \in \{0,1\}^k\ \ 
  \|B (y-z)\|_\infty \le \herdisc(B).
  \]
  Let $K \eqdef \{x \in \R^k: \|Bx\|_{\infty} \le
  \herdisc(B)\}$. The equation above is equivalent to
  \(
    [0,1]^k \subseteq \bigcup_{z \in \{0,1\}^k} (K + z).
  \)
  Letting $\vol^k(\cdot)$ be the $k$-dimensional Lebesgue
  measure, by its translation invariance we have
  \begin{align}
    1 &= \vol^k([0,1]^k) \le
    \sum_{z \in \{0,1\}^k} \vol^k(K + z) = 2^k \vol^k(K).\label{eq:ch7-covering}
  \end{align}
  To turn this into a lower bound on $\herdisc(B)$, we need to
  relate it to $\vol^k(K)$. Let us assume that $B$ is invertible: 
  otherwise, the lower bound
  \(
  \herdisc(B) \ge \frac14 |\det(B)|^{1/k}=0
  \)
  is trivial. Then we can write
  \begin{align*}
    K & = \{x \in \R^k: \|B x\|_{\infty} \le   \herdisc(B)\}\\
    & = \{B^{-1} y: y\in \R^k, \|y\|_{\infty} \le \herdisc(B)\}.
  \end{align*}
  The right side is the image of the cube
  $[-\herdisc(B),\herdisc(B)]^k$ under multiplication by $B^{-1}$, and
  has volume $
  (2\herdisc(B))^{k} |\det(B^{-1})|$ $=
  2^k\herdisc(B)^k/|\det(B)|$.
  
Together with \eqref{eq:ch7-covering}, 
 we get that \[
    1 \le 2^k\vol^k(K) = 4^k\herdisc(B)^k/|\det(B)|. \]
  Re-arranging gives us the lower bound. 
\end{proof}

\begin{remark}\label{rem:vollb}
  The determinant lower bound can be far from tight when $m$ is
  large. For example, take $\mat$ to be a $2^n \times n$ matrix whose
  rows are all the vectors in $\{-1, +1\}^n$. Hadamard's inequality shows that
  $\detlb(\mat) \le \sqrt{n}$, but it is easy to see that $\disc(\mat)
  = n$. Intuitively, the reason for this gap is that the determinant
  lower bound only considers square submatrices of $\mat$. The proof
  above, however, can be modified slightly to establish a
  stronger volumetric lower bound on hereditary discrepancy, as we
  explain next. 

  Notice that the proof of the inequality \eqref{eq:ch7-covering} did
  not use that $B$ is a square matrix, and in fact works for any
  submatrix $B$ of $\mat$. Using the inequality for
  submatrices that consist of a subset of the columns of $\mat$, we
  get that, for any $S \subseteq [n]$, the convex body
  $\{x \in \R^S: \|\mat(*,S) x\|_{\infty} \le \herdisc(\mat)\}$ has
  $|S|$-dimensional volume at least $2^{-|S|}$, or, equivalently,
  $\{x \in \R^S: \|\mat(*,S) x\|_{\infty} \le 1\}$, has volume at
  least $(2\herdisc(\mat))^{-|S|}$. Letting 
  $\overline{K} \eqdef \{x \in \R^n: \|\mat x\|_{\infty} \le 1\}$,
  this means that
  \begin{equation}
    \label{eq:vollb}
    \herdisc(\mat) \ge \frac12 \vollb(\mat) \eqdef
    \min_{S \subseteq [n]}\vol^{S}(\overline{K} \cap \R^S)^{-1/|S|},
  \end{equation}
  where $\vol^S(\cdot)$ is the $|S|$-dimensional Lebesgue measure
  taken inside $\R^S$, and we used the observation that
  \(
  \overline{K} \cap \R^S = \{x \in \R^S: \|\mat(*,S) x\|_{\infty} \le 1\}.
  \)
   This volumetric lower bound takes advantage of
  ``tall'' matrices, and can be stronger than the determinant lower
  bound. Returning to the $2^n \times n$ matrix $\mat$ above, whose rows are
  all vectors in $\{-1,+1\}^n$, we have
  $\overline{K} = \{x \in \R^n: \|x\|_1 \le 1\}$, i.e., the
  $n$-dimensional $\ell_1$ ball, which has Lebesgue measure
  $2^n/n!$. So $\vollb(\mat) = (n!)^{1/n}/2 \approx n/2e$ by
  Stirling's approximation. Unlike the determinant lower bound, this
  matches the discrepancy of $\mat$  up to a constant factor.
\end{remark}

\begin{exercise}
  Prove the inequality \(\herdisc(\mat) \ge (1/2) \detlb(\mat)\) in
  Lemma~\ref{lm:ch7-detlb}. To do so, argue that
  \eqref{eq:ch7-covering} can be strengthened to $\vol^k(K) \ge 1$.
\end{exercise}

\paragraph{Relating $\detlb(\mat)$ and $\gamma_2(\mat)$.}
Our goal will be to show that $\gamma_2(\mat)\lesssim \log(2n) \detlb(\mat)$. 

Let us first see how to use
Lemma~\ref{lm:ch7-fact-dual} to get an inequality in the other
direction. Let us take any $k\times k$ submatrix $\mat(S,T)$ of $\mat$ that
achieves $\detlb(\mat)$. Define $P(i,i) = \frac{1}{\sqrt{k}}$ for $i \in S$,
$Q(j,j) = \frac{1}{\sqrt{k}}$ for $j \in T$, and define all other entries of $P$
and $Q$ to be $0$. Then, by \eqref{eq:ch7-fact-dual},
\[
  \gamma_2(\mat) \ge \|P\mat Q\|_{\tr} = \frac1k \|\mat(S,T)\|_{\tr},
\]
and notice that $\frac1k \|\mat(S,T)\|_{\tr}$ is the arithmetic mean of
the singular values of $\mat(S,T)$. Since $|\det(\mat(S,T))|^{1/k}$ is
the geometric mean of the singular values of the same matrix, the
AM-GM inequality gives us that
$\gamma_2(\mat) \ge \detlb(\mat)$.  

To reverse this inequality, we
then need an inequality that reverses the AM-GM inequality, in the
appropriate sense. The next lemma is sufficient for us for this
purpose. 

\begin{lemma}\label{lm:ch7-rev-AMGM}
  For any non-negative real numbers $\sigma_1 \ge \ldots \ge
  \sigma_r \ge 0$,
  \[
    \max_{k = 1}^r k(\sigma_1 \cdots \sigma_k)^{1/k}
    \ge
    \max_{k = 1}^r k\sigma_k
    \ge
    \frac{\sigma_1 + \ldots + \sigma_r}{1+\ln r}
  \]
\end{lemma}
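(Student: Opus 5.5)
The plan is to prove the two inequalities separately, the first by monotonicity and the second by a harmonic-sum argument.

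\emph{First inequality.} Fix $k \in [r]$. Since the $\sigma_i$ are sorted in non-increasing order, each of $\sigma_1, \ldots, \sigma_k$ is at least $\sigma_k$. Hence their geometric mean satisfies $(\sigma_1\cdots\sigma_k)^{1/k} \ge \sigma_k$, so $k(\sigma_1\cdots\sigma_k)^{1/k} \ge k\sigma_k$. Maximizing over $k$ on both sides gives the first inequality. This step is immediate.

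\emph{Second inequality.} Let $M \eqdef \max_{k=1}^r k\sigma_k$. Then $\sigma_k \le M/k$ for every $k \in [r]$. Summing over $k$,
\[
\sigma_1 + \cdots + \sigma_r \le M \sum_{k=1}^r \frac1k = M H_r,
\]
where $H_r$ is the $r$-th harmonic number. It then remains to bound $H_r \le 1 + \ln r$. I would do this by comparison with an integral: for $k \ge 2$ we have $\frac1k \le \int_{k-1}^k \frac{dx}{x}$, so
\[
H_r \le 1 + \int_1^r \frac{dx}{x} = 1 + \ln r.
\]
Rearranging gives $M \ge (\sigma_1+\cdots+\sigma_r)/(1+\ln r)$, which is the second inequality.

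There is no real obstacle in either step. The only point needing attention is the edge case where some $\sigma_k = 0$, and the argument above is unaffected by it since it never divides by any $\sigma_k$.
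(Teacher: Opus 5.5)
Your proposal is correct and follows essentially the same route as the paper: the first inequality by monotonicity of the sorted $\sigma_i$, and the second by writing $\sigma_k \le \frac{1}{k}\max_{k} k\sigma_k$, summing, and using $\sum_{k=1}^r \frac1k \le 1+\ln r$. Your integral-comparison justification of the harmonic bound fills in a detail the paper simply asserts.
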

\begin{proof}
  The first inequality is trivial. The second
  follows as,
  \begin{align*}
    \sum_{k = 1}^r \sigma_k =
    \sum_{k = 1}^r k\sigma_k\cdot\frac{1}{k}
    &\le \left(\max_{k = 1}^r k\sigma_k\right)\left(\sum_{k=1}^r \frac{1}{k}\right)
    \le \left(\max_{k = 1}^r k\sigma_k\right)(1+\ln r),
  \end{align*}
  as $\sum_{k=1}^r \frac{1}{k} \le 1 + \ln r$.
\end{proof}

Let us also recall the Cauchy-Binet formula, which states that for any
two $m\times n$ matrices $A,B$, where $n \le m$, we have
\[
  \det(A^T B) = \sum_{S \subseteq [m]: |S| = n} \det(A(S,*))\det(B(S,*)).
\]
For a short proof, see Section 3.2 in Tao's
book~\cite{Tao-randmat}. Note that applying the Cauchy-Schwarz
inequality to the right hand side of the Cauchy-Binet formula, we get
\begin{align}
  \det(A^T B)^2 &\le
     \det(A^T A) \det(B^T B).  \label{eq:ch7-CS-mat}
\end{align}

We need one final lemma before we can relate $\gamma_2(\mat)$ and
$\detlb(\mat)$. This lemma  allows us to extract a submatrix of
$\mat$ to use as a certificate that $\detlb(\mat)$ is large. 

\begin{lemma}\label{lm:ch7-submatrix}
  Let $n\le m$ be positive integers, and let $D$ be an $m\times m$ diagonal matrix with non-negative entries such that $\tr(D) =
  1$. Then for any $m\times n$ matrix $A$, there exists a set
  $S\subseteq [m]$ of cardinality $n$ for which 
  \[
    |\det(A(S,*))| \ge  \sqrt{n!\det(A^TDA)}.
  \]
\end{lemma}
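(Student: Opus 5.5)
We will expand $\det(A^TDA)$ with the Cauchy--Binet formula and then pick the largest term. Write $D = \diag(d_1,\ldots,d_m)$ and $A^T D A = (D^{1/2}A)^T(D^{1/2}A)$. Cauchy--Binet applied to the $m\times n$ matrix $D^{1/2}A$, with $n\le m$, gives
\[
  \det(A^TDA) = \sum_{S\subseteq[m],\,|S|=n} \det\big((D^{1/2}A)(S,*)\big)^2
  = \sum_{|S|=n} \Big(\prod_{i\in S} d_i\Big)\det(A(S,*))^2 .
\]
The second equality holds because row $i$ of $D^{1/2}A$ is row $i$ of $A$ scaled by $\sqrt{d_i}$, so the determinant of the $S$-rows picks up the factor $\prod_{i\in S}\sqrt{d_i}$.

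Next, we bound this weighted sum by its largest determinant. Let $S^*$ maximize $|\det(A(S,*))|$ over all $|S|=n$. Then
\[
\det(A^TDA)\le \det(A(S^*,*))^2\cdot e_n(d_1,\ldots,d_m),
\]
where $e_n(d)=\sum_{|S|=n}\prod_{i\in S}d_i$ is the $n$-th elementary symmetric polynomial of the $d_i$.

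The remaining step, which is the only one needing an idea, is the bound $e_n(d)\le 1/n!$ for nonnegative $d_i$ with $\sum_i d_i=\tr(D)=1$. We get it from the multinomial expansion of $(d_1+\cdots+d_m)^n = 1$. Among its terms, each monomial $\prod_{i\in S}d_i$ with distinct indices appears with coefficient exactly $n!$. All other terms are nonnegative, so $1\ge n!\,e_n(d)$.

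Combining the three steps gives $\det(A^TDA)\le \det(A(S^*,*))^2/n!$, which rearranges to $|\det(A(S^*,*))|\ge\sqrt{n!\det(A^TDA)}$. No step is a serious obstacle. The one point needing care is that Cauchy--Binet applies with the roles arranged as in the paper's statement, i.e. to two $m\times n$ matrices with $n\le m$, which is exactly the hypothesis of the lemma.
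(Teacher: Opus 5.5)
Your proof is correct and follows essentially the same route as the paper. Both arguments expand $\det(A^TDA)$ by Cauchy--Binet as a sum of squared $n\times n$ minors weighted by $\prod_{i\in S}D(i,i)$, bound this by the largest squared minor times the elementary symmetric sum, and then use the multinomial expansion $1=(\sum_i D(i,i))^n \ge n!\sum_{|S|=n}\prod_{i\in S}D(i,i)$.
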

\begin{proof}
  By the Cauchy-Binet formula and H\"older's inequality, we have
  \begin{align*}
    \det(A^T DA) 
                 &= \sum_{S\subseteq [m]: |S| = n}  \det(A(S,*))^2 \left(\prod_{i \in S} D(i,i)\right)\\
                 &\le \left(\max_{S \subseteq [m]: |S| = n}\det(A(S,*))^2\right)
                   \left(\sum_{S\subseteq [m]: |S| = n} \prod_{i \in S} D(i,i)\right).
  \end{align*}
  It remains to bound the sum on the right hand side. One can show that this sum is maximized when all $D(i,i)$ are equal,
  in which case it equals ${m \choose n} \frac{1}{m^n} <
  \frac{1}{n!}$. For an elementary proof of a similar bound,
  notice that, since all $D(i,i)$ are non-negative,
  \[
    1 = (D(1,1) + \ldots + D(m,m))^n
    \ge n! \sum_{S\subseteq [m]: |S| = n} \prod_{i \in S} D(i,i),
  \]
  because when we expand $(D(1,1) + \ldots + D(m,m))^n$ we get each
  multilinear term $\left(\prod_{i \in S} D(i,i)\right)$ $n!$ times,
  in addition to the other terms. 
\end{proof}

We are, finally, ready to prove that $\gamma_2(\mat)$ and
$\detlb(\mat)$ are equal up to a logarithmic factor.

\begin{lemma}\label{lm:ch7-fact-detlb}
  For any $m\times n$ matrix $\mat$ of rank $r$, we have
  \[
    \detlb(\mat) \le \gamma_2(\mat) \lesssim (1 + \log r) \detlb(\mat).
  \]
\end{lemma}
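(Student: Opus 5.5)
The lower bound $\detlb(\mat) \le \gamma_2(\mat)$ was already derived in the text just above from the dual formula \eqref{eq:ch7-fact-dual} and AM-GM, so the work is the upper bound. I will start from an optimal dual solution of Lemma~\ref{lm:ch7-fact-dual}: diagonal $P,Q$ with $\tr(P^2)=\tr(Q^2)=1$ and $\gamma_2(\mat)=\|P\mat Q\|_{\tr}=\sigma_1+\dots+\sigma_r$, where $\sigma_1\ge\dots\ge\sigma_r$ are the singular values of $P\mat Q$ (its rank is at most $r$). By Lemma~\ref{lm:ch7-rev-AMGM} there is a $k\le r$ with $k(\sigma_1\cdots\sigma_k)^{1/k}\ge \gamma_2(\mat)/(1+\ln r)$. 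The rest of the argument converts the product of the top $k$ singular values of $P\mat Q$ into a $k\times k$ minor of $\mat$ itself, losing only a constant in the $k$-th root.

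To do this, I will let $u_1,\dots,u_k$ and $v_1,\dots,v_k$ be the top $k$ left and right singular vectors, with $U_k,V_k$ the corresponding matrices having orthonormal columns. Then $\det(U_k^T P\mat Q V_k)=\sigma_1\cdots\sigma_k$. Next I will strip off the diagonal weights using Lemma~\ref{lm:ch7-submatrix} on each side. On the column side, write $Q V_k$; applying Lemma~\ref{lm:ch7-submatrix} with $A=V_k$ (an $n\times k$ matrix) and $D=Q^2$ gives a set $T$ of $k$ columns with $|\det(V_k(T,*))|\ge\sqrt{k!\det(V_k^TQ^2V_k)}$. I intend to run this argument in the reverse direction. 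The plan is to use Cauchy--Binet to expand $\det(U_k^TP\mat QV_k)=\sum_{S,T}\det(U_k(S,*))\det(P(S,S))\det(\mat(S,T))\det(Q(T,T))\det(V_k(T,*))$ over $|S|=|T|=k$. I will bound each $|\det(\mat(S,T))|$ by $\detlb(\mat)^k$, and then bound $\sum_S|\det U_k(S,*)|\prod_{i\in S}P(i,i)$ by Cauchy--Schwarz: the squared determinants of $U_k$ sum to $\det(U_k^TU_k)=1$, and $\sum_S\prod_{i\in S}P(i,i)^2\le 1/k!$ by the expansion trick in the proof of Lemma~\ref{lm:ch7-submatrix}. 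That proof used $\tr(D)=1$, which is satisfied here by $D=P^2$. The same bound holds on the $T$ side.

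Putting these together gives $\sigma_1\cdots\sigma_k\le \detlb(\mat)^k/k!$. Taking $k$-th roots and using $(k!)^{1/k}\ge k/e$ yields $k(\sigma_1\cdots\sigma_k)^{1/k}\le e\,\detlb(\mat)$, and combining with the choice of $k$ gives $\gamma_2(\mat)\le e(1+\ln r)\detlb(\mat)$. The main obstacle I expect is the Cauchy--Binet step for a product of four rectangular factors, $U_k^T P$, $\mat$, $Q V_k$. I plan to handle it by applying the two-factor formula twice, first splitting $(U_k^TP\mat)(QV_k)$ over $T$ and then $U_k^TP\,\mat(*,T)$ over $S$. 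I will also need to check that the $1/k!$ factors from the two sides combine to exactly $1/k!$ overall, because each Cauchy--Schwarz step contributes a square root $1/\sqrt{k!}$. Lemma~\ref{lm:ch7-submatrix} is essentially this calculation, so I may simply reuse its inequality.
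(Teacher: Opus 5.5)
Your proof is correct. It uses the same ingredients as the paper's proof: an optimal dual pair $P,Q$, Lemma~\ref{lm:ch7-rev-AMGM}, Cauchy--Binet, and the bound $\sum_{|S|=k}\prod_{i\in S}D(i,i)\le 1/k!$. You assemble them differently.

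The paper works only with the right singular vectors $V$ and proceeds in stages. It applies Lemma~\ref{lm:ch7-submatrix} to $A=\mat QV$ with $D=P^2$ to extract a row set $S$ with $|\det(\mat(S,*)QV)|\ge\sqrt{k!}\,\sigma_1\cdots\sigma_k$. It then strips off $V$ using the matrix Cauchy--Schwarz inequality \eqref{eq:ch7-CS-mat}. Finally it applies Lemma~\ref{lm:ch7-submatrix} a second time, to $\mat(S,*)^T$ with $D=Q^2$, to extract the column set $T$.

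You instead compress to the $k\times k$ matrix $U_k^TP\mat QV_k=\diag(\sigma_1,\dots,\sigma_k)$, using both singular frames. You expand its determinant over all pairs $(S,T)$ at once, bound every minor by $\detlb(\mat)^k$, and control each side by scalar Cauchy--Schwarz together with $\det(U_k^TU_k)=\det(V_k^TV_k)=1$.

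Your route is symmetric in rows and columns and avoids both the intermediate quantity $\det(A^TP^2A)$ and inequality \eqref{eq:ch7-CS-mat}. The paper's sequential route has the modest advantage of reusing Lemma~\ref{lm:ch7-submatrix} as a black box. It also gives the intermediate fact that some $k$ rows of $\mat QV$ already carry a large minor. Both routes yield exactly the same estimate $\max_{|S|=|T|=k}|\det\mat(S,T)|\ge k!\,\sigma_1\cdots\sigma_k$, and hence the same constant, $\gamma_2(\mat)\le e(1+\ln r)\detlb(\mat)$.

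Two small remarks. First, your sentence applying Lemma~\ref{lm:ch7-submatrix} to $A=V_k$ plays no role in the argument and can be deleted. Second, in the expansion you should write $\prod_{i\in S}|P(i,i)|$ and $\prod_{j\in T}|Q(j,j)|$. Alternatively, note that $P$ and $Q$ may be taken nonnegative, since flipping signs of diagonal entries does not change $\|P\mat Q\|_{\tr}$.
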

\begin{proof}
  As we argued above, the first inequality follows by considering the
  $k\times k$ submatrix $\mat(S,T)$ of $\mat$ achieving
  $|\det(\mat(S,T))|^{1/k} = \detlb(\mat)$, and setting $P$ and $Q$ in
  \eqref{eq:ch7-fact-dual} so that $P\mat Q = \frac1k \mat(S,T)$.
  Here we prove the second inequality, again using
  Lemma~\ref{lm:ch7-fact-dual}, as well as
  Lemmas~\ref{lm:ch7-rev-AMGM}~and~\ref{lm:ch7-submatrix}.

  Let $P$ and $Q$ achieve equality in \eqref{eq:ch7-fact-dual}, and
  let $\sigma_1 \ge \ldots \ge \sigma_r \ge 0$ be the $r$ largest singular
  values of $P \mat Q$. Since $P \mat Q$ has rank at most $r$, all
  other singular values of $P \mat Q$ are $0$, and $\gamma_2(\mat) =
  \sigma_1 + \ldots + \sigma_r$. Let $k \in [r]$ be the integer for
  which Lemma~\ref{lm:ch7-rev-AMGM} guarantees that
  \begin{equation}\label{eq:ch7-top-k}
    k(\sigma_1 \cdots \sigma_k)^{1/k}
    \ge \frac{\sigma_1 + \ldots + \sigma_r}{1+\ln r}
    = \frac{\gamma_2(\mat)}{1+\ln r}.
  \end{equation}
  The goal in the rest of the proof is to show that there exists a
  $k\times k$ submatrix $\mat(S,T)$ of $\mat$ for which $|\det(\mat(S,T))|^{1/k}$ is not
  much smaller than $k(\sigma_1 \cdots \sigma_k)^{1/k}$.

  Let $v_1, \ldots, v_k$ be the right singular vectors of $P \mat Q$
  associated with $\sigma_1, \ldots, \sigma_k$, and let $V$ be a
  matrix with columns $v_1, \ldots, v_k$. Define the $m\times k$
  matrix $A \eqdef \mat QV$. The matrix $PA$ then has singular values
  $\sigma_1, \ldots, \sigma_k$, or, equivalently, $A^TP^2A$ has
  eigenvalues $\sigma_1^2, \ldots,
  \sigma_k^2$. Lemma~\ref{lm:ch7-submatrix} applied to $A$ and $D
  \eqdef P^2$ now tells us that there exists a set $S \subseteq [m]$
  of cardinality $k$ so that
  \begin{align}
    |\det(\mat(S,*)QV)| &= |\det(A(S,*))|\notag\\
    &\ge \sqrt{k! \det(A^TP^2A)} = \sqrt{k!} \sigma_1 \cdots \sigma_k.    \label{eq:ch7-rows}
  \end{align}

  Next we would like to apply Lemma~\ref{lm:ch7-submatrix} again, in
  order to replace $\mat(S,*)QV$ with $\mat(S,T)$ for some cardinality
  $k$ set $T \subseteq [n]$. We first need to deal with the matrix
  $V$, however. To this end, apply \eqref{eq:ch7-CS-mat}  to the
  matrices $Q\mat(S,*)^T$ and $V$, to conclude that
  \begin{align}
    \det(\mat(S,*)Q^2 \mat(S,*)^T) &\ge
    \frac{\det(\mat(S,*)QV)^2}{\det(V^T V)} \notag\\
                                   &= \det(\mat(S,*)QV)^2,\label{eq:ch7-proj}
  \end{align}
  where we used that $V^T V = I$. 
  Then 
  \eqref{eq:ch7-rows} and \eqref{eq:ch7-proj} together give us
  \[
    \sqrt{\det(\mat(S,*)Q^2\mat(S,*)^T)} \ge
    \sqrt{k!} \sigma_1 \cdots \sigma_k.
  \]
  Finally, we apply Lemma~\ref{lm:ch7-submatrix} one more time, now
  with the $n\times k$ matrix $A \eqdef \mat(S,*)^T$ and $D \eqdef
  Q^2$, concluding that there exists a set $T \subseteq [n]$ of
  cardinality $k$ for which
  \[
    |\mat(S,T)| \ge \sqrt{k!\det(\mat(S,*)Q^2\mat(S,*)^T)} \ge
    k! \sigma_1 \cdots \sigma_k.
  \]
  Together with \eqref{eq:ch7-top-k}, and as $k! \ge (k/e)^k$, we can conclude that
  \begin{align*}
    \detlb(\mat)
    \ge
    |\det(\mat(S,T))|^{1/k}
    &\gtrsim
      k (\sigma_1 \cdots \sigma_k)^{1/k}
    \ge
      \frac{\gamma_2(\mat)}{1+\ln r}.  \qedhere
  \end{align*}
\end{proof}

Lemma~\ref{lm:ch7-fact-detlb} gives an approximation algorithm for the
determinant lower bound: computing $\gamma_2(\mat)$ gives a factor
$O(\log r)$ approximation for $\detlb(\mat)$ for any rank $r$ matrix
$\mat$. No better polynomial time computable approximation for
$\detlb(\mat)$ is known. 

Combining Lemmas~\ref{lm:ch7-detlb}~and~\ref{lm:ch7-fact-detlb}, we get
the following lower bound on hereditary discrepancy.

\begin{theorem}\label{thm:ch7-fact-lb}
  For any $m\times n$ matrix $\mat$ of rank $r$, we have
  \[
    \herdisc(\mat) \gtrsim  \frac{\gamma_2(\mat)}{1 + \log r}.
  \]
\end{theorem}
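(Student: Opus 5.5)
The proof is a direct chaining of two results already established in this chapter. Lemma~\ref{lm:ch7-detlb} gives $\herdisc(\mat) \ge \frac12 \detlb(\mat)$. Lemma~\ref{lm:ch7-fact-detlb} gives $\gamma_2(\mat) \lesssim (1+\log r)\detlb(\mat)$ for a rank $r$ matrix, and rearranging yields $\detlb(\mat) \gtrsim \gamma_2(\mat)/(1+\log r)$.

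Combining the two,
\[
\herdisc(\mat) \ge \tfrac12\detlb(\mat) \gtrsim \frac{\gamma_2(\mat)}{1+\log r},
\]
which is the claim with the factor $\frac12$ absorbed into the implied constant.

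The only points to check are bookkeeping. First, the logarithm base: Lemma~\ref{lm:ch7-fact-detlb} is proved with $1+\ln r$, which differs from $1+\log r$ by at most a constant factor for $r\ge 1$. Second, the degenerate case $r=0$: then $\mat=0$, so $\gamma_2(\mat)=0$ and the inequality holds trivially; we can use the convention $1+\log r \ge 1$ here. I expect no genuine obstacle, since all the substantive work (the volume covering argument and the reverse AM-GM together with Cauchy--Binet extraction) was already carried out in the two lemmas.
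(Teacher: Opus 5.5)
Your proposal is correct and is exactly the paper's argument: the theorem is obtained by chaining Lemma~\ref{lm:ch7-detlb} with the bound $\gamma_2(\mat)\lesssim(1+\log r)\detlb(\mat)$ from Lemma~\ref{lm:ch7-fact-detlb}. The text itself only proves $\herdisc(\mat)\ge\frac14\detlb(\mat)$ and leaves the factor $\frac12$ as an exercise, but either constant is absorbed into $\gtrsim$.
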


\subsection{The Approximation Theorem}

We can now state our main result.
\begin{theorem}\label{thm:ch7-fact-main}
  For any $m\times n$ matrix $\mat$ of rank $r$, we have
  \[
    \frac{\gamma_2(\mat)}{1 + \log r} \lesssim
    \herdisc(\mat)
    \lesssim
    \sqrt{\ln(2m)} \gamma_2(\mat).
  \]
\end{theorem}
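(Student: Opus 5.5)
The statement of Theorem~\ref{thm:ch7-fact-main} is a direct combination of the two bounds established earlier in this section, so the plan is simply to assemble them. The upper bound $\herdisc(\mat) \lesssim \sqrt{\ln(2m)}\gamma_2(\mat)$ is exactly Corollary~\ref{cor:ch7-fact-ub}. That corollary came from two facts: Lemma~\ref{lm:fact-ub}, proved via Banaszczyk's theorem (Theorem~\ref{thm:bana}) applied to the body $\{y: \|Uy\|_\infty \le r\sqrt{2\ln(2m)}\}$, and the observation that $\gamma_2$ does not increase when we pass to column submatrices. Nothing new is needed for this direction.

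For the lower bound I would chain three inequalities. By Lemma~\ref{lm:ch7-detlb}, $\herdisc(\mat) \ge \frac12\detlb(\mat)$; this is the volume/covering argument based on the transference Theorem~\ref{thm:ch1-transference}. By Lemma~\ref{lm:ch7-fact-detlb}, for a rank $r$ matrix, $\gamma_2(\mat) \lesssim (1+\log r)\detlb(\mat)$. Rearranging gives $\detlb(\mat) \gtrsim \gamma_2(\mat)/(1+\log r)$, and combining with the first inequality yields $\herdisc(\mat) \gtrsim \gamma_2(\mat)/(1+\log r)$. This is precisely Theorem~\ref{thm:ch7-fact-lb}.

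So the proof is a two-line citation of Corollary~\ref{cor:ch7-fact-ub} and Theorem~\ref{thm:ch7-fact-lb}. The only point requiring care is bookkeeping of the parameter $r$. In Lemma~\ref{lm:ch7-fact-detlb}, the singular values of $P\mat Q$ from the dual formula \eqref{eq:ch7-fact-dual} number at most $\rank(P\mat Q) \le r$, so applying the reverse AM--GM Lemma~\ref{lm:ch7-rev-AMGM} with $r$ terms is legitimate. One must also handle the degenerate case $r=0$ (or $\mat=0$), where both sides vanish.

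The genuine difficulty lies not in this assembly but in the ingredients it relies on. These are the dual SDP characterization in Lemma~\ref{lm:ch7-fact-dual} and the double application of the Cauchy--Binet-based Lemma~\ref{lm:ch7-submatrix} to extract a $k\times k$ submatrix with large determinant. Since both have already been established, I expect no obstacle here.
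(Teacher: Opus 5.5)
Your proposal is correct and follows the paper exactly. The paper also obtains the theorem immediately: the upper bound is Corollary~\ref{cor:ch7-fact-ub}, and the lower bound is Theorem~\ref{thm:ch7-fact-lb}, which chains Lemma~\ref{lm:ch7-detlb} with Lemma~\ref{lm:ch7-fact-detlb}.
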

Theorem~\ref{thm:ch7-fact-main} follows immediately from
Corollary~\ref{cor:ch7-fact-ub}, and Theorem~\ref{thm:ch7-fact-lb}.

In Exercise~\ref{ex:ch7-fact-ub} we showed that the upper bound in
Theorem~\ref{thm:ch7-fact-main} is tight up to constant factors. Soon
we will see a simple example of a matrix for which the lower bound is
tight.

\section{Some Properties}

There are many natural questions one can ask about the behavior of
hereditary discrepancy under simple transformations. For example, we
can ask how $\herdisc(\SS \cup \SS')$ compares to $\herdisc(\SS)$ and
$\herdisc(\SS')$ for two set systems $\SS$ and $\SS'$ on the same
universe. As another example, we can ask how $\herdisc(\mat^T)$
compares to $\herdisc(\mat)$ for a matrix $\mat$. None of these
questions has an obvious answer, but, nevertheless, we can give nearly tight
answers to them using Theorem~\ref{thm:ch7-fact-main}, and simple
properties of the $\gamma_2$ norm. Let us remark that it is essential
here to work with hereditary discrepancy - discrepancy itself does not
behave nicely with respect to most simple transformations. Two
examples of this are offered in the next exercises.

\begin{exercise}
  Give an example of two set systems $\SS$ and $\SS'$, both on the ground
  set $[n]$ for an arbitrarily large integer $n$, so that $\disc(\SS)
  = \disc(\SS') = 0$ but $\disc(\SS \cup \SS') \gtrsim n$.
\end{exercise}


\begin{exercise}
  Give an example of a $n\times n$ matrix with $\{0,1\}$ entries,
  for $n$ arbitrarily large, so that $\disc(\mat) = 0$ but
  $\disc(\mat^T) \gtrsim \sqrt{n}$.
\end{exercise}


The next lemma summarizes the basic properties of the $\gamma_2$ norm
that are useful in reasoning about hereditary
discrepancy. Below, we use $\otimes$ for the Kronecker product: given
two matrices $A$ and $B$, of dimensions, respectively, $m \times n$
and $p \times q$, $A\otimes B$ is a $(mp) \times (nq)$ matrix whose
rows are indexed by $[m]\times [p]$, whose columns are indexed by $[n]\times
[q]$, and whose entries are $(A\otimes B)((i_1, i_2), (j_1, j_2)) =
A(i_1,j_1)B(i_2,j_2)$. The key property of Kronecker products that we
repeatedly use is that $(A\otimes B)(A'\otimes B') =
(AA')\otimes(BB')$ for any matrices $A,A',B,B'$ for which these matrix
products are well-defined.

\begin{lemma}\label{lm:ch7-gamma2-props}
  The $\gamma_2$ function on matrices has the following properties:
  \begin{enumerate}
  \item $\gamma_2(\mat(S,T)) \le \gamma_2(\mat)$ for any $m\times n$
    matrix $\mat$ and any submatrix $\mat(S,T)$ of
    it;\label{pr:ch7-submatrix}
  \item duplicating rows and columns of $\mat$  keeps $\gamma_2(\mat)$ unchanged;\label{pr:ch7-duplicate}
  \item $\gamma_2(\mat) = \gamma_2(\mat^T)$ for any matrix $\mat$;\label{pr:ch7-transpose}
  \item $\gamma_2(\mat) = 0 \implies \mat = 0$;\label{pr:ch7-pos}
  \item $\gamma_2(c\mat) = |c|\gamma_2(\mat)$ for any matrix $\mat$
    and real value $c$;\label{pr:ch7-hom}
  \item $\gamma_2(\mat + \mat') \le \gamma_2(\mat) + \gamma_2(\mat')$;\label{pr:ch7-triangle}
  \item for any two matrices $\mat'$ and $\mat''$, each with $n$ columns, and the matrix 
    $\mat = \begin{pmatrix}\mat'\\ \mat''\end{pmatrix}$, 
    we have $\gamma_2(\mat) \le \sqrt{\gamma_2(\mat')^2 +
      \gamma_2(\mat'')^2}$; \label{pr:ch7-union}
  \item $\gamma_2(\mat \otimes \mat') = \gamma_2(\mat)
    \gamma_2(\mat')$ for any matrices $\mat$ and $\mat'$.\label{pr:ch7-tensor}
  \end{enumerate}
\end{lemma}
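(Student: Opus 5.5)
I will prove the eight items one at a time, using the primal definition \eqref{eq:ch7-gamma2-defn} of $\gamma_2$ wherever possible. For the Kronecker product I will also use the dual formula \eqref{eq:ch7-fact-dual}.

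\emph{The factorization items.} Item~\ref{pr:ch7-submatrix} follows the argument in Corollary~\ref{cor:ch7-fact-ub}. Given $\mat = UV$, we have $\mat(S,T) = U(S,*)V(*,T)$, and deleting rows of $U$ or columns of $V$ does not increase $\|U\|_{2\to\infty}$ or $\|V\|_{1\to2}$. For item~\ref{pr:ch7-duplicate}, duplicating a row of $\mat$ corresponds to duplicating the same row of $U$, and duplicating a column corresponds to duplicating a column of $V$; neither changes the norms, so $\gamma_2$ does not increase. Item~\ref{pr:ch7-submatrix} gives the reverse inequality, since $\mat$ is a submatrix of the enlarged matrix. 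Item~\ref{pr:ch7-transpose} holds because $\mat^T = V^TU^T$, and the row norms of $V^T$ are the column norms of $V$ (and vice versa for $U$). For item~\ref{pr:ch7-pos}, the infimum is attained, as noted before Lemma~\ref{lm:ch7-fact-sdp}; if the attained product is zero then $U=0$ or $V=0$, so $\mat=0$. Item~\ref{pr:ch7-hom} follows by replacing $U$ with $cU$, with the case $c=0$ handled trivially. For item~\ref{pr:ch7-triangle}, first rescale factorizations $\mat=UV$ and $\mat'=U'V'$ so that $\|U\|_{2\to\infty}=\|V\|_{1\to2}=\sqrt{\gamma_2(\mat)}$, and similarly for $\mat'$. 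Then write $\mat+\mat' = (U\ U')\binom{V}{V'}$. Each row norm squared of $(U\ U')$ is at most $\gamma_2(\mat)+\gamma_2(\mat')$, and the same holds for each column norm squared of $\binom{V}{V'}$, so the product of norms is at most $\gamma_2(\mat)+\gamma_2(\mat')$.

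\emph{Stacking.} For item~\ref{pr:ch7-union}, I will normalize differently: choose factorizations with $\|U'\|_{2\to\infty}=\|U''\|_{2\to\infty}=1$, so that $\|V'\|_{1\to2}=\gamma_2(\mat')$ and $\|V''\|_{1\to2}=\gamma_2(\mat'')$. Then
\[
\mat=\begin{pmatrix}U' & 0\\ 0 & U''\end{pmatrix}\begin{pmatrix}V'\\ V''\end{pmatrix}.
\]
The rows of the block-diagonal matrix have norm at most $1$, and each column of the stacked matrix has norm squared $\|V'(*,j)\|^2+\|V''(*,j)\|^2\le \gamma_2(\mat')^2+\gamma_2(\mat'')^2$. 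This gives the bound. (If one of the norms is zero, I will use a limiting argument, or simply use the zero matrix in that block.)

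\emph{Tensor products.} The upper bound $\gamma_2(\mat\otimes\mat')\le\gamma_2(\mat)\gamma_2(\mat')$ is immediate from $(U\otimes U')(V\otimes V')=\mat\otimes\mat'$. The row norms of $U\otimes U'$ are products of row norms of $U$ and $U'$, and likewise for the column norms of $V\otimes V'$. The lower bound is the main obstacle, and I will prove it with the dual formula \eqref{eq:ch7-fact-dual}. Take optimal diagonal $P,Q$ for $\mat$ and $P',Q'$ for $\mat'$. Then $P\otimes P'$ and $Q\otimes Q'$ are diagonal, with $\tr((P\otimes P')^2)=\tr(P^2)\tr(P'^2)=1$, and similarly for $Q\otimes Q'$. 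Moreover,
\[
(P\otimes P')(\mat\otimes\mat')(Q\otimes Q') = (P\mat Q)\otimes(P'\mat' Q').
\]
The singular values of a Kronecker product are the pairwise products of the singular values of the factors, so the trace norm is multiplicative. Hence $\gamma_2(\mat\otimes\mat')\ge \|P\mat Q\|_{\tr}\|P'\mat'Q'\|_{\tr}=\gamma_2(\mat)\gamma_2(\mat')$. The only fact that needs checking here is the multiplicativity of singular values. I will derive it from the SVDs $P\mat Q=X\Sigma Y^T$ and $P'\mat'Q'=X'\Sigma'Y'^T$: the product is $(X\otimes X')(\Sigma\otimes\Sigma')(Y\otimes Y')^T$, and Kronecker products of orthogonal matrices are orthogonal.
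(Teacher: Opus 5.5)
Your proof is correct. For items~\ref{pr:ch7-submatrix}, \ref{pr:ch7-union} and the lower bound in item~\ref{pr:ch7-tensor} it follows the paper's argument, and you also make explicit the feasibility check $\tr((P\otimes P')^2)=\tr(P^2)\tr(P'^2)=1$.

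The real difference is in item~\ref{pr:ch7-triangle} and the upper bound of item~\ref{pr:ch7-tensor}. There the paper uses the SDP of Lemma~\ref{lm:ch7-fact-sdp}: it adds optimal solutions ($X+X'$, $t+t'$), and it tensors them ($X\otimes X'$, $tt'$, using that Kronecker products of PSD matrices are PSD). You instead write down explicit factorizations, a route the paper mentions as an alternative.

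The two are the same construction in different clothing. If $X=W^TW$ with $W=(U^T\ V)$, then $X+X'$ is the Gram matrix of $W$ stacked on $W'$. That stacked matrix encodes exactly your factorization $(U\ U')\binom{V}{V'}$, and the single SDP bound $X(i,i)\le t$ is your balanced normalization.

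Each route buys something. The SDP version builds the balancing into the one parameter $t$, so no rescaling step is needed. Your version is more elementary, and it sidesteps a point the paper glosses over. The matrix $X\otimes X'$ is indexed by $([m]\cup[n])\times([p]\cup[q])$, not by $([m]\times[p])\cup([n]\times[q])$. So strictly one must pass to a principal submatrix to obtain a feasible solution for $\mat\otimes\mat'$.

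You also prove items~\ref{pr:ch7-duplicate}--\ref{pr:ch7-hom}. The paper leaves items~\ref{pr:ch7-transpose}--\ref{pr:ch7-hom} as an exercise and does not address item~\ref{pr:ch7-duplicate}; your arguments for all of these are sound. For item~\ref{pr:ch7-pos} there is a slightly cleaner route that avoids attainment. The entrywise bound $|\mat(i,j)|=|\ip{u_i,v_j}|\le\|U\|_{2\to\infty}\|V\|_{1\to 2}$ holds for every factorization, so it gives $\max_{i,j}|\mat(i,j)|\le\gamma_2(\mat)$ directly.
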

\begin{proof}
  Property~\ref{pr:ch7-submatrix} was noted previously. Next we
  give the proofs of some of the remaining properties, and leave
  the rest as exercises.

  Property~\ref{pr:ch7-triangle} can be proved in several ways,
  including by directly giving a factorization of $\mat + \mat'$ using
  optimal factorizations of $\mat$ and $\mat'$. Instead, let us prove
  it using the SDP characterization from
  Lemma~\ref{lm:ch7-fact-sdp}. Let $X, t$ be a solution to the
  semidefinite program \eqref{eq:ch7-sdp-obj}--\eqref{eq:ch7-sdp-psd}
  for $\mat$ that achieves the optimal objective value
  $\gamma_2(\mat)$, and let $X', t'$ be a solution for $\mat'$ that
  achieves objective value $\gamma_2(\mat')$. Then $X + X', t + t'$ is a
  feasible solution for $\mat + \mat'$, and achieves objective value
  $\gamma_2(\mat) + \gamma_2(\mat')$, which is an upper bound $\gamma_2(\mat + \mat')$.

\smallskip
  While property~\ref{pr:ch7-union} can also be shown using the SDP
  characterization, we give a direct
  factorization of $\mat = \begin{pmatrix}\mat'\\ \mat''\end{pmatrix}$
  instead. Let $\mat' = U' V'$ and $\mat'' = U'' V''$ be
  factorizations achieving, respectively,
  $\gamma_2(\mat') = \|U'\|_{2\to\infty} \|V'\|_{1\to 2}$, and
  $\gamma_2(\mat'') = \|U''\|_{2\to\infty} \|V''\|_{1\to
    2}$. 
    Rescale, so that
  $\|U'\|_{2\to\infty} = \|U''\|_{2\to\infty} = 1$,
  $\|V'\|_{1\to 2} = \gamma_2(\mat')$, and
  $\|V''\|_{1\to 2} = \gamma_2(\mat'')$.
  Consider then the factorization $\mat = UV$ where
  \[
    U =
    \begin{pmatrix}
      U' &0\\
      0  &U''
    \end{pmatrix}; \ \ \ \ 
    V =
    \begin{pmatrix}
      V'\\
      V''
    \end{pmatrix}.
  \]
  Then, clearly,
  \(
  \|U\|_{2 \to \infty}= 1.
  \)
  Moreover, as
  \(\|V(*,i)\|_2^2 = \|V'(*,i)\|_2^2 + \|V''(*,i)\|_2^2\)
  for any $i \in [n]$, we have
  \begin{align*}
    \|V\|_{1\to 2}^2 &= \max_{i = 1}^n \left( \|V'(*,i)\|_2^2 + \|V''(*,i)\|_2^2 \right)\\
    &\le
      \max_{i = 1}^n  \|V'(*,i)\|_2^2 + \max_{i=1}^n\|V''(*,i)\|_2^2\\
    &=
    \|V'\|_{1\to 2}^2 + \|V''\|_{1\to 2}^2.
  \end{align*}
  Thus,
  \(
  \gamma_2(\mat)^2 \le \|U\|^2_{2\to\infty} \|V\|^2_{1\to 2}
  \le
  \gamma_2(\mat')^2 + \gamma_2(\mat'')^2,
  \)
  as claimed.
  
\smallskip
  Finally, we prove property \ref{pr:ch7-tensor}.  
  To
  show
  $\gamma_2(\mat \otimes \mat') \le \gamma_2(\mat) \gamma_2(\mat')$ we
  can, once again, either use the SDP characterization, or give a
  direct factorization. Using the SDP characterization and Lemma~\ref{lm:ch7-fact-sdp} let $X, t$ be
  a solution to the semidefinite program
  \eqref{eq:ch7-sdp-obj}--\eqref{eq:ch7-sdp-psd} for $\mat$ achieving
  value $\gamma_2(\mat)$, and let $X', t'$ be
  a solution for $\mat'$ achieving value $\gamma_2(\mat')$. Then $X\otimes X', tt'$ is a
  feasible solution for $\mat\otimes \mat'$ achieving value
  $\gamma_2(\mat)\gamma_2(\mat')$. Here we use the fact that the
  Kronecker product of two positive semidefinite matrices is positive
  semidefinite. 
  
  To show the reverse inequality $\gamma_2(\mat\otimes
  \mat') \ge \gamma_2(\mat)\gamma_2(\mat')$, we use
  Lemma~\ref{lm:ch7-fact-dual}. Let $P,Q$ achieve equality in
  \eqref{eq:ch7-fact-dual} for $\mat$, let $P', Q'$ achieve
  equality in \eqref{eq:ch7-fact-dual} for $\mat'$. Then,
  \begin{align*}
    \gamma_2(\mat\otimes \mat') &\ge
    \|(P\otimes P') (\mat \otimes \mat') (Q\otimes Q')\|_{\tr}\\
    &=
    \|(P\mat Q)\otimes (P'\mat Q')\|_{\tr}
    =
   \|P\mat Q\|_{\tr}\|P'\mat Q'\|_{\tr}.
 \end{align*}
 Above we used the fact that, for any matrices $A$ and $B$,
 $\|A\otimes B\|_{\tr} = \|A\|_{\tr}\|B\|_{\tr}$, which can be verified
 easily using the singular value decomposition. 
\end{proof}

\begin{exercise}
  Prove the remaining properties
  \ref{pr:ch7-transpose}--\ref{pr:ch7-hom} in Lemma~\ref{lm:ch7-gamma2-props}.
\end{exercise}

Note that properties \ref{pr:ch7-pos}--\ref{pr:ch7-triangle} in
Lemma~\ref{lm:ch7-gamma2-props} show that $\gamma_2$ is, indeed, a
norm on matrices.

Lemma~\ref{lm:ch7-gamma2-props} and Theorem~\ref{thm:ch7-fact-main}
show that properties~\ref{pr:ch7-submatrix}--\ref{pr:ch7-tensor} carry
over from the $\gamma_2$ norm to hereditary discrepancy, up to
logarithmic factors. Properties \ref{pr:ch7-submatrix},
\ref{pr:ch7-pos}, and \ref{pr:ch7-hom} hold trivially for
hereditary discrepancy even without the extra logarithmic factors, but
we know no direct proof for the others. The next theorem rephrases
some of these properties for set systems.

\begin{theorem}\label{thm:ch7-props-ss}
  The following bounds hold:
  \begin{enumerate}
  \item \label{pr:ch7-ss-dual} For a set system $(\SS, \uni)$, define the dual set system
    $\SS^*$ on the universe $\SS$ by $\{S_\elem: \elem \in \uni\}$ where $S_\elem = \{S \in
    \SS: \elem \in S\}$; then 
    \(
    \herdisc(\SS^*) \lesssim  \log(2|\uni|)^{3/2}\herdisc(\SS),
    \)
  \item \label{pr:ch7-ss-union} For any set systems $\SS_1, \ldots, \SS_k$ defined on the same
    universe $\uni$, 
 \[
    \herdisc(\SS_1 \cup \cdots \cup \SS_k)
    \lesssim
    \log(2m)^{3/2} \left(\sum_{i=1}^k \herdisc(\SS_i)^2\right)^{1/2},\]
    where $m = \sum_{i=1}^k|\SS_i|$.
    
  \item \label{pr:ch7-ss-product}  for set systems $(\SS_1, \uni_1), \ldots, (\SS_k, \uni_k)$,
    let $\SS_1 \times \cdots \times \SS_k$ consist of all product sets $S_1
    \times \cdots \times S_k$ with $S_i \in \SS_i$; then
    \begin{multline*}
      \log(2m)^{ - \frac{k}{2}-1}
      \lesssim
      \frac{\herdisc(\SS_1 \times \cdots \times \SS_k)}{\herdisc(\SS_1) \cdots \herdisc(\SS_k)}
      \lesssim
      \log(2m)^{k + \frac12},
    \end{multline*}
    where $m = |\SS_1|\cdots |\SS_k|$.
  \end{enumerate}
\end{theorem}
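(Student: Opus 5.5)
Each of the three bounds goes through the $\gamma_2$ norm. Write $\mat$ for the incidence matrix of the relevant set system and use Theorem~\ref{thm:ch7-fact-main} in both directions. For a set system of $m$ sets on $n$ elements, the rank $r$ satisfies $r\le\min\{m,n\}$, so each application of the lower bound costs a factor $1+\log r$ and each application of the upper bound costs $\sqrt{\ln(2\cdot\#\text{rows})}$. In every case we pass from $\herdisc$ to $\gamma_2$ via the lower bound of Theorem~\ref{thm:ch7-fact-main}, apply the relevant property from Lemma~\ref{lm:ch7-gamma2-props}, and return to $\herdisc$ via Corollary~\ref{cor:ch7-fact-ub}.

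\emph{Dual systems.} The incidence matrix of $\SS^*$ is $\mat^T$, which has $|\uni|$ rows. Corollary~\ref{cor:ch7-fact-ub} gives $\herdisc(\SS^*)\lesssim\sqrt{\ln(2|\uni|)}\,\gamma_2(\mat^T)$. By property~\ref{pr:ch7-transpose}, $\gamma_2(\mat^T)=\gamma_2(\mat)$. Theorem~\ref{thm:ch7-fact-lb} then gives $\gamma_2(\mat)\lesssim(1+\log r)\herdisc(\SS)$, and $r\le|\uni|$. Multiplying the factors gives $\log(2|\uni|)^{3/2}$.

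\emph{Unions.} Stack the incidence matrices $\mat_i$ of the $\SS_i$. Iterating property~\ref{pr:ch7-union} gives $\gamma_2(\mat)^2\le\sum_i\gamma_2(\mat_i)^2$. For each $i$, Theorem~\ref{thm:ch7-fact-lb} gives $\gamma_2(\mat_i)\lesssim(1+\log |\SS_i|)\herdisc(\SS_i)\le(1+\log m)\herdisc(\SS_i)$, since $\mathrm{rank}\le|\SS_i|\le m$. Finally, Corollary~\ref{cor:ch7-fact-ub} on the stacked $m$-row matrix contributes $\sqrt{\ln(2m)}$. The total factor is $\log(2m)^{3/2}$.

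\emph{Products.} The incidence matrix of $\SS_1\times\cdots\times\SS_k$ is exactly $\mat_1\otimes\cdots\otimes\mat_k$. It has $m$ rows, and its rank is the product of the ranks $r_i$, so it is at most $m$. By property~\ref{pr:ch7-tensor}, $\gamma_2$ is multiplicative.

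For the upper bound, $\herdisc(\text{product})\lesssim\sqrt{\ln 2m}\prod_i\gamma_2(\mat_i)$. Each factor satisfies $\gamma_2(\mat_i)\lesssim(1+\log|\SS_i|)\herdisc(\SS_i)\lesssim\log(2m)\herdisc(\SS_i)$, so the total is $\log(2m)^{k+1/2}$. Here I must watch that the absolute constants $C^k$ are absorbed; this seems unavoidable. It is harmless if one uses $1+\log|\SS_i|\le C\log(2m)$ with care, and it is the main technical nuisance.

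For the lower bound, $\herdisc(\text{product})\gtrsim\gamma_2(\otimes\mat_i)/(1+\log m)=\prod_i\gamma_2(\mat_i)/(1+\log m)$. Each $\gamma_2(\mat_i)\gtrsim\herdisc(\SS_i)/\sqrt{\ln(2|\SS_i|)}\ge\herdisc(\SS_i)/\sqrt{\ln 2m}$, which gives $\log(2m)^{-k/2-1}$.

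The only step needing real thought is the identification of the product set system's incidence matrix with the Kronecker product, which is immediate from the definition $(A\otimes B)((i_1,i_2),(j_1,j_2))=A(i_1,j_1)B(i_2,j_2)$, together with the rank bound for Kronecker products.
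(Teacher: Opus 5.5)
Your proposal is correct and is the paper's own argument: apply Theorem~\ref{thm:ch7-fact-main} in both directions, use properties~\ref{pr:ch7-transpose}, \ref{pr:ch7-union}, and~\ref{pr:ch7-tensor} of Lemma~\ref{lm:ch7-gamma2-props}, and identify the product system's incidence matrix with $\mat_1\otimes\cdots\otimes\mat_k$. The $C^{\pm k}$ you flag in part~3 is also implicit in the paper's one-line proof, and bounding each factor by $1+\log|\SS_i|\le C\log(2m)$ does not remove it (it still leaves $C^k\log(2m)^k$), so part~3 should be read with an implied constant that may depend on $k$, as with the $\lesssim_d$ in Theorem~\ref{thm:ch7-tusnady}.
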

\begin{proof}
  Each of the bounds follows from Theorem~\ref{thm:ch7-fact-main}, and
  respectively, properties~\ref{pr:ch7-transpose}, \ref{pr:ch7-union}, and
  \ref{pr:ch7-tensor} of Lemma~\ref{lm:ch7-gamma2-props}. 
\end{proof}

\section{Applications}

In this section we outline several applications of
Theorem~\ref{thm:ch7-fact-main} to proving upper and lower bounds on
the discrepancy of set systems.

\subsection{The $\gamma_2$ Norm of Intervals}

We start with a set system that, at first sight appears to be quite
uninteresting: the set system $\II_n$ of prefix intervals  of $[n]$, i.e., sets
of the type $\{1, \ldots i\}$. The hereditary discrepancy of $\II_n$
is $1$: we just need to alternate the signs the elements we color to
achieve this bound. It turns out however, that the $\gamma_2$ norm of
the incidence matrix of $\II_n$ is on the order of $\log
n$. This fact, which we prove below, shows that the lower bound in 
Theorem~\ref{thm:ch7-fact-lb} is tight up to constants. It is also
useful in the applications we develop below.

We use $T_n$ for the incidence matrix of $\II_n$, which we
write as the lower triangular matrix
\[
T_n(i,j) =
\begin{cases}
  1 &i \ge j\\
  0 &i < j
\end{cases}.
\]

\begin{lemma}\label{lm:ch7-gamma2-Tn}
  For the matrix $T_n$ above, $\gamma_2(T_n) = \Theta(\log n)$. 
\end{lemma}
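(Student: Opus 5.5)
We need to show $\gamma_2(T_n) = \Theta(\log n)$, which splits into an upper bound and a lower bound.

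For the \emph{upper bound} we use canonical intervals from Section~\ref{sec:ch2-tusnady}. Assume $n$ is a power of $2$; otherwise embed $T_n$ as a submatrix of $T_{n'}$ with $n' < 2n$ and apply property~\ref{pr:ch7-submatrix} of Lemma~\ref{lm:ch7-gamma2-props}. Let $U$ be the $n\times|\CC|$ matrix with $U(i,C)=1$ if the canonical interval $C$ appears in the canonical decomposition of the prefix $\{1,\ldots,i\}$, and let $V$ be the $|\CC|\times n$ incidence matrix of the canonical intervals, $V(C,j)=1$ iff $j\in C$. The decomposition of a prefix into canonical intervals is disjoint, so $(UV)(i,j)=1$ exactly when $j\le i$, that is, $UV=T_n$. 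Each row of $U$ has at most $1+\log_2 n$ ones, since each prefix uses at most one interval from each $\CC_k$. Each column of $V$ has at most $1+\log_2 n$ ones, since $\ssdeg(\CC)\le 1+\log_2 n$. Therefore
\[
\gamma_2(T_n)\le \|U\|_{2\to\infty}\|V\|_{1\to2}\le 1+\log_2 n .
\]

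For the \emph{lower bound} we use the dual formula~\eqref{eq:ch7-fact-dual} with uniform weights $P=Q=\frac{1}{\sqrt n}I$. This gives
\[
\gamma_2(T_n)\ge \tfrac1n\|T_n\|_{\tr},
\]
so it suffices to show $\|T_n\|_{\tr}\gtrsim n\log n$. The singular values of $T_n$ are easiest to read off from $T_n^{-1}$, which is bidiagonal with $1$ on the diagonal and $-1$ on the subdiagonal. Then $T_n^{-1}T_n^{-T}$ is (up to a boundary correction in one corner entry) the path Laplacian, with eigenvalues $2-2\cos\frac{(2k-1)\pi}{2n+1}=4\sin^2\frac{(2k-1)\pi}{2(2n+1)}$ for $k\in[n]$. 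Hence the singular values of $T_n$ are
\[
\sigma_k=\frac{1}{2\sin\frac{(2k-1)\pi}{2(2n+1)}}.
\]
Using $\sin x\le x$,
\[
\sum_k\sigma_k\ge \sum_{k=1}^n \frac{2n+1}{(2k-1)\pi}\gtrsim n\log n .
\]

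The main obstacle is pinning down this singular value computation cleanly. If the exact spectrum is awkward to justify, the fallback is to compare $T_n$ with a circulant matrix, or to use a test matrix $Y$ with $\|Y\|_{\mathrm{op}}\le1$ and bound $\|T_n\|_{\tr}\ge \tr(Y^T T_n)$. A natural choice is the Hilbert-type matrix $Y(i,j)=\frac{1}{\pi(i-j)}$ for $i\ne j$ and $Y(i,i)=0$, whose operator norm is at most $1$ by Hilbert's inequality. Then
\[
\tr(Y^TT_n)=\sum_{i>j}\frac{1}{\pi(i-j)}\gtrsim n\log n,
\]
which gives the same lower bound without needing the exact spectrum.
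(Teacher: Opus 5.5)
Your proof is correct, and in both directions it differs from the paper's.

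\textbf{Upper bound.} The paper builds no factorization. It combines Theorem~\ref{thm:ch7-fact-lb} with $\herdisc(T_n)=1$, so it relies on the determinant-lower-bound machinery (Cauchy--Binet, reverse AM--GM) and gets only an unspecified constant. Your canonical-interval factorization is the exercise the paper poses right after the proof. It is elementary and self-contained, and it gives the explicit bound $\gamma_2(T_n)\le 1+\log_2 n$.

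\textbf{Lower bound.} You start exactly as the paper does, with $P=Q=I/\sqrt n$ in \eqref{eq:ch7-fact-dual}. The paper then avoids the spectrum of $T_n$ by passing to $S_n=2T_n-J_n$. This matrix is skew-circulant, hence normal, and is diagonalized by $u_k(i)=\omega^{(2k+1)(i-1)}$. The cost is an additive $-\tfrac12$ from the triangle inequality with $J_n$. That is essentially your ``circulant'' fallback.

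Your main route computes the singular values of $T_n$ itself, and you do not need to hedge. The eigenvalues you list are exactly those of $T_n^{-1}T_n^{-T}$. That matrix has the same spectrum as $T_n^{-T}T_n^{-1}$, the tridiagonal matrix with diagonal $(2,\dots,2,1)$ and off-diagonal entries $-1$. For the latter:
\begin{itemize}
\item The vector $x_j=\sin(j\theta)$ satisfies the first $n-1$ rows with eigenvalue $2-2\cos\theta$.
\item The last row forces $\sin((n+1)\theta)=\sin(n\theta)$, i.e.\ $\theta=(2k-1)\pi/(2n+1)$ for $k\in[n]$.
\item This gives $n$ distinct eigenvalues, hence the full spectrum, with no additive loss.
\end{itemize}

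Your Hilbert-matrix alternative is the most economical of the three. It needs only the duality $\|A\|_{\tr}=\sup_{\|Y\|_{\mathrm{op}}\le 1}\tr(Y^TA)$ and Hilbert's inequality. It yields $\|T_n\|_{\tr}\ge\frac1\pi\sum_{d=1}^{n-1}\frac{n-d}{d}$ with no eigenvalue computation at all.
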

\begin{proof}
  The upper bound follows from Theorem~\ref{thm:ch7-fact-lb}, since
  \[
    \gamma_2(T_n) \lesssim (1 + \log n) \herdisc(T_n) = 1 + \log n.
  \]
  For the lower bound $\gamma_2(T_n) \gtrsim 1+\log n$, we use
  Lemma~\ref{lm:ch7-fact-dual}. In particular, setting $P = Q =
  \frac{1}{\sqrt{n}}$ in \eqref{eq:ch7-fact-dual}, we see that
  \(
    \gamma_2(T_n) \ge \frac1n \|T_n\|_{\tr}.
  \)
  To prove a lower bound on $\|T_n\|_{\tr}$, we need to estimate the
  singular values of $T_n$. This can be done directly by computing the
  eigenvectors of $T_n^T T_n$, which have a simple closed
  form. Instead, we will consider a matrix that is easier to
  diagonalize. Consider the matrix $S_n = 2T_n - J_n$, where $J_n$ is
  the $n\times n$ all-ones matrix. In other words, $S_n$ is the matrix
  we get from $T_n$ by replacing all entries equal to $0$ with
  $-1$. By the triangle inequality,
  \begin{equation}\label{eq:ch7-Tn-lb}
    \gamma_2(T_n) \ge \frac1n \|T_n\|_{\tr}
    \ge \frac{1}{2n} (\|S_n\|_{\tr} - \|J_n\|_{\tr})
    = \frac{1}{2n} \|S_n\|_{\tr} - \frac12,
  \end{equation}
  since $J_n$ has only one nonzero singular value equal to $n$. Notice
  that $S_n$ is a normal matrix, i.e., $S_n^T S_n = S_nS_n^T$, and,
  therefore, the singular values of $S_n$ are equal to the absolute
  values of the eigenvalues of $S_n$. To compute $\|S_n\|_{\tr}$, it
  then suffices to compute its eigenvalues. We can diagonalize $S_n$
  over the complex numbers, using a transformation similar to the
  discrete Fourier transform. Let $\omega \in \mathbb{C}$ be a primitive $2n$-th
  root of unity, and, for $k \in \{0, \ldots, n-1\}$,  define the
  vector $u_k$ by $u_k(i) = \omega^{(2k + 1)(i-1)}$. What motivates
  this choice is that $\omega^{(2k + 1)n} = -1$. It is easy to
  verify that $u_k$ and $u_\ell$ are orthogonal (with the standard
  inner product over $\mathbb{C}^n$) if $k \neq \ell$. Moreover,
  notice that
  \begin{align*}
    (S_nu_k)(i)
    &=
      (1 + \ldots + \omega^{(2k + 1)(i-1)})
      - (\omega^{(2k + 1)i} +\ldots +  \omega^{(2k + 1)(n-1)})\\
    &= \frac{\omega^{(2k + 1)i} - 1}{\omega^{2k + 1} - 1}
      - \frac{\omega^{(2k + 1)n} - \omega^{(2k + 1)i}}{\omega^{2k + 1} - 1}\\
    &= \frac{2\omega^{(2k + 1)i}}{\omega^{2k + 1}-1}
      = \frac{2\omega^{2k + 1}}{\omega^{2k + 1}-1}u_k(i).
  \end{align*}
  Therefore, $u_k$ is an eigenvector of $S_n$ with eigenvalue
  $\frac{2\omega^{2k + 1}}{\omega^{2k + 1}-1}$. Thus, the
  singular values of $S_n$ are $2/|\omega - 1|,
  \ldots, 2/|\omega^{2n-1} - 1|$. Using elementary
  trigonometry,
  \(
  |\omega^{2k + 1} - 1| = 2\sin\left(\frac{\pi(2k+1)}{2n}\right),
  \)
  and, therefore,
  \[
    \|S_n\|_{\tr} = \sum_{k = 0}^{n-1} \frac{1}{\sin\left(\frac{\pi(2k+1)}{2n}\right)}
    \ge
    \frac{2n}{\pi}\sum_{k = 0}^{n-1} \frac{1}{2k+1},
  \]
  as $\sin x \le x$ for $x \ge 0$.
  Plugging back into \eqref{eq:ch7-Tn-lb}, we get that
  \[
    \gamma_2(T_n) \ge \frac{1}{\pi}\sum_{k = 0}^{n-1} \frac{1}{2k+1}
    \gtrsim 1 + \log n. \qedhere
  \]
\end{proof}

\begin{exercise}
  Using canonical intervals (see Section~\ref{sec:ch2-tusnady}), give a factorization
  showing that $\gamma_2(T_n) \lesssim 1 + \log n$. This gives a more
  direct proof of the upper bound in Lemma~\ref{lm:ch7-gamma2-Tn}.
\end{exercise}

\subsection{The Discrepancy of Boxes}

Let us now use what we have proved to show nearly tight bounds on the
discrepancy of boxes in $d$-dimensions, resolving
Problem~\ref{prob:tusnady} up to a fixed power of log.

Recall that
$\RR_d$ is the set of all axis aligned boxes in $\R^d$, i.e., sets of
the type $\rbox_{a,b} = \{u \in \R^d: a \le u \le b\}$ for $a,b \in
\R^d$. Recall also that $\disc(\RR_d, n)$ is the largest discrepancy
achieved by restricting $\RR_d$ to a set $P$ of $n$
points. Theorem~\ref{thm:ch7-fact-main}, Lemma~\ref{lm:ch7-gamma2-Tn},
and the properties of the $\gamma_2$ norm allow us to prove the
following bounds on $\disc(\RR_d,n)$.

\begin{theorem}\label{thm:ch7-tusnady}
  For any integer $d \ge 1$,
  \[
    \log(2n)^{d-1} \lesssim_d \disc(\RR_d,n) \lesssim_d \log(2n)^{d+\frac12},
  \]
  where the $\lesssim_d$ means that the constant
  depends on $d$.
\end{theorem}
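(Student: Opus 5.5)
The plan is to reduce everything to anchored boxes on a grid and then compute $\gamma_2$ of a Kronecker product of triangular matrices. As in Section~\ref{sec:ch2-tusnady}, we have $\disc(\AA_d,n)\le\disc(\RR_d,n)\le 2^d\disc(\AA_d,n)$. Also, any $n$-point set may be assumed to lie in $[n]^d$ without changing $\AA_d|_P$. The set system induced by anchored boxes on the full grid $[n]^d$ has incidence matrix $T_n^{\otimes d}$: the anchored box with corner $b$ contains the grid point $u$ iff $u(i)\le b(i)$ for all $i$, and this is exactly the entry $\prod_i T_n(b(i),u(i))$. For an $n$-point set $P\subseteq[n]^d$, the incidence matrix of $\AA_d|_P$, with rows indexed by corners in $[n]^d$, is the column submatrix $T_n^{\otimes d}(*,P)$. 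Consequently $\disc(\AA_d|_P)\le\herdisc(T_n^{\otimes d})$.

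For the upper bound, I will use Corollary~\ref{cor:ch7-fact-ub} together with properties~\ref{pr:ch7-submatrix} and~\ref{pr:ch7-tensor} of Lemma~\ref{lm:ch7-gamma2-props} and Lemma~\ref{lm:ch7-gamma2-Tn}. These give $\gamma_2(T_n^{\otimes d}(*,P))\le\gamma_2(T_n)^d\lesssim_d\log(2n)^d$. The factor $\sqrt{\ln(2m)}$ in the corollary involves the number of rows, which here is $m=n^d$, so it contributes $\sqrt{d\ln(2n)}\lesssim_d\sqrt{\log(2n)}$. Together these give $\disc(\RR_d,n)\lesssim_d\log(2n)^{d+1/2}$.

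For the lower bound, the difficulty is that the full grid has $n^d$ points, while we want only $n$. I will take the grid $[k]^d$ with $k=\lfloor n^{1/d}\rfloor$, so it has $N=k^d\le n$ points; padding with extra points only increases discrepancy, so it is harmless. The induced system restricted to the grid has incidence matrix $T_k^{\otimes d}$. I then need a bound on $\disc$ rather than $\herdisc$, and I will get it by noting that every subset of the grid is itself a point set of size at most $n$. Hence
\[
\disc(\RR_d,n)\ge\max_{J}\disc(T_k^{\otimes d}(*,J))=\herdisc(T_k^{\otimes d}),
\]
where I use that $\disc(\RR_d,\cdot)$ is monotone in the number of points, which holds by adding far-away dummy points. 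Theorem~\ref{thm:ch7-fact-lb} then gives $\herdisc(T_k^{\otimes d})\gtrsim\gamma_2(T_k)^d/(1+\log r)$, where $r\le k^d$ is the rank. By Lemma~\ref{lm:ch7-gamma2-Tn} this is $\gtrsim_d (\log k)^d/(d\log k)\gtrsim_d\log(2n)^{d-1}$, since $\log k\approx\frac1d\log n$. Small $n$ are absorbed into the constants.

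I expect the main obstacle to be the bookkeeping in these reductions rather than any deep step. Two points need care: that monotonicity in the number of points lets hereditary discrepancy on the grid translate into a lower bound on $\disc(\RR_d,n)$, and that the logarithmic loss in Theorem~\ref{thm:ch7-fact-lb} is governed by the rank (at most $n$) and not by the row count. Both are routine, but they need to be stated carefully.
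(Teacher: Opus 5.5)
Your proposal is correct and follows the paper's proof. You reduce to anchored boxes on a grid, identify the incidence matrix as $T_n^{\otimes d}$, use the tensor property of $\gamma_2$ together with Lemma~\ref{lm:ch7-gamma2-Tn}, and apply the two sides of Theorem~\ref{thm:ch7-fact-main}; your rescaling to the grid $[k]^d$ with $k=\lfloor n^{1/d}\rfloor$ and the dummy-points monotonicity argument simply spell out what the paper compresses into the inequality $\herdisc(\SS)\le\disc(\RR_d,n^d)$ followed by an implicit change of variables.
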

\begin{proof}
  As we have done before with boxes, we will use the fact that we only
  need to consider anchored boxes, restricted to the grid
  $[n]^d$. Recall that $\AA_d$ is the set system of anchored boxes of the type $[0,b_1]\times\cdots \times [0,b_d]$ for $b \in \R^d$, and consider
  the set system $\SS \eqdef \AA_d|_{[n]^d}$ of subsets of the
  $n\times \cdots \times n$ grid induced by anchored boxes. As we observed in Section~\ref{sec:ch2-tusnady},
  \begin{equation}\label{eq:ch7-tusnady-grid}
   2^{-d}\disc(\RR_d,n) \le \herdisc(\SS) \le \disc(\RR_d,n^d).
 \end{equation}
  Thus, it suffices to bound $\herdisc(\SS)$ from
  both sides, and we do so by computing the $\gamma_2$ norm of the
  incidence matrix of $\SS$.

  Let then $\mat$ be the incidence matrix of $\SS$. The main
  observation is that $\SS = \II_n^{\times d}$, i.e., the product of
  $\II_n$ with itself $d$ times. Therefore,  $\mat =
  T_n^{\otimes d}$, i.e., the Kronecker
  product of $T_n$ with itself $d$
  times. Lemma~\ref{lm:ch7-gamma2-props} then shows that
  \(
  \gamma_2(\mat) = \gamma_2(T_n^{\otimes d}) = \gamma_2(T_n)^d,
  \)
  so, by Lemma~\ref{lm:ch7-gamma2-Tn}, 
  \begin{equation}\label{eq:ch7-gamma2-tusnady}
    \log(2n)^d \lesssim_d \gamma_2(\mat) \lesssim_d \log(2n)^d.
  \end{equation}
  Theorem~\ref{thm:ch7-fact-main} now implies
  \[
    \log(2n)^{d-1} \lesssim_d \herdisc(\SS) \lesssim_d \log(2n)^{d+\frac12}
  \]
  since $|\SS| \le n^d$. This implies the theorem via
  \eqref{eq:ch7-tusnady-grid}.
\end{proof}

Below we will see that the upper bound in
Theorem~\ref{thm:ch7-tusnady} can be improved further. The lower bound
is the best one known for Tusn\'ady's problem.

\subsection{Other Applications}

Let us note a couple of other discrepancy upper bounds that can be
proved using Lemma~\ref{lm:ch7-gamma2-props},
Theorem~\ref{thm:ch7-props-ss}, and
Theorem~\ref{thm:ch7-fact-main}. Neither of the two bounds below is
optimal, but they are both non-trivial and follow easily from the
results proved thus far.

\paragraph{Permutation families.} The set system induced by a single
permutation of $[n]$ clearly has hereditary discrepancy $1$, as we can list the
elements of $[n]$ (or a subset of it) in the order given by the permutation, and alternate
the signs of the coloring. Then, by property~\ref{pr:ch7-ss-union}.~of
Theorem~\ref{thm:ch7-props-ss}, we have that, for a set system $\SS$
induced by $k$ permutations of $[n]$, $\disc(\SS) \lesssim \sqrt{k} \log(kn)^{3/2}$.

\paragraph{Arithmetic Progressions.} Consider the system $\mathcal{AP}$
consisting of all arithmetic progressions restricted to $[n]$. A
classical result of Roth shows that $\disc(\mathcal{AP}) \gtrsim n^{1/4}$. We
can show an upper bound that matches Roth's lower bound up to
logarithmic factors. Let $\mathcal{AP}_d$ be the subset of $\mathcal{AP}$ consisting of
arithmetic progressions of difference $d$, i.e., sets of the form $a +
d, a+2d, \ldots, a + \ell d$ for some integers $a$ and $\ell$. Let
$\mathcal{AP}_{\ge s} \eqdef \bigcup_{d \ge s} \mathcal{AP}_{d}$ for a parameter $s$ we will
choose soon. Since
every arithmetic progression in $\mathcal{AP}_{\ge s}$ has difference at least
$s$, it can have size at most $\frac{n}{s}$, so, for the incidence
matrix $\mat_{\ge s}$ of $\mathcal{AP}_{\ge s}$, $\gamma_2(\mat_{\ge s}) \le
\sqrt{\frac{n}{s}}$. On the other hand, the hereditary discrepancy of
$\mathcal{AP}_d$ for any $d$ is at most $1$, since we can separately color each
congruence class $\mod d$, and alternate signs of the coloring within
a class. Therefore, by Theorem~\ref{thm:ch7-fact-main},
$\gamma_2(\mat_{d}) \lesssim \log(2n)$ for the incidence matrix
$\mat_d$ of $\mathcal{AP}_d$. Using property~\ref{pr:ch7-union} of
Lemma~\ref{lm:ch7-gamma2-props} we have, for the incidence matrix
$\mat$ of $\mathcal{AP}$,
\begin{align*}
  \gamma_2(\mat) &\le (\gamma_2(\mat_{\ge s})^2 +
  \gamma_2(\mat_{s-1})^2 + \cdots + \gamma_2(\mat_1)^2)^{1/2}\\
  &\lesssim \left(\frac{n}{s} + s\log(2n)^2\right)^{1/2}.
\end{align*}
Optimizing over $s$, we have $\gamma_2(\mat) \lesssim
n^{1/4}\sqrt{\log(2n)}$, and Theorem~\ref{thm:ch7-fact-main} gives us 
\(
\disc(\mathcal{AP}) \lesssim n^{1/4}\log(2n).
\)

\begin{exercise}
  Use canonical intervals to show that $\gamma_2(\mat)\lesssim
  n^{1/4}$ for the incidence matrix $\mat$ of the set system of
  arithmetic progressions restricted to $[n]$.
\end{exercise}
\section{Prefix Discrepancy}

Recall that in Lemma~\ref{lm:fact-ub-prefix} we showed that, for any
matrix $\mat$ with columns $u_1, \ldots, u_n \in \R^m$, the prefix discrepancy
\begin{equation}\label{eq:ch7-ub-prefix}
  \pdisc(\mat) \lesssim \gamma_2(\mat)\sqrt{\log(2mn)}.
\end{equation}
Theorem~\ref{thm:ch7-fact-main} and this inequality show that prefix
discrepancy is never much bigger than the hereditary discrepancy,
since, for any $m\times n$ matrix $\mat$,
\begin{equation}
  \label{eq:ch7-pdisc-herdisc}
  \pdisc(\mat) \lesssim \log(mn)^{1.5} \herdisc(\mat).
\end{equation}

In addition to this connection between prefix and hereditary
discrepancy, the upper bound~\eqref{eq:ch7-ub-prefix} sometimes allows us to
prove tighter discrepancy upper bounds than
Theorem~\ref{thm:ch7-fact-main}. We give two such examples below, the
first giving the best known  upper bound for Tusn\'ady's problem.

\medskip
\noindent {\bf Axis-aligned boxes.} As we will see in the next theorem, the
bound \eqref{eq:ch7-pdisc-herdisc} allows us to improve the upper
bound in Theorem~\ref{thm:ch7-tusnady} by effectively getting one
dimension ``for free''.

\begin{theorem}\label{thm:ch7-tusnady-ub}
  For any integer $d \ge 1$,
  \(
    \disc(\RR_d,n) \lesssim_d \log(2n)^{d-\frac12}.
  \)
\end{theorem}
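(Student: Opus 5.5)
We will reduce to anchored boxes on the grid $[n]^d$, as in the proof of Theorem~\ref{thm:ch7-tusnady}. For any $n$-point set $P$ we may assume $P\subseteq[n]^d$, and it suffices to bound $\disc(\AA_d|_P)$, since $\disc(\RR_d,n)\le 2^d\disc(\AA_d,n)$. The idea is to treat the last coordinate as an ordering, encoded by prefix discrepancy, rather than as a factor in the Kronecker product. Concretely, we order the points of $P$ by their $d$-th coordinate, breaking ties arbitrarily, and list them as $p_1,\dots,p_n$.

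Let $\SS'$ be the set system of anchored boxes in the first $d-1$ coordinates, $\AA_{d-1}|_{[n]^{d-1}}$, with incidence matrix $T_n^{\otimes(d-1)}$. For each point $p_j$, let $u_j\in\{0,1\}^{\SS'}$ be the column of this incidence matrix indexed by the projection of $p_j$ onto its first $d-1$ coordinates. Thus $u_j(A)=1$ exactly when that projection lies in the $(d-1)$-dimensional anchored box $A$. Let $\mat$ be the matrix with columns $u_1,\dots,u_n$.

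The key observation concerns an anchored box $A\times[0,b_d]$. Its intersection with $P$ is $\{p_j: j\le k,\ \pi(p_j)\in A\}$, where $\pi$ denotes projection onto the first $d-1$ coordinates and $k$ is the last index whose $d$-th coordinate is at most $b_d$. Hence for any coloring $\col$,
\[
\col\bigl((A\times[0,b_d])\cap P\bigr)=\Bigl(\sum_{j\le k}\col(j)u_j\Bigr)(A).
\]
This gives $\disc(\AA_d|_P,\col)\le\pdisc(\mat,\col)$. Tied $d$-th coordinates cause no trouble here, because a box always cuts the ordering at a prefix boundary between distinct $d$-th coordinate values.

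It then remains to bound $\pdisc(\mat)$ using \eqref{eq:ch7-ub-prefix}. The columns of $\mat$ are (possibly repeated) columns of $T_n^{\otimes(d-1)}$. By Lemma~\ref{lm:ch7-gamma2-props} (submatrices and duplication, plus the tensor property) together with Lemma~\ref{lm:ch7-gamma2-Tn}, we get $\gamma_2(\mat)\le\gamma_2(T_n)^{d-1}\lesssim_d\log(2n)^{d-1}$. The number of rows is $m=n^{d-1}$, so $\log(2mn)\lesssim_d\log(2n)$. Therefore
\[
\pdisc(\mat)\lesssim\gamma_2(\mat)\sqrt{\log(2mn)}\lesssim_d\log(2n)^{d-1}\sqrt{\log(2n)}=\log(2n)^{d-\frac12},
\]
which is the claimed bound.

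The step needing most care is the prefix correspondence in the key observation: the $d$-th coordinate must become exactly a prefix order, and ties must be handled as described. The rest is bookkeeping with the $\gamma_2$ properties already established.
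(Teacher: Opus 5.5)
Your proof is correct and is essentially the paper's argument. Both reduce to anchored boxes on $[n]^d$, sort $P$ by the last coordinate so that each anchored box becomes a prefix intersected with a $(d-1)$-dimensional anchored box, bound $\gamma_2(\mat)\le\gamma_2(T_n)^{d-1}\lesssim_d\log(2n)^{d-1}$, and apply $\pdisc(\mat)\lesssim\gamma_2(\mat)\sqrt{\log(2mn)}$ with $\log(2mn)\lesssim_d\log(2n)$. The only difference is cosmetic: you index rows by all anchored boxes of the grid $[n]^{d-1}$ rather than by the induced system $\AA_{d-1}|_{\pi_{d-1}(P)}$, which is immaterial because duplicating rows or columns does not change $\gamma_2$.
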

\begin{proof}
  As in Theorem~\ref{thm:ch7-tusnady}, we only consider anchored
  boxes, restricted to sets in $[n]^d$. Equation
  \eqref{eq:ch7-pdisc-herdisc} allows us to restrict the sets we
  consider even more. Let $P \subseteq [n]^d$ be some $n$-point set,
  and let $\SS$ be the set system of all subsets of
  $P$ of the form $R \cap P$ where
  \(
  R = [0,b(1)] \times\cdots \times [0,b({d-1})]\times \R,
  \)
  for $b\in \R^{d-1}$. That is, $\SS$ consists of sets
  induced by anchored boxes that extend to infinity in the last
  coordinate. Equivalently, let $\pi_{d-1}$ be the orthogonal
  projection onto the first $d-1$ coordinates of $\R^d$; then $\SS$ is
  the set system consisting of all sets of the type $\{p: \pi_{d-1}(p)
  \in R\}$ where $R \in \AA_{d-1}$ is a $(d-1)$-dimensional anchored
  box. Since $\SS$ is isomorphic to $\AA_{d-1}|_{\pi_{d-1}(P)}$, equation \eqref{eq:ch7-gamma2-tusnady} in the proof
  of Theorem~\ref{thm:ch7-tusnady} gives us that the incidence matrix
  $\mat$ of $\SS$ satisfies $\gamma_2(\mat)\lesssim_d
  \log(2n)^{d-1}$.

  Let us order $P$ as $p_1, \ldots, p_n$ so that $p_1(d) \le \ldots
  \le p_n(d)$, and also order the columns of the incidence matrix $\mat$ of $\SS$
  so that the $i$-th column is the indicator vector of sets containing $p_i$.
  The key observation we make is that, for any anchored box $R \in
  \AA_d$, $R \cap P = S \cap \{p_1, \ldots, p_i\}$ for some $S \in
  \SS$ and $i \in [n]$. In particular, we can take $p_i$ to be the
  point with the largest $p_i(d)$ coordinate contained in $R$, and then
  take $S = \{p \in P: \pi_{d-1}(p) \in \pi_{d-1}(R)\}$. Therefore, for any
  coloring $\col:P \to \{-1,+1\}$, $|\col(R \cap P)|$ is bounded by
  $\pdisc(\mat, \col)$. The theorem now follows from
  \eqref{eq:ch7-ub-prefix} since
  \[
    \disc(\AA_d|_P) \le \pdisc(\mat)
    \lesssim_d \sqrt{\log(2n)} \gamma_2(\mat)
    \lesssim_d \log(2n)^{d-\frac12},
  \]
  where we used the fact that $\AA_d|_P$ consists of at most $n^d$
  distinct sets, as well as the estimate
  \(\gamma_2(\mat)\lesssim_d  \log(2n)^{d-1}\) above.
\end{proof}

\paragraph{Arithmetic Progressions.} Recall that $\mathcal{AP}$ is the
set system of arithmetic progressions restricted to $[n]$. We can use
equation~\eqref{eq:ch7-pdisc-herdisc} to improve our upper bound on
its discrepancy to $\disc(\mathcal{AP}) \lesssim n^{1/4}\sqrt{\log
  n}$. To see this, let us define $\SS$ to be the set of
\emph{maximal} arithmetic progressions, \ie, progressions inside $[n]$
that cannot be extended further to the left or to the right. We claim
that the incidence matrix $\mat$ of $\SS$ satisfies $\gamma_2(\mat)
\lesssim n^{1/4}$. To see this, notice that
\(
\mat =
\begin{pmatrix}
  \mat_{\ge \sqrt{n}}\\
  \mat_{< \sqrt{n}}
\end{pmatrix}
\), where $\mat_{\ge \sqrt{n}}$ and $\mat_{< \sqrt{n}}$ are the incidence matrix of maximal
arithmetic progressions of difference at least $\sqrt{n}$, and at most $\sqrt{n}$  respectively. 

Recall that each arithmetic progression in $[n]$ of difference $d$ has
cardinality at most $n/d$, and all maximal arithmetic
progressions of the same difference are disjoint. Thus, it follows that $\gamma_2(\mat_{\ge \sqrt{n}}) \le n^{1/4}$ as each of its rows has at
most $\sqrt{n}$ ones, and $\gamma_2(\mat_{<\sqrt{n}}) \le n^{1/4}$
as each of its columns has at most $\sqrt{n}$ ones, one for each
possible difference. So  property~\ref{pr:ch7-union} of
Lemma~\ref{lm:ch7-gamma2-props} gives us that $\gamma_2(\mat)\le
\sqrt{2} n^{1/4}$.  Now equation
\eqref{eq:ch7-ub-prefix} gives us that
\(
\pdisc(\mat) \lesssim n^{1/4} \sqrt{\log n},
\)
where the colums of $\mat$ are ordered so that the $i$-th column is
the indicator of the sets in $\SS$ containing $i$. This means that
there exists a coloring $\col:[n]\to \{-1,+1\}$ such that $|\col(S
\cap [j])| \lesssim n^{1/4} \sqrt{\log n}$ for each $j \in [n]$ and $S
\in \SS$. Since any arithmetic progression inside $[n]$ can be written
as $(S \cap [k]) \setminus (S \cap [j])$, we also have
$\disc(\mathcal{AP},\col) \lesssim n^{1/4}\sqrt{\log n}$.

\section{Bibliographic Notes}

The computational complexity of discrepancy was first investigated by
Charikar, Newman and Nikolov, who proved
Theorem~\ref{thm:ch7-disc-hard}, and analogous results for set systems
of bounded degree, and for set systems of bounded VC
dimension~\cite{dischard}. It was shown by Austrin, Guruswami, and
H{\aa}stad that hereditary discrepancy is $\mathsf{NP}$-hard to
approximate within factor $2-\varepsilon$ for any
$\varepsilon > 0$~\cite{AustrinGH13}. 

The $\gamma_2$ norm was first introduced in Banach space theory as a
way to measure how well a bounded linear operator $u:X \to Y$ between
Banach spaces $X$ and $Y$ factors through a Hilbert space. In
particular, $\gamma_2(u)$ is the infimum of the product of operator
norms $\|v\|\|w\|$ over all operators $v:X \to H$ and $w:H \to Y$ such
that $wv = u$ and $H$ is some Hilbert space. Analogously, if we
replace the Hilbert space with an $L_p$ space over some measure space
$(\Omega,\mu)$, we get the $\gamma_p$ norm. The $\gamma_2$ norm we
define here for matrices corresponds to the special case when $X =
\ell_1^n$ and $Y = \ell_\infty^m$.
Factorization of linear operators through
$L_p$ spaces was studied systematically by Kwapien~\cite{Kwapien72},
but has its origins already in the work of
Grothendieck~\cite{Grothendieck52}. In particular, the dual formulation
\eqref{eq:ch7-fact-dual} can be derived from the results of
Grothendieck (up to a factor of 2) and Kwapien; see also the appendix
of Pisier's survey on Grothendieck's inequality~\cite{Pisier-grothendieck}. The book by
Tomczak-Jaegermann surveys the basic facts and many
applications of the $\gamma_p$ norms in Banach space theory~\cite{Tomczak-Jaegermann}.

The connection between hereditary discrepancy and the $\gamma_2$ norm
is due to Matou\v{s}ek, Nikolov, and Talwar, who also proved
Theorems~\ref{thm:ch7-fact-main}~and~\ref{thm:ch7-tusnady}~\cite{MNT}. The
semidefinite formulation in Lemma~\ref{lm:ch7-fact-sdp}, and the
computability and duality results in Lemma~\ref{lm:ch7-fact-dual} are
due to Lee, Shraibman, and \v{S}palek~\cite{LeeSS08}. 

The $\gamma_2$ norm has also found other applications, \eg, to
communication complexity and learning
theory~\cite{LinialMSS07-signmatrices} (see also the
book~\cite{LeeS09}), and differential privacy~\cite{NTZ}. It is also
implicit in the work of Larsen on lower bounds for oblivious range
searching data structure~\cite{disc-larsen}, who also discovered the discrepancy upper
bounds stated in Lemma~\ref{lm:fact-ub} and
Corollary~\ref{cor:ch7-fact-ub}.

The determinant lower bound in Lemma~\ref{lm:ch7-detlb} was first
shown by Lov\'asz, Spencer, and Vesztergombi~\cite{LSV}. They also asked
if hereditary discrepancy can be bounded from above by a function of
the determinant lower bound. This was disproved by Hoffman, and his
counterexample can be found in Chapter~4 of Matou\v{s}ek's
book~\cite{matousek2010geometric}. Nevertheless, it is possible to
show that $\herdisc(\mat) \lesssim
\detlb(\mat)\sqrt{\log(2m)\log(2n)}$. The first result of this type
was proved by Matou\v{s}ek~\cite{Matousek-detlb}, and his (loose)
bound was strengthened by Jiang and
Reis~\cite{JiangReis22}.  Li and Nikolov showed this bound is
tight~\cite{LiNikolov24}. 
The volume
lower bound in Remark~\ref{rem:vollb} was observed by
Banaszczyk~\cite{Bana93}. It was further investigated by Dadush, Nikolov,
Talwar, and Tomczak-Jaegermann, who showed it matches the hereditary
discrepancy up to logarithmic factors for any finite dimensional norm,
and matches a hereditary notion of partial coloring discrepancy up to
constant factors~\cite{anynorm}.

The lower bound on $\gamma_2(T_n)$ in Lemma~\ref{lm:ch7-gamma2-Tn} is due to Mathias~\cite{Mathias}. He also showed that $\gamma_2(S_n) = \frac{1}{n} \|S_n\|_{\tr}$, and, therefore, $\gamma_2(T_n) \le \frac{1}{2n}\|S_n\| + \frac12$. Together with \eqref{eq:ch7-Tn-lb}, this gives us the value of $\gamma_2(T_n)$ up an additive approximation of $1$. Determining the exact value of $\gamma_2(T_n)$ as a function of $n$, and giving an optimal factorization remain intriguing open problems, with applications in differential privacy~\cite{DP-FTRL}.

The improved upper bound on the discrepancy of boxes in
Theorem~\ref{thm:ch7-tusnady-ub} is due to
Nikolov~\cite{tusnady-ub}. Essentially the same proof, together with a
decomposition theorem of Matou\v{s}ek~\cite{Matousek99-boxes}, gives similar upper bounds for
more general set systems induced by polytopes with prescribed facets. 
Moreover, the lower bound in Theorem~\ref{thm:ch7-tusnady}  also
extends to these more general set systems, under some technical conditions. These
results are also due to Nikolov~\cite{tusnady-ub}. 

The discrepancy of arithmetic progressions is bounded by $O(n^{1/4})$,
and this was shown by Matou\v{s}ek and Spencer using the partial
coloring method~\cite{MatousekSpencer-ap}: see the notes to
Chapter~\ref{ch:partial} for more on the discrepancy of arithmetic
progressions.




\chapter{Algorithmic Banaszczyk's bound for Koml\'os}
\label{ch:algo-komlos-bana}

\chaptermark{Algorithmic Koml\'os}

Let $\mat$ be an instance of the Koml\'{o}s problem (Problem~\ref{prob:komlos}), i.e., a matrix all of whose columns have $\ell_2$ norm at most $1$.
Recall that 
applying Banaszczyk's Theorem \ref{thm:bana} with $K = c [-1,1]^m$ with $c \approx \sqrt{\log 2m}$ so that $\gamma^m(K)\geq 1/2$, directly implies that $\disc(\mat) \lesssim \sqrt{\log 2m})$. However, the proof of Theorem~\ref{thm:bana} in Chapter~\ref{ch:bana-proof} does not give an efficient algorithm to find a coloring satisfying this bound.

We now describe an efficient algorithm to compute such a coloring.
This algorithm is based on using a discrete Brownian motion random walk process, similar to those in Chapter~\ref{ch:partial}, but using a stronger semidefinite program to guide the random walk. 
Later, in Chapter~\ref{ch:gram-schmidt}, we will see yet another algorithm called the Gram-Schmidt Walk, based on a different approach, that in fact works for general convex bodies $K$, and allows us to make Theorem \ref{thm:bana} fully algorithmic.

\section{Preliminaries}
Recall that in the Koml\'os problem, we are given an $m\times n$ matrix $\mat$, where each column $j\in [n]$ has squared $\ell_2$ norm $\sum_{i=1}^m \mat(i,j)^2\leq 1$. In other words, we have $\|\mat\|_{1\to 2} \le 1$.
By Remark \ref{rem:bana-komlos-trick}, we can assume that $n \leq m \leq n^2$, so that $\log n \approx \log m$.
As usual, we use $i$ and $j$ to index the rows and columns, and use $v_i$ to denote the $i$-th row of $\mat$.

Our goal is to show the following.
\begin{theorem}
There is a randomized polynomial time algorithm such that, for any $m\times n$ matrix $\mat$ all of whose columns have $\ell_2$ norm at most $1$, the algorithm finds a coloring $x \in \{-1,1\}^n$ with discrepancy $\|\mat x\|_\infty \lesssim  \sqrt{\log 2n}$.
\end{theorem}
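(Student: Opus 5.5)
The plan is to run an SDP-guided discrete Brownian motion, in the spirit of the algorithm behind Theorem~\ref{thm:pseduo-alg}, but with a stronger SDP. The covariance of each step will make every row's increment have variance at most a constant times the step's contribution to the progress measure $\sum_j x_t(j)^2$, rather than at most $\herdisc(\mat)^2$. The existence of such an SDP solution will come from the vector-coloring bound for Koml\'os (Theorem~\ref{thm:komlos-vec-disc}) applied to a suitably weighted instance.

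\textbf{The walk.} We start at $x_0=0$ with step size $\gamma=1/\poly(n)$. Let $A_t$ be the set of coordinates with $|x_{t-1}(j)|<1-\delta$. At each step we solve an SDP for vectors $w_j$, $j\in A_t$, with $\|w_j\|_2^2\le 1$ and $\sum_{j\in A_t}\|w_j\|_2^2\ge |A_t|/2$. We also impose the row constraints
\[
\Big\|\sum_{j\in A_t}\mat(i,j)w_j\Big\|_2^2\le C\sum_{j\in A_t}\mat(i,j)^2\|w_j\|_2^2 \quad \text{for all } i.
\]
Finally, we require the update to be orthogonal to the dangerous rows, namely those with $\sum_{j\in A_t}\mat(i,j)^2$ large, of which there are at most $|A_t|/4$ by the counting argument of Lemma~\ref{lm:ch2-bf-counting}. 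We then set $x_t(j)=x_{t-1}(j)+\gamma\ip{g_t,w_j}$.

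\textbf{Feasibility.} To show the SDP is feasible, I would use SDP duality, or equivalently a minimax argument over diagonal weights. This reduces the question to the existence of vector colorings with $\ell_2$ row-discrepancy bounded by $O(1)$ times the row norms restricted to a large subset of columns. I would obtain this from Theorem~\ref{thm:komlos-vec-disc} after rescaling rows by a dual weighting. I expect this to be the main obstacle: a Theorem~\ref{thm:komlos-vec-disc}-type statement with a \emph{relative}, row-wise bound that holds while a constant fraction of coordinates keep full weight. I would try to deduce it from the cloning trick together with Banaszczyk's theorem (Theorem~\ref{thm:bana}), applied to a body $K=\bigcap_i K_i$ with per-row weights, as in Section~\ref{sec:komlos-sdp-bound}.

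\textbf{Analysis.} Let $D_i(t)=\sum_j \mat(i,j)x_t(j)$ be the discrepancy of row $i$. By the SDP constraint, its increment has variance at most $C\gamma^2\sum_j\mat(i,j)^2\|w_j\|^2$. This quantity equals $C$ times the expected increment of $\Psi_i(t)=\sum_j\mat(i,j)^2x_t(j)^2$, and $\Psi_i$ never exceeds $\sum_j \mat(i,j)^2\le 1$. Hence the total quadratic variation of $D_i$ is at most $C$ with high probability. The point of this accounting is that it charges variance to energy, not to time, which avoids the $\sqrt{\log n}$ loss from the $\log n$ rounds. A Freedman-type bound, a variant of Lemma~\ref{ch4:lm-martingale-bound} in which the variance bound is a random quantity controlled through the potential $\Psi_i$, then gives $|D_i|\lesssim\sqrt{\log m}$ except with probability $1/m^2$, and a union bound over the $m\le n^2$ rows finishes the discrepancy bound.

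\textbf{Termination and rounding.} The constraint $\sum_j\|w_j\|^2\ge|A_t|/2$ makes $\|x_t\|_2^2$ grow in expectation, so as in Lemma~\ref{lem:lm-vl} all coordinates become fixed within $O(\log n/\gamma^2)$ steps with high probability. The final rounding to signs then costs $n\delta=o(1)$.
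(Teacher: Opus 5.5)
Your proposal has a genuine gap at the step ``hence the total quadratic variation of $D_i$ is at most $C$ with high probability.''

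Your SDP gives $\E[(\Delta D_i)^2\mid\mathcal{F}_{t-1}]\le C\,\E[\Delta\Psi_i\mid\mathcal{F}_{t-1}]$. Summing therefore bounds $\langle D_i\rangle$ by $C$ times the \emph{compensator} $A_i$ of $\Psi_i$, not by the realized increase of $\Psi_i$.

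A side slip: rows need not have norm at most $1$; only columns do. The bounded energy budget is the dangerous-row threshold (say $4$), and it is available only after row $i$ stops being dangerous. Your orthogonality constraint does handle the earlier period.

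Bounded energy then only gives $A_i\le 4-N_i$, where $N_i$ is the martingale part of $\Psi_i$. Its increments are, up to $O(\gamma^2)$ terms, $2\gamma\sum_j \mat(i,j)^2x_{t-1}(j)\ip{g_t,w_j}$. Their conditional variance is the variance of the walk in the data-dependent direction $v_i^2\circ x_{t-1}$. That direction is not a row of $\mat$, so your constraints say nothing about it.

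The only generic bound is Cauchy--Schwarz, $\langle N_i\rangle\lesssim A_i$. This gives $\Pr[A_i>a]\le e^{-\Omega(a)}$, and combined with Freedman the best this accounting yields is $\Pr[|D_i|>\lambda]\le e^{-\Omega(\lambda)}$. After the union bound over rows, that is $O(\log n)$, not $O(\sqrt{\log n})$. The same loss appears if you run a regularized-discrepancy argument with only row constraints: the drift-to-variance ratio is $O(1)$ rather than $\Omega(\sqrt{\log n})$.

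The paper closes this with two ingredients you are missing.

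\textbf{Isotropy in every direction.} The SDP imposes $U\preceq\frac{1}{\eta}\diag(U)$, so $\E\ip{\theta,\Delta x_t}^2\le\frac1\eta\E\ip{\theta^2,\Delta x_t^2}$ for \emph{every} $\theta$, not just the rows.

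\textbf{Splitting each row.} Once row $i$ becomes small, it is split into its at most $4\log 2n$ entries of magnitude $\ge 1/\sqrt{\log 2n}$ and a remainder $s$. The big entries contribute $O(\sqrt{\log n})$ by Cauchy--Schwarz. For $s$, one tracks $Z_t=\ip{s,x_t}+\beta\sum_j s(j)^2(1-x_t(j)^2)$ with $\beta=\sqrt{\log 2n}$. Isotropy applied to the direction $s-2\beta x_{t-1}\circ s^2$, together with $|2\beta x_{t-1}(j)s(j)^2|\lesssim|s(j)|$, gives $\E[\Delta Z_t]\le-\Omega(\beta)\,\E[(\Delta Z_t)^2]$. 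Lemma~\ref{fact:freedman_conc} then yields $O(\sqrt{\log n})$.

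In your language, this is the statement that for small entries the energy's martingale part has variance only an $O(1/\log n)$ fraction of its drift. That is exactly what makes the compensator $O(1)$ with high probability. Both directions used, $s$ and $x_{t-1}\circ s^2$, are not rows of $\mat$, so row-wise constraints cannot substitute for global isotropy.

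Feasibility of this stronger SDP also does not need cloning or Banaszczyk. By duality it reduces to the following fact: for any PSD $H$ there is a $(1-\eta)h$-dimensional subspace on which $H\preceq\frac1\eta\diag(H)$. This follows from the singular value decomposition of the unit-column matrix $H^{1/2}\diag(H)^{-1/2}$.
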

Before describing the algorithm, we give some intuition.

\smallskip

Given the matrix $\mat$, we call a row $v_i$, $i\in [m]$, {\em large} if $\|v_i\|^2_2  \geq 4$, and {\em small} otherwise.
Intuitively, the difficulty of the problem is essentially due to large rows. In particular, if all the rows are small, a random coloring already works,  as for any row $i$ we have that $\E [\langle v_i,x \rangle^2] =  \|v_i\|^2_2 \lesssim 1$, and standard concentration bounds directly give that $|\langle v_i, x\rangle | \lesssim 
\sqrt{\log 2n}$  for all rows $v_i$ with high probability.  

 This suggests that a reasonable algorithm should try to carefully cancel out the discrepancy for large rows while ensuring that the coloring looks random for small rows.

 \paragraph {A Natural Algorithm that Fails.} We start by describing a very natural candidate algorithm, and showing a bad  example where it fails. This example will be instructive to understand, and it will also suggest a natural fix.
 
Consider the following algorithm. We start initially with the coloring $x_0=0^n$. At time $t=1,2,\ldots$, (as usual) we update the coloring $x_{t-1}$ to $x_{t} = x_{t-1} + \Delta x_t$, where  $\Delta x_t$  is some tiny random increment 
supported on coordinates  that are not yet fixed at time $t$, and chosen as follows. 

Let $n_t$ denote the number of active (not yet fixed) coordinates at $t$.
By averaging, there are at most $n_t/4$ large rows (restricted to the active coordinates), and
let $W_t$ be the subspace orthogonal to these rows. Suppose we pick $\Delta x_t$ randomly in $W_t$. 

 Clearly, in this algorithm every row incurs zero discrepancy as long as it is large. Moreover, as $\dim(W_t)\geq 3n_t/4$, intuitively one would expect that the walk still looks sufficiently random, and thus, with high probability, any row only incurs $O(\sqrt{\log n})$ discrepancy once it becomes small (as under a fully random coloring). 



\paragraph{An Instructive Bad Example.} Unfortunately, a serious problem with this idea is that constraining the walk to lie in the subspace $W_t$ can create arbitrary correlations between the active coordinates, leading to high discrepancy.
Consider the following example.

Suppose $v_1$ is a small row supported on $k\ll n$ coordinates (for concreteness, say, $\mat(1,j)=k^{-1/2}$ for $j \in[k]$ and $0$ otherwise). Let $W$ be the subspace orthogonal to the large rows. Now suppose $W$ is such that every vector $v \in W$ must satisfy $v(1) = v(2) = \cdots = v(k)$. But then, any update $\Delta x \in W$ must have $\Delta x(1)= \cdots = \Delta x(k)$, and hence updating $\Delta x(1)$ by $\pm \eps$ will incur discrepancy $\ip{v_1, \Delta x}  =\pm k \cdot k^{-1/2} \eps = \pm k^{1/2} \eps$. This should be contrasted with the case where the $\Delta x(i)$ are i.i.d. $\pm \eps$, where $\ip{v_1, \Delta x} \approx \pm k^{1/2} \cdot k^{-1/2} \eps = \pm \eps$.

Before reading on,
the reader may find it instructive to think about how one can  handle this example, even if the subspace $W$ is chosen adversarially as above.

\allowdisplaybreaks
\section{Algorithm}

In the example above, the correlations for the row $v_1$ can be handled by also setting $\Delta x(1)=0$ (and thus also $\Delta x(2)=\cdots =\Delta x(k) = 0$). 

However, it is not immediately clear how to address this problem in general. First, there could be multiple bad rows $v_i$ with coordinates correlated in arbitrary ways. Moreover, imposing additional constraints on $\Delta x$ to fix the currently bad rows can create further correlations and additional bad rows that need to be fixed again and so on. 

Interestingly, it turns out that there always exist 
good update directions $\Delta x$ that are

\smallskip 

(i) orthogonal to all large rows, and 

(ii) still look {\em random} (or better) in every direction.

\smallskip 

Moreover, these directions can be computed efficiently using a semidefinite program (SDP). 
 We now describe this SDP, stated below in a slightly more general setting than we need for our algorithm. We describe the overall algorithm, which will be similar to that in Section~\ref{sec:vec-herdisc}. We then analyze it in Section \ref{subsubsec:analysis_Banaszczyk}.
\subsection{SDP Formulation}
\label{subsec:sdp_formulation}


 Let $W \subset \R^h$ be any subspace with dimension $\dim(W) \le \delta h$. 
Consider the following SDP with the variable $U \in \R^{h\times h}$, and  with some scalar parameters $0\leq \kappa, \eta \leq 1$.
\begin{align}
 \text{(SDP)} \qquad   \langle ww^\top, U \rangle           &= 0                                          && \text{for all } w \in W  \label{sdp:orthog}\\
    \langle e_je_j^\top, U \rangle                               &\leq 1                                         && \text{for all } j \in [h] \label{sdp:jj} \\
    \tr(U)                                &\geq \kappa h                       \label{sdp:trace}         \\
    U                                     &\preceq \frac{1}{\eta} \diag(U)              \label{sdp:isotropic}\\
    U                                     &\succeq 0 \label{sdp:psd}
\end{align}
Here, $e_1,\ldots,e_h$ are the standard coordinate directions in $\R^h$, and $\diag(U)$ is the diagonal matrix we get from zeroing out all non-diagonal entries of $U$.
It is clear that this SDP can be solved in time polynomial in $h$.\footnote{In particular, for constraint \eqref{sdp:orthog}, one can pick an arbitrary basis $w_1,\ldots,w_{\delta h}$ for $W$ and set $\langle w_i w_i^T,U\rangle =0$ for each $i \in [\delta h]$. This ensures that $\|U^{1/2}w_i\|_2=0$ so that $U^{1/2}w_i ={\bf 0}$ for each $i$, and thus by linearity $U^{1/2}w =\bf{0}$ for all $w\in W$.}
To understand the constraints in this SDP, fix some feasible solution $U$ and consider the random vector $u =U^{1/2} g$ where $g \sim N(0,I_h)$. Clearly, $u$ has mean $\E[u]=0$ and covariance $\E[uu^\top]=U$. Let us think of $u$ as the coloring update  (ignoring the scaling). 
The constraints $\eqref{sdp:orthog}$ ensure that $\langle u,w\rangle=0 $ for all vectors $w\in W$, as  
\[\langle u,w\rangle = \langle U^{1/2} g,w \rangle = \langle g,U^{1/2} w \rangle  =0.\] 
constraints \eqref{sdp:jj} ensure that $\E[u(j)^2]\leq 1$ for all coordinates $j \in [h]$, as \[\E[u(j)^2] = \E[ \langle uu^T, e_je_j^T  \rangle] =  \langle U, e_je_j^T\rangle  \leq 1.\] 
For $\kappa >0$, the constraint \eqref{sdp:trace} rules out the trivial solution $U={\bf 0}$.

\medskip

\noindent {\bf Random-Like Coloring Update.} 
The constraint \eqref{sdp:isotropic} is more interesting. Notice that 
 it is equivalent to saying that \[\theta^T U \theta \leq (1/\eta)\, \theta^T \diag (U) \theta\] for all vectors $\theta = (\theta(1),\ldots,\theta(h)) \in \R^h$.
 
 As $\theta^T U \theta =\E[\langle \theta, u \rangle ^2]$, this says that \[\E\left[\langle \theta, u \rangle ^2 \right] = \E \Big[ \Big(\sum_{j=1}^h \theta(j) u(j)\Big)^2\Big] \leq \frac{1}{\eta} \sum_{j=1}^h \theta(j)^2\, \E[u(j)^2].\]
Notice that if we had $\eta=1$ and an equality above, then this is the same as saying that the random variables $u(1),\ldots,u(h)$ are pairwise independent. In our algorithm, we will set $\eta = 1/4$, so intuitively \eqref{sdp:isotropic} should be viewed as saying that the ``coloring update'' $u$ is almost pairwise independent.

We need the following result about the feasibility of this SDP.
 We will prove this later in
 Section \ref{subsec:sdp_feasibility}.

\begin{theorem}\label{thm:sdp_feasibility}(SDP Feasibility) 
For any subspace $W$ of $\R^h$ of dimension at most $\delta h$, the SDP given by \eqref{sdp:orthog}-\eqref{sdp:psd} is feasible
whenever $\delta + \kappa + \eta \leq 1$. 
\end{theorem}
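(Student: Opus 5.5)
We will prove Theorem~\ref{thm:sdp_feasibility} by SDP duality: we assume the SDP \eqref{sdp:orthog}--\eqref{sdp:psd} is infeasible, extract a dual certificate, and then derive a contradiction using the dimension count $\delta + \kappa + \eta \le 1$.

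\textbf{Step 1: a convenient normalization.} Let $P$ be the orthogonal projection onto $W^\perp$. Constraint \eqref{sdp:orthog} is equivalent to $U = PUP$, that is, to $U$ being supported on $W^\perp$. We look for solutions of the form $U = P D^{-1/2}$-type or, more simply, $U$ in the cone $\{U \succeq 0: PUP = U\}$. We impose $\diag(U) \le I$ and require the two linear matrix conditions $\tr(U) \ge \kappa h$ and $\eta U \preceq \diag(U)$. This is a feasibility problem over a closed convex cone intersected with affine constraints.

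\textbf{Step 2: the dual certificate.} By Farkas/SDP duality (infeasibility of a strictly-feasible-perturbable system), infeasibility gives the following objects:
\begin{itemize}
\item a nonnegative diagonal matrix $D$, dual to \eqref{sdp:jj};
\item a scalar $\mu \ge 0$, dual to \eqref{sdp:trace};
\item a PSD matrix $Y \succeq 0$, dual to \eqref{sdp:isotropic}.
\end{itemize}
These satisfy
\[
\mu\kappa h > \tr(D),
\qquad\text{and}\qquad
\mu I + \eta Y - \diag(Y) - D \preceq 0 \ \text{on } W^\perp,
\]
meaning $P(\mu I + \eta Y - \diag(Y) - D)P \preceq 0$. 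Rescaling gives $\mu=1$, so
\[
P\bigl(I + \eta Y - \diag(Y)\bigr)P \preceq PDP, \qquad \tr(D) < \kappa h.
\]
For this duality step we need strong alternatives. We handle this by relaxing the constraint to $\tr(U) \ge (\kappa-\epsilon) h$ and letting $\epsilon \to 0$, by compactness of the feasible region, since $\diag(U) \le I$ bounds $U$.

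\textbf{Step 3: contradiction by taking traces on a good subspace.} Let $G$ be the set of coordinates with $D(j,j) \ge$ some threshold, and let $B$ be the set where $Y(j,j)$ is large. We consider the subspace
\[
S = W^\perp \cap \{x: x(j)=0 \text{ for } j \in G\}.
\]
Restricted to $S$, the term $D$ vanishes. This gives $I + \eta Y \preceq \diag(Y)$ on $S$, so taking the trace over $S$ gives
\[
\dim S + \eta\,\tr(\Pi_S Y) \le \tr(\Pi_S \diag(Y)).
\]
The quantity $\sum_j Y(j,j)\|\Pi_S e_j\|^2$ must then be compared against $\eta\,\tr(\Pi_S Y)$. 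The cleanest route I expect is instead to choose the threshold so that the count works directly. Since $\tr(D) < \kappa h$, fewer than $\kappa h$ coordinates have $D(j,j) \ge 1$. Likewise, the PSD inequality forces $\diag(Y)$ to dominate on roughly a $(1-\eta)$ fraction of directions. With these choices,
\[
\dim S \ge h - \delta h - \kappa h > \eta h.
\]
On $S$ we then have $\diag(Y) - \eta Y \succeq I$. The plan is to show that $\diag(Y) - \eta Y \succeq I$ cannot hold on any subspace of dimension more than $\eta h$. The contradiction comes from comparing traces of $\diag(Y)$ restricted to $S$ with $\eta\,\tr(Y)$, using $\|\Pi_S e_j\|^2 \le 1$ and $\sum_j \|\Pi_S e_j\|^2 = \dim S$.

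\textbf{Main obstacle.} The main obstacle is this last linear-algebra inequality. The dual certificate must be paired against a cleverly chosen primal test matrix, probably $\Pi_S$ or a diagonally rescaled projection, and the accounting must produce exactly $\delta + \kappa + \eta \le 1$ without losing constants. I will first try plugging $U = \Pi_S$ against the dual inequality, using $\langle \diag(Y), \Pi_S\rangle = \sum_j Y(j,j)\|\Pi_S e_j\|^2$. If that fails, I will rescale the test matrix by $\diag(Y)^{-1/2}$ so that the contribution from $\diag(Y)$ is normalized.
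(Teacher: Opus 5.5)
\textbf{Gap: the $Y$-term in your dual certificate has the wrong sign, and Step 3 depends on it.} Write \eqref{sdp:isotropic} as $\diag(U)-\eta U\succeq 0$. A multiplier $Y\succeq 0$ then contributes the valid inequality $\langle \diag(Y)-\eta Y,U\rangle\ge 0$. So the Farkas certificate of infeasibility is
\[
P\bigl(\mu I+\diag(Y)-\eta Y-D\bigr)P\preceq 0,\qquad \tr(D)<\mu\kappa h,
\]
not $P(\mu I+\eta Y-\diag(Y)-D)P\preceq 0$. Your version is not a valid certificate: for a feasible $U$ it only gives $\tr(U)\le \tr(D)+\langle \diag(Y)-\eta Y,U\rangle$, and the last term is nonnegative.

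Because Step 3 is built on the wrong sign, the inequality you set as your final goal is false. The claim that $\diag(Y)-\eta Y\succeq I$ cannot hold on a subspace of dimension more than $\eta h$ fails for $Y=cI$ with $c(1-\eta)\ge 1$, which satisfies it on all of $\R^h$. Indeed, what you derive on $S$ agrees with the fact you quote, that $\diag(Y)$ dominates $\eta Y$ on a $(1-\eta)h$-dimensional subspace, rather than contradicting it. A smaller error: $D$ does not vanish on $S$. With threshold $1$ you only get $x^\top Dx<\|x\|_2^2$ for nonzero $x\in S$.

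\textbf{With the sign fixed, your thresholding idea works, by a slightly different route from the paper.} For nonzero $x\in S$ the correct certificate gives $\|x\|_2^2+x^\top(\diag(Y)-\eta Y)x\le x^\top Dx<\|x\|_2^2$, that is, $\eta\,x^\top Yx> x^\top\diag(Y)x$. Meanwhile $\dim S\ge h-\delta h-|G|>(1-\delta-\kappa)h\ge \eta h$.

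The missing ingredient is exactly the paper's Lemma~\ref{diagdom}: for every PSD $Y$ there is a subspace of dimension at least $(1-\eta)h$ on which $\eta Y\preceq \diag(Y)$. To prove it, discard coordinates with $Y(j,j)=0$. The matrix $Y^{1/2}\diag(Y)^{-1/2}$ then has unit columns, so its squared singular values sum to at most $h$ and at most $\eta h$ of them exceed $1/\eta$. Take the span of the remaining right singular vectors and map it back through $\diag(Y)^{-1/2}$. A dimension count then gives a nonzero vector in both subspaces, which is a contradiction.

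Your instinct to rescale by $\diag(Y)^{-1/2}$ points at this lemma. Pairing the certificate with $U=\Pi_S$ alone will not work, because it yields only the averaged inequality $\langle \eta Y-\diag(Y),\Pi_S\rangle>0$. That inequality can hold on subspaces of dimension larger than $\eta h$.

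The paper proceeds differently. It intersects $W^\perp$ with the subspace from Lemma~\ref{diagdom} to get a space of dimension at least $\kappa h$ on which the $\gamma_w$ and $H$ terms of the dual constraint are negative semidefinite. It then pairs the dual constraint with the projection onto $\kappa h$ orthonormal vectors in that space, which gives $\sum_i\alpha_i\ge\kappa h$ directly, without thresholding $D$. Your compactness device for obtaining an exact certificate is acceptable, since $\diag(U)\le I$ keeps approximate solutions bounded; the paper instead exhibits a strictly feasible dual point.
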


\subsection{The Algorithm}
\label{subsubsec:alg_bana}
Let $\mat$ be the input matrix.
Fix a small step size $\gamma = 1/n^2$  and let $\delta = 1/n$. 
 Let $x_{t-1}$ be the coloring at the end of time step $t-1$ of the algorithm, and define $F_{t} = \{j: |x_{t-1}(j)| > 1-\delta\}$ to be the set of ``fixed'' coordinates at the end of time $t-1$ (or equivalently at the beginning of time $t$). We let $\act_t = [n]\setminus F_t$ be the set of active, i.e., unfixed coordinates.

Initialize $x_0(j) =0$ for $j\in [n]$
and $F_0= \emptyset$. 
Let $n_t := n-|F_t| = |\act_{t}|$.
At each time step $t$, we will only update coordinates in $\act_t$.

\medskip 

 \noindent {\bf Algorithm:} At each time step $t=1,2,\ldots$, repeat  until $n_t=0$.

 

\begin{enumerate}
\item Call a row $i$ \emph{large} (at time $t$)  if $\sum_{j \in \act_t} \mat(i,j)^2 \geq 4$. Let $W_t$ denote the subspace spanned by all the large rows. All rows that are not large are \emph{small}.
    \item Solve the SDP \eqref{sdp:orthog}-\eqref{sdp:psd} with variable $U_t \in \R^{\act_t \times \act_t}$, the subspace $W = W_t \subseteq \R^{\act_t}$, $h=n_t$ and $\kappa = \eta = 1/4$.  
 
     Let $\Delta x_t = \gamma U_t^{1/2} g_t$, where $g_t \sim N(0,I_{\act_t})$. 
     
     Update $x_t = x_{t-1} + \Delta x_t$.
\end{enumerate}

\noindent Let $T$ be the (random) time at which the algorithm terminates. The final coloring is given by $\col(i) = \sign(\col_T(i))$.

\section{Analysis} 
\label{subsubsec:analysis_Banaszczyk}
We first show that the SDP is always feasible at each time $t$, and that the algorithm runs in polynomial time. Then we bound the discrepancy.

\medskip 
\noindent 
{\bf Feasibility.}
Fix some time $t$. As $\sum_i \mat(i,j)^2\leq 1$ for each column $j$,  summing over the active columns gives $\sum_{j \in \act_t}\sum_i \mat(i,j)^2\leq |\act_t|= n_t$. So by averaging there can be at most $n_t/4$ large rows, and  $\dim(W_t) \leq n_t/4$ (and thus $\delta\leq 1/4$) in the SDP constraint \eqref{sdp:orthog}. (This argument is analogous to the proof of Lemma~\ref{lm:ch2-bf-counting}.)
As we set $\kappa=\eta=1/4$, we have that $\delta + \kappa + \eta \leq 1$, and thus by Theorem \ref{thm:sdp_feasibility} the SDP is feasible.

The algorithm also terminates with high probability in polynomial time, as at each time $t$, the squared norm
\[\E[\|x_t\|^2] = \E[\|\col_{t-1}\|^2] + \gamma^2 \Tr(U_t) \geq \E[\|x_{t-1}\|^2] + \gamma^2\]  rises  in expectation by at least $\gamma^2$, and is bounded by $n$ since, with high probability, $|x_t(j)| \le 1$ for all time steps $t$ and coordinates $j$.

 \medskip
%
We now bound the discrepancy.
\begin{theorem}
\label{thm:alg-komlos-bana}
  With high probability,  $\|\mat\col\|_\infty \lesssim \sqrt{\log 2n}$.   
\end{theorem}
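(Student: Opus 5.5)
We fix a row $i$ and bound $|\ip{v_i,x_T}|$, where $x_T$ is the final fractional coloring. The final rounding to signs changes each coordinate by at most $\delta = 1/n$. Since $\|v_i\|_\infty \le 1$, this costs at most $n\cdot\delta = 1$ in absolute value, so it is negligible. We then take a union bound over the $m \le n^2$ rows, so it suffices to show that each row has discrepancy $O(\sqrt{\log 2n})$ with probability $1 - n^{-3}$.

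The evolution of row $i$ splits into two phases. Let $\tau_i$ be the first time at which $i$ becomes small. Because active sets only shrink, $\sum_{j\in\act_t}\mat(i,j)^2$ is non-increasing, so the row stays small from $\tau_i$ on. For $t < \tau_i$, $v_i$ restricted to $\act_t$ lies in $W_t$. Constraint \eqref{sdp:orthog} then forces $U_t^{1/2}v_i = 0$, so $\ip{v_i,\Delta x_t}=0$ and the discrepancy of row $i$ is exactly $0$ up to time $\tau_i$.

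For $t\ge\tau_i$ we use a potential/martingale argument. The increment $\ip{v_i,\Delta x_t}$ is Gaussian, conditionally on the past, with variance $\gamma^2 v_i^\top U_t v_i$. By \eqref{sdp:isotropic} with $\eta=1/4$, this is at most
\[
4\gamma^2\sum_{j\in\act_t}\mat(i,j)^2\,U_t(j,j).
\]
On the other hand, $\gamma^2 U_t(j,j) = \E[(\Delta x_t(j))^2\mid x_{t-1}]$ is exactly the expected increase of $x_t(j)^2$. So the total predictable variance of row $i$ after $\tau_i$ is dominated by $4\sum_j \mat(i,j)^2\,\bigl(\text{total increase of }x(j)^2\text{ after }\tau_i\bigr)$, with $j$ ranging over coordinates active at $\tau_i$. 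Each coordinate's square increases by at most about $1$ in total, because it stays in $[-1,1]$ with high probability. Hence the predictable variance is, in expectation, $\lesssim \sum_{j\in\act_{\tau_i}}\mat(i,j)^2 \le 4$ times a constant. We then want a concentration statement of the form $\Pr[|\ip{v_i,x_T}|\ge \lambda]\le 2e^{-c\lambda^2}$, and choosing $\lambda \asymp \sqrt{\log n}$ finishes the argument.

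The main obstacle is that the variance bound above holds only in expectation, or as a random quantity, not almost surely. Lemma~\ref{ch4:lm-martingale-bound} needs a deterministic per-step variance bound, so we cannot apply it directly. Our plan is to use an exponential supermartingale that couples the two processes:
\[
Z_t=\exp\Bigl(\beta\ip{v_i,x_t} - \tfrac{\beta^2}{2}\cdot 4\sum_j \mat(i,j)^2\, x_t(j)^2\Bigr)
\]
for $t\ge\tau_i$, with small constant $\beta$. We will verify that $Z_t$ is a supermartingale up to negligible $O(\gamma^3)$ error terms. The $x_t(j)^2$ terms are themselves random with Gaussian fluctuations, and their quadratic fluctuation contributes an extra $O(\beta^4)$ term per step; handling this second-order term is the delicate calculation. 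Choosing $\beta=\lambda/C$ and using $x_T(j)^2\le 1$ then bounds $\E\exp(\beta\ip{v_i,x_T})\le e^{O(\beta^2)}$. Markov's inequality gives the sub-Gaussian tail, and the same argument applies to $-v_i$.
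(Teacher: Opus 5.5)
Your reduction steps are fine. The rounding error, the union bound over $m\le n^2$ rows, the zero discrepancy while the row is large, and your remark that the predictable variance is controlled only in expectation are all correct. The exponential supermartingale step, however, has a genuine gap, and it is exactly where the paper needs an extra idea.

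First, your two choices of $\beta$ are incompatible. With $\beta$ a small constant, Markov's inequality only gives $\Pr[\langle v_i,x_T\rangle\ge\lambda]\le e^{O(1)-c\lambda}$, which after the union bound yields $O(\log n)$, not $O(\sqrt{\log n})$. A sub-Gaussian tail at level $\sqrt{\log 2n}$ needs $\beta\asymp\sqrt{\log 2n}$.

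Second, for such $\beta$ your $Z_t$ is not a supermartingale. To first order in $\gamma$, the increment of $\log Z_t$ is $\langle w,\Delta x_t\rangle-2\beta^2\sum_j M(i,j)^2\Delta x_t(j)^2$, where $w(j)=\beta M(i,j)(1-4\beta M(i,j)x_{t-1}(j))$. The isotropy constraint only gives $\E\langle w,\Delta x_t\rangle^2\le 4\sum_j w(j)^2\E[\Delta x_t(j)^2]$. Comparing with the drift, you need $|1-4\beta M(i,j)x_{t-1}(j)|\le 1$ with your exact coefficient $\tfrac{\beta^2}{2}\cdot 4$, which has no slack at all. Even after enlarging the coefficient you need this quantity bounded by a constant. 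Either way this forces $\beta|M(i,j)|\lesssim 1$, i.e.\ $|M(i,j)|\lesssim 1/\sqrt{\log 2n}$, for every $j$.

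This is not a bookkeeping issue. Take a row consisting of a single entry $1$, with $U_t=I$ (feasible when there are no large rows). Then $Z_t=\exp(\beta x-2\beta^2x^2)$ evaluated along a Brownian path, and the second derivative of this function in $x$ is strictly positive at $x=-1/2$ whenever $\beta>1/2$. So $Z_t$ has positive drift there. Koml\'os rows can have constant-size entries, so this case must be handled.

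The missing idea is the paper's split of the row, restricted to the coordinates active when it becomes small, as $\theta=b+s$, where $b$ keeps the entries of magnitude at least $1/\sqrt{\log 2n}$. Since $\|\theta\|_2^2\le 4$, $b$ has at most $4\log 2n$ nonzero entries. Each coordinate moves by at most $2$, so Cauchy--Schwarz gives deterministically $|\langle b,x_T-x_{\tau_i}\rangle|\le 2\|b\|_2\sqrt{|\mathrm{supp}(b)|}\lesssim\sqrt{\log 2n}$.

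For $s$, every entry satisfies $\sqrt{\log 2n}\,|s(j)|\le 1$. With $\beta=c\sqrt{\log 2n}$ for a small constant $c$ and a coefficient somewhat larger than $2\beta^2$, the cross terms are absorbed and your multiplicative potential then works. The paper instead uses the additive analogue $Z_t=\langle s,x_t\rangle+\beta\sum_j s(j)^2(1-x_t(j)^2)$ with a Freedman-type negative-drift lemma. There, the needed inequality $|2\beta x_{t-1}(j)s(j)^2|\lesssim|s(j)|$ is precisely the condition $\beta|s(j)|\lesssim 1$.
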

Notice that we only need to bound the discrepancy from the time a row becomes small. 
Indeed, for any row $i$ that is large at time $t$, the vector $v_i$ always lies in the subspace $W_t$. Thus any row incurs $0$ discrepancy as long as it is large.


\medskip
\noindent {\bf Random processes with negative drift.} 
We first state a
useful concentration inequality for super-martingales with negative drift.
\begin{lemma}\label{fact:freedman_conc}
Let $\{Z_t: t = 0,1, \cdots\}$ be a real-valued random process with increments $\Delta Z_t := Z_t - Z_{t-1}$, where 
there exists an $\alpha> 0$ such that, for all $t$, with probability 1, $|\alpha \Delta Z_t| \leq 1$, and 
\[
\E_{t-1}[\Delta Z_t] \leq - \alpha\, \E_{t-1}[(\Delta Z_t)^2].
\] Here $\E_{t-1}[\cdot]$ denotes the conditional expectation $\E[\cdot | Z_1, \cdots, Z_{t-1}]$. Then for all $\xi \geq 0$, we have that   
$
\Pr[ Z_t - Z_0 > \xi ] \leq \exp(- \alpha \xi).$
\end{lemma}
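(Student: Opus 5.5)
We prove the statement with the standard exponential supermartingale argument, as in the proof of Lemma~\ref{ch4:lm-martingale-bound}, with the negative drift used to absorb the quadratic term. Define $Y_t \eqdef \exp(\alpha(Z_t - Z_0))$. The plan is to show that $Y_t$ is a supermartingale with respect to the natural filtration, that is, $\E_{t-1}[Y_t] \le Y_{t-1}$. Iterating this gives $\E[Y_t] \le Y_0 = 1$. Markov's inequality then yields
\[
\Pr[Z_t - Z_0 > \xi] \le \Pr[Y_t \ge e^{\alpha\xi}] \le e^{-\alpha\xi},
\]
which is the claim.

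The key step is an elementary scalar inequality: $e^{y} \le 1 + y + y^2$ for all $|y|\le 1$. To verify it, note that for $|y| \le 1$,
\[
e^y - 1 - y = \sum_{k\ge 2} \frac{y^k}{k!} \le y^2 \sum_{k\ge 2}\frac{1}{k!} = (e-2)y^2 \le y^2.
\]

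Apply this with $y = \alpha \Delta Z_t$, which is allowed because $|\alpha\Delta Z_t|\le 1$ almost surely by hypothesis. This gives
\[
\E_{t-1}\bigl[e^{\alpha \Delta Z_t}\bigr] \le 1 + \alpha\,\E_{t-1}[\Delta Z_t] + \alpha^2\,\E_{t-1}[(\Delta Z_t)^2].
\]
The drift assumption gives $\alpha\,\E_{t-1}[\Delta Z_t] \le -\alpha^2 \E_{t-1}[(\Delta Z_t)^2]$, so the last two terms cancel and the right-hand side is at most $1$. Since $Y_{t-1}$ is determined by $Z_1,\ldots,Z_{t-1}$ (and $Z_0$, which we treat as fixed or include in the conditioning), we can pull it out:
\[
\E_{t-1}[Y_t] = Y_{t-1}\,\E_{t-1}\bigl[e^{\alpha\Delta Z_t}\bigr] \le Y_{t-1}.
\]
Taking expectations and inducting on $t$ gives $\E[Y_t]\le 1$.

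The main point requiring care is the constant in the scalar inequality. The drift condition has coefficient exactly $\alpha$, so we need the quadratic coefficient in the exponential bound to be at most $1$ on the whole range $|y|\le 1$, and $e-2<1$ provides this. One also needs integrability, which is immediate since $|\Delta Z_t|\le 1/\alpha$ bounds $Y_t$ by $e^{t}$. Finally, if $t$ is a stopping time rather than fixed, we apply optional stopping to the nonnegative supermartingale $Y_t$ to get the same bound $\E[Y_t] \le 1$.
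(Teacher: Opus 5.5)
Your proof is correct and is essentially the paper's argument: show that $\exp(\alpha(Z_t - Z_0))$ is a supermartingale using $e^y \le 1+y+y^2$ for $|y|\le 1$ together with the drift condition, then apply Markov's inequality. Your explicit check of the scalar inequality via the constant $e-2$, and your remark on optional stopping, are harmless additions that the paper leaves implicit.
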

\begin{proof}
By Markov's inequality,
\[ \Pr[Z_t - Z_0 \geq \xi ]  = \Pr[\exp(\alpha (Z_t-Z_0)) \geq \exp(\alpha \xi)] \leq \frac{\E[\exp(\alpha (Z_t-Z_0))] }{\exp(\alpha \xi)}.\]
So, it suffices to show that $\E[ \exp(\alpha (Z_t-Z_0)) ]  \leq 1$. 
Now,
\begin{align*}
\E_{t-1}\left[\exp(\alpha (Z_t-Z_0))\right] = &  
  \exp(\alpha (Z_{t-1}-Z_0))\E_{t-1}\left[\exp(\alpha \Delta Z_t)\right] \\
 \le &  \exp(\alpha (Z_{t-1}-Z_0)) \E_{t-1}[1+\alpha \Delta Z_t + (\alpha \Delta Z_t)^2] \\
  \leq & \exp(\alpha (Z_{t-1}-Z_0)). \qquad 
\end{align*}
Here the first inequality uses that $\alpha |\Delta Z_t|\leq 1$ and $e^y \leq 1 + y + y^2$ for $ |y| \leq 1$, and the second that 
 $\E_{t-1}[\Delta Z_t] \leq -\alpha \E_{t-1}[(\Delta Z_t)^2]$.

The result now follows by the law of iterated expectations.
\end{proof}

\paragraph{Proof of Theorem \ref{thm:alg-komlos-bana}.}
%
%
Fix some row $i$. Let $t_i$ be the time when it first becomes small, and 
let $\theta$ denote the row $v_i$ restricted to the coordinates that are alive at $t_i$ (and set to $0$ otherwise). Note that $\|\theta\|^2_2 \leq 4$. 
Our goal is to show that
$|\langle \theta,x_T - x_{t_i} \rangle | \lesssim \sqrt{\log 2n}$, with high probability.

\smallskip
 
Let us write $\theta = b + s$, where $b$ contains the big entries of $\theta$ with magnitude at least $1/\sqrt{\log 2n}$, with the other entries set to $0$,
and $s$ consists of the rest of the entries, with the big entries set to $0$. 
\smallskip 

The discrepancy due to $b$ is trivial to bound. As $b$ has big entries and norm $\|b\|_2 \leq \|\theta\|_2 \le 2$, it can have at most $4 \log 2n$ non-zero entries. By the Cauchy-Schwarz inequality,
\[
\big|\langle b, x_T - x_{t_i} \rangle \big| \leq \|b\|_2 \cdot \Big(\sum_{j: b(j) \neq 0} (x_T(j) - x_{t_i}(j))^2\Big)^{1/2} \lesssim \sqrt{\log 2n}.
\]

\medskip
We now focus on bounding the discrepancy of $s$. 

\noindent {\bf Regularized Discrepancy.}
To do this, we will apply Lemma \ref{fact:freedman_conc} to the {\em regularized discrepancy} of the row $s$, defined as
\[
Z_t := \langle s, x_t \rangle + \beta \sum_{j=1}^n s(j)^2 (1 - x_t(j)^2), 
\]
where we set $\beta :=\sqrt{\log 2n}$. 

$Z_t$ consists of two components: the discrepancy of $s$, and the ``energy'' term $\beta \sum_{j=1}^n s(j)^2 (1 - x_t(j)^2)$, which quantifies how far the colors of coordinates that contribute to the discrepancy are from $\{-1,+1\}$.
Intuitively, $Z_t$ is a good proxy for the discrepancy of $s$ at time $t$. Moreover, it has the nice property that one can ``charge'' the increase in discrepancy to the decrease in energy. We show these properties next.

As $\sum_j s(j)^2 \leq \|\theta\|^2 \leq 4$ and $1-x_t(j)^2 \geq 0$ for all $j\in [n]$,  
\[\langle s, x_t \rangle  \leq Z_t \leq \langle s, x_t \rangle + 4 \beta,\]
$Z_t$ then tracks the actual discrepancy of $s$ to within $O(\beta) = O(\sqrt{\log n})$, by our choice of $\beta$. 
\medskip

\noindent {\bf Showing the Negative Drift.} 
Next, we claim that $\Delta Z_t$ satisfies the negative drift condition in Lemma \ref{fact:freedman_conc} with $\alpha = \Omega(\beta)$. 
Indeed, as $x_t = x_{t-1} + \Delta x_t$, and writing $x_t(j)^2 =  x_{t-1}(j)^2 + 2 \Delta x_t (j) x_{t-1}(j) + \Delta x_t(j)^2$, a direct computation gives
\begin{align} \label{eq:dZ_t}
\Delta Z_t = \langle s - 2 \beta x_{t-1} \circ s^2, \Delta x_t \rangle  - \beta \langle s^2, (\Delta x_t)^2 \rangle,
\end{align} 
where we use $s^2$ and $\Delta x_t^2$ to denote the vector with entries $s(j)^2$ and  $(\Delta x_t(j))^2$ respectively, and $x_{t-1} \circ s^2$ the vector with entries $x_{t-1}(j) s(j)^2$. 

Let us now consider $\E[\Delta Z_t]$ and $\E[(\Delta Z_t)^2]$.
As $\E[\Delta x_t]=0$, we have 
\begin{equation}
    \label{eq:sdp-komlos-disc-0}\E[\Delta Z_t] = - \beta \,\, \E \langle s^2, \Delta x_t^2 \rangle.
\end{equation}
Next as $\Delta x_t = \gamma U_t^{1/2} g$, and ignoring $O(\gamma^3)$ terms as $\gamma$ is tiny,\footnote{As the algorithm runs for $\tilde{O}(1/\gamma^2)$ time steps with high probability, the total contribution of these terms over all time steps can be trivially bounded by $o(1)$.} we have
\begin{align}
\E[(\Delta Z_t)^2] 
& = \E  \langle s - 2 \beta x_{t-1} \circ s^2, \Delta x_t \rangle^2  \nonumber \\
& \lesssim \E  \langle (s - 2 \beta x_{t-1} \circ s^2)^2, \Delta x_t^2 \rangle \label{eq:sdp-komlos-disc-1}\\
& \lesssim \E \langle s^2, \Delta x_t^2 \rangle.\label{eq:sdp-komlos-disc-2}
\end{align}
The first inequality \eqref{eq:sdp-komlos-disc-1} uses that for any vector $v$, we have
\begin{equation} \E [\langle v,\Delta x_t\rangle^2] \lesssim \E[\langle v^2, \Delta x_t^2\rangle].
\label{eq:sdp-komlos-bana-nice-eq}
\end{equation}
This follows as $\E[\Delta x_t \Delta x_t^\top] = \gamma^2 U$, and so the SDP constraint \eqref{sdp:isotropic} implies that
$\E[\Delta x_t \Delta x_t^\top]  \preceq O(1) \cdot \diag(\E[\Delta x_t \Delta x_t^\top])$. 
This immediately implies \eqref{eq:sdp-komlos-bana-nice-eq}.
The inequality \eqref{eq:sdp-komlos-disc-2} uses that   $|2 \beta x_{t-1}(j) s(j)^2| \lesssim|s(j)|$, which follows as $|s(j)| \leq 1/\sqrt{\log2n}$, $\beta = \sqrt{\log2n}$  and $ |x_{t-1}(j)| \leq 1$ for each coordinate $j$.

Together, \eqref{eq:sdp-komlos-disc-0} and \eqref{eq:sdp-komlos-disc-2} give that, for some $\alpha = \Omega(\beta)$, \[\E[\Delta Z_t] \leq  -\alpha\E[(\Delta  Z_t)^2].\] 

\medskip
\noindent {\bf Bounding the discrepancy.}
Applying Lemma \ref{fact:freedman_conc} with $\alpha = \Omega(\beta)$ and $\xi = \Theta(\beta^{-1} \log n) $ gives 
\[
\Pr[Z_T - Z_{t_i} \geq \xi] = \exp(-\alpha \xi ) = 1/\poly(n). 
\]
As $\xi = O(\sqrt{\log2n})$ by our choice of $\beta$, this implies that $Z_T \leq Z_{t_i} + O(\sqrt{\log n})$  with high probability.
As
$\langle s, x_T \rangle \leq Z_T$ 
and $Z_{t_i} = O(\beta)$ (as $\langle v_i,x_{t_i}\rangle=0$, since $v_i$ was large until time $t_i$, and hence the updates $\Delta x_t$ were orthogonal to it), this implies that
$
\langle s, x_T - x_{t_i} \rangle = O(\sqrt{\log n})$. This proves the result. \qedhere

\section{SDP Feasibility}
\label{subsec:sdp_feasibility}
We now prove Theorem \ref{thm:sdp_feasibility}. We first state two useful facts.

\begin{lemma}
\label{col1matr}
Given any $m\times n$ matrix $M$ with columns $m_j$ satisfying $\|m_j\|_2\le 1$ for all $j\in[n]$ and $\beta\in(0,1]$, there exists a subspace $S \subseteq \mathbb{R}^n$ such that (i) $\dim(S)\ge (1-\beta)n $ and (ii) $\forall y\in S$, $\|My\|_2^2 \le (1/\beta)\|y\|_2^2$.
\end{lemma}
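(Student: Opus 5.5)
Our plan is to use the spectral decomposition of $M^\top M$. Let $A \eqdef M^\top M$, an $n\times n$ positive semidefinite matrix with eigenvalues $\mu_1 \ge \mu_2 \ge \cdots \ge \mu_n \ge 0$ and corresponding orthonormal eigenvectors $w_1,\ldots,w_n$. Since $\|My\|_2^2 = y^\top A y$, condition (ii) asks for a subspace on which the quadratic form of $A$ is at most $1/\beta$ times the squared norm. The natural candidate is the span of the eigenvectors whose eigenvalues are small, so we take
\[
S \eqdef \mathrm{span}\{w_k : \mu_k \le 1/\beta\}.
\]

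For any $y \in S$, write $y = \sum_{k:\mu_k\le 1/\beta} c_k w_k$. By orthonormality of the eigenvectors,
\[
y^\top A y = \sum_{k} \mu_k c_k^2 \le \frac{1}{\beta}\sum_k c_k^2 = \frac{1}{\beta}\|y\|_2^2,
\]
which establishes (ii).

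For (i), we need to bound the number of large eigenvalues, which we do by a trace (Markov-type) argument. We have
\[
\sum_{k=1}^n \mu_k = \tr(A) = \sum_{j=1}^n \|m_j\|_2^2 \le n.
\]
Since all eigenvalues are nonnegative, the number of indices with $\mu_k > 1/\beta$ is less than $n/(1/\beta) = \beta n$. It follows that $\dim(S) \ge n - \beta n = (1-\beta)n$. This is the same averaging step used in Lemma~\ref{lm:ch2-bf-counting}.

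There is no real obstacle here. The only point needing care is the inequality convention: we put eigenvalues equal to exactly $1/\beta$ into $S$, so the strict count $|\{k:\mu_k>1/\beta\}| < \beta n$ leaves room. Even the non-strict bound $\le \beta n$ would suffice for the claimed $\dim(S)\ge(1-\beta)n$. This lemma then serves as the building block for constructing the SDP solution in Theorem~\ref{thm:sdp_feasibility}.
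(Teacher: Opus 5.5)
Your proof is correct and is essentially the paper's argument. The paper takes the SVD $M=\sum_i \sigma_i p_i q_i^\top$, uses $\sum_i \sigma_i^2=\tr(M^\top M)=\sum_j\|m_j\|_2^2\le n$ to conclude that at least $(1-\beta)n$ of the $\sigma_i^2$ are at most $1/\beta$, and takes $S$ to be the span of the corresponding right singular vectors $q_i$; these are exactly your eigenvectors $w_k$ of $M^\top M$, with $\mu_k=\sigma_k^2$. Working directly with the eigendecomposition of $M^\top M$ is, if anything, slightly cleaner, since it avoids the paper's need for $n$ orthonormal left singular vectors $p_i\in\mathbb{R}^m$, which requires $m\ge n$.
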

\begin{proof}
Let $M=\sum_{i=1}^n \sigma_i p_i q_i^T$ by the singular value decomposition of $M$, with singular values $0\le \sigma_1\le\dots\le\sigma_n$ and $\{p_i : i\in [n]\}, \{q_i : i\in [n]\}$ are the two sets of orthonormal  vectors.
Then,
\[
\sum_{i=1}^n\sigma_i^2 =\textrm{Tr}[\sum_{i=1}^n \sigma_i^2 q_iq_i^T]=\textrm{Tr}[M^TM]=\sum_{i=1}^n \|m_i\|_2^2 \le n
\]
So at least $(1-\beta)n$ of the $\sigma_i^2$s have value at most $1/\beta$, and thus $\sigma_1^2\le\dots\le\sigma^2_{(1-\beta)n} \le 1/\beta$.
Let $S=\textrm{span}\{q_1,\dots,q_{(1-\beta)n}\}$. For $y\in S$,
\begin{align*}
\|My\|_2^2 & =  \|\sum_{i\leq n} \sigma_ip_iq_i^T y\|_2^2 
  =   \|\sum_{i \leq (1-\beta)n} \sigma_ip_iq_i^T y\|_2^2 \nonumber\\
 & =  \sum_{i\leq (1-\beta)n} \sigma_i^2 (q_i^Ty)^2  \leq    \frac{1}{\beta}\sum_{i\leq (1-\beta)n}(q_i^Ty)^2 \le \frac{1}{\beta}\|y\|_2^2
\end{align*}
where the third equality uses that the $p_i$ are orthonormal, and the last step uses that the $q_i$ are orthonormal.
\end{proof}

\begin{lemma}
\label{diagdom}
Given an $n\times n$ PSD matrix $G$ and $\beta\in(0,1]$, there exists a subspace $S\subseteq \mathbb{R}^n$ satisfying (i) $\dim(S)\ge (1-\beta) n$
and (ii) $\forall y\in S$, $y^TGy\le (1/\beta) y^T\diag(G)y$.
\end{lemma}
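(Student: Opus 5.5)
We will reduce Lemma~\ref{diagdom} to Lemma~\ref{col1matr} by a diagonal rescaling. First handle degenerate coordinates: let $Z = \{j : G(j,j) = 0\}$. Since $G \succeq 0$, a zero diagonal entry forces the whole $j$-th row and column of $G$ to vanish, so coordinates in $Z$ contribute nothing to either side of the desired inequality. We may therefore work on the coordinates $[n]\setminus Z$ and take $S$ to be the direct sum of $\R^Z$ with the subspace constructed for the principal submatrix of $G$ on $[n]\setminus Z$. This only increases the dimension count, because $\dim$ of the coordinate part is $|Z| \ge (1-\beta)|Z|$. So assume all diagonal entries are positive, and write $D \eqdef \diag(G)$.

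Next, factor $G = M^T M$ for some matrix $M$, for instance $M = G^{1/2}$. Then the $j$-th column $m_j$ of $M$ satisfies $\|m_j\|_2^2 = G(j,j)$. Normalize by setting $M' \eqdef M D^{-1/2}$, whose columns are $m_j/\sqrt{G(j,j)}$ and hence have unit norm. Apply Lemma~\ref{col1matr} to $M'$ with the given $\beta$. This yields a subspace $S'$ with $\dim(S') \ge (1-\beta)n$ such that $\|M' z\|_2^2 \le (1/\beta)\|z\|_2^2$ for every $z \in S'$.

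Finally, set $S \eqdef D^{-1/2} S'$. This has the same dimension as $S'$, since $D^{-1/2}$ is invertible. For $y \in S$, write $y = D^{-1/2} z$ with $z \in S'$. Then
\[
y^T G y = \|M y\|_2^2 = \|M' z\|_2^2 \le \frac{1}{\beta}\|z\|_2^2 = \frac{1}{\beta}\, y^T D y = \frac{1}{\beta}\, y^T \diag(G) y,
\]
which is exactly property (ii).

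The argument is essentially routine. The only point needing care is the zero-diagonal case, which is resolved by the PSD fact $G(j,j)=0 \Rightarrow G(j,k)=0$ for all $k$. A second minor point is that Lemma~\ref{col1matr} implicitly treats $(1-\beta)n$ as an integer; if needed, one takes the ceiling-compatible count of small singular values, exactly as in that lemma's proof.
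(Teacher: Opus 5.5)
Your proof is correct and follows essentially the same route as the paper: split off the zero-diagonal coordinates, rescale $G^{1/2}$ by $\diag(G)^{-1/2}$ to get unit-norm columns, apply Lemma~\ref{col1matr}, and pull the resulting subspace back through $\diag(G)^{-1/2}$. Your handling of the degenerate coordinates, via the PSD fact $G(j,j)=0 \Rightarrow G(j,k)=0$ for all $k$, makes explicit why both sides of the inequality split across the direct sum, which the paper only sketches.
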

\begin{proof}
Let $U=G^{1/2}$. Then $G=U^TU$, and $w^TGw\le (1/\beta)w^T\diag(G)w$ is equivalent to
\[ \|Uw\|_2^2\le (1/\beta)\|\diag(G)^{1/2}w\|_2^2.\]
Let $N \subseteq [n]$ be the set of coordinates $i$ with $G_{ii} > 0$. For $i \notin N$, as $G_{ii}=0$, the $i^{th}$ column of $U$ must be identically zero, and we trivially have that  $\|Ue_i\|_2^2= \frac{1}{\beta}\|\diag(G)^{1/2}e_i\|_2^2=0.$

As $\textrm{span}\{e_i:i\in N\}$ is orthogonal to the directions in $[n]\setminus N$, it 
suffices to show that there is a $(1-\beta)|N|$ dimensional subspace $Z$ in $\textrm{span}\{e_i:i\in N\}$ such that $ \|Uy\|_2^2 \leq  (1/\beta)  \|\diag(G)^{1/2}y\|_2^2$ for each $y \in Z$. Then we can simply return the overall subspace $Z \oplus \textrm{span}\{e_i: i \in [n]\setminus N\}$ which has dimension $(1-\beta)|N| + (n-|N|) \geq (1-\beta)n$.

So let us assume that $N=[n]$, so that  $G_{ii} > 0$ for all $i\in N$ and $\diag(G)$ is invertible.
Let $\widetilde{U}=U \diag(G)^{-1/2} $. The $\ell_2$-norm of each column in $\widetilde{U}$ is $1$, and by Lemma \ref{col1matr}, there is a subspace $\widetilde{S}$ of dimension at least $(1-\beta)|N|$ such that $\|\widetilde{U}\widetilde{y}\|_2^2 \leq  \frac{1}{\beta} \|\widetilde{y}\|_2^2$ for each $\widetilde{y}\in \widetilde{S}$.
Setting $y=\diag(G)^{-1/2}\widetilde{y}$ gives
 \[ \|U y \|_2^2 =  \|\widetilde{U}\tilde{y}\|_2^2 \leq (1/\beta) \|\widetilde{y}\|_2^2 = (1/\beta) \|\diag(G)^{1/2} y\|_2^2, \]  
and $S = \{\diag(G)^{-1/2}\widetilde{y}: \widetilde{y}\in \widetilde{W}\}$ gives the desired subspace. 
\end{proof}
We can now prove Theorem \ref{thm:sdp_feasibility}.
\begin{proof}(Theorem \ref{thm:sdp_feasibility})
Consider the following SDP
\begin{equation}
\label{eq:sdp_primal} \tag{Primal SDP}
\begin{aligned}
    \max \quad & \Tr(X) \\
    s.t. \quad & X \cdot ww^\top = 0 \ , && \text{for all } w \in W ,\\
    & X \cdot e_i e_i^\top \leq 1 \ , &&\text{for all } i \in [h] , \\
    & X \preceq \frac{1}{\eta} \diag(X) , \\
    & X \succeq 0 .
\end{aligned}
\end{equation}
We will show the optimal value of \eqref{eq:sdp_primal} is at least $\kappa h$.

\smallskip
\noindent 
We do so by considering the dual SDP. 
Let $\gamma_w \in \R$, $\alpha_i \geq 0$, $H \succeq 0$ be the Lagrangian multipliers for the constraints of \eqref{eq:sdp_primal}. 
Note that $H \in \R^{h\times h}$.
Then the dual SDP is given by 
\begin{align}
\label{eq:sdp_dual} 
& \min \quad \sum_{i \in [h]} \alpha_i   \qquad  \tag{Dual SDP} \\
& s.t. \sum_{w \in W} \gamma_w ww^\top + \sum_{i \in [h]} \alpha_i e_i e_i^\top + H - \frac{1}{\eta} \diag(H)  \succeq I, \label{eq:dual-sdp-komlos} \\
& \quad H \succeq 0, \gamma_w \in \R, \ \text{and } \alpha_i \geq 0 \text{ for all } i \in [h] .\nonumber 
\end{align}
We will show that {\em every} feasible solution to \eqref{eq:sdp_dual} must have objective value at least $\kappa h$.
By strong duality,\footnote{Strictly speaking, to apply strong duality for SDPs one needs that the dual have some solution strictly in its interior. In our case this is easily verified by choosing, say, $\gamma_w=0$ for all $w$, $\alpha_i=2/\eta$ for all $i$ and $H=I$.  } this would imply that \eqref{eq:sdp_primal} must have a solution with value at least $\kappa h$.

\medskip
\noindent {\bf Every dual solution has large value.}
Fix some solution $(\gamma,H,\alpha)$ to \eqref{eq:sdp_dual}.
By Lemma \ref{diagdom} applied to the matrix $H$, there exists some subspace $S \subset \R^h$ with $\dim(S) \geq (1 - \eta) h$, such that \[y^\top H y \leq (1/\eta) y^\top \diag(H) y \qquad \text{ for all vectors $y \in S$}.\]

Consider the subspace $S_{\mathsf{neg}} := W^\perp \cap S$. Then
as  $\dim(W^\perp)\geq (1-\delta) h$ and $\dim(S) \geq (1 - \eta) h$, we have that $\dim(S_{\mathsf{neg}}) \geq \big (1 - \delta - \eta  \big) h \geq \kappa h$, where we use that $\delta + \kappa + \eta \leq 1$. Moreover,  
for any vector  $y \in S_{\mathsf{neg}}$, 
\begin{align} \label{eq:neg_subspace_SDP}
& y^\top \Big( \sum_{w \in W} \gamma_w ww^\top + H - \frac{1}{\eta} \diag(H) \Big) y \nonumber \\  
& = y^\top (  H - \frac{1}{\eta} \diag(H) ) y  \leq 0 .
\end{align}
Consider some orthonormal set of vectors $v_1, \cdots, v_{\kappa h} \subset S_{\mathsf{neg}}$.
As $(\gamma,H,\alpha)$ is a feasible dual solution, it satisfies \eqref{eq:dual-sdp-komlos}. Now, taking the trace inner product with  $\sum_{ j \in [\kappa h]} v_jv_j^T$  on both sides of \eqref{eq:dual-sdp-komlos}, and using \eqref{eq:neg_subspace_SDP} gives that 
\begin{align}
\label{eq:key-sdp-komlos-bana-alg}
 \left\langle\sum_{j \in [\kappa h]}  v_j v_j^\top, \sum_{i \in [h]} \alpha_i e_i e_i^\top \right\rangle\geq \left\langle \sum_{j \in [\kappa h]} v_j v_j^\top, I\right\rangle = \kappa h.
\end{align}
As $\sum_{j \in [\kappa h]} v_j v_j^\top \preceq I$, we have $\sum_{j \in [\kappa h]} \langle v_j v_j^\top ,  e_ie_i^\top \rangle \leq 1$ for each $i \in [h]$, and thus \eqref{eq:key-sdp-komlos-bana-alg} implies have that  the dual objective
$\sum_{i \in [h]} \alpha_i  \geq \kappa h$.
\end{proof}

\section{Bibliographic Notes}
Theorem \ref{thm:alg-komlos-bana} was first proved by Bansal, Dadush and Garg in \cite{BDG16}. However, the original proof was more complicated, and our exposition here is substantially simpler and is based on the ideas in \cite{BansalG17}. 

Several alternate ways to prove Theorem \ref{thm:alg-komlos-bana} have also been discovered since then, using a variety of different algorithmic ideas.
In particular, Levy, Ramadas and Rothvoss \cite{LevyRR17} gave a deterministic algorithm based on a nice application of the multiplicative updatie method. More recently, Bansal, Laddha and Vempala  \cite{BansalLV22} gave an algorithm using the barrier function methods, and Pesenti and Vladu \cite{PesentiV23} gave another algorithm based on regularization. Both these algorithms are also deterministic.

\chapter{The Gram Schmidt Walk Algorithm}
\label{ch:gram-schmidt}
We now give an efficient algorithm for Banaszczyk's result as stated
in Theorem \ref{thm:bana} (assuming access to a membership oracle for
the convex set $K$).
As discussed in Section \ref{sec:bana-core}, it is not clear how to
work directly with a general $K$ or use the arguments in
Chapter~\ref{ch:bana-proof} or Chapter~\ref{ch:algo-komlos-bana} to give an algorithm. 

Instead we will use the following equivalent formulation described in Section \ref{sec:bana-subgaussian}.


\begin{theorem}
\label{thm:bana-subgaussian2}
Given arbitrary vectors $v_1,\ldots,v_n \in \R^m$ with
$\|v_i\|_2 \leq 1$, there is a distribution $D$ over colorings
$x \in \{-1,1\}^n$ such that the resulting discrepancy vector
$\sum_{j=1}^n x(j) v_j$, where $x$ is sampled from $D$, has a
mean-zero $O(1)$-sub-Gaussian distribution.

Moreover, given $v_1, \ldots, v_n$, a coloring $x$ can be sampled from
$D$ in polynomial time.
\end{theorem}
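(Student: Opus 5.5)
The plan is to prove the statement through the Gram--Schmidt Walk, a random walk on fractional colorings $x_t\in[-1,1]^n$ that follows the template of Lemma~\ref{lm:ch2-linalg-gen}. We start at $x_0=0$. In every phase at least one new coordinate is frozen at $\pm1$, so there are at most $n$ phases.

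At phase $t$, let $A_t$ be the set of alive coordinates (those with $|x_{t-1}(j)|<1$). We pick a pivot $p_t\in A_t$, taking the largest alive index, and keep the same pivot until it freezes. We then define the update direction $u_t\in\R^n$ as follows:
\[
u_t(p_t)=1,\qquad u_t(j)=0 \text{ for } j\notin A_t,
\]
and the coordinates $u_t(j)$ for $j\in A_t\setminus\{p_t\}$ are chosen to minimize $\|v_{p_t}+\sum_{j\in A_t\setminus\{p_t\}}u_t(j)v_j\|_2$. Equivalently, $Vu_t$ is the component of $v_{p_t}$ orthogonal to $\mathrm{span}\{v_j:j\in A_t\setminus\{p_t\}\}$, which is exactly a Gram--Schmidt step.

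Next we compute the two step sizes $\delta_t^-<0<\delta_t^+$ at which $x_{t-1}+\delta u_t$ first hits the boundary of the cube. We set $x_t=x_{t-1}+\delta_t u_t$, where $\delta_t=\delta_t^+$ with probability $-\delta_t^-/(\delta_t^+-\delta_t^-)$ and $\delta_t=\delta_t^-$ otherwise. This makes $\E[\delta_t]=0$, so $x_t$ is a martingale; moreover each step freezes at least one coordinate. The final point $x_T\in\{-1,1\}^n$ is our sample from $D$. Each step needs only a least-squares solve, so the procedure clearly runs in polynomial time.

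For sub-Gaussianity, fix $\theta\in\R^m$. The discrepancy vector is $Y=\sum_t\delta_t Vu_t$, where $V$ has columns $v_j$, so $\ip{\theta,Y}=\sum_t\delta_t\ip{\theta,Vu_t}$. We will prove $\E\exp(\ip{\theta,Y})\le\exp(O(\|\theta\|_2^2))$ using a potential argument like Lemma~\ref{fact:freedman_conc}. The key estimate, and the main obstacle, is to control the accumulated variance $\sum_t\delta_t^2\ip{\theta,Vu_t}^2$ by $O(\|\theta\|_2^2)$ without losing a factor of $n$.

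The planned route uses the vectors $w_t:=Vu_t/\|Vu_t\|_2$. Because the pivot stays fixed and the alive set only shrinks while pivots are taken in decreasing order, the $w_t$ attached to distinct pivots are orthonormal. Within a single pivot's lifetime the direction $w_t$ can change, but only as vectors drop out, and we need a charging argument for that.

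We also bound $\E[\delta_t^2\|Vu_t\|_2^2]$ by the expected increase of $x(p_t)^2$. Since $\E[\delta_t^2\mid\cdot]=\E[x_t(p_t)^2-x_{t-1}(p_t)^2\mid\cdot]$ and $\|Vu_t\|_2\le\|v_{p_t}\|_2\le1$, the total variance in direction $w_t$ charged to one pivot is at most $1$ in expectation.

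Concretely, the potential we will use is
\[
\Psi_t=\ip{\theta,Vx_t}-\tfrac{c}{2}\sum_j\ip{\theta,b_j}^2\,(1-x_t(j)^2),
\]
or an exponential version of it, where the $b_j$ are orthonormal Gram--Schmidt directions attached to the pivots. We aim to show that $\exp(\Psi_t)$ is a supermartingale, with the negative energy term absorbing the second-order terms exactly as in the regularized discrepancy argument of Chapter~\ref{ch:algo-komlos-bana}.

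Bessel's inequality then gives $\sum_j\ip{\theta,b_j}^2\le\|\theta\|_2^2$, which yields $\E e^{\ip{\theta,Y}}\le e^{c\|\theta\|_2^2/2}$. If this bookkeeping becomes too delicate, the fallback is to first prove the simpler Koml\'os-type second-moment bound $\E\ip{\theta,Y}^2\le\|\theta\|_2^2$, and then upgrade it using the bounded, martingale-difference structure of the steps.
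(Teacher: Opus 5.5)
\paragraph{Verdict.} Your algorithm is exactly the Gram--Schmidt Walk: the same pivot rule, the same least-squares direction, and the same mean-zero two-point step. Your remark that $Vu_t\perp Vu_{t'}$ when $t,t'$ lie in different phases is also correct. The gap is in the sub-Gaussian analysis, and it sits precisely at the ``charging argument'' you defer.

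\paragraph{The missing idea.} Inside one phase, $Vu_t=\Pi_{V_t^\perp}(v_{p_t})$ rotates as vectors leave $V_t$. So no single direction $b_p$ per pivot can dominate the per-step variance $\ip{\theta,Vu_t}^2$ for all $\theta$. Take $v_1=\varepsilon e_2$ and $v_2=a(e_1+e_2)$ with $0<\varepsilon<a\le 1/\sqrt2$. Pivot $2$ first moves in direction $e_1$ until $x(1)$ freezes, and then in direction $e_1+e_2$. Every nonzero $\theta\in\mathrm{span}\{e_1,e_2\}$ orthogonal to a candidate $b_2$ still has nonzero variance at one of the two steps.

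Your $\Psi_t$ has two further defects. First, the $b_j$ of pivots that have not yet appeared depend on future randomness, so $\Psi_t$ is not adapted. Second, with the minus sign the energy term has nonnegative drift. Indeed, if $e^{\Psi_t}$ were a supermartingale you would get $\E e^{\ip{\theta,Y}}\le e^{\Psi_0}\le 1$, contradicting Jensen.

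The fix is to charge to orthogonal \emph{subspaces} rather than directions.
\begin{itemize}
\item Let $\Pi_t=\Pi_{V_t^\perp}-\Pi_{V_{t-1}^\perp}$. These are mutually orthogonal projections.
\item Let $f_t$ be the first time of the current phase. Since $v_{p_t}\in V_{f_t-1}$, you get $Vu_t=P_tv_{p_t}$ with $P_t=\sum_{s=f_t}^t\Pi_s$. Hence $\ip{\theta,Vu_t}^2\le\|P_t\theta\|_2^2$.
\item Within a phase, the weight $\|P_t\theta\|_2^2$ grows by $\|\Pi_t\theta\|_2^2$. This is paid for by the adapted reservoir $\|\Pi_{V_t}\theta\|_2^2$, which drops by exactly that amount. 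The reservoir also dominates the weights of all later phases, since they live inside $V_t$.
\end{itemize}
This gives the potential $\Phi_t=\|\Pi_{V_t}\theta\|_2^2+(1-x_t(p_t)^2)\|P_t\theta\|_2^2$, with $\Phi_0\le\|\theta\|_2^2$ and $\Phi_n=0$.

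\paragraph{Large $\theta$.} Even with this potential, the one-step bound $\E[e^{\Delta Z_t}\mid\Omega_{t-1}]\le 1$ only holds when $\|P_t\theta\|_2$ is below a small constant. This is true whether you use Hoeffding's lemma or $e^y\le 1+y+y^2$ as in Lemma~\ref{fact:freedman_conc}. Sub-Gaussianity, however, must hold for every $\theta$.

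The paper handles this as follows.
\begin{itemize}
\item A time $t$ is \emph{bad} when $\|P_t\theta\|_2>1/4$. Bad times form a final segment of each phase.
\item It shows that $\exp(Y_t+2\Phi_t)$ is a supermartingale, where $Y_t$ accumulates $\ip{\Delta d_t,\theta}$ only over good steps.
\item Bad steps are bounded deterministically. Within a phase, $|\sum_{t=f}^{g}\ip{\Delta d_t,\theta}|\le 2\|P_g\theta\|_2$, because the pivot's color changes by at most $2$ over any interval.
\item So each bad phase contributes at most $8\|P_{e_k}\theta\|_2^2$. Orthogonality across phases then gives at most $8\|\theta\|_2^2$ in total.
\end{itemize}

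\paragraph{The fallback.} Your fallback does not work. A martingale with bounded increments and $\E M^2\le 1$ need not be $O(1)$-sub-Gaussian. For example, with probability $1/k$ take $k$ fair $\pm1$ steps, and otherwise stay at $0$. Then $\Pr[M=k]=2^{-k}/k\gg e^{-\Omega(k^2)}$, because the quadratic variation is controlled only in expectation. An exponential supermartingale with an energy term, as above, is unavoidable.
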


We will describe a randomized algorithm called the Gram-Schmidt Walk (GS Walk), that given $v_1,\ldots,v_n$ produces a random  coloring $x$ (sampled from some implicit distribution $D$) such that the resulting discrepancy vector $Y$ is $\sqrt{20}$-sub-Gaussian.

\smallskip 
Before describing the algorithm, it is instructive to consider the following two examples to get some intuition for the sub-Gaussianity condition in Theorem \ref{thm:bana-subgaussian2}. 

\begin{example}
\label{ex:gs-1}
    Suppose that $v_1,\ldots,v_n$ are orthogonal unit vectors. 
    
    We claim that in this case, a uniformly random $\pm 1$ coloring suffices. Indeed fix some vector $\theta \in \R^m$. Then  
    \[\ip {\theta,Y}=
\ip {\theta, \sum_{j=1}^n x(j) v_j} = \sum_{j=1}^n  x(j) \ip{\theta,v_j}.\]
For a uniformly random coloring $x \in \{-1,1\}^n$, each $x(j)$ is an
independent and uniform sample in $\{-1,+1\}$, and hence $x(j) \ip{\theta,v_j}$
is $O(|\ip{\theta,v_j}|)$-sub-Gaussian by Hoeffding's lemma. By
independence $\sum_{j=1}^n  x(j) \ip{\theta,v_j}$ is $\sigma$-sub-Gaussian
with $\sigma\lesssim \sqrt{\sum_{j=1}^n \ip{ \theta, v_j}^2 }$, and the
right hand side is at most $ \|\theta\|_2$  as the $v_j$ are orthonormal.

\begin{remark}
    The randomness is crucial; if we just output a fixed coloring $x$,
    then $Y=\sum_{j=1}^n x(j)v_j$ is $\sqrt{n}$-sub-Gaussian as
    $\ip{Y,\theta} = \|Y\|_2 = \sqrt{n}$ for the unit vector $\theta = Y/\|Y\|_2$.
\end{remark} 
\end{example} 

 \begin{example}
 \label{ex:gs-2}
On the other extreme, suppose that $v_1 = \cdots = v_n =v $ are all
identical to some $v$ with $\|v\|_2=1$. Then a uniformly random
coloring is  very bad as $Y = \left(\sum_{j=1}^n x(j)\right) v$, and
\(
  \E [\ip{Y,\theta}^2] = \E\left(\sum_{j=1}^n x(j)\right)^2 = n
\)
for
$\theta =v$, while we require that $\E [\ip{Y,\theta}^2] \lesssim
1$ for an $O(1)$-sub-Gaussian $Y$ and a unit vector $\theta$.

The right thing to do in this example, of course, is to pair up the
signs $x(j)$ and cancel them out as much as possible. We can
then make sure that $\|Y\|_2 \lesssim 1$ with probability $1$.
\end{example}

The GS Walk algorithm will handle these two extreme scenarios in a unified way. It will exploit the linear dependencies to cancel things exactly as much as possible, and at the same time have enough randomness so that  the resulting distribution is $O(1)$-sub-Gaussian.

We describe this algorithm next in Section~\ref{sec:gs:algorithm}.
In Section~\ref{sec:gs-covariance} we show that the covariance of the output discrepancy vector $Y$ satisfies $\text{Cov}(Y) \preceq I$ and finally in Section~\ref{sec:gs-subgaussian} we prove the sub-Gaussianity.

\section{The Gram-Schmidt Walk}
\label{sec:gs:algorithm}
We first describe the algorithm informally.

Let $v_1,\ldots,v_n \in \R^m$ be the input vectors. The algorithm begins with the fractional coloring $x_0 =(0,\ldots,0)$ and updates it iteratively over time until the final coloring $x_T \in \{-1,1\}^n$. Once  a coordinate reaches $\pm 1$ it is frozen and not updated anymore, otherwise it is alive.

To update the coloring at time $t$, we compute an update direction
$u_t$ based on which coordinates are still alive, and set $x_t = x_{t-1} + \delta_t u_t$ where the step size $\delta_t$ is chosen randomly such that $\E[\delta_t]=0$ and at least one new variable gets frozen.

\paragraph{Computing the update.} The idea behind the update step (which is the crux of the algorithm) is the following. 

Assume for simplicity that no coordinate is frozen.
Let $B$ denote the matrix with columns $v_1,\ldots,v_n$. 
If the coloring $x$ is updated by $u$, the discrepancy vector changes by $Bu = u(1) v_1 + u(2) v_2 +  \ldots + u(n) v_n$.

Suppose that we are required to set $u(n)=1$ (to ensure progress on coordinate $n$),   
but can freely set $u(1),\ldots,u(n-1)$ in $\R$ (not just in  $[-1,1]$). Then intuitively, we should try to use this flexibility to cancel things to to make $Bu$ as small as possible. 

Concretely, let us try to minimize $\|Bu\|_2$ subject to $u(n) = 1$.
Then we explicitly have
\begin{align}
\label{eq:gs-bu}
    \min_{u: u(n)=1} \| Bu\|_2 & =  \min_{u(1),\ldots,u(n-1) } \left\|v_n  + \sum_{j=1}^{n-1} u(j) v_j  \right\|_2  \nonumber \\
    & = \|\Pi_{V^\perp} (v_n)\|_2,
\end{align} 
where $V=\text{span}(v_1,\ldots,v_{n-1})$ is the subspace spanned by $v_1,\ldots,v_{n-1}$, $V^\perp$ is the subspace orthogonal to $V$, and for any subspace $W$, $\Pi_W$ denotes the orthogonal projection operator onto  $W$. 

To see \eqref{eq:gs-bu}, set $u(1),\ldots,u(n-1)$ so that
$\sum_{i=1}^{n-1} u(i) v_i = - \Pi_V(v_n)$ and hence $Bu = v_n -
\Pi_V(v_n) =\Pi_{V^\perp} (v_n)$. This is also
the least possible value of $\|Bu\|_2$, because $\sum_{j=1}^{n-1} u(j)
v_j \in V$, so
\[
  \|Bu\|_2 \ge \|\Pi_{V^\perp}(Bu)\|_2 = \|\Pi_{V^\perp} (v_n)\|_2.
\]

\paragraph{Notation.}
 The algorithm is described formally below. See also Figure \ref{fig:enter-label}.
Let $x_{t-1}$ and $ A_{t-1}$ denote the coloring and the set of alive coordinates at the beginning of time $t$. 
Let
 $p_t$ be the largest-indexed (alive) element in $A_{t-1}$ at time $t$. 
 
 We compute the vector $u_t$ to achieve the minimum in \eqref{eq:gs-bu} (restricted to alive coordinates and $p_t$ as the index $n$), and update $x_t$ randomly along $u_t$
 so that $x_t$ stays in $[-1,1]^n$ and least one more variable reaches $\pm 1$.
 We call $p_t$ the \textit{pivot}  at time $t$, and it plays a special role at time $t$.

\begin{figure}[hbtp!]
    \centering
    \includegraphics[scale=0.7, trim=5mm 10mm 20mm 5mm]{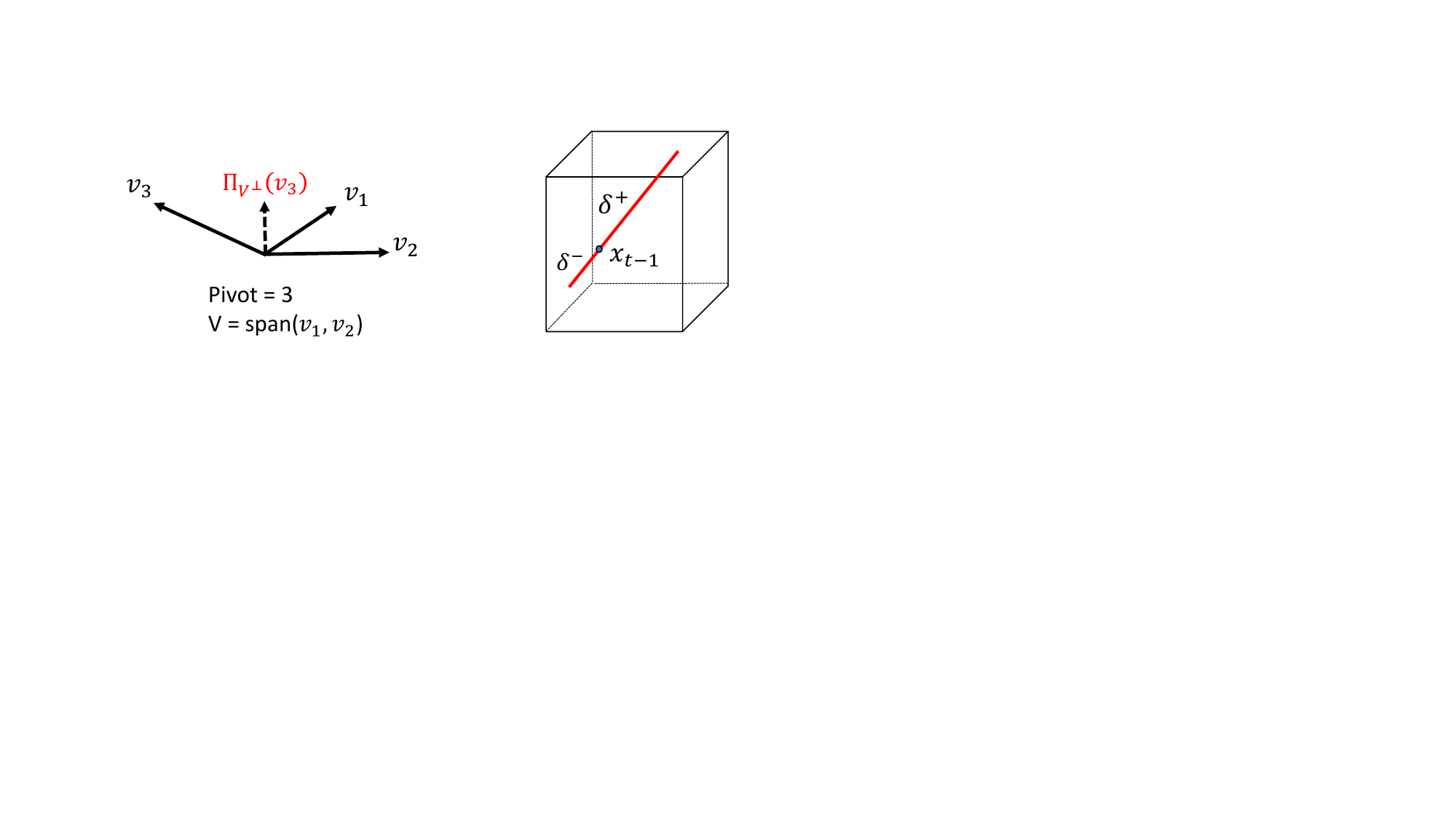}
    \caption{The left shows the pivot and discrepancy update direction. The right shows how the coloring updates.}
    \label{fig:enter-label}
\end{figure}

\noindent {\bf Algorithm Description.} 
Let $B$ denote the input matrix with columns $v_1,\ldots,v_n$. 
Initialize $x_0=(0,\ldots,0)$ and $A_0=[n]$.\\

For $t=1,\ldots,n$ do the following:
\begin{enumerate}

\item 
Compute  $u_t = \text{argmin}_{u} \|Bu\|_2$, subject to $u(p_t)=1$ for the pivot $p_t$, and $u(j)=0$ for each frozen variable $j \notin A_{t-1}$.

\item Let $\delta^{-}_t <0< \delta^{+}_t$ be the maximal negative and positive values for $\delta$, such that 
$|x_{t-1} + \delta u_t| \in [-1, 1]^n$ for all $\delta \in
[\delta_t^-, \delta_t^+]$.
Let
\[
\delta_t \eqdef
\begin{cases}
\delta^{-}_t & \text{with probability }  \delta^{+}_t/(\delta^{+}_t -  \delta^{-}_t)\\
\delta^{+}_t & \text{with probability }  -\delta^{-}_t/( \delta^{+}_t -  \delta^{-}_t)
\end{cases}\;.
\]
\item Set $x_t = x_{t-1} + \delta_t u_t$, and update $A_t \eqdef \{i \in [n]: |x_t(i)| < 1\}$.
\end{enumerate}

\subsection{Basic Properties}
Let us note some basic properties of the algorithm and define some notation that will be useful later.
\paragraph{Coloring update.}
Let $\Delta x_t := x_t - x_{t-1} = \delta_t u_t$ denote the coloring update  at time $t$, and
let $\Omega_{t-1}$ denote all the random choices $\delta_1,\ldots,\delta_{t-1}$ made by the algorithm  by time $t-1$.
Note that $\E[\delta_t| \Omega_{t-1}]=0$, and thus each color $x_t(i)$ evolves as a martingale. 

The step sizes $\delta_t^-,\delta_t^+$  ensure that at least one alive variable always reaches $\pm 1$.
We can assume that exactly one variable gets frozen per step (else we can divide the step into several steps). Thus the algorithm terminates in exactly $n$ steps. 

\paragraph{Discrepancy update.} Let $d_t := Bx_t$ denote the discrepancy vector at the end of time $t$, and let
$\Delta d_t := d_t - d_{t-1}$ denote its update at $t$.

At time $t$, let $V_t$ denote the subspace spanned by the alive vectors excluding the pivot, that is by $\{ v_i:i \in A_{t-1}, i \neq  p_t\}$. 
Notice that $V_0 \supseteq V_1 \supseteq \ldots \supseteq V_n = \emptyset$, is a (random) nested sequence produced by the execution of the algorithm, where we define $V_0= \text{span}\{v_1,\ldots,v_n\}$.
Equivalently, $V_0^\perp \subseteq V_1^\perp \subseteq \ldots \subseteq V_n^\perp$.

As $Bu_t = \Pi_{V_t^\perp}(v_{p_t})$ by \eqref{eq:gs-bu}, we can explicitly write
\begin{equation}
\label{eq:gs-disc-update-explicit1}
    \Delta d_t = B \Delta x_t =  \delta_t  B u_t = \delta_t \Pi_{V_t^\perp}(v_{p_t}). 
\end{equation} 

\paragraph{Phases.}
When the coloring is updated at time $t$, two things may happen:
\begin{enumerate}
    \item 
 Some non-pivot $i \neq p_t$ gets frozen. Then we update $A_t = A_{t-1}\setminus \{i\}$ and (crucially) keep the pivot unchanged, i.e.,~$p_{t+1}=p_t$.
    \item The current pivot gets frozen, i.e., $x_{t}(p_t)$ reaches $\pm 1$. Then we update $A_t =A_{t-1}\setminus \{p_t\}$ and choose a new pivot $p_{t+1} \in A_{t}$ (as the largest indexed element in $A_t$).
\end{enumerate}

For a pivot $p_t$ at time $t$, let $f_t\leq t$ and $\ell_t \geq t$
denote the first (resp.~last) time when $p_t$ was the pivot. We call
the time interval $[f_t,\ell_t]$ a {\em phase}. Thus the phases
partition the time steps $[n]$ into maximal intervals, where inside
each interval the pivot remains unchanged. 

\subsection{Examples} It is instructive to see the algorithm on Examples \ref{ex:gs-1} and \ref{ex:gs-2}.

If the $v_i$ are orthogonal, we claim that it produces a uniformly random coloring.
Indeed, as $\Pi_{V_t} (v_{p_t})=0$, we have $\Pi_{V_t^\perp} (v_{p_t})=v_{p_t}$ and thus  $u_t(p_t)=1$ and $u_t(i)=0$ for $i\neq p_t$. In other words, the algorithm only updates $x(p_t)$ at time $t$ and sets it independently to $\pm 1$. At each step it will choose a new pivot and thus output a uniformly random $\pm 1$ coloring.

If the $v_i$ are identical, we claim that the algorithm will cancel out the colors as much as possible as desired. 
Indeed, as long as at least two elements are alive, we have $\Pi_{V_t}(v_{p_t}) = v_{p_t}$ and thus $\Pi_{V_t^\perp}(v_{p_t}) = 0$ and so $\Delta d_t=0$ by \eqref{eq:gs-disc-update-explicit1}. So the final coloring will have an equal number of $\pm 1$ (up to the parity of $n$).

\section{Bounding the Covariance}
\label{sec:gs-covariance}

Before proving Theorem \ref{thm:gs-subg} in Section \ref{sec:gs-subgaussian},
we first bound the covariance of $d_n$.
This already contains most of the conceptual ideas and avoids various technical details.
In fact, we prove the following optimum covariance bound. 
\begin{theorem}
\label{thm:gs-covariance}
 The final discrepancy vector has covariance \[\E[d_n d_n^T ] \preceq I.\]
Equivalently, $\E[\ip{d_n,\theta}^2] \leq \|\theta\|_2^2$ for all
$\theta \in \R^m$.
\end{theorem}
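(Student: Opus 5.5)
Fix $\theta \in \R^m$. The plan is to write $\ip{d_n,\theta} = \sum_{t=1}^n \delta_t \ip{\Pi_{V_t^\perp}(v_{p_t}),\theta}$ using \eqref{eq:gs-disc-update-explicit1}. Since $\E[\delta_t \mid \Omega_{t-1}] = 0$, the increments are martingale differences, so the cross terms vanish and
\[
\E\ip{d_n,\theta}^2 = \sum_t \E\big[\delta_t^2 \ip{w_t,\theta}^2\big], \qquad w_t := \Pi_{V_t^\perp}(v_{p_t}).
\]
The difficulty is that $\E[\delta_t^2 \mid \Omega_{t-1}] = -\delta_t^+\delta_t^-$ can be large. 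I will therefore control it through the pivot's color. Because $u_t(p_t)=1$, we have $\Delta x_t(p_t) = \delta_t$. Within a phase with pivot $p$, the quantity $x_t(p)^2$ increases in conditional expectation by exactly $\E[\delta_t^2\mid\Omega_{t-1}]$. Summed over the phase, this gives $\E\big[\sum_{t\in\text{phase}}\delta_t^2 \mid \text{start}\big] \le 1 - x_{f}(p)^2 \le 1$. This holds because $x(p)$ is a martingale bounded in $[-1,1]$ and stops at $\pm1$ (or at the end of the algorithm).

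Next, I need $\ip{w_t,\theta}^2$ to be essentially constant within a phase, or at least dominated by one quantity per phase. Inside a phase the pivot is fixed and $V_t$ only shrinks, so $w_t$ changes. To handle this, I will decompose $w_t$ orthogonally. Let $V^{(0)} = V_{f}$ at the phase start. Then $V_t^\perp \supseteq V_{f}^\perp$, and I write $w_t = \Pi_{V_t^\perp} v_p$. The cleaner route is a per-step charging. Let $W_t := V_t^\perp$, which grows over time. Then $w_t \in W_t$ and $w_t \perp W_{t'}$ for... This does not work directly, so instead I will use the following.

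The key claim is that the vectors $w_t/\|w_t\|$ over \emph{distinct phases} are orthonormal, and within a phase I will bound using the largest $\ip{w_t,\theta}^2$. The plan is to show that for phases with pivots $p$ (earlier) and $p'$ (later), $p$ is frozen before $p'$'s phase begins. Since $p'$ has a smaller index than $p$ and was alive during $p$'s phase, $v_{p'}$ lies in $V_t$ for those $t$. So $w_t \perp v_{p'}$, and more generally $w_t \perp$ span of all vectors alive at later phases, which contains $w_{t'}$ for $t'$ later. Hence $w_t \perp w_{t'}$ across phases.

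Within a phase, $w_t$ changes when non-pivots freeze. I will note that $w_t$ for later $t$ in the same phase equals $w_{t_{\text{old}}}$ plus a component in $V_{t_{\text{old}}}\cap V_t^\perp$. Consequently the entire set $\{w_t\}$ is generated by an orthogonal decomposition: $w_t = \sum_{s\le t \text{ in phase}} z_s$ with the $z_s$ mutually orthogonal and orthogonal to everything in other phases (via a Gram--Schmidt ordering of the vectors by freezing time). Then $\ip{w_t,\theta}^2 \le \sum_{s\le t}\|z_s\|^{-2}$... This is the main obstacle. I expect to resolve it by reorganizing the sum as $\sum_s \ip{z_s/\|z_s\|,\theta}^2\cdot(\text{weight})$, exchanging the order of summation, and using the facts that $\|z_s\|\le 1$ and that the remaining pivot-variance after time $s$ in the phase is at most $1$ in conditional expectation. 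Combining this with $\sum_s \ip{\hat z_s,\theta}^2 \le \|\theta\|^2$ by orthonormality yields $\E\ip{d_n,\theta}^2 \le \|\theta\|_2^2$.
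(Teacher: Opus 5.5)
Your outline follows the same route as the paper's proof. It uses martingale orthogonality of the increments, the nested-subspace decomposition $w_t=\Pi_{V_t^\perp}v_{p_t}=\sum_{s=f_t}^{t}\Pi_s v_{p_t}$ with $\Pi_s=\Pi_{V_s^\perp}-\Pi_{V_{s-1}^\perp}$ mutually orthogonal (your $z_s$), an interchange of summation, and the pivot bound $\E\bigl[\sum_{t'=s}^{\ell_s}\delta_{t'}^2\mid\Omega_{s-1}\bigr]=1-x_{s-1}(p_s)^2\le 1$. However, the step you call the main obstacle is left open, and the fact you propose for it does not close it. You need a pointwise bound $\ip{w_t,\theta}^2\le\sum_{s=f_t}^{t}c_s$, where each $c_s$ is determined by $\Omega_{s-1}$ and $\sum_s c_s\le\|\theta\|_2^2$. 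Write $\ip{w_t,\theta}=\sum_{s=f_t}^{t}\|z_s\|_2\,\ip{\hat z_s,\theta}$. Using only $\|z_s\|_2\le 1$ term by term gives $\ip{w_t,\theta}^2\le\bigl(\sum_s|\ip{\hat z_s,\theta}|\bigr)^2$. This can exceed $\sum_s\ip{\hat z_s,\theta}^2$ by a factor equal to the phase length, and your final bound would carry that factor.

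The missing idea is Cauchy--Schwarz applied to the whole sum. Since $\sum_{s=f_t}^{t}\|z_s\|_2^2=\|w_t\|_2^2\le\|v_{p_t}\|_2^2\le 1$, you get $\ip{w_t,\theta}^2\le\sum_{s=f_t}^{t}\ip{\hat z_s,\theta}^2$. The paper does this more cleanly through self-adjointness: $\ip{P_tv_{p_t},\theta}=\ip{v_{p_t},P_t\theta}\le\|P_t\theta\|_2$ with $P_t=\sum_{s=f_t}^{t}\Pi_s$, and $\|P_t\theta\|_2^2=\sum_{s=f_t}^{t}\|\Pi_s\theta\|_2^2$. Taking $c_s=\|\Pi_s\theta\|_2^2$ also avoids normalizing $z_s$ when it vanishes. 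Then interchanging sums and conditioning on $\Omega_{s-1}$ gives $\E\ip{d_n,\theta}^2\le\E\sum_s\|\Pi_s\theta\|_2^2\le\|\theta\|_2^2$.

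Your earlier idea of bounding by the largest $\ip{w_t,\theta}^2$ within a phase would not work as stated. That maximum is not determined at the start of the phase, since it depends on which non-pivots freeze and hence on the signs of the $\delta_t$. So it cannot be decoupled from the phase's quadratic variation. This is exactly why the weights must be attached to individual steps $s$ and be $\Omega_{s-1}$-measurable.
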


Let us fix some $\theta \in \R^m$.
We will track how $\E[\ip{d_t,\theta}^2]$ evolves as the algorithm proceeds.

As $d_n = \sum_{t=1}^n \Delta d_t$ and $\Delta d_t = \delta_t Bu_t$, we have
\[\ip{d_n,\theta} = \sum_{t=1}^n\ip{ \Delta d_t,\theta} = \sum_{t=1}^n \delta_t \ip{Bu_t, \theta}.\]
As the increments $\delta_t$ satisfy $\E[\delta_t|\Omega_{t-1}]=0$
(and the update direction $u_t$  and the pivot $p_t$ are completely
determined by $\Omega_{t-1}$), the sequence $ \ip{d_t,\theta}$ has martingale increments. Thus we have that,
\begin{equation}
\label{eq:gs-disc1}
\E[\ip{d_n,\theta}^2] = \E \left( \sum_{t=1}^n \delta_t   \ip{Bu_t, \theta} \right)^2 = \E \left[ \sum_{t=1}^n \delta_t^2   \ip{Bu_t, \theta}^2 \right].\end{equation}
\paragraph{A naive attempt.} 
As $\|Bu_t\|_2 = \|\Pi_{V_t^\perp}(v_{p_t})\|_2 \leq \|v_{p_t}\|_2 \leq 1$, one can bound the right side of \eqref{eq:gs-disc1}
 by $\E[\sum_t \delta_t^2 \|\theta\|_2^2]$. However, this can be as large as $\Omega(n)\|
\theta\|_2^2$ in general (instead of the desired bound of $\|\theta\|_2^2$).

The problem is that the bound $\|\Pi_{V_t^\perp}(v_{p_t})\|_2 \leq 1 $ is too crude, and does not exploit the linear algebraic properties of the algorithm; in particular how the subspaces $V_t$ evolve and how the pivots $p_t$ are chosen. 


To this end, we  will decompose the discrepancy increment $\Delta d_t = \delta_t Bu_t = \delta_t \Pi_{V_t^\perp}(v_{p_t})$ further based on evolution of $V_t$ during the phase where $p_t$ was the pivot. 

\paragraph{Nested subspaces and decomposing $\Pi_{V_t^\perp}(v_{p_t})$ further.}
 Recall that for any two subspaces nested $A\subseteq B$, we can
 write $B$ as the direct sum of the orthogonal subspaces $A$ and $B \cap A^\perp$. Equivalently,  
 the projection $\Pi_{B}$ onto $B$ can be written as  $\Pi_{B} =
 \Pi_{B \cap A^\perp} + \Pi_{A}$, the sum of projections onto $B \cap
 A^\perp$ and $A$. In other words, $\Pi_{B} - \Pi_A = \Pi_{B \cap A^\perp}$.

Applying this to the sequence of subspaces $V_0^\perp \subseteq V_1^\perp \subseteq \ldots \subseteq V_n^\perp$ produced by the algorithm, let us define 
\begin{equation}
\label{eq:gs-pi_t} \Pi_t :=   \Pi_{V_t^\perp}  - \Pi_{V_{t-1}^\perp} = \Pi_{V_t^\perp \cap V_{t-1}},  \end{equation}
 for $t=1,\ldots,n$. Notice that the $\Pi_t$ are mutually orthogonal,
 \ie, $\Pi_{s}\Pi_{t} = \Pi_t \Pi_s = 0$ whenever $s < t$. This is the case since
 the subspace $V_{s}^\perp \cap V_{s-1} \subseteq V_s^\perp \subseteq
 V_{t-1}^\perp$ is orthogonal to $V_{t-1}$.
 
We have the following useful fact.
\begin{lemma}
\label{lem:gs-bu_t-sum}
    At any time $t \in [n]$, we have that
    $Bu_t =  \Pi_{V_{t}^\perp}(v_{p_t})= \sum_{t'=f_t}^t \Pi_{t'} (v_{p_t})$,
    where $p_t$ is the pivot and $f_t$ is the start time of the phase containing $t$. 
\end{lemma}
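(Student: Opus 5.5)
We need to prove Lemma \ref{lem:gs-bu_t-sum}: $Bu_t = \Pi_{V_t^\perp}(v_{p_t}) = \sum_{t'=f_t}^t \Pi_{t'}(v_{p_t})$. The first equality is already \eqref{eq:gs-disc-update-explicit1}, which comes from the minimization \eqref{eq:gs-bu} applied to the alive vectors with $p_t$ in the role of the index $n$. So the content is the second equality, and the plan is to telescope using \eqref{eq:gs-pi_t}.

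\textbf{Telescoping.} By definition $\Pi_{t'} = \Pi_{V_{t'}^\perp} - \Pi_{V_{t'-1}^\perp}$. Summing over $t' = f_t, \ldots, t$ gives
\[
\sum_{t'=f_t}^t \Pi_{t'} = \Pi_{V_t^\perp} - \Pi_{V_{f_t-1}^\perp}.
\]
Applying this to $v_{p_t}$, it suffices to show that $\Pi_{V_{f_t-1}^\perp}(v_{p_t}) = 0$, or equivalently that $v_{p_t} \in V_{f_t-1}$.

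\textbf{Key observation.} At time $f_t-1$, the phase of the previous pivot $p_{f_t-1}$ is still in progress. The space $V_{f_t-1}$ is spanned by $\{v_i : i \in A_{f_t-2},\ i \neq p_{f_t-1}\}$. Since $p_t$ becomes the pivot at time $f_t$, it is alive at that point, so $p_t \in A_{f_t-1} \subseteq A_{f_t-2}$. It is also distinct from the previous pivot $p_{f_t-1}$, because that pivot is exactly the element frozen at time $f_t-1$, which is what ends its phase. Hence $v_{p_t}$ is one of the spanning vectors of $V_{f_t-1}$, so $\Pi_{V_{f_t-1}^\perp}(v_{p_t}) = 0$, and the identity follows.

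\textbf{Edge case.} The one step needing care is the boundary $f_t = 1$. There $V_0$ is defined to be $\mathrm{span}\{v_1,\ldots,v_n\}$, which contains $v_{p_t}$, so the same conclusion holds. I do not expect a genuine obstacle. The only subtlety is the index bookkeeping: $V_{t'}$ is defined from $A_{t'-1}$ and the pivot $p_{t'}$, and the convention that exactly one variable freezes per step guarantees that the element whose freezing ends the previous phase is exactly the previous pivot.
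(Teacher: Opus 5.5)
Your proof is correct and follows the paper's argument. Both telescope $\sum_{t'=f_t}^t \Pi_{t'}$ to $\Pi_{V_t^\perp}-\Pi_{V_{f_t-1}^\perp}$, then kill the second term because $v_{p_t}\in V_{f_t-1}$: $p_t$ was alive at time $f_t-1$ but was not the pivot then. Your explicit handling of the boundary case $f_t=1$ via $V_0=\mathrm{span}\{v_1,\ldots,v_n\}$ is a detail the paper leaves implicit.
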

Henceforth, we denote $P_t:= \sum_{t'=f_t}^t \Pi_{t'}$, so that 
$Bu_t = P_t v_{p_t}$.
\begin{proof}
As $v_{p_t}$ was alive  at time $f_t-1$ and it was not the pivot at that time it  lies in the subspace $V_{f_t-1}$. So  $\Pi_{V_{f_t-1}^\perp}(v_{p_t})=0$ and thus,
    \[  \Pi_{V_{t}^\perp}(v_{p_t}) = \Pi_{V_{t}^\perp}(v_{p_t}) - \Pi_{V_{f_t-1}^\perp}(v_{p_t}) 
     = \sum_{t'\in [f_t,t]} \Pi_{t'} (v_{p_t}). \qedhere\]
\end{proof}
We can now show the following refined bound on $\E[\ip{d_n,\theta}^2]$.
\begin{lemma} \label{lem: gs-ref-bd} 
\[\E[\ip{d_n,\theta}^2] \leq  \E\left[  \sum_{t=1}^n \|\Pi_{t} (\theta) \|_2^2\sum_{t'=t}^{\ell_t}  \delta_{t'}^2\right]  ,\] where $\ell_t$ is the last time of the phase containing $t$.
\end{lemma}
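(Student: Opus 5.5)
We begin from \eqref{eq:gs-disc1}, which gives $\E[\ip{d_n,\theta}^2] = \E[\sum_{t=1}^n \delta_t^2 \ip{Bu_t,\theta}^2]$. By Lemma~\ref{lem:gs-bu_t-sum}, $Bu_t = P_t v_{p_t}$ with $P_t = \sum_{t'=f_t}^t \Pi_{t'}$. Since the $\Pi_{t'}$ are mutually orthogonal projections, $P_t$ is itself an orthogonal projection. This lets us move it onto $\theta$: $\ip{Bu_t,\theta} = \ip{P_t v_{p_t},\theta} = \ip{v_{p_t}, P_t\theta}$. Cauchy--Schwarz and $\|v_{p_t}\|_2\le 1$ then give
\[
\ip{Bu_t,\theta}^2 \le \|P_t\theta\|_2^2 = \sum_{t'=f_t}^{t} \|\Pi_{t'}\theta\|_2^2 ,
\]
where the last equality again uses that the projections $\Pi_{t'}$ act on mutually orthogonal subspaces. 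This is the key gain over the naive attempt. Instead of bounding by $\|\theta\|_2^2$, each step is bounded only by the mass of $\theta$ in the pieces of $V^\perp$ that have been added during the current phase.

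Substituting, we obtain
\[
\E[\ip{d_n,\theta}^2] \le \E\Big[\sum_{t=1}^n \delta_t^2 \sum_{t'=f_t}^{t}\|\Pi_{t'}\theta\|_2^2\Big].
\]
Next we swap the order of summation inside the expectation. This is a pathwise identity for each realization of the algorithm. A pair $(t,t')$ contributes exactly when $t'\le t$ and $t$ lies in the same phase as $t'$. Since phases are intervals, this is equivalent to $t'\le t\le \ell_{t'}$. Also, $f_t = f_{t'}$ and $\ell_t=\ell_{t'}$ for such pairs, so the reindexing is consistent. Relabeling the variables then gives $\sum_{t=1}^n \|\Pi_t\theta\|_2^2 \sum_{t'=t}^{\ell_t}\delta_{t'}^2$, which is the claimed bound.

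The only point needing care is that all objects ($f_t,\ell_t,\Pi_t,P_t$) are random. The argument therefore has to be done inside the expectation, pathwise, which is fine because \eqref{eq:gs-disc1} already reduced everything to a single expectation of a pathwise sum. The orthogonal-projection facts hold on every path. I expect the main subtlety to be verifying that $P_t$ is a genuine orthogonal projection, so that we may transfer it to $\theta$ and so that $\|P_t\theta\|^2$ splits as a sum. This follows from the nestedness $V_{t'}^\perp\subseteq V_{t}^\perp$ and the telescoping definition \eqref{eq:gs-pi_t}. In fact $P_t = \Pi_{V_t^\perp}-\Pi_{V_{f_t-1}^\perp} = \Pi_{V_t^\perp\cap V_{f_t-1}}$.
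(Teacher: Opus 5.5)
Your proposal is correct and follows the paper's proof essentially step for step. You start from \eqref{eq:gs-disc1}, write $Bu_t=P_tv_{p_t}$, move the self-adjoint $P_t$ onto $\theta$ and apply Cauchy--Schwarz, split $\|P_t\theta\|_2^2$ using the mutual orthogonality of the $\Pi_{t'}$, and interchange the sums over pairs $(t,t')$ in a common phase. Your explicit observation that $P_t=\Pi_{V_t^\perp\cap V_{f_t-1}}$ is an orthogonal projection simply spells out the self-adjointness that the paper uses.
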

\begin{proof}
By  \eqref{eq:gs-disc1} and Lemma \ref {lem:gs-bu_t-sum}, we have 
\begin{equation}
    \E[\ip{d_n,\theta}^2] = \E \left[ \sum_{t=1}^n \delta_t^2 \ip{Bu_t, \theta}^2\right] = \E \left[ \sum_{t=1}^n \delta_t^2 \ip{P_t v_{p_t}, \theta}^2\right], \label{gs:covariance-bd}
\end{equation} 
As $P_t$ is self-adjoint and  $\|v_{p_t}\|_2\leq 1$, by the Cauchy-Schwarz inequality
   \[\ \ip { P_t v_{p_t}, \theta }^2  =  \ip { v_{p_t},P_t \theta }^2 
     \leq \| v_{p_t}\|^2_2 \,\|P_t\theta\|^2_2 \leq \|P_t\theta\|^2_2.\]
     Further, as the $\Pi_{t'}$ are mutually orthogonal,
     \[\|P_t\theta\|^2_2 = \left\| \sum_{t' \in [f_t,t] }\Pi_{t'} (\theta) \right\|_2^2 = \sum_{t' \in [f_t,t]} \|\Pi_{t'}(\theta)\|_2^2.\] 
Plugging this into \eqref{gs:covariance-bd} and interchanging summations gives
   \[ \E[\ip{d_n,\theta}^2] \leq \E\left[ \sum_{t=1}^n \delta_t^2 \sum_{t'\in [f_t,t]} \|\Pi_{t'}(\theta)\|_2^2\right]  = \E\left[ \sum_{t=1}^n \   \|\Pi_{t}(\theta)\|_2^2 \sum_{t'\in[t,\ell_t]} \delta_{t'}^2 \right]. \qedhere\] 
\end{proof}

\paragraph{A martingale argument.} We can now finish the proof of Theorem \ref{gs:covariance-bd}.
We only need the following standard fact about martingales. 

Consider a real-valued martingale  $X_t$ starting at some fixed $X_0$
with increments $\delta_t = X_t - X_{t-1}$, where  $|X_t |\leq 1$ for
all $t$ with probability $1$.
\begin{lemma} \label{lem:gs-martingale} Let
  $\tau \eqdef \min\{t: |X_t| = 1\}$ be the first (random) time when
  the martingale $X_t$ above reaches $+1$ or $-1$, and assume that,
  for some fixed $N$, $\tau \le N$ with probability $1$. For any
  $t\geq 1$, the expected quadratic variation of future increments
  satisfies
  $\E\left[ \sum_{t'= t}^\tau \delta_{t'}^2 | \Omega_{t-1}\right] =
  1-X_{t-1}^2$, where $\Omega_{t-1}$ denotes all the random choices up
  to time $t-1$.
\end{lemma}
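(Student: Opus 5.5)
The plan is to use the standard orthogonality-of-martingale-increments identity together with optional stopping. Fix $t \ge 1$ and condition on $\Omega_{t-1}$, so that $X_{t-1}$ is determined. On the event $\tau < t$ both sides need interpretation; the intended case is $\tau \ge t$, i.e.\ $|X_{t-1}|<1$. If instead $|X_{t-1}|=1$, then the sum on the left is empty and $1-X_{t-1}^2=0$, so the statement holds trivially with the convention that the sum over $t'\in[t,\tau]$ is empty when $\tau<t$. So I will assume $\tau \ge t$.

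The key identity is that for a martingale the squares telescope in expectation. For each $s$ I will write
\[
X_s^2 - X_{s-1}^2 = 2X_{s-1}\delta_s + \delta_s^2 ,
\]
and take conditional expectations given $\Omega_{s-1}$. Since $\E[\delta_s\mid \Omega_{s-1}]=0$ and $X_{s-1}$ is $\Omega_{s-1}$-measurable, this gives $\E[X_s^2-X_{s-1}^2\mid\Omega_{s-1}]=\E[\delta_s^2\mid \Omega_{s-1}]$. To handle the random endpoint $\tau$, I will use the stopped process: define $\delta'_s=\delta_s\mathbf 1\{s\le\tau\}$. The event $\{s\le \tau\}=\{|X_{s-1}|<1 \text{ and not stopped before}\}$ is $\Omega_{s-1}$-measurable, so $X_{s\wedge\tau}$ is again a martingale with increments $\delta'_s$. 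The identity above then holds for $X_{s\wedge \tau}$.

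Next I will sum from $s=t$ to $N$ and use the tower property; all sums are finite because $\tau\le N$ almost surely. This yields
\[
\E\Big[\sum_{t'=t}^{\tau}\delta_{t'}^2\,\Big|\,\Omega_{t-1}\Big]=\E[X_{N\wedge\tau}^2\mid\Omega_{t-1}]-X_{(t-1)\wedge \tau}^2=\E[X_\tau^2\mid \Omega_{t-1}]-X_{t-1}^2 .
\]
By the definition of $\tau$ we have $|X_\tau|=1$, so the first term equals $1$, which gives the claim. Boundedness $|X_t|\le1$ guarantees integrability of every term, so no convergence issues arise.

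The only step requiring care is checking that stopping at $\tau$ preserves the zero-mean property of the increments. This is the measurability of $\{s\le\tau\}$ with respect to $\Omega_{s-1}$. In the Gram–Schmidt Walk application, where $X_t=x_t(i)$ and freezing is determined by the past, this holds by construction. Beyond that, the proof is a routine telescoping argument.
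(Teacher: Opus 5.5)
Your proof is correct and is essentially the paper's argument. The paper notes that $Y_s = X_{t+s-1}^2 - \sum_{t'=t}^{t+s-1}\delta_{t'}^2$ (with $Y_0 = X_{t-1}^2$) is a martingale, since $\E[X_s^2 - X_{s-1}^2 - \delta_s^2 \mid \Omega_{s-1}] = 2X_{s-1}\E[\delta_s \mid \Omega_{s-1}] = 0$, and then applies the optional stopping theorem together with $X_\tau^2 = 1$. You use the same identity but prove the bounded optional-stopping step by hand, by telescoping the stopped process $X_{s\wedge\tau}$ up to the deterministic horizon $N$; this makes explicit where $\tau \le N$ and the $\Omega_{s-1}$-measurability of $\{s \le \tau\}$ are used.
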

\begin{proof}
  We assume that $t \le \tau$, since the lemma is trivial otherwise.
  Let us define the auxiliary random variables $Y_s \eqdef X_{t+s-1}^2 - \sum_{t' =
    t}^{t+s-1} \delta_{t'}^2$, for any integer $s \ge 1$, and $Y_{0}
  \eqdef X_{t-1}^2$. We claim that, conditional on $\Omega_{t-1}$,
  $Y_0, Y_1, \ldots$ forms a
  martingale sequence. Indeed, for any $s \ge 1$ we have
  \begin{align*}
    \E[Y_s - Y_{s-1}|\Omega_{t+s-2}]
    &= \E[X_{t+s-1}^2  - X_{t+s-2}^2 - \delta_{t+s-1}^2|\Omega_{t+s-2}]\\
    &= \E[2\delta_{t+s-1}X_{t+s-2} | \Omega_{t+s-2}]  = 0,
  \end{align*}
  by the martingale property of $X_t$. Thus, by the optional stopping
  theorem, and using $X_{\tau}^2 = 1$,
  \[
    \E\left[1 - \sum_{t'=t}^\tau \delta_{t'}^2|\Omega_{t-1}\right]
    =\E[Y_{\tau - t+1}|\Omega_{t-1}]
    = Y_0 = X_{t-1}^2.
  \]
  The lemma follows after re-arranging.
\end{proof}

\paragraph{Proof of Theorem \ref{gs:covariance-bd}.}
Fix some time $t$, and condition on $\Omega_{t-1}$. This fixes $V_t$, $\Pi_t$ and the pivot $p_t$. As $p_t$ stays unchanged until its color reaches $\pm 1$ (which happens at time $\ell_t$), by Lemma \ref{lem:gs-martingale} 
\[ \E \bigg[ \sum_{t'\in [t,\ell_t]} \delta_{t'}^2\, |\Omega_{t-1} \bigg] = 1- x_{t-1}(p_t)^2 \leq 1.\]
Averaging over $\Omega_{t-1}$ gives that  $\E \left[ \sum_{t' \in [t, \ell_t]} \delta_{t'}^2 \|\Pi_t (\theta)\|_2^2\right] \leq \|\Pi_t (\theta)\|_2^2,$  and thus by Lemma \ref{lem: gs-ref-bd} 
\[ \E[\ip{d_n,\theta}^2] \leq \sum_{t=1}^n \|\Pi_{t} (\theta) \|_2^2
  = \left\|\sum_{t=1}^n\Pi_t(\theta)\right\|_2^2
 \leq \|\theta\|_2^2, \] 
where we use that the  $\Pi_t$ are mutually orthogonal. 

\section{Proving Sub-gaussianity}

\label{sec:gs-subgaussian}
We now prove the sub-Gaussianity property. In particular, we will show the following.
\begin{theorem}
\label{thm:gs-subg}
For any vector
$\theta \in \R^m$, we have 
\begin{equation}
\label{eq:subg-bound-thm}
    \E[\exp(\ip{d_n,\theta})] \leq \exp(10 \|\theta\|_2^2).
\end{equation}
\end{theorem}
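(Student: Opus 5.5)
We will follow the covariance argument, but replace the quadratic variation by an exponential-moment potential. Fix $\theta$ and write $\ip{d_n,\theta}=\sum_t \delta_t \ip{P_t v_{p_t},\theta}$, where $P_t=\sum_{t'=f_t}^t \Pi_{t'}$ as in Lemma~\ref{lem:gs-bu_t-sum}. Within a phase, the coefficient $c_t=\ip{v_{p_t},P_t\theta}$ is a deterministic function of $\Omega_{t-1}$. Because $P_t$ grows along the phase, $c_t$ is not fixed, but the increments of $P_t$ are the orthogonal pieces $\Pi_{t'}\theta$.

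The plan is to show that the process
\[
M_t=\exp\Big(\ip{d_t,\theta}-\alpha\sum_{s\le t}\|\Pi_s\theta\|_2^2 - \beta\,\Psi_t\Big)
\]
is a supermartingale for suitable constants $\alpha,\beta$. Here $\Psi_t$ is a correction term that accounts for the pivot's remaining variance $1-x_t(p_t)^2$ weighted by $\|P_t\theta\|^2$. A natural choice is $\Psi_t=(1-x_t(p_t)^2)\,\|P_{t}\theta\|_2^2$, mirroring Lemma~\ref{lem:gs-martingale}.

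The one-step analysis conditions on $\Omega_{t-1}$. The step $\delta_t$ is a two-point mean-zero variable with $|\delta_t u_t(p_t)|\le 2$, and it moves $x(p_t)$ by $\delta_t$. The change in the exponent is $\delta_t c_t$ plus the change in $\Psi$, namely $-2\beta\delta_t x_{t-1}(p_t)\|P_t\theta\|^2-\beta\delta_t^2\|P_t\theta\|^2$, plus the term added when $P$ grows by $\Pi_t$. We will use $e^y\le 1+y+y^2$ for $|y|\le 1$, exactly as in Lemma~\ref{fact:freedman_conc}. The linear terms vanish in expectation. The quadratic term $\E[\delta_t^2](c_t+\dots)^2\lesssim \E[\delta_t^2]\|P_t\theta\|^2$ (by Cauchy--Schwarz) is then absorbed by the drift $-\beta\E[\delta_t^2]\|P_t\theta\|^2$. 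When the phase advances, $\|P_t\theta\|^2$ increases by $\|\Pi_t\theta\|^2$ while $1-x^2\le1$, and this increase is paid for by the $\alpha\|\Pi_t\theta\|^2$ term.

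Telescoping gives $\E[M_n]\le M_0$. Then $\sum_s\|\Pi_s\theta\|^2\le\|\theta\|^2$ by orthogonality, $\Psi_n=0$ since every variable ends frozen, and $\Psi_0\le\|\theta\|^2$. Together these yield $\E e^{\ip{d_n,\theta}}\le e^{C\|\theta\|^2}$.

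The main obstacle is the regime where $\|\theta\|$ is large, so that $|\delta_t c_t|$ is not at most $1$ and the Taylor bound fails. For that case we would try a direct bound: pair the Bernoulli step's MGF, $\E e^{\lambda\delta}\le e^{\lambda^2\E\delta^2/2}\cdot$(constant), using the two-point structure. Alternatively, we would argue that for $\|\theta\|\ge C$ the trivial bound $|\ip{d_n,\theta}|\le\|\theta\|\,\|d_n\|$ is not usable, so the scaling has to be handled inside the potential by choosing $\beta$ proportional to $1$ and verifying the inequality $\E e^{a\delta-b\delta^2}\le1$ for two-point $\delta$ whenever $b\ge a^2/2$-type conditions hold. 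Getting the constant $10$ is a matter of tracking these absolute constants.
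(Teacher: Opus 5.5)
Once you correct the sign in $M_t$, your argument is the paper's argument at good times. The term must be $+\beta\Psi_t$, which is what your computation of the change in the exponent already assumes. With $\alpha=\beta=2$ your process is then the paper's $\exp(Z_t)$ up to a constant factor, since $\sum_{s\le t}\|\Pi_s\theta\|_2^2=\|\Pi_{V_0}\theta\|_2^2-\|\Pi_{V_t}\theta\|_2^2$, and your handling of the growth of $P_t$ is Lemma~\ref{lm:gs-subg-struc}. Your Taylor step is also sound, but only while $b:=\|P_t\theta\|_2^2$ is below a small constant. With $x=x_{t-1}(p_t)$ and $x'=x_t(p_t)$, the exponent changes by $y=\delta_t\bigl(c_t-\beta b(x+x')\bigr)$. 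Hence $|y|\le 2(\sqrt b+2\beta b)$ and $\E[y^2\mid\Omega_{t-1}]\le(\sqrt b+2\beta b)^2\,\E[\delta_t^2\mid\Omega_{t-1}]$. The drift $\beta b\,\E[\delta_t^2\mid\Omega_{t-1}]$ absorbs this for, e.g., $\beta=4$ and $\sqrt b\le 1/8$.

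The gap is the remaining regime, and it depends on individual times $t$, not on the size of $\theta$. When $\|P_t\theta\|_2^2\gtrsim 1/\beta$, your one-step supermartingale inequality is false. You also cannot take $\beta$ small, because the drift must dominate $c_t^2\,\E[\delta_t^2\mid\Omega_{t-1}]$ with $c_t^2$ as large as $b$. For a concrete failure, take $\Pi_t\theta=0$, $c_t=0$, $x=1/2$, and another coordinate capping $\delta_t^-$ at $-1/2$. Then $\delta_t=\pm1/2$ with probability $1/2$ each, the exponent changes by $\beta b(x^2-x'^2)\in\{\beta b/4,\,-3\beta b/4\}$, and $\E[e^{y}\mid\Omega_{t-1}]>1$ once $\beta b>4\ln 2$.

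Neither remedy you sketch repairs this. Skewed mean-zero two-point variables admit no bound of the form $\E e^{\lambda\delta}\lesssim e^{\lambda^2\E\delta^2/2}$. In $\E e^{a\delta-b'\delta^2}\le 1$, the coefficient $a$ must be the full linear coefficient $c_t-2\beta x b$, so a condition like $b'=\beta b\gtrsim a^2$ fails exactly in this regime. Nor can you set large $\|\theta\|_2$ aside: $\ip{d_n,\theta}$ has no deterministic bound (orthonormal inputs give $\|d_n\|_2=\sqrt n$), and an MGF bound for small $\theta$ does not rescale.

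The missing idea is the paper's split into bad times, those with $\|P_t\theta\|_2>1/4$. Since $P_t$ only grows within a phase, the bad times of each phase form a final segment. At bad times the pivot term is dropped from the potential and $\ip{\Delta d_t,\theta}$ is not recorded, so the exponent cannot increase there. The discarded increments are then bounded deterministically. For a phase starting at $t_0$ with pivot $p$, $\sum_{t=f}^{g}\ip{\Delta d_t,\theta}=\sum_{t'=t_0}^{g}\ip{\Pi_{t'}v_p,\theta}\sum_{t=\max(t',f)}^{g}\delta_t$. Each inner sum is a change of the color $x(p)$, so its absolute value is at most $2$. Cauchy--Schwarz then gives $\bigl|\sum_{t=f}^{g}\ip{\Delta d_t,\theta}\bigr|\le 2\|P_g\theta\|_2\le 8\|P_g\theta\|_2^2$ on a bad segment. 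Orthogonality of the $P_{e_k}$ across phases bounds the total by $8\|\theta\|_2^2$.

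With this piece your argument gives $\exp(C\|\theta\|_2^2)$. Obtaining the constant $10$ also needs the sharper one-step estimate of Lemma~\ref{lm:gs-subg-col} (a finely discretized walk plus Hoeffding's lemma), which holds up to the threshold $1/4$.
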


The proof will follow a similar structure as the bound on the covariance. We fix some $\theta \in \R^m$ and track the evolution of $\E[\exp(\ip{d_t,\theta})]$ over time.   

\paragraph{Proof Overview.}
Clearly $\exp(\ip{d_0,\theta})=1$ as $x_0 = \bf{0}$ initially. To show that \eqref{eq:subg-bound-thm} holds eventually at the end of the algorithm,
a natural idea to define a suitable potential $\Phi_t$ so that (roughly) the increase in $d_t$ can be charged to the decrease in $\Phi_t$.
More precisely, 
suppose we have some $\Phi_t$ such that 
$Z_t = \ip{d_t,\theta} + \Phi_t$
satisfies the following properties:
 \[ \E[\exp(Z_t)|\Omega_{t-1}] \leq \exp(Z_{t-1}) \qquad \text{(Super-martingale)} \] 
 \[ \Phi_0 \leq  {10\|\theta\|_2^2}  \text{ and  } \Phi_n=0.  \qquad   \text {(Boundary conditions)}\]
Then Theorem \ref{thm:gs-subg} follows directly as $\E[\exp(Z_n)] \leq \exp(Z_0)$ by super-martingale condition, and $Z_n = \ip{d_n,\theta}$ and $Z_0 = \Phi_0 \leq 10\|\theta\|_2^2$ by the boundary conditions.

Denoting $Z_t = Z_{t-1} + \Delta Z_t = \Delta d_t + \Delta\Phi_t$,  the first property is same as showing that
 $\E[\exp(\Delta Z_t)|\Omega_{t-1}] \leq 1$,
 or equivalently
 \begin{equation}
     \label{eq:gs-supmartingale}\E[ \exp(\ip{\Delta d_t,\theta} + \Delta \Phi_t) | \Omega_{t-1}]  \leq 1. 
 \end{equation}
At a high level this is what we do, but one complication is that \eqref{eq:gs-supmartingale} does not hold when $|\ip{\Delta d_t,\theta}| \gg 1$. 
This introduces some technical complications. In particular, we work with a modified $Z_t$ 
that ignores time steps where $|\ip{\Delta d_t,\theta}|$ can be large, and bound the contribution of such steps to $\ip{d_n,\theta}$ separately.

The motivates the notion of {\em bad} times that we define formally below. We then define the potential $\Phi_t$ and $Z_t$ and give the  detailed proof.
\subsection{Notation and the Potential}
\paragraph{Bad times.}
Recall that by Lemma \ref{lem:gs-bu_t-sum}, $\Delta d_t= \delta_t Bu_t = \delta_t P_t v_{p_t}$.
We call a time $t$  {\em bad} if  $\|P_t\theta \|_2 > 1/4$.
Note that whether $t$ is bad or not is completely determined by
$\Omega_{t-1}$ (the random choices before $t$), since $P_t$ is.
\paragraph{The potential $\Phi_t$.} 
Recall that $V_t = \text{span}\{ v_i : i\in A_{t-1}, i\neq p_t\}$.
Consider the potential
\begin{align*} \Phi_t := 
\begin{cases}
        \|\Pi_{V_t} (\theta) \|_2^2 + (1-x_t(p_t)^2) \|P_t \theta \|_2^2
           \quad &  \text{if $t$ is not bad }\\
                \| \Pi_{V_t} \theta \|_2^2  \quad  &  \text{if $t$ is bad }
\end{cases}
\end{align*}
The term $(1-x_t(p_t)^2) \|P_t \theta \|_2^2$ tracks progress toward freezing the pivot.

Let's compute the boundary conditions. 
\smallskip

(i) Initially $\Phi_0 \leq \|\theta\|_2^2$ as $P_0=0$ and,

(ii) Eventually $\Phi_n =0$  as $V_n=\{0\}$ and $x_t\in \{-1,1\}^n$ at $t=n$.
\allowdisplaybreaks
\paragraph{The process $Z_t$.}
We define $Z_t \eqdef Y_t +  2\Phi_t$, where $Y_t \eqdef Y_0 + \sum_{t' =
  1}^t\Delta Y_{t'}$ is the process defined by $Y_0=0$ and the following increments for $t=1,\ldots,n$
\begin{align*}
\Delta Y_t := 
	\begin{cases}
			\ip{\Delta d_t,\theta} & \mbox{if }t \text{ is not bad} \\
		  0  & \text{ if $t$ is bad}
	\end{cases}
\end{align*}
Intuitively, one should think of $Y_t$ is $\ip{d_t,\theta}$, except that we do not update $Y_t$ at time $t$ if $t$ is a bad time (so while $d_t $ updates to $d_{t-1} + \Delta d_{t-1}$, we keep $Y_t$ unchanged to $Y_{t-1}$).

Notice that initially $Z_0 = Y_0 + 2 \Phi_0 = 2 \Phi_0 \leq  2 \|\theta\|_2^2$. 
Eventually at time $t=n$, using $\Phi_n=0$,
\begin{equation}
        Z_n  = Y_n + 2 \Phi_n = Y_n  
       = \sum_{t=1}^n \Delta Y_t =   \sum_{t \notin B}   \ip{\Delta d_t,\theta}  \label{eq:gs-zn}
\end{equation}
where $B$ denotes the set of all the bad times $t$.

\allowdisplaybreaks
\subsection{The Key Lemmas} We will show the following key lemmas, which will directly imply Theorem \ref{thm:gs-subg}.
\begin{lemma} (Super-martingale property.) 
\label{lem:gs-boundz}
$ \E[\exp(\Delta Z_t)| \Omega_{t-1}] \leq 1$ for  each $t \geq 1$. Thus,  $\E[\exp(Z_n)] \leq  \exp(Z_{0}) = \exp(2 \|\theta\|_2^2).$
 \end{lemma}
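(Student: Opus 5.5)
We fix $t\ge 1$ and condition on $\Omega_{t-1}$. This determines the pivot $p_t$, the subspaces $V_{t-1},V_t$, the projections $\Pi_t$ and $P_t$, whether $t$ (and $t-1$) is bad, and the two candidate step sizes $\delta_t^\pm$. The only randomness left is the mean-zero two-point variable $\delta_t$. Since $u_t(p_t)=1$ and both $x_{t-1}(p_t)$ and $x_{t-1}(p_t)+\delta_t$ lie in $[-1,1]$, we also have $|\delta_t|\le 2$.

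The first step is to compute $\Delta\Phi_t$. The basic identity is $\Pi_{V_{t-1}}=\Pi_{V_t}+\Pi_t$, which follows from \eqref{eq:gs-pi_t}. It gives $\|\Pi_{V_{t-1}}\theta\|_2^2=\|\Pi_{V_t}\theta\|_2^2+\|\Pi_t\theta\|_2^2$, so the first term of $\Phi$ drops deterministically by $\|\Pi_t\theta\|_2^2$. For the pivot term we split into cases.

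\emph{(a) $t$ is in the same phase as $t-1$.} Here $P_t=P_{t-1}+\Pi_t$ with orthogonal summands, so $\|P_t\theta\|_2^2=\|P_{t-1}\theta\|_2^2+\|\Pi_t\theta\|_2^2$. In particular, if $t$ is not bad then neither is $t-1$.

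\emph{(b) $t$ starts a new phase.} The old pivot was frozen at time $t-1$, so the old pivot term $(1-x_{t-1}(p_{t-1})^2)\|P_{t-1}\theta\|_2^2$ equals $0$. Also $P_t=\Pi_t$.

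Write $x=x_{t-1}(p_t)$ and $b=\|P_t\theta\|_2$, and expand $1-(x+\delta_t)^2=1-x^2-2x\delta_t-\delta_t^2$. In both cases, when $t$ is not bad, we obtain
\[
\Delta\Phi_t\le -x^2\|\Pi_t\theta\|_2^2-(2x\delta_t+\delta_t^2)b^2\le -(2x\delta_t+\delta_t^2)b^2 .
\]
When $t$ is bad, $\Phi_t=\|\Pi_{V_t}\theta\|_2^2$. Moreover $\Phi_{t-1}\ge\|\Pi_{V_{t-1}}\theta\|_2^2$ in all cases, since the pivot term is nonnegative. Hence $\Delta\Phi_t\le 0$ and $\Delta Y_t=0$, so $\Delta Z_t\le 0$ and the claim is immediate for bad $t$.

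For non-bad $t$, Lemma~\ref{lem:gs-bu_t-sum} gives $\langle\Delta d_t,\theta\rangle=\delta_t\, a$ with $a=\langle v_{p_t},P_t\theta\rangle$. By Cauchy--Schwarz $|a|\le b\le 1/4$. Then
\[
\Delta Z_t\le y:=c\,\delta_t-2b^2\delta_t^2,\qquad c:=a-4xb^2,\qquad |c|\le b+4b^2\le 2b\le \tfrac12 .
\]
Now $|c\delta_t|\le 1$ and $|y|$ is bounded by an absolute constant below about $1.7$, so we can use $e^{y}\le 1+y+y^2$. Taking conditional expectations with $\E[\delta_t\mid\Omega_{t-1}]=0$ gives
\[
\E[e^{y}]\le 1-2b^2\E\delta_t^2+\E y^2 .
\]
It then suffices to show $\E y^2\le 2b^2\E\delta_t^2$. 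The leading part $c^2\E\delta_t^2$ must be compared with $2b^2\E\delta_t^2$ using $|c|\le b(1+4b)$ and $b\le 1/4$. The quartic term $4b^4\delta_t^4\le 16b^4\delta_t^2$ is lower order because $b\le1/4$. The cross term $-4cb^2\delta_t^3$ is also controlled by $|\delta_t|\le 2$.

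\textbf{Main obstacle.} This last inequality is where I expect the real difficulty, because the constants are tight. At $b=1/4$ the crude bound $c^2\le 4b^2$ already exceeds $2b^2$. So I would first try to sharpen the estimate, either by not discarding the $-x^2\|\Pi_t\theta\|_2^2$ term in $\Delta\Phi_t$ or by using a sharper exponential inequality such as $e^{y}\le 1+y+\tfrac{y^2}{2}e^{|y|}$, which gives room to absorb the $x$-dependent part of $c$. If that does not close, the fallback is to rerun the argument with the badness threshold $1/4$ replaced by a smaller constant, which is harmless for the rest of the proof. Once $\E[\exp(\Delta Z_t)\mid\Omega_{t-1}]\le 1$ holds for every $t$, iterating conditional expectations yields $\E[\exp(Z_n)]\le\exp(Z_0)\le\exp(2\|\theta\|_2^2)$.
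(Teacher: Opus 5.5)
Your reduction is sound and parallels the paper. The bad-time case, the bound $\Delta Z_t\le y=c\,\delta_t-2b^2\delta_t^2$ with $c=a-4xb^2$ for non-bad $t$, and the final telescoping all check out. The paper reaches the same expression by splitting $\Delta\Phi_t$ into the structural change $\Phi_t'-\Phi_{t-1}\le 0$ and the coloring update; note that your $b$ is $\|P_t\theta\|_2$, while the paper's $b$ is its square.

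\textbf{The gap.} The step that carries the lemma, $\E[e^{y}\mid\Omega_{t-1}]\le 1$, is left open, and your main route cannot close it. The condition $\E[y^2]\le 2b^2\,\E[\delta_t^2]$ that you reduce to is false under the constraints your argument uses ($|a|\le b\le 1/4$, $x+\delta_t^{\pm}\in[-1,1]$, $\E\delta_t=0$). Take $a=b=1/4$, $x=-1+\eta$, and $\delta_t\in\{-\eta,s\}$ with $\eta\ll s\ll 1$. Then $c\approx 1/2$ and $\E[y^2]\approx\E[\delta_t^2]/4$.

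This is not a matter of constants. In this regime $\E[y]=-\E[\delta_t^2]/8$ while $\frac12\E[y^2]\approx\E[\delta_t^2]/8$, so the second-order terms cancel exactly. Hence any estimate $\E e^y\le 1+\E y+K\,\E y^2/2$ with a constant $K>1$ must fail. That covers $e^y\le 1+y+y^2$, and also $e^y\le 1+y+\frac{y^2}{2}e^{|y|}$ if you only use $|y|\le 1.5$. Keeping $-x^2\|\Pi_t\theta\|_2^2$ does not help either, since $\Pi_t$ can vanish inside a phase. Shrinking the badness threshold does give a valid argument, but for a different process $Z_t$. It also changes the constant $8$ in Lemma~\ref{lem:gs:bound_badtimes}, so it does not prove the lemma as stated.

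\textbf{The missing idea.} You need to impose the quadratic condition at the \emph{running} value of the pivot's color, not only at $\delta_t=0$. The paper realizes $\delta_t$ as the exit point of a fine symmetric $\pm\epsilon$ walk $s$, started at $0$ and stopped on $\{\delta_t^-,\delta_t^+\}$. It shows that $\exp\bigl(as-2b^2(2xs+s^2)\bigr)$ is a supermartingale along this walk. One step only requires $\E[\exp(\sigma\epsilon(a-4b^2x'))]\le\exp(2b^2\epsilon^2)$ at the current $x'=x+s\in[-1,1]$. Hoeffding's lemma gives this, because $(a-4b^2x')^2/2\le a^2+16b^4\le 2b^2$ when $b\le 1/4$. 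Optional stopping then yields $\E[e^{y}]\le 1$.

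Equivalently, $h(s)=\exp\bigl(as-2b^2(2xs+s^2)\bigr)$ satisfies $h''=\bigl((a-4b^2(x+s))^2-4b^2\bigr)h\le 0$ on $[-1-x,1-x]$. So $h$ is concave on an interval containing the support of $\delta_t$, and Jensen with $\E[\delta_t\mid\Omega_{t-1}]=0$ gives $\E[h(\delta_t)]\le h(0)=1$.

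Your sharper-Taylor idea can also be rescued, but only pointwise. Write $y=\delta_t w$ with $w=a-2b^2(x+x')$ and $x'=x+\delta_t$. The key fact is $|\delta_t|\le 2-|x+x'|$: large jumps force $|x+x'|$ away from $2$. Set $\sigma=2-|x+x'|$ and $W=b+2b^2(2-\sigma)$. Then $|w|e^{|\delta_t||w|/2}\le We^{\sigma W/2}$, which is non-increasing in $\sigma\in[0,2]$ and equals $b+4b^2\le 2b$ at $\sigma=0$. This gives $\frac{y^2}{2}e^{|y|}\le 2b^2\delta_t^2$, and hence $\E[e^y]\le 1$.
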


\begin{lemma} (Contribution of bad times.)
\label{lem:gs:bound_badtimes} Let $B$ be the set of bad times $t$ with $\|P_t \theta\|_2> 1/4$. Then $|\sum_{t \in B} \ip{\Delta d_t,\theta}| \leq 8\|\theta\|_2^2$. 
\end{lemma}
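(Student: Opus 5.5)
The plan is to group the bad times by phase, bound each phase's contribution deterministically by $2\|P_{\ell}\theta\|_2$ using summation by parts, and then sum over phases using the mutual orthogonality of the projections $\Pi_t$.

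\textbf{Step 1: bad times in a phase form a suffix.} Fix a phase $[f,\ell]$ with pivot $p$. For $t$ in this phase, $P_t=\sum_{t'=f}^{t}\Pi_{t'}$. Since the $\Pi_{t'}$ are mutually orthogonal, $\|P_t\theta\|_2^2=\sum_{t'=f}^t\|\Pi_{t'}\theta\|_2^2$ is non-decreasing in $t$. Hence the bad times of the phase form a suffix $[b,\ell]$. In particular, the phase contains a bad time only if $\|P_\ell\theta\|_2>1/4$.

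\textbf{Step 2: summation by parts within a phase.} By Lemma~\ref{lem:gs-bu_t-sum}, $\ip{\Delta d_t,\theta}=\delta_t\ip{P_tv_p,\theta}=\delta_t\sum_{t'=f}^{t}a_{t'}$, where $a_{t'}\eqdef\ip{\Pi_{t'}v_p,\theta}$. Exchanging the order of summation gives
\[
\sum_{t=b}^{\ell}\ip{\Delta d_t,\theta}=\sum_{t'=f}^{\ell} a_{t'}\sum_{t=\max(t',b)}^{\ell}\delta_t .
\]
Since $u_t(p)=1$ throughout the phase, the inner sum equals $x_\ell(p)-x_{\max(t',b)-1}(p)$. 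This has absolute value at most $2$ because all colors stay in $[-1,1]$. Therefore
\[
\Big|\sum_{t=b}^{\ell}\ip{\Delta d_t,\theta}\Big|\le 2\sum_{t'=f}^{\ell}|a_{t'}|.
\]
Each $\Pi_{t'}$ is an orthogonal projection, so $|a_{t'}|=|\ip{\Pi_{t'}v_p,\Pi_{t'}\theta}|\le\|\Pi_{t'}v_p\|_2\|\Pi_{t'}\theta\|_2$. Applying Cauchy--Schwarz to the sum over $t'$ and using orthogonality once more, we get
\[
\sum_{t'=f}^{\ell}|a_{t'}|\le\Big(\sum_{t'}\|\Pi_{t'}v_p\|_2^2\Big)^{1/2}\Big(\sum_{t'}\|\Pi_{t'}\theta\|_2^2\Big)^{1/2}\le\|v_p\|_2\,\|P_\ell\theta\|_2\le\|P_\ell\theta\|_2 .
\]
So the bad times of this phase contribute at most $2\|P_\ell\theta\|_2$ in absolute value.

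\textbf{Step 3: summing over phases.} For any phase that contains a bad time, $\|P_\ell\theta\|_2>1/4$, so $2\|P_\ell\theta\|_2\le 8\|P_\ell\theta\|_2^2$. The phases partition $[n]$, so their operators $P_\ell$ are sums of disjoint groups of the mutually orthogonal projections $\Pi_t$. Hence $\sum_{\text{phases}}\|P_\ell\theta\|_2^2\le\sum_t\|\Pi_t\theta\|_2^2\le\|\theta\|_2^2$. Combining with the triangle inequality over phases yields $|\sum_{t\in B}\ip{\Delta d_t,\theta}|\le 8\|\theta\|_2^2$.

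\textbf{Main obstacle.} The delicate point is Step 2. Bounding each term by $|\delta_t|\,\|P_t\theta\|_2$ separately fails, because the step sizes $\delta_t$ need not have bounded total absolute value within a phase. The summation by parts avoids this: it converts the sum of step sizes into a telescoping change of the pivot's color, which is bounded by $2$. The resulting $\ell_1$ sum of the $a_{t'}$ is then controlled by Cauchy--Schwarz. One should double-check the boundary bookkeeping when $t'<b$: there the inner sum is $x_\ell(p)-x_{b-1}(p)$, which is still bounded by $2$, so the argument goes through.
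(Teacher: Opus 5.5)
Your proposal is correct and follows the paper's proof essentially step for step. The paper factors your Step 2 out as Lemma~\ref{lem:gs-discphase}, which bounds any sub-interval of a phase by $2\|P_g\theta\|_2$ via the same interchange of sums, telescoping of the pivot's color, and double Cauchy--Schwarz. It then uses the suffix structure of bad times, the bound $2\|P_{e_k}\theta\|_2\le 8\|P_{e_k}\theta\|_2^2$, and orthogonality of the $P_{e_k}$ across phases exactly as you do.
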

Let us see how these directly give Theorem \ref{thm:gs-subg}.
\begin{proof}(\emph{Theorem \ref{thm:gs-subg}}.)
Writing $d_n = \sum_{t \notin B} \Delta d_t + \sum_{t\in B} \Delta d_t$, as the sum of increments over bad and non-bad times,
\begin{align*} 
\ip{d_n,\theta} &= \underbrace{\sum_{t \notin B} \ip{ \Delta d_t,\theta}}_{=Z_n\,\,\, \eqref{eq:gs-zn}} + \underbrace{\sum_{t \in B} \ip{ \Delta d_t,\theta}}_{\leq 8\|\theta\|_2^2 \text{ (Lem. \ref{lem:gs:bound_badtimes})}} \leq Z_n + 8\|\theta\|_2^2.
\end{align*}
As $\E[\exp(Z_n)] \leq  \exp(2 \|\theta\|_2^2) $ by Lemma \ref{lem:gs-boundz}, we get the desired bound 
\[\E[\exp( \ip{d_n,\theta})] \leq \E[ \exp(Z_n)]  \exp(8 \|\theta\|_2^2) \leq   \exp(10 \|\theta\|_2^2). \qedhere\]
\end{proof} 

\subsection {Proof of Lemma \ref{lem:gs:bound_badtimes}}
Our goal here is to bound the contribution of bad time steps. We first bound the discrepancy incurred during any sub-interval of a phase.
\begin{lemma}
\label{lem:gs-discphase}
    Let $k$ be any phase with start and end times $b_k$ and $e_k$. Then $\left|\sum_{t=f}^g \ip{ \Delta d_t,\theta}\right| \leq  2 \|P_{g}\theta\|_2$ for any times $f,g \in [b_k, e_k]$. 
\end{lemma}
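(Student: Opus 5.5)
Within a phase the pivot $p$ stays fixed, so every discrepancy increment points along the same kind of vector. By Lemma~\ref{lem:gs-bu_t-sum}, for each $t\in[b_k,e_k]$ we have $\Delta d_t = \delta_t P_t v_p$ with $P_t = \sum_{t'=b_k}^{t}\Pi_{t'}$, since $f_t=b_k$ throughout the phase. The plan is to rewrite the sum of increments over $[f,g]$ by swapping the order of summation and grouping by the orthogonal projections $\Pi_{t'}$. Then we bound each group using the fact that the pivot's color moves by a bounded total amount.

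Concretely, write
\[
\sum_{t=f}^{g}\ip{\Delta d_t,\theta} = \sum_{t=f}^{g}\delta_t\sum_{t'=b_k}^{t}\ip{\Pi_{t'}v_p,\theta} = \sum_{t'=b_k}^{g}\ip{\Pi_{t'}v_p,\theta}\sum_{t=\max(f,t')}^{g}\delta_t .
\]
The inner sum is a partial sum of the increments of the pivot's color. Since $u_t(p)=1$, we have $x_t(p)-x_{t-1}(p)=\delta_t$, so $\sum_{t=a}^{g}\delta_t = x_g(p)-x_{a-1}(p)$. Both values lie in $[-1,1]$, so each inner sum has absolute value at most $2$. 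Hence the whole expression is bounded in absolute value by $2\sum_{t'=b_k}^{g}|\ip{\Pi_{t'}v_p,\theta}|$.

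To finish, I must turn a sum of absolute values into $\|P_g\theta\|_2$, and this is the step that needs care. A plain triangle inequality gives $\sum_{t'}\|\Pi_{t'}\theta\|_2$, which is too weak. The fix is to avoid absolute values altogether. Write $c_{t'}:=x_g(p)-x_{\max(f,t')-1}(p)\in[-2,2]$ and the sum as $\ip{v_p,\sum_{t'}c_{t'}\Pi_{t'}\theta}$. Cauchy--Schwarz with $\|v_p\|_2\le1$ bounds this by $\|\sum_{t'}c_{t'}\Pi_{t'}\theta\|_2$. The projections $\Pi_{t'}$ are mutually orthogonal, so this norm equals $(\sum_{t'} c_{t'}^2\|\Pi_{t'}\theta\|_2^2)^{1/2}\le 2(\sum_{t'=b_k}^{g}\|\Pi_{t'}\theta\|_2^2)^{1/2}=2\|P_g\theta\|_2$, which is the claimed bound.

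The one detail to verify is that all these manipulations hold pathwise, with no expectations involved. The subspaces $V_t$, and hence the $\Pi_{t'}$, are fixed once the random choices up to each time are fixed, and the reindexing above is a deterministic identity. So the bound holds for every run of the algorithm.
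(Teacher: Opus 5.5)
Your proof is correct and follows the paper's argument: the same interchange of summation via Lemma~\ref{lem:gs-bu_t-sum}, the same bound $|x_g(p)-x_{a-1}(p)|\le 2$ on the inner sums, and orthogonality of the $\Pi_{t'}$. The only difference is in the last step, and your remark there is a bit off. The paper does take absolute values termwise, but it keeps the factor $\|\Pi_{t'}v_p\|_2$ and applies Cauchy--Schwarz across $t'$, using $\sum_{t'}\|\Pi_{t'}v_p\|_2^2=\|P_gv_p\|_2^2\le 1$, so the triangle-inequality route is not actually too weak; you instead keep the signed coefficients and apply Cauchy--Schwarz once against $v_p$ followed by Pythagoras, and both routes give $2\|P_g\theta\|_2$.
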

    \begin{proof}
    Let $p$ denote the pivot during phase $k$.
 Then $\Delta d_t = \delta_t P_t v_p$ at any time $t$ in the phase, and $P_t = \sum_{t'=b_k}^t \Pi_{t'}$. So we have
  \begin{align}
     \sum_{t=f}^g \Delta d_t 
& =\sum_{t=f}^{g} \delta_t  P_t(v_p)  = \sum_{t=f}^{g} \delta_t \left(\sum_{t'=b_k}^t \Pi_{t'}(v_p) \right)
\nonumber \\
 & =  \sum_{t'=b_k}^{g}  \Pi_{t'}(v_p) \left(\sum_{t= \max(t',f)}^{g} \delta_t \right). 
 \qquad \text{(interchanging sums)} \label{eq:gs-derivation-subg1}
\end{align} 
As the pivot $p$ stays unchanged during the phase and $\delta_t= \Delta x_t (p)$ for each $t$ in the phase, a key observation is that  for any $u,v \in [b_k,e_k]$,
\[\left|\sum_{t=u}^{v} \delta_{t}\right| = |x_{v}(p)-x_{u-1}(p) | \leq 2.\]
Using this bound in \eqref{eq:gs-derivation-subg1}, by the triangle inequality,
\[  \left|\sum_{t=f}^g \ip{ \Delta d_t,\theta}\right| \leq  2  \sum_{t'=b_k}^{g}  |\ip{\Pi_{t'}(v_p),\theta} |. \]
As $\Pi_{t'}$ is a projection, it is self-adjoint, and $\Pi_{t'}^2 =
\Pi_{t'}$. Using this, and applying the Cauchy-Schwarz inequality twice,
gives us that  
\begin{align*} \sum_{t'=b_k}^{g} |  \ip{\Pi_{t'}(v_{p}),\theta}| 
      & 
 \leq   \sum_{t'=b_k}^{g} \| \Pi_{t'}(v_{p}) \|_2 \|\Pi_{t'} \theta\|_2 \\
   &  \leq \underbrace{\Big( \sum_{t'=b_k}^{g}  \|\Pi_{t'}(v_{p})\|_2^2 \Big)^{1/2}}_{=\, \|P_g(v_{p})\|_2 \leq \|v_p\|_2\leq 1 } \underbrace{\Big( \sum_{t'=b_k}^{g}  \|\Pi_{t'} \theta\|_2^2 \Big)^{1/2}}_{  =\, \|P_g\theta \|_2  }  
   \leq  \|P_g \theta \|_2.  \end{align*}
   The last step uses that $P_g = \sum_{t'=b_k}^g \Pi_{t'}$, and as the projections $\Pi_{t'}$  are mutually orthogonal  $\|P_g w \|_2^2 = \sum_{t'=b_k}^g \|\Pi_{t'}(w)\|_2^2 $
for any vector $w$.  
\end{proof}
We now finish the proof of Lemma \ref{lem:gs:bound_badtimes}.
\begin{proof}[Proof of \em Lemma \ref{lem:gs:bound_badtimes}.]
Let $B_k$ denote the set of bad time steps in phase $k$. Notice that if time $t$ is bad, then all subsequent times $t'>t$ in the same phase as $t$ remain bad as $P_t \preceq P_{t'}$. 
So the bad times in $B_k$ form an interval and we can apply Lemma~\ref{lem:gs-discphase}, which gives that  
\[\sum_{t \in B_k} \ip{\Delta d_t,\theta}   \leq 2 \|P_{e_k} \theta\|_2  \leq 8 \|P_{e_k} \theta\|_2^2,\]
where the last step uses that $\|P_{e_k}\theta\|_2> 1/4$ whenever $B_k$ is non-empty.

Thus we can bound the contribution of bad times as,
\begin{align*}
  \left|\sum_{t \in B}  \ip{\Delta d_t,\theta} \right|
&  \leq \sum_{k: B_k \neq \emptyset} \left|\sum_{t \in B_k} \ip{\Delta d_t,\theta} \right| 
  \leq  \sum_{k:B_k\neq \emptyset} 8 \|P_{e_k} \theta\|_2^2
  \leq  8 \|\theta\|_2^2.
\end{align*}
Here, the last step follows as the projection operators $P_{e_k}$ are mutually orthogonal for different phases $k$, since $P_{e_k} =\sum_{t = b_k}^{e_k} \Pi_t$, and the $\Pi_t$ are mutually orthogonal.
\end{proof}

\subsection{Proof of Lemma \ref{lem:gs-boundz}.}
We now show that  $\E[\exp(\Delta Z_t)|\Omega_{t-1}] \leq 1$.\\

First, suppose that $t$ is bad. Then this holds trivially as $\Delta Y_t =0$ (by design) and so $\Delta Z_t = 2\Delta \Phi_t$. But then, $\Delta \Phi_t \leq 0$ as 
\[\Phi_t = \|\Pi_{V_t} \theta\|_2^2 \leq \|\Pi_{V_{t-1}} \theta\|_2^2  \leq \Phi_{t-1} \quad \text{ (as $V_t \subset V_{t-1}$)}.\] Thus $\Delta Z_t \leq 0$ with probability $1$.\\

Henceforth we assume that $t$ is not bad.
It will be easier to analyze the change $\Delta Z_t = Z_t-Z_{t-1}$ in two steps:
\begin{enumerate}
\item \emph{Structural changes:} Due to the subspace $V_{t-1}$ changing to $V_t$ and (possibly) the  pivot changing from $p_{t-1}$ to $p_t$. This does not affect the discrepancy $d_{t-1}$ (and hence $Y_{t-1}$), but causes $\Phi_{t-1}$ to change to change to the intermediate potential
  \[
  \Phi_{t}' \eqdef \|\Pi_{V_{t}}(\theta)\|_2^2 + (1-x_{t-1}(p_t)^2) \|P_{t}(\theta)\|_2^2.
  \]
\item \emph{Coloring update:} Due to update of $x_{t-1}$ to $x_t:= x_{t-1} +\Delta x_t$. This affects the discrepancy (and causes $Y_{t-1}$ to change to $Y_t$) and also the potential  $\Phi_t'$ to change to $\Phi_t$.
\end{enumerate}
We can then decompose $\Delta Z_t$ as
\(
\Delta Z_t = (\Phi_t'- \Phi_{t-1})  + (\Delta Y_t + \Phi_t - \Phi_t').
\)
We bound each of the two parenthesized expressions on the right hand side separately.

Let us first consider the effect of structural changes.
\begin{lemma}\label{lm:gs-subg-struc}
$\Phi_t'\le  \Phi_{t-1}$ holds with probability $1$.
\end{lemma}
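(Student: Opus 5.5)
The plan is a direct case analysis on whether the pivot changes at time $t$, using only the orthogonal decomposition \eqref{eq:gs-pi_t} and the fact that $1-x(p)^2\in[0,1]$. Throughout, $t$ is not bad, since bad times were handled separately. The first identity I will use is that $V_t\subseteq V_{t-1}$ and $\Pi_t=\Pi_{V_t^\perp\cap V_{t-1}}$. Together these give the orthogonal decomposition $\Pi_{V_{t-1}}=\Pi_{V_t}+\Pi_t$, and hence
\[
\|\Pi_{V_{t-1}}\theta\|_2^2=\|\Pi_{V_t}\theta\|_2^2+\|\Pi_t\theta\|_2^2 .
\]
For $t=1$ I use the convention $\Phi_0=\|\Pi_{V_0}\theta\|_2^2$, with $P_0=0$. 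This agrees with the boundary bound $\Phi_0\le\|\theta\|_2^2$ and falls under the second case below.

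\textbf{Case 1: $t$ starts a new phase}, i.e.\ $f_t=t$, which includes $t=1. In this case $P_t=\Pi_t$. Whether or not $t-1$ was bad, the definition of $\Phi_{t-1}$ consists of $\|\Pi_{V_{t-1}}\theta\|_2^2$ plus a possibly present nonnegative term. Therefore
\[
\Phi_{t-1}\ge \|\Pi_{V_{t-1}}\theta\|_2^2=\|\Pi_{V_t}\theta\|_2^2+\|\Pi_t\theta\|_2^2
\ge \|\Pi_{V_t}\theta\|_2^2+(1-x_{t-1}(p_t)^2)\|P_t\theta\|_2^2=\Phi_t' .
\]
The last inequality holds because $|x_{t-1}(p_t)|\le 1$.

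\textbf{Case 2: $t-1$ and $t$ lie in the same phase.} Here $p_t=p_{t-1}$ and $P_t=P_{t-1}+\Pi_t$, where $\Pi_t$ is orthogonal to $P_{t-1}$, so $\|P_t\theta\|_2^2=\|P_{t-1}\theta\|_2^2+\|\Pi_t\theta\|_2^2$. Within a phase $P_{t-1}\preceq P_t$, so badness is monotone along the phase; since $t$ is not bad, $t-1$ is not bad either. Hence $\Phi_{t-1}$ has the full form. Writing $a=1-x_{t-1}(p_t)^2\in[0,1]$, we get
\[
\Phi_{t-1}=\|\Pi_{V_t}\theta\|_2^2+\|\Pi_t\theta\|_2^2+a\|P_{t-1}\theta\|_2^2,\qquad
\Phi_t'=\|\Pi_{V_t}\theta\|_2^2+a\|P_{t-1}\theta\|_2^2+a\|\Pi_t\theta\|_2^2 .
\]
Their difference is $(1-a)\|\Pi_t\theta\|_2^2\ge 0$. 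The main point needing care is exactly this: confirming that $t-1$ is not bad in this case, which follows from the monotonicity of $P_s$ within a phase. It is also important that the pivot coordinate $x_{t-1}(p_t)$ is the same quantity in both potentials, which holds because the pivot is unchanged.
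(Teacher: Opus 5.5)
Your proof is correct and follows the paper's argument. It uses the same case split on whether the pivot changes at time $t$, the decompositions $\Pi_{V_{t-1}}=\Pi_{V_t}+\Pi_t$ and $P_t=P_{t-1}+\Pi_t$ (orthogonally), and the bound $1-x_{t-1}(p_t)^2\in[0,1]$. You also make explicit a point the paper uses silently: in the same-phase case $t-1$ is not bad either, since $\|P_{t-1}\theta\|_2\le\|P_t\theta\|_2\le 1/4$, so $\Phi_{t-1}$ indeed has its full form.
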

\begin{proof}
  If
  $p_t \neq p_{t-1}$, then a new phase begins at $t$ and $P_t= \Pi_t$. 
In that case, 
\begin{align*}
    \Phi_t' &= \|\Pi_{V_{t}}(\theta)\|_2^2 + (1-x_{t-1}(p_t)^2) \|\Pi_{t} \theta\|_2^2  \\
    &\leq
     \|\Pi_{V_{t}}(\theta)\|_2^2 + \|\Pi_{t} \theta\|_2^2 =  \|\Pi_{V_{t-1}}(\theta)\|_2^2 \le \Phi_{t-1},
\end{align*}
where we used that $\Pi_{V_t}$ and $\Pi_t$ are orthogonal, and $\Pi_{V_{t-1}} = \Pi_{V_t} + \Pi_t$.

Now suppose that $p_t=p_{t-1}$. Note that
\[
  \|\Pi_{V_{t}}(\theta)\|_2^2 - \|\Pi_{V_{t-1}}(\theta)\|_2^2 = -\|\Pi_t(\theta)\|_2^2,
\]
since  $\Pi_{V_{t-1}} = \Pi_{V_{t}} +\Pi_t$ and  $\Pi_t$ is orthogonal to $\Pi_{V_{t}}$. Moreover,
\(
\|P_{t} (\theta)\|_2^2 - \|P_{t-1}(\theta)\|_2^2 = \|\Pi_t(\theta)\|_2^2,
\)
because $P_t = P_{t-1}  + \Pi_t$  and $\Pi_t$ and $P_{t-1}$ are orthogonal. 
Then,
\[
  \Phi_t'- \Phi_{t-1} \le -\|\Pi_t(\theta)\|_2^2 +
                        (1-x_{t-1}(p_t)^2)\|\Pi_t(\theta)\|_2^2  \le 0. \qedhere
\]
\end{proof}

We now consider the effect of changing $x_{t-1}$ to $x_t$.
\begin{lemma}\label{lm:gs-subg-col}
    $\E[\exp(\Delta Y_t + \Phi_t - \Phi_t')|\Omega_{t-1}] \leq 1$.
\end{lemma}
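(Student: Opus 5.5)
Throughout, condition on $\Omega_{t-1}$ and assume $t$ is not bad, so $\|P_t\theta\|_2\le 1/4$; the bad case is immediate, since then $\Delta Y_t=0$ and $\Phi_t-\Phi_t'=\|\Pi_{V_t}\theta\|_2^2-\Phi_t'\le 0$. Conditioning fixes the pivot $p=p_t$, the direction $u_t$, the projector $P_t$ and the current value $x\eqdef x_{t-1}(p)$. The only randomness left is the two-point step $\delta=\delta_t\in\{\delta_t^-,\delta_t^+\}$, which has $\E[\delta\mid\Omega_{t-1}]=0$. Since $u_t(p)=1$ and both $x$ and $x+\delta$ lie in $[-1,1]$, we have $x_t(p)=x+\delta$ and $|\delta|\le 2$.

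First I reduce the exponent to a one-variable function of $\delta$. By Lemma~\ref{lem:gs-bu_t-sum}, $\Delta Y_t=\ip{\Delta d_t,\theta}=\delta\, a$ with $a\eqdef \ip{P_tv_p,\theta}=\ip{v_p,P_t\theta}$. The coloring update does not change $V_t$ or $P_t$, so only the pivot factor in the potential moves. Writing $c\eqdef\|P_t\theta\|_2^2\le 1/16$,
\[
\Phi_t-\Phi_t'=\bigl(x^2-(x+\delta)^2\bigr)c=-(2x\delta+\delta^2)\,c .
\]
Hence $\Delta Y_t+\Phi_t-\Phi_t'=b\delta-c\delta^2$ with $b\eqdef a-2xc$. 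Two facts will be used. First, Cauchy--Schwarz together with $\|v_p\|_2\le 1$ gives $a^2\le c$, so $|a|\le 1/4$. Second, $|2xc|\le 1/8$, so $|b|\le 3/8$. Consequently $y\eqdef b\delta-c\delta^2$ satisfies $|y|\le 2\cdot\tfrac38+4\cdot\tfrac1{16}=1$.

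Next I Taylor-expand. For $|y|\le 1$, $e^y\le 1+y+\kappa y^2$ with $\kappa=e-2$ (or $\kappa=1$). Taking conditional expectations and using $\E\delta=0$,
\[
\E[e^{y}]\le 1-c\,\E\delta^2+\kappa\,\E\bigl[(b\delta-c\delta^2)^2\bigr],
\]
so it suffices to show $\kappa\E[(b\delta-c\delta^2)^2]\le c\,\E\delta^2$. I expand the square as $b^2\delta^2-2bc\delta^3+c^2\delta^4$. The quartic and cubic terms are lower order: $c^2\delta^4\le 4c^2\delta^2\le c\delta^2/4$, and $|2bc\delta^3|\le \tfrac32 c\delta^2$ is of the same order as the target but has an unspecified sign. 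The cubic term can alternatively be handled through the two-point structure of $\delta$, since $\E\delta^3=\delta^+\delta^-(\delta^++\delta^-)$. The leading term is $b^2\delta^2$ with $b^2\le (|a|+2|x|c)^2\le c(1+2|x|\sqrt c)^2$.

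\textbf{The main obstacle} is the constant in this comparison. The crude estimates give $b^2$ up to about $2.25c$, whereas $e^y\le 1+y+\kappa y^2$ requires roughly $\kappa b^2<c$. So I will not use the generic quadratic bound; instead I will exploit the exact two-point law of $\delta$. Let $s=\delta^+$ and $-r=\delta^-$ with $r,s>0$, so that $\Pr[\delta=s]=r/(r+s)$. Then
\[
\E[e^{y}]=\frac{r\,e^{bs-cs^2}+s\,e^{-br-cr^2}}{r+s}.
\]
I would bound this by $1$ by treating it as a function of $b$ for fixed $r,s,c$. It is convex in $b$, so it suffices to check the extreme admissible values of $b$, using the constraint $|a|\le\sqrt c$ and the fact that $x$ is tied to $r,s$ through $|x+\delta^\pm|\le1$. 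As a sanity check, $x=0$, $r=s=1$ and $|b|=|a|\le\sqrt c$ give $e^{-c}\cosh(b)\le e^{-c}e^{b^2/2}\le 1$. In general I expect the convex-in-$b$ reduction plus Hoeffding's lemma for the two-point variable ($\E e^{b\delta}\le e^{b^2(r s)/2}$, and $\E\delta^2=rs$) to give $\E e^{y}\le \exp\bigl(b^2rs/2-c\,rs\bigr)$, up to controlling the correlation between $b\delta$ and $-c\delta^2$. This reduces the claim to $b^2\le 2c$. That inequality holds when $|x|$ is small, and the remaining work is showing that the $2xc$ shift in $b$ is absorbed, which is where I expect the real difficulty to lie.
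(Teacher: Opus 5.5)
Your reduction is correct as far as it goes. In your notation the exponent is $y=b\delta-c\delta^2$ with $c=\|P_t\theta\|_2^2\le 1/16$, $b=a-2xc$, and $a^2\le c$. But the proposal stops exactly where the proof has to happen, and that remaining step cannot be carried out. With coefficient $1$ on $\Phi_t-\Phi_t'$, the inequality does not follow from these facts, and in fact it fails.

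To see this, take $\delta_t^-=-r$ and $\delta_t^+=s$ both small. The pivot's color at $x=-1+r$ forces $|\delta_t^-|\le r$, and another alive coordinate with $u_t(j)\neq 0$ near the face it moves toward makes $\delta_t^+$ small. Using $\E\delta=0$, $\E\delta^2=rs$ and $\E\delta^3=rs(s-r)$,
\[
\E[e^{y}\mid\Omega_{t-1}]=1+\Bigl(\frac{b^2}{2}-c\Bigr)rs+O\bigl(rs(r+s)\bigr).
\]
Nothing you use (mean-zero two-point $\delta$, $x+\delta\in[-1,1]$, $a^2\le c\le 1/16$) rules out $c=1/16$ (a time that is not bad), $a$ close to $1/4$, and $x$ close to $-1$. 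There $b\approx 3/8$ and $b^2/2-c\approx 1/128>0$. So the $2xc$ shift is not absorbed, and no convexity-in-$b$ argument can rescue it.

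Separately, $\E e^{b\delta}\le e^{b^2rs/2}$ is not Hoeffding's lemma. For an asymmetric two-point law the variance is not a sub-Gaussian proxy: for $b>0$ and $r\to 0$ the left side is $1+\frac{r}{s}(e^{bs}-1-bs)+O(r^2)$, which exceeds $1+b^2rs/2$. Hoeffding only gives $(r+s)^2/4$ in place of $rs$.

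The missing point is that the potential enters with weight $2$. Since $Z_t=Y_t+2\Phi_t$, what the argument needs, and what the paper's proof actually bounds, is
\[
\Delta Y_t+2(\Phi_t-\Phi_t')=\delta a-2(2x\delta+\delta^2)c,
\]
where the paper writes $b$ for your $c$. The doubled quadratic penalty changes the local requirement to $(a-4cx')^2\le 4c$. This holds because $(a-4cx')^2\le 2a^2+32c^2\le 2c+2c$, using exactly your two facts.

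To avoid the lossy one-shot expansion of the finite jump, which you correctly found too weak, the paper proceeds as follows.
\begin{itemize}
\item It realizes $\delta_t$ as the exit point of a symmetric $\pm\eps$ walk started at $0$ and stopped on $\{\delta_t^-,\delta_t^+\}$. This has the same law by optional stopping.
\item It checks that $E_s=\exp\bigl(\delta_{t,s}a-2(2\delta_{t,s}x+\delta_{t,s}^2)c\bigr)$ is a supermartingale. One step multiplies it by $\exp\bigl(\sigma\eps(a-4cx')-2c\eps^2\bigr)$, where $x'=x+\delta_{t,s-1}\in[-1,1]$ is the current value of the pivot's color.
\item Hoeffding's lemma for the symmetric step gives $\E e^{\sigma\eps(a-4cx')}\le e^{\eps^2(a-4cx')^2/2}\le e^{2c\eps^2}$, and optional stopping finishes the proof.
\end{itemize}
The linear coefficient is thus evaluated at the walk's current position rather than frozen at the start. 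Each step is symmetric, which is exactly where Hoeffding's bound is sharp to second order.
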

\begin{proof}
Recall that $\delta_t= x_{t}(p_t)-x_{t-1}(p_t) $ and $\Delta d_t = \delta_t P_t v_{p_t}$, 
where $p_t$ is the pivot at time $t$. 
So, 
\[\Delta Y_t + \Phi_t - \Phi_t' =  \underbrace{\delta_t \ip{P_t v_{p_t},\theta}}_{\Delta Y_t} \underbrace{- 2(2 \delta_t x_{t-1}(p_t) + \delta_t^2 ) \|P_t \theta\|_2^2}_{\Phi_t - \Phi_t'}.\]
Let us denote $a = \ip{P_t v_{p_t},\theta} , b = \|P_t \theta\|_2^2$ and $x = x_{t-1}(p_t)$. As $a,b$ and $x$
are fixed by $\Omega_{t-1}$, our goal is to show that
\begin{equation} \E_{\delta_t} [\exp(\delta_t a -2(2 \delta_t x + \delta_t^2 ) b)] \leq 1 \label{eq:gs-final} \end{equation}
Recall that 
$\delta_t  \in \{-\delta^-_t, \delta^+_t\}$ has a two-point distribution with mean $\E[\delta_t|\Omega_{t-1}] = 0$. To simplify computations in proving \eqref{eq:gs-final}, let us imagine instead a (finely discretized) continuous random walk starting from $0$ and terminating when it reaches $\delta^-_t$ or $\delta^+_t$. 
Since the random walk is a martingale, the final distribution of $\delta_t$ will be supported on \(\{-\delta^-_t, \delta^+_t\}\) and have expected value $0$, so it will have the same distribution as the $\delta_t$ described in the GS algorithm.

In more detail, let us assume that $\delta^-_t$, and $\delta^+_t$ are rational numbers; the lemma would then follow for general $\delta^-_t$, and $\delta^+_t$ by a standard approximation argument. Let us choose some arbitrarily small $\eps> 0$ so that $\delta^-_t$, and $\delta^+_t$ are both integer multiples of $\eps$. Define $\delta_{t,0} \eqdef 0$, and $\delta_{t,s} = \delta_{t,s-1}$ when $\delta_{t,s-1} \in \{-\delta^-_t, \delta^+_t\}$, and $\delta_{t,s} + \eps\sigma_s$ otherwise,  where $\sigma$ is uniform in $\{-1,+1\}$. Let $\tau$ be the first time $s$ when $\delta_{t,s} \in \{-\delta^-_t, \delta^+_t\}$ and set $\delta_t \eqdef \delta_{t,\tau}$. As mentioned above, $\delta_{t,0}, \delta_{t,1}, \ldots$ is a martingale sequence, and, by the optional stopping theorem, $\delta_t = \delta_{t,\tau} \in \{-\delta^-_t, \delta^+_t\}$ has expectation $0$, so it has the correct distribution.

It suffices to show that the sequence defined from $\delta_{t,s}$ by $E_s \eqdef \exp(\delta_{t,s} a -2(2 \delta_{t,s} x + \delta_{t,s}^2 ) b)$ is a supermartingale. Then \eqref{eq:gs-final} follows from the optional stopping theorem. For $s-1 \ge \tau$, $\E[E_s|\delta_{t,s-1}] = E_{s-1}$ trivially, since $\delta_{t,s} = \delta_{t,s-1}$. Therefore, we only need to show that $\E[E_s|\delta_{t,s-1}] \le E_{s-1}$ for $s \le \tau$. To do so, we need to prove the inequality
\begin{equation*}
  \E[\exp(\sigma_s\eps a - 4\sigma_s \eps b(x+\delta_{t,s-1})|\delta_{t,s-1}] \le  \exp({2b\eps^2}).
\end{equation*}
Let us replace $x + \delta_{t,s-1}$ above with $x'$, and notice that $|x'| \le 1$ by the choice of $\delta_{t}^-$ and $\delta_t^+$. Then, it is enough to show that, for a uniformly random $\sigma \in \{-1,+1\}$, and any $x' \in [-1,+1]$,
\begin{equation}
\label{eq:final-gs-computation}
\E[\exp( \sigma \eps (a - 4  bx'))]  \leq \exp(2 b \eps^2).
\end{equation}
By Hoeffding's lemma and the Cauchy-Schwarz inequality, we have
\[
  \E[\exp( \sigma \eps (a - 4  bx'))]
  \le \exp(\eps^2(a - 4bx')^2/2) \le \exp(\eps^2 (a^2 + 16b^2)),
\]
where we also used that $(x')^2 \le 1$.
Now, as $a = \ip{P_t v_{p_t},\theta} =\ip{v_{p_t}, P_t \theta} \leq \|P_t \theta \|_2 = \sqrt{b}$, we have $a^2 \leq b$. Also, as $t$ is a good time, $\|P_t \theta\|_2\leq 1/4$, so $b \leq 1/16$ and hence $16b^2\leq b$. This gives that $a^2 + 16b^2 \leq  2b$, which proves \eqref{eq:final-gs-computation}. 
\end{proof}

Now Lemma~\ref{lem:gs-boundz} follows from Lemma~\ref{lm:gs-subg-col}~and~\ref{lm:gs-subg-struc}. This also completes the proof of Theorem~\ref{thm:bana-subgaussian2}.

\section{Bibliographic Notes}
The Gram-Schmidt Walk (GS Walk) algorithm in Section \ref{sec:gs:algorithm} is due to Bansal, Dadush, Garg and Lovett~\cite{BansalDGL19}. They showed that the distribution $D$ in Theorem \ref{thm:bana-subgaussian2} produced by the GS Walk is  $\sqrt{40}$-sub-Gaussian.
Our presentation follows their approach while simplifying some of the arguments.

Remarkably, motivated by applications to the design of randomized controlled trials,  Harshaw, S{\"{a}}vje, Peng and Spielman \cite{HarshawSSW24} showed that the distribution $D$ output by GS walk algorithm is in fact 1-sub-Gaussian. This is clearly optimal --- as already for $n=1$, the random vector $\pm e_1$ is exactly 1-sub-Gaussian. Proving this refined bound requires more care and we do not do this here.
The tight covariance bound in Theorem \ref{thm:gs-covariance} is also due to \cite{HarshawSSW24}.

\chapter{Online Discrepancy}
\label{ch:online}

So far in all the discrepancy problems we considered, the vectors $v_1,\ldots,v_n \in \R^m$ were given to us upfront. Another very natural setting is where the vectors are revealed in an online manner and when $v_t$ arrives at time $t$, its sign $x(t)$  must be chosen immediately and irrevocably, without the knowledge of future vectors.
The goal is to keep the discrepancy $\|d_t\|_\infty$ small at all times $t$, where  $d_t = x(1) v_1 + \ldots + x(t) v_t$. In a sense, this is very similar
to prefix discrepancy, where we also need to minimize $\max_{t = 1}^n
\|d_t\|_\infty$. However, the difference from the online setting is that in (offline) prefix discrepancy problems, we can choose the coloring
$x$ with full advance knowledge of $v_1, \ldots, v_n$. We will see in this
chapter that a simple online discrepancy minimization algorithm also
gives the best algorithmic results for a number of prefix discrepancy
problems we have already considered. 

\paragraph{Online Koml\'{o}s Problem.}

As the primary example of an online discrepancy problem, we focus here on the online Koml\'{o}s setting where the vectors satisfy $\|v_t\|_2 \leq 1$. It is useful to think of $n$ as being much larger than $m$ (the problem is already non-trivial for $m=2$). 

Notice that sampling each $x(t)$ independently and uniformly can be
thought of as an online algorithm, and incurs discrepancy $O(\sqrt{n})$
in the Koml\'os setting.
Unfortunately, no online algorithm can do better in general. In
particular, suppose that $m=2$, and each vector $v_t$ is picked by an
adversary that knows the discrepancy vector $d_{t-1}$. Then the adversary can pick $v_t$
orthogonal to $d_{t-1}$ so that $\|d_t\|_2^2 = \|d_{t-1}\|_2^2 +
\|v_t\|_2^2$ irrespective of the sign $x(t)$ chosen by the online
algorithm. Repeating this for $n$ steps gives $\|d_n\|_\infty \ge
\sqrt{\frac{n}{2}}$.

Interestingly, substantially better results can be obtained if the vectors $v_t$ are chosen in a less adversarial manner.

\paragraph{Oblivious Adversary Model.} 
A standard model in online computation is that of an oblivious adversary. Here the adversary still knows the online algorithm and can pick the worst case sequence of vectors accordingly. However it must do so in advance before the online algorithm begins its execution. 

Clearly, oblivious adversaries do not help for deterministic
algorithms --- as the adversary knows exactly what the algorithm will do on each input, it can still pick the worst possible sequence in advance.
However, randomization can be very helpful here as the adversary cannot see the specific random choices made by the algorithm (even though it knows the algorithm itself).

\section{Self-Balancing Walk}
\label{sec:sw-alg}
We now describe an elegant algorithm called the  Self-Balancing Walk (SW) with the following guarantee.

\begin{theorem}
\label{thm:als-21} 
For any
$\delta \in (0,1)$, there is a polynomial time randomized online algorithm against oblivious
adversaries, that, for any input vectors $v_1, \ldots, v_n$ with
$\|v_t\|_2 \le 1$ for each $t$,  with probability $1-\delta$ satisfies that $\|d_t
\|_\infty =O(\log(mn/\delta))$ for all $t \in [n]$.
\end{theorem}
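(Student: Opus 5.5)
The plan is to follow the Alweiss--Liu--Sawhney Self-Balancing Walk. Set a threshold $c \eqdef C\log(mn/\delta)$ for a large constant $C$. At time $t$, with current discrepancy vector $d_{t-1}$, set $x(t) = +1$ with probability $\frac12 - \frac{\ip{d_{t-1},v_t}}{2c}$ and $x(t)=-1$ otherwise. This is a valid probability as long as $|\ip{d_{t-1},v_t}| \le c$. If that ever fails, the algorithm declares failure and we show this happens with probability at most $\delta$. The update gives $\E[x(t)\,|\,d_{t-1}] = -\ip{d_{t-1},v_t}/c$, so the walk has a drift toward the origin in the direction $v_t$. Since the adversary is oblivious, $v_1,\ldots,v_n$ are fixed in advance, and $d_t$ is a Markov chain driven only by the algorithm's coins.

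The core step is to show by induction on $t$ that, as long as no failure has occurred, $d_t$ is $O(\sqrt c)$-sub-Gaussian. Concretely, we will show that for every fixed unit $\theta$ and every $\lambda$ in a suitable range $|\lambda| \lesssim \sqrt{c}$, $\E[\exp(\lambda\ip{d_t,\theta})] \le \exp(O(c)\lambda^2)$. To pass from $d_{t-1}$ to $d_t = d_{t-1}+x(t)v_t$, we condition on $d_{t-1}$ and compute the conditional MGF in closed form:
\[
\E[e^{\lambda x(t)\ip{v_t,\theta}}\,|\,d_{t-1}] = \cosh(\lambda\ip{v_t,\theta}) - \frac{\ip{d_{t-1},v_t}}{c}\sinh(\lambda\ip{v_t,\theta}).
\]
The negative term has to absorb the growth factor $\cosh(\lambda\ip{v_t,\theta}) \approx 1+\lambda^2\ip{v_t,\theta}^2/2$. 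To handle this, I would decompose $d_{t-1}$ along $v_t$ and against $\theta$, and use the inductive hypothesis jointly in the two directions $\theta$ and $v_t$, that is, a bound on $\E[e^{\ip{d_{t-1},w}}]$ for all $w$ in their span. Applying Cauchy--Schwarz or a tilting argument then shows that the drift term contributes $-\Omega(\lambda^2\ip{v_t,\theta}^2)$ on average, which closes the induction. An alternative route is to prove a second-moment version first, namely $\E[d_td_t^\top] \preceq O(c) I$ via the potential $\E\|d_t\|^2$-type calculation. We could then upgrade it with a stationarity and coupling argument: show the distribution of $d_t$ is dominated in the convex order by a Gaussian of variance $O(c)$, by checking that the map $d\mapsto d+x v$ preserves domination. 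I expect this induction step, getting the sub-Gaussian constant not to grow with $t$, to be the main obstacle. If the direct MGF induction is messy, I would fall back on the convex-order domination approach.

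Given sub-Gaussianity, the conclusion is a union bound. For each $t$, each coordinate $e_i$, and each direction $v_{t+1}$, we have $\Pr[|\ip{d_t,w}|>c] \le 2\exp(-c^2/O(c)) = 2\exp(-\Omega(c))$. This is at most $\delta/(3mn)$ once $C$ is large. Over the $n(m+1)$ events, with probability $1-\delta$ the walk never fails, since $|\ip{d_{t-1},v_t}|\le c$ always holds, and every $\|d_t\|_\infty \le c = O(\log(mn/\delta))$. Each step only needs one inner product and one coin, so the algorithm runs in polynomial time and is online.
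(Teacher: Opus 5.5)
Your algorithm and overall plan (sub-Gaussianity of $d_t$ by induction on $t$, then a union bound over the coordinates $e_i$ and the directions $v_{t+1}$) match the paper's. However, the inductive step, which you yourself flag as the main obstacle, is not supplied, and the tools you propose would not supply it. Cauchy--Schwarz applied to $\E[e^{\lambda\ip{d_{t-1},\theta}}(\cosh(\lambda a)-b_t\sinh(\lambda a))]$ doubles the constant in the exponent at every step. A tilting argument would need a lower bound on the tilted mean of $\ip{d_{t-1},v_t}$, and an upper bound on the MGF does not give one.

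The missing idea is to move the drift into the exponent as a linear map. Write $a=\ip{v_t,\theta}$ and $b_t=\ip{d_{t-1},v_t}/c$. The recentred sign $y(t)=x(t)+b_t$ is conditionally mean zero and lies in an interval of length $2$. Hoeffding's lemma then gives $\cosh(a)-b_t\sinh(a)\le e^{-ab_t+a^2/2}$, i.e.
\[
\E\bigl[e^{\ip{d_t,\theta}}\mid d_{t-1}\bigr]\le e^{\ip{d_{t-1},A_t\theta}}\,e^{\ip{v_t,\theta}^2/2},\qquad A_t\eqdef I-\tfrac1c\, v_tv_t^\top .
\]
Now take expectations and apply the hypothesis $\E[e^{\ip{d_{t-1},u}}]\le e^{c\|u\|_2^2/2}$ at $u=A_t\theta$. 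This is why the hypothesis must hold in every direction; $A_t\theta$ lies in the span of $\theta$ and $v_t$, as you anticipated. Since $\|A_t\theta\|_2^2\le\|\theta\|_2^2-\ip{v_t,\theta}^2/c$ for $c\ge 1$ and $\|v_t\|_2\le 1$, the factor $e^{\ip{v_t,\theta}^2/2}$ cancels exactly. So $\sigma^2=c$ reproduces itself and nothing grows with $t$.

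Your convex-order fallback does not close the gap either. That one step ``preserves domination'' is precisely what would need proof, and it is not automatic: the transition kernel does not preserve convexity. For $m=1$, $v=1$, it maps $f(z)=|z|$ to $1-d^2/c$ on $(-1,1)$.

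Second, ``as long as no failure has occurred, $d_t$ is sub-Gaussian'' is not something the induction can prove as stated. Conditioning on the no-failure event destroys the conditional-mean structure. If you instead freeze the walk at a failure, the step $d_t=d_{t-1}$ amounts to $y(t)=b_t$, which is not mean zero, and the displayed inequality can fail on that event (e.g.\ $\theta=v_t$ with $\|v_t\|_2=1$ and $b_t>1$).

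The paper sidesteps this with a modified walk that never aborts. When $|b_t|>1$ it sets $x(t)=-b_t$ deterministically, so $y(t)=0$, $d_t=A_td_{t-1}$, and the same inductive step applies verbatim. Run with the same coins, the modified walk coincides with the self-balancing walk up to the first time $|b_t|>1$. Your union bound, applied to the modified walk, then shows that with probability at least $1-\delta$ that time never comes and $\|d_t\|_\infty\le c$ for all $t$. This transfers to the original walk, since the two coincide on that event.
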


Theorem \ref{thm:als-21} is remarkable in many ways. 
It gives $O(\log mn)$ discrepancy w.h.p.~for the online Koml\'{o}s
problem. This matches the best known algorithmic bound for the offline
\emph{prefix} Koml\'os problem that we discussed in
Chapter~\ref{ch:bana-intro}, and comes close to the non-constructive
$O(\sqrt{\log n})$ bound from  Theorem \ref{thm:prefix-komlos}.
Finally, the algorithm is very fast to implement and only requires computing a single inner product at each step. 

\paragraph{The Gaussian Walk (GW) Algorithm.} To gain some intuition for the SW algorithm, we first describe another algorithm with weaker requirements.  We allow our algorithm to choose, at each time step $t$, a random color $x(t)$ which can be any real number (possibly outside the interval $[-1,1]$), but must satisfy $\E[x(t)^2] = 1$. 

Remarkably, one can ensure that $\E[d_t(i)^2] \le 1$ holds for each coordinate $i \in [m]$ and each time $t$.\footnote{Finding such a distribution over $x_t$ is equivalent to constructing a feasible SDP solution with vector discrepancy at most $1$, as discussed later.} Moreover, the (random) color $x(t)$ can be computed online, using the following algorithm.\\

\noindent For $t=1,\ldots,n$ do the following:
\begin{enumerate}
    \item Given the vector $v_t$ at time $t$, compute
    \(
        b_t =\langle v_t, d_{t-1} \rangle.
    \)
    \item Sample $x(t)$ from the Gaussian distribution $N(-b_t, 1-\E[b_t^2])$.\footnote{We will show later that this variance is always non-negative.}
    \item Set $d_t = d_{t-1} + x(t) v_t$.
\end{enumerate}

Note here $b_t$ is a random variable as $d_{t-1}$ is random, and the expectation $\E[b_t^2]$ in Step 2
 is over the randomness up to time $t-1$. 
 
 The choice to make $x(t)$ Gaussian is not essential, and we could have chosen any random variable with mean $-b_t$ and variance $1-\E[b_t^2]$. The bias $-b_t$ of $x(t)$ plays an important role --- it makes sure that, if $\ip{v_t, d_{t-1}}$ is large, then the discrepancy vector $d_t$ is pushed towards the origin in the direction of $v_t$. To see this more precisely, we can write $x(t)$ as $x(t) = -b_t + y(t)$ where $y(t)\sim N(0, 1-\E[b_t^2])$, independent of $d_{t-1}$. Then $d_t = d_{t-1} -b_tv_t + y(t) v_t$.

We have the following guarantee.

\begin{theorem}\label{thm:gauss-prefix-komlos}
     For any $v_1, \ldots, v_n$ provided by an oblivious adversary such that $\|v_t\|_2 \le 1$ for all $t$, the GW algorithm above is well-defined, and, at any time step $t$, satisfies $\E[x(t)^2]=1$ and $\E[d_td_t^T]\preceq I$.\footnote{Recall that the notation $A\preceq B$, for two symmetric $m\times m$ matrices $A$ and $B$, means that $B-A$ is positive semidefinite.}
\end{theorem}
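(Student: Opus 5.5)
We prove the theorem by induction on $t$, maintaining the invariant $\Sigma_t \eqdef \E[d_td_t^T] \preceq I$, with $\Sigma_0 = 0$. Since the adversary is oblivious, the vectors $v_1,\ldots,v_n$ are fixed in advance. The expectations $\E[b_t^2]$ in Step~2 are therefore deterministic numbers depending only on $v_1,\ldots,v_t$, so the algorithm is a well-defined online procedure. Moreover, $\E[b_t^2] = \E[\ip{v_t,d_{t-1}}^2] = v_t^T\Sigma_{t-1}v_t$.

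Well-definedness, i.e.\ that the variance $1-\E[b_t^2]$ is non-negative, follows directly from the inductive hypothesis. Indeed, $\Sigma_{t-1}\preceq I$ and $\|v_t\|_2\le 1$ give $\E[b_t^2] \le \|v_t\|_2^2 \le 1$. Next, write $x(t) = -b_t + y(t)$, where $y(t)\sim N(0,1-\E[b_t^2])$ is independent of $d_{t-1}$. Then
\[
\E[x(t)^2] = \E[b_t^2] + 1 - \E[b_t^2] = 1 ,
\]
since the cross term vanishes because $y(t)$ is independent of $d_{t-1}$ and has mean zero.

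For the covariance, let $P\eqdef I - v_tv_t^T$. We have $d_t = d_{t-1} - v_t v_t^T d_{t-1} + y(t)v_t = P d_{t-1} + y(t) v_t$. Using independence of $y(t)$ again,
\[
\Sigma_t = P\Sigma_{t-1}P + (1 - v_t^T\Sigma_{t-1}v_t)\, v_tv_t^T .
\]
The key step is to show that this is $\preceq I$ given $\Sigma_{t-1}\preceq I$. Since $\Sigma_t$ depends linearly (in the matrix sense) on $\Sigma_{t-1}$, we can write
\[
I - \Sigma_t = P(I-\Sigma_{t-1})P + \bigl(I - PP - (1 - v_t^Tv_t)\,v_tv_t^T\bigr) + \bigl(v_t^T(\Sigma_{t-1} - I)v_t\bigr)v_tv_t^T + \ldots .
\]
A cleaner route is to set $E \eqdef I-\Sigma_{t-1}\succeq 0$ and expand everything in terms of $E$, writing $s\eqdef\|v_t\|_2^2\le 1$. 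We have $PP = I - (2-s)v_tv_t^T$ and $v_t^T\Sigma_{t-1} v_t = s - v_t^TEv_t$. Substituting,
\[
\Sigma_t = I - (2-s)v_tv_t^T - PEP + (1-s+v_t^TEv_t)\,v_tv_t^T ,
\]
so
\[
I - \Sigma_t = PEP + (1-s)\,v_tv_t^T - (v_t^TEv_t)\,v_tv_t^T .
\]
Since $(1-s)\,v_tv_t^T\succeq 0$, it remains to show $PEP \succeq (v_t^TEv_t)\,v_tv_t^T$. This is the main obstacle.

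To attack it, test against an arbitrary vector $z$ and write $z = \alpha v_t + w$ with $w\perp v_t$. Then $Pz = w + \alpha(1-s)v_t$, and we need
\[
(Pz)^T E (Pz) \ge \alpha^2 s^2\, v_t^TEv_t .
\]
This looks false in general, for example when $w$ is chosen to cancel against $v_t$ in $E$. So the direct $PEP$ bound may fail. If it does, I will fall back on the observation that the two troublesome terms $-\,v_tv_t^TE - Ev_tv_t^T$ inside $PEP$ should be compared together with $(1-s)v_tv_t^T$, and on the easier special case $s=1$, where $P$ is an orthogonal projection. In that case, for unit $v_t$,
\[
I - \Sigma_t = PEP + \bigl(1 - v_t^T\Sigma_{t-1}v_t\bigr)v_tv_t^T - v_tv_t^T + \ldots ,
\]
and the block structure of $\R^m = \mathrm{span}(v_t)\oplus v_t^\perp$ makes it immediate that $\Sigma_t$ restricted to $v_t^\perp$ is $P\Sigma_{t-1}P\preceq I$ and that its $v_t$-component is $1$, with no cross terms.

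For non-unit $v_t$, I would first check the identity above carefully. If the general case resists, I would reduce to the unit case by splitting $v_t$ into a unit step plus a correction, or by slightly modifying the algorithm. In summary, the covariance invariant is the crux, and the rest is routine.
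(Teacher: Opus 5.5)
There is a genuine gap: the proposal never establishes $\Sigma_t\preceq I$, which is the heart of the theorem. Your setup, the well-definedness argument, the computation $\E[x(t)^2]=1$, and the recursion $\Sigma_t = P\Sigma_{t-1}P + (1-v_t^T\Sigma_{t-1}v_t)\,v_tv_t^T$ are all correct.

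The ``main obstacle'' you identify comes from an arithmetic slip in the last line of your expansion. Start from your own (correct) line $\Sigma_t = I-(2-s)v_tv_t^T - PEP + (1-s+v_t^TEv_t)\,v_tv_t^T$. Subtracting from $I$ gives
\[
I-\Sigma_t = PEP + \bigl((2-s)-(1-s)-v_t^TEv_t\bigr)v_tv_t^T = PEP + (1-v_t^TEv_t)\,v_tv_t^T ,
\]
not $PEP+(1-s)v_tv_t^T-(v_t^TEv_t)v_tv_t^T$. A sanity check confirms this: at $t=1$ with $\Sigma_0=0$ and unit $v_1$, you have $\Sigma_1=v_1v_1^T$, so $I-\Sigma_1=P$, whereas your formula gives $I-2v_1v_1^T$, which is not PSD.

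With the correct coefficient there is nothing left to prove. First, $PEP\succeq 0$ because $E\succeq 0$. Second, $v_t^TEv_t = s - v_t^T\Sigma_{t-1}v_t\le s\le 1$ because $\Sigma_{t-1}\succeq 0$.

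The inequality $PEP\succeq (v_t^TEv_t)\,v_tv_t^T$ you set out to prove is indeed false (take $s=1$, $E=I$, test vector $v_t$). So as written the argument stalls, and your fallbacks do not repair it. Splitting $v_t$ into a unit step plus a correction, or modifying the algorithm, would prove a statement about a different algorithm. Also, in your unit case the $v_t$-component of $\Sigma_t$ is $1-v_t^T\Sigma_{t-1}v_t$, not $1$; this is harmless since it is at most $1$, but the claim as stated is wrong.

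The paper avoids exact bookkeeping altogether. Conjugation preserves the Loewner order, so $\Sigma_{t-1}\preceq I$ gives $P\Sigma_{t-1}P\preceq P^2 = I-(2-s)v_tv_t^T$. Bounding the variance crudely by $\sigma_t^2\le 1$ then yields $\Sigma_t\preceq I-(2-s-\sigma_t^2)v_tv_t^T\preceq I$. Either your corrected identity or this monotonicity argument closes the induction; the proposal as submitted contains neither.
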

\begin{proof}
We will show using induction on $t$ that $\E[d_{t}d_{t}^T]\preceq I$. 
This holds trivially for the base case $t=0$ since $d_0 = 0$. Consider some $t\geq 1$ and assume
inductively that $\E[d_{t-1}d_{t-1}^T]\preceq I$. 

First note that this implies that the algorithm is well-defined at time $t$, as  $\E[b_t^2] 
=\E[\langle v_t, d_{t-1} \rangle^2]\le \|v_t\|_2^2
\le 1$, so the variance of $x(t)$ is non-negative.
As $\E[b_t^2] \le 1$, it is also easily seen that $\E[x(t)^2] = 1$.
    

  We now show the inductive step that $\E[d_{t}d_{t}^T]\preceq I$. Let us define $d'_t := d_{t-1} -b_t v_t$, so that $d_t = d'_t + y(t)v_t$.
  A key observation is that
  \[d'_t = d_{t-1} - b_t v_t = d_{t-1} - \langle d_{t-1},v_t \rangle v_t =  (I - v_tv_t^T)d_{t-1}.\]
 Let us further define $\sigma_t^2:= \E[y(t)^2] = 1-\E[b_t^2]$. We thus have,
    \begin{align*}
    \E[d_td_t^T]
    &= \E[(d'_t + y(t) v_t)(d'_t + y(t) v_t)^T]\\
    &= \E[(d'_t) (d'_t)^T] + \sigma_t^2 v_t v_t^T\\
    &= (I-v_t v_t^T)\E[d_{t-1}d_{t-1}^T](I-v_t v_t^T) + \sigma_t^2v_t v_t^T\\
    &\preceq (I-v_t v_t^T)^2 + \sigma_t^2v_t v_t^T \\
    & = I - (2- \|v_t\|_2^2 - \sigma_t^2) v_tv_t^T  \preceq I.
    \end{align*}
    Here, the second equality uses that $y(t)$ is independent of $d'_t$ and has mean $0$, the fourth line uses the inductive hypothesis $\E[d_{t-1}d_{t-1}^T]\preceq I$, and the last inequality follows as $\|v_t\|_2^2 \le 1$ and $\sigma_t^2 \le 1$.
%
\end{proof}

\begin{remark}
    Theorem~\ref{thm:gauss-prefix-komlos} is similar to Theorem~\ref{thm:bana-prefix-subgaussian}. Indeed, since $d_t$ is jointly Gaussian, has mean $0$, and has variance at most $1$ in every direction, it is also $1$-subgaussian. However, unlike Theorem~\ref{thm:bana-prefix-subgaussian}, we allow the ``colors'' to be any real number (instead of just $\pm 1$). On the other hand, we achieve better constants, and have an online algorithm. 
\end{remark}

\smallskip

\noindent {\bf An explicit Vector Coloring.} As a corollary of Theorem~\ref{thm:gauss-prefix-komlos}, we get a vector coloring for the prefix Koml\'os problem achieving vector discrepancy $1$. This is a strengthening of Theorem~\ref{thm:komlos-vec-disc}. 

\begin{theorem}\label{thm:vec-prefix-komlos}
    For any vectors $v_1, \ldots, v_n \in \R^m$ with $\|v_j\|_2 \le 1$ for all $j \in [n]$, there exist vectors $w_1, \ldots w_n \in \R^d$ for some positive integer $d$, such that $\|w_j\|_2 = 1$ for all $j \in [n]$, and 
    \[
    \max_{j=1}^n \max_{i = 1}^m \left\|\sum_{k = 1}^j w_k v_k(i)\right\|_2 \le 1.
    \]
\end{theorem}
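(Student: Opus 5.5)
We derive Theorem~\ref{thm:vec-prefix-komlos} directly from Theorem~\ref{thm:gauss-prefix-komlos}, by turning the random real colors of the Gaussian Walk into vectors in a Hilbert space of random variables. Run the GW algorithm on $v_1,\ldots,v_n$; this input is fixed, so it is trivially an oblivious adversary. The algorithm produces jointly Gaussian random variables $x(1),\ldots,x(n)$ on a common probability space. Each is a Gaussian of mean $-b_t$, with $b_t$ linear in the previous Gaussians, plus an independent Gaussian. Let $H$ be the linear span of $x(1),\ldots,x(n)$ inside $L^2$ of this probability space, with inner product $\ip{X,Y}=\E[XY]$. Then $H$ has finite dimension $d\le n$. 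Fix an isometry $H\to\R^d$, and let $w_j\in\R^d$ be the image of $x(j)$.

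The verification has two steps.

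\begin{enumerate}
\item \textbf{Unit norms.} By Theorem~\ref{thm:gauss-prefix-komlos}, $\|w_j\|_2^2=\E[x(j)^2]=1$.

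\item \textbf{Prefix bound.} Fix $j\in[n]$ and $i\in[m]$. By linearity of the isometry, $\sum_{k\le j} w_k v_k(i)$ is the image of the random variable
\[
\sum_{k\le j} x(k)v_k(i)=d_j(i)=\ip{d_j,e_i}.
\]
Its squared norm is therefore
\[
\E[d_j(i)^2]=e_i^T\E[d_jd_j^T]e_i\le e_i^Te_i=1,
\]
again by Theorem~\ref{thm:gauss-prefix-komlos}. Taking the maximum over $i$ and $j$ gives the claimed bound.
\end{enumerate}

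The only point needing care is that the colors must be genuine $L^2$ vectors that can all be realized simultaneously. The GW construction already guarantees this, since every $x(t)$ is a finite-variance random variable on one probability space. An equivalent route avoids probability: take the $n\times n$ Gram matrix $G(j,k)=\E[x(j)x(k)]$. It is positive semidefinite, so it factors as $G=W^TW$, and the columns of $W$ serve as the $w_j$. The theorem then reduces to checking the two inequalities above, and those are exactly the conclusions of Theorem~\ref{thm:gauss-prefix-komlos}.
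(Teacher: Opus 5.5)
Your proposal is correct and matches the paper's argument. The paper also takes the Gram matrix $X(i,j)=\E[x(i)x(j)]$ of the Gaussian Walk colors, reads off $X(j,j)=1$ and $V_jXV_j^T\preceq I$ from Theorem~\ref{thm:gauss-prefix-komlos}, and factors $X$ to obtain the $w_j$; your $L^2$-span formulation is just a coordinate-free phrasing of that same factorization.
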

\begin{proof}
    Let $x := (x(1), \ldots, x(n))$ be the random colors determined by the GW algorithm, and define the positive semidefinite matrix $X$ with entries $X(i,j) = \E[x(i)x(j)]$. Theorem~\ref{thm:gauss-prefix-komlos} implies that $X(j,j) = 1$ for all $j \in [n]$, and also that $V_j X V_j^T\preceq I$ for all $j \in [n]$, where $V_j$ is the $m\times n$ matrix whose first $j$ columns are $v_1, \ldots, v_j$, and the remaining $n-j$ columns are $0$. Now the theorem is implied by taking $w_1, \ldots, w_n$ to be any vectors such that $\ip{w_i,w_j} = X(i,j)$. 
\end{proof}

\paragraph{The Self-Balancing Walk (SW) Algorithm.}
We now formulate the Self-Balancing Walk (SW) algorithm that proves Theorem~\ref{thm:als-21}. The SW algorithm is very similar to the GW algorithm, with some key differences. We need to make sure that each color $x(t)$ is in $\{-1,+1\}$. Since this forces the bias to be in $[-1,1]$, we will make the bias smaller in proportion to $\ip{v_t, d_{t-1}}$, and, moreover, the algorithm will fail when the bias is outside the interval $[-1,1]$. Below, we let $c = 2\log (4mn/\delta)$, and, as before, let $d_t$ denote the discrepancy vector at time $t$. Initially, $d_0=0$.\\

\noindent For $t=1,\ldots,n$ do the following:
\begin{enumerate}
    \item Given the vector $v_t$ at time $t$, compute \begin{equation}
    \label{eq:sw-bt}
        b_t =\langle v_t, d_{t-1} \rangle/c.
    \end{equation}
    \item If $|b_t| > 1$, abort the algorithm.
Otherwise, set
\begin{equation} 
\label{eq:sw-rule}
x(t) =  \begin{cases} -1  \text { \,\,\, w.p. }  (1+b_t)/2 \\ 
  +1 \text{ \,\,\,\, otherwise}.
\end{cases}
\end{equation}
\item Set $d_t = d_{t-1} + x(t) v_t$.
\end{enumerate}
\vspace{2mm}

To see the parallel between the GW and SW algorithms, notice that,
conditioned on $d_{t-1}$, the sign $x(t)$ satisfies $\E[x(t) | d_{t-1}]=-b_t$, and, as $b_t = \ip{v_t,d_{t-1}}/c$, the discrepancy $d_t$ satisfies
\[\E[d_t |d_{t-1}] = d_{t-1} - b_t v_t = \left(I - \frac{1}{c}v_t v_t^T\right) d_{t-1}.\]

\section{Analysis}
We now prove Theorem \ref{thm:als-21}. The proof follows the proof of Theorem~\ref{thm:gauss-prefix-komlos}, but is more involved. Instead of just bounding the second moments of the discrepancy vector $d_t$, we show that $d_t$ is subgaussian. Moreover, we need to account for the possibility that the algorithm aborts. To deal with this second issue, it will be convenient to consider a modified algorithm that never aborts but may sometimes output a non-sense ``sign'' $x(t)$ with $|x(t)|>1$. Consider the following algorithm. 
\paragraph{Modified Walk (MW).} At each time $t$, compute $b_t = \ip{d_{t-1},v_t}/c$ exactly as before. 

\vspace{2mm}

(i) If $|b_t|> 1$, set $x(t) =-b_t$ deterministically.
\vspace{2mm}

(ii) Else if $|b_t|\leq 1$, then set $x(t)$ randomly exactly as in \eqref{eq:sw-rule}.\\

Notice that MW never aborts, but may set $|x(t)|>1$ in step (i).

\paragraph{Coupling SW and MW.} Notice that if we use the same random choices in SW in \eqref{eq:sw-rule} and MW in step (ii), then they both behave identically until the first (random) time $\tau$ when $|b_{\tau}|>1$. In particular, at this time SW will abort, but MW will output $x_\tau$ with $|x_\tau|>1$. 

For this reason it will suffice to analyze the MW algorithm. 

\paragraph{Sub-gaussianity of $d_t$.}
Recall 
from Section \ref{sec:bana-subgaussian} that 
a mean-zero vector-valued random variable  $X \in \R^m$  is $\sigma$-sub-Gaussian if
\begin{equation}
    \label{eq:als-subg-general}
\E[\exp( \langle X,\theta \rangle )] \leq \exp (\sigma^2 \|\theta\|^2/2) \quad \text{for all $\theta \in \R^m$},
\end{equation}
and that such an $X$ satisfies
\begin{equation}
\label{eq:als-tail}
    \Pr[ |\ip{X,\theta} | \geq x] \leq 2 \exp(-x^2/(2\sigma^2\|\theta\|^2)) \quad \text{for all $\theta \in \R^m$}.
\end{equation}
MW satisfies the  following remarkable guarantee.
\begin{theorem}
\label{thm:als-subgaussian}
    At each time $t$, the discrepancy vector $d_t$ produced by MW is
    $\sqrt{c}$-sub-Gaussian. 
\end{theorem}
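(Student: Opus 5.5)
We will prove the theorem by induction on $t$, showing the stronger statement that for every fixed $\theta \in \R^m$,
\[
\E[\exp(\ip{d_t,\theta})] \le \exp\left(c\|\theta\|_2^2/2\right).
\]
The base case $t=0$ is trivial since $d_0 = 0$. Because the adversary is oblivious, $v_1,\ldots,v_n$ are fixed in advance. The only randomness is in the signs, so we may condition on $d_{t-1}$ and compute the conditional moment generating function of the increment $x(t)\ip{v_t,\theta}$.

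The key one-step estimate, modeled on the covariance computation in Theorem~\ref{thm:gauss-prefix-komlos}, is the following. Condition on $d_{t-1}$ and write $\alpha := \ip{v_t,\theta}$ and $b_t = \ip{v_t,d_{t-1}}/c$.

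\emph{Case $|b_t|\le 1$.} Here $x(t)$ is a $\pm1$ variable with mean $-b_t$. A Hoeffding-type bound for a bounded variable with shifted mean gives
\[
\E[e^{x(t)\alpha}\mid d_{t-1}] \le \exp(-b_t\alpha + \alpha^2/2),
\]
because $x(t)+b_t$ is mean zero and ranges in an interval of length $2$.

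\emph{Case $|b_t|>1$.} The MW rule is deterministic, so the same bound holds trivially: the factor is exactly $e^{-b_t\alpha}\le e^{-b_t\alpha+\alpha^2/2}$.

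In both cases,
\[
\E[e^{\ip{d_t,\theta}}\mid d_{t-1}] \le \exp\left(\ip{d_{t-1},\theta - \tfrac{\alpha}{c}v_t} + \alpha^2/2\right).
\]
Setting $\theta' := (I-\tfrac1c v_tv_t^T)\theta$, we have $\ip{d_{t-1},\theta-\tfrac{\alpha}{c}v_t} = \ip{d_{t-1},\theta'}$. Taking expectations and applying the induction hypothesis to $\theta'$ yields
\[
\E[e^{\ip{d_t,\theta}}] \le \exp\left(\tfrac c2\|\theta'\|_2^2 + \tfrac{\alpha^2}{2}\right).
\]

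It remains to verify the quadratic inequality
\[
c\|\theta'\|_2^2 + \alpha^2 \le c\|\theta\|_2^2 .
\]
Expanding, with $\|v_t\|_2\le 1$,
\[
c\|\theta'\|_2^2 = c\|\theta\|_2^2 - 2\alpha^2 + \tfrac{\alpha^2}{c}\|v_t\|_2^2 \le c\|\theta\|_2^2 - 2\alpha^2 + \tfrac{\alpha^2}{c}.
\]
So the required inequality holds once $c\ge 1$, which is true for our choice $c = 2\log(4mn/\delta)$. This closes the induction. The argument is exactly analogous to $(I-v_tv_t^T)^2 + \sigma_t^2 v_tv_t^T \preceq I$ in the Gaussian walk, with the damping factor $1/c$ compensating for the unit variance of $\pm1$ signs.

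The step I expect to be the main obstacle is the one-step moment generating function bound when $x(t)$ has mean $-b_t\neq 0$. Hoeffding's lemma, as quoted earlier, is stated for mean-zero variables. So I would apply it to $x(t)+b_t$, which is mean zero and supported on $[-1+b_t,\,1+b_t]$, an interval of length $2$; Hoeffding's lemma then gives the factor $e^{\alpha^2/2}$.

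One also needs the mean-zero requirement in the definition of sub-Gaussianity. By linearity, $\E[d_t] = (I-\tfrac1c v_tv_t^T)\E[d_{t-1}]$ holds in both cases (the deterministic case also has conditional mean $-b_tv_t$), so $\E[d_t]=0$ by induction.
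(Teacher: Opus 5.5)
Your proof is correct and follows the paper's argument essentially step for step: write $x(t) = -b_t + y(t)$, bound the conditional moment generating function of $y(t)$ by Hoeffding's lemma on an interval of length $2$ (trivially when $|b_t|>1$), apply the induction hypothesis at $(I - v_tv_t^T/c)\theta$, and close with $\|(I - v_tv_t^T/c)\theta\|_2^2 \le \|\theta\|_2^2 - \ip{v_t,\theta}^2/c$ using $c \ge 1$ and $\|v_t\|_2 \le 1$. Your explicit check that $\E[d_t]=0$ is a small addition that the paper leaves implicit.
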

Interestingly, Theorem \ref{thm:als-subgaussian} directly implies Theorem \ref{thm:als-21}---by simply setting $c = 2 \ln (4mn/\delta)$ to ensure that $\tau > n$ with high probability.

\begin{proof}[Proof of Theorem \ref{thm:als-21}] 
As the adversary is oblivious, $v_1,\ldots,v_n$ are fixed in advance (and do not depend upon the randomness of the algorithm). 

Fix a time $t$.
By Theorem \ref{thm:als-subgaussian}
the vector $d_t$ produced by MW is $\sqrt{c}$-sub-Gaussian. So, applying \eqref{eq:als-tail} with $X=d_{t-1}$ and $\theta = v_t$, and using that $\|v_t\|_2\leq 1$ and $\sigma^2 =c$, 
\[\Pr[|b_t|>1] = \Pr[|\ip{d_{t-1},v_t}| \geq c] \leq 2 e^{-c/2} = \frac{\delta}{2mn}. \] 
By a union bound over the $n$ time steps, the probability that SW aborts at some $t\leq n$ is at most $\frac{\delta}{2m} \leq \frac{\delta}{2}$.

Similarly, for a fixed time $t$ and standard coordinate vector $e_i$, applying  \eqref{eq:als-tail} with $X=d_t, \theta = e_i$,  $x=c$, and $\sigma^2=c$ gives  \[\Pr[|\ip{ d_t,e_i}|> c] \leq 2 e^{-c/2} \leq \frac{\delta}{4mn}.\] Thus, by a union bound over all $t \in [n],i \in [m]$, the probability that $\|d_t\|_\infty >c$ for any time $t$ is at most $\delta/2$.
\end{proof}

\remark
We can also derive the following more general theorem, using an
argument analogous to the proof of
Theorem~\ref{thm:bana-subg-suff}. While an even more general statement
is possible, we state the theorem for a symmetric convex body $K$ in $\R^m$ with non-empty interior. We use
$\|y\|_K \eqdef \inf\{t: y \in tK\}$ for the norm with unit ball
$K$. We use $B_{\ell_2^m}$ for the unit ball of the $\ell_2^m$
norm.

\begin{theorem}\label{thm:als-genK}
  Let $v_1, v_2, \ldots, v_n \in \R^m$ satisfy $\|v_t\|_2 \leq 1$ for
  all $t \in [n]$, and suppose that $K$ is a symmetric convex  body in
  $\R^m$ such that $rB_{\ell_2^m} \subseteq K$. Then, for any $\delta
  \in (0,1)$, and for an appropriate choice of $c$, with probability at
  least $1-\delta$ the SW algorithm satisfies
  \[
    \|d_t\|_K \lesssim \sqrt{\log(n/\delta)}(\E\|\grv\|_K + r\sqrt{\log(n/\delta)}),
  \]
  where  $\grv$ is a standard Gaussian random vector in $\R^m$.
\end{theorem}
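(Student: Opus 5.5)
The plan follows the proof of Theorem~\ref{thm:bana-subg-suff}, replacing the sub-Gaussian vector $Y$ there by the discrepancy vector of SW. Through the coupling with MW this vector is $\sqrt{c}$-sub-Gaussian by Theorem~\ref{thm:als-subgaussian}.

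First I fix $c \eqdef 2\ln(4mn/\delta)$, as in the proof of Theorem~\ref{thm:als-21}. That argument shows SW and MW coincide for all $t\le n$, except with probability $\delta/2$. So it suffices to bound $\|d_t\|_K$ for the MW vectors and take a union bound over $t\in[n]$. (Remark~\ref{rem:bana-komlos-trick}-style reductions should let $\log(mn)$ be replaced by $\log(n)$ up to constants; I will not dwell on this.)

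Next, fix $t$ and write $\|d_t\|_K = \sup_{z\in K^\circ}\ip{d_t,z}$. This uses $K^{\circ\circ}=K$ and the symmetry of $K$. The hypothesis $rB_{\ell_2^m}\subseteq K$ gives $K^\circ\subseteq \frac1r B_{\ell_2^m}$, so $K^\circ$ is bounded.

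\textbf{Expectation.} Talagrand's comparison theorem (Theorem~\ref{thm:ch6a-talagrand}) with $\sigma=\sqrt c$ gives
\[
\E\|d_t\|_K\lesssim \sqrt c\,\E\|\grv\|_K .
\]

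\textbf{Tail.} For the tail I will use the generic chaining tail form of Talagrand's bound. Namely, for a $\sigma$-sub-Gaussian vector $X$,
\[
\sup_{z\in T}\ip{X,z}\lesssim \sigma\big(\E\sup_{z\in T}\ip{\grv,z} + u\,\mathrm{diam}(T)\big)
\]
holds with probability at least $1-2e^{-u^2}$ (see \cite[Section 8.6]{vershynin}). I apply this with $T=K^\circ$ and $u\asymp\sqrt{\log(n/\delta)}$. A union bound over $t$ then gives
\[
\|d_t\|_K\lesssim \sqrt{\log(n/\delta)}\,\big(\E\|\grv\|_K+\sqrt{\log(n/\delta)}\,L\big),
\qquad L\eqdef\sup_{z\in K^\circ}\|z\|_2 .
\]

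\textbf{Reconciling with the stated bound (main obstacle).} The remaining step is to bound the Lipschitz term $L$ by the quantity in the statement. The containment $rB_{\ell_2^m}\subseteq K$ yields $L\le 1/r$. This is at most $r$ when $r\ge1$, which gives the stated $r\sqrt{\log(n/\delta)}$ term in that regime.

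For $r<1$ I would first try to absorb $L$ into the Gaussian term. For every $z\in K^\circ$ we have $\E\|\grv\|_K\ge \E|\ip{\grv,z}|=\sqrt{2/\pi}\,\|z\|_2$, so $L\le\sqrt{\pi/2}\,\E\|\grv\|_K$. However, this only gives a bound of order $\log(n/\delta)\,\E\|\grv\|_K$.

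I expect this step to be the genuine difficulty. The displayed bound is not invariant under rescaling $K\mapsto\lambda K$, $r\mapsto\lambda r$. So for small $r$ I suspect the intended term is $\sqrt{\log(n/\delta)}/r$. In that case the argument above already proves it, since it gives the bound in terms of $L\le 1/r$.
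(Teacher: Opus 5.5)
Your argument is the paper's own sketch: choose $c\asymp\log(n/\delta)$ so that SW and MW agree except with probability $\delta/2$, apply the high-probability Talagrand comparison (Exercise~8.6.5 in~\cite{vershynin}) to the $\sqrt{c}$-sub-Gaussian $d_t$ over $T=K^\circ$, and take a union bound over $t$. Your suspicion about the last step is correct and is not a gap in your proof. That inequality bounds the deviation term by $u\,\mathrm{diam}(K^\circ)\le 2u/r$, so the statement's $r\sqrt{\log(n/\delta)}$ should read $\frac1r\sqrt{\log(n/\delta)}$. The literal version is in fact false for small $r$. Take $m=1$, $v_t\equiv 1$, $K=[-r,r]$ and let $r\to 0$: the displayed bound would force $\max_t|d_t|\lesssim\sqrt{\log(n/\delta)}$. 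But SW is then a restoring walk with stationary variance of order $c$, and avoiding an abort requires $c\gtrsim\log n$, so $\max_t|d_t|$ reaches order $\log n$.

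The only slip is putting $m$ into $c$ and appealing to Remark~\ref{rem:bana-komlos-trick}, which is specific to $\ell_\infty$. The abort estimate in the proof of Theorem~\ref{thm:als-21} is a union bound over the $n$ steps alone. So $c=2\ln(4n/\delta)$ already gives abort probability at most $2ne^{-c/2}=\delta/2$, and $\sqrt{c}\asymp\sqrt{\log(n/\delta)}$ directly.
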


The proof of Theorem~\ref{thm:als-genK} is similar to the proof of
Theorem~\ref{thm:als-21} above. We pick $c$ on the order of
$\log(n/\delta)$, which is enough to argue that the probability that
SW aborts is at most $\frac{\delta}{2}$. Then we estimate $\max_{t =
  1}^n \|d_t\|_K$ using the fact that $d_t$ is $\sqrt{c}$-sub-Gaussian,
and the high probability version of Talagrand's comparison inequality
(see, \eg, Exercise~8.6.5 in~\cite{vershynin}). We leave the details
as an exercise.

Theorem~\ref{thm:als-genK} can be used to give an efficient (online!)
algorithm that nearly matches other prefix discrepancy bounds we have
already seen, in particular those derived from Theorem~\ref{thm:bana2}. In particular, the bounds on prefix discrepancy in the
$\ell_p$ norm in Theorem~\ref{thm:sser-ellp} can be matched up to an
additional $O(\sqrt{\log n})$ multiplicative factor.

\subsection{Proof of Theorem \ref{thm:als-subgaussian}}
We now prove Theorem \ref{thm:als-subgaussian}. 

Fix some time $t$.
Conditioned on $d_{t-1}$, as the ``sign'' $x(t)$ satisfies
$\E[x(t)| d_{t-1}]=-b_t$, let us write $y(t) \eqdef x(t)  + b_t$, 
so that  $\E[y(t)|d_{t-1}]=0$. 
In particular
 if $|b_t|>1$ then $y(t)=0$, and otherwise
\begin{equation}
\label{eq:als-rt}
    y(t)  = \begin{cases} -(1 - b _t) \text{ w.p. }  (1+b_t)/2\\ 
             +(1 +b _t) \text{ w.p. }  (1-b_t)/2
             \end{cases}
\end{equation}
As $b_t = \langle v_t,d_{t-1}\rangle/c$, we can write  
\[ d_t = d_{t-1} + x(t) v_t =   d_{t-1} - b_t v_t + y(t) v_t  =  \left(I - \frac{1}{c}v_tv_t^T\right) d_{t-1} + y(t)v_t.\]
Let us denote $A_t : = (I - v_t v_t^T/c)$, so that $d_t = A_t d_{t-1} + y(t) v_t$.
Note that $A_t$ only depends on $v_t$, and hence it does not depend on the randomness of the algorithm as the adversary is oblivious.

\paragraph{The inductive argument.}
We now prove the sub-Gaussianity of $d_t$ by induction on $t$.
Fix some test vector $\theta \in \R^m$. We will show that \[\E [\exp(\ip{d_t,\theta})] \leq \exp(c \|\theta\|_2^2/2).\]
This clearly holds for $t=0$ as $\ip{d_t,\theta}=0$.
Assume inductively that 
\begin{equation}
    \label{eq:als-induction}
    \E[\exp(\ip{d_{t-1},u})]\leq \exp(c\|u\|_2^2/2) \quad \text{ for all } u \in \R^m.
\end{equation}
As $d_t = A_t d_{t-1} +y(t) v_t $, we have 
\begin{align}
\E [\exp(\ip{d_t,\theta }) |d_{t-1}] & =   \E [\exp(\ip{A_t d_{t-1}+y(t) v_t,\theta}) |d_{t-1}] \notag  \\
 &=   \exp( \ip{A_t d_{t-1}, \theta }) \, {\E [\exp(y(t) \ip{ v_t,\theta }) | d_{t-1}]} \notag \\
& \leq \exp( \ip{A_t d_{t-1}, \theta })   \exp(\ip{v_t,\theta}^2/2).\label{eq:als} 
 \end{align}
 The last step follows from Hoeffding's lemma,
applied to $y(t)$ conditioned on $d_{t-1}$, and noting that
$\E[y(t)|d_{t-1}]=0$, and that $y(t)$ takes values in an interval of
length $2$. 

Taking expectation over $d_{t-1}$ in \eqref{eq:als}  gives
\begin{align}
    \E [\exp(\ip{d_t,\theta })] 
                      & \leq  \exp(\ip{v_t,\theta }^2/2)\, \E [\exp( \ip{A_t d_{t-1}, \theta })] \notag \\
                      & =  \exp(\ip{v_t,\theta }^2/2) \,\E [\exp( \ip{ d_{t-1}, A_t \theta })]  \notag \\
                      &  \leq \exp(\ip{v_t,\theta }^2/2) \,\exp( c \|A_t \theta \|_2^2 /2\label{eq:als-final} ) 
\end{align}
where the second line uses that $A_t$ is self-adjoint and the last step uses the inductive hypothesis \eqref{eq:als-induction} for $d_{t-1}$ with $u=A_t \theta $.

Now, as $A_t \theta = \theta - \frac{\ip{v_t,\theta}}{c} v_t$, 
\begin{align*}
    \|A_t \theta \|_2^2  & =  \left\| \theta -  \frac{ \ip{v_t,\theta }}{c} v_t \right\|_2^2 \\
    & =  \|\theta\|_2^2 - \frac{2   \ip{v_t,\theta }^2}{c} +\frac{\ip{v_t,\theta }^2 \|v_t\|_2^2}{c^2} \\
    & \leq  \|\theta\|_2^2 -   \frac{\ip{v_t,\theta }^2}{c}.  \tag{as $c \geq 1$, $\|v_t\|^2 \leq 1$}
\end{align*}
Plugging this in \eqref{eq:als-final} gives the desired bound 
\[
 \E [\exp(\ip{d_t,\theta })]  
 \leq \exp ( c \|\theta \|_2^2/2).\]




\section{Bibliographic Notes} 
 Online discrepancy was first studied by Spencer \cite{Spencer77}.
Besides the online Koml\'{o}s problem we consider here, several other discrepancy problems have also been studied in the online setting. 
B\'ar\'any \cite{Barany79} considered the general setting where the vectors $v_t$ are chosen from some fixed set $S$, and showed almost matching upper and lower  bounds on the online discrepancy achievable by any deterministic algorithm.

Spencer \cite{Spencer86Prefix} considered the setting where the vectors $v_t$ satisfy $\|v_t\|_\infty \leq 1$
and showed that for $n=m$ any online algorithm must incur discrepancy $\Omega(\sqrt{n \log n})$. In contrast, the offline  prefix discrepancy is $O(\sqrt{n})$.
Bansal and Spencer \cite{BansalS20} considered the setting where the vectors $v_t$ are chosen uniformly and independently from $\{-1,1\}^m$ and gave an online strategy that at any time $t$ ensures that $\|d_t\|_\infty = O(\sqrt{m})$ with high probability.


Another very interesting question is the online carpooling problem \cite{AjtaiANRSW98}. Here there are $n$ nodes
and edges $e_t=(u_t,v_t)$ arrive over time that must be oriented from
$u_t\rightarrow v_t$ or $v_t\rightarrow u_t$. The goal is to minimize
the discrepancy at each vertex, equal to the absolute value of the
difference between in-degree and out-degree.
If the edges arrive randomly from the complete graph the discrepancy is $O(\log \log n)$ at any fixed time \cite{AjtaiANRSW98}.
For arbitrary graphs and oblivious adversaries Theorem \ref{thm:als-21} implies an $O(\log (nT))$ discrepancy, where $T$ is the time horizon. 
A prominent open question is whether a $\text{polylog}(n)$ bound (independent of $T$) exists. To this end,
\cite{AjtaiANRSW98} gave a
randomized algorithm with $O(\sqrt{n \log n})$ discrepancy at any fixed time, and also showed a lower bound of $\Omega(\log^{1/3} n)$ for any randomized algorithm.

For the online Koml\'{o}s problem, the Self-Balancing Walk algorithm in Section \ref{sec:sw-alg} is due to Alweiss, Liu and Sawhney  \cite{alweiss2020discrepancy}. Their original proof
follows a different approach and is based on showing that the distribution of the discrepancy vector is stochastically dominated by a suitably scaled Gaussian distribution. The exposition here gives better constants and is simpler, and is based on a personal communication by Arun Jambulapati. The weaker Gaussian Walk algorithm and the corresponding vector coloring in Theorem~\ref{thm:vec-prefix-komlos} are unpublished results of Nikolov.

    Very recently, Kulkarni, Reis, and  Rothvoss \cite{rothvoss-online}
    showed the {\em existence} of an online algorithm with
    $O(\sqrt{\log mn})$ discrepancy. Moreover, this algorithm matches
    the guarantees of Theorem~\ref{thm:bana-prefix-subgaussian} while being online.
       The bound     $O(\sqrt{\log mn})$ is also the best achievable
       by any online algorithm for the Koml\'os problem, in contrast to offline prefix Koml\'{o}s where $O(1)$ discrepancy may be possible.
    The algorithm of Kulkarni, Reis, and Rothvoss is highly non-constructive\footnote{Roughly speaking, given a time horizon $n$, it computes a distribution over exponentially many decision trees, where each decision tree specifies a deterministic online coloring strategy for every possible sequence of incoming vectors.}, 
    but it strongly suggests that a simpler and more efficient algorithm might exist. 
    Finding an efficient algorithm with $O(\sqrt{\log mn})$
    discrepancy for the online Koml\'{o}s problem is an outstanding open problem. 
    
    As further evidence, Liu, Sah and Sawhney \cite{LiuSS22} gave an efficient  Gaussian Fixed Point Walk algorithm that gives a partial $\{-1,0,1\}$ coloring with discrepancy $O(\sqrt{\log mn})$ where the color $0$ is used at most $4\%$ of the time.
    In fact, their algorithm ensures that the discrepancy vector is distributed exactly 
   as the standard Gaussian $N(0,I_m)$.  
    In another remarkable result, Chewi, Gerber, Rigollet and Turner \cite{ChewiGRT22} showed  that if one relaxes the colors to be two-dimensional unit vectors, then there is an efficient algorithm that ensures that the discrepancy is exactly distributed as the standard Gaussian.

\bibliographystyle{plain}
\bibliography{discrepancy}

\end{document}